\title{The Discrete Gaussian model, I.\\ Renormalisation group flow at high temperature}
\author{Roland Bauerschmidt\footnote{University of Cambridge, Statistical Laboratory, DPMMS. E-mail: {\tt rb812@cam.ac.uk}.}
  \and
  Jiwoon Park\footnote{University of Cambridge, Statistical Laboratory, DPMMS. E-mail: {\tt jp711@cam.ac.uk}.}
  \and
  Pierre-Fran\c{c}ois Rodriguez\footnote{Imperial College London, Department of Mathematics. E-mail: {\tt p.rodriguez@imperial.ac.uk}.}
}
\newcommand{\subjclass}[2][1991]{%
  \let\@oldtitle\@title%
  \gdef\@title{\@oldtitle\footnotetext{#1 \emph{Mathematics subject classification.} #2}}%
}
\subjclass[2020]{82B20, 82B28, 60G15, 60K35}
\date{\vspace*{-2em}} 
\newcommand{\betafree}{\beta_{\operatorname{free}}}
\newcommand{\betaeff}{\beta_{\operatorname{eff}}}
\definecolor{grey}{rgb}{0.55, 0.55, 0.55}
\definecolor{gray}{rgb}{0.55, 0.55, 0.55}
\definecolor{darkmagenta}{rgb}{0.55, 0.0, 0.55}
\definecolor{magenta}{rgb}{0.85, 0.0, 0.55}
\newcommand{\Tay}{\operatorname{Tay}}
\newcommand{\Rem}{\operatorname{Rem}}
\newcommand{\Loc}{\operatorname{Loc}}
\newcommand{\MM}{\mathfrak{M}}
\newcommand{\Eplus}{\mathbb{E}}
\newcommand{\cwone}{{}}
\renewcommand{\bar}[1]{\overline{#1}}
\newcommand{\alphaLoc}{\alpha_{\Loc}}
\newcommand{\xnorm}{|x|}
\begin{document}
\maketitle

\begin{abstract}
  The Discrete Gaussian model is the lattice Gaussian free field conditioned to be integer-valued.
  In two dimensions, at sufficiently high temperature,
  we show that its macroscopic scaling limit on the torus
  is a multiple of the Gaussian free field.
  Our proof starts from a single renormalisation group step after which the integer-valued field
  becomes a smooth field which we then analyse using the renormalisation group method.
  
  This paper also provides the foundation for the construction of the scaling limit of the infinite-volume
  gradient Gibbs state of the Discrete Gaussian model in the companion paper.
  Moreover, we  develop all estimates
  for general finite-range interaction with sharp dependence on the range.
  We expect these estimates to prepare for  
  a future analysis of the spread-out version of the
  Discrete Gaussian model at its critical temperature.
\end{abstract}

\setcounter{tocdepth}{1}
\tableofcontents


\newpage
\section{Introduction and main results}
\label{sec:intro}

\subsection{Introduction}

Many fundamental models in two-dim\-en\-sional statistical physics are related to
two-di\-men\-sional Coulomb gas models (with charge symmetry).
These models include the Discrete Gaussian model (which is the Gaussian free field conditioned
to be integer-valued),
the Solid-On-Solid model,
the plane rotator (or XY) model
\cite{MR634447}, and many further models \cite{MR496208}.
The relation between the former lattice models and Coulomb gas models
was rigorously used to prove the existence of phase transitions in these models
in the fundamental work of Fr\"ohlich--Spencer \cite{MR634447}.
The Discrete Gaussian model is a model for a crystal interface (in 2+1 dimensions)
and the phase transition is one between a smooth (localised) low-temperature
phase and a rough (delocalised) high-temperature phase.
Its  understanding was pioneered in independent works by Berezinski\u{\i} \cite{MR0314399}
and Kosterlitz--Thouless \cite{0022-3719-6-7-010}; see 
\cite[Chapter~6]{MR1446000} for a textbook treatment
and also the recent survey \cite{10.1088/0034-4885/79/2/026001} and references therein.

The rigorous approach of \cite{MR634447} (see also \cite{MR733469})
uses a multiscale resummation based on conditional expectations and Jensen's inequality.
For a recent exposition as well as recent extensions and applications of this approach, see
\cite{1711.04720,2002.12284,1907.08868,2012.01400},
and for recent alternative approaches to the proof of the existence of the Kosterlitz--Thouless transition, see also \cite{MR4367953,2101.05139,2110.09498,2110.09465}.
These approaches have many appealing features which include that they apply
quite robustly to various  models, but they are not precise enough
to derive scaling limits or sharp asymptotics of correlation functions,
or to study the (expected) critical curve --- the Kosterlitz--Thouless transition line, see Fig.~\ref{fig:phasediagram}.

\begin{figure}[h] 
\centering
\includegraphics[width=0.65\columnwidth]{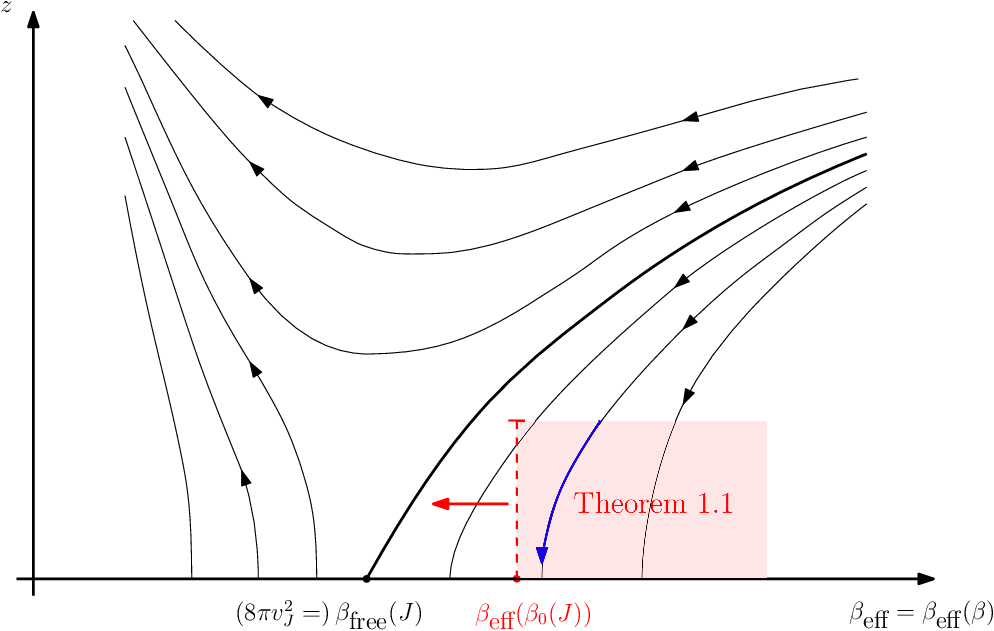}
\caption{The phase diagram of the Discrete Gaussian model.
  The blue arrow represents an initial renormalisation group step which reduces
  the activity 
  of the Coulomb gas associated with the Discrete Gaussian model from order $1$ to $O(e^{-c\beta})$,
  see Section~\ref{sec:first_step}. 
  The region shaded in red comprises values of $\beta$ for which the Discrete Gaussian model is shown to rescale to a Gaussian free field with effective temperature $\beta_{\operatorname{eff}}$.
  For $J=J_{\rho}$ and large enough $\rho$, this region invades an entire ``strip'' to the right of the (boldfaced) separatrix, as indicated by the red arrow. \label{fig:phasediagram}}
\end{figure}

A more precise approach to the low-temperature phase
of the two-dimensional lattice Coulomb gas is based on the renormalisation
group method. This approach is the standard approach in physics and
it provides very important heuristics as well as precise predictions, but for this reason
it is also more challenging to implement rigorously.
For the Coulomb gas at sufficiently low temperatures and small activity,
this approach was first made rigorous by Dimock--Hurd in
\cite{MR1777310,MR1101688} (relying on \cite{MR1048698}),
and an analysis that exhibits a segment of Kosterlitz--Thouless transition line was finally carried out
by Falco in \cite{MR2917175,1311.2237} (relying on the former works and \cite{MR2523458}),
again for small activity.

Discrete models such as the Discrete Gaussian model and the XY model are however related to
Coulomb gases with activity that appears to be \emph{not small} initially, but rather of order $1$.
Thus the relationship between these discrete models and the lattice Coulomb gas is roughly the
same as that between the Ising and the lattice $\varphi^4$ model.
It is widely expected that the phase diagram of the two-dimensional Coulomb gas looks qualitatively as in
Fig.~\ref{fig:phasediagram}, and thus in particular also that large activities have
the large-scale behaviour of an effective Coulomb gas with small activity,
obtained after an appropriate initial renormalisation group step.
Implementing such an initial renormalisation group step is in general difficult
and requires ideas different from the small activity analysis.
There are however two scenarios in which we believe that such an initial renormalisation group step can be
carried out effectively by trading the assumption of small activity
for a smallness assumption concerning another parameter.

The first mechanism is to (effectively) trade the assumption of a small activity of the Coulomb gas
for that of a sufficiently low temperature, an idea that was already used in various forms in the past,
including by Fr\"ohlich--Spencer. In the present article, we implement this idea in the context of the renormalisation group approach, in which it is more
challenging to carry out as sufficient precision must be maintained in particular to identify the effective temperature exactly.
As a result, we prove that the scaling limit of the Discrete Gaussian model is a
multiplicatively renormalised Gaussian free field, at sufficiently high temperature; see Theorem~\ref{thm:highbeta} below
and also~Theorems~1.1 and 1.2 in the companion paper \cite{dgauss2}.
Note that the high-temperature regime of the Discrete Gaussian model
corresponds by duality to low temperatures in the related Coulomb gas.

The second mechanism we believe is possible to carry out concerns models with finite-range step distributions
(also called spread-out models), for which the range can be used as a large parameter.
This idea has also been used previously, in particular in the context of the lace expansion \cite{MR2239599}.
For large range, we expect that it is possible to show that the scaling limit of the Discrete Gaussian model is a renormalised Gaussian free field,
but now not only at sufficiently large temperatures, but for all temperatures up to and including the
critical temperature of the Kosterlitz--Thouless transition, where logarithmic corrections
to various critical exponents expectedly appear, cf.~also Conjecture~\ref{thm:criticalbeta} below.
In this article we prepare  for such an analysis, by carrying out all estimates
with explicit dependence on the range
of the finite-range distribution, but we leave the ultimate second-order analysis of the renormalisation group map itself,
needed to control the approach of the critical point, to the future.
For the closely related lattice Coulomb gas but with small activity such a second-order analysis was carried out by Falco \cite{1311.2237,MR2917175}.

\bigskip

We next define the Discrete Gaussian model precisely and state our results.

\subsection{Discrete Gaussian model with finite-range coupling}

For $J \subset \Z^d \setminus \{0\}$ finite and symmetric under reflections and lattice rotations,
the normalised range-$J$ Laplacian $\Delta_J$ is defined by
\begin{equation}
  (\Delta_J f)(x) = \frac{1}{|J|}\sum_{y\in J} (f(x+y)-f(x)), \label{eq:Delta_J_definition}
\end{equation}
for $f:\Z^d\to \R$, 
where $|J|$ denotes the number of elements of $J$.
The normalised standard nearest-neighbour Laplacian on $\Z^2$ is given by choosing $J=J_{\rm nn} = \{(1,0),(0,1),(-1,0),(0,-1)\}$.
Another particular case of interest is the range-$\rho$ Laplacian, for $\rho \geq 1$, see the discussion below Theorem~\ref{thm:highbeta}.
The Green's function of $\Delta_J$ on $\Z^2$ restricted to functions with sum $0$ satisfies, as $|x-y|\to\infty$,
\begin{equation}
  \label{eq:green_asymptotics}
  (-\Delta_J)^{-1}(x,y) \sim - \frac{1}{2\pi v_J^2} \log |x-y|,
  \qquad
  v_J^2
  = \frac{1}{2|J|} \sum_{x\in J} |x_1|^2,
\end{equation}
where $x= (x_1,x_2)$,
see e.g.~\cite[Theorem~4.4.4]{MR2677157}.
For example, for the nearest-neighbour model one has $v_{J_{\rm nn}}^2=1/4$.  

We consider the two-dimensional Discrete Gaussian model with periodic boundary conditions.
Thus, let $\Lambda=\Lambda_N$ be a two-dimensional discrete torus of side length $L^N$ for integers $L>1,N \geq 1$, and fix a point $0\in \Lambda_N$, the origin.
For a given step distribution $J$,
the \emph{Discrete Gaussian model} on $\Lambda_N$ at temperature $\beta \in (0,\infty)$
has expectation, for any $F: (2\pi\Z)^{\Lambda_N} \to \R$ with $F(\sigma) = F(\sigma+c)$ for any constant $c\in 2\pi\Z$ and such that the following series converges, defined by\begin{equation}
  \avg{F}_{J,\beta}^{\Lambda_{N}}
  \propto \sum_{\sigma \in \Omega^{\Lambda_N}} e^{-\frac{1}{2\beta} (\sigma,-\Delta_J\sigma)} \, F(\sigma)
  = \sum_{\sigma \in \Omega^{\Lambda_N}} e^{-\frac{1}{4\beta |J|} \sum_{x-y \in J} (\sigma_x-\sigma_y)^2} \, F(\sigma)
  \label{eq:DG_model_1}
\end{equation}
where the sum over $x-y\in J$ counts every undirected edge $\{x,y\}$ twice and
\begin{equation} \label{e:Omega-def}
  \Omega^{\Lambda_N} = \{\sigma \in (2\pi\Z)^{\Lambda_N}: \sigma_{x=0} =0 \}.
\end{equation}
The factors of $2\pi$ in the spacing for $\sigma$ will be most convenient for our purposes (but could of course be absorbed by rescaling $\beta$).
To relate better to the Coulomb gas literature,
we use $\frac 1\beta$ rather than $\beta$ to denote the inverse temperature of the Discrete Gaussian model,
so that $\beta$ corresponds to an inverse temperature for the dual Coulomb gas.
For small temperature $\beta$, one can show by a Peierls expansion that the Discrete Gaussian field is localised,
e.g., in the sense of bounded variance and exponential decay of truncated correlations (see, e.g., \cite{2103.11985}),
and even the extremal behaviour is understood very precisely \cite{MR3508158}.
We study the regime of large  $\beta$ where typical fluctuations of the field are unbounded.
The transition between the two regimes is the roughening transition
or Kosterlitz--Thouless transition.

\subsection{Main results}
\label{sec:main_results}

In this first article, we consider the (small mesh) scaling limit of the Discrete Gaussian model
for macroscopic functions on the torus.
The scaling limit of the infinite-volume limit of the Discrete Gaussian model on $\Z^2$,
or the consideration of mesoscopic test functions on the torus, requires additional analysis
and the extension is considered in the companion paper \cite{dgauss2}.

To state the torus scaling limit, let $\T^2=(\R/\Z)^2$
and for $f\in C^\infty(\T^2)$ with $\int_{\T^2} f \, dx =0$, define
$f_N: \Lambda_N\to\R$ by
\begin{equation}
  f_N(x) = \frac{1}{|\Lambda_N|} \bigg( f(L^{-N}x)-\frac{1}{|\Lambda_N|} \sum_{y\in\Lambda_N} f(L^{-N}y) \bigg),
  \qquad x\in\Lambda_N,
  \label{eq:f_N_definition}
\end{equation}
so that $\sum_{x\in\Lambda_N} f_N (x) = 0$. By Fourier analysis, one can verify that (see, e.g., Lemma~\ref{lemma:integral_of_zero_mode})
\begin{equation}
  (f_N,(-\Delta_J)^{-1}f_N)_{\Lambda_N} \to
  \frac{1}{v_J^2}
  (f,(-\Delta_{\T^2})^{-1}f)_{\T^2}
\end{equation}
where on the left-hand side the inner product is $(u,v)_{\Lambda_N} = \sum_{x\in\Lambda_N} u(x) v(y)$ and
$(-\Delta_J)^{-1}$ acts on $\{\varphi\in \R^{\Lambda_N}: \sum_{x\in\Lambda_N}\varphi(x)=0\}$,
and on the right-hand side 
$(f,g)_{\T^2} = \int_{\T^2} f(x)g(x)\,dx$ and $\Delta_{\T^2}$ is the Laplace operator on $\T^2$.
The constant $v_J^2$ is the one defined in \eqref{eq:green_asymptotics}.

\begin{theorem} \label{thm:highbeta}
  Let $J \subset \Z^2 \setminus \{0\}$ be any finite-range step distribution
  that is invariant under lattice rotations and reflections and includes the nearest-neighbour edges.
  Then there exists $\beta_0(J) \in (0,\infty)$ and an integer $L = L(J)$ such that for   
  the Discrete Gaussian Model on the torus $\Lambda_N$  of side length $L^N$
  at temperature $\beta \geq \beta_0(J)$,
  there is $\beta_{\rm eff}(J,\beta) \in (0,\infty)$
  such that for any $f \in C^\infty(\T^2)$
  with $\int f \, dx =0$, as $N\to\infty$,
  \begin{equation} \label{e:highbeta-convergence}
    \log \avgb{e^{(f_N,\sigma)_{\Lambda_N}}}_{J,\beta}^{\Lambda_N}
    \to
    \frac{\beta_{\rm eff}(J,\beta)}{2 v_J^2}
    (f,(-\Delta_{\T^2})^{-1}f)_{\T^2}.
  \end{equation}
  Moreover, $\beta_{\rm eff}(J,\beta) = \beta +O_J(e^{-c\beta})$ for some $c>0$
  (independent of $J$) as $\beta \to \infty$.
\end{theorem}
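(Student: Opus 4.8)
\emph{Overview of the plan.} The plan is to pass to the dual Coulomb gas / sine-Gordon representation, to perform one explicit renormalisation group step after which the interaction is smooth and of small activity, and then to run the renormalisation group flow in order to identify the limiting generating functional exactly. \emph{Reduction to a sine-Gordon functional.} By Poisson summation applied coordinate-wise to $\sigma\in(2\pi\Z)^{\Lambda_N}$ together with a Hubbard--Stratonovich transform, one rewrites, on the appropriate zero-sum subspace (the torus zero mode being treated separately),
\[
  \avgb{e^{(f_N,\sigma)_{\Lambda_N}}}_{J,\beta}^{\Lambda_N}
  \;=\;\frac{\Eplus_{C}\bigl[e^{-(\varphi,f_N)_{\Lambda_N}}\,\mathcal I_0(\varphi)\bigr]}{\Eplus_{C}\bigl[\mathcal I_0(\varphi)\bigr]},
\]
where $\Eplus_C$ is the Gaussian expectation with covariance $C=\beta(-\Delta_J)^{-1}$ and $\mathcal I_0(\varphi)=\prod_{x\in\Lambda_N}\sum_{q\in\Z}e^{iq\varphi_x}$ is the Coulomb-gas interaction of unit activity (a Dirac comb in each coordinate). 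Replacing $\mathcal I_0$ by $1$ would give $\Eplus_C[e^{-(\varphi,f_N)_{\Lambda_N}}]=e^{\frac{\beta}{2}(f_N,(-\Delta_J)^{-1}f_N)_{\Lambda_N}}$, whose logarithm converges to $\tfrac{\beta}{2v_J^2}(f,(-\Delta_{\T^2})^{-1}f)_{\T^2}$ by the Fourier identity stated before the theorem. The content of the theorem is thus that the interaction $\mathcal I_0$ modifies this only by replacing $\beta$ with $\betaeff=\beta+O_J(e^{-c\beta})$, and that no other (non-Gaussian) contribution survives.

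\emph{The first renormalisation group step.} Using a finite-range decomposition $C=\beta(-\Delta_J)^{-1}=\sum_{j=1}^N C_j$ (plus the zero mode) with $C_j$ of range $O(L^j)$ and all constants tracked explicitly in $J$, I would integrate out the scale-one field $\zeta\sim\Eplus_{C_1}$. Since $C_1$ is finite range, the expectation of the interaction essentially factorises over $L$-blocks, and each factor $\sum_q e^{iq(\varphi_x+\zeta_x)}$ becomes, after averaging in $\zeta$, $\sum_q e^{-\frac12 q^2(C_1)(0,0)}e^{iq\varphi_x}$. Because the removed scale-one variance $(C_1)(0,0)$ is of order $\beta$ --- bounded below by a constant times $\beta$ uniformly in $J$, using the inclusion of the nearest-neighbour edges --- the activity drops from $1$ to $z_1=O_J(e^{-c\beta})$ with $c>0$ independent of $J$, and the singular comb interaction is replaced by a uniformly bounded, smooth interaction of the remaining field (covariance $\sum_{j\ge2}C_j$). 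This is the step in which the integer-valued field becomes a smooth field. The source $f_N$, slowly varying on the lattice, contributes only a small smooth perturbation, which ultimately feeds into the renormalisation of $\beta$ at the coarsest scales.

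\emph{The renormalisation group iteration and conclusion.} I would then run the renormalisation group for the resulting Coulomb gas at small activity, in the polymer renormalisation group formalism adapted to the Coulomb gas by Dimock--Hurd and Falco, with relevant parts extracted by the operators $\Tay$, $\Rem$ and $\Loc$. At scale $j$ the state is described by a vacuum energy, one relevant coupling $\beta_j$ (a finite renormalisation of the stiffness of the Gaussian part), and a polymer activity $\KK_j$ in a suitable weighted norm. The essential input is that for $\beta\ge\beta_0(J)$ --- in particular for $\beta>4\pi v_J^2$, which makes the charge two-point functions decay faster than $|x|^{-2}$ --- the charges are irrelevant: the map $\KK_j\mapsto\KK_{j+1}$ is a contraction, $z_j$ and $\KK_j$ tend to $0$, and $\beta_j$ converges to a limit $\betaeff=\beta+O_J(z_1^2)=\beta+O_J(e^{-c\beta})$, the leading correction coming from the polarisation of a single dipole. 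Tested against the slowly-varying source, the surviving local term at scale $j$ equals $\tfrac{\beta_j}{2}(f_N,(-\Delta_J)^{-1}f_N)_{\Lambda_N}$ up to errors tending to $0$, so that together with $\KK_N\to0$ and the separate treatment of the zero mode one obtains $\log\avgb{e^{(f_N,\sigma)_{\Lambda_N}}}_{J,\beta}^{\Lambda_N}\to\tfrac{\betaeff}{2v_J^2}(f,(-\Delta_{\T^2})^{-1}f)_{\T^2}$; Gaussianity of the scaling limit is automatic, since the limit is a quadratic functional of $f$, and $\betaeff>0$ is clear for $\beta$ large.

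\emph{Main obstacle.} The hard part is the renormalisation group iteration: establishing control of the flow uniform in the number of scales $N$ --- stability of the small-activity domain, contractivity of $\KK_j$, analyticity of the maps --- while retaining enough precision to identify $\betaeff$ \emph{exactly} (not merely up to $o(1)$), since the scaling limit is asserted to be a \emph{specific} multiple of the Gaussian free field. This is what forces the careful choice of norms, the fully explicit $J$-dependence of the finite-range decomposition and of every estimate, and the separate handling of the torus zero mode and of the macroscopic source $f_N$, which couples only to the coarsest scales.
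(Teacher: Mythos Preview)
Your overall strategy is right, and you have correctly identified that the essential difficulty is controlling the marginal stiffness direction precisely enough to name $\betaeff$ exactly. But two ingredients of the paper's argument differ from your sketch in ways that matter.

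\textbf{The first step is a range-$0$ step, not a scale-$1$ step.} The paper does not integrate out the first slice $C_1$ of a finite-range decomposition of $\beta(-\Delta_J)^{-1}$. Instead it writes $(-\Delta_J+m^2)^{-1}=\gamma\,\id+C(m^2)$ with a \emph{constant} $\gamma>0$ and then convolves with i.i.d.\ Gaussians of variance $\gamma$ at each site (the identity \eqref{e:convid}). This exactly factorises over single sites, not $L$-blocks, and turns the Dirac comb into the smooth periodic potential $\tilde U$ of Lemma~\ref{lemma:Fourier_repn_of_V} with Fourier coefficients $O(e^{-\gamma\beta/4})$. The finite-range decomposition of Section~\ref{sec:finite_range_decomposition} is then built for $C(m^2)$ (indeed for $C(s,m^2)$), not for the full Green function; this is a new construction and is where the uniform-in-$J$ estimates enter. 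Your claim that $(C_1)(0,0)$ is of order $\beta$ is correct in your normalisation but is not how the smallness actually arises in the paper.

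\textbf{The marginal coupling is handled by tuning, not by tracking a limit.} You describe a running stiffness $\beta_j$ that converges to $\betaeff$. The paper instead introduces a free parameter $s$ in the Gaussian covariance $C(s,m^2)$ \emph{before} the iteration (Lemma~\ref{lemma:reformulation}), compensated by $s_0=s$ in the potential, and then runs the flow $(s_j,z_j,K_j)$ with $s$ held fixed. The stable manifold theorem (Theorem~\ref{thm:stable_manifold}) produces, for each $s$, an initial $\mathfrak s_0^c(\beta,s)$ for which $(s_j,z_j,K_j)\to 0$; an intermediate-value argument (Corollary~\ref{cor:tuning_s}) then solves the constraint $s=\mathfrak s_0^c(\beta,s)$, and $\betaeff$ is read off from the covariance via $\betaeff=\beta v_J^2/(v_J^2+s_0^c)$. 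This is the mechanism that identifies $\betaeff$ exactly while keeping $\|(U_j,K_j)\|$ in the small-ball domain where the nonlinear estimates of Theorem~\ref{thm:local_part_of_K_j+1} apply. Your scheme of letting $\beta_j$ drift would leave a nonzero $s_N|\nabla\varphi|^2$ in the potential at the last scale and does not by itself explain why the flow stays in the domain of the map; the paper's pre-tuning avoids both issues. (Two minor points: the irrelevance threshold is $\betafree(J)=8\pi v_J^2$, not $4\pi v_J^2$; and the paper proves $s_0^c=O(e^{-\gamma\beta/4})$, i.e.\ a first-order bound in the activity, not the $O(z_1^2)$ you assert.)
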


Nonmatching lower and upper bounds on the left-hand side of \eqref{e:highbeta-convergence}
are known  from  the previously mentioned work of Fr\"ohlich--Spencer \cite{MR634447}
and Ginibre-type correlation inequalities \cite{MR496191},
see also \cite{1711.04720} for a detailed presentation of both
as well as \cite{2012.01400} for improved estimates.
Thus the main contribution of Theorem~\ref{thm:highbeta} is the identification of the exact scaling limit. We expect that the convergence \eqref{e:highbeta-convergence} can be quantified with a rate that is polynomial in $|\Lambda_N|^{-1}$, but we omit the detailed analysis.
In our companion paper \cite{dgauss2},
we extend this scaling limit result, which applies to macroscopic test functions on the torus,
by an infinite-volume version and as a by-product also prove a more precise version for the torus that applies to mesoscopic test functions. 

Even though our main objective is Theorem~\ref{thm:highbeta} (and the mentioned extensions in the companion paper),
which both apply to the Discrete Gaussian
model at sufficiently high temperatures $\beta \geq \beta_0(J)$,
one important motivation for studying general step distributions $J$ 
is that 
sharper results can be obtained when the set $J$ is sufficiently large, i.e., when it is sufficiently spread out.
For concreteness, we focus on the particular choice $J = J_\rho =\{x\in\Z^2 \setminus 0: |x|_\infty \leq \rho\}$,
which we refer to as the standard range-$\rho$ distribution, but any choice satisfying certain regularity conditions would be admissible (see \eqref{eq:frd_ulbds} below).
The following remark is a corollary of our proof of Theorem~\ref{thm:highbeta}
and already indicates that the result
becomes sharper when the range $\rho$ of the distribution is large,
and its proof will also appear in Section~\ref{sec:integration_of_zero_mode}.

\begin{remark}\label{rk:highbeta-rho} 
  For the standard range-$\rho$ distribution $J=J_\rho$,  
  there exists $C>0$ such that
  for any $\delta>0$,
  $\rho^2 \geq C |\log \delta|$ and $L = L(J_\rho,\delta)$,
  the conclusion of Theorem~\ref{thm:highbeta} holds with $\beta_0(J_\rho) =(1+\delta)\betafree(J_\rho)$, 
  where for any step distribution $J$, we define
  \begin{equation} \label{eq:betafree_def}
    \betafree(J) := 8\pi v_{J}^2.
  \end{equation}
  Moreover, $v_{J_\rho}^2 \sim \rho^2/6$ as $\rho\to\infty$. 
\end{remark}

The parameter $\betafree(J)$ is the generalisation of the reference value $8\pi$
that frequently appears in the literature as the approximate value of the transition temperature
of the lattice Coulomb gas.
This value is simply $4=2d$ times the constant in the denominator in front of the
logarithm in \eqref{eq:green_asymptotics}, and turns out to be actually
valid in the $0$ activity limit of the lattice Coulomb gas, but it differs
from the transition temperature for finite values of the activity.

For the Discrete Gaussian model,
we also expect that the critical point $\beta_c(J_\rho)$ is strictly greater than
$\betafree(J_\rho)$ for any finite $\rho$,
but asymptotic to 
$\betafree(J_\rho) \sim (4\pi/3) \rho^2$ as $\rho\to\infty$.
As mentioned above, we develop much of the analysis in sufficient generality and precision that
we expect that it forms the basis for the study of the
Discrete Gaussian model at the critical point $\beta=\beta_c(J_\rho)$ when $\rho \geq \rho_0$
(spread-out interaction).
We state what we expect could be proved by extending our analysis as the following conjecture
(whose proof we  hope to come back to in the future).

\begin{conjecture} \label{thm:criticalbeta}
  Let $J = J_\rho =\{x \in \Z^2 \setminus \{0\}: |x|_\infty \leq \rho\}$ be the standard range-$\rho$ step distribution.
  Then there is $\rho_0 \in [1,\infty)$ such that for $\rho \geq \rho_0$,
  one can choose $\beta_0(J) = \beta_c(\rho)$ in 
  the above theorem where $\beta_c(\rho)$ is such that,
  as $\beta \downarrow \beta_c=\beta_c(\rho)$,
  \begin{equation} \label{eq:criticalbeta_div}
    \betaeff(J_\rho,\beta_c) = \betafree(J_\rho) = 8\pi v_{J_\rho}^2, \qquad
    \frac{\betaeff(J_\rho,\beta) -  \betaeff(J_\rho,\beta_c)}{\beta-\beta_c} \to \infty,
  \end{equation}
  while $\beta_{\rm eff}(J_\rho,\beta)$ is differentiable in $\beta >\beta_c(\rho)$.
  Moreover, this critical temperature satisfies
  \begin{equation}
    \beta_c(\rho) \sim \betafree(J_\rho) \sim \frac{4\pi}{3} \rho^2 \qquad (\rho\to \infty).
  \end{equation}
  In addition, we expect the critical Discrete Gaussian model to exhibit
  various logarithmic corrections, in the approach of the critical point $\beta \downarrow \beta_c$ in \eqref{eq:criticalbeta_div},
  and in pointwise correlations of composite fields exactly at $\beta=\beta_c$.
\end{conjecture}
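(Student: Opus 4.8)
The plan is to upgrade the renormalisation group analysis underlying Theorem~\ref{thm:highbeta} from a first-order to a second-order analysis of the RG map, in the spirit of Falco's treatment of the small-activity lattice Coulomb gas \cite{1311.2237,MR2917175}. After the initial RG step, the Discrete Gaussian model with step distribution $J=J_\rho$ is represented as a Gaussian field perturbed by a Coulomb-gas activity of size $z_0 = O(e^{-c\beta})$, which near $\beta \asymp \betafree(J_\rho) \sim (4\pi/3)\rho^2$ is exponentially small in $\rho^2$. One then iterates the RG map across scales $j=0,1,\dots,N$, tracking the coupled evolution of (i) the activity $z_j$, (ii) the running deviation of the effective temperature from the free value, $x_j$ (a reparametrisation of $\beta_j/\betafree - 1$, so that $x_\infty$ reads off $\betaeff/\betafree - 1$), and (iii) the formally irrelevant coordinate $\KK_j$. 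The linearised flow of the activity has multiplier $\approx 1 - b\,x_j\log L$, so for $\beta$ near $\betafree$ the activity is nearly marginal and its fate is governed by the second-order terms, which take the standard Kosterlitz--Thouless form
\begin{equation}
  x_{j+1} = x_j - a\, z_j^2 + r_j, \qquad z_{j+1} = z_j\bigl(1 - b\, x_j + s_j\bigr),
\end{equation}
with $a,b>0$ coefficients to be computed explicitly, with sharp $\rho$-dependence, from the covariance decomposition used in this paper, and $r_j,s_j$ the higher-order remainders (cubic in $(x_j,z_j)$ together with contributions of $\KK_j$). The leading-order conserved quantity $b\,x_j^2 - a\, z_j^2$ organises trajectories into hyperbolas and singles out the separatrix $b\,x^2 = a\,z^2$.

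One would then define $\beta_c(\rho)$ as the threshold separating bare data $(x_0,z_0)$ whose RG trajectory flows into the Gaussian fixed point $\{z=0\}$ from those that leave the perturbative regime; equivalently $\beta_c(\rho)$ is determined by $h(\beta):=b\,x_0(\beta)^2 - a\,z_0(\beta)^2 = 0$, and one checks $h(\beta_c)=0$, $h'(\beta_c)>0$ using $\partial_\beta x_0>0$ and $z_0=O(e^{-c\beta})$. For $\beta>\beta_c(\rho)$ one has $h(\beta)>0$, so $x_j$ stays bounded below by $\sqrt{h/b}$, whence $z_j\to0$ exponentially and $\beta_j\to\betaeff(J_\rho,\beta)=\betafree(1+\sqrt{h(\beta)/b})\in(0,\infty)$; feeding this convergence into the analysis of the zero mode (as in Section~\ref{sec:integration_of_zero_mode}) yields \eqref{e:highbeta-convergence} with $\beta_0(J)=\beta_c(\rho)$, and smoothness of the off-separatrix trajectory in $\beta$ gives differentiability of $\betaeff(J_\rho,\cdot)$ on $(\beta_c,\infty)$. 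At $\beta=\beta_c$ one has $h=0$: the trajectory lies on the separatrix, it still converges but only at the polynomial-in-scale rate $x_j,z_j=O(1/j)$ dictated by $\dot x\sim -b\,x^2$ along the separatrix, and $\betaeff(J_\rho,\beta_c)=\betafree(J_\rho)$ since $x_\infty=0$; the divergence of the difference quotient in \eqref{eq:criticalbeta_div} is then immediate from $\betaeff-\betafree=\betafree\sqrt{h(\beta)/b}$ with $h'(\beta_c)>0$ (for $\beta$ slightly above $\beta_c$ the trajectory tracks the separatrix up to the crossover scale $j_*(\beta)\sim(\beta-\beta_c)^{-1/2}$ before the hyperbola curves away). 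The asymptotics $\beta_c(\rho)\sim\betafree(J_\rho)\sim(4\pi/3)\rho^2$ follow because the shift $\beta_c-\betafree$ is governed by $z_0^2=O(e^{-c\rho^2})$, negligible against $\betafree(J_\rho)\sim 8\pi v_{J_\rho}^2\sim(4\pi/3)\rho^2$ by the computation $v_{J_\rho}^2\sim\rho^2/6$ recalled in Remark~\ref{rk:highbeta-rho}; finally, the expected logarithmic corrections are read off from the $1/j$ decay of $z_j$ exactly at $\beta_c$, which turns into $1/\log(\cdot)$ corrections for correlations of composite (charged) fields and for the approach to criticality.

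The main obstacle is to control the irrelevant coordinate $\KK_j$ — and more generally all non-perturbative remainders $r_j,s_j$ in the RG map — uniformly down to $\beta=\beta_c(\rho)$. In the regime of Theorem~\ref{thm:highbeta} the multiplier $x_j$ is bounded below by a constant of order $1$, which furnishes a genuine spectral gap and makes contraction of $\KK$ and summability of the error terms comparatively easy; near $\beta_c$ this gap degenerates to size $O(1/j)$ along the marginal trajectory, and one must show that the quadratically small $z_j^2$ still dominates the at-worst-cubic errors over all scales — precisely the second-order analysis of the RG map that we implement only to first order in the present article and defer, with the explicit $\rho$-dependence needed to locate $\beta_c(\rho)$, to future work. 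A secondary difficulty is to establish that the separatrix is a genuinely sharp threshold, i.e.\ that trajectories on its massive side truly exit the perturbative domain (signalling the complementary, non-Gaussian behaviour of the low-temperature phase), which calls for a complementary lower bound beyond the expansion that controls the convergent phase.
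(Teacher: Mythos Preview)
The statement you are addressing is a \emph{conjecture} in the paper, not a theorem; the paper provides no proof and explicitly says ``whose proof we hope to come back to in the future.'' So there is nothing in the paper to compare your argument against line by line.

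That said, your outline is faithful to the paper's own indications of what would be required. The paper points to Falco's second-order analysis of the small-activity Coulomb gas \cite{MR2917175,1311.2237} as the template, and remarks that its estimates are developed with explicit $\rho$-dependence ``to prepare for a future analysis of the spread-out version of the Discrete Gaussian model at its critical temperature.'' Your sketch --- the KT recursion for $(x_j,z_j)$, the separatrix $b x^2 = a z^2$, the identification of $\beta_c(\rho)$ via $h(\beta)=0$, the $1/j$ decay on the separatrix yielding logarithmic corrections, and the asymptotics $\beta_c(\rho)\sim\betafree(J_\rho)$ from the exponential smallness of $z_0$ --- is exactly this Falco-type second-order picture, and you correctly flag the main obstruction: controlling the irrelevant coordinate $K_j$ when the spectral gap degenerates to $O(1/j)$ along the marginal trajectory. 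You also rightly note that the paper only carries out the first-order analysis (Theorems~\ref{thm:H_j_E_j_estimate}--\ref{thm:local_part_of_K_j+1}) and defers the second-order step.

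In short: your proposal is not a proof but a plausible and well-informed programme, consistent with what the paper itself suggests; the paper contains no competing argument.
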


In fact, we expect the conjecture to be true for any finite-range step distribution,
so in particular also for $\rho \geq 1$, but a proof for small $\rho$ currently appears out of reach.
For the fractional charge correlation functions of the Coulomb gas at small activity, logarithmic corrections
at the critical point were derived in \cite{1311.2237}.

\subsection{Related problems}
\label{sec:related_problems}

There are many interesting related problems we expect to be within reach, some with significant effort.
These include on the one hand
the extension of our theorems to more general 
discrete gradient distributions, such as the height functions from \cite{MR634447}
arising in the study of the XY model,
and on the other hand the 
correlation functions (different from the height correlations studied in this paper)
that are needed for the study of Villain and XY models (both then in the dual low temperature
regime, or at the critical point but then with spread-out interaction $J$).
Further interesting problems include the potential application of the methods to
models of two-dimensional melting \cite{0022-3719-6-7-010} (see also \cite{MR3995896,1907.07923})
and the related analysis of spin systems that take values in more general lattices than $\Z$.

Finally, we mention that gradient models with sufficiently smooth uniformly convex potential have been studied
extensively.
The scaling limit was proven in \cite{MR1461951},
and a very comprehensive picture including stochastic dynamics has also been established \cite{MR1463032,MR2228384,MR1872740}. 
For more recent developments on these models, we mention \cite{ %
MR2251117, 
MR4212193,MR3189075,MR4137943,
MR2855536, 
MR4003143, 
1909.13325, 
2002.02946, 
DR22,
MR4164451,MR3933043}. Smooth gradient models with nonconvex potential have also been considered, see, e.g., \cite{MR2778801,MR2322690, 
  MR2470934,MR2976565}, 
and in relation to our work we mention in particular
\cite{MR1048698} and \cite{1910.13564} 
which also use the renormalisation group approach.
For other discrete height functions, we mention in addition to the works discussed
in the introduction the extensive literature on dimer models, see
\cite{MR2523460} and references therein,
their nonintegrable versions \cite{MR4121614,MR3606736},
as well as recent progress on height functions associated with other six-vertex models
\cite{1909.03436,1911.00092,2012.13750} and graph homomorphisms \cite{MR4315657}.

Among the novelties in our paper compared to earlier works on the renormalisation group
method are, to start with, that we develop a new finite-range decomposition that permits us to integrate out
a preliminary renormalisation group step of range $0$. We expect this to have further applications, e.g., to the analysis
of critical Ising models and $O(n)$ sigma models, or strictly self-avoiding walks
with spread-out interaction, e.g., using the analysis of $|\varphi|^4$ type models
from \cite{MR3339164,MR3269689,MR3332938,MR3332939,MR3332940,MR3332942,MR3332941};
see also \cite{MR4061408,MR3926125} for applications of this idea to dynamics in simpler contexts.
For the main renormalisation group analysis,
compared to \cite{1311.2237,MR2917175}, we incorporate in particular a more
systematic organisation of the contraction mechanisms inspired by \cite{MR3332939},
quantify the dependence on the range of the interaction,
and along the way provide the details for some of the technical properties omitted in \cite{1311.2237,MR2917175} as well as some simpler arguments.


\subsection{Outline of the paper}

We now give a high-level exposition of the proof, with pointers to the relevant sections. We will work with a mass-regularised version of our model, parametrised by $m^2 \in (0,1]$, see Section~2.1, which amounts to replacing $-\Delta_J$ by $-\Delta_J+m^2$ in \eqref{eq:DG_model_1}. All subsequent arguments will be shown to apply uniformly in $m^2$, and our main results will be obtained by letting $m^2 \downarrow 0$ at the end (while considering a suitable class of test functions).

In Section~\ref{sec:first_step} we carry out the preliminary renormalisation group step (Step 0) mentioned above, which involves integrating out i.i.d.~Gaussians with (small) variance $\gamma >0$. The parameter $\gamma$ will be chosen small enough (see Proposition~\ref{prop:decomp}, where it is fixed), but will otherwise not require fine-tuning. The benefit of this simple but important step is to turn the singular $\mathbb Z$-valued conditioning inherent to the Discrete Gaussian model into a spin model with smooth
periodic potential, see \eqref{eq:fourier_tildeV}-\eqref{eq:z0_estimate}, comprising infinitely many cosine modes indexed by an integer $q \geq 1$ (which one may regard as a generalised lattice sine-Gordon model, the latter corresponding to having $q=1$ only). The activities (prefactors) $z=({z}^{(q)}: q \geq 1)$ of each mode decay exponentially in both $q$ and $\beta$. Together with a (scalar) stiffness parameter $s$ discussed shortly, the activities form the coupling constants of our model, whose evolution will be tracked by the renormalisation group map. In particular, for $\beta$ large enough (as in the statement of our main results), this defines a regime of \textit{small} coupling constants after this preliminary step, cf.~also~Fig.~\ref{fig:phasediagram}.

As a result of Step 0, the generalised sine-Gordon potential is now integrated against a Gaussian field with covariance $C= (-\Delta_J + m^2)^{-1} - \gamma$. Crucially, we extract from $C^{-1}$ a GFF part $s\Delta$ (with $\Delta$ the usual nearest-neighbor Laplacian) for small $|s|$, which for approriate choice of $s$ will act as the correct `counterterm' in the renormalisation. The remaining Gaussian field with covariance $\tilde{C}=(C^{-1}- s\Delta)^{-1}$ will after fine-tuning of $s$ converge in the scaling limit to the limiting GFF on the right-hand side of \eqref{e:highbeta-convergence}, and thus constitutes at the discrete level  the `correct' (Gaussian) large-scale approximation of the (non-Gaussian) Discrete Gaussian model.  

The analysis to identify the scaling limit is driven by a decomposition of the field $\tilde{C}$ for generic small $s$, into a sum of spatially localised Gaussian fields $\zeta_j$, corresponding to contributions stemming from scales $\approx L^j$ for a parameter $L \gg 1$. This finite-range decomposition is the object of Section~\ref{sec:finite_range_decomposition}.
The main novelty of our construction is that we obtain a finite-range decomposition (with precise control on both gradients and on-diagonal behavior of the covariance) in the presence of the parameters $\gamma$ and $s$.

Starting from this finite-range decomposition,
the renormalisation group analysis then proceeds using the general strategy proposed in \cite{MR2523458}, by which the modes $\zeta_j$ are progressively integrated, starting at small scales. This general method was also used for the lattice Coulomb gas in \cite{MR2917175} in combination with the ideas from \cite{MR1777310,MR1101688}. Sections~\ref{sec:scales} and \ref{sec:norms} contain the framework that will be used in this paper: Section~\ref{sec:scales} sets up the decomposition of the torus into blocks of increasing scales $L^j$ and associated polymers, and Section~\ref{sec:norms} the norms needed for this analysis, with particular care on the dependence on the finite-range step distribution, which improves as the range becomes large,
and that we expect are suitable for studying the critical spread-out problem. These norms are used to control the renormalisation group map $\Phi_j$, which encodes the evolution of both the coupling constants $(s_j,z_j)$ (along with an inessential scalar constant $E_j$) as well as a remainder coordinate $K_j$, which altogether parametrise the quantity of interest -- typically a (generalised) partition function, see~\eqref{eq:general_RG_step2}. The quantity $K_j$ is an infinite-dimensional object (a so-called polymer activity), which is only tractable explicitly to a given order.

The key is to devise a careful inductive choice of $(s_j,z_j, E_j, K_j)$ rendering $\Phi_j$ contractive. This requires identifying the natural contraction mechanisms in the problem, some of which are specific to periodic potentials. We list these mechanisms separately in Section~\ref{sec:inequalities}. They allow to understand at once \textit{all}
the `relevant contributions' to any polymer activity, i.e.,~those contributions which do not automatically contract, and thus ultimately need to be followed carefully through the iteration.
Our organisation, which records these relevant contributions in terms of a certain localisation operator, is inspired by \cite{MR3332939}, which we believe makes Section~\ref{sec:inequalities}  insightful; see in particular Propositions~\ref{prop:Loc-coupling} and~\ref{prop:Loc-contract}, which capture the essence of the matter: upon removal of their localisation, polymer activities contract (see \eqref{e:Loc-contract-full}), and the localisation possesses the required algebraic structure (see \eqref{e:Loc-coupling}).

The actual renormalisation group map $\Phi_j$ is then defined and analysed in Section~\ref{sec:rg_generic_step}. The evolution of the remainder coordinate $K_j$ is the most difficult. It splits into a linear part, and a nonlinear one. Controlling the latter is among the most technically involved parts of the paper. In Section~\ref{sec:stable_manifold_theorem} we finally analyse the stable manifold of the renormalisation group, i.e.,~we
construct initial conditions which allow to analyse the Discrete Gaussian model. Among other things, one issue throughout is to exhibit enough regularity in $s$, which ultimately translates into a  continuity property (in $s$) of the initial condition $\mathfrak{s}_0^c (\beta, s)$ for a stable stiffness flow. This is key to the final \textit{coup de gr\^ace}, which is an application of the intermediate value theorem to resolve the constraint $s = \mathfrak{s}_0^c (\beta, s)$ at high enough $\beta$, i.e.,~matching the initial condition for a stable flow with the stiffness $s$ sacrificed initially in the Discrete Gaussian model. 

This careful choice of $s$ transcribes into the effective temperature $\beta_{\rm eff}$ of our main result, Theorem~\ref{thm:highbeta}, the proof of which is completed in the short
Section~\ref{sec:integration_of_zero_mode}, by connecting all of the previously obtained ingredients.

In the appendices, we include proofs of some results that are essentially known, but for which the proof in our exact
context is hard to locate or difficult to adapt to our setting without going carefully through the arguments.
In particular,
in Appendix~\ref{app:regulator}, we prove the properties of the regulator defined in Section~\ref{sec:norms}.
In Appendix~\ref{app:completeness}, we show that the spaces defined in Section~\ref{sec:norms} are complete.
In Appendix~\ref{app:bump}, we include a decay estimate for the Fourier transform of the standard bump function, used in Section~\ref{sec:finite_range_decomposition}.

\subsection{Notation}
\label{sec:notation}

Throughout the paper, all constants are uniform in $\beta$ and $\rho$ unless explicitly stated.
We use the notation $|a| \leq O(|b|)$ or $a=O(b)$ to denote $|a|\leq C|b|$ for an absolute constant $C>0$
and $a\sim b$ to denote that $\lim a/b =1$ (where the limit is clear from the context).

Usually, the dimension will be assumed to be $d=2$ unless otherwise emphasised.
Let $e_1, \dots, e_d$ be the basis of unit vectors with non-negative components  spanning $\Z^d$ or the local coordinates of $\Lambda$,
and set $\hat{e} = \{\pm e_1, \cdots, \pm e_d \}$.
For a function $f: \Z^d \to \C$ or $f: \Lambda_N \to \C$, we write
$\nabla^{\mu} f(x) = f(x + \mu) - f(x)$ for $\mu \in \hat{e}$.
For any multi-index $\alpha \in \{ \pm 1, \cdots, \pm d\}^n$ with $n =|\alpha| \geq 1$, 
we write $\nabla^{\alpha} f = \nabla^{e_{\alpha_1}}\cdots  \nabla^{e_{\alpha_n}} f$.
The vector of $n$-th order discrete partial derivatives is denoted by 
\begin{equation}
\nabla^n f (x) = (\nabla^{\mu_1} \cdots \nabla^{\mu_n} f (x) : \mu_k \in \hat{e} \text{ for all } k),
\end{equation}
and we write $|\nabla ^nf(x)|$ for the maximum over all of its components.
The symbol $\Delta$ without subscript denotes the \emph{normalised} nearest-neighbour Laplacian,
\begin{equation}
  \Delta f (x) = \sum_{\mu \in \hat{e}} (f(x+ \mu) - f(x))
  = \sum_{\mu \in \hat{e}} \nabla^\mu f(x)
  = {-} \frac12 \sum_{\mu \in \hat{e}} \nabla^\mu \nabla^{-\mu} f(x),
\end{equation}
whereas $\Delta_J$ denotes the \emph{normalised} Laplacian \eqref{eq:Delta_J_definition} with finite-range step distribution $J$.

Throughout,  
$\E^{\zeta}_{\Gamma}$ denotes the expectation with respect to the (centred) Gaussian random variable $\zeta$ with covariance $\Gamma$.
We omit $\zeta$ and $\Gamma$ whenever they are clear from the context.
Typically, we write $\Eplus$ for $\E_{\Gamma_{j+1}}$, 
where $\Gamma_{j+1}= \Gamma_{j+1}(s,0)$ is the covariance introduced below in \eqref{eq:Gammaj}
and $j$ without further specification is allowed to take values $j=1,\dots, N-2$, where $N \geq 1$ refers to the exponent of the underlying torus size $L^N$, for some $L >1$. Unless explicitly stated otherwise, all results tacitly hold for all choices of integers $N \geq 1$ and $L>1$. 
Throughout $c,C,\dots$ refer to generic positive numerical constants that may vary from place to place.

\section{Smoothing of discrete to continuous model}
\label{sec:first_step}

In this section, we prepare for the renormalisation group analysis by performing several
initial manipulations of the Discrete Gaussian model as defined in \eqref{eq:DG_model_1}.
As a first preliminary,
it is convenient for the subsequent analysis to rescale $\sigma$ by a factor $1/\sqrt{\beta}$ 
so that $\Omega^{\Lambda_N}$ in \eqref{e:Omega-def} gets replaced by
\begin{equation} \label{eq:Omega_rescaled}
   \Omega_\beta^{\Lambda_N} = \{\sigma \in \Z_{\beta}^{\Lambda_N}: \sigma_0 = 0 \}, \quad \Z_{\beta}= 2\pi \beta^{-1/2} \Z.
\end{equation}
Thus, from now on $\avg{\cdot}_\beta$ is the Discrete Gaussian model on $\Omega_\beta^{\Lambda_N}$, i.e.,
for any bounded $F:\Omega_\beta^{\Lambda_N} \to \R$,
\begin{equation}
  \avg{F}_\beta = \frac{1}{Z_{\beta}} \sum_{\sigma\in\Omega_\beta^{\Lambda_N}} e^{-\frac12 (\sigma,-\Delta_J\sigma)} \, F(\sigma).
\end{equation}
As with $\avg{\cdot}_\beta$, we will mostly keep the interaction $J$ and the underlying torus $\Lambda= \Lambda_N$ (of side length $L^N$) implicit throughout this section. The corresponding statements are then simply understood to hold for any choice of $J$ satisfying the conditions above \eqref{eq:Delta_J_definition}, all $N \geq 0$ and $L \geq 1$. In fact the choice of side length for $\Lambda$ will play no role in the present section.

\subsection{Mass regularisation}

In the first regularisation step, we replace the Discrete Gaussian model $\avg{\cdot}_\beta$ supported on $\sigma\in \Omega_\beta^{\Lambda_N}$ by a mass-regularised version
supported on all $\sigma \in \Z_{\beta}^{\Lambda_N}$, i.e., without fixing $\sigma_0$ to be $0$, thus restoring translation invariance.
To this end, given $m^2>0$, for any bounded function $F: \Z_{\beta}^{\Lambda_N} \to \R$, let
\begin{equation} \label{eq:DG-withmass}
  \avg{F}_{\beta,m^2} =
  \frac{1}{Z_{\beta,m^2}}
  \sum_{\sigma \in \Z_{\beta}^\Lambda} e^{-\frac12 (\sigma, (-\Delta_J+m^2)\sigma)} F(\sigma),
\end{equation}
where $Z_{\beta,m^2}$ is the corresponding normalisation constant.
The following lemma shows that we recover $\avg{\cdot}_\beta$ in the limit $m^2\downarrow 0$. In the sequel, for $ t\in \Z_{\beta}$ and $\sigma \in  \Z_{\beta}^{\Lambda_N}$ we write $\sigma+t$ for the shifted configuration with entries $(\sigma +t)_x=\sigma_x + t$, $x \in \Lambda_N$.
\begin{lemma} \label{lemma:m2to0}
Let $F: \Z_{\beta}^\Lambda \to \R$ be such that $F(\sigma) = F(\sigma +t)$ for any constant $t\in \Z_{\beta}$,
and assume that $F\vert_{ \Omega_\beta^{\Lambda}}$ is integrable with respect to $\langle \cdot \rangle_{\beta}$. Then $F$ is integrable under $\langle \cdot  \rangle_{\beta, m^2}$ for all $m^2>0$ and
\begin{equation}
\langle F(\sigma) \rangle_{\beta} = \lim_{m^2 \downarrow 0} \langle F(\sigma) \rangle_{\beta, m^2}.
\end{equation}
\end{lemma}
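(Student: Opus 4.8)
The plan is to identify the mass-regularised measure $\langle \cdot \rangle_{\beta,m^2}$ as a mixture (over a discrete shift variable $t$) of measures which, up to the mass term, look like $\langle \cdot \rangle_\beta$ on the hyperplane $\{\sigma_0 = t\}$, and then show that as $m^2 \downarrow 0$ this mixture concentrates on no particular value of $t$ but nevertheless reproduces $\langle F \rangle_\beta$ because $F$ is shift-invariant. Concretely, first I would decompose $\sigma \in \Z_\beta^{\Lambda_N}$ as $\sigma = \tilde\sigma + \sigma_0$ where $\tilde\sigma \in \Omega_\beta^{\Lambda_N}$ (i.e.\ $\tilde\sigma_0 = 0$) and $t := \sigma_0 \in \Z_\beta$ is free. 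Since $-\Delta_J$ annihilates constants, $(\sigma,(-\Delta_J+m^2)\sigma) = (\tilde\sigma,-\Delta_J\tilde\sigma) + m^2(\sigma,\sigma)$, and $(\sigma,\sigma) = \sum_{x}(\tilde\sigma_x + t)^2$. Using $F(\sigma) = F(\tilde\sigma)$, the numerator of $\langle F \rangle_{\beta,m^2}$ factorises as
\begin{equation}
  \sum_{t \in \Z_\beta} \sum_{\tilde\sigma \in \Omega_\beta^{\Lambda_N}} e^{-\frac12(\tilde\sigma,-\Delta_J\tilde\sigma)} e^{-\frac{m^2}{2}\sum_x (\tilde\sigma_x+t)^2} F(\tilde\sigma),
\end{equation}
and the partition function $Z_{\beta,m^2}$ is the same sum with $F\equiv 1$.

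The second step is to control the $m^2 \downarrow 0$ limit of the ratio. Write $w_{m^2}(\tilde\sigma) := \sum_{t \in \Z_\beta} e^{-\frac{m^2}{2}\sum_x(\tilde\sigma_x+t)^2}$; this is a (scaled) theta-type sum, finite and positive for every $m^2>0$ because the torus $\Lambda_N$ is finite, and it diverges like $c/m$ as $m^2\downarrow 0$ by comparison with a Gaussian integral. The key point is that $w_{m^2}(\tilde\sigma)/w_{m^2}(0) \to 1$ as $m^2 \downarrow 0$, uniformly over $\tilde\sigma$ in any set on which $\sum_x \tilde\sigma_x^2$ is controlled, and more globally one has a crude two-sided bound $w_{m^2}(\tilde\sigma) = w_{m^2}(0)(1 + o(1))$ with an error that can be dominated by an integrable function of $\tilde\sigma$. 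Granting this, both numerator and denominator of $\langle F\rangle_{\beta,m^2}$, after dividing through by $w_{m^2}(0)$, converge by dominated convergence to $\sum_{\tilde\sigma} e^{-\frac12(\tilde\sigma,-\Delta_J\tilde\sigma)} F(\tilde\sigma)$ and $\sum_{\tilde\sigma}e^{-\frac12(\tilde\sigma,-\Delta_J\tilde\sigma)}$ respectively, whose ratio is exactly $\langle F\rangle_\beta$. Integrability of $F$ under $\langle\cdot\rangle_{\beta,m^2}$ for fixed $m^2>0$ follows from the same factorisation: the $t$-sum of the Gaussian weights is summable, and the remaining $\tilde\sigma$-sum is exactly (a finite multiple of) the one appearing in $\langle |F|\rangle_\beta$, which is finite by hypothesis.

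The main obstacle I anticipate is making the domination in the dominated-convergence argument honest: one must produce, uniformly in small $m^2$, an integrable (with respect to $e^{-\frac12(\tilde\sigma,-\Delta_J\tilde\sigma)}\,|F(\tilde\sigma)|$) upper bound on $w_{m^2}(\tilde\sigma)/w_{m^2}(0)$. The estimate $w_{m^2}(\tilde\sigma) \le e^{\frac{m^2}{2} \cdot \text{(stuff)}} w_{m^2}(0)$ is not quite true because shifting the centre of a lattice theta sum can only decrease or keep it bounded by a constant, not increase it — in fact by unimodality of the Gaussian and a standard comparison $w_{m^2}(\tilde\sigma) \le w_{m^2}(0)$ is essentially correct up to the replacement of $\sum_x(\tilde\sigma_x+t)^2$ by its minimising shift, so one should first complete the square in $t$ (replacing $t \mapsto t + \bar\sigma$ with $\bar\sigma$ the mean, absorbing a harmless real shift into the lattice by periodicity/approximation) to see that $w_{m^2}(\tilde\sigma)$ depends on $\tilde\sigma$ only through $\mathrm{Var}_x(\tilde\sigma_x)$ up to the lattice discreteness, and is maximised near $\tilde\sigma \equiv \text{const}$. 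A clean way around the discreteness subtlety is to bound $w_{m^2}$ above and below by the corresponding Gaussian integral over $\R$ via the Poisson summation formula or simple monotonicity of the sum as a Riemann-type sum, giving $w_{m^2}(\tilde\sigma) = \sqrt{2\pi/(m^2 |\Lambda_N|)}\,\beta^{1/2}(1 + O(e^{-c/m^2}))$ with the error uniform in $\tilde\sigma$; then the ratio is $1 + O(e^{-c/m^2})$ with no $\tilde\sigma$-dependence at all in the leading term, and domination is trivial. Once this uniform theta-sum asymptotic is in hand the rest is bookkeeping.
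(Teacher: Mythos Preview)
Your approach is essentially the paper's: decompose $\sigma=\tilde\sigma+t$, factorise, show the $t$-sum ratio $w_{m^2}(\tilde\sigma)/w_{m^2}(0)\to 1$ pointwise and is uniformly bounded, then apply dominated convergence. The paper obtains the pointwise asymptotic via Young's inequality (sandwiching the cross term $m^2(\tilde\sigma,1)t$ between $\pm(\tfrac{\epsilon}{2}m^2 t^2+\tfrac{1}{2\epsilon}m^2(\tilde\sigma,1)^2)$ and letting $\epsilon\to 0$) rather than Poisson summation, and obtains the domination by the cruder but cleaner bound $\sum_t\prod_x e^{-\frac12 m^2(\tilde\sigma_x+t)^2}\le \sum_t e^{-\frac12 m^2 t^2}$ (keep only the $x=0$ factor).

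One point to correct: your claimed formula $w_{m^2}(\tilde\sigma)=\sqrt{2\pi/(m^2|\Lambda_N|)}\,\beta^{1/2}(1+O(e^{-c/m^2}))$ with error uniform in $\tilde\sigma$ is false as stated. Completing the square gives
\[
w_{m^2}(\tilde\sigma)=e^{-\frac{m^2}{2}\bigl(\|\tilde\sigma\|^2-|\Lambda_N|^{-1}(\sum_x\tilde\sigma_x)^2\bigr)}\sum_{t\in\Z_\beta}e^{-\frac{m^2|\Lambda_N|}{2}(t+\bar\sigma)^2},
\]
and while Poisson summation does make the theta sum $=\sqrt{2\pi/(m^2|\Lambda_N|)}\,\tfrac{\sqrt\beta}{2\pi}(1+O(e^{-c/m^2}))$ uniformly in the shift $\bar\sigma$, the variance prefactor in front is genuinely $\tilde\sigma$-dependent --- you noted this dependence yourself two sentences earlier, then dropped it. This does not break your argument: the prefactor is $\le 1$ by Cauchy--Schwarz, so $w_{m^2}(\tilde\sigma)/w_{m^2}(0)\le 1+O(e^{-c/m^2})$ gives the domination, and pointwise the prefactor $\to 1$ for each fixed $\tilde\sigma$. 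Just don't claim the two-sided uniform estimate.
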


\begin{proof}
 For $F$ having the above properties, writing any element of $\Z_{\beta}^\Lambda$ as $ \sigma+t$ with $t \in \Z_{\beta}$ and $\sigma \in \Omega_{\beta}^\Lambda$, cf.~\eqref{eq:Omega_rescaled}, one has that
\begin{equation}
\label{eq:F-L^1_m}
\begin{split}
\sum_{\sigma \in   \Z_{\beta}^\Lambda} e^{-\frac12 (\sigma, (-\Delta_J+m^2)\sigma)} |F(\sigma)| 
&= \sum_{t \in \Z_{\beta}} \sum_{\sigma \in   \Omega_{\beta}^\Lambda} e^{-\frac12 (\sigma+t, (-\Delta_J+m^2)(\sigma+t))} |F(\sigma+t)|\\
&= \sum_{\sigma \in   \Omega_{\beta}^\Lambda} e^{-\frac12 (\sigma, -\Delta_J\sigma)} |F(\sigma)| \sum_{t \in \Z_{\beta}} e^{-\frac12 m^2(\sigma+t,\sigma+t)},
\end{split}
\end{equation}
where the second line is obtained using that $F(\sigma) = F(\sigma +t)$ and expanding the exponential (note that $\Delta_J f=0$ when $f$ is constant-valued). Since, uniformly in $\sigma \in \Omega_{\beta}^\Lambda$,
\begin{equation}
 \sum_{t \in \Z_{\beta} } e^{-\frac12 m^2(\sigma+t,\sigma+t)} = \sum_{t \in \Z_{\beta}} \prod_{x \in \Lambda }e^{-\frac12 m^2(\sigma_x+t)^2} \leq \sum_{t \in \Z_{\beta}}  e^{-\frac12 (mt)^2}, 
\end{equation}
where the inequality follows by retaining only $x=0$ with $\sigma_x=0$, and combining with the integrability of $F\vert_{ \Omega_\beta^{\Lambda}}$, it follows that the left-hand side of \eqref{eq:F-L^1_m} is finite; hence $F$ is in $L^1(\langle \cdot  \rangle_{\beta, m^2})$. Moreover \eqref{eq:F-L^1_m} continues to hold without absolute values, as follows readily by the Dominated convergence.
Now, as $m^2\downarrow 0$, for any fixed $\sigma \in \Omega_\beta^{\Lambda}$, we claim that
  \begin{equation} \label{e:m2to0-pf2}
    \sum_{t\in \Z_{\beta}}  e^{-\frac12 m^2 (\sigma+t,\sigma+t)}
    \sim \sum_{t\in \Z_{\beta}}  e^{-\frac12 m^2 t^2 |\Lambda|}.
  \end{equation}
  Indeed, since $|(\sigma,1)t| \leq \frac12 \epsilon t^2 + \frac{1}{2\epsilon} (\sigma,1)^2$ for any $\varepsilon > 0$ by Young's inequality,
  the left-hand side is
  \begin{equation}
    e^{-\frac12 m^2 (\sigma,\sigma)} \sum_{t\in \Z_{\beta}}  e^{-\frac12 m^2 t^2} e^{-m^2 (\sigma,1)t}
    \leq
    e^{-\frac12 m^2 (\sigma,\sigma)}
    e^{\frac{1}{2\epsilon}m^2 (\sigma,1)^2}\sum_{t\in \Z_{\beta}}  e^{-\frac12 (1 - \epsilon ) m^2 t^2}.
  \end{equation}
  Therefore, for all $\varepsilon > 0$,
  \begin{equation}
    \limsup_{m^2 \downarrow 0}\frac{\sum_{t\in \Z_{\beta}}  e^{-\frac12 m^2 (\sigma+t,\sigma+t)}}
    {\sum_{t\in \Z_{\beta}}  e^{-\frac12 (1  - \epsilon  ) m^2 t^2}} \leq 1,
  \end{equation}
  and analogously
  \begin{equation}
    \liminf_{m^2 \downarrow 0}\frac{\sum_{t\in \Z_{\beta}}  e^{-\frac12 m^2 (\sigma+t,\sigma+t)}}
    {\sum_{t\in \Z_{\beta}}  e^{-\frac12 (1  + \epsilon  ) m^2 t^2}} \geq 1.
  \end{equation}
  From this, \eqref{e:m2to0-pf2} follows by taking $\epsilon \to 0$.
  By \eqref{eq:F-L^1_m} and the Dominated convergence theorem, thus
  \begin{equation}
    \sum_{\sigma \in \Z_{\beta}^\Lambda} e^{-\frac12 (\sigma, (-\Delta_J+m^2)\sigma)} F(\sigma)
    \sim
    \sum_{t\in \Z_{\beta}}  e^{-\frac12 m^2 t^2} \sum_{\sigma \in\Omega_\beta^{\Lambda}}  e^{-\frac12 (\sigma, -\Delta_J\sigma)}  F(\sigma) 
  \end{equation}
  and the claim follows by taking a ratio of this and the expression with $F$ replaced by $1$.
\end{proof}

\subsection{Smoothing the Discrete Gaussian model}
\label{sec:tildeVdef}

In the next step, we replace the Discrete Gaussian model with mass $m^2 \in (0,1]$ given by \eqref{eq:DG-withmass}
with a smoothed-out version. For this we write
\begin{equation} \label{e:GFdecomp1}
  (-\Delta_J+m^2)^{-1} = \gamma \id + C(m^2)
\end{equation}
where $\gamma>0$ is a positive constant chosen such that $C(m^2)$ is positive definite.
Assuming $m^2 \in (0,1]$, we have $0<-\Delta_J+m^2 \leq 3 \id$ as quadratic forms, and one
can choose any $\gamma \in (0,1/3)$. Note that $C(m^2)$ inherits symmetry from $-\Delta_J+m^2$ by \eqref{e:GFdecomp1}.
 The parameter $\gamma$ will later be fixed in Proposition~\ref{prop:decomp} below. Until then, the dependence of assertions on $\gamma$ 
 will be kept explicit, and all results hold for any choice of $\gamma \in (0,1/3)$. For later reference, note that the covariance $C(m^2)$ appearing in \eqref{e:GFdecomp1} will be denoted by $C^{\Lambda_N} (m^2) \equiv C (m^2)$ from Section~\ref{subsec:FRresults} onwards (see for instance \eqref{e:iidtorus}). We omit the superscript $\Lambda_N$ for the remainder of Section~\ref{sec:first_step} to avoid unnecessary clutter. 
 

The decomposition \eqref{e:GFdecomp1} implies that for any $\sigma \in \R^\Lambda$, one can rewrite
\begin{equation} \label{e:convid}
  e^{-\frac12 (\sigma,(-\Delta_J+m^2)\sigma)}
  = c(\gamma,m^2) \int_{\R^\Lambda} e^{-\frac{1}{2\gamma}\sum_x(\varphi_x-\sigma_x)^2}
  e^{-\frac12 (\varphi, C(m^2)^{-1} \varphi)} \, d\varphi,
\end{equation}
for a suitable constant $c(\gamma,m^2) \in (0,\infty)$. The identity \eqref{e:convid} is of central importance
%
and a special case of
the well-known property that the sum of two independent Gaussian vectors is Gaussian with covariance the sum of the two individual covariances.

Inserting the identity \eqref{e:convid} into the partition function of the (mass regularised) Discrete Gaussian model \eqref{eq:DG-withmass}, one obtains, for all $\beta ,m^2> 0$ (and $\gamma \in (0,\frac13)$),
\begin{equation} \label{e:onestep}
  Z_{\beta,m^2} = \sum_{\sigma\in\Z_{\beta}^\Lambda} e^{-\frac12 (\sigma,(-\Delta_J+m^2)\sigma)}
  = c(\gamma,m^2,\beta ) \int_{\R^\Lambda}   e^{-\frac12 (\varphi, C(m^2)^{-1} \varphi)} e^{\sum_x\tilde U(\varphi_x)}\, d\varphi,
\end{equation}
where for $\varphi \in \R$ and all $\gamma >0$ we define
\begin{equation} \label{e:tildeF}
  \tilde F(\varphi) = c(\gamma, \beta) \sum_{\sigma \in \Z_{\beta}} e^{-\frac{1}{2\gamma}(\sigma-\varphi)^2},
  \qquad
 \tilde U(\varphi) =  \log \tilde F(\varphi).
\end{equation}
Here $c(\gamma, \beta)>0$ is a constant that is chosen for later convenience such that
\begin{equation}\label{e:norm-tildeF}
1=\frac{1}{2\pi}\int_0^{2\pi} \tilde F(\varphi/\sqrt{\beta}) \, d\varphi= \frac{c(\gamma,\beta)}{2\pi} \int_{\R}e^{-\frac{1}{2\gamma\beta}\varphi^2}  d\varphi.
\end{equation}
Both $\tilde F$ and $\tilde U$ are smooth periodic functions of the single real variable
$\varphi \in \R$. 
For later application, we record the following properties of their Fourier representations.

\begin{lemma}
\label{lemma:Fourier_repn_of_V}
 For any $\gamma>0$ and $\beta>0$,
the Fourier representation of $\tilde F$ is given by
\begin{equation} 
  \tilde F(\varphi) = 1+ \sum_{q=1}^\infty 2e^{-\frac{\gamma\beta}{2} q^2} \cos(q\sqrt{\beta}\varphi), \quad \varphi \in \R. \label{eq:fourier_F}
\end{equation}
Moreover, there exists $C\in (0,\infty)$ such that for any $ \gamma \beta \geq  C$,
the function $\varphi \mapsto\tilde U(\varphi)$ has the Fourier representation
\begin{equation} \label{eq:fourier_tildeV}
\tilde U(\varphi) = \sum_{q=1}^\infty \tilde{z}^{(q)} (\beta) \cos(q \sqrt{\beta}\varphi), \quad \varphi \in \R, 
\end{equation}
with coefficients satisfying
\begin{equation}
  |\tilde{z}^{(q)}(\beta)| \leq 16 e^{-\frac{1}{4} \gamma \beta (1 + q)}.
  \label{eq:z0_estimate}
\end{equation}
\end{lemma}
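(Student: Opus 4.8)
Here is how I would prove Lemma~\ref{lemma:Fourier_repn_of_V}.

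The plan is to handle the two displays in turn. For \eqref{eq:fourier_F}: by \eqref{e:tildeF} and \eqref{eq:Omega_rescaled} we have $\tilde F(\varphi) = c(\gamma,\beta)\sum_{n\in\Z}g(2\pi\beta^{-1/2}n-\varphi)$ with $g(t)=e^{-t^2/(2\gamma)}$, i.e.\ $\tilde F$ is a constant multiple of the period-$2\pi\beta^{-1/2}$ periodisation of a Gaussian. I would apply the Poisson summation formula: since $\hat g(\xi)=\int_\R g(t)e^{-i\xi t}\,dt=\sqrt{2\pi\gamma}\,e^{-\gamma\xi^2/2}$, evaluating at the dual frequencies $\xi=q\sqrt\beta$, $q\in\Z$, gives $\tilde F(\varphi)=c(\gamma,\beta)\sqrt{\gamma\beta/(2\pi)}\,\big(1+\sum_{q\ge1}2e^{-\gamma\beta q^2/2}\cos(q\sqrt\beta\varphi)\big)$. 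The terms $q\ge1$ integrate to zero over a period, so the normalisation \eqref{e:norm-tildeF} defining $c(\gamma,\beta)$ forces the prefactor $c(\gamma,\beta)\sqrt{\gamma\beta/(2\pi)}$ to equal $1$; this is \eqref{eq:fourier_F}. In particular $\tilde F$ is even and, for $\gamma\beta$ large, bounded below by $1-\sum_{q\ge1}2e^{-\gamma\beta q^2/2}>0$.

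For \eqref{eq:fourier_tildeV}--\eqref{eq:z0_estimate}, I would write $\tilde F=1+\varepsilon$ with $\varepsilon(\varphi)=\sum_{q\ge1}2e^{-\gamma\beta q^2/2}\cos(q\sqrt\beta\varphi)$, which extends to an entire function, and work in the strip $S_w=\{|\operatorname{Im}\varphi|\le w\}$ with $w:=\tfrac14\gamma\sqrt\beta$. There $|\cos(q\sqrt\beta\varphi)|\le e^{q\sqrt\beta w}=e^{\gamma\beta q/4}$, so $|\varepsilon(\varphi)|\le\sum_{q\ge1}2e^{-\frac14\gamma\beta\,q(2q-1)}\le 4e^{-\gamma\beta/4}\le\tfrac12$, the last two bounds holding once $\gamma\beta\ge C$ for an absolute constant $C$ (this is the $C$ of the statement; the $q$-sum is dominated by $q=1$). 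Hence $\tilde F$ is nonvanishing on a neighbourhood of $S_w$, so $\tilde U=\log\tilde F$ is analytic there and $|\tilde U|\le\sum_{k\ge1}|\varepsilon|^k/k\le 2|\varepsilon|\le 8e^{-\gamma\beta/4}=:M$ on $S_w$. Being analytic, $2\pi\beta^{-1/2}$-periodic and even, $\tilde U$ has a Fourier expansion $\sum_{q\in\Z}c_qe^{iq\sqrt\beta\varphi}$ with $c_{-q}=c_q\in\R$, i.e.\ $\tilde U(\varphi)=c_0+\sum_{q\ge1}\tilde z^{(q)}\cos(q\sqrt\beta\varphi)$ with $\tilde z^{(q)}:=2c_q$ (no sine terms, by evenness). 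For $q\ge1$ I would estimate $c_q$ by shifting the contour in $c_q=\frac{\sqrt\beta}{2\pi}\int_0^{2\pi\beta^{-1/2}}\tilde U(\varphi)e^{-iq\sqrt\beta\varphi}\,d\varphi$ down to $\operatorname{Im}\varphi=-w$ (the vertical end-segments cancel by periodicity), which inserts a factor $e^{-q\sqrt\beta w}$ and yields $|c_q|\le Me^{-q\sqrt\beta w}=Me^{-\gamma\beta q/4}$, whence $|\tilde z^{(q)}|\le 2Me^{-\gamma\beta q/4}=16e^{-\frac14\gamma\beta(1+q)}$, which is \eqref{eq:z0_estimate}. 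The constant term $c_0$ is the period-average of $\log\tilde F$, of size $O(e^{-\gamma\beta/4})$; since it merely rescales the normalisation constant $c(\gamma,m^2,\beta)$ in \eqref{e:onestep} it may be dropped, as in \eqref{eq:fourier_tildeV}.

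The only step requiring genuine care is calibrating the strip width $w=\tfrac14\gamma\sqrt\beta$: it must be small enough that $\log\tilde F$ remains analytic and uniformly $O(e^{-\gamma\beta/4})$ on $S_w$, and large enough that the contour shift produces exactly the exponent $\tfrac14\gamma\beta(1+q)$ of \eqref{eq:z0_estimate}. The rest --- Poisson summation, the elementary bound $|\log(1+z)|\le 2|z|$ for $|z|\le\tfrac12$, and the Cauchy-type estimate for the Fourier coefficients of an analytic periodic function --- is routine.
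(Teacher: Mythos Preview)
Your proof is correct. Both parts are handled properly: the Poisson summation argument for \eqref{eq:fourier_F} is equivalent to the paper's direct Fourier coefficient computation (the paper unfolds the periodic sum to $\R$ and then computes the Gaussian integral, which is Poisson summation in disguise), and your treatment of the constant term $c_0$ is in fact more careful than the paper's, which simply omits mention of it.

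For the decay estimate \eqref{eq:z0_estimate} you take a genuinely different route from the paper. The paper works in Fourier space: it introduces the weighted norm $\|f\|_{\ell^1(c)}=\sum_{q}e^{c|q|}|\hat f(q)|$ with $c=\tfrac14\gamma\beta$, observes that this makes the periodic functions into a Banach algebra, bounds $\|F-1\|_{\ell^1(c)}\le 4e^{-\gamma\beta/4}$, and then invokes the Banach-algebra inequality $\|\log F\|\le 2\|F-1\|$ to read off the coefficient bound directly. You instead work in physical space, analytically continuing to the strip $|\operatorname{Im}\varphi|\le w=\tfrac14\gamma\sqrt\beta$, bounding $|\varepsilon|$ and hence $|\log(1+\varepsilon)|$ pointwise there, and then extracting the Fourier decay by a contour shift. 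The two arguments are dual: your strip width $w$ and the paper's weight $c$ are related by $c=w\sqrt\beta$, and both encode the same exponential decay rate. The paper's approach is slightly more algebraic and compact; yours is more classical and makes the role of analyticity explicit, which fits naturally with the analytic-continuation arguments used elsewhere in the paper (e.g.\ Proposition~\ref{prop:complex_extension_of_polymer_activity} and Lemma~\ref{lemma:charged_term_exponential_contraction}). Either is perfectly adequate here.
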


\begin{proof}
  Let $F(\varphi) = \tilde F(\varphi/\sqrt{\beta}) =e^{\tilde U(\varphi/\sqrt{\beta})}$. Then $F$ is $2\pi$ periodic, see \eqref{e:tildeF} and recall \eqref{eq:Omega_rescaled}, and even. Its Fourier coefficients are given by
  \begin{equation}
    \hat F(q) = \frac{1}{2\pi}\int_0^{2\pi} F(\varphi) e^{iq\varphi} \, d\varphi
    = \frac{c(\gamma,\beta)}{2\pi} \int_\R e^{-\frac{1}{2\gamma \beta}\varphi^2} e^{iq\varphi} \, d\varphi
    \stackrel{\eqref{e:norm-tildeF}}{=} e^{-\frac{\gamma\beta}{2}q^2}, \quad q \in \Z.
  \end{equation}
  Thus \eqref{eq:fourier_F} follows. To prove \eqref{eq:fourier_tildeV}, \eqref{eq:z0_estimate}, consider the following norm on $2\pi$-periodic  functions $f$ (for which the norm is finite): for $c = \frac{1}{4}\gamma \beta$,  denoting by $\hat{f}(q) = \frac{1}{2\pi}\int_0^{2\pi} f(\varphi) e^{iq\varphi} \, d\varphi$ the corresponding Fourier coefficients, one sets
\begin{align}\label{e:fouriernorm}
  \norm{f}_{\ell^1 (c)} = \sum_{q\in \mathbb{Z}} e^{c |q|} |\hat{f}(q)| .
\end{align}    
Using the fact that $\widehat{fg}(q)= \sum_{q' \in \mathbb{Z}}\hat{f}(q)\hat{g}(q'-q)$ for periodic $f$ and $g$, one readily deduces that $  \norm{\cdot}_{\ell^1 (c)} $ is submultiplicative, i.e.,~that $\|fg\|_{\ell^1(c)} \leq \|f\|_{\ell^1(c)}\|g\|_{\ell^1(c)}$,
making the space of $2\pi$-periodic functions with finite norm a unital Banach algebra with unit $f\equiv1$.
Moreover, for $\beta \gamma \geq 4$,
\begin{align}
\norm{F-1}_{\ell^1 (c)} = 2 \sum_{q \geq 1} e^{-\frac{\gamma \beta}{2} q^2 + c q} \leq  4 e^{-\frac{1}{4} \gamma \beta},
\end{align}
where the second inequality follows for instance by completing the square, comparing with a Gaussian integral and applying a standard Gaussian tail estimate. Since $\tilde{U} (\varphi/ \sqrt{\beta}) = \log F(\varphi)$, we have
\begin{align}
\norm{\tilde{U}(\cdot/ \sqrt{\beta})}_{\ell^1 (c)} \leq 2 \norm{F-1}_{\ell^1 (c)} \leq 8 e^{-\frac{1}{4} \gamma \beta},
\end{align}
where we have used the estimate $\|\log F\| \leq 2\|F-1\|$ which is valid in any (unital) Banach algebra with norm $\| \cdot \|$ if $\|F-1\|$ is small, as follows e.g.~by bounding the relevant Taylor remainder.
In view of \eqref{e:fouriernorm}, this yields that $|\tilde{z}^{(q)}(\beta)| \leq 16 e^{-\frac{1}{4} \gamma \beta - c |q|}$ for all $q \geq 1$ with $\tilde{z}^{(q)}(\beta)$ as defined by \eqref{eq:fourier_tildeV}.
\end{proof}

\subsection{Temperature renormalisation}

The identity \eqref{e:onestep} for the partition function
and its  extension to the moment generating function in \eqref{e:reformulation} below
reformulates the analysis of the Discrete Gaussian model in terms of
a smooth periodic potential integrated against a Gaussian field. Ideas of this flavour have been used in various contexts in the past.
However, to achieve sufficient precision to control the scaling limit,
it is crucial for our work to allow for the parameter $s\neq 0$ below, which will correspond to the
stiffness renormalisation of the limiting Gaussian free field, or equivalently, the exact identification of the effective temperature $\betaeff$ in \eqref{e:highbeta-convergence}.

To set up this stiffness renormalisation, first
recall that $\Delta_J$ is the normalised Laplacian, see \eqref{eq:Delta_J_definition}, with step distribution $J$,
and $C(m^2)$ is its Green's function minus a diagonal part, see \eqref{e:GFdecomp1}.
For convenience, we will denote by $\Delta$ without subscript the standard \emph{unnormalised} nearest-neighbour Laplacian; the irrelevant omission of the normalisation for $\Delta$ simplifies some formulas later.
For $|s|$ sufficiently small, $C (m^2)^{-1} - s\Delta$ is positive definite, as shown in Proposition~\ref{prop:decomp_compatible} below (see in particular \eqref{e:C(m)-torus-decomp}, where $C\equiv C^{\Lambda_N}$), hence
\begin{equation}
  C(s, m^2) = (C (m^2)^{-1} - s\Delta)^{-1}, \label{eq:C_s_m2}
\end{equation}
is well-defined and positive-definite. We then introduce, for $s_0 \in \R$,
\begin{equation}
  Z_0(\varphi) \equiv Z_0 (\varphi | \Lambda_N ) = e^{U_0(\varphi)} \stackrel{\text{def.}}{=} e^{\frac{s_0}{2}(\varphi,-\Delta \varphi) + \sum_x\tilde U(\varphi_x)} \label{eq:Z_0_definition}
\end{equation}
with $\tilde{U}$ given by \eqref{e:tildeF} and
\begin{equation}\label{e:tildeC}
  \tilde C(s, m^2) = \gamma(1 + s\gamma\Delta) + (1 + s\gamma \Delta) C(s, m^2) (1 + s\gamma \Delta).
\end{equation}
We return to discuss the interpretation of $Z_0$ and $\tilde C(s,m^2)$ below the proof of the following lemma.
This lemma generalises the partition function identity \eqref{e:onestep},
both by allowing a test function and by allowing the parameter $s\neq 0$ that will
later correspond to the stiffness renormalisation.
The right-hand side of \eqref{e:reformulation} will be our starting point for the renormalisation group analysis.
Recall that $\E_C$ denotes the expectation with respect to the (centred) Gaussian measure with covariance $C$.



\begin{lemma}
    \label{lemma:reformulation}
    For all $\beta>0$,
    $\gamma\in (0,\frac13)$, $m^2 \in (0,1]$, 
    and $|s|$ small enough that \eqref{eq:C_s_m2} is well-defined and positive-definite,
  and with $s_0=s$, one has for any $f\in \R^\Lambda$,
  \begin{align} \label{e:reformulation}
  	\avgb{ e^{(f, \sigma)} }_{\beta, m^2} = e^{\frac12 (f, \tilde C(s, m^2) f)} \frac{\E_{C(s,m^2)} [ Z_0(\varphi+Af) ) ]}{\E_{C(s,m^2)} [ Z_0(\varphi ) ]}
  \end{align}
   where $A=(1 {+} s\gamma \Delta)^{-1}\tilde C(s,m^2)$.
\end{lemma}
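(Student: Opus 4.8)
The plan is to run the same Gaussian-convolution identity used for \eqref{e:onestep}, but now applied to the modified covariance $C(s,m^2)$ in place of $C(m^2)$ and keeping track of the source term $(f,\sigma)$. First I would observe that the quadratic form in the exponent on the left can be rewritten using the two-step splitting $(-\Delta_J + m^2)^{-1} = \gamma\,\mathrm{Id} + C(m^2)$ together with the definition \eqref{eq:C_s_m2}, so that $-\Delta_J + m^2 = C(m^2)^{-1}$ and hence $C(s,m^2)^{-1} = -\Delta_J + m^2 - s\Delta$. The key algebraic fact I would isolate is that, with $B = C(s,m^2)^{-1}$ and $A_{\mathrm{conv}} = \frac{1}{\gamma}(1 + s\gamma\Delta)^{-1}$ as the "convolution" operator, the pair $(A_{\mathrm{conv}}, B)$ satisfies $(A_{\mathrm{conv}}^{-1} + B^{-1})^{-1} = -\Delta_J + m^2$; this is a short computation using $A_{\mathrm{conv}}^{-1} = \gamma(1 + s\gamma\Delta)$ and expanding $\gamma(1+s\gamma\Delta) + C(m^2) - s\gamma^2\Delta(1+s\gamma\Delta)\cdots$, and it is precisely what makes the Gaussian weight $e^{-\frac12(\sigma,(-\Delta_J+m^2)\sigma)}$ reappear after integrating out $\varphi$. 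The operator $\tilde C(s,m^2)$ in \eqref{e:tildeC} is then exactly the "cross term" covariance $(A_{\mathrm{conv}}^{-1}+B^{-1})^{-1}$ conjugated appropriately — more precisely it is what multiplies $f$ in the completed square — and $A = (1+s\gamma\Delta)^{-1}\tilde C(s,m^2)$ is the corresponding shift.

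Concretely I would: (i) insert the Gaussian representation of $e^{-\frac{1}{2\gamma}\sum_x(\varphi_x-\sigma_x)^2}$-type weight adapted to the operator $A_{\mathrm{conv}}$ — i.e.\ write $e^{-\frac12(\sigma,(-\Delta_J+m^2)\sigma)}$ as a constant times $\int e^{-\frac12(\varphi-\sigma, A_{\mathrm{conv}}(\varphi-\sigma))}e^{-\frac12(\varphi,B\varphi)}\,d\varphi$, which is legitimate once one checks $A_{\mathrm{conv}}$, $B$, and $A_{\mathrm{conv}}+B$ are all positive definite (this is where the smallness of $|s|$ and Proposition~\ref{prop:decomp_compatible} enter); (ii) multiply by $e^{(f,\sigma)}$ and complete the square in $\sigma$ for the Gaussian sum, which is where the factor $e^{\frac12(f,\tilde C(s,m^2)f)}$ and the shift of $\varphi$ by $Af$ get produced — here one uses the discrete-Gaussian/periodicity structure exactly as in the passage from \eqref{eq:DG-withmass} to \eqref{e:onestep}, noting the sum over $\sigma \in \Z_\beta^\Lambda$ factorizes after the substitution into the periodized weight $\tilde F$; (iii) recognize the remaining $\varphi$-integral against $e^{-\frac12(\varphi,B\varphi)}$ as $\E_{C(s,m^2)}$ of $\prod_x \tilde F((\varphi + Af)_x)$ times the extra Gaussian factor $e^{\frac{s_0}{2}(\varphi,-\Delta\varphi)}$ coming from reorganizing the quadratic form — this last factor is the origin of the $s_0(\varphi,-\Delta\varphi)/2$ term in $Z_0$, and setting $s_0 = s$ is what makes the bookkeeping close. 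The identity \eqref{e:complete-squ} with a suitable choice of $A$, $B$ is the workhorse throughout, just as in the derivation of \eqref{e:convid}.

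The main obstacle, and the step I would spend the most care on, is the algebraic identification of $\tilde C(s,m^2)$ and $A$: one must verify that after completing the square the coefficient of the $f$-quadratic term is exactly $\gamma(1+s\gamma\Delta) + (1+s\gamma\Delta)C(s,m^2)(1+s\gamma\Delta)$ and that the induced shift of $\varphi$ is $(1+s\gamma\Delta)^{-1}\tilde C(s,m^2)f$, rather than some other conjugate. This is a matter of carefully tracking how the source $f$ couples through the convolution variable: $f$ couples to $\sigma$, $\sigma$ is shifted relative to $\varphi$ via $A_{\mathrm{conv}}$, and after the Gaussian sum over $\sigma$ the residual $f$-dependence reappears as a shift of $\varphi$ plus a deterministic $f$-quadratic term. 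I expect the cleanest route is to treat the combined $(\sigma,\varphi)$ integral/sum as a single Gaussian in the enlarged variable, complete the square in $(\sigma,\varphi)$ jointly against the linear source $(f,\sigma)$, read off the marginal in $\varphi$ and the normalization, and only at the very end reinsert the discreteness of $\sigma$ (which, by the periodicity trick of Lemma~\ref{lemma:m2to0} and the factorization into $\tilde F$, costs nothing extra). The positivity hypotheses needed to justify all Gaussian manipulations are supplied by the stated smallness of $|s|$ together with Proposition~\ref{prop:decomp_compatible}, so no new analytic input is required beyond the linear algebra.
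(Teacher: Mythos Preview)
Your proposal has a genuine gap that would prevent it from closing. The algebraic claim ``$-\Delta_J + m^2 = C(m^2)^{-1}$'' is false: from \eqref{e:GFdecomp1} one has $(-\Delta_J+m^2)^{-1} = \gamma\,\mathrm{Id} + C(m^2)$, so $C(m^2)^{-1} = ((-\Delta_J+m^2)^{-1}-\gamma)^{-1}$, not $-\Delta_J+m^2$. Consequently your identity $(A_{\mathrm{conv}}^{-1}+B^{-1})^{-1} = -\Delta_J+m^2$ with $A_{\mathrm{conv}}=\frac{1}{\gamma}(1+s\gamma\Delta)^{-1}$ and $B=C(s,m^2)^{-1}$ does not hold. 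More fundamentally, even if you found the correct $A_{\mathrm{conv}}$ satisfying that relation, it would necessarily be \emph{non-diagonal} in position space (since $C(m^2)-C(s,m^2)$ is not a multiple of the identity). The sum $\sum_{\sigma\in\Z_\beta^\Lambda} e^{-\frac12(\varphi-\sigma,A_{\mathrm{conv}}(\varphi-\sigma))}e^{(f,\sigma)}$ with a non-diagonal $A_{\mathrm{conv}}$ is a lattice theta function with coupled sites and does \emph{not} factorise into a product $\prod_x \tilde F(\cdots)$; your step (ii) asserting this factorisation would fail.

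The paper's proof avoids this by keeping the convolution kernel diagonal: it uses the original identity \eqref{e:convid} with $\frac{1}{\gamma}\mathrm{Id}$, so that the $\sigma$-sum factorises pointwise into $e^{\tilde U(\varphi+\gamma f)}$, yielding $e^{\frac{\gamma}{2}(f,f)}\E_{C(m^2)}[e^{(f,\varphi)}e^{\tilde U(\varphi+\gamma f)}]$. Only \emph{after} this does one pass from $C(m^2)$ to $C(s,m^2)$, using that $C(m^2)^{-1} = C(s,m^2)^{-1} + s\Delta$ means the expectation picks up an explicit factor $e^{\frac{s}{2}(\varphi,-\Delta\varphi)}$; this factor is precisely the $\frac{s_0}{2}(\varphi,-\Delta\varphi)$ term in the definition \eqref{eq:Z_0_definition} of $Z_0$ (with $s_0=s$). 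Two subsequent completions of the square in $\varphi$ then produce $\tilde C(s,m^2)$ and the shift $Af$. The order matters: factorise first with the diagonal kernel, then absorb the $s$-dependence into $Z_0$.
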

\begin{proof}
 Completing the square and recalling \eqref{e:tildeF}, one sees that for any $f,\varphi \in \R$,
  \begin{equation}
    \sum_{\sigma\in \Z_{\beta}} e^{f\sigma} e^{-\frac{1}{2\gamma}(\sigma-\varphi)^2}
    =
    e^{\frac{\gamma}{2} f^2} e^{f\varphi} \sum_{\sigma\in\Z_{\beta}} e^{-\frac{1}{2\gamma}(\sigma-\varphi-\gamma f)^2}
   \propto     e^{\frac{\gamma}{2} f^2}  e^{f\varphi} e^{\tilde U(\varphi+\gamma f)}.
  \end{equation}
  Using the convolution identity \eqref{e:convid}, for any $f\in \R^\Lambda$, one therefore obtains that
  \begin{equation}
    \sum_{\sigma \in \Z_{\beta}^\Lambda} e^{-\frac12(\sigma,(-\Delta_J +m^2)\sigma)} e^{(f,\sigma)}
    \propto e^{\frac{\gamma}{2}(f,f)}\E_{C(m^2)}[e^{(f,\varphi)}e^{\tilde U(\varphi+\gamma f)}]
    .
  \end{equation}
  By definition of $C(s,m^2)$, see \eqref{eq:C_s_m2}, the right-hand side is proportional to
  \begin{align} \label{e:trans1}
    &e^{\frac{\gamma}{2}(f,f)}\E_{C({s,} m^2)}[e^{(f,\varphi)}e^{\frac{s}{2}(\varphi,-\Delta\varphi)}e^{\tilde U(\varphi+\gamma f)}]
      \nnb
    &=e^{\frac{\gamma}{2}(f,f) - \frac{s}{2}\gamma^2 (f,-\Delta f)}\E_{C(s, m^2)} [e^{(f+s\gamma \Delta f,\varphi)}e^{\frac{s}{2}(\varphi+\gamma f,-\Delta(\varphi+\gamma f))}e^{\tilde U(\varphi+\gamma f)} ]
      \nnb
    &=e^{\frac{\gamma}{2}(f,g)}\E_{C(s, m^2)} [e^{(g,\varphi)}Z_0(\varphi+\gamma f) ]
  \end{align}
  where in the second line we again completed the square and in the third line we set
  \begin{equation}
    g= f + s\gamma\Delta f
  \end{equation}
  and used that $s=s_0$ along with \eqref{eq:Z_0_definition}.
  Changing variables from $\varphi$ to $\varphi-C(s,m^2)g$, the last line of \eqref{e:trans1} is seen to equal
  \begin{equation}
    e^{\frac{\gamma}{2}(f,g)+\frac{1}{2} (g, C(s,m^2)g)} \E_{C(s, m^2)}[ Z_0(\varphi+\gamma f+C(s,m^2)g) ]
  \end{equation}
and the term in the exponential can be simplified as
  \begin{equation}
    \gamma (f,g) + (g,C(s, m^2)g)
    =
    (f,\tilde C(s, m^2) f),
  \end{equation}
  and the term in the argument of $Z_0$ can be written as
  \begin{equation}
    Af = \gamma f + C(s,m^2)g = \gamma f + C(s,m^2)(1+s\gamma \Delta) f
    = (1 {+} s\gamma \Delta)^{-1}\tilde C(s,m^2),
  \end{equation}
  yielding \eqref{e:reformulation}.
\end{proof}

To conclude this section, we briefly discuss the role of $U_0$ and $\tilde C(s,m^2)$ introduced in \eqref{eq:Z_0_definition}-\eqref{e:tildeC}.
Compared to $\tilde U$ the potential $U_0$ includes an additional Dirichlet energy term  $(\varphi,-\Delta \varphi)=(\nabla\varphi,\nabla\varphi)$
with prefactor $s_0$. 
This parameter $s_0$ is essentially arbitrary for the moment 
and can be chosen independently of $s$ for most part of our analysis. 
However, it can be
compensated 
by the $s$-dependence of $\tilde C(s,m^2)$ on the right-hand side of \eqref{e:reformulation}
by enforcing the restriction
$s=s_0$ as in the assumption of the last lemma.
Thus the parameter $s=s_0$ corresponds to a division of the Gaussian free field into a part that
serves as reference measure, i.e., the Gaussian measure with covariance $\tilde C(s,m^2)$,
and a part that is interpreted as a perturbation of it.
A careful choice of this division will be made at the end of the analysis.
This choice will be such that
\begin{align}
	\frac{\E_{C(s,m^2)} [Z_0 (\varphi + A f)]}{\E_{C(s,m^2)} [Z_0 (\varphi)]} \rightarrow 1
	\label{eq:Z_N_ratio_convergence}
\end{align}
as $N\rightarrow \infty$ and
the covariance $\tilde C(s,m^2)$ is that of a limiting
Gaussian field that approximates the Discrete Gaussian model on large scales
and that converges to the multiple of the Gaussian free field in Theorem~\ref{thm:highbeta}.
Namely, if $f\in C^\infty(\T^2)$ and $f_N$ is as in the statement of Theorem~\ref{thm:highbeta},
then
\begin{equation}\label{e:sl-beta_eff}
  \lim_{N\to\infty} \lim_{m^2\downarrow 0}(f_N,\tilde C(s,m^2)f_N) = \frac{1}{s+v_{J}^2} (f,(-\Delta_{\T^2})^{-1}f)_{\T^2} ,
\end{equation}
see Lemma~\ref{lemma:integral_of_zero_mode},
and $\betaeff(J,\beta)$ defined as $\beta(1+sv_J^{-2})$
will eventually be the effective temperature in Theorem~\ref{thm:highbeta}.
Then the proof of the theorem will be complete in view of Lemma~\ref{lemma:reformulation}.

\section{Finite-range decomposition}
\label{sec:finite_range_decomposition}

The starting point for our renormalisation group analysis is a finite-range decomposition for the covariances \eqref{eq:C_s_m2}
(recalled for convenience in \eqref{eq:decomp2} below),  which we construct in the present section.
We expect that this construction will be useful for the analysis of other models where an initial renormalisation step can be carried out
due to the removal of a diagonal part from the covariance, i.e., an i.i.d.\ contribution of the Gaussian field.
We also refer to Section~\ref{sec:related_problems} for possible applications.
Since it comes at no additional cost, we formulate the decomposition in any dimension $d\geq 2$. The main results of this section are Propositions~\ref{prop:decomp} and~\ref{prop:decomp_compatible} below, which exhibit the desired decomposition, first on $\Z^d$ and then on $\Lambda_N= (\mathbb{Z}/L^N \mathbb{Z})^d$, respectively.

\subsection{Existence of the finite-range decomposition}
\label{subsec:FRresults}

First recall our convenient convention
that $\Delta$ denotes the standard \emph{unnormalised} nearest neighbour lattice Laplacian
while
$\Delta_J$ is the \emph{normalised} Laplacian with finite-range step distribution $J \subset \Z^d\setminus \{0\}$.
As discussed at the beginning of Section~\ref{sec:tildeVdef},
for any $\gamma \in (0,1/3)$ and any finite-range step distribution $J$, we can then decompose (cf.~\eqref{e:GFdecomp1})
\begin{equation}
  (-\Delta_J + m^2)^{-1} = \gamma  + C(m^2), 
  \label{eq:decomp1}
\end{equation}
with $C(m^2)$ a $J$-dependent positive-definite symmetric matrix.
Then recall \eqref{eq:C_s_m2}, i.e.,
\begin{equation}
  C(s, m^2) := (C(m^2)^{-1}-s\Delta)^{-1} = C(m^2)(1-s\Delta C(m^2))^{-1},
  \label{eq:decomp2}
\end{equation}
which makes sense and is positive definite for $|s|$ small (depending on $J$; see Proposition~\ref{prop:decomp} below), as can be seen from the second representation. The main result of this section yields a decomposition
of $C(s,m^2)$ into an integral of covariances with a finite-range property.
Available results on such decompositions apply to $s=0$ or without subtraction of the constant $\gamma$,
i.e., to $(-\Delta+m^2)^{-1}$ instead of $C(s,m^2)$,
but for our purposes it is important to permit both $s\neq 0$ and the subtraction of $\gamma$.

Recall that the step distribution $J  \subset \Z^d \setminus \{0\}$ is assumed to satisfy the conditions above \eqref{eq:Delta_J_definition}.
The estimates for the resulting decomposition depend on the following parameters specific to $J$:
\begin{alignat}{2}
  \rho_J &= \sup\{ |x|_{\infty} : x\in J \}  &\qquad & \text{(range)}, \label{eq:rJ_range}\\
  v_{J}^2 &= \frac{1}{2|J|} \sum_{x\in J} |x_1|^2  &\qquad & \text{(variance)}, \label{eq:vJ2_variance}\\
  \theta_J &= \inf_{p\neq 0}(\lambda_J (p) / \lambda(p)) &\qquad & \text{(spectral lower bound)}. \label{eq:theta_def}
\end{alignat}
In the spectral lower bound,
$\lambda_J(p)$ and $\lambda(p)$ are the Fourier multipliers of $-\Delta_J$ and $-\Delta$,
defined precisely in Section~\ref{sec:decomp-fourier} below.

\begin{example}\label{EX:range-rho}
  For the standard range-$\rho$ step distribution $J_\rho = \{x\in \Z^d \setminus \{0\}: |x|_\infty \leq \rho\}$,
  \begin{equation} \label{eq:Jrho-constants}
    \rho_{J_\rho} = \rho, \qquad v_{J_\rho}^2 \sim \frac{1}{6} \rho^2,
    \qquad
    \theta_{J_\rho} \geq 3^{-d},
  \end{equation}
  see Lemma~\ref{lem:lambda_rho_error} below.
\end{example}

We now state the main results of this section. We refer to Remark~\ref{r:gamma=0} below regarding a version of these findings for the 
(more standard)
choice $\gamma=0$ in \eqref{eq:decomp1}, which implies various known results of this type (notably for the usual Green's function in the nearest-neighbour case). In the following proposition,
we first consider \eqref{eq:decomp1} and \eqref{eq:decomp2} as operators
on $\Z^d$; the inverses are then well defined as bounded operators on $\ell^2(\Z^d)$ if $m^2>0$.
(To emphasise which space an inverse of an operator $A$ is taken in we will sometimes write $A_{\Z^d}^{-1}$ for the inverse of $A$
on $\ell^2(\Z^d)$ and $A_{\Lambda_N}^{-1}$ for its inverse of $A$ acting on $\R^{\Lambda_N}$).
Thus the proposition considers the infinite-volume case of $\mathbb{Z}^d$ rather than the finite torus
relevant for our application to the Discrete Gaussian model.
The torus case is treated thereafter in Proposition~\ref{prop:decomp_compatible}. In fact, Proposition~\ref{prop:decomp_compatible} can be obtained in large part as a corollary of Proposition~\ref{prop:decomp}. Indeed the contributions to the torus decomposition comprising ranges smaller than the torus size are directly inherited from the decomposition on $\Z^d$, cf.~\eqref{eq:decomp3} and~\eqref{e:C(m)-torus-decomp} below. Scales which `feel' the periodic boundary condition however must be treated separately. In what follows, recall the discrete gradient notation from Section~\ref{sec:notation}.

\begin{proposition} \label{prop:decomp}
  Let $d \geq 2$.
  There exist absolute constants $\gamma >0$ and $\epsilon_{s}>0$ (both purely numerical)
  such that for any finite-range step distribution $J \subset \Z^d \setminus \{0\}$ as specified above \eqref{eq:Delta_J_definition}, 
  the following holds.
  For all $|s| \leq \epsilon_s \theta_J$ and $m^2 \in (0, 1]$, one has a decomposition of the form
  \begin{equation}
    C^{\mathbb{Z}^d} (s, m^2) := \big(  C^{\mathbb{Z}^d} (m^2)^{-1}-s\Delta \big)^{-1}_{\mathbb{Z}^d} = \int_{\rho_J}^{\infty} D^{\mathbb{Z}^d}_t(s, m^2) \, dt, 
    \label{eq:decomp3}
  \end{equation}
where the $D^{\mathbb{Z}^d}_t(s, m^2)$ are positive-definite symmetric kernels with range less than $t$, i.e.,
\begin{equation} \label{eq:Dt_range}
  D^{\mathbb{Z}^d}_t(x,y; s,m^2):= ( \delta_x , D^{\mathbb{Z}^d}_t(s, m^2) \delta_y ) = 0 \;\; \text{whenever} \;\; |x-y|_{\infty} \geq t. 
\end{equation}
The left-hand side depends only on $x-y \in \Z^d$ and is invariant  under lattice rotations.
Moreover,
\begin{itemize}
\item[(i)] uniformly in $(s, m^2) \in [-\epsilon_s \theta_J, \epsilon_s \theta_J] \times (0, 1 ]$, all multi-indices $\alpha$ (including $|\alpha|=0$),
  $t \geq \rho_J$,
\begin{equation}
  |\nabla^\alpha D^{\mathbb{Z}^d}_t(0,x ; s, m^2)|
  \leq C_{\alpha}\rho_J^{-2} t \Big( \frac{\rho_J}{v_J t } \Big)^{d+|\alpha|}   \,
  + C_\alpha \theta_J^{-d-|\alpha|} \rho_J^{-2} t \Big( \frac{\rho_J}{t^2 } \Big)^{d+|\alpha|} e^{-c(\theta_J^{1/2} t)^{1/4}}
  \label{eq:decomp3-bd}
\end{equation}
(note: if $\theta_J$ is bounded from below by a positive value,  the second term can be omitted since $v_J \leq \rho_J/2$);
\item[(ii)] for all $|s| \leq \epsilon_s \theta_J$ and all $t$, the map $m^2 \mapsto D^{\mathbb{Z}^d}_t (s, m^2)$ is continuous and has a limit
\begin{equation}
D^{\mathbb{Z}^d}_t (s) \equiv D^{\mathbb{Z}^d}_t (s, 0) = \lim_{m^2 \downarrow \, 0} D^{\mathbb{Z}^d}_t (s, m^2) \, ,
\label{eq:decomp4}
\end{equation}
and the map $s \mapsto D_t^{\Z^d}(s)$ is analytic in $|s|\leq \epsilon\theta_J$;
\item[(iii)]  if $d=2$, then for all $|s| \leq \epsilon_s \theta_J$, 
  \begin{equation}
    D^{\mathbb{Z}^d}_t(0,0 ; s) =
    \frac{1}{2\pi t(v_J^2+s)}
    \Big( 1+ O \Big(\frac{\rho_J}{t} + \frac{\rho_J^4}{v_J^2 t^2} + \frac{\theta_J^{-2} v_J^2}{t^2} \Big) \Big),
    \quad
    \text{as} \;\;\; \frac{\rho_J}{t} + \frac{\rho_J^4}{ v_J^{2}t^2} + \frac{\theta_J^{-2} v_J^2}{t^2} \to 0 .
    \label{eq:decomp5}
  \end{equation}
\end{itemize}
In the above estimates, all constants are independent of $J$ (and $s$).
\end{proposition}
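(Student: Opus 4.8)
The plan is to work on the Fourier side. On $\Z^d$ with $m^2>0$, the kernel $C^{\Z^d}(s,m^2)$ has symbol $(\lambda_J(p)+m^2-s\lambda(p))^{-1}$ after subtracting the constant $\gamma$, where $\gamma+C(m^2)^{-1}$-manipulations from \eqref{e:GFdecomp1} are inverted; so first I would record that $C(m^2)^{-1} = (-\Delta_J+m^2)(1-\gamma(-\Delta_J+m^2))^{-1}$ and that $C(s,m^2)$ has a symbol $g_{s,m^2}(p)$ which, for $|s|\le \epsilon_s\theta_J$, is bounded above and below by $c\lambda(p)^{-1}$ uniformly (this is where $\theta_J$ enters, via $\lambda_J(p)\ge\theta_J\lambda(p)$, and where $\epsilon_s$ must be chosen small enough to keep the denominator positive and comparable to $\lambda(p)$, i.e.\ this also proves the claim in Section~\ref{sec:tildeVdef} that $C(m^2)^{-1}-s\Delta$ is positive definite). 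Then I would apply the standard finite-range-decomposition machinery: introduce a smooth radial bump/partition of unity in position space at dyadic (or continuum) scales $t$ and write $g_{s,m^2}(p) = \int_{\rho_J}^\infty \hat{D}_t(p;s,m^2)\,dt$ where each $\hat D_t$ is constructed so that $D_t$ has range $<t$ — concretely, convolve the symbol against the Fourier transform of the standard bump function whose decay is recorded in Appendix~\ref{app:bump}. This is the analogue of the Bauerschmidt or Brydges--Guadagni--Mitter construction, but I must carry the $\rho_J$, $v_J$, $\theta_J$ dependence explicitly.

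For the bounds (i), after the decomposition is in place, I would estimate $\nabla^\alpha D_t(0,x;s,m^2)$ by differentiating the integral representation and bounding the resulting oscillatory integral over $p$. The contour/stationary-phase analysis splits into a ``Gaussian-like'' region $|p|\lesssim \rho_J^{-1}$, where $\lambda_J(p)\approx v_J^2|p|^2$ gives the main term $C_\alpha\rho_J^{-2}t(\rho_J/(v_Jt))^{d+|\alpha|}$ (essentially the heat-kernel-type bound for a field with stiffness $v_J^2$ at scale $t$), and a ``remainder'' region $|p|\gtrsim\rho_J^{-1}$, which after a suitable contour shift (using analyticity of the symbol in a strip of width $\sim\theta_J^{1/2}$) produces the exponentially small correction $e^{-c(\theta_J^{1/2}t)^{1/4}}$ with the stated polynomial prefactor; the quartic root in the exponent is the usual artifact of the bump function having Gevrey-class rather than analytic Fourier decay. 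Positivity and symmetry of $D_t$ come for free from constructing $\hat D_t(p)\ge 0$ and radial; lattice-rotation invariance from invariance of $\lambda_J,\lambda$.

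Parts (ii) and (iii) are then corollaries of the construction. For (ii), continuity and the $m^2\downarrow0$ limit follow from dominated convergence applied to the Fourier integral, using that $g_{s,m^2}(p)\le g_{s,0}(p)\lesssim\lambda(p)^{-1}$ is integrable against $\hat D_t$'s rapidly decaying weight for $d\ge2$ once $t\ge\rho_J>0$ cuts off the small-$p$ singularity at each fixed scale; analyticity in $s$ follows since $g_{s,0}(p)$ is analytic in $s$ on $|s|\le\epsilon_s\theta_J$ with uniform bounds, so the integral is a uniform limit of analytic functions. For (iii) in $d=2$, I would compute $D_t(0,0;s)$ by plugging $x=0$, $\alpha=0$ and expanding $\lambda_J(p) = (v_J^2+s)\lambda(p)(1+O(\rho_J^2|p|^2))$ near the origin, plus $\lambda(p)=|p|^2(1+O(|p|^2))$; the leading contribution is $\frac{1}{(2\pi)^2}\int \hat\chi_t(p)\frac{dp}{(v_J^2+s)|p|^2}$ which evaluates to $\frac{1}{2\pi t(v_J^2+s)}$ by the scaling of the bump, and the three error terms $\rho_J/t$, $\rho_J^4/(v_J^2t^2)$, $\theta_J^{-2}v_J^2/t^2$ track respectively the bump-support/range mismatch, the quadratic correction to $\lambda_J$, and the large-$p$ remainder region. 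The main obstacle is the careful bookkeeping in step~(i): getting the contour shift to yield exactly the exponent $(\theta_J^{1/2}t)^{1/4}$ with $J$-uniform constants while simultaneously tracking the powers of $\rho_J$, $v_J$, $\theta_J$ in front — this is the technically delicate part, and it is where the explicit properties of the standard bump function from Appendix~\ref{app:bump} and the spectral comparison $\lambda_J\ge\theta_J\lambda$ must be combined most carefully. The torus statement (Proposition~\ref{prop:decomp_compatible}) is then obtained by using these $\Z^d$ kernels verbatim for scales $t$ below the torus size and handling the finitely many remaining scales, which feel the periodicity, by a separate direct argument on $\Lambda_N$.
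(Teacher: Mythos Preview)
Your outline has the right destinations but the wrong vehicle for the central construction, and this is where the proposal breaks down.

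\textbf{The construction of $D_t$.} You propose to ``convolve the symbol against the Fourier transform of the standard bump function'' in the style of a generic Brydges--Guadagni--Mitter decomposition, and assert that positivity comes for free from $\hat D_t(p)\ge 0$. This is the gap. The operator whose inverse you want to decompose is $C(m^2)^{-1}-s\Delta$, and $C(m^2)^{-1}=(-\Delta_J+m^2)(1-\gamma(-\Delta_J+m^2))^{-1}$ is \emph{nonlocal} once $\gamma>0$. So you cannot simply write $\hat C(s,m^2)$ as a function of a single finite-range multiplier and invoke a ready-made decomposition; the paper says this explicitly just above Lemma~\ref{lemma:basic_frd}. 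What the paper actually does is quite different: for $s=0$ it uses the polynomial identity $\lambda^{-1}=\int_0^\infty t\,P_t(\lambda)\,dt$ from Lemma~\ref{lemma:basic_frd}, where $P_t$ has degree $\le t$, so that $P_{\rho_J^{-1}t}(\lambda_{J,m^2})$ is automatically a finite-range operator (range $\le t$) and positivity is inherited from $P_t\ge 0$ on $(0,3]$. The bump function from Appendix~\ref{app:bump} enters only in the construction of $P_t$ via $f=c\hat\kappa^2$, not as a direct spatial cutoff. For $s\ne 0$ the paper then expands $\hat C(s,m^2)=\sum_{l\ge 0} s^{2l}\lambda^{2l}\hat C(m^2)^{2l+1}(1-s\lambda\hat C(m^2))$ and defines $\hat D_t(s,m^2)$ by the simplex integral of Definition~\ref{def:C_t}; the finite-range property then requires the bookkeeping argument below that definition, and positivity uses that each factor $\lambda_{J,m^2}-s\lambda\,1_{t_0>\rho_J}$ stays nonnegative for $|s|\le\epsilon_s\theta_J$. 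None of this structure appears in your proposal.

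\textbf{The bounds (i).} There is no contour shift. The exponential factor $e^{-c(\theta_J^{1/2}t)^{1/4}}$ comes from the pointwise Fourier-space bound $0\le \hat D_t(p;s,m^2)\le \tilde C\rho_J^{-2}t\,\exp(-\tilde c(\rho_J^{-1}\sqrt{\lambda_{J,m^2}}\,t)^{1/4})$ of Proposition~\ref{prop:Ct_elementary_bound}, which is proved by summing the series in $s$ via Lemmas~\ref{lem:fbd}--\ref{lem:fbd-sum} and using $P_t(\lambda)\le Ce^{-c(\lambda t^2)^{1/4}}$. One then simply integrates $|\hat D_t(p)|\lambda(p)^{|\alpha|/2}$ over $p$, splitting at $|p|=\rho_J^{-1}$ exactly as you say, but the large-$|p|$ region is handled by the spectral lower bound $\lambda_J\ge\theta_J\lambda$ inside the exponential, not by analyticity in $p$. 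Your intuition about the Gevrey origin of the $1/4$ is correct, and the $|p|$-split matches the paper, but the mechanism you name for the second term is wrong.

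Your sketches for (ii) and (iii) are closer in spirit to the paper (Proposition~\ref{prop:C_t(0)} for (iii) does proceed by approximating $\lambda_J\to v_J^2|p|^2$, $P_t\to f$, etc.), but they ride on the construction above, so they cannot be completed as stated.
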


In the particular case of the standard range-$\rho$ step distribution the conclusions simplify as follows.

\begin{remark}
  For $J = J_{\rho}$ (see Example~\ref{EX:range-rho}), 
  the uniform lower bound on $\theta_{J_\rho}$ in \eqref{eq:Jrho-constants} implies that
  that the domain of $s$ can be chosen independently of $\rho$. For such $s$,
using the bounds from \eqref{eq:Jrho-constants}, the estimates in items \emph{(i)} and \emph{(iii)} above become (see also the note below \eqref{eq:decomp3-bd}),
for $t \geq \rho$,
\begin{equation}
  |\nabla^\alpha D^{\mathbb{Z}^d}_t(0,x ; s, m^2)|
  \leq C_{\alpha}  \rho^{-2} t^{1-d-|\alpha|}
\end{equation}
and (in $d=2$)
\begin{equation}
  D^{\mathbb{Z}^d}_t(0,0 ; s)
  =
  \frac{1}{2\pi t (v_{J_\rho}^2+s)}
  \Big( 1+ O \Big(\frac{\rho}{t} \Big)  \Big),
\end{equation}
with all constants independent of $\rho$, $s$ and $m^2$.
\end{remark}

Proposition~\ref{prop:decomp} applies to covariances defined on all of $\Z^d$.
By periodisation, Proposition~\ref{prop:decomp} and its proof also imply an analogous statement for the discrete torus, which we state next.
Since this is the decomposition we will use in the present article, we
consider the torus $\Lambda_N$ of side length $L^N$
(even though an analogous statement holds for any side length).
For $t< { \frac14 } L^N$, the covariances $D_t^{\Z^d}$
from Proposition~\ref{prop:decomp}
are translation invariant and have range less than half the diameter of the torus. They can thus naturally be identified with
covariances on the torus $\Lambda_N$ by projection. More precisely, with $\pi_N: \Z^d \to \Lambda_N$ denoting the canonical projection and for any $f:\Lambda_N \to \R$, $t< { \frac14 }  L^N$, one sets $D_t f (\pi_N(x)):= D_t^{\Z^d}(f\circ \pi_N)(x)$, for $x \in \Z^d$, and readily verifies that this is well-defined, i.e.,~the right-hand side does not depend on the choice of representative $x$ in the equivalence class. 
On the other hand, for $t\geq { \frac14 } L^N$, the periodisation of the covariance $D_t^{\Z^d}$ does depend on the torus.

\begin{proposition} \label{prop:decomp_compatible}
Let $d \geq 2$, $L > 1$, $N\geq 1,$ and $\Lambda_N= (\mathbb{Z}/L^N \mathbb{Z})^d$.
With the same constants $\gamma>0$ and $\epsilon_s > 0$ as in Proposition~\ref{prop:decomp}, $m^2 \in (0, 1]$, the matrix
\begin{align}\label{e:iidtorus}
(-\Delta_J + m^2)^{-1}_{\Lambda_N} - \gamma = C^{\Lambda_N} (m^2 )
\end{align}
is positive definite and for all $|s| \leq \epsilon_s \theta_J$,
\begin{equation}\label{e:C(m)-torus-decomp}
  (C^{\Lambda_N} (m^2) - s\Delta)_{\Lambda_N}^{-1}
  =
  \int_{ \rho_J}^{\frac{1}{4} L^{N-1}} D^{\mathbb{Z}^d}_t (s, m^2) \, dt
  + \int_{\frac14 L^{N-1}}^\infty \tilde D^{\Lambda_N}_t(s,m^2) \, dt + t_N(s,m^2) Q_N
\end{equation}
where the $D^{\Z^d}_t (s, m^2)$ are the same as in \eqref{eq:decomp3} (with the identification discussed above),
for all $t > \frac14 L^{N-1}$, the covariances
$\tilde D^{\Lambda_N}_t(s,m^2)$ satisfy 
translation and lattice rotation invariance, 
the same upper bounds as $D^{\Z^d}_t$ in \eqref{eq:decomp3-bd},
{the same analyticity in $s$, and the same continuity in $m^2$ (including as $m^2\downarrow 0$).}
Finally, $Q_N$ denotes the matrix with all entries equal to $1/|\Lambda_N|=L^{-dN}$
and $t_N(s, m^2) \in (0,m^{-2})$ is a constant satisfying
\begin{equation} \label{eq:t_N_bound}
  |t_N (s,m^2) - m^{-2}| \leq C\rho_J^{-2}L^{2N}. 
\end{equation}
\end{proposition}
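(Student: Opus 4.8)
The plan is to obtain the torus decomposition from the $\mathbb{Z}^d$-decomposition of Proposition~\ref{prop:decomp} by periodisation, treating the scales $t<\tfrac14 L^{N-1}$, the scales $t\ge\tfrac14 L^{N-1}$, and the zero Fourier mode separately. The positive definiteness statements follow exactly as on $\mathbb{Z}^d$: on $\Lambda_N$ the multiplier of $-\Delta_J$ is $\lambda_J(p)=\tfrac1{|J|}\sum_{y\in J}(1-\cos(p\cdot y))\in[0,2]$, so $0<-\Delta_J+m^2\le 3$ as quadratic forms for $m^2\in(0,1]$, hence $(-\Delta_J+m^2)^{-1}_{\Lambda_N}\ge\tfrac13>\gamma$ and $C^{\Lambda_N}(m^2)>0$; moreover $C^{\Lambda_N}(m^2)^{-1}-s\Delta$ has eigenvalue $\tfrac{\lambda_J(p)+m^2}{1-\gamma(\lambda_J(p)+m^2)}+s\lambda(p)$ on the Fourier mode $p$, which is $\ge\lambda_J(p)+s\lambda(p)\ge(\theta_J+s)\lambda(p)>0$ for $p\ne0$ (using $|s|\le\epsilon_s\theta_J$ and $\epsilon_s<1$) and equals $\tfrac{m^2}{1-\gamma m^2}>0$ for $p=0$, so $C^{\Lambda_N}(s,m^2):=(C^{\Lambda_N}(m^2)^{-1}-s\Delta)^{-1}_{\Lambda_N}$ is well defined and positive definite.

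The structural point that makes most of this a corollary of Proposition~\ref{prop:decomp} is that $-\Delta_J$ and $-\Delta$ have the same Fourier symbols on $\mathbb{Z}^d$ and on $\Lambda_N$, so the multiplier $\widehat C(p;s,m^2)$ is literally the same function; in particular $\widehat{C^{\Lambda_N}}(p)=\widehat{C^{\mathbb{Z}^d}}(p)$ for $p\in\Lambda_N^*\setminus\{0\}$, while $\widehat{C^{\Lambda_N}}(0)=m^{-2}-\gamma$. For $t<\tfrac14 L^{N-1}$ the kernel $D^{\mathbb{Z}^d}_t(s,m^2)$ has range $<t<$ a quarter of the side length of $\Lambda_N$, so its periodisation $D^{\Lambda_N}_t:=D^{\mathbb{Z}^d}_t\circ\pi_N$ involves no wrap-around, is a positive-definite symmetric kernel on $\Lambda_N$, and inherits verbatim the bounds \eqref{eq:decomp3-bd}, the analyticity in $s$ and the continuity in $m^2$ (down to $0$) from Proposition~\ref{prop:decomp}. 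Subtracting these small scales, set $R:=C^{\Lambda_N}(s,m^2)-\int_{\rho_J}^{\tfrac14 L^{N-1}}D^{\Lambda_N}_t\,dt$; writing \eqref{eq:decomp3} in Fourier (which at $p=0$ reads $\int_{\rho_J}^\infty\widehat{D^{\mathbb{Z}^d}_t}(0)\,dt=m^{-2}-\gamma$) gives $\widehat R(p)=\int_{\frac14 L^{N-1}}^\infty\widehat{D^{\mathbb{Z}^d}_t}(p)\,dt$ for $p\ne0$ and $\widehat R(0)=m^{-2}-\gamma-\int_{\rho_J}^{\tfrac14 L^{N-1}}\widehat{D^{\mathbb{Z}^d}_t}(0)\,dt$.

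I would then peel off the zero mode by setting $t_N(s,m^2):=\widehat R(0)$, so that $R-t_NQ_N$ annihilates constants. Since each $D^{\mathbb{Z}^d}_t$ is translation invariant and positive definite, $\widehat{D^{\mathbb{Z}^d}_t}(0)=\sum_{w\in\mathbb{Z}^d}D^{\mathbb{Z}^d}_t(0,w;s,m^2)\ge0$, whence $0<t_N<\int_{\rho_J}^\infty\widehat{D^{\mathbb{Z}^d}_t}(0)\,dt=m^{-2}-\gamma<m^{-2}$. The estimate \eqref{eq:t_N_bound} is then $|t_N-m^{-2}|=\gamma+\int_{\rho_J}^{\tfrac14 L^{N-1}}\widehat{D^{\mathbb{Z}^d}_t}(0)\,dt\le C\rho_J^{-2}L^{2N}$, which follows by bounding $\widehat{D^{\mathbb{Z}^d}_t}(0)$ from the $|\alpha|=0$ case of \eqref{eq:decomp3-bd} ($O(t^d)$ summands, each $\le C\rho_J^{d-2}v_J^{-d}t^{1-d}$, the exponentially small $\theta_J$-term contributing $O(1)$ after integration) and integrating over $t\in[\rho_J,\tfrac14 L^{N-1}]$, using the regularity of $J$ (e.g. $v_{J_\rho}^2\sim\rho^2/6$) to turn $\rho_J^{d-2}v_J^{-d}$ into $O(\rho_J^{-2})$. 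Continuity in $m^2$ and analyticity in $s$ of $t_N$ are inherited, $\widehat{D^{\mathbb{Z}^d}_t}(0)$ being a finite sum of entries of $D^{\mathbb{Z}^d}_t$.

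Finally, for $t\ge\tfrac14 L^{N-1}$ I set $\tilde D^{\Lambda_N}_t:=\operatorname{Per}D^{\mathbb{Z}^d}_t-\widehat{D^{\mathbb{Z}^d}_t}(0)\,Q_N$, the projection of the (now possibly wrapping) periodisation onto the orthogonal complement of constants; then $\widehat{\tilde D^{\Lambda_N}_t}(p)=\widehat{D^{\mathbb{Z}^d}_t}(p)\ge0$ for $p\ne0$ and $=0$ for $p=0$, so $\tilde D^{\Lambda_N}_t$ is a positive-definite symmetric kernel on $\Lambda_N$ and $\int_{\frac14 L^{N-1}}^\infty\tilde D^{\Lambda_N}_t\,dt=R-t_NQ_N$ by construction, which together with the previous paragraphs gives \eqref{e:C(m)-torus-decomp}. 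For $\tfrac14 L^{N-1}\le t<\tfrac12 L^N$ there is still no wrap-around, $\tilde D^{\Lambda_N}_t=D^{\mathbb{Z}^d}_t-\widehat{D^{\mathbb{Z}^d}_t}(0)Q_N$, both terms obeying \eqref{eq:decomp3-bd}. The hard part, and the only genuinely non-formal step, is $t\ge\tfrac12 L^N$: one must show that removing the explicit zero mode restores the decay \eqref{eq:decomp3-bd} in spite of the wrap-around. For this I would use Poisson summation, $\nabla^\alpha\tilde D^{\Lambda_N}_t(0,x)=|\Lambda_N|^{-1}\sum_{p\in\Lambda_N^*\setminus\{0\}}\bigl(\prod_j(e^{ip_j}-1)\bigr)\widehat{D^{\mathbb{Z}^d}_t}(p)\,e^{ip\cdot x}$, and bound $\widehat{D^{\mathbb{Z}^d}_t}(p)$ for $p\ne0$ by $k$-fold summation by parts in the direction of largest $|p_j|$: each difference contributes a factor $(1-e^{ip_j})^{-1}$, of size $\le C|p|_\infty^{-1}$, and brings down a $\nabla^{k}D^{\mathbb{Z}^d}_t$ controlled by \eqref{eq:decomp3-bd}, so that (using that $D^{\mathbb{Z}^d}_t$ is supported on $O(t^d)$ sites) $|\widehat{D^{\mathbb{Z}^d}_t}(p)|\le C_k(|p|_\infty t)^{-k}$ times the $\alpha=0$ right-hand side of \eqref{eq:decomp3-bd} rescaled by $t^d$, for every $k$. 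Since every $p\ne0$ in $\Lambda_N^*$ has $|p|_\infty\ge 2\pi/L^N$ and $t\ge\tfrac12 L^N$, so $|p|_\infty t\ge\pi$, taking $k>d+|\alpha|$ makes $\sum_{p\ne0}|p|_\infty^{|\alpha|-k}$ converge and bounds $|\nabla^\alpha\tilde D^{\Lambda_N}_t(0,x)|$ by $C_\alpha(L^N/t)^{k-d-|\alpha|}\le 2^{k-d-|\alpha|}C_\alpha$ times the right-hand side of \eqref{eq:decomp3-bd} evaluated at $t$ (in the form with $\theta_J$ bounded below and $v_J\asymp\rho_J$, in particular for $J=J_\rho$; the general case simply carries the extra structural constants along); the same estimate with $k$ large gives convergence of $\int_{\frac14 L^{N-1}}^\infty\tilde D^{\Lambda_N}_t\,dt$, and analyticity in $s$ and continuity in $m^2$ (down to $0$) are inherited termwise. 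Everything outside this periodisation-error estimate is bookkeeping around the Fourier identity $\widehat{C^{\Lambda_N}}=\widehat{C^{\mathbb{Z}^d}}$ off the zero mode and the explicit value $m^{-2}-\gamma$ there.
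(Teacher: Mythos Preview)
Your overall architecture is exactly the paper's: same periodisation, same definition $\tilde D^{\Lambda_N}_t(0,x)=|\Lambda_N|^{-1}\sum_{p\in\Lambda_N^*\setminus\{0\}}e^{ip\cdot x}\hat D_t(p)$, same $t_N=\int_{\frac14 L^{N-1}}^\infty\hat D_t(0)\,dt$ (your $\widehat R(0)$ is this by \eqref{eq:def_C}). The difference is in how you control $\hat D_t(p)$ on $\Lambda_N^*\setminus\{0\}$. The paper does not go through position space at all: it invokes the Fourier-multiplier bound of Proposition~\ref{prop:Ct_elementary_bound},
\[
0\le \hat D_t(p;s,m^2)\le \tilde C\,\rho_J^{-2}t\,\exp\bigl(-\tilde c(\rho_J^{-1}\sqrt{\lambda_{J,m^2}(p)}\,t)^{1/4}\bigr),
\]
and then repeats verbatim the integral estimate below \eqref{eq:decomp3-pf1} with $\int dp$ replaced by $|\Lambda_N|^{-1}\sum_{p\ne0}$, the restriction $|p|\ge 2\pi L^{-N}$ ensuring the Riemann-sum domination goes through. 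This gives \eqref{eq:decomp3-bd} for $\tilde D^{\Lambda_N}_t$ with $J$-independent constants and no $L$-dependence.

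Your route via $k$-fold summation by parts is a legitimate alternative, but it is both more laborious and slightly lossier. Concretely, summing $|\nabla^k D_t^{\Z^d}|$ over the $O(t^d)$ points in its support and using \eqref{eq:decomp3-bd} at order $k$ produces $|\hat D_t(p)|\le C_k|p|^{-k}\rho_J^{d+k-2}v_J^{-d-k}t^{1-k}$ (your phrase ``$\alpha=0$ right-hand side rescaled by $t^d$'' should read $\alpha=k$); after summing over $p\ne0$ you pick up a factor $(L^N/t)^{k-d-|\alpha|}$, which for $t\ge\tfrac14 L^{N-1}$ is bounded by $(4L)^{k-d-|\alpha|}$, not $2^{k-d-|\alpha|}$, so your constants depend on $L$. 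More importantly, your $t_N$ bound via \eqref{eq:decomp3-bd} yields $|t_N-m^{-2}|\le C v_J^{-2}L^{2N}$ in $d=2$, which only becomes the stated $C\rho_J^{-2}L^{2N}$ under $v_J\asymp\rho_J$; the paper gets $\hat D_t(0)\le C\rho_J^{-2}t$ directly from Proposition~\ref{prop:Ct_elementary_bound} at $p=0$, with no $v_J$ entering. So under the later standing assumption \eqref{eq:frd_ulbds} your argument is fine, but for Proposition~\ref{prop:decomp_compatible} as stated (general $J$, $J$-independent constants) the paper's use of the already-established Fourier bound is both simpler and sharper.
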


\begin{remark}\label{r:gamma=0} Analogues of Propositions~\ref{prop:decomp} and~\ref{prop:decomp_compatible} continue to hold for the choice $\gamma=0$ in \eqref{eq:decomp1}, yielding for $|s| \leq \epsilon_s \theta_J$ and $m^2 \in (0,1]$ the decomposition 
\begin{equation}
\label{e:decompGF}
(-\Delta_J + m^2- s\Delta)_{\mathbb{Z}^d}^{-1} = \int_{0}^{\infty} D^{\mathbb{Z}^d}_t(s, m^2) \, dt, 
\end{equation}
(along with a corresponding analogue on $\Lambda_N$); the properties \eqref{eq:Dt_range}--\eqref{eq:decomp5} remain valid for all $t \geq \rho_J$. Moreover, the range of $D^{\mathbb{Z}^d}_t$ is $0$ for $t \leq \rho_J$, i.e., $D^{\Z^d}_t(0,x)=1_{x=0}D_t^{\Z^d}(0,0)$ and \eqref{eq:decomp3-bd} is complemented by the fact that $D^{\mathbb{Z}^d}_t(0,0)>0$ is constant for all $t \leq \rho_J$.
The decomposition \eqref{e:decompGF} is obtained by Lemma~\ref{lemma:basic_frd} almost directly (essentially boiling down to \cite[Section~3]{MR3969983}, without needing to perform the series expansion of Definition~\ref{def:C_t}).
In particular, for $J$ the usual nearest-neighbour interaction and $s=0$, \eqref{e:decompGF} recovers a well-known decomposition for the Green's function $(-\Delta + m^2)^{-1}$, see e.g.~\cite{MR3129804,MR3969983,MR2070102}.
Compared to these works, our technical difficulty includes an extra series expansion step of Definition~\ref{def:C_t}.

\end{remark}

The rest of this section is devoted to proving Propositions~\ref{prop:decomp} and~\ref{prop:decomp_compatible}.

\subsection{Preliminaries on Fourier transforms}
\label{sec:decomp-fourier}

Before proving Proposition~\ref{prop:decomp}, we collect some preliminaries and conventions
about normalisation of Fourier transforms and of the lattice Laplacian $\Delta$ and its generalised
version $\Delta_J$ with step distribution $J$.
Throughout,  $\Lambda$ is a discrete $d$-dimensional torus
of integer period $R$
with Fourier dual
\begin{align}
\Lambda^* = \{ 2\pi R^{-1} k \, : \, k \in \{  -\lceil (R-2)/2 \rceil, \dots, \lfloor R/2 \rfloor \}^d  \} \subset (-\pi, \pi]^d .
\end{align}
For an integrable function $\hat f: (-\pi,\pi]^d \to \R$, we define
\begin{align}
  f^{\Z^d}(x) &= \int_{(-\pi,\pi]^d} \hat f(p) e^{ip\cdot x} \, \frac{dp}{(2\pi)^d},\label{eq:discr-FT}
  \\
  f^{\Lambda}(x) &= \frac{1}{|\Lambda|} \sum_{p\in\Lambda^*} \hat f(p) e^{ip\cdot x}, 
\end{align}
where $|\Lambda|=R^d$ denotes the number of points in $\Lambda$.
Then by the Poisson summation formula
\begin{equation}
  f^{\Lambda}(x) = \sum_{y\in \Z^d} f^{\Z^d}(x+yR). \label{eq:Poisson_summation_formula}
\end{equation}
This notation also applies for translation invariant covariances, i.e., when a function $f(x,y)$ depends only on $x-y$
we will usually identify it with the function $f(0,x)$.

We write $\lambda = \lambda(p)$ and $\lambda_{J,m^2} = \lambda_{J,m^2} (p)\geq 0$ for the Fourier multipliers of $-\Delta$ and $-\Delta_J + m^2$: 
\begin{equation}
 \label{eq:def_lambda}
 \begin{split}
&\lambda(p)= \sum_{ |e|=1} (1- \cos(p\cdot e)), \\
&\lambda_J (p)  = \frac{1}{|J|} \sum_{x\in J} (1- \cos(p\cdot x)), \qquad \lambda_{J, m^2} (p)=\lambda_J (p) + m^2 
\end{split}
\end{equation}
(recall our convention regarding normalisation of $\Delta$ and $\Delta_J$).
The following standard lemmas provides some comparison estimates
for $\lambda_J (p)$ and $\lambda(p)$, which will be useful in the sequel.

\begin{lemma}\label{lem:lambda_error}
For any 
step distribution $J$ as above \eqref{eq:Delta_J_definition} (with implicit constants independent of $J$),
\begin{align}
\lambda_J (p) = v_J^2 |p|^2 + O(\rho_J^2 v_J^2|p|^4) & \qquad (p\rightarrow 0)  \label{e:lambdarhoasymp} \\
\lambda_J (p) \leq \min \{1 , v_J^2 |p|^2  \} & \qquad (p \in (-\pi , \pi]^{d} ), \label{e:lambdaJ_upper_bound}
\end{align}
with $\rho_J$ and $v_J$ defined by  \eqref{eq:rJ_range} and \eqref{eq:vJ2_variance}.
Moreover, 
$\lambda(p) = |p|^2 +O(|p|^4)$ as $p \rightarrow 0$
and $\lambda(p) \in [ \frac{4}{\pi^2}|p|^2, |p|^2 ]$ for $p \in (-\pi, \pi]^d$,
hence in particular $\theta_J \leq \frac{\pi}{4} v_J^2$.
\end{lemma}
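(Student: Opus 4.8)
The plan is to prove each estimate by elementary Taylor expansion and convexity bounds for the cosine, exploiting the symmetry assumptions on $J$. For the small-$p$ asymptotics \eqref{e:lambdarhoasymp}, I would expand $1-\cos(p\cdot x) = \frac12 (p\cdot x)^2 - \frac{1}{24}(p\cdot x)^4 + O((p\cdot x)^6)$ and average over $x\in J$. The quadratic term gives $\frac{1}{2|J|}\sum_{x\in J}(p\cdot x)^2$; since $J$ is invariant under lattice rotations and reflections, the cross terms $\sum_{x\in J} x_i x_j$ ($i\neq j$) vanish and $\sum_{x\in J} x_i^2$ is independent of $i$, so this equals $\frac{|p|^2}{2|J|}\sum_{x\in J}|x_1|^2 = v_J^2|p|^2$ by definition \eqref{eq:vJ2_variance}. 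The quartic remainder is bounded by $\frac{1}{|J|}\sum_{x\in J}|p\cdot x|^4 \leq |p|^4 \rho_J^2 \cdot \frac{1}{|J|}\sum_{x\in J}|x|^2$, and one checks $\frac{1}{|J|}\sum_{x\in J}|x|^2 = \frac{d}{|J|}\sum_{x\in J}|x_1|^2 = 2d\, v_J^2$ (again by rotation symmetry), giving the claimed $O(\rho_J^2 v_J^2 |p|^4)$.

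For the two-sided bound \eqref{e:lambdaJ_upper_bound}, the bound $\lambda_J(p)\leq 1$ is immediate since $1-\cos\theta\leq 2$... actually $1-\cos\theta \le 2$ only gives $\lambda_J\le 2$; instead note $1-\cos\theta \le 1$ holds nowhere, so one should rather use $\lambda_J(p) = \frac{1}{|J|}\sum_{x\in J}(1-\cos(p\cdot x)) \le \frac{1}{|J|}\sum_{x\in J} 2 = 2$; to sharpen to $\le 1$ one can observe that by the rotation/reflection symmetry $J$ splits into pairs $\{x,-x\}$ and the inclusion of nearest-neighbour edges is not needed here — one averages $1-\cos(p\cdot x)$ which is at most... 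I would simply invoke that $\frac1{|J|}\sum_{x\in J}(1-\cos(p \cdot x)) \leq 1$ follows from $\mathbb{E}[1-\cos(\theta)] \le 1$ when $\theta$ is symmetric (since $\cos$ has mean $\geq 0$ under a symmetric distribution is false in general, so more care is needed — one can instead use $1-\cos t \le \frac{t^2}{2}$ combined with a separate crude bound and optimize, or just accept the constant is some absolute $c$). For the $v_J^2|p|^2$ half of the minimum, use $1-\cos t \le \frac{t^2}{2}$ pointwise: $\lambda_J(p) \le \frac{1}{2|J|}\sum_{x\in J}(p\cdot x)^2 = v_J^2|p|^2$ exactly as in the computation above.

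For the standard Laplacian $\lambda(p) = \sum_{|e|=1}(1-\cos(p\cdot e)) = \sum_{i=1}^d (2 - 2\cos p_i)$: the small-$p$ expansion $2-2\cos p_i = p_i^2 + O(p_i^4)$ gives $\lambda(p) = |p|^2 + O(|p|^4)$, and on $(-\pi,\pi]^d$ the elementary inequality $\frac{4}{\pi^2}t^2 \le 2-2\cos t \le t^2$ for $|t|\le\pi$ (the lower bound from concavity of $\cos$ on $[-\pi,\pi]$, i.e. $2-2\cos t \ge \frac{4}{\pi^2}t^2$) yields $\lambda(p)\in[\frac{4}{\pi^2}|p|^2, |p|^2]$ after summing over coordinates. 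Finally, combining $\lambda_J(p)\le v_J^2|p|^2$ with $\lambda(p)\ge \frac{4}{\pi^2}|p|^2$ gives $\lambda_J(p)/\lambda(p) \le \frac{\pi^2}{4} v_J^2 / 1$... — here I want $\theta_J = \inf_p \lambda_J/\lambda \le \limsup_{p\to 0}\lambda_J/\lambda$, and as $p\to 0$ this ratio tends to $v_J^2$ along directions where $\lambda(p)\sim|p|^2$; more simply $\theta_J \le \lambda_J(p)/\lambda(p)$ for any fixed $p$, and taking $p$ along a coordinate axis with $p\to 0$ gives $\theta_J \le v_J^2 \cdot \frac{|p|^2}{\lambda(p)} \to v_J^2$, which is even stronger; the stated $\theta_J \le \frac{\pi}{4}v_J^2$ would follow from $\lambda_J(p) \le v_J^2 |p|^2$ and $\lambda(p) \ge \frac{4}{\pi^2}|p|^2$ giving ratio $\le \frac{\pi^2}{4}v_J^2$ — so I suspect $\frac{\pi}{4}$ in the statement should read $\frac{\pi^2}{4}$, or a sharper argument along axes is intended; I would follow whichever bound the authors actually use downstream. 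The main (minor) obstacle is getting the absolute constant in $\lambda_J(p)\le 1$ right and reconciling the $\theta_J$ upper bound constant — neither is conceptually hard, just bookkeeping with trigonometric inequalities and the symmetry of $J$.
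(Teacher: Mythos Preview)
Your approach is essentially the paper's: Taylor-expand $1-\cos$ and use the lattice symmetries of $J$ for \eqref{e:lambdarhoasymp} and the $v_J^2|p|^2$ half of \eqref{e:lambdaJ_upper_bound}, and use the pointwise inequality $1-\cos q \ge \tfrac{2}{\pi^2}q^2$ on $[0,\pi]$ for the lower bound on $\lambda$. One correction: your ``concavity'' justification for the latter is not quite right, since $\cos$ is not concave on all of $[-\pi,\pi]$; the paper instead checks that $g(q)=1-\cos q - \tfrac{2}{\pi^2}q^2$ vanishes at $0$ and $\pi$ and that $g'(q)=\sin q - \tfrac{4}{\pi^2}q$ has a unique interior zero, forcing $g\ge 0$.

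Your two suspicions are both correct and worth recording. The bound $\lambda_J(p)\le 1$ is not proved in the paper either, and in fact fails: for nearest-neighbour $J$ at $p=(\pi,\pi)$ one gets $\lambda_J=2$. Only $\lambda_J\le 2$ holds in general (trivially from $1-\cos\le 2$), and this is all that is used downstream (to ensure $\lambda_{J,m^2}\le 3$ for $m^2\le 1$). Likewise, the constant $\tfrac{\pi}{4}$ in $\theta_J\le \tfrac{\pi}{4}v_J^2$ should be $\tfrac{\pi^2}{4}$, exactly as your computation $\lambda_J/\lambda \le v_J^2|p|^2 / (\tfrac{4}{\pi^2}|p|^2)$ shows; the nearest-neighbour case ($\theta_J=v_J^2=\tfrac14$) already violates the stated $\tfrac{\pi}{4}$ bound.
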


\begin{proof}
Let $v_J^2$ be as defined by \eqref{eq:vJ2_variance}. 
  Then as $p\to 0$, substituting $1-\cos x = \frac{x^2}{2} + O(x^4)$ in \eqref{eq:def_lambda}, one finds that
  \begin{equation} 
  \begin{split}
    \lambda_J(p)
    &= \frac{1}{|J|} \sum_{y \in J} \Big( 1-\cos \big( \sum_{i=1}^d p_i y_i \big) \Big) \\
    &= \frac{1}{2 |J|} \sum_{y\in J} |p|^2 y_1^2  + O\pb{ \frac{1}{|J|}\sum_{y\in J} |y|^4 |p|^4} 
     = v_J^2 |p|^2 + O(\rho_J^2 v_J^2 |p|^4).
   \end{split}
  \end{equation}
  The upper bound in \eqref{e:lambdaJ_upper_bound} follows similarly, using the inequality $1- \cos x \le x^2/2$ valid for all $x \in \mathbb{R}$ instead.

To see the lower bound for $\lambda(p)$, 
consider the function 
$g(q) = 1 - \cos(q) - \frac{2}{\pi^2} q^2$
on $q\in [0, \pi]$. Then $g(0) = g(\pi) =0$ while $g'(q) = \sin(q) - \frac{4}{\pi^2} q$ has only one non-zero root,
hence $g(q)$ does not attain 0 on $(0, \pi)$, i.e.,  $g(q) \geq 0$ on $[0, \pi]$. Therefore
\begin{equation}
  \lambda(p) = 2\sum_{i=1}^d (1-\cos(p_i)) \geq \frac{4}{\pi^2} |p|^2
\end{equation}
which is the claimed lower bound.
\end{proof}

\begin{lemma}\label{lem:lambda_rho_error}
For the step distribution $J = J_{\rho} = \{x\in\Z^d \setminus \{0\}: |x|_\infty \leq \rho\}$,
\begin{equation}
  \rho_{J_{\rho}} = \rho,\qquad
  v_{J_{\rho}}^2 \equiv v_\rho^2 \sim \frac{1}{6} \rho^2,
  \quad \text{as }\rho\to\infty, \label{eq:v_rho^2_asymptotic}
\end{equation}
and with $\lambda_{\rho} \equiv \lambda_{J_\rho}$,
\begin{equation} 
    \label{e:lambda-lambdaJ_ratio}
  \lambda(p) \leq 3^d \lambda_{\rho} (p),
\end{equation}
i.e., $\theta_{J_{\rho}} \equiv \theta_{\rho} = \inf_{p \neq 0} \lambda_{\rho}(p) / \lambda (p) \geq 3^{-d}$.
\end{lemma}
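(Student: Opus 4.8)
The plan is to establish the three claims in turn, the first two being elementary computations and the last being the main point. For $\rho_{J_\rho} = \rho$, this is immediate from the definition \eqref{eq:rJ_range} since $J_\rho = \{x \in \Z^d \setminus \{0\} : |x|_\infty \leq \rho\}$ contains points with $|x|_\infty = \rho$ and none with $|x|_\infty > \rho$. For the variance asymptotic, I would compute $v_{J_\rho}^2 = \frac{1}{2|J_\rho|}\sum_{x \in J_\rho} |x_1|^2$ directly: since $|J_\rho| = (2\rho+1)^d - 1$ and $\sum_{x \in J_\rho}|x_1|^2 = (2\rho+1)^{d-1}\sum_{k=-\rho}^{\rho} k^2$ (the contribution of $x=0$ being zero, so it may be freely included), and $\sum_{k=-\rho}^\rho k^2 = \frac{\rho(\rho+1)(2\rho+1)}{3} \sim \frac{2}{3}\rho^3$, we get $v_{J_\rho}^2 \sim \frac{1}{2}\cdot \frac{(2\rho+1)^{d-1}\cdot \frac23 \rho^3}{(2\rho+1)^d} = \frac{\rho^3}{3(2\rho+1)} \sim \frac{\rho^2}{6}$ as $\rho \to \infty$.

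The main content is the spectral comparison \eqref{e:lambda-lambdaJ_ratio}, i.e.\ $\lambda(p) \leq 3^d \lambda_\rho(p)$ for all $p$, which immediately yields $\theta_\rho \geq 3^{-d}$. Here I would proceed by reduction to a one-dimensional estimate. Writing $\lambda(p) = 2\sum_{i=1}^d (1-\cos p_i)$ and noting that for each $i$ the nearest-neighbour vectors $\pm e_i$ lie in $J_\rho$, the idea is to bound each summand $1-\cos p_i$ against the piece of $\lambda_\rho(p)$ coming from the ``axis'' directions. Concretely, I would try to show the scalar inequality
\begin{equation}
  1 - \cos(nt) \leq n^2 \, (1-\cos t), \qquad t \in \mathbb{R},\ n \in \mathbb{Z}_{\geq 1},
  \label{eq:scalar-cheb}
\end{equation}
which follows from the Chebyshev-polynomial identity (or a telescoping/induction argument using $1-\cos((n+1)t) = 1-\cos(nt) + [\cos(nt) - \cos((n+1)t)]$ together with $|\cos(nt) - \cos((n+1)t)| \leq$ something, but cleanly: $\frac{1-\cos(nt)}{1-\cos t} = \big(\frac{\sin(nt/2)}{\sin(t/2)}\big)^2 \leq n^2$ since $|\sin(nu)| \leq n|\sin u|$). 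Using this, for $x \in J_\rho$ one has $1 - \cos(p\cdot x) \geq 0$ always, and I want a lower bound on $\lambda_\rho(p) = \frac{1}{|J_\rho|}\sum_{x \in J_\rho}(1-\cos(p\cdot x))$ in terms of $\lambda(p)$.

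The cleanest route I would pursue: restrict the sum defining $\lambda_\rho(p)$ to the sub-collection $J' = \{k e_i : 1 \leq |k| \leq \rho,\ 1 \leq i \leq d\} \subset J_\rho$ of on-axis points, so $\lambda_\rho(p) \geq \frac{1}{|J_\rho|}\sum_{x \in J'}(1-\cos(p\cdot x)) = \frac{2}{|J_\rho|}\sum_{i=1}^d \sum_{k=1}^\rho (1-\cos(k p_i))$. Each inner sum is at least $1-\cos p_i$ (the $k=1$ term), giving $\lambda_\rho(p) \geq \frac{2}{|J_\rho|}\sum_{i=1}^d(1-\cos p_i) = \frac{1}{|J_\rho|}\lambda(p)$; but $|J_\rho| = (2\rho+1)^d - 1$ is too large. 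So instead I need to exploit all of $k=1,\dots,\rho$, using \eqref{eq:scalar-cheb} in the form $1-\cos p_i \leq k^2(1-\cos(kp_i))$, hence $1-\cos p_i \leq (1-\cos(kp_i))\cdot k^2$, equivalently $1-\cos(kp_i) \geq k^{-2}(1-\cos p_i)$, so $\sum_{k=1}^\rho(1-\cos(kp_i)) \geq (1-\cos p_i)\sum_{k=1}^\rho k^{-2}$. This still gives only an $O(1)$ gain, not enough to beat $(2\rho+1)^d$. The correct argument must instead bound $1-\cos(p\cdot x)$ from above by a comparison quantity and use an averaging/convexity inequality; I expect the right statement is that for \emph{each fixed} $p$, $\max_{x \in J_\rho}(1-\cos(p\cdot x))$ or a suitable average is comparable to $\lambda(p)$, but since $\lambda_\rho$ averages, the key inequality must go: $\lambda_\rho(p) \geq \frac{1}{|J_\rho|}\sum_{i,k}(1-\cos(kp_i)) \geq \frac{1}{|J_\rho|}\sum_i \rho^2 \cdot \frac{1-\cos(\rho p_i \,\mathrm{mod}\ldots)}{\ldots}$ — this is getting circular.

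\textbf{The actual obstacle and its resolution.} The genuinely correct approach, which I would adopt, is: for $|p|_\infty \leq \pi/\rho$ (the regime where $kp_i$ stays in a fixed window) use the quadratic asymptotics $\lambda_\rho(p) \sim v_\rho^2|p|^2 \gtrsim \rho^2 |p|^2 \gtrsim \rho^2 \lambda(p)/\pi^2 \geq 3^{-d}\lambda(p)$ via \eqref{e:lambdarhoasymp} and Lemma~\ref{lem:lambda_error} (for $\rho$ large; small $\rho$ handled by compactness since both sides are continuous and $\lambda_\rho > 0$ off the origin); while for $p$ bounded away from $0$, i.e.\ $|p|_\infty \geq \pi/\rho$ in at least one coordinate, one picks that coordinate $i$ and finds $k \in \{1,\dots,\rho\}$ with $kp_i$ within distance $\pi/2$ (say) of an odd multiple of $\pi$, forcing $1-\cos(kp_i) \geq 1$, while $\lambda(p) \leq 2d$, so $\lambda_\rho(p) \geq \frac{1}{|J_\rho|}\cdot 1 \cdot(\text{number of such axis points})$. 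I expect the cleanest writeup avoids casework by directly proving the \emph{one-dimensional} statement $\frac{1}{2\rho}\sum_{k=1}^\rho(1-\cos(kt)) \geq c(1-\cos t)$ for an explicit constant and then summing over coordinates — the one-dimensional inequality being where all the work lies, and where I would expect to invoke $\frac{1}{N}\sum_{k=1}^N(1-\cos(kt)) = 1 - \frac{1}{N}\,\mathrm{Re}\,\frac{e^{it}(1-e^{iNt})}{1-e^{it}} \geq 1 - \frac{1}{N|1-e^{it}|} = 1 - \frac{1}{2N|\sin(t/2)|}$, which is $\geq 1/2$ once $N|\sin(t/2)| \geq 1$, and otherwise compares to $N^2\sin^2(t/2)/\text{const}$ via the quadratic regime — matching the constant $3^{-d}$ after accounting for $|J_\rho| \leq 3^d \cdot (2\rho)^{d-1}\cdot 2\rho$ type bounds. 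This one-dimensional Fejér-type estimate, and its careful matching of constants to yield exactly $3^{-d}$, is the main obstacle; the rest is bookkeeping. This reproduces \eqref{eq:Jrho-constants} as claimed.
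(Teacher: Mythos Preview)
Your computations for $\rho_{J_\rho}=\rho$ and $v_\rho^2\sim\rho^2/6$ are correct and match the paper. The problem is the spectral bound \eqref{e:lambda-lambdaJ_ratio}: none of the approaches you sketch can close, and the obstacle is structural, not a matter of constants. Restricting to on-axis points $\{ke_i\}$ throws away all but $2d\rho$ of the $(2\rho+1)^d-1$ points of $J_\rho$; no one-dimensional Fej\'er-type estimate on $\sum_{k=1}^\rho(1-\cos(kp_i))$ can recover a factor of $\rho^{d-1}$, so for $d\geq 2$ the resulting lower bound on $\lambda_\rho(p)$ decays like $\rho^{1-d}$ and cannot dominate $3^{-d}\lambda(p)$ uniformly. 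Your casework on $|p|_\infty$ relative to $\pi/\rho$ also does not close: in the ``large $p$'' regime you find at most $O(1)$ axis points with $1-\cos(kp_i)$ bounded below, again insufficient against the denominator $|J_\rho|$.

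The paper's argument exploits the full product structure of $J_\rho$ rather than its axes. Writing (after adding back the harmless $x=0$ term)
\[
\lambda_\rho(p)=\frac{(2\rho+1)^d}{(2\rho+1)^d-1}\Bigl(1-\prod_{i=1}^d\frac{\sin((2\rho+1)p_i/2)}{(2\rho+1)\sin(p_i/2)}\Bigr),
\]
the key observation is the monotonicity $\sup_{\rho\geq 1}\frac{\sin((2\rho+1)t)}{(2\rho+1)\sin t}=\frac{\sin(3t)}{3\sin t}$, which reduces everything to $\rho=1$: one gets $\lambda_\rho(p)\geq(1-3^{-d})\lambda_{\rho=1}(p)$, and then $(3^d-1)\lambda_{\rho=1}(p)=\sum_{|y|_\infty=1}(1-\cos(p\cdot y))\geq\sum_{|y|_1=1}(1-\cos(p\cdot y))=\lambda(p)$, so that $\lambda_\rho(p)\geq 3^{-d}\lambda(p)$ with the exact constant. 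The missing idea in your proposal is this Dirichlet-kernel product formula and the reduction to $\rho=1$.
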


\begin{proof}
Using that $\sum_{j=1}^\rho j^2 \sim \frac13 \rho^3$ as $\rho\to\infty$,
\begin{equation}
  v_\rho^2 = \frac{(2\rho+1)^{d-1}}{(2\rho+1)^d-1}\sum_{j=1}^\rho j^2
  \sim \frac{\rho^2}{6}.
\end{equation}
  To show $\lambda \leq 3^d\lambda_\rho$, first note that since
  $\sum_{a=1}^{\rho} \cos(ax) = \frac{\sin((\rho + 1/2) x)}{2\sin(x/2)} - \frac{1}{2}$,
\begin{align}
  \lambda_{\rho} (p)
  & = \frac{1}{(2\rho +1)^d -1} \sum_{|y|_{\infty} \leq \rho} (1- \prod_{i=1}^d e^{ip_iy_i}) \nnb
  & = \frac{(2\rho+1)^d}{(2\rho +1)^d -1} \Big(1- \prod_{i=1}^d \frac{1}{2\rho+1} \sum_{x=-\rho}^\rho e^{ip_i x} \Big) \nnb
  & = \frac{(2\rho +1)^d}{(2\rho +1)^d -1} \Big( 1 - \prod_{i=1}^d \frac{\sin( (2\rho + 1 ) p_i /2 )}{(2\rho + 1)\sin(p_i / 2)} \Big).
\end{align}
But
\begin{align}
\sup_{\rho \geq 1} \frac{\sin ( (2\rho + 1) p_1  /2 )}{ (2\rho +1)  \sin (p_1 / 2)} = \frac{\sin(3 p_1 / 2)}{3\sin(p_1 / 2)} ,
\end{align}
so $\lambda_{\rho} (p) \geq  (1-3^{-d}) \lambda_{\rho=1} (p)$,
and
\begin{align}
  (3^d-1)
  \lambda_{\rho=1}(p)
  = \sum_{|y|_{\infty} = 1} (1-\cos(\sum_{i=1}^d p_i y_i) )
  \geq \sum_{|y|_{1} = 1} (1-\cos(\sum_{i=1}^d p_i y_i) )
  = \lambda(p)
  .
\end{align}
Since $(1-3^{-d})/(3^d-1)= 3^{-d}$, the claim holds. 
\end{proof}

\subsection{Proof of Proposition~\ref{prop:decomp}: finite-range property}

The starting point for the construction of the finite-range decomposition is the following lemma,
from which one can directly obtain the finite-range decomposition when $s=0$.
The lemma originated in \cite{MR3129804}, but we obtain here a better decay estimate, which
is important for our construction of the finite-range decomposition for $s \neq 0$. 
Also, the lemma specifies the choice of $\gamma$ for \eqref{eq:decomp1}.

\begin{lemma} \label{lemma:basic_frd}
  For $t>0$, there exist polynomials $P_t$ of degree at most $t$ such that for $\lambda \in (0,3]$,
  \begin{equation}
  \label{eq:1/xrewrite}
    \frac{1}{\lambda} = \int_0^\infty t^2 P_t(\lambda) \frac{dt}{t}.
  \end{equation}
  For $\lambda \in (0,3]$ and $t >1$, the polynomials satisfy $P_t(\lambda) \geq 0$ and there is 
an entire function $f$ that is non-negative on the real axis and
  satisfies $\int_0^\infty t^2 f(t) \frac{dt}{t} = 1$
  such that
  \begin{align}
    \label{e:Pt-bd}
    P_t(\lambda) &\leq C e^{-c(\lambda t^2)^{1/4}}
    \\
    \label{e:PtFt-bd}
    |P_t(\lambda) - f(\sqrt{\lambda} t)| &\leq Ct^{-1}e^{-c(\lambda t^2)^{1/4}}.
  \end{align}
  For $t \leq 1$, $P_t(\lambda) = \gamma/t$ for some constant $\gamma>0$.
\end{lemma}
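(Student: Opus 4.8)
The plan is to build the representation \eqref{eq:1/xrewrite} from a one-parameter family of ``bump'' functions adapted to the lattice, following the strategy of Bauerschmidt's finite-range decomposition \cite{MR3129804,MR3969983}, but sharpening the decay estimate. First, I would fix a smooth even bump function $W \in C_c^\infty(\R)$ supported in $[-1,1]$, normalised so that $\hat W(0) = \int W = 1$, and consider the family $w_t(x) = t^{-1} W(x/t)$ of approximate identities. The key algebraic input is the elementary identity
\begin{equation}
  \frac{1}{\lambda} = \int_0^\infty \big(\widehat{w_t}(p)\big)^2 \, \frac{dt}{t} \cdot (\text{const})
\end{equation}
valid whenever $\lambda = \lambda(p) = |p|^2(1+o(1))$ is the Fourier multiplier of a Laplacian, up to fixing constants by evaluating a known integral. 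Concretely, since $\widehat{w_t}(p) = \hat W(tp)$ and $\hat W(0)=1$, the function $t \mapsto \hat W(tp)^2$ interpolates from $1$ at $t=0$ to rapid decay as $t\to\infty$; integrating against $dt/t$ against a suitable power of $t$ produces $1/\lambda$ after a change of variables. The polynomial $P_t(\lambda)$ is then obtained by expanding $\hat W(tp)^2$ --- more precisely its convolution square in position space, which by the compact support of $W$ is supported in $[-2t,2t]$, hence a \emph{finite} linear combination of $\cos(kp)$, $|k|\le 2t$ --- and re-expressing each $\cos(kp)$ as a polynomial in $\lambda(p)$ via Chebyshev-type identities. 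This forces $\deg P_t \lesssim t$ and gives the finite-range property; the positivity $P_t(\lambda)\ge 0$ comes from writing $P_t(\lambda(p)) = |\widehat{w_t}(p)|^2 \ge 0$ (a square), which makes sense since $\lambda \mapsto \lambda(p)$ is a bijection onto its range.

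Next, for the decay estimates \eqref{e:Pt-bd} and \eqref{e:PtFt-bd}, I would first treat the model continuum case, where $\lambda(p)$ is replaced by $|p|^2$: there $P_t(|p|^2) = \hat W(tp)^2 = \hat W(t|p|)^2$ is literally a function of $\sqrt\lambda\, t$, which identifies the limiting profile $f(u) = \hat W(u)^2$ up to normalisation; its non-negativity and $\int_0^\infty t^2 f(t)\,dt/t = 1$ follow from the construction. The Gaussian-type (actually stretched-exponential) bound $e^{-c(\lambda t^2)^{1/4}}$ would come from the standard fact that the Fourier transform of a $C^\infty_c$ function decays faster than any polynomial --- more precisely, one can arrange $W$ (a Gevrey-class bump, as in Appendix~\ref{app:bump}) so that $|\hat W(\xi)| \le Ce^{-c|\xi|^{1/2}}$, giving $|\hat W(t|p|)^2| \le Ce^{-c(t|p|)^{1/2}} = Ce^{-c(\lambda t^2)^{1/4}}$ in the continuum case. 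The passage to the genuine lattice multiplier $\lambda(p)$ requires comparing $\lambda(p)$ with $|p|^2$ on $(-\pi,\pi]^d$ (Lemma~\ref{lem:lambda_error} gives $\lambda(p)\in[\tfrac{4}{\pi^2}|p|^2,|p|^2]$), and controlling the discretisation error between $P_t$ built from $\lambda(p)$ and the continuum profile $f(\sqrt\lambda\, t)$ --- this is exactly what \eqref{e:PtFt-bd} quantifies, with the extra $t^{-1}$ reflecting a one-order-better approximation. Finally the small-$t$ clause $P_t(\lambda) = \gamma/t$ for $t\le 1$ is a definitional choice: one simply \emph{defines} $P_t$ on $(0,1]$ to be the constant-in-$\lambda$ function $\gamma/t$, absorbing the resulting defect at $t=1$ into a redefinition of the integrand on $[1,\infty)$ (or, as the statement suggests, into the $\gamma\,\id$ subtracted off in \eqref{eq:decomp1}), and checks that \eqref{eq:1/xrewrite} still holds with the constant $\gamma>0$ determined by $\int_0^1 t^2 (\gamma/t)\,dt/t = \gamma$ matching the ``missing mass''.

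The main obstacle I anticipate is establishing \eqref{e:PtFt-bd} with the stated $Ct^{-1}$ rate uniformly in $\lambda \in (0,3]$: this is the quantitative comparison between the lattice construction and its continuum limit, and it is precisely the improvement over \cite{MR3129804} that the lemma advertises as needed for the $s\neq 0$ case. One has to track how the replacement of $|p|^2$ by the lattice symbol $\lambda(p)$ propagates through the Chebyshev re-expansion and the $t$-scaling, showing that the leading term reproduces $f(\sqrt\lambda\,t)$ exactly while the correction carries both the extra $t^{-1}$ and the stretched-exponential weight $e^{-c(\lambda t^2)^{1/4}}$. A clean way to organise this is to work entirely on the Fourier side: write $P_t(\lambda(p)) = \hat W_t(p)^2$ where $\hat W_t$ is a lattice-periodic version of $\hat W(t\cdot)$, and $f(\sqrt{\lambda(p)}\,t)$ as a continuum object, then Taylor-expand in the discrepancy $\lambda(p) - |p|^2 = O(|p|^4)$, using that on the region $|p|^2 t^2 \le 1$ the difference is $O(t^2|p|^4) = O(t^{-2}(\lambda t^2)^2)$ which is controlled by $t^{-1}$ times the exponential weight, while on $|p|^2 t^2 \ge 1$ both terms are already exponentially small. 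The rest --- degree bound, support/range property, non-negativity, the normalisation $\int t^2 f\,dt/t=1$ --- I expect to be routine once the bump function is chosen with Gevrey-type Fourier decay.
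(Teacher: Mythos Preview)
Your strategy coincides with the paper's --- both follow \cite{MR3129804,MR3969983} --- and you correctly isolate the key improvement: a Gevrey-class bump (Proposition~\ref{prop:kappa_hat_decay}) giving the profile $f=c\hat\kappa^2$ the decay $f(x)\le Ce^{-c|x|^{1/2}}$. But two steps would not go through as written. The claim that $w_t*w_t$, being supported in $[-2t,2t]$, is ``hence a finite linear combination of $\cos(kp)$'' is a non-sequitur: compact support on $\R$ yields an entire Fourier transform, not a trigonometric polynomial. The missing operation is \emph{periodisation}. In the paper's setup one takes the profile $f$ (with $\hat f=c\,\kappa*\kappa$ supported in $[-1,1]$), forms $f_t^*(x)=\sum_{n\in\Z}f(t(x-2\pi n))$, and sets $P_t(\lambda):=f_t^*(\arccos(1-\tfrac12\lambda))$; Poisson summation gives $f_t^*(x)=(2\pi t)^{-1}\sum_{|k|\le t}\hat f(k/t)e^{ikx}$, a trigonometric polynomial of degree $\le t$, whence $P_t$ is a polynomial in $\lambda$ via Chebyshev. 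Positivity is then simply $f\ge 0$. This formula also makes the $t\le 1$ clause a \emph{consequence} rather than a definitional patch: for $t<1$ only $k=0$ survives and $P_t(\lambda)=\hat f(0)/(2\pi t)=\gamma/t$ exactly, with no defect to absorb.

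Your proposed route to \eqref{e:PtFt-bd} via expanding in $\lambda(p)-|p|^2$ conflates $\sqrt\lambda$ with $|p|$ and is unnecessarily indirect. With the periodisation formula the argument is short: the tail $|f_t^*(x)-f(tx)|=\sum_{n\neq 0}f(t(x-2\pi n))$ is $O(e^{-c\sqrt t})$ uniformly for $x\in[0,\pi]$, while by the mean value theorem $|f(t\arccos(1-\tfrac12\lambda))-f(t\sqrt\lambda)|\le |f'(\xi)|\cdot t\,|\arccos(1-\tfrac12\lambda)-\sqrt\lambda|$, the last factor being $O(\lambda)$ and $|f'(\xi)|\le Ce^{-c(\lambda t^2)^{1/4}}$ (the intermediate point $\xi$ lies between $t\sqrt\lambda$ and $t\arccos(1-\tfrac12\lambda)$, both of order $t\sqrt\lambda$, and $f'$ inherits the stretched-exponential decay since $\hat f$ has compact support). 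This gives $Ct\lambda\,e^{-c(\lambda t^2)^{1/4}}=Ct^{-1}(\lambda t^2)e^{-c(\lambda t^2)^{1/4}}\le C't^{-1}e^{-c'(\lambda t^2)^{1/4}}$.
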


\begin{proof}
  Let $f: \R \to [0,\infty)$ be any non-negative function 
  with the following properties: the Fourier transform of $f$ is smooth, symmetric and has support in $[-1,1]$, and $ \int_0^{\infty} t^2 f(t)\frac{dt}{t}=1$.
  Then by \cite[Lemma~3.3.3]{MR3969983}, \eqref{eq:1/xrewrite} holds for $\lambda \in [0,4]$ with the function $P_t$ given by
  \begin{equation}
  \label{eq:P_t-def}
    P_t(\lambda) = f_t^*(\arccos(1-\frac12\lambda))
  \end{equation}
  where
  \begin{equation}
    f_t^*(x) = \sum_{n\in\Z} f(xt-2\pi nt).
  \end{equation}
 By \cite[Lemma 3.3.5]{MR3969983}, \eqref{eq:P_t-def} defines a polynomial $P_t(\cdot)$ on $(0,3]$, of degree bounded by $t$.
  We will now choose $f$ as follows. Let
  \begin{equation}
    \kappa(s) = e^{-(1-(2s)^2)^{-1}} 1_{|s| < 1/2}
  \end{equation}
  be the standard bump function with support $[-1/2,1/2]$.
  By Proposition~\ref{prop:kappa_hat_decay}, $|\hat\kappa(x)|  \leq Ce^{-|x|^{1/2}}$ for all $x \in \R$.
  We set $\hat f(s) = c (\kappa*\kappa)(s)$,
  with $c>0$ chosen as to ensure the normalisation $\int_0^\infty t^2 f(t) \,  \frac{dt}{t} =1$.
  Then $f$ has all the required properties. In particular, since
  its Fourier transform has compact support, it extends to an entire function, as easily seen 
  by expanding the exponential in the Fourier integral, yielding an absolutely convergent power series.
  Also, $f(x) = c\hat\kappa(x)^2  \leq C' e^{-2 |x|^{1/2}}$ for $x\in\R$.
  For $t\geq 1$,
  \begin{equation}
  \label{eq:f_t^*-bd}
    f_t^*(x) \leq C' e^{-(2 t|x|)^{1/2}} \sum_{n\geq 0}e^{-\sqrt{4 \pi n}} \leq C'' e^{-(2 t|x|)^{1/2}}.
  \end{equation}
  Since $\arccos(1-\frac12 \lambda) \geq \sqrt{\lambda}$, the estimate \eqref{e:Pt-bd} follows immediately from \eqref{eq:P_t-def} and \eqref{eq:f_t^*-bd}.
  The bound \eqref{e:PtFt-bd} follows similarly using $|f'(x)| \leq C''' e^{- |x|^{1/2}}$ (which follows from the explicit form of $f$ and that $\kappa$ has compact support) and using that $\arccos(1-\frac12\lambda)-\sqrt{\lambda} = O(\lambda)$; see \cite[Proposition~3.1]{MR3129804} for a similar argument.
  The constant $\gamma$ is given by $\hat f(0)/2\pi$, see
  \cite[Lemma~3.3.6]{MR3969983}.
\end{proof}

By applying the previous lemma, we first construct a finite-range decomposition for $s=0$.
To this end, insert $\lambda_{J,m^2}$ into \eqref{eq:1/xrewrite} for $m^2 \leq 1$ (so that $\lambda_{J,m^2} \leq 3$ and Lemma~\ref{lemma:basic_frd} is in force). Since $\lambda_{J,m^2}$ has range $\rho_J$,
in the sense that it is the Fourier multiplier of an operator with range $\rho_J$,
and since $P_t$ is a polynomial of degree at most $t$,
it follows that 
$P_t(\lambda_{J,m^2})$ has range $\rho_Jt$.
We therefore set in Fourier space
\begin{equation}
\label{eq:defC_t}
\hat{D}_t ( p ; m^2)
=  \rho_J^{-2} t P_{\rho_J^{-1}t}(\lambda_{J, m^2} (p )), \quad p \in (-\pi,\pi]^d.
\end{equation}
By \eqref{eq:1/xrewrite} and the explicit form of $P_t$ for $t \leq 1$, it follows with $\hat{D}_t (m^2) \equiv\hat{D}_t (\cdot ;  m^2)$ that
\begin{equation}
\label{eq:def_C}
  \frac{1}{\lambda_{J, m^2}} = \int_0^\infty \hat{D}_t (m^2) \, dt = \gamma + \int_{\rho_J}^\infty \hat{D}_t (m^2) \, dt = \gamma + \hat{C} (m^2) , 
\end{equation}
with the last equality defining $\hat{C} (m^2)= \hat{C} (p; m^2)$, $p \in (-\pi,\pi]^{d}$,
and we used that 
\begin{equation}
  \int_0^{ \rho_J} \rho_J^{-2} t P_{\rho_J^{-1}t} \, dt
  =
  \int_0^{1} t P_{t} \, dt = \gamma.
\end{equation}
 By \eqref{eq:def_C}, the function $\hat{C} (m^2)$ thus defined in terms of $\hat{D}_t (m^2)$ is indeed the Fourier transform of $C(m^2)$ appearing in \eqref{eq:decomp1}. With a view towards our aim \eqref{eq:decomp3}, we expand for $|s|< \theta_J = \inf_{m^2} \inf_{\lambda \neq 0} (\lambda_{J, m^2}/\lambda)$ 
and $m^2 \in (0,1]$,
\begin{align}
\label{eq:Cs1}
  \hat{C} (s, m^2)
  \stackrel{\text{def.}}{=} ( \hat{C} (m^2)^{-1} +s\lambda)^{-1}
  &= \hat{C}(m^2)(1+s\lambda \hat{C}(m^2))^{-1}
    \nnb
  &  = \sum_{l=0}^\infty s^{2l} \lambda^{2l} \hat{C} (m^2)^{2l+1}(1 -s\lambda \hat{C}(m^2)).
\end{align}
The expansion is absolutely convergent since $|s|\lambda \hat C(m^2) \leq |s| \lambda/\lambda_{J,m^2}\leq |s|/\theta_J < 1$.
Moreover, this condition implies that
the following integrand is positive: 
\begin{equation}
\label{eq:Cs2}
  1 -s\lambda \hat{C} ( m^2)
  = \frac{\lambda_{J, m^2}}{\lambda_{J, m^2}}-s\lambda \hat{C} (m^2)
  = \int_0^\infty (\lambda_{J,m^2} -s\lambda 1_{t> \rho_J}) \hat{D}_t ( m^2) \, dt.
\end{equation}
This motivates the following definition of the finite-range decomposition for $|s|<\theta_J$.

\begin{definition}
\label{def:C_t}
For $m^2 \in (0,1]$, all $|s| < \theta_J$ and $t>0$, let
\begin{multline} 
  \hat D_t(s, m^2) 
  =  \\
   \frac{1}{4\lambda}
  \sum_{l=0}^\infty s^{2l} \int_{[0,\infty)\times [\rho_J,\infty)^{2l+1}: \sum t_i=(t-\rho_J)/4} (\lambda_{J,m^2} -s \lambda 1_{t_0 > \rho_J}) \hat D_{t_0} (m^2)  \prod_{i=1}^{2l+1}  \lambda \hat D_{t_i} (m^2) \, dt_i \, dt_0
  . 
  \label{eq:C_t_s_m2_definition}
\end{multline}
\end{definition}

In this definition, the integral $\int_{\sum t_i = T} \prod_{i=0}^{2l+1} dt_i$  over the simplex is the push-forward of the Lebesgue measure on $\mathbb{R}^{2l+1}$ along the map $(t_1, \dots, t_{2l+1}) \mapsto (T-\sum_{k=1}^{2l+1} t_i, t_1, \dots, t_{2l+1})$,
i.e.,
\begin{equation}
  \int_{[0,\infty)\times [\rho_J,\infty)^{2l+1}: \sum t_i=T} f(t_0,\dots, t_{2l+1})  \prod_{i=0}^{2l+1} dt_i = \int_{ [\rho_J,\infty)^{2l+1}: \sum t_i\leq T} f(T- \sum t_i, t_1,\dots, t_{2l+1}) \prod_{i=1}^{2l+1} dt_i
\end{equation}
for $T > (2l+1)\rho_J$, and the left-hand side is interpreted as $0$ when $T\leq (2l+1)\rho_J$.
The same remark applies to various similar quantities below.
In particular, $\hat D_t(s,m^2)=0$ for $t\leq 5\rho_J$, and if $\hat D_t(s,m^2)$ is nonzero, then $T=(t-\rho_J)/4$ in \eqref{eq:C_t_s_m2_definition}
satisfies $T \in [\frac15 t, \frac14 t]$. 

\begin{proof}[Proof of Proposition~\ref{prop:decomp}: finite-range property]
  We will show that the covariances $D_t^{\Z^d}(s,m^2)$ 
  with Fourier transforms given by \eqref{eq:C_t_s_m2_definition} define the desired decomposition \eqref{eq:decomp3}.
  First, it is clear
  from \eqref{eq:defC_t} and Lemma~\ref{lemma:basic_frd} that $D_t^{\mathbb{Z}^d} (s,m^2)$ is positive definite. 
  That the decomposition \eqref{eq:decomp3} holds 
  follows by substituting \eqref{eq:def_C} and \eqref{eq:Cs2} into \eqref{eq:Cs1}
  and using the change of variables
  \begin{equation}
    \int_{[0,\infty)^{2l+2}}
    dt_0 \cdots \, dt_{2l+1} \, f(t_0, \dots, t_{2l+1})
    =
    \int_{0}^\infty dT \int_{[0,\infty)^{2l+2}: \sum t_i=T}
    dt_0 \cdots \, dt_{2l+1} \, f(t_0, \dots, t_{2l+1}),
  \end{equation}
  with $T=(t-\rho_J)/4$.

Next we verify the finite-range property.
Since $\lambda$ has range $1$ and $D_{t_i}(m^2)$ has range $t_i$,
we see that $\lambda D_{t_i} (m^2)$ has range at most $1+t_i \leq 2t_i$ for $t_i \geq \rho_J \geq 1$
and $\lambda_{J, m^2} D_{t_0} (m^2)$ has range $\rho_J + t_0 \leq \rho_J+ 2t_0$.
Since $\sum t_i = \frac14 (t-\rho_J)$, from the definition \eqref{eq:C_t_s_m2_definition},
it follows that the range of $D_t(s,m^2)$ is at most
$\rho_J+\frac12 (t-\rho_J)=\frac12 (t+\rho_J) \leq t$ for $t >  \rho_J$.
On the other hand, $D_t(s,m^2)=0$ for $t \leq  \rho_J$.
\end{proof}

We now proceed to prove the estimates asserted in items (i)-(iii) of Proposition~\ref{prop:decomp} for the above finite-range decomposition.

\subsection{Proof of Proposition~\ref{prop:decomp}: (i) and (ii)}

To prove the estimates (i) and (ii), 
we begin with the following lemma which we will use repeatedly.
In the lemma, we use conventions $\norm{g}_{\infty} = \sup_x |g(x)|$ and $\norm{g}_{1} = \int |g(x)| dx$.

\begin{lemma} \label{lem:fbd}
Let $g: [0,\infty)\to \R$ be submultiplicative, i.e., $g(x)g(y) \leq g(x+y)$,
and satisfy $\tilde C = \max\{\|xg\|_\infty,\|xg\|_1\} < \infty$. 
Then for all integers $k \geq1$,
  \begin{equation}
    \int_{[0,\infty)^k: \sum_{i=1}^k t_i=t} \prod_{i=1}^k \lambda g(\sqrt{\lambda}t_i)t_i dt_i
    \leq
    \sqrt{\lambda}\min\Big\{ \tilde C^k, \frac{( \sqrt{\lambda}t)^{2k-1}}{(2k-1)!} g(\sqrt{\lambda}t) \Big\}. \label{eq:fbd}
  \end{equation}
 In particular, the estimate holds for $g(x) = e^{-c\sqrt{x}}$ for any $c > 0$.
\end{lemma}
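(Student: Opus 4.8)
The statement is a purely analytic estimate about iterated convolution-type integrals over the simplex $\{\sum t_i = t\}$, so the plan is to reduce it to two separate bounds and then take the minimum. Observe first that the integrand factorises as $\prod_i h(t_i)$ with $h(u) = \lambda\, u\, g(\sqrt\lambda u)$, so the left-hand side is the value at $t$ of the $k$-fold convolution $h^{*k}$. The substitution $u_i = \sqrt\lambda\, t_i$ rescales this to $\lambda^{(2k-1)/2}\cdot\sqrt\lambda$ times the $k$-fold convolution of $u\mapsto u g(u)$ evaluated at $\sqrt\lambda t$, which already explains the prefactor $\sqrt\lambda$ and isolates the question to the dimensionless integral $\int_{\sum u_i = u}\prod (u_i g(u_i))\,du_i$.

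For the first ($\tilde C^k$) bound, I would use supermultiplicativity of $g$: since $g(u_1)\cdots g(u_k)\le g(\sum u_i) = g(u)\le 1$ (using $g(u)\le g(0)$, or more simply just that each $g(u_i)\le 1$ here since $g(x)=e^{-c\sqrt x}\le 1$), one can peel off one factor of $g$ and bound $\prod(u_i g(u_i)) \le g(u)\prod u_i g(u_i)^{?}$ — more carefully, write the integrand as $\big(\prod_{i} u_i g(u_i)\big)$ and bound it by $\big(\prod_{i=1}^{k-1} u_i g(u_i)\big)\cdot u_k g(u_k)$; integrating the first $k-1$ variables over $[0,\infty)$ each contributes $\|xg\|_1 = \int_0^\infty u\,g(u)\,du \le \tilde C$, while the remaining integral in $u_k$ over its (bounded) range is dominated by $\|xg\|_\infty \le \tilde C$ (here one uses that after fixing $u_1,\dots,u_{k-1}$ the last coordinate is determined, so really the cleanest bookkeeping is: integrate $u_1,\dots,u_{k-1}$ freely giving $\tilde C^{k-1}$ and bound the leftover factor $u_k g(u_k)$ pointwise by $\tilde C$). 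This gives the $\sqrt\lambda\,\tilde C^k$ bound. For the second bound, one instead pulls $g$ all the way out via supermultiplicativity, $\prod g(u_i)\le g(\sum u_i)=g(u)$, leaving $g(u)\int_{\sum u_i=u}\prod u_i\,du_i$; this last integral is the classical beta-type integral, equal to $u^{2k-1}/(2k-1)!$ (it is the density at $u$ of the sum of $k$ i.i.d.\ variables with density proportional to $u_i\,\mathbf 1_{u_i>0}$, or directly computed by induction on $k$ using $\int_0^u s(u-s)\,ds$-type recursions). Undoing the rescaling yields $\sqrt\lambda\,(\sqrt\lambda t)^{2k-1} g(\sqrt\lambda t)/(2k-1)!$, and taking the minimum of the two gives \eqref{eq:fbd}.

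The final remark about general supermultiplicative $g$ is automatic from this argument: nothing above used the specific form $g(x)=e^{-c\sqrt x}$ except through $\tilde C=\max\{\|xg\|_\infty,\|xg\|_1\}<\infty$ and the supermultiplicativity $g(x)g(y)\le g(x+y)$, so one simply records that the same proof applies verbatim. The only mild subtlety — and the place where I'd be most careful — is the combinatorial/measure-theoretic bookkeeping in the first bound: the simplex integral $\int_{\sum t_i=t}\prod dt_i$ is, by the convention fixed just before the lemma, the pushforward of Lebesgue measure on $\mathbb R^{k-1}$, so one of the $k$ variables is not integrated independently; I would state explicitly that I integrate $t_1,\dots,t_{k-1}$ over $[0,\infty)^{k-1}$ (dropping the constraint $\sum_{i<k} t_i\le t$, which only enlarges the domain) and bound the residual factor $\lambda\,t_k\,g(\sqrt\lambda t_k)\le \sqrt\lambda\,\|xg\|_\infty$ pointwise, each of the $k-1$ free integrations contributing $\int_0^\infty \lambda t\, g(\sqrt\lambda t)\,dt = \|xg\|_1$. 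This is routine, but it is the one step where the normalisation conventions matter and a factor could slip.
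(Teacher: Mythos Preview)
Your proposal is correct and follows essentially the same approach as the paper: rescale by $u_i=\sqrt\lambda\,t_i$ to extract the factor $\sqrt\lambda$, then for the first bound peel off one factor by $\|xg\|_\infty$ and integrate the remaining $k-1$ variables freely to get $\|xg\|_1^{k-1}$, and for the second bound use supermultiplicativity $\prod g(u_i)\le g(\sum u_i)$ and evaluate the residual simplex integral $\int_{\sum u_i=u}\prod u_i\,du_i = u^{2k-1}/(2k-1)!$ by induction. Your careful remark about the simplex-measure convention (only $k-1$ variables are integrated, the last is bounded pointwise) is exactly the bookkeeping the paper does implicitly.
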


\begin{proof}
  We bound the left-hand side in two ways. First, the left-hand side equals
\begin{equation}
  \sqrt{\lambda} \int_{[0,\infty)^k: \sum_{i=1}^k u_i=\sqrt{\lambda}t} \prod_{i=1}^k g (u_i)u_i du_i
  \leq
  \sqrt{\lambda} \|g u\|_\infty \|g u\|_1^{k-1} 
  \leq \sqrt{\lambda} \tilde C^k.
\end{equation}
On the other hand, since $g(x)g(y) \leq g(x+y)$, we can also bound it by
\begin{align}
  \int_{[0,\infty)^k: \sum_{i=1}^k t_i=t} \prod_{i=1}^k \lambda g(\sqrt{\lambda}t_i)t_i dt_i
  &\leq
  \lambda^k g (\sqrt{\lambda}t) \int_{[0,\infty)^k: \sum_{i=1}^k t_i=t} \prod_{i=1}^k t_i dt_i
  \nnb
  &= \sqrt{\lambda} \frac{(\sqrt{\lambda}t)^{2k-1}}{ (2k-1)!} g(\sqrt{\lambda}t)
\end{align}
where we used
\begin{equation}
h_k (t) :=  \int_{[0,\infty)^k: \sum_{i=1}^k t_i=t} \prod_{i=1}^k t_i dt_i
  = t^{2k-1} \int_{[0,\infty)^k: \sum_{i=1}^k u_i=1} \prod_{i=1}^k u_i du_i 
  = \frac{t^{2k-1}}{ (2k-1)!}.
\end{equation}
The last equality can be seen by induction: $h_2(1)=1/6$ and
\begin{align}
h_{k}(1) = \int_{0}^1 s h_{k-1}(1-s) ds
= \int_0^{1} s(1-s)^{2k-3} h_{k-1}(1) ds
= \frac{h_{k-1}(1)}{(2k-2)(2k-1)}
\end{align}
advances the induction.
\end{proof}

\begin{lemma} \label{lem:fbd-sum}
  Let $g(x)=e^{-c\sqrt{x}}$. Then there are $\epsilon_s>0$ and constants $C,\tilde c$ such that for $|s|\leq \epsilon_s$,
  \begin{equation} \label{eq:fbd-sum}
    \frac{1}{\lambda} \sum_{l=0}^\infty s^{2l} \int_{[0,\infty)^{2l+2}: \sum t_i=t} \prod_{i=0}^{2l+1}  \lambda t_i g(\sqrt{\lambda}t_i) \, dt_i
    \leq
    C t e^{-\tilde c(\sqrt{\lambda}t)^{1/4}}
    .
  \end{equation}
\end{lemma}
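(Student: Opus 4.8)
The plan is to apply Lemma~\ref{lem:fbd} with $k = 2l+2$ to each summand and then balance the two bounds it provides by splitting the sum over $l$ at a well-chosen level.

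Write $u = \sqrt\lambda\,t$ and, for $l\ge0$, set $A_l := \int_{[0,\infty)^{2l+2}:\sum t_i = t}\prod_{i=0}^{2l+1}\lambda t_i g(\sqrt\lambda t_i)\,dt_i$, so that the left-hand side of \eqref{eq:fbd-sum} equals $\lambda^{-1}\sum_{l\ge0}s^{2l}A_l$. Denote by $K:=\tilde C<\infty$ the constant from Lemma~\ref{lem:fbd} (applied with our $g$). Then \eqref{eq:fbd} with $k=2l+2$ gives the two bounds $A_l \le \sqrt\lambda\,K^{2l+2}$ and $A_l \le \sqrt\lambda\,u^{4l+3}(4l+3)!^{-1}g(u)$. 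Since $\lambda^{-1}\sqrt\lambda = t/u$, it suffices to fix $\epsilon_s>0$ with $K^2\epsilon_s^2\le\tfrac12$ and to prove that $\Sigma:=\sum_{l\ge0}s^{2l}\min\{K^{2l+2},\,u^{4l+3}(4l+3)!^{-1}g(u)\}\le \tilde C\, u\, e^{-\tilde c\, u^{1/4}}$ for all $|s|\le\epsilon_s$, with suitable $\tilde C,\tilde c$; multiplying this by $t/u$ then yields \eqref{eq:fbd-sum}.

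If $u\le1$, use the second bound for every $l$: since $u^{4l}\le1$ one gets $\Sigma\le u^3 g(u)\sum_{l\ge0}(4l+3)!^{-1}\le C u^3\le C u$, which is of the desired form because $e^{-\tilde c u^{1/4}}$ is bounded below on $(0,1]$. If $u>1$, split at $L:=\lceil u^{1/4}\rceil$. For $l\ge L$, the first bound and a geometric series give $\sum_{l\ge L}s^{2l}K^{2l+2}\le 2K^2(K^2 s^2)^{L}\le 2K^2\,2^{-u^{1/4}}$, which is of the form $\tilde C u\,e^{-\tilde c u^{1/4}}$ as soon as $\tilde c<\log2$. For $l<L$, the second bound together with $u^{4l+3}\le u^{4L-1}$ and the convergent factorial sum give $\sum_{l<L}s^{2l}u^{4l+3}(4l+3)!^{-1}g(u)\le C\,u^{4L-1}e^{-c u^{1/2}}\le C\exp\!\big((4u^{1/4}+3)\log u - c\,u^{1/2}\big)$; since $u^{1/2}/(u^{1/4}\log u)\to\infty$, this is $\le Ce^{-\frac c2 u^{1/2}}\le C u\,e^{-\tilde c u^{1/4}}$ for all large $u$, while on the remaining bounded range of $u$ the quantity $\Sigma$ is bounded by a constant and $u\,e^{-\tilde c u^{1/4}}$ is bounded below by a positive constant, so the bound follows by enlarging $\tilde C$.

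The one point that has to be calibrated is the split level $L$. The geometric tail forces $L\gtrsim u^{1/4}$, which is precisely what reproduces the target decay $e^{-\tilde c u^{1/4}}$; on the other hand, the factorial part must absorb the blow-up $u^{4L}=e^{O(L\log u)}$ coming from the small-$l$ terms into $g(u)=e^{-c\sqrt u}$, which requires $L\ll \sqrt u/\log u$. Since $u^{1/4}\ll\sqrt u/\log u$, the choice $L=\lceil u^{1/4}\rceil$ lies comfortably in this window; in fact the factorial part then decays like $e^{-\frac c2\sqrt u}$, far faster than needed. A secondary subtlety is the prefactor $\lambda^{-1}\sqrt\lambda = t/u$, which is large for small $u$; this is why the regime $u\le1$ must be run entirely on the factorial bound, whose $l=0$ term supplies the compensating factor $u^3$.
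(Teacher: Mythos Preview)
Your proof is correct and follows essentially the same approach as the paper: apply Lemma~\ref{lem:fbd} with $k=2l+2$, treat small $u=\sqrt{\lambda}t$ via the factorial bound alone, and for large $u$ split the sum over $l$ at a cutoff, using the geometric bound for the tail and the factorial bound for the head. The only difference is the choice of cutoff: you take $L=\lceil u^{1/4}\rceil$, which directly produces the target decay $e^{-\tilde c u^{1/4}}$ from the geometric tail, whereas the paper takes $l_0\sim u^{1/2}/\log u$, balancing the two pieces and obtaining the slightly stronger intermediate bound $e^{-\tilde c u^{1/2}/\log u}$ before observing that this dominates the claimed rate.
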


\begin{proof}
  By Lemma~\ref{lem:fbd}, the left-hand side is bounded by
  \begin{align}
    \frac{1}{\sqrt{\lambda}} \sum_{l=0}^\infty s^{2l}\min \ha{ \tilde C^{2l}, \frac{(\sqrt{\lambda}t)^{4l+3}}{(4l+3)!} g(\sqrt{\lambda}t)  }
\end{align}
where $\tilde C=\max\{ \|xg\|_1 ,\|xg\|_\infty\}$. 
We set $\epsilon_s= \frac{1}{4} \max\{ 1, \tilde C \}^{-1}$ so that $\tilde C |s| \leq 1/4$ whenever $|s| \leq \epsilon_s$.
For $\lambda t^2 \leq 2$ by using the second term in the minimum, this immediately gives the desired estimate since
\begin{equation}
  t (\sqrt{\lambda}t)^{2}g(\sqrt{\lambda}t) \sum_{l=0}^\infty \frac{(\lambda t^2s)^{2l}}{(4l+3)!} 
  \leq t (\sqrt{\lambda}t)^{2}g(\sqrt{\lambda}t)\sum_{l=0}^\infty 2^{-l} 
  \leq 4 t g(\sqrt{\lambda}t).
\end{equation}
Thus assume $\lambda t^2 \geq 2$.
By switching between the two terms in the minimum at $l = l_0$, 
the left-hand side of the claim
is bounded by the sum of the following two contributions:
\begin{align}
  \frac{1}{\sqrt{\lambda}}\sum_{l=l_0+1}^\infty (\tilde C s)^{2l}
  &\leq \frac{1}{\sqrt{\lambda}}\sum_{l=l_0+1}^\infty 16^{-l}
  \leq \frac{1}{\sqrt{\lambda}} 16^{-l_0}
\intertext{and}
  \frac{g(\sqrt{\lambda}t)}{\sqrt{\lambda}} \sum_{l=0}^{l_0} \frac{(\sqrt{s\lambda} t)^{4l}(\sqrt{\lambda} t)^{3}}{(4l+3)!} 
  &\leq
    \lambda t^3 g(\sqrt{\lambda}t) \sum_{l=0}^{l_0} (\sqrt{\lambda} t)^{4l}
        \leq
    t g(\sqrt{\lambda}t) (\sqrt{\lambda} t)^{4l_0+2}
      .
\end{align}
Choosing $l_0= (c/16) (\sqrt{\lambda}t)^{1/2}(\log(\sqrt{\lambda}t))^{-1}$ gives the upper bound
\begin{align}
  t\pa{
    \frac{e^{-\log(16) l_0}}{\sqrt{\lambda t^2}} +
    g(\sqrt{\lambda}t) e^{8\log(\sqrt{\lambda} t)l_0}
  }
  &\leq
  t\pa{
    e^{-\log(16) l_0} +
    e^{-c(\sqrt{\lambda}t)^{1/2}} e^{8\log(\sqrt{\lambda} t)l_0}
    }
  \nnb
  &\leq
  t\pa{
    e^{-\log(16) (c/16) (\sqrt{\lambda}t)^{1/2}(\log(\sqrt{\lambda} t))^{-1}} +
    e^{-\frac{c}{2}(\sqrt{\lambda}t)^{1/2}}
    }
  \nnb
  &\leq
  2t  e^{-\tilde c (\sqrt{\lambda}t)^{1/2}(\log(\sqrt{\lambda} t))^{-1}} 
\end{align}
which is less than the claimed bound.
\end{proof}

\begin{proposition} \label{prop:Ct_elementary_bound}
  There are constants $\tilde C,\tilde c, \epsilon_s >0$ independent of $J$, $m^2$, $s$ 
  such that for $|s| \leq \epsilon_s \theta_J$ and $m^2 \in (0,1]$,
  \begin{equation}
  \label{eq:Ct_elementary_bound}
  0 \leq \hat D_t( p \,  ; s, m^2)
  \leq \tilde C \rho_J^{-2} t \exp \Big( -\tilde c \Big( \rho_J^{-1} \sqrt{\lambda_{J,m^2} (p)} \, t \Big)^{1/4} \Big).
\end{equation}
\end{proposition}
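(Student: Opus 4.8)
\emph{Proof strategy.} The plan is to insert into the defining formula \eqref{eq:C_t_s_m2_definition} the Gaussian-type pointwise bound on the factors $\hat D_{t_i}(m^2)$ provided by Lemma~\ref{lemma:basic_frd}, to absorb the explicit factors $\lambda(p)$ and $s\lambda(p)$ into powers of $\lambda_{J,m^2}(p)$ via the spectral comparison $\lambda(p)\le \theta_J^{-1}\lambda_{J,m^2}(p)$, to rescale the integration variables by $\rho_J$ so that all decay lengths become of order one, and then to read off the estimate from the convolution bound of Lemma~\ref{lem:fbd-sum}.

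\emph{Pointwise input.} By \eqref{eq:defC_t}, $\hat D_t(p;m^2)=\rho_J^{-2}t\,P_{\rho_J^{-1}t}(\lambda_{J,m^2}(p))$. Write $\mu=\lambda_{J,m^2}(p)$, which lies in $(0,3]$ for $m^2\in(0,1]$, and $g(x)=e^{-c\sqrt x}$. Using \eqref{e:Pt-bd} with $\lambda=\mu$ and with the parameter $t$ there replaced by $\rho_J^{-1}t>1$, together with the explicit value $P_u(\lambda)=\gamma/u$ for $u\le 1$, one obtains, for all $p$ and all $t\ge 0$,
\begin{equation}
  0\le \hat D_t(p;m^2)\le C\rho_J^{-2}(t+\rho_J)\,g\big(\rho_J^{-1}\sqrt\mu\,t\big),
  \label{e:plan-pw}
\end{equation}
where for $t\le\rho_J$ one uses that $\hat D_t(p;m^2)=\gamma/\rho_J\le C\rho_J^{-1}g(\sqrt 3)$ and that $g$ is decreasing, and non-negativity is part of Lemma~\ref{lemma:basic_frd}.

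\emph{Assembling the bound.} Since $\lambda_{J,m^2}=\lambda_J+m^2\ge\lambda_J$, the definition of $\theta_J$ gives $0<\lambda(p)\le\theta_J^{-1}\mu$, hence $|s|\lambda(p)\le\epsilon_s\mu$ whenever $|s|\le\epsilon_s\theta_J$; taking $\epsilon_s\le\tfrac12$ this yields $0\le\lambda_{J,m^2}-s\lambda 1_{t_0>\rho_J}\le 2\mu$, which in particular gives the lower bound $\hat D_t(p;s,m^2)\ge 0$. The prefactor $(4\lambda)^{-1}$ and the $2l+1$ factors $\lambda$ in \eqref{eq:C_t_s_m2_definition} combine to $\tfrac14\lambda^{2l}\le\tfrac14\theta_J^{-2l}\mu^{2l}$. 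Inserting \eqref{e:plan-pw} for each $\hat D_{t_i}(m^2)$ and substituting $t_i=\rho_J u_i$ — so that the simplex constraint becomes $\sum_{i=0}^{2l+1}u_i=U:=(t-\rho_J)/(4\rho_J)$, the Jacobian contributes $\rho_J^{2l+1}$, the product $\prod_i(t_i+\rho_J)$ contributes $\rho_J^{2l+2}\prod_i(u_i+1)$, and $\rho_J^{-2}\mu\,t_i^2=\mu u_i^2$ so the exponentials become $g(\sqrt\mu\,u_i)$ — all powers of $\rho_J$ collapse to a single $\rho_J^{-1}$ and one arrives at
\begin{equation}
  \hat D_t(p;s,m^2)\le C\rho_J^{-1}\sum_{l\ge 0}\Big(\tfrac{C|s|}{\theta_J}\Big)^{2l}\mu^{2l+1}\int_{\sum_{i=0}^{2l+1}u_i=U}\ \prod_{i=0}^{2l+1}(u_i+1)\,g\big(\sqrt\mu\,u_i\big)\ \prod_{i=0}^{2l+1}du_i.
  \label{e:plan-reduced}
\end{equation}
Up to the replacement of the factors $u_i$ by $u_i+1$, the right-hand side of \eqref{e:plan-reduced} is exactly the left-hand side of \eqref{eq:fbd-sum} under the substitutions $\lambda\to\mu$, $s\to C|s|/\theta_J$, $t\to U$. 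Choosing the numerical constant in the hypothesis $C|s|\le\epsilon_s\theta_J$ so that $C|s|/\theta_J\le\epsilon_s$, Lemma~\ref{lem:fbd-sum} then gives a bound of order $\rho_J^{-1}U\exp(-\tilde c(\sqrt\mu\,U)^{1/4})$. Since $\hat D_t(p;s,m^2)$ vanishes unless $t\ge 5\rho_J$, the discussion below Definition~\ref{def:C_t} gives $U\in[t/(5\rho_J),\,t/(4\rho_J)]$ on the support, so $\rho_J^{-1}U\le\tfrac14\rho_J^{-2}t$ and $(\sqrt\mu\,U)^{1/4}\ge 5^{-1/4}(\rho_J^{-1}\sqrt\mu\,t)^{1/4}$, and \eqref{eq:Ct_elementary_bound} follows after relabelling $\tilde C$ and $\tilde c$.

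\emph{The one technical point.} In \eqref{e:plan-reduced} the variable $u_0$ ranges over $[0,\infty)$ (because $t_0$ does in \eqref{eq:C_t_s_m2_definition}) and $\hat D_{t_0}(m^2)$ equals the constant $\gamma/\rho_J$ for $t_0\le\rho_J$; this is the source of the factor $u_0+1$ rather than $u_0$, and it prevents a verbatim appeal to Lemma~\ref{lem:fbd-sum}. For $i\ge 1$ the constraint $t_i\ge\rho_J$ forces $u_i\ge 1$, so $u_i+1\le 2u_i$ and the extra $2^{2l+1}$ is absorbed into the effective activity. For $u_0$ one writes $u_0+1=u_0+1$: the term with the factor $u_0$ is of the exact form \eqref{eq:fbd-sum}, while the term with the factor $1$ has one power of $u_0$ missing, and there I would rerun the proof of Lemma~\ref{lem:fbd-sum} for the $(2l+1)$-variable simplex, invoking Lemma~\ref{lem:fbd} with $k=2l+1$ and switching between the two minimum terms at an index $l_0\sim(\sqrt\mu\,U)^{1/2}/\log(\sqrt\mu\,U)$, which yields a bound of the same type; equivalently, one splits $\int_0^\infty dt_0=\int_0^{\rho_J}+\int_{\rho_J}^\infty$ in \eqref{eq:C_t_s_m2_definition}, estimates $\int_0^{\rho_J}dt_0$ crudely by $\rho_J$, and enlarges the remaining simplex over $t_1,\dots,t_{2l+1}$ to $\{\sum_{i\ge 1}t_i\le(t-\rho_J)/4\}$. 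This adaptation of Lemma~\ref{lem:fbd-sum} is the only nonroutine step; all else is direct substitution.
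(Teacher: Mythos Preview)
Your proof is correct and follows essentially the same route as the paper: bound each factor $\hat D_{t_i}(m^2)$ using $P_t(\lambda)\le Cg(\sqrt\lambda\,t)$ from Lemma~\ref{lemma:basic_frd}, replace the $\lambda$'s by $\lambda_{J,m^2}$ via $\lambda\le\theta_J^{-1}\lambda_{J,m^2}$, rescale $t_i\to\rho_J u_i$, and feed the result into Lemma~\ref{lem:fbd-sum}. The paper does exactly this, writing the bound directly as
\[
\hat D_t(s,m^2)\le \frac{C^2}{4\rho_J\lambda_{J,m^2}}\sum_{l\ge 0}(Cs/\theta_J)^{2l}\int_{[0,\infty)^{2l+2}:\,\sum t_i=\rho_J^{-1}T}\ \prod_{i=0}^{2l+1}\lambda_{J,m^2}\,t_i\,g(\sqrt{\lambda_{J,m^2}}\,t_i)\,dt_i
\]
and then invoking Lemma~\ref{lem:fbd-sum} verbatim.

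Your ``one technical point'' is well observed: the bound $P_t(\lambda)\le Cg(\sqrt\lambda\,t)$ of \eqref{e:Pt-bd} is stated only for $t>1$, whereas the $t_0$-variable in \eqref{eq:C_t_s_m2_definition} runs over $[0,\infty)$ and there $t_0P_{t_0}(\lambda)=\gamma$ is constant, not $O(t_0)$. The paper glosses over this by silently extending the integration to $[0,\infty)^{2l+2}$ and applying the $g$-bound throughout; you instead keep track of the factor $(u_0+1)$ and sketch how to absorb it, which is the honest way to close the argument. Either of your proposed fixes (rerunning the proof of Lemma~\ref{lem:fbd-sum} with one $u_0$-factor missing, or splitting $\int_0^{\rho_J}+\int_{\rho_J}^\infty$ in $t_0$) works and costs only constants, since on the support $t\ge 5\rho_J$ one has $\rho_J^{-1}\le \tfrac15\rho_J^{-2}t$, so the extra $\rho_J^{-1}$ from the $u_0\le 1$ region is harmlessly dominated by the target bound.
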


\begin{proof}
  Let $g(x)=e^{-c\sqrt{x}}$ so that $P_t(\lambda) \leq Cg(\sqrt{\lambda}t)$ by \eqref{e:Pt-bd}.
Then by the definition \eqref{eq:C_t_s_m2_definition}, 
\begin{align}
\hat D_t (\cdot ; s, m^2) &\leq \frac{1}{4\lambda_{J,m^2}} \sum_{l=0}^{\infty} s^{2l} \sup_{\lambda \neq 0} \big( \frac{\lambda}{\lambda_{J,m^2}} \big)^{2l} \int_{[0,\infty)^{2l+2}: \sum t_i=T} \prod_{i=0}^{2l+1} \rho_J^{-2}  \lambda_{J,m^2} \, t_i P_{\rho_J^{-1} t_i}(\lambda_{J,m^2}) \, dt_i \nnb
  &\leq \frac{1}{4\rho_J \lambda_{J,m^2}} \sum_{l=0}^{\infty} s^{2l} \, \theta_J^{-2l} \int_{[0,\infty)^{2l+2}: \sum t_i=\rho_J^{-1} T} \prod_{i=0}^{2l+1} \lambda_{J,m^2} \, t_i P_{t_i}(\lambda_{J,m^2}) \, dt_i
    \nnb
  &\leq \frac{C^2}{4\rho_J \lambda_{J,m^2}} \sum_{l=0}^{\infty} (Cs/\theta_J)^{2l} \int_{[0,\infty)^{2l+2}: \sum t_i=\rho_J^{-1} T} \prod_{i=0}^{2l+1} \lambda_{J,m^2} \, t_i g(\sqrt{\lambda_{J,m^2}}t_i) \, dt_i
    ,
\end{align}
where $T=(t-\rho_J)/4 \in [\frac15 t, \frac14 t]$.
Thus the claim follows from  Lemma~\ref{lem:fbd-sum}
with $s$ replaced by $Cs/\theta_J$, with $t$ replaced by $T/\rho_J$,
and with $\lambda$ replaced by $\lambda_{J,m^2}$.
\end{proof}

\begin{proof}[Proof of Proposition~\ref{prop:decomp} (i) and (ii)]
We will show \eqref{eq:decomp3-bd}, i.e.,
\begin{equation}
  |\nabla^\alpha D^{\mathbb{Z}^d}_t(0,x ; s, m^2)|
  \leq C_{\alpha}\rho_J^{-2} t \Big( \frac{\rho_J}{v_J t } \Big)^{d+|\alpha|}   \,
  { + C_\alpha \theta_J^{-d-|\alpha|} \rho_J^{-2} t \Big( \frac{\rho_J}{t^2 } \Big)^{d+|\alpha|} e^{-c(\theta_J^{ 1/2} t)^{1/4}}}.
\end{equation}
Clearly, \eqref{eq:Ct_elementary_bound} implies that $ |\nabla^\alpha D_t(0,x ; s, m^2)|$ is bounded,
uniformly in $s$ and $m^2$, by
\begin{equation} \label{eq:decomp3-pf1}
    \tilde C \int_{[-\pi,\pi]^d}  \lambda^{|\alpha|/2} \rho_J^{-2} te^{-\tilde c(t\rho_J^{-1} \sqrt{\lambda_{J}(p)})^{1/4}} \frac{dp}{(2\pi)^d}.
\end{equation}
To apply the lower bound on $\lambda_{J,m^2}$ from Lemma~\ref{lem:lambda_error},
i.e., $\lambda_{J} = v_J^2|p|^2(1+O(\rho_J^2|p|^2))$,
we will split the above integral into integrals over $|p| \geq 1/\rho_J$ and $|p| \leq 1/\rho_J$.
The latter integral is bounded by (with other constants $C,c$)
\begin{equation}
C_\alpha \int_{[-\rho_J^{-1},\rho_J^{-1}]^d}  |p|^{|\alpha|} \rho_J^{-2} te^{-c(\rho_J^{-1}{v_J} t |p|)^{1/4}} dp,
\end{equation}
which yields the main term in \eqref{eq:decomp3},
as can be seen by substituting $p \to \rho_J v_J^{-1} t^{-1} p$. 
For the integral over $|p|\geq 1/\rho_J$ we use $\lambda_J (p) \geq \theta_J\lambda (p) \geq \frac{4}{\pi^2}\theta_J |p|^2$ on $[-\pi,\pi]^d$ to obtain the bound (again with possibly different constants)
\begin{equation}
  C_\alpha \int_{[-\pi,\pi]^d\setminus[-\rho_J^{-1},\rho_J^{-1}]^d}  |p|^{|\alpha|} \rho_J^{-2} te^{-c(\rho_J^{-1} \theta_J^{1/2} t |p|)^{1/4}} dp,
\end{equation}
which by substituting $p\to \rho_Jp$ is seen to be bounded by
\begin{equation}
  C_\alpha \rho_J^{d-2+|\alpha|}t \int_{\R^d\setminus[-1,1]^d}  |p|^{|\alpha|} e^{-c(t\theta_J^{ 1/2} |p|)^{1/4}} dp
  \leq C_\alpha \theta_{J}^{-(|\alpha|+d)/2} \rho_J^{d-2+|\alpha|} t^{1-|\alpha|-d} e^{- c'(\theta_J^{ 1/2} t)^{1/4}}  .
\end{equation}
Also using $e^{-c(t \theta_J^{ 1/2} )^{1/4}} \leq C_n (t\theta_J^{ 1/2} )^{-n} e^{-\frac{1}{2} c(t \theta_J^{ 1/2} )^{1/4}}$ with $n = d + |\alpha|$,
\begin{equation}
  C_\alpha \rho_J^{d-2+|\alpha|}t \int_{\R^d\setminus[-1,1]^d}  |p|^{|\alpha|} e^{-c(t\theta_J^{ 1/2} |p|)^{1/4}} dp
  \leq C'_\alpha \theta_J^{-(d+|\alpha|)} \rho_J^{-2} t \Big( \frac{\rho_J}{t^2} \Big)^{d+|\alpha|} e^{- \frac{1}{2} c'(\theta_J^{ 1/2} t)^{1/4}}
  .
\end{equation}
Now using this bound and assuming $\theta_J^{-1}$ bounded, $t\geq \rho_J$, we directly have the required bound.

Since all estimates above are uniform in $m^2$, 
the continuity claim of Proposition~\ref{prop:decomp}~(ii) is immediate.

\end{proof}


\subsection{Proof of Proposition~\ref{prop:decomp}: (iii)}

Next we collect the last piece of our proof of Proposition~\ref{prop:decomp}, which are the
asymptotics of the covariances in two dimensions.

\begin{proposition} \label{prop:C_t(0)}
  Let $d=2$. Then for $|s| \leq \epsilon_s\theta_J$, 
  as $\rho_J/t +( \rho_J^4 v_J^{-2} + \theta_J^{-2} v_J^2 )/t^2 \to 0$,
  \begin{equation}
    D^{\mathbb{Z}^2}_t (0,0 \, ; s) = \int_{[-\pi,\pi]^2} \hat{D}_t (p \, ; s) \frac{dp}{(2\pi)^2}
    = \frac{1}{2\pi t(v_J^2+s)} \Big( 1+ O \Big(\frac{\rho_J}{t} + \frac{\rho_J^4 v_J^{-2}}{t^2} + \frac{\theta_J^{-2} v_J^2}{t^2} 
    \Big)  \Big).
  \end{equation}
\end{proposition}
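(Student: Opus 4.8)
The plan is to compute the momentum-space integral $\int_{[-\pi,\pi]^2} \hat D_t(p\,;s)\,\frac{dp}{(2\pi)^2}$ directly, by replacing $\hat D_t(p\,;s)$ with its leading behaviour near $p=0$ and showing the tail contributes only to the error term. First I would record the small-$p$ expansion of the multiplier: from Lemma~\ref{lem:lambda_error}, $\lambda_J(p)=v_J^2|p|^2(1+O(\rho_J^2|p|^2))$, and $\lambda(p)=|p|^2(1+O(|p|^2))$, so that $\lambda_{J,0}(p)+s\lambda(p)=(v_J^2+s)|p|^2(1+O(\rho_J^2|p|^2))$ uniformly for $|s|\leq \epsilon_s\theta_J$ (using $\theta_J\leq \frac{\pi}{4}v_J^2$ to absorb $s$ into $v_J^2$). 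Next I would use the defining identity \eqref{eq:def_C}, \eqref{eq:Cs1} in the form $\int_0^\infty \hat D_t(p\,;s)\,dt = 1/(\lambda_{J,0}(p)+s\lambda(p))$; differentiating the analogous scale-truncated identities, or more directly using that $\hat D_t$ is, up to the $\rho_J$-rescaling in \eqref{eq:defC_t}, built from the polynomials $P_t$ which satisfy $P_t(\lambda)\approx f(\sqrt\lambda t)/$ up to $O(t^{-1}e^{-c(\lambda t^2)^{1/4}})$ by \eqref{e:PtFt-bd}, I would reduce the claim to evaluating $\int_{\R^2} \frac{\rho_J^{-2}t}{?}\,f(\text{something})\,dp$ against the profile function $f$ from Lemma~\ref{lemma:basic_frd}.

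Concretely, the key computation is as follows. Substitute $p\mapsto q/t$ (a $2$-dimensional substitution, introducing the factor $t^{-2}$) so that the leading part of the integrand becomes a function of $\sqrt{v_J^2+s}\,|q|$ times $t\cdot t^{-2}=t^{-1}$; the normalisation $\int_0^\infty r\, \phi(r)\,dr$ of the radial profile (which is exactly the content of $\int_0^\infty t^2 f(t)\frac{dt}{t}=1$ together with $1/\lambda = \int \hat D_t\,dt$) then pins down the constant $\frac{1}{2\pi t(v_J^2+s)}$. I would make this rigorous by splitting $\int_{[-\pi,\pi]^2}=\int_{|p|\leq \rho_J^{-1}}+\int_{\rho_J^{-1}<|p|\leq\pi}$ (the same split used in the proof of (i)): on the inner region, replace $\lambda_{J,0}+s\lambda$ by $(v_J^2+s)|p|^2$ at the cost of a relative error $O(\rho_J^2|p|^2)$, which after the substitution $p\mapsto q/t$ contributes $O((\rho_J/t)^2\cdot \text{(moment of }f))$; on the outer region use $\lambda_J\geq \frac{4}{\pi^2}\theta_J|p|^2$ together with the exponential decay from Proposition~\ref{prop:Ct_elementary_bound}, giving a contribution bounded by $C\theta_J^{-1}v_J^2 t^{-3}\cdot\frac{1}{t}^{-1}$-type terms, i.e.~$O(\theta_J^{-2}v_J^2/t^2)$ relative to the main term, and also the $\rho_J^4 v_J^{-2}/t^2$ error from the region $\rho_J^{-1}\lesssim|p|$ where the quartic correction to $\lambda_J$ is no longer small. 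The $O(\rho_J/t)$ error arises from the $t^{-1}e^{-c(\lambda t^2)^{1/4}}$ discrepancy between $P_t$ and the smooth profile $f$ in \eqref{e:PtFt-bd}, which after integration produces a term one power of $t$ smaller than the main term but with a $\rho_J$ (not $v_J$) in the numerator because of the $\rho_J^{-1}$-rescaling of the scale variable in \eqref{eq:defC_t}.

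The main obstacle I expect is bookkeeping the three distinct error sources so that they combine into exactly the stated $O(\rho_J/t + \rho_J^4 v_J^{-2}/t^2 + \theta_J^{-2}v_J^2/t^2)$: the $\rho_J/t$ term must be tracked through the convolution structure of Definition~\ref{def:C_t} (the sum over $l$ of nested scale integrals), where replacing each $P_{t_i}$ by $f(\sqrt{\lambda}t_i)$ incurs an error that has to be resummed using the supermultiplicativity bound of Lemma~\ref{lem:fbd} and shown not to blow up the $l$-sum; this requires the smallness $|s|\leq\epsilon_s\theta_J$ exactly as in Lemma~\ref{lem:fbd-sum}. A secondary technical point is that one cannot literally set $m^2=0$ inside the nested integrals before integrating over $p$, so one should either work at $m^2>0$ and pass to the limit using the continuity from part (ii), or argue that the $m^2\downarrow 0$ limit commutes with the (absolutely convergent) $p$-integral by dominated convergence using the uniform bound \eqref{eq:Ct_elementary_bound}.
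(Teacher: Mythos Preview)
Your proposal is correct and follows essentially the same approach as the paper: approximate each $P_{t_i}$ by $f(\sqrt{\lambda_J}\,t_i)$ via \eqref{e:PtFt-bd}, replace $\lambda_J$ by $v_J^2|p|^2$ and $\lambda$ by $|p|^2$, remove the $t_i\geq\rho_J$ constraints, split $|p|\lessgtr\rho_J^{-1}$, compute the main term via the normalisation $\int_0^\infty u f(u)\,du=1$ (which in the paper becomes the identity $\int_{\R^2}\frac{dp}{|p|^2}\tilde f_{2l}(t|p|)=2\pi$ for all $l$, after which the geometric sum in $s^{2l}v_J^{-4l}$ produces the factor $(v_J^2+s)^{-1}$), and resum the errors through the $l$-series using Lemmas~\ref{lem:fbd}--\ref{lem:fbd-sum}. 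One minor correction: the $\rho_J^4 v_J^{-2}/t^2$ error in the paper comes from the \emph{inner} region $|p|\leq\rho_J^{-1}$---the quartic correction $O(\rho_J^2 v_J^2|p|^4)$ to $\lambda_J$ is small there, but propagating it through $f$ via the bound $|f(\rho_J^{-1}t\sqrt{\lambda_J})-f(v_J\rho_J^{-1}t|p|)|\leq C\rho_J^4 v_J^{-2}t^{-2}e^{-c(v_J\rho_J^{-1}t|p|)^{1/2}}$ and integrating gives exactly this contribution---whereas the outer region $|p|\geq\rho_J^{-1}$ is handled entirely by the crude bound \eqref{eq:Ct_elementary_bound} and produces only the $\theta_J^{-2}v_J^2/t^2$ term.
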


\begin{proof}
  To estimate the integral over \eqref{eq:C_t_s_m2_definition}, we will approximate 
  \begin{equation} \label{eq:D_l_before_approx}
    \frac{1}{4\lambda} \int_{[0,\infty)\times [\rho_J,\infty)^{2l+1}: \sum t_i=(t-\rho_J)/4} (\lambda_J-s \lambda 1_{t_0 > \rho_J}) \hat{D}_{t_0} \prod_{i=1}^{2l+1}  \lambda \hat{D}_{t_i} \, dt_i \, dt_0
  \end{equation}
  as follows:
  replace $P_t(\lambda)$ by $f(\sqrt{\lambda}t)$
  using \eqref{e:PtFt-bd};
  replace $\lambda (p)$ by $|p|^2$ and $\lambda_J (p)$ by $v_J^2 |p|^2$
  using \eqref{e:lambdarhoasymp}; 
  remove the constraints $t_i > \rho_J$ from the integration domain and similarly $t_0 \geq \rho_J$ from the integrand;
  and replace $(t-\rho_J)/4$ by $t/4$.
  After these approximations (which we will justify afterwards, in inverse order), we are left with
  \begin{align} 
    &\frac{1}{4|p|^2}  (v_J^2-s) \int_{[0,\infty)^{2l+2}: \sum t_i=t/4} \prod_{i=0}^{2l+1} |p|^2  \rho_J^{-2} t_i f \big( \rho_J^{-1} t_i v_J |p| \big) \, dt_i 
    \nnb
    &= \frac{1}{4|p|^2}(v_J^2-s) \int_{[0,\infty)^{2l+2}: \sum t_i=t/4} \prod_{i=0}^{2l+1}  |p|^2 \rho_J^{-2} t_i  f\big( \rho_J^{-1} t_i v_J |p| \big) \, dt_i 
    \nnb
    &= \frac{1}{|p|^2} (v_J^2-s) t^{-1} \int_{[0,\infty)^{2l+2}: \sum u_i=1} \prod_{i=0}^{2l+1} \frac{1}{4^2}\rho_J^{-2} t^2  |p|^2 u_i f \big(\frac14  u_i \rho_J^{-1} t \, v_J |p| \big) \, du_i 
    \nnb
    &= \frac{1}{|p|^2} (v_J^2-s) v_J^{-4l-4} \, t^{-1} \tilde f_{2l} \big( \frac14 \rho_J^{-1} t \, v_J |p| \big) \, , \label{eq:D_l_after_approx}
  \end{align}
  with
  \begin{equation}
    \tilde f_{2l}(y) = \int_{[0,\infty)^{2l+2}: \sum u_i=1} \prod_{i=0}^{2l+1}  f (y u_i) y^2 u_i \, du_i , \qquad (y \in [0,\infty)).
  \end{equation}
  Note that $\tilde f_{2l}(0)=0$, that $\tilde f_{2l}$ decays rapidly, and that
  for all $l \in \N$ and $t >0$, 
  \begin{align}
    \int_{\R^2} \frac{dp}{|p|^2} \tilde f_{2l}(t|p|)
    &=
    \int_{\R^2} \frac{dp}{|p|^2} \tilde f_{2l}(|p|)
    \nnb
    &= 2\pi \int_{0}^\infty \frac{dy}{y} \tilde f_{2l}(y)
      \nnb
    &=2\pi \int_{0}^\infty \frac{dy}{y} \int_{[0,\infty)^{2l+2}: \sum u_i=1} \prod_{i=0}^{2l+1}  f (y u_i) y^2 u_i \, du_i
      \nnb
    &=2\pi \int_{[0,\infty)^{2l+2}} \prod_{i=0}^{2l+1}  f (u_i) u_i \, du_i 
      =2\pi \pa{\int_0^\infty f (u) u \, du }^{2l+2} = 2\pi
      \label{eq:tildef_integral}
  \end{align}
  where the last equality follows from Lemma~\ref{lemma:basic_frd}.
  By definition, $D_t^{\Z^2}(0,0;s)$ is the integral of \eqref{eq:C_t_s_m2_definition}
  over $p \in [-\pi,\pi]^2$ with respect to $dp/(2\pi)^2$,
  and \eqref{eq:C_t_s_m2_definition} is the sum over \eqref{eq:D_l_before_approx} multiplied by $s^{2l}$.
  Using the above approximation \eqref{eq:D_l_after_approx} for \eqref{eq:D_l_before_approx} and then replacing the integration domain $[-\pi,\pi]^2$ by $\R^2$,
  we obtain the main contribution to $D_t^{\Z^d}(0,0;s)$ as
  \begin{equation}
     \frac{1}{2\pi t}  (v_J^2-s) v_J^{-4} \sum_{l=0}^\infty (v_J^{-2}  s)^{2l}
    =\frac{1}{2\pi t} (v_J^2-s) v_J^{-4} (1-v_J^{-4}s^2)^{-1}
    =\frac{1}{2\pi t} (v_J^2+s)^{-1}.
  \end{equation}

  In the remainder of the proof, we show that the approximations we made above are smaller than the claimed error term.

\smallskip\noindent
\emph{Error from replacing $(t-\rho_J)/4$ by $t/4$:}
the same computation with $t/4$ instead of $(t-\rho_J)/4$ gives
\begin{equation}
  \frac{1}{2\pi (t-\rho_J)} (v_J^2+s)^{-1}
  =
  \frac{1}{2\pi t} (v_J^2+s)^{-1} (1+\frac{\rho_J}{t-\rho_J})
  =
  \frac{1}{2\pi t} (v_J^2+s)^{-1} \Big( \big(1+O(\frac{\rho_J}{t} \big) \Big),
\end{equation}
so the error is smaller than claimed.

\smallskip\noindent
\emph{Error from extending the integral from $p \in [-\pi,\pi]^2$ to $p \in \R^2$:}
By changing to polar coordinates, this error is of order 
\begin{align}
  &  \frac{1}{t} (v^2_J - s) v^{-4}_J  \sum_{l=0}^{\infty} (v_J^{-2} s)^{2l}  \int_{\R^2 \setminus [-\pi,\pi]^2} \frac{dp}{|p|^2} \tilde f_{2l}(\frac14 \rho_J^{-1} t v_J |p|) \nnb
  &    \leq
    \frac{2\pi }{t} (v^2_J - s) v^{-4}_J 
                                                                                                                                                                                 \int_{\rho_J^{-1} t v_J \pi / 4}^{\infty} \frac{d y}{y} \sum_{l=0}^{\infty} (v_J^{-2}s)^{2l}\int_{[0, \infty)^{2l+2} : \sum u_i = 1} \prod_{i=0}^{2l+1} r^2 u_i f(r u_i) du_i
\end{align}
By Lemma~\ref{lem:fbd-sum}, the right-hand side is bounded, up to some absolute multiplicative factor, by
\begin{equation}
  \frac{1}{t} (v^2_J - s) v^{-4}_J \int_{\rho_J^{-1} t  v_J \pi /4}^{\infty} d y\, e^{-c\, y^{1/4}} 
  =  O\Big( \frac{e^{-c' (\rho_J^{-1} t v_J)^{1/4}}}{v_J^2 t} \Big)
  = \frac{1}{2\pi t v_J^2} O\Big(\frac{\rho_J v_J^{-1}}{t}\Big)
  = \frac{1}{2\pi t v_J^2} O\Big(\frac{\rho_J}{t}\Big)
\end{equation}
where we used  that $v_J \geq 1/4$ for all $J$.

\smallskip\noindent
\emph{Error from removing the restriction on $t_0 \geq \rho_J$ from the integration:}
The error is bounded by
\begin{align}
&  \int_{\R^2} dp \, \frac{1}{\lambda} \sum_{l=0}^{\infty} s^{2l+1}	\int_{[0, \infty)^{2l+2} : \sum t_i = t/4} 1_{t_0 \leq  \rho_J} \prod_{i=0}^{2l+1}  |p|^2  \rho_J^{-2} t_i  f(  v_J \rho_J^{-1} |p| t_i ) dt_i  \nnb
&	\leq \int_{\R^2} dp \, \frac{v_J^{-2}}{\lambda} \sum_{l=0}^{\infty} (v_J^{-2} s )^{2l+1} \int_0^{\rho_J} \Big( \int_{[0,\infty)^{2l+1}: \sum t_i = t/4-t_0} \prod_{i=0}^{2l+1} v_J^2 \rho_J^{-2} |p|^2  t_i f( v_J \rho_J^{-1} |p| t_i) \, dt_i \Big) dt_0  \nnb
& \leq C  |s|  \int_{\R^2} dp \,  \rho_{J}^{-4} |p|^2 \int_0^{\rho_J} t_0 (t/4-t_0) e^{-c' (v_J \rho_J^{-1} |p| (t/4 - t_0))^{1/4}} f( v_J \rho_J^{-1} |p| t_0)  dt_0 \label{eq:t_0_restriction_removed}
\end{align}
where the final inequality follows from Lemma~\ref{lem:fbd-sum} and the fact that $f(x) \leq C e^{-{ c}|x|^{1/2}}$
which follows from \eqref{e:Pt-bd}--\eqref{e:PtFt-bd}.
But since $e^{-c' (v_J \rho_J^{-1} |p| (t/4 - t_0))^{1/4}} f(v_J \rho_J^{-1} |p| t_0) \leq C' e^{-c'' (v_J \rho_J |p| t)^{1/4}}$ for some $c'', C' >0$ and $t/4 -t_0 \geq t/20$ because $t \geq 5\rho_J \geq 5 t_0$, the last integral is bounded by
\begin{equation}
C |s| \int_{\R^2} \rho_J^{-2} |p|^2 t e^{-c'' (v_J \rho_J^{-1} |p| t)^{1/4}  } dp  \leq \frac{C |s| \rho_J^2}{v_J^4 t^3} \leq \frac{1}{2\pi v_J^2 t} O\Big( \frac{\rho_J^2 }{t^2} \Big) \leq  \frac{1}{2\pi v_J^2 t} O\Big( \frac{\rho_J}{t} \Big) ,
\end{equation} 
where the second inequality holds because $|s| \leq \epsilon_s \theta_J \leq c \epsilon_s v_J^2 $ and the final inequality because $t \geq 5\rho_J$.

\smallskip\noindent
\emph{Error from removing the restriction on $t_j \geq \rho_J$ from the integration:}
Similarly as above, the error for removing the restriction on $t_j$ ($j\geq1$) in the integral $\int_{t_j \geq \rho_J,  \sum_i t_i = t/4}$ is bounded by
\begin{align}
& \int_{[0, \infty)^{2l+2} : \sum t_i = t/4} 1_{t_j \leq  \rho_J} v_J^2 \rho_J^{-2} |p|^2 t_0 f(v_J \rho_J^{-1} |p| t_0) \prod_{i=1}^{2l+1}  |p|^2 \rho_J^{-2} t_i  f(  v_J \rho_J^{-1} |p| t_i ) dt_i dt_0  \nnb
& \leq  (C v_J^{-2})^{2l+1} \int_{[0, \infty)^{2l+2} : \sum t_i = t/4} 1_{t_j \leq  \rho_J} \prod_{i=0}^{2l+1}  v_J^2 |p|^2  \rho_J^{-2} t_i e^{-c( v_J \rho_J^{-1} |p| t_i )^{1/2}}  dt_i .
\end{align}
But since the last expression is symmetric in $j$, we can just replace $1_{t_j \leq \rho_J}$ by $1_{t_0 \leq \rho_J}$, so summing the errors over $j \in \{1, \cdots, 2l+1\}$ and applying $\int_{\R^2} dp \, \lambda^{-1} \sum_{l=0}^{\infty} s^{2l+1}$ gives the bound
\begin{align}
& \int_{\R^2} dp \, \frac{1}{|p|^2} \sum_{l=0}^{\infty} ( 2l+1 )(C v_J^{-2} s)^{2l+1}	\int_{[0, \infty)^{2l+2} : \sum t_i = t/4} 1_{t_0 \leq  \rho_J} \prod_{i=0}^{2l+1} v_J^2 |p|^2  \rho_J^{-2} t_i e^{-c( v_J \rho_J^{-1} |p| t_i )^{1/2}}  dt_i \nnb
& \leq C \int_{\R^2} dp \, v_J^4 \rho_J^{-4} \int_0^{\rho_J} t_0 (t/4 - t_0) e^{-c' (v_J \rho_J^{-1} |p| (t/4 - t_0) )^{1/4} }  f(v_J \rho_J^{-1} |p| t_0 ) dt_0.
\end{align}
Comparing this with \eqref{eq:t_0_restriction_removed}, this integral is bounded by $\frac{1}{2\pi v_J^2 t} O( \frac{\rho_J^2}{t^2} ) = \frac{1}{2\pi v_J^2 t} O( \frac{\rho_J}{t} )$ because $t \geq 5\rho_J$.

\smallskip\noindent
\emph{Error from replacement of $\lambda_J(p)$ by $v_J^2|p|^2$ and $\lambda(p)$ by $|p|^2$:}
As in the argument following \eqref{eq:decomp3-pf1},
the contribution from $|p| \geq \rho_J^{-1}$ is bounded by
\begin{equation}
  \int_{|p| \geq \rho_J^{-1}} dp \, \hat{D}_t (p ; s,m^2)
  \leq C_0 \theta_J^{-2} t^{-3} e^{-c(\theta_J^{1/2} t)^{1/4}}
  \leq
  \frac{1}{2\pi v_J^2 t} O( \frac{\theta_J^{-2} v_J^2}{t^2} ).
\end{equation}
It remains to consider the contribution coming from $|p| \leq \rho_J^{-1}$.
But then by Lemma~\ref{lem:lambda_error},
\begin{align}
& 0 \leq |p|^2 - \lambda(p) \leq O( |p|^4  ) \leq O( |p|^2) \\
& 0\leq v_J^2|p|^2 - \lambda_J(p) \leq O(\rho_J^2 v_J^2 |p|^4) \leq O(v_J^2 |p|^2) .
\end{align}
With $\hat \kappa$ as in {in the proof of} Lemma~\ref{lemma:basic_frd}, we have $f = c \hat{\kappa}^2$, so
    \begin{align}
      |f(\rho_J^{-1}t\sqrt{\lambda_J (p)})-f(v_J \rho_J^{-1}t|p|)| 
&      \leq C \rho_J^{-1} t ( v_{J} |p| - \sqrt{\lambda_J (p)} ) \max \{ \hat{\kappa} ( \rho_J^{-1} t \lambda_J (p)  ), \hat{\kappa} ( v_J \rho_J^{-1} t |p|  ) \} \nnb
&      \leq C \rho_J^{-1} t  \min \Big\{ 1,  \frac{\rho_J^2v_J^2}{2v_J} |p|^3 \Big\} e^{-\frac{1}{2} (\sqrt{\lambda_J (p)} \rho_J^{-1} t  )^{1/2}  } \nnb
&      \leq C  \rho_J^4 v_J^{-2} t^{-2}  e^{-c (v_J \rho_J^{-1} t |p| )^{1/2}  }
    \end{align}
where the first inequality holds because $\norm{\hat{\kappa}'}_{\infty} < \infty$ and the second because $\kappa(x)$ is decreasing in $|x|$ and $|\hat{\kappa} (x)| \leq C e^{-\frac{1}{2} |x|^{1/2}}$. 
Thus the error from this approximation is, up to an absolute multiplicative factor, bounded by
\begin{equation}
\frac{\rho_J^4 v_J^{-2}}{t^2} \int_{|p| \leq \rho_J^{-1} } \frac{dp}{|p|^{2}} \sum_{l=0}^{\infty} (2l+2) (C' s)^{2l+1}  \int_{[0, \infty)^{2l+2} : \sum t_i = t/4 }  (v_J^2 +s)  \prod_{i=0}^{2l+1} |p|^2 \rho_J^{-2} t_i e^{-c (v_J \rho_J^{-1} |p| t_i)^{1/2}} .
\end{equation} 
Since $|s| \leq \epsilon_s \theta_J \leq O(v_J^2)$, this error is again of order $\frac{1}{2\pi v_J^2 t} O( \frac{\rho_J^4 v_J^{-2}}{t^2} )$, comparing this expression with \eqref{eq:D_l_after_approx}.

\smallskip\noindent
\emph{Error from replacement $P_t(x)$ by $f(\sqrt{x}t)$:}
We consider the difference between
\begin{equation}
  (1-s)\int_{[0,\infty)^{2l+2}: \sum t_i=t/4}  \lambda_J \rho_J^{-2} t_0 P_{\rho_J^{-1} t_0} (\lambda_J)  \prod_{i=1}^{2l+1} \lambda \rho_J^{-2}  t_i P_{\rho_J^{-1} t_i}(\lambda_J) \, dt_i
\end{equation}
and
\begin{equation}
  (1-s)\int_{[0,\infty)^{2l+2}: \sum t_i=t/4} \lambda_J \rho_J^{-2} t_0 f(\sqrt{\lambda_J} \, \rho_J^{-1} t_0) \prod_{i=1}^{2l+1}  \lambda \rho_J^{-2} t_i f(\sqrt{\lambda_J} \, \rho_J^{-1} t_i ) \, dt_i.
\end{equation}
By     \eqref{e:PtFt-bd},
one has $P_{\rho_J^{-1} t}(\lambda_J)-f(\sqrt{\lambda_J} \, \rho_J^{-1}t) = (\rho_J/t) g(\sqrt{\lambda_J}t)$
with $g(x)=Ce^{-c\sqrt{x}}$.
This is essentially the same bound as $P_t(\lambda_J)$ or $f(\sqrt{\lambda_J}t)$
except for an additional factor $\rho_J/t$.
Therefore, again using Lemma~\ref{lem:fbd}, the difference between the above two displays is bounded by
\begin{equation}
  O\pa{  \frac{\rho_J}{t} C^l v_{J}^{-4l+2} \frac{1}{t} \tilde{g}_{2l} (\sqrt{\lambda_J} \, \rho_J^{-1} t)  }
  ,
\end{equation}
when $\tilde g_{2l}$ is defined analogously to $\tilde f_{2l}$.
As in \eqref{eq:tildef_integral}, the integral of $\tilde g_{2l}(t|p|)$ over $dp/|p|^2$ is bounded by $2\pi C^{2l}$ with $C \geq \int_0^\infty g(u)u\, du$, for all $t>0$.
Hence possibly decreasing $|s|$ relative to $C$ we obtain the claimed relative error $O(\rho_J/t)$.

    Summing up the bounds gives the claimed error.
\end{proof}

\subsection{Proof of Proposition~\ref{prop:decomp_compatible}}

Having proved the estimates for the full plane covariance decomposition, the torus analogue is not difficult to prove.

\begin{proof}[Proof of Proposition~\ref{prop:decomp_compatible}]
By definition, 
  \begin{align}
    & D_t^{\Z^d}(0,x; s,m^2) = \int_{[-\pi,\pi]^d} e^{ip\cdot x} \hat D_t(p \, ; s,m^2) \, \frac{dp}{(2\pi)^d} 
\end{align}    
    and we define 
    \begin{equation}    
    D_t^{\Lambda_N}(0,x; s,m^2) = \frac{1}{|\Lambda_N|}\sum_{p\in\Lambda_N^*} e^{ip\cdot x} \hat D_t(p \, ; s,m^2), \label{eq:D_t^Lambd_N_definition}
  \end{equation}
  where $\Lambda_N^* \subset (-\pi,\pi ]^d$ is the dual torus.
  For $t<L^N /2$, the finite-range property and Poisson summation \eqref{eq:Poisson_summation_formula} imply that
  \begin{equation}
    D_t^{\Z^d}(0,x; s,m^2) = D_t^{\Lambda_N}(0,x; s,m^2).
  \end{equation}
  So we are only left to prove \eqref{eq:t_N_bound} and the bound on $\tilde{D}_t^{\Lambda_N}$.
  Let $t_N = \int_{\frac14 L^{N-1}}^{\infty} \hat{D}_t (0 ; s, m^2)\, dt$ and
  \begin{equation}    
     \tilde D_t^{\Lambda_N}(0,x; s,m^2) = \frac{1}{|\Lambda_N|}\sum_{p\in\Lambda_N^* \setminus \{0\}} e^{ip\cdot x} \hat D_t(p \, ; s,m^2). \label{eq:tildeD_t^Lambd_N_definition}
  \end{equation}
To see the bound for $t_N$, just notice that
\begin{align}
t_N = \int_0^{\infty} \hat D_t (0 \, ; s, m^2) dt - \int_0^{\frac14 L^{N-1}} \hat D_t (0 \, ; s, m^2) dt
\leq m^{-2} - C \rho_J^{-2} L^{2N-2}
\end{align}
by \eqref{eq:def_C} and Proposition~\ref{prop:Ct_elementary_bound}.
  The proof of the bound on $\tilde{D}_t^{\Lambda_N}$ is analogous to the argument below \eqref{eq:decomp3-pf1}
  using that all $p$ that contribute satisfy $|p|>2\pi L^{-N}$ and that $t \geq \frac12 L^{N}$.
  Indeed,
  \begin{equation}
    \frac{1}{|\Lambda_N|} \sum_{p \neq 0} \lambda^{|\alpha|/2} \hat D_t(p;s,m^2)
    \leq
    \frac{1}{|\Lambda_N|} \sum_{p \neq 0}  \lambda^{|\alpha|/2} \rho_J^{-2}te^{-\tilde c(t\rho_J^{-1}\sqrt{\lambda_{J,m^2} (p)}))^{1/4}}.
  \end{equation}
  The contribution from $2\pi L^{-N} < |p| \leq \rho_J^{-1}$ is
  \begin{align}
    & \rho_J^{-2} t (\rho_J^{-1}v_J t)^{-|\alpha|} \frac{1}{|\Lambda_N|} \sum_{0<|p|\leq \rho_J^{-1}}  (\rho_J^{-1} v_J t|p|)^{|\alpha|} e^{-c(\rho_J^{-1}{v_J} t |p|)^{1/4}}
    \nnb
    & \qquad \qquad \qquad \qquad \leq C_\alpha \rho_J^{-2} t  (\rho_J^{-1}v_J t)^{-|\alpha|}
      \frac{1}{|\Lambda_N|} \sum_{0<|p|\leq \rho_J^{-1}} e^{-\frac12 c(\rho_J^{-1}{v_J} t |p|)^{1/4}},
  \end{align}
but since $r \mapsto e^{-c r^{1/4}}$ is decreasing for $r \geq 0$ and $2\pi L^{-N} <  \rho_{J}^{-1}$, we have the domination
\begin{align}
\frac{1}{|\Lambda_N|} \sum_{0<|p|\leq \rho_J^{-1}} e^{-\frac12 c(\rho_J^{-1}{v_J} t |p|)^{1/4}} \leq 2d \int_{|p| \leq 2 \rho_J^{-1} } e^{-\frac12 c(\rho_J^{-1}{v_J} |p| t)^{1/4}} \, dp.
\end{align}
Hence the contribution from $|p| \leq \rho_J^{-1}$ is bounded by $C''_{\alpha} \rho_J^{-2} t ( \frac{\rho_J}{v_J t}  )^{|\alpha| +d}$.
Finally, the contribution from $|p| \geq \rho_J^{-1}$ is bounded by
\begin{equation}
\frac{C}{|\Lambda_N|} \sum_{|p| \geq \rho_J^{-1}} |p|^{|\alpha|} \rho_J^{-2} t e^{-c (\rho_J^{-1} \theta_J^{1/2} |p| t )^{1/4} } 
\leq C_{\alpha} \rho_J^{-2} t (\rho_J^{-1} \theta_J^{-1/2} t)^{-|\alpha|} \sum_{|p|\geq \rho_J^{-1}} e^{-\frac{c}{2} (\rho_J^{-1} \theta_J^{1/2} |p| t )^{1/4} }
\end{equation}
but again by the same domination, the estimate for $|\nabla^{\alpha} \tilde{D}_t^{\Lambda_N}|$ is the same as that for $|\nabla^{\alpha} D_t^{\Z^d}|$.
The claim that $D_t^{\Lambda_N} (s,m^2)$ is continuous in $m^2$ and attains a limit as $m^2 \downarrow 0$ is deduced from the fact that the partial absolute sums $\sum_{|p| \leq R} |\hat{D}_t (p ; s,m^2)|$ have a bound uniform in $R$ and $m^2$.
\end{proof}

\section{Scales and polymers}
\label{sec:scales}

In this and the remaining sections,
$\Lambda_N$ always denotes a discrete torus of side length $L^N$,
for integers $L > 1$, $N\geq 1$. Later we will further assume that $L =\ell^M$ for integers $\ell>1$, $M\geq 1$.

\subsection{Blocks and polymers}
\label{sec:polymersdef0}

We follow the set-up for the renormalisation group coordinates of \cite{MR2523458}.
Thus for any scale $j=0,1,\dots,N$, we call \textit{$j$-block} any set of the form $\pi_N (B)$, where~$\pi_N : \Z^d \to \Lambda_N$ is the canonical projection and $B =x+([0,L^j)\cap \mathbb{Z})^d$ for some $x\in L^{j}\mathbb{Z}^d$.
The set of $j$-blocks is denoted by $\cB_j\equiv \cB_j(\Lambda_N)$. It induces a partition of $\Lambda_N$ into $j$-blocks.
A \textit{$j$-polymer} is any set $X$ which is obtained as the union of $j$-blocks,
and we then denote by $\mathcal{B}_j (X) \subset \mathcal{B}_j$ the set of $j$-blocks contained in $X$.
The set of $j$-polymers is denoted by $\cP_j \equiv \cP_j (\Lambda_N)$. Note that the family $\cP_j$ is decreasing in $j$. For $X \in \cP_j$, its closure $\overline{X} \in \cP_{j+1}$ is the union of all $(j+1)$-blocks which intersect $X$,
i.e., $\overline{X}$ is the `smallest' $Y \in \cP_{j+1}$ such that $X \subset Y$.

Next, a \textit{connected polymer} is a polymer $X \neq \emptyset$ which forms a connected set in $\ell^{\infty}$-sense. 
Two connected polymers $X_1, X_2$ are called \textit{connected} if $X_1 \cup X_2$ is a connected polymer; this is denoted by $X_1 \sim X_2$ and we write $X_1 \not\sim X_2$ if $X_1$ and $X_2$ are not connected.
The set of connected $j$-polymers is denoted by $\cP_j^c \equiv \cP_j^c (\Lambda_N)$.
It is worth highlighting that $\emptyset \notin \cP_j^c$ by this definition.
For $X \in \mathcal{P}_j$ we write $\operatorname{Comp}_j(X) \subset  \mathcal{P}_j^c$ for the set of constituting connected polymers, i.e.,
each $Y \in \operatorname{Comp}_j(X)$ is a maximal connected polymer in $X$ and the union over all such $Y$ is $X$.
Denoting $|X|_j=|\cB_j(X)|$, the number of
$j$-blocks contained in $X$, 
a connected polymer $X \in\cP_j^c$ is called a \emph{small set} if $|X|_j \leq 2^d$,
and denote $X \in \cS_j$.
Finally, for any $X \in \cP_j$, we define its \textit{small-set neighbourhood} as $X^* = \bigcup S$ where the union ranges over all $S \in \cS_j$ such that $S \cap X \neq \emptyset$.

For later reference, we note that the combinatorial results of Lemmas 6.15--6.19 from \cite[Section~6.4]{MR2523458} all hold in the present set-up.

\subsection{Massless finite-range decomposition}
\label{subsec:limit_m_to_0}

As in Section~\ref{sec:finite_range_decomposition} (cf.~also above \eqref{eq:Delta_J_definition}),
let $J \subset \Z^2 \setminus \{0\}$ be a finite-range step distribution that is invariant under lattice symmetries,
and recall the finite-range decomposition of the associated covariance matrix $C (s,m^2)$ from 
Propositions~\ref{prop:decomp} and~\ref{prop:decomp_compatible}.
To simplify the conditions,
we will from now on always assume that $d=2$ and that there is a constant $C>0$ such that
the parameters from \eqref{eq:rJ_range}--\eqref{eq:theta_def} satisfy
\begin{equation} \label{eq:frd_ulbds}
|s| \leq \epsilon_s \theta_J,
  \qquad
  \theta_J \geq C^{-1},
  \qquad
	C^{-1} \rho_J \leq v_J \leq \rho_J / 2
	.
\end{equation}
All constants in the sequel are permitted to depend on this constant $C$ but will be otherwise independent of $J$.
In particular, this assumption holds for any fixed $J$ as in the statement of Theorem~\ref{thm:highbeta}, and it also holds uniformly in $\rho$ for the standard range-$\rho$ distribution $J_\rho$ discussed above Remark~\ref{rk:highbeta-rho}, see Lemma~\ref{lem:lambda_rho_error}.
Since $D_t^{\Z^d}$ is independent of $\Lambda_N$
for scales $< \frac14 L^{N-1}$, setting $D_t^{\Z^d}=0$ for $t< \rho_J$ (cf.~\eqref{eq:decomp3}), we define for $j\geq 0$,
\begin{align}\label{eq:Gammaj}
  \Gamma_{j+1} (s,m^2)
  &= \int_{ \frac14 L^{j}}^{\frac14 L^{j+1}}
  D_t^{ \Z^d} (s, m^2 ) \, dt,
  \\
  \Gamma_N^{\Lambda_N}(s,m^2)
  &= \int_{ \frac14 L^{N-1}}^{\infty}
    \tilde D_t^{\Lambda_N} (s, m^2 ) \, dt,
\end{align}
and set $\Gamma_{j, j'} = \sum_{k=j+1}^{j'} \Gamma_k$ so that, in view of \eqref{eq:decomp2}, \eqref{e:C(m)-torus-decomp}, 
we obtain
\begin{align}
  C^{\Lambda_N} (s,m^2)
  &= \Gamma_1(s,m^2)+\cdots+\Gamma_{N-1}(s,m^2)
    +
    \Gamma^{\Lambda_N}_{N}(s,m^2) + t_N(s,m^2)Q_N
    \nnb
  &= \Gamma_{0,N-1}(s,m^2)
    + \Gamma^{\Lambda_N}_{N}(s,m^2) + t_N(s,m^2)Q_N.
  \label{eq:frd_of_C^Lambda_N}
\end{align}
In particular, the matrices $\Gamma_{j}$ have range $\frac14 L^{j}$ by \eqref{eq:Dt_range} and satisfy the following bounds,
which are straightforward consequences of Propositions~\ref{prop:decomp} and~\ref{prop:decomp_compatible}.

\begin{corollary} \label{cor:Gammaj}
  Assume \eqref{eq:frd_ulbds} (and recall $d=2$).
  Then $\Gamma_{j+1}$ is analytic in $|s|<\epsilon_s\theta_J$,
\begin{equation} \label{eq:Gammaj_bd}
| \nabla^{\alpha} \Gamma_{j+1} (0, x ; s) | \leq
\begin{array}{ll}
\begin{cases}
C_{\alpha} \rho_J^{-2} L^{-j |\alpha|} & \quad \textnormal{if } |\alpha| \geq 1 \\
C_{0} \rho_J^{-2} \log L & \quad \textnormal{if } \alpha = 0
\end{cases}
\end{array}
\end{equation}
and 
\begin{equation} \label{eq:Gammaj0_asymp}
  \Gamma_{j+1} (0, 0 ; s) = 
  \frac{\log L}{2\pi (v_J^2+s)}
  + O(\rho_J^{-1} L^{-j}),
\end{equation}
and the estimates \eqref{eq:Gammaj_bd} also hold for $\Gamma^{\Lambda_N}_N$ and
we have $t_N (s,m^2) = m^{-2} + O(\rho_J^{-2}L^{2N})$.
\end{corollary}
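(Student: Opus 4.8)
The plan is to derive Corollary~\ref{cor:Gammaj} directly from Propositions~\ref{prop:decomp} and~\ref{prop:decomp_compatible} by integrating the pointwise bounds on the $D_t$ over the dyadic window $t\in[\tfrac14 L^j,\tfrac14 L^{j+1}]$. Throughout I would use the standing assumptions \eqref{eq:frd_ulbds}, namely $\theta_J\geq C^{-1}$ and $v_J\geq C^{-1}\rho_J$, which collapse the two-term bound \eqref{eq:decomp3-bd} into the single term $|\nabla^\alpha D_t^{\Z^d}(0,x;s,m^2)|\leq C_\alpha \rho_J^{-2}t(\rho_J/(v_Jt))^{d+|\alpha|}=C_\alpha \rho_J^{-2}t^{1-d-|\alpha|}$ for $t\geq\rho_J$, where we also used $\rho_J/v_J\leq C$ to absorb the $v_J$ factors into the constant (this is exactly the remark displayed below \eqref{eq:decomp3-bd} and in the Remark following Proposition~\ref{prop:decomp}). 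Since $d=2$, this reads $|\nabla^\alpha D_t(0,x;s)|\leq C_\alpha\rho_J^{-2}t^{-1-|\alpha|}$.

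The analyticity of $\Gamma_{j+1}$ in $|s|<\epsilon_s\theta_J$ is inherited from the analyticity of $s\mapsto D_t^{\Z^d}(s)$ asserted in Proposition~\ref{prop:decomp}(ii) (and its analogue for $\tilde D_t^{\Lambda_N}$ in Proposition~\ref{prop:decomp_compatible}), together with the uniform-in-$s$ integrability provided by the bound just quoted, which lets one differentiate under the integral sign. For the derivative bounds \eqref{eq:Gammaj_bd} with $|\alpha|\geq 1$, I would simply integrate: $\int_{\frac14 L^j}^{\frac14 L^{j+1}} C_\alpha\rho_J^{-2}t^{-1-|\alpha|}\,dt\leq C_\alpha'\rho_J^{-2}(\tfrac14 L^j)^{-|\alpha|}=C_\alpha''\rho_J^{-2}L^{-j|\alpha|}$, using $|\alpha|\geq 1$ so the integral converges at the lower endpoint and is dominated by its value there. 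For $\alpha=0$ the integrand is $C_0\rho_J^{-2}t^{-1}$, whose integral over the dyadic window is $C_0\rho_J^{-2}\log L$; adding the trivial $O(\rho_J^{-2})$ from small $j$ where one might have $\tfrac14 L^j<\rho_J$ (recall $D_t$ is cut off below $\rho_J$, so in that regime the window of integration shrinks and the bound only improves) gives the stated $C_0\rho_J^{-2}(1+\log L)$.

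For the sharp asymptotic \eqref{eq:Gammaj0_asymp} I would use Proposition~\ref{prop:decomp}(iii): $D_t^{\Z^2}(0,0;s)=\tfrac{1}{2\pi t(v_J^2+s)}(1+O(\rho_J/t))$ under assumption \eqref{eq:frd_ulbds} (the other error terms $\rho_J^4/(v_J^2 t^2)$ and $\theta_J^{-2}v_J^2/t^2$ are $O(\rho_J^2/t^2)\leq O(\rho_J/t)$ there). Integrating the main term over $[\tfrac14 L^j,\tfrac14 L^{j+1}]$ gives $\tfrac{1}{2\pi(v_J^2+s)}\log L$ exactly, and integrating the error gives $\int_{\frac14 L^j}^{\frac14 L^{j+1}} O(\tfrac{\rho_J}{2\pi t^2(v_J^2+s)})\,dt=O(\rho_J L^{-j}/v_J^2)=O(\rho_J^{-1}L^{-j})$, using $v_J^2\geq C^{-2}\rho_J^2$ once more; for the finitely many small $j$ with $\tfrac14 L^j\lesssim\rho_J$ one checks directly that both sides of \eqref{eq:Gammaj0_asymp} are $O(\rho_J^{-2}\log L)$, which is absorbed. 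The bounds for $\Gamma_N^{\Lambda_N}$ are identical because $\tilde D_t^{\Lambda_N}$ satisfies the same upper bounds as $D_t^{\Z^d}$ by Proposition~\ref{prop:decomp_compatible}, and the claim $t_N(s,m^2)=m^{-2}+O(\rho_J^{-2}L^{2N})$ is a verbatim restatement of \eqref{eq:t_N_bound}. The only mild subtlety — and the closest thing to an obstacle — is bookkeeping the lower cutoff $t\geq\rho_J$ versus the window $[\tfrac14 L^j,\tfrac14 L^{j+1}]$ uniformly over $j$; but since $\rho_J\geq 1$ and for $\tfrac14 L^j<\rho_J$ the effective range of integration is $[\rho_J,\tfrac14 L^{j+1}]$ (or empty), all estimates above only become stronger, so no genuine difficulty arises.
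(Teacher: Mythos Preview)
Your approach is exactly what the paper intends --- it offers no proof beyond calling the corollary a straightforward consequence of Propositions~\ref{prop:decomp} and~\ref{prop:decomp_compatible} --- and your treatment of the bulk covariances $\Gamma_{j+1}$, including the asymptotic \eqref{eq:Gammaj0_asymp}, is correct.

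There is one place where your argument as written fails: the $\alpha=0$ bound for $\Gamma_N^{\Lambda_N}$. You say the bounds are ``identical because $\tilde D_t^{\Lambda_N}$ satisfies the same upper bounds as $D_t^{\Z^d}$,'' but for $\alpha=0$ under \eqref{eq:frd_ulbds} that bound is $C_0\rho_J^{-2}t^{-1}$, and the defining integral for $\Gamma_N^{\Lambda_N}$ runs over $t\in[\tfrac14 L^{N-1},\infty)$, where $\int t^{-1}\,dt$ diverges. To recover the claimed bound you must actually use that the zero Fourier mode has been removed from $\tilde D_t^{\Lambda_N}$: every contributing $p$ satisfies $|p|\geq 2\pi L^{-N}$, so going back to \eqref{eq:Ct_elementary_bound} and splitting the exponent in half, one half gives (via $\sqrt{\lambda_J(p)}\geq c\,v_J|p|\geq c'L^{-N}$ for $|p|\leq\rho_J^{-1}$, using $v_J\geq C^{-1}\rho_J$) an extra factor $e^{-c(t/L^N)^{1/4}}$, while the other half is summed over $p$ as in the proof of Proposition~\ref{prop:decomp_compatible} to produce the $t^{-1}$. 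This yields $|\tilde D_t^{\Lambda_N}(0,x)|\leq C\rho_J^{-2}t^{-1}e^{-c(t/L^N)^{1/4}}$ (the large-$|p|$ contribution already carries the integrable factor $e^{-c(\theta_J^{1/2}t)^{1/4}}$ from the second term of \eqref{eq:decomp3-bd} and is negligible). Now split the $t$-integral at $t=L^N$: the part below contributes $O(\rho_J^{-2}\log L)$ and the tail $O(\rho_J^{-2})$. The fix is short, but without it the step does not go through.
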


We are ultimately interested in taking $m^2 \downarrow 0$.
While the zero mode $t_N(s,m^2)$ diverges as $m^{-2}$ as $m^2\downarrow 0$ like the torus Green function,
the covariances $\Gamma_j$ and their discrete derivatives are continuous as $m^2\downarrow 0$,
and this allows to directly set $m^2=0$ in these. This is made precise by the following lemma.
To simplify notation, we will abbreviate  from now on
$\Gamma_j = \Gamma_j(s)= \Gamma_j(s,0)$
{and $\Gamma_{N}^{\Lambda_N}=\Gamma_{N}^{\Lambda_N}(s)=\Gamma_N^{\Lambda_N}(s,0)$,}
i.e., $m^2$ is set to $0$ and the dependence on $s$ is often made implicit.

\begin{lemma} \label{lemma:m2to0_with_frd}
   Let $s$ be as in  \eqref{eq:frd_ulbds},
   let $\kappa < \theta_J+s$, and let $F: \R^\Lambda \to \R$ be a smooth function satisfying $|F(\varphi)| \leq e^{\kappa (\nabla \varphi, \nabla \varphi)}$.
   Then as $m^2\downarrow 0$,
  \begin{equation}
    \E_{C^{\Lambda_N}(s,m^2)} F
    \sim
    \E_{t_N(s,m^2)Q_N}^{\varphi'}
    \E^{\zeta}_{\Gamma_{0, N-1}(s)+\Gamma^{\Lambda_N}_{N}(s)} F(\varphi' +\zeta),
  \end{equation}  
  where on the right-hand side $\varphi'$ is (centered) Gaussian with covariance
  $t_N(s,m^2) Q_N$  and $\zeta$ is (centered) Gaussian with covariance $\Gamma_{0, N-1}(s) + \Gamma^{\Lambda_N}_{N}(s)$, and we recall that $a\sim b$ means $\lim a/b=1$.
\end{lemma}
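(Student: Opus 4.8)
The plan is to isolate the zero (constant) Fourier mode of the field, which is the only mode whose covariance diverges as $m^2\downarrow0$, and to show that on the complementary mean-zero subspace everything converges well enough that setting $m^2=0$ there costs only a factor $1+o(1)$. Concretely, by \eqref{eq:frd_of_C^Lambda_N} we have $C^{\Lambda_N}(s,m^2)=t_N(s,m^2)Q_N+\Sigma_{m^2}$ with $\Sigma_{m^2}:=\Gamma_{0,N-1}(s,m^2)+\Gamma_N^{\Lambda_N}(s,m^2)$, a sum of positive semi-definite matrices; and by the additivity of covariances under convolution of independent Gaussians (the same identity as in \eqref{e:convid}), the right-hand side of the asserted equivalence is exactly $\E_{t_N(s,m^2)Q_N+\Sigma_0}[F]$, where $\Sigma_0:=\Gamma_{0,N-1}(s)+\Gamma_N^{\Lambda_N}(s)$ is the $m^2=0$ value of $\Sigma_{m^2}$. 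It therefore suffices to prove
\[
\E_{C^{\Lambda_N}(s,m^2)}[F]=\big(1+o(1)\big)\,\E_{t_N(s,m^2)Q_N+\Sigma_0}[F]\qquad(m^2\downarrow0),
\]
i.e.\ that replacing $\Sigma_{m^2}$ by $\Sigma_0$ inside the covariance, with the divergent part $t_N(s,m^2)Q_N$ kept fixed, perturbs the Gaussian expectation only negligibly.

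Two facts are needed. First, $\Sigma_{m^2}\to\Sigma_0$ as $m^2\downarrow0$, in operator norm (these being matrices of the fixed size $|\Lambda_N|\times|\Lambda_N|$): this follows from the continuity in $m^2$ of $D_t^{\mathbb{Z}^2}$ and $\tilde D_t^{\Lambda_N}$ and from the existence of their limits at $m^2=0$ (Propositions~\ref{prop:decomp}(ii) and~\ref{prop:decomp_compatible}), together with dominated convergence in the $t$-integrals defining $\Gamma_{0,N-1}$ and $\Gamma_N^{\Lambda_N}$, for which the $m^2$-uniform bounds \eqref{eq:decomp3-bd}/\eqref{eq:Gammaj_bd} serve as dominating function; recall also $t_N(s,m^2)=m^{-2}+O(\rho_J^{-2}L^{2N})\to\infty$. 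Second, uniform exponential integrability of $F$: since $(\nabla\varphi,\nabla\varphi)=(\varphi,-\Delta\varphi)$ does not see the constant mode, $\E_{C^{\Lambda_N}(s,m^2)}[e^{q\kappa(\nabla\varphi,\nabla\varphi)}]=\E_{\Pi\Sigma_{m^2}\Pi}[e^{q\kappa(\nabla\varphi,\nabla\varphi)}]$ with $\Pi:=\mathrm{Id}-Q_N$; using $-\Delta_J\ge\theta_J(-\Delta)$ and $\gamma<1/3$ one checks that, for $\kappa<\theta_J+s$, these moments are finite and bounded uniformly in $m^2\in(0,1]$ for some $q>1$ (with $\Sigma_0$ in place of $\Sigma_{m^2}$, the bound is independent of $m^2$). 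Thus $F$ lies in the relevant $L^q$ spaces with uniformly bounded norm.

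For the comparison itself — write $C_{m^2}:=C^{\Lambda_N}(s,m^2)=t_N(s,m^2)Q_N+\Sigma_{m^2}$ and $C_0:=t_N(s,m^2)Q_N+\Sigma_0$ — I would use that all matrices in play are translation-invariant on $\Lambda_N$, hence simultaneously diagonal in the Fourier basis. Consequently, under each of the two measures the scalar mean $c:=|\Lambda_N|^{-1}\sum_x\varphi_x$ and the mean-zero part $\psi:=\Pi\varphi$ are independent, with $c$ Gaussian of variance $\sigma_\bullet^2:=(t_N(s,m^2)+\hat\Sigma_\bullet(0))/|\Lambda_N|$ and $\psi$ Gaussian with covariance $\Pi\Sigma_\bullet\Pi$, where $\bullet\in\{m^2,0\}$. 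Setting $H_\bullet(c):=\E^\psi_{\Pi\Sigma_\bullet\Pi}[F(c\mathbf{1}+\psi)]$, we get $\E_{C_\bullet}[F]=\E^c_{\sigma_\bullet^2}[H_\bullet(c)]$, and the envelope bound yields $|H_\bullet(c)|\le\E^\psi[e^{\kappa(\nabla\psi,\nabla\psi)}]$ \emph{uniformly in $c$} — this is what neutralises the divergence of $\sigma_\bullet^2$. The comparison then splits into two steps. (1) Replacing $\sigma_{m^2}^2$ by $\sigma_0^2$: since $\sigma_{m^2}^2/\sigma_0^2\to1$ (both numerators diverge like $t_N$ and differ only by the bounded, vanishing quantity $\hat\Sigma_{m^2}(0)-\hat\Sigma_0(0)$) and $H$ is uniformly bounded, the total-variation distance between the two scalar Gaussians tends to $0$, contributing an additive error $o(1)$ times the envelope bound. (2) Replacing $\Pi\Sigma_{m^2}\Pi$ by $\Pi\Sigma_0\Pi$ inside $H$: here I would couple the two mean-zero Gaussians through a common standard normal, using $\|(\Pi\Sigma_{m^2}\Pi)^{1/2}-(\Pi\Sigma_0\Pi)^{1/2}\|\to0$ (continuity of the matrix square root at the positive-definite matrix $\Pi\Sigma_0\Pi$), the continuity of the smooth function $F$, and the uniform $L^q$-envelope from the second fact above, to pass to the limit by dominated convergence. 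Combining (1) and (2) gives $\E_{C^{\Lambda_N}(s,m^2)}[F]=\E_{t_N(s,m^2)Q_N+\Sigma_0}[F]+o(1)$ with the error controlled uniformly by the $L^q$-norm of $F$; since the leading term stays bounded below (automatically so whenever $F\ge0$, as in all of our applications), the asserted equivalence follows.

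\emph{Main obstacle.} The one genuine difficulty is the divergence $t_N(s,m^2)\sim m^{-2}\to\infty$ of the zero-mode variance, which precludes a direct dominated-convergence argument on the whole field. It is resolved by the Fourier factorisation above, which confines the divergence to a \emph{scalar} Gaussian integral of a function that is \emph{uniformly bounded} there, precisely because the growth envelope $e^{\kappa(\nabla\varphi,\nabla\varphi)}$ ignores the constant mode; this is why the hypothesis is formulated in terms of $(\nabla\varphi,\nabla\varphi)$ rather than the full field, and why the relevant smallness condition is the spectral-gap condition $\kappa<\theta_J+s$ for the mean-zero covariance, with no reference to $t_N$. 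This is the continuum counterpart of the mechanism in the proof of Lemma~\ref{lemma:m2to0}, where the divergent sum over constant shifts $t$ was shown to factor out asymptotically.
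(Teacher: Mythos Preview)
Your approach is essentially the paper's: both iterate the expectation via the split $C^{\Lambda_N}(s,m^2)=t_NQ_N+\Sigma_{m^2}$, invoke the continuity $\Sigma_{m^2}\to\Sigma_0$ from Propositions~\ref{prop:decomp}--\ref{prop:decomp_compatible}, exploit that the envelope $e^{\kappa(\nabla\varphi,\nabla\varphi)}$ is blind to the constant mode, and check uniform integrability through the spectral bound $C^{\Lambda_N}(s,m^2)^{-1}\ge -(\theta_J+s)\Delta$; your explicit Fourier factorisation into $(c,\psi)$ with a TV estimate on the scalar mode and a square-root coupling on the mean-zero part makes precise what the paper compresses into a single sentence. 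One correction: the passage from the additive $o(1)$ to the asserted $\sim$ needs $\E_{t_NQ_N+\Sigma_0}[F]$ bounded away from $0$, and this does \emph{not} follow from $F\ge0$ alone (e.g.\ $F(\varphi)=e^{-\bar\varphi^{\,2}}$ satisfies the hypotheses yet gives $\E\to0$ as $t_N\to\infty$)---in the actual applications it is the $2\pi\beta^{-1/2}$-periodicity of $Z_0$ under constant shifts that makes $H_0(c)$ periodic in $c$ and keeps its Gaussian average bounded below.
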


\begin{proof}
  Provided sufficient integrability holds, by \eqref{eq:frd_of_C^Lambda_N}
  and the fact that the sum of independent Gaussian vectors is Gaussian
  with covariance the sum of the covariances, we have the identity
\begin{equation}
  \E_{C^{\Lambda_N} (s,m^2)} F
  =
  \E_{t_N(s,m^2)Q_N}^{\varphi'}
  \E_{\Gamma_1(s,m^2)+\cdots+\Gamma_{N-1}(s,m^2)+\Gamma^{\Lambda_N}_{N}(s,m^2)}^\zeta F(\varphi'+\zeta) 
\end{equation}
and thus
\begin{equation}
  1\sim\frac{\E_{t_N(s,m^2)Q_N}^{\varphi'}
    \E_{\Gamma_1(s,0)+\cdots+\Gamma_{N-1}(s,0)+\Gamma^{\Lambda_N}_{N}(s,0)}^\zeta F(\varphi'+\zeta)}{\E_{C^{\Lambda_N} (s,m^2)} F}
  \qquad (m^2\downarrow 0),
\end{equation}
where we used that $\Gamma_j(s,m^2)$ and $\Gamma_N^{\Lambda_N}(s,m^2)$ are continuous as $m^2\downarrow 0$
which implies that the inner Gaussian expectation in the numerator is continuous as $m^2 \downarrow 0$
if $F$ is integrable uniformly in $m^2$.

To see the integrability of the function $\varphi \mapsto e^{\kappa (\nabla \varphi, \nabla \varphi) } = e^{\kappa (\varphi, -\Delta \varphi)}$, it is enough to check that $\kappa (-\Delta) < C^{\Lambda_N} (s,m^2)^{-1}$ for each $m^2 >0$ and sufficiently small $\kappa$. But by definition  $C^{\Lambda_N} (s,m^2)^{-1} \geq -\Delta_J - s\Delta \geq - (\theta_J + s) \Delta$, so this holds as long as $\kappa < \theta_J + s$.
\end{proof}

Given a function $Z_0 ( \, \cdot \, | \Lambda_N)  : \R^{\Lambda_N} \to \R$,
which in our application will be taken to be \eqref{eq:Z_0_definition},
functions $Z_j ( \, \cdot \, | \Lambda_N) : \R^{\Lambda_N} \rightarrow \R$ are defined inductively by
\begin{equation}\label{eq:Z_j_recursion}
  Z_{j+1} (\varphi | \Lambda_N  ) = 
  \E_{\Gamma_{j+1}} [Z_j (\varphi + \zeta | \Lambda_N)], 
\end{equation}
where the expectation is taken over $\zeta \sim \mathcal{N}(0, \Gamma_{j+1})$; here we emphasise again that $\Gamma_{j+1}=\Gamma_{j+1}(s,0)$,
and we assume that $Z_0$ is such that the integrals exist.
Then by the previous lemma, and again using that the sum of independent Gaussian vectors is Gaussian with covariance the sum of the covariances,
\begin{equation}
  \mathbb{E}_{C^{\Lambda_N} (s,m^2)} Z_0 (\varphi + \zeta | \Lambda_N)
  \sim
  \E_{\Gamma^{\Lambda_N}_{N}(s)+t_N(s,m^2)Q_N}
  Z_{N-1} (\varphi + \zeta | \Lambda_N),
  \qquad (m^2\downarrow 0),
\end{equation}
where as usual both expectations are over $\zeta$.
In our setting, we will see in Section~\ref{sec:integration_of_zero_mode} that the integral 
over $\zeta \sim \cN(0,\Gamma_N^{\Lambda_N}(s)+t_N(s,m^2)Q_N)$ on the right-hand side is negligible as $N \rightarrow \infty$
for the purpose of Theorem~\ref{thm:highbeta}.
Therefore we can and will focus on the massless covariances $\Gamma_j (s)$
in Sections~\ref{sec:norms}--\ref{sec:stable_manifold_theorem}.

We conclude this short section with the following factorisation property implied by the finite
range property of the covariances.

\begin{lemma}
  Let $X, Y \subset\Lambda_N$ with $\min\{|x-y|_\infty: x\in X, y\in Y\} \geq {\frac14} L^{j+1}$.
  Then for all functions
  $F(X) : \R^{X} \to \R$ and $F(Y):\R^{Y} \to \R$ such that the following integrals exist,
  \begin{equation} \label{e:E-factor}
    \E_{\Gamma_{j+1}} \pB{F(X,\varphi+\zeta) F(Y,\varphi+\zeta)}
    =
    \E_{\Gamma_{j+1}} (F(X,\varphi+\zeta)) \E_{\Gamma_{j+1}}(F(Y,\varphi+\zeta)).
  \end{equation}
  In particular, assuming $L \geq 2^{d+2}$,
  this applies if $X$ and $Y$ are scale-$(j+1)$ polymers that do not touch,
  i.e., $X$ and $Y$   are distinct elements of $\operatorname{Comp}_{j+1}(X\cup Y)$, and
  $F(X) : \R^{X^*} \to \R$ and $F(Y) : \R^{Y^*} \to \R$ where $X^*$ and $Y^*$ denote the small set neighbourhoods of $X$ and $Y$ at scale $j$.
\end{lemma}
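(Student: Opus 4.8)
\emph{Proof strategy.} The plan is to derive both assertions from the finite-range property of $\Gamma_{j+1}$ recorded in Section~\ref{sec:scales}: since $\Gamma_{j+1}=\int_{\frac14 L^{j}}^{\frac14 L^{j+1}} D_t\,dt$ and each $D_t$ has range strictly less than $t$ by \eqref{eq:Dt_range}, one has $\Gamma_{j+1}(x,y)=0$ whenever $|x-y|_\infty\geq \frac14 L^{j+1}$. In particular this holds for all $x\in X$, $y\in Y$ under the standing hypothesis $\min\{|x-y|_\infty:x\in X,\,y\in Y\}\geq \frac14 L^{j+1}$.

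For the first (general) statement I would fix $\varphi$ and note that $\zeta\mapsto F(X,\varphi+\zeta)$ depends only on the family $(\zeta_x)_{x\in X}$, and likewise $\zeta\mapsto F(Y,\varphi+\zeta)$ only on $(\zeta_y)_{y\in Y}$. Since $\zeta\sim\mathcal{N}(0,\Gamma_{j+1})$ and $\Gamma_{j+1}(x,y)=0$ for all $x\in X$, $y\in Y$ by the previous paragraph, the jointly Gaussian families $(\zeta_x)_{x\in X}$ and $(\zeta_y)_{y\in Y}$ are uncorrelated, hence independent. Therefore $F(X,\varphi+\zeta)$ and $F(Y,\varphi+\zeta)$ are independent random variables under $\E_{\Gamma_{j+1}}$, and \eqref{e:E-factor} is just multiplicativity of the expectation over independent factors, all relevant integrals being finite by assumption.

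For the second statement it suffices to verify the hypothesis of the first part with $X^*$ and $Y^*$ in place of $X$ and $Y$, i.e., $\min\{|x-y|_\infty:x\in X^*,\,y\in Y^*\}\geq \frac14 L^{j+1}$ whenever $X$ and $Y$ are distinct elements of $\operatorname{Comp}_{j+1}(X\cup Y)$ and $L\geq 2^{d+2}$. The argument is purely geometric. First, since $X$ and $Y$ are non-touching $(j+1)$-polymers, no $(j+1)$-block of $X$ is $\ell^\infty$-adjacent to a $(j+1)$-block of $Y$, so their positions differ by at least $2L^{j+1}$ in some coordinate, whence $\min\{|x-y|_\infty:x\in X,\,y\in Y\}\geq L^{j+1}$. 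Second, each point of the scale-$j$ small-set neighbourhood $X^*$ lies in some $S\in\cS_j$ with $S\cap X\neq\emptyset$; since $S$ is an $\ell^\infty$-connected union of at most $2^{d}$ $j$-blocks, connectedness forces its occupied block-columns to form an interval of length at most $2^d$ in every coordinate, so $\operatorname{diam}_\infty(S)<2^d L^j$, and hence every point of $X^*$ lies within $\ell^\infty$-distance $<2^d L^j$ of $X$; similarly for $Y^*$. Combining these with the triangle inequality, $\min\{|x-y|_\infty:x\in X^*,\,y\in Y^*\}\geq L^{j+1}-2\cdot 2^d L^j=L^j(L-2^{d+1})\geq \tfrac14 L^{j+1}$, the last step holding because $L\geq 2^{d+2}$ implies $\tfrac34 L\geq 2^{d+1}$.

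There is no real difficulty here; the only point that warrants a little care is the geometric estimate of the last paragraph, namely the bound $\operatorname{diam}_\infty(S)<2^d L^j$ for scale-$j$ small sets and the separation $\geq L^{j+1}$ of non-touching $(j+1)$-polymers, together with tracking how exactly the condition $L\geq 2^{d+2}$ (rather than merely $L\geq 2^{d+1}$) is used. I would establish the diameter bound through the observation that $\ell^\infty$-connectedness of a union of $j$-blocks forces the set of occupied block-columns in each coordinate to be a contiguous range, of cardinality at most $|S|_j\leq 2^d$.
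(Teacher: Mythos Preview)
Your proposal is correct and follows essentially the same approach as the paper: the factorisation \eqref{e:E-factor} from the finite-range property of $\Gamma_{j+1}$ together with independence of uncorrelated jointly Gaussian vectors, and the geometric verification via $\operatorname{dist}_\infty(X,Y)\geq L^{j+1}$ for non-touching $(j+1)$-polymers, the scale-$j$ small-set bound $\operatorname{diam}_\infty(S)<2^d L^j$, and hence $\operatorname{dist}_\infty(X^*,Y^*)\geq L^{j+1}-2^{d+1}L^j\geq\frac14 L^{j+1}$ for $L\geq 2^{d+2}$. The paper's proof is slightly terser and records the marginally sharper bound $\geq\frac12 L^{j+1}$, but the content is the same.
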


\begin{proof}
  \eqref{e:E-factor} is immediate from the finite-range property of the covariance $\Gamma_{j+1}$ (recall that $\Gamma_{j+1}$ has range at most $\frac14L^{j+1}$, cf.~\eqref{eq:Gammaj} and \eqref{eq:Dt_range}) and the fact that two jointly Gaussian random variables are independent if their covariance vanishes.

  The claim below \eqref{e:E-factor} then follows from the fact that if $X$ and $Y$
  are scale-$(j+1)$ polymers that do not touch, their $\ell^{\infty}$-distance is at least $L^{j+1}$
  and their scale-$j$ small set neighbourhoods $X^*$ and $Y^*$ then still have distance
  at least $L^{j+1}-2^{d+1}L^j = L^{j+1}(1-2^{d+1}L^{-1}) \geq \frac12 L^{j+1}\geq \frac14 L^{j+1}$. 
\end{proof}

\subsection{Scale subdecomposition}
\label{sec:subscale}

In some places, it is necessary to subdecompose each $\Gamma_{j+1}$ further to obtain better integrability and related
better contractivity of the renormalisation group map.
(For example, in the proof of Proposition~\ref{prop:E_G_j} below this subdecomposition allows to choose
$\kappa_L$ of order $1/(\log L)$.
Since $1/\kappa_L$ appears in various error terms,
this integrability is especially important to get to the critical temperature
or close to it, cf.~Remark~\ref{rk:highbeta-rho}.)
More precisely, we subdecompose each scale $j$ further
into fractional scales $j+s$ with $s\in \{0,1/M, \dots, 1-1/M, 1\}$ where $M$ is an integer such that $L=\ell^M$
for an integer $\ell$.
Corresponding to the fractional scales, we define covariances analogously to \eqref{eq:Gammaj}, i.e., 
\begin{equation}\label{eq:gamma_js}
\Gamma_{j+s, j+s'}
= \int_{\frac14 L^{j+s}}^{\frac14 L^{j+s'}} {D}_t 
\, dt, 
\qquad s,s'\in \{0, M^{-1}, 2M^{-1}, \dots, 1 \}
\end{equation}
for $j< N-1$, and for $j = N-1$,
\begin{equation}
\Gamma^{\Lambda_N }_{j+s, j+s'}
= 
\begin{cases}
\int_{\frac14 L^{j+s}}^{\frac14 L^{j+s'}} {\tilde D}_t^{\Lambda_N} \, dt &  \text{if} \;\; s' < 1 \\
\int_{\frac14 L^{j+s}}^{\infty} {\tilde D}_t^{\Lambda_N} \, dt  & \text{if} \;\; s'=1 . 
\end{cases}
\label{eq:gamma^b_js}
\end{equation}

In particular $\Gamma_{j, j+1}=\Gamma_{j+1}$.
These covariances admit estimates that are analogous to those for $\Gamma_{j+1}$
in Corollary~\ref{cor:Gammaj}
and they are again corollaries of Proposition~\ref{prop:decomp} and Proposition~\ref{prop:decomp_compatible}. 

\begin{lemma} \label{lemma:fine_Gamma_estimate}
    Let $d=2$ and assume \eqref{eq:frd_ulbds}.
  Then for $s,s'\in \{0, M^{-1}, 2M^{-1}, \dots, 1 \}$ and $s' - s = M^{-1}$,
\begin{align}
|\nabla^{\alpha} \Gamma_{j+s, j+s'} | \leq
\begin{array}{ll}
\begin{cases}
C_{\alpha} \rho_J^{-2} L^{- (j+s) |\alpha| } &  \text{if} \;\; |\alpha| \geq 1 \\
C_0 \rho_J^{-2} \log \ell & \text{if} \;\; \alpha = 0 .
\end{cases}
\end{array}
\end{align}
and the estimates also hold
for $\Gamma^{\Lambda_N}_{N-1+s,N-1+s'}$.
\end{lemma}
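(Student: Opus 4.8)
The plan is to obtain these estimates by integrating the pointwise bounds on $D_t$ and $\tilde D_t^{\Lambda_N}$ from Propositions~\ref{prop:decomp} and~\ref{prop:decomp_compatible} over the fractional-scale window, exactly as for Corollary~\ref{cor:Gammaj}. First I would record the simplified pointwise bound: under the standing assumption \eqref{eq:frd_ulbds} one has $\theta_J \geq C^{-1}$ and $v_J \geq C^{-1}\rho_J$, so in \eqref{eq:decomp3-bd} the second term may be discarded and the first estimated using $v_J \geq C^{-1}\rho_J$, which gives in $d=2$
\begin{equation*}
  |\nabla^\alpha D_t^{\Z^2}(0,x;s,m^2)| \leq C_\alpha\, \rho_J^{-2}\, t^{-1-|\alpha|}, \qquad t \geq \rho_J,
\end{equation*}
uniformly in $s$ and $m^2$, together with $D_t = 0$ for $t < \rho_J$ (recall the convention above \eqref{eq:Gammaj}); by Proposition~\ref{prop:decomp_compatible} the same bound holds with $\tilde D_t^{\Lambda_N}$ in place of $D_t^{\Z^2}$.

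Next, for $\Gamma_{j+s,j+s'}$ with $s'-s = M^{-1}$, I would differentiate under the integral in \eqref{eq:gamma_js} — legitimate since the bound above is integrable in $t$ — and insert it. If $\frac14 L^{j+s'} \leq \rho_J$ the covariance vanishes; otherwise, with $a = \max\{\rho_J, \frac14 L^{j+s}\}$,
\begin{equation*}
  |\nabla^\alpha \Gamma_{j+s,j+s'}(0,x)| \leq \int_a^{\frac14 L^{j+s'}} C_\alpha \rho_J^{-2}\, t^{-1-|\alpha|}\, dt.
\end{equation*}
For $|\alpha|\geq 1$ the integral is dominated by its lower endpoint, giving $\leq C_\alpha \rho_J^{-2} a^{-|\alpha|} \leq C_\alpha \rho_J^{-2} L^{-(j+s)|\alpha|}$; for $\alpha=0$ it equals $\rho_J^{-2}\log(\tfrac14 L^{j+s'}/a) \leq \rho_J^{-2}\log(L^{j+s'}/L^{j+s}) = \rho_J^{-2}(s'-s)\log L = \rho_J^{-2} M^{-1}\log L = \rho_J^{-2}\log \ell$, using $L = \ell^M$. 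This proves the estimates for $\Gamma_{j+s,j+s'}$, and the identical computation with $\tilde D_t^{\Lambda_N}$ in \eqref{eq:gamma^b_js} settles the torus case $j = N-1$ whenever $s' < 1$.

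The one genuinely new point, which I expect to be the main obstacle, is the torus case $j = N-1$, $s'=1$, i.e.\ $\Gamma^{\Lambda_N}_{N-M^{-1},N} = \int_{\frac14 L^{N-M^{-1}}}^\infty \tilde D_t^{\Lambda_N}\,dt$, where the $t$-integral is unbounded. For $|\alpha|\geq 1$ the integrand $C_\alpha\rho_J^{-2}t^{-1-|\alpha|}$ is still integrable at $\infty$ and the lower endpoint dominates, so nothing changes. For $\alpha=0$ one cannot integrate the $\rho_J^{-2}t^{-1}$ bound to $\infty$, and I would split the integral at $t = \frac14 L^N$: on $[\frac14 L^{N-M^{-1}}, \frac14 L^N]$ the pointwise bound contributes $\leq \rho_J^{-2}\log\ell$ exactly as above. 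On $[\frac14 L^N,\infty)$ one uses the spectral gap of the torus: every $p \in \Lambda_N^*\setminus\{0\}$ has $|p| \geq 2\pi L^{-N}$, hence by \eqref{e:lambdarhoasymp}, \eqref{eq:theta_def} and \eqref{eq:frd_ulbds} one has $\lambda_J(p) \geq c\,\rho_J^2 L^{-2N}|k|^2$ for $p = 2\pi L^{-N}k$ in a neighbourhood of $0$, and $\lambda_J(p) \geq c|p|^2$ in general; combining this with the Fourier representation \eqref{eq:D_t^Lambd_N_definition} and Proposition~\ref{prop:Ct_elementary_bound} yields, for $t \geq \frac14 L^N$,
\begin{equation*}
  \tilde D_t^{\Lambda_N}(0,0) = \frac{1}{|\Lambda_N|}\sum_{p \in \Lambda_N^*\setminus\{0\}} \hat D_t(p;s,m^2) \leq C\, \rho_J^{-2}\, L^{-2N}\, t\, e^{-c(L^{-N}t)^{1/4}},
\end{equation*}
because for such $t$ the lattice sum is dominated, up to a constant, by a bounded number of modes of smallest $|p|$. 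Integrating over $t \in [\frac14 L^N,\infty)$ then contributes $O(\rho_J^{-2})$, so the $\alpha=0$ bound is $\leq C_0\rho_J^{-2}(1+\log\ell)$, as claimed. (An alternative I would try is to write $\Gamma^{\Lambda_N}_{N-M^{-1},N} = \Gamma^{\Lambda_N}_N - \Gamma^{\Lambda_N}_{N-1,N-M^{-1}}$ and cancel the matching $\frac{(1-M^{-1})\log L}{2\pi(v_J^2+s)}$ leading terms via Corollary~\ref{cor:Gammaj} and the integrated asymptotic \eqref{eq:decomp5}, leaving $\frac{\log\ell}{2\pi(v_J^2+s)} = O(\rho_J^{-2}\log\ell)$.)
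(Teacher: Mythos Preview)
Your proposal is correct and follows exactly the approach the paper indicates: the paper does not give a detailed proof of this lemma, stating only that it is a corollary of Propositions~\ref{prop:decomp} and~\ref{prop:decomp_compatible} obtained in the same way as Corollary~\ref{cor:Gammaj}. Your argument---integrate the simplified pointwise bound $|\nabla^\alpha D_t|\leq C_\alpha\rho_J^{-2}t^{-1-|\alpha|}$ over the fractional window---is precisely that, and your identification of the only genuinely new issue (the unbounded $t$-integral in the torus case $j=N-1$, $s'=1$, $\alpha=0$) together with the spectral-gap resolution is the right way to complete the argument.

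One minor refinement: your displayed pointwise bound $\tilde D_t^{\Lambda_N}(0,0)\leq C\rho_J^{-2}L^{-2N}t\,e^{-c(L^{-N}t)^{1/4}}$ is correct for the contribution of modes with $|p|\leq\rho_J^{-1}$ (where $\lambda_J(p)\geq c\,v_J^2|p|^2\geq c'\rho_J^2|p|^2$, so indeed $\rho_J^{-1}\sqrt{\lambda_J(p)}\geq c''|p|$ and the lattice sum $\sum_{k\neq 0}e^{-c(L^{-N}|k|t)^{1/4}}$ is bounded uniformly once $L^{-N}t\geq 1/4$). The modes with $|p|>\rho_J^{-1}$ only satisfy the weaker lower bound $\lambda_J(p)\geq c|p|^2$, so they do not quite fit under your displayed estimate; however, their contribution to $\tilde D_t^{\Lambda_N}(0,0)$ is controlled by the second term of \eqref{eq:decomp3-bd}, which under \eqref{eq:frd_ulbds} is $O(t^{-3}e^{-ct^{1/4}})$ and integrates over $[\tfrac14 L^N,\infty)$ to $O(L^{-2N})$, negligible compared to the $O(\rho_J^{-2})$ you need. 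So your conclusion stands.
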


Finally, for each fractional scale $j+s$, 
we also introduce the corresponding division of the torus into blocks and polymers,
exactly as in Section~\ref{sec:polymersdef0}.
Thus
$\cP_{j+s}$ is the set of polymers composed of blocks in $\cB_{j+s}$ of (integer) side lengths $L^{j+s} = L^j\ell^k$ if $s=k/M$.
Given $X \in \mathcal{P}_{j+s}$, we define $X_{s'}$ ($s\leq s'$) to be the smallest $j+s'$-polymer that contains $X$, i.e.,~$X_{s'}$ consists of all blocks of side length $L^{j+s'}$ that intersect $X$.
In particular, $(X_s)_{s'}= X_{s \vee s'}$ and $\overline{X}=X_1$.

\section{Norms and regulators}
\label{sec:norms}

In this section, we define the norms in terms of which we will eventually measure contractivity and regularity properties of the renormalisation group map, cf.~Theorem~\ref{thm:local_part_of_K_j+1} below.
Our choice of norms is almost the same as that in \cite[Section~5.1]{MR2917175},
 which is closely related to those of \cite{MR2523458,MR1777310}.
Compared to these references, we simplify the construction somewhat and
make the estimates explicit to obtain uniform control
in the range of the step distribution.
Most proofs are given in Appendix~\ref{app:regulator}.

Henceforth, we assume that $d = 2$, and recall that $\Lambda_N$ denotes the discrete $d$-dimensional torus of side length $L^N$, for integers $N,L \geq 1$. Unless explicitly stated otherwise, all results in this section (implicitly) hold for any choice of $N$ and $L$. In the sequel, we make frequent use of the notation and set-up introduced in Sections~\ref{sec:polymersdef0}-\ref{subsec:limit_m_to_0} and write $\E=\E_{\Gamma_{j+1}}$.

\subsection{Norms on polymer activities}
\label{sec:polymersdef}

In Section~\ref{sec:rg_generic_step} below, we will define a renormalisation group map
that parametrises the successive integration (cf.~\eqref{eq:Z_j_recursion} and \eqref{eq:Z_0_definition})
\begin{equation}
  Z_{j+1}(\varphi | \Lambda ) = \Eplus [Z_j(\varphi+\zeta | \Lambda )]
\end{equation}
as
\begin{equation}
  Z_j (\varphi | \Lambda ) = e^{-E_{j+1} |\Lambda|}  \sum_{X\in \mathcal{P}_j (\Lambda)} e^{U_j (\Lambda \backslash X,\varphi)} K_j (X,\varphi), 
\end{equation}
where we recall the definition of polymers from Section~\ref{sec:polymersdef0},
that $\zeta$ is a centred Gaussian random variable with covariance $\Gamma_{j+1}$ and we use $\Eplus$ to denote expectation with respect to $\zeta$.
This notation will be fixed in the rest of the paper whenever the scale $j$ is clear from the context.
In this representation, $E_j$ is going to be a suitable scalar (parametrising the free energy), $U_j$ an explicit leading part (parametrising the effective potential in the Wilsonian picture of the renormalisation group),
whereas $K_j$ will be a so-called remainder coordinate whose main feature is the following \emph{component factorisation property}:
\begin{equation}\label{eq:factorization}
  K_j(X) = \prod_{Y \in \operatorname{Comp}_j(X)} K_j(Y)
\end{equation}
with the convention that the product over the empty set equals $1$.
In particular, the tuple $(K_j(X))_{X \in \cP_j}$ is determined by $(K_j(X))_{X \in \cP_j^c}$. The latter
is an example of a \emph{polymer activity}. We formalise the space of polymer activities as follows,
and then define norms on polymer activites in the remainder of this section.

\begin{definition} \label{def:polymeractivity}
For $X \in \cP_j$, we write $\mathcal{N}_j(X)$ for the space $C^\infty(\R^{X^*})$.
For $F \in \cN_j(X)$ and $\varphi \in \R^\Lambda$ we make the identification $F(\varphi)  = F(\varphi|_{X^*})$.
In particular, $F(\varphi)$ only depends on $\varphi|_{X^*}$
and we have the natural inclusions $\cN_j(X) \subset \cN_j(Y)$ if $X \subset Y$.

A scale-$j$ \emph{polymer activity}
is a tuple $K = (K(X))_{X\in \cP_j^c}$, where for each connected polymer $X \in \cP_j^c$,
the corresponding component is a function $K(X) \in \cN_j(X)$.
Any polymer activity $K$ is 
identified with its extension $(K(X))_{X\in \cP_j}$
to all (not necessarily connected) polymers by means of the component factorisation property
\eqref{eq:factorization} with $K_j \equiv K$.
We denote the space of scale-$j$ polymer activities  by $\cN_j$.
\end{definition}

Note that \eqref{eq:factorization} implies that $K(\emptyset)=1$ for any polymer activity $K$ according to Definition~\ref{def:polymeractivity},
and that, with the restriction to connected polymers, the scale-$j$ polymer activities form a linear space with $0$ element given by $K(X)=1_{X=\emptyset}$.
To define norms on polymer activities, we first define norms of lattice functions which will
enter the definition of norms on polymer activities.
Firstly, recall the definition of the set of standard basis $\hat{e} = \{ \pm e_1, \cdots, \pm e_n \}$, derivatives $\nabla^n f$ for $f : \Lambda_N \rightarrow \C$ and the Laplacian $\Delta f$ from Section~\ref{sec:notation}.
For functions $f, g : \Lambda_N \rightarrow \C$, 
we also define the inner products
\begin{equation} \label{e:innerprod}
  (f,g)_X = \sum_{x\in X} f(x) g(x),
    \quad (\nabla^n f, \nabla^n g)_X =2^{-n} \sum_{(\mu) = (\mu_1, \cdots, \mu_n) \in \hat{e}^n} \sum_{x\in X} \nabla^{(\mu)} f(x) \nabla^{(\mu)} g(x) 
\end{equation}
where $\nabla^{(\mu)} f(x) = \nabla^{\mu_1} \cdots \nabla^{\mu_n} f (x)$ and
\begin{align}
|\nabla f|^2_X = (\nabla f, \nabla f)_X \label{eq:nablaf^2_definition}
\end{align}
so that $(f, -\Delta f)_{\Lambda_N} = |\nabla f|^2_{\Lambda_N}$ by summation by parts.
(The factors $2^{-n}$ in \eqref{e:innerprod} are natural because each coordinate direction appears with positive and negative sign in the sum.)
At scale $j$, it is further natural to consider the rescaled derivatives 
\begin{align}
\nabla^n_j f = L^{jn} \nabla^n f .
\end{align}

\begin{definition} \label{def:L-p-norms}
  Let $n\in \mathbb{N}$, $X\in \mathcal{P}_j$ and $f: \{ x : d_1 (x, X) \leq n \} \rightarrow \mathbb{C}$ where $d_1$ is the graph distance on $\Lambda_N$. With $(\mu)$ ranging over $\{ \pm e_1, \pm e_2 \}^n$ in the sequel, define for $p\in [1, \infty)$
\begin{align}
&\norm{\nabla^n_j f}_{L^{\infty}(X)} = \max_{(\mu)} \max_{x\in X} |\nabla^{(\mu)}_j f (x)| \label{eq:norminfty}\\
&\norm{\nabla^n_j f}^p_{L^{p}_j (X)} = L^{-2j} \sum_{x\in X} \sum_{(\mu)} 2^{-n}
 |\nabla_j^{(\mu)} f (x)|^p  \label{eq:normL2}\\
&\norm{\nabla^n_j f}^p_{L^{p}_j (\partial X)} = L^{-j} \sum_{x\in \partial X} \sum_{(\mu)} 2^{-n} 
|\nabla^{(\mu)}_j f (x)|^p \label{eq:normL2boundary}\\
& \norm{f}_{C^2_j (X)} = \max_{n=0,1,2} \norm{\nabla^n_j f}_{L^{\infty}(X)}. \label{eq:normC2}
\end{align}
(In \eqref{eq:normL2boundary} and elsewhere, $\partial U$ refers to the inner vertex boundary of $U\subset \Lambda_N$ with respect to the graph distance $d_1$).
\end{definition}

These norms on lattice functions provide the basis for the norms on polymer activities that we use and which we introduce next.
This definition is slightly involved, and we therefore briefly highlight its main features before stating the full definition (see Definition~\ref{def:NORM}).
The norm is scale-dependent and measures smoothness of polymer activities with respect to typical fields
at scale $j$,
which are lattice functions $\varphi$ with bounded $C_j^2$ norm.
The norm needs to permit growth when $\nabla \varphi$ is large and give small weight to large sizes of polymers $X$.
These two aspects are accounted for by two weights often called regulators:
the (large-field) regulator $G_j$ for growth in $\nabla\varphi$ (see Definition~\ref{def:G_j} and \eqref{eq:NORm})
and the parameter $A>1$ (the large-set regulator)  for decay in the size of the polymer (see \eqref{eq:NORM}).

We start by measuring the size of a polymer activity for fixed $\varphi$ and $X \subset \Lambda_N$.
For all $n \in \mathbb{N}$, given 
$K(X, \cdot) \in \mathcal{N}_j(X)$, its $n$-th order derivative $D^n K$ along the directions
$f_1,\dots f_n \in \R^{X^*}$ is given by
\begin{equation}\label{eq:def-D}
 D^n K (X,\varphi) (f_1, \cdots, f_n) =  \sum_{x_1, \cdots, x_n \in X^*} \frac{\partial^n K(X,\varphi)}{\partial \varphi (x_1) \cdots \partial \varphi(x_n)} f_1(x_1) \cdots f_n(x_n),
\end{equation}
with the convention $D^0K =K$.
For $X \in  \cP^c_j$, $K( X) \in \mathcal{N}_j( X) $ and $\varphi \in \R^{\Lambda_N}$, then set
\begin{equation}
 \norm{D^n K (X,\varphi)}_{n, T_j (X,\varphi)} = \sup\big\{ |D^n K (X,\varphi) (f_1, \cdots, f_n)| \, : \, \norm{f_k}_{C^2_j(X^*)} \leq 1, \,  k =1,\dots, n \big\}, \label{eq:nTj_norm_definition}
\end{equation}
with the convention $\norm{D^0 K (X,\varphi)}_{0, T_j (X,\varphi)}=|K (X,\varphi)|$.
Then, for a parameter $h >0$,  define
\begin{equation}
\label{eq:seminorm}
\norm{K(X,\varphi )}_{h, T_j (X,\varphi )} = \sum_{n=0}^{\infty} \frac{h^n}{n!} \norm{D^n K (X,\varphi)}_{n, T_j (X,\varphi)}.
\end{equation}
Note that \eqref{eq:def-D} only depends on the $f_k$ in $X^*$,
but that the norms $\|f_k\|_{C_j^2(X^*)}$ in \eqref{eq:nTj_norm_definition} actually depend on $f_k$
in a neighbourhood of $X^*$. The supremum  in \eqref{eq:nTj_norm_definition}
is thus over all $f_k \in \R^{\Lambda}$ or equivalently
over all extensions of $f_k\in \R^{X^*}$ to a suitable neighbourhood of $X^*$.
More generally, the above definitions of $\norm{D^nK(X,\varphi)}_{n,T_j(X,\varphi)}$ and
of $\norm{K(X,\varphi)}_{h,T_j(X,\varphi)}$  continue to make sense when $K(X) \in \cN_k(X)$ and $X\in \cP_k$ with $k \leq j$.

The $\norm{\cdot}_{h, T_j(X,\varphi)}$-norm measures the size of $K$ in a manner depending on $\varphi$ and $X$.
The norms on functions of $(X,\varphi)$ are defined by weighted supremum norms.
The \emph{large-field regulator} which is the $\varphi$-dependent weight is defined next.

\begin{definition}
\label{def:G_j}
Given $\cwone c_2,\kappa_L>0$,
define the large-field regulator for $X \in \cP_j$ and $\varphi \in \R^{\Lambda_N}$ by
\begin{equation}
G_j (X,\varphi) = \exp \Big\{\cwone \kappa_L \norm{\nabla_j \varphi}_{L^2_j (X)}^2 + c_2 \kappa_L \norm{\nabla_j \varphi}_{L^2_j (\partial X)}^2 + \cwone \kappa_L W_j (X, \nabla^2_j \varphi)^2 \Big\} \label{eq:def_large_field_regulator}
\end{equation}
where 
\begin{equation}\label{eq:W_j}
W_j (X, \nabla^2_j \varphi)^2 = \sum_{B\in \mathcal{B}_j (X)} \norm{\nabla^2_j \varphi}_{L^{\infty}(B^*)}^2 .
\end{equation}
\end{definition}

The particular form of the regulator is motivated by its properties stated in Section~\ref{sec:norm_properties} below.
Finally, the definition of the norms on polymer activities is given by the following definition.
\begin{definition}
\label{def:NORM}
Given $G_j (X,\varphi)$ as in \eqref{eq:def_large_field_regulator} with $\cwone c_2,\kappa_L>0$,  $h >0$,  and $A > 1$,
for any scale-$j$ polymer activity $K$, define
\begin{align}
& \norm{D^n K(X)}_{n, T_j (X)} = \sup_{\varphi\in \mathbb{R}^{X^*}} (G_j (X, \varphi))^{-1} \norm{D^n K(X, \varphi)}_{n, T_j (X, \varphi)} \\
& \norm{K (X)}_{h, T_j (X)} = \sup_{\varphi\in \mathbb{R}^{X^*} }  (G_j (X, \varphi ))^{-1} \norm{K(X, \varphi)}_{h, T_j (X, \varphi)} \label{eq:NORm}\\
& \norm{K}_{h, T_j} = \sup_{X\in\cP_j^c (\Lambda_N)} A^{|X|_j}\norm{K (X)}_{h, T_j (X)}. \label{eq:NORM}
\end{align}
We will sometimes abbreviate $\norm{K}_j = \norm{K}_{h, T_j}$.
\end{definition}

This norm and the associated spaces (of polymer activities of finite norm) implicitly depend on the choice of $\Lambda_N$.
However, the definitions are essentially local and it is thus possible to define
an infinite-volume analogue of the norm, see Section~\ref{sec:infvol}.
                                                                                                                                                            The space of polymer activities in $\cN_j(X)$ with finite $\norm{\cdot}_{h, T_j (X)}$ norm is complete, see Appendix~\ref{app:completeness}, and
                                                                                                                                                            as a consequence the space of polymer activities in $\cN_j$ with finite $\norm{\cdot}_{h,T_j}$ norm is also complete.

For the reader's orientation, we now give an overview of how the parameters $h,A,\cwone c_2,\kappa_L$ will eventually be chosen;
see also Definition~\ref{def:K_space}.
The constants $\cwone c_2,\kappa_L$ will be fixed below Proposition~\ref{prop:E_G_j} (see Remark~\ref{R:choice-parameters})
as 
$c_2>0$ sufficiently small (independent of $L$), and $\kappa_L$ of order $(\log L)^{-1}$.
The large set weight $A$ will be chosen large enough as a function of $L$ in Theorem~\ref{thm:local_part_of_K_j+1} (essentially in such a way that the conclusions of Proposition~\ref{prop:largeset_contraction-v2} below hold for a suitably large value of $p$).
This leaves $h$, which will be picked large enough (larger than $1$) so that the conclusions of Lemma~\ref{lemma:contraction_of_charge_q_term} hold.

\subsection{Main properties of the norm and the regulator}
\label{sec:norm_properties}

The most fundamental properties of the seminorm $\norm{\cdot}_{h, T_j (X, \varphi)}$
are its submultiplicativity property,
and its monotonicity in the base polymer $X$ and in the scale $j$.
These properties will be used heavily.

\begin{lemma}
  Suppose $X, Y \in \cP_j$ with $Y\subset X$, and let $F(Y) \in \cN_j(Y)$
  (here recall the inclusion $\cN_j(Y) \subset \cN_j(X)$ from Definition~\ref{def:polymeractivity}).
  Then for each $n\geq 0$,  (i) $\norm{D^n F(Y, \varphi)}_{n, T_{j+1} (Y, \varphi)} \leq \norm{D^n F(Y, \varphi)}_{n, T_{j} (Y, \varphi)}$, and (ii) $\norm{D^n F(Y, \varphi)}_{n, T_j (Y,  \varphi)} \leq \norm{D^n F(Y, \varphi)}_{n, T_j (X, \varphi)}$. 
  Hence,
  \begin{equation}\label{e:norm-monot}
    \norm{F(Y, \varphi)}_{h, T_{j+1} (Y, \varphi)} \leq \norm{F(Y, \varphi)}_{h, T_j (Y, \varphi)},
    \quad
  \norm{F(Y, \varphi)}_{h, T_{j} (Y, \varphi)} \leq \norm{F(Y, \varphi)}_{h, T_j (X, \varphi)}.
  \end{equation}
  Moreover, for $Y_1,Y_2,X \in \cP_j$ with $Y_1,Y_2 \subset X$ (with $Y_1$ and $Y_2$ not necessarily disjoint),
  and $F(Y_i) \in \cN_j(Y_i)$, the following submultiplicativity property holds:
  \begin{equation}
    \label{eq:prodprop}
    \norm{F_1(Y_1,\varphi)F_2(Y_2,\varphi)}_{h, T_j (X,\varphi)}
    \leq
    \norm{F_1(Y_1,\varphi)}_{h, T_j (Y_1, \varphi )}\norm{F_2(Y_2,\varphi)}_{h, T_j (Y_2, \varphi)}.
  \end{equation}
\end{lemma}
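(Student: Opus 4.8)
The plan is to prove each assertion directly from the definitions in Section~\ref{sec:norms}, exploiting that the $T_j$-seminorms are built from suprema and $\ell^1$-type sums over derivatives.

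First I would prove (i), the monotonicity in the scale. Fix $Y \in \cP_j$, $\varphi$, and $n$. The only difference between $\norm{D^n F(Y,\varphi)}_{n,T_{j+1}(Y,\varphi)}$ and $\norm{D^n F(Y,\varphi)}_{n,T_j(Y,\varphi)}$ lies in the test-function normalisation: the former is a supremum over $f_k$ with $\norm{f_k}_{C^2_{j+1}(Y^*)}\le 1$, the latter over $f_k$ with $\norm{f_k}_{C^2_j(Y^*)}\le 1$. By definition $\norm{f}_{C^2_j(X)} = \max_{n=0,1,2}\norm{\nabla^n_j f}_{L^\infty(X)}$ with $\nabla^n_j f = L^{jn}\nabla^n f$, and since $L>1$ we have $L^{jn}\le L^{(j+1)n}$ for each $n\in\{0,1,2\}$, so $\norm{f}_{C^2_j(X)}\le \norm{f}_{C^2_{j+1}(X)}$. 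Hence $\{f : \norm{f}_{C^2_{j+1}(Y^*)}\le 1\}\subset\{f : \norm{f}_{C^2_j(Y^*)}\le 1\}$, and the supremum over the smaller set is at most the supremum over the larger one; this gives (i). Summing against $h^n/n!$ as in \eqref{eq:seminorm} yields the first inequality in \eqref{e:norm-monot}. (One subtlety to address carefully: $Y^*$ at scale $j$ versus at scale $j+1$; since $\cP_{j+1}$ is coarser, the scale-$j$ small-set neighbourhood of $Y$ is contained in its scale-$(j+1)$ neighbourhood, and because $F(Y)\in\cN_j(Y)$ depends only on $\varphi|_{Y^*}$ with $Y^*$ at scale $j$, the argument above goes through on $Y^*$; I would state this explicitly.)

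Next, (ii), monotonicity in the base polymer. Fix $Y\subset X$ in $\cP_j$ and $F(Y)\in\cN_j(Y)\subset\cN_j(X)$. Since $Y\subset X$ we have $Y^*\subset X^*$, so $\{f:\norm{f}_{C^2_j(X^*)}\le 1\}\subset\{f:\norm{f}_{C^2_j(Y^*)}\le 1\}$ (restricting a function to a smaller set decreases its $C^2_j$-norm). But $D^n F(Y,\varphi)(f_1,\dots,f_n)$ depends only on $f_k|_{Y^*}$ by \eqref{eq:def-D}, so the supremum defining $\norm{D^nF(Y,\varphi)}_{n,T_j(X,\varphi)}$ is a supremum of the \emph{same} quantity over a \emph{smaller} test-function class than that defining $\norm{D^nF(Y,\varphi)}_{n,T_j(Y,\varphi)}$; hence the former is $\le$ the latter. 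This gives (ii), and summing with the weights $h^n/n!$ gives the second inequality in \eqref{e:norm-monot}.

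Finally, submultiplicativity \eqref{eq:prodprop}. Let $K=F_1(Y_1)F_2(Y_2)\in\cN_j(X)$ with $Y_1,Y_2\subset X$. By the Leibniz rule, for test functions $f_1,\dots,f_n\in\R^{X^*}$,
\begin{equation}
D^n K(X,\varphi)(f_1,\dots,f_n) = \sum_{S\subset\{1,\dots,n\}} D^{|S|}F_1(Y_1,\varphi)\big((f_k)_{k\in S}\big)\, D^{n-|S|}F_2(Y_2,\varphi)\big((f_k)_{k\notin S}\big).
\end{equation}
Taking absolute values, using that each term with $\norm{f_k}_{C^2_j(X^*)}\le 1$ is bounded by $\norm{D^{|S|}F_1(Y_1,\varphi)}_{|S|,T_j(X,\varphi)}\,\norm{D^{n-|S|}F_2(Y_2,\varphi)}_{n-|S|,T_j(X,\varphi)}$, and applying part (ii) to replace the base polymer $X$ by $Y_1$ and $Y_2$ respectively, one gets
\begin{equation}
\norm{D^n K(X,\varphi)}_{n,T_j(X,\varphi)} \le \sum_{m=0}^n \binom{n}{m}\norm{D^m F_1(Y_1,\varphi)}_{m,T_j(Y_1,\varphi)}\,\norm{D^{n-m}F_2(Y_2,\varphi)}_{n-m,T_j(Y_2,\varphi)}.
\end{equation}
Multiplying by $h^n/n!$, using $\binom{n}{m}/n! = 1/(m!(n-m)!)$, and summing over $n$ factors the double sum into the product of the two seminorms, giving \eqref{eq:prodprop} at the level of $\norm{\cdot}_{h,T_j(X,\varphi)}$ before dividing by the regulator. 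Since the norm-with-regulator statements in \eqref{e:norm-monot}, \eqref{eq:prodprop} are asserted \emph{without} the regulator weight $G_j$ (they are the $T_j(X,\varphi)$ versions), no further work on $G_j$ is needed here. I do not anticipate a serious obstacle: the main point requiring care is the bookkeeping of which small-set neighbourhood ($X^*$ vs $Y^*$, scale $j$ vs $j+1$) the test functions and the $C^2_j$-norms refer to, together with the elementary fact $L^{jn}\le L^{(j+1)n}$ that drives (i).
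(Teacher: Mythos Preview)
Your arguments for (i) and (ii) coincide with the paper's: compare the unit $C_j^2$-balls of test functions and use that a supremum over a smaller class is smaller. For \eqref{eq:prodprop} the paper simply cites \cite[Lemma~6.7]{MR2523458}; your explicit Leibniz rule plus Cauchy-product computation is the standard proof of that cited lemma and is correct.

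One orientation issue to watch in your write-up of (ii): the inclusion you derive, $\{f:\norm{f}_{C^2_j(X^*)}\le 1\}\subset\{f:\norm{f}_{C^2_j(Y^*)}\le 1\}$, yields
\[
\norm{D^nF(Y,\varphi)}_{n,T_j(X,\varphi)}\;\le\;\norm{D^nF(Y,\varphi)}_{n,T_j(Y,\varphi)},
\]
which is the \emph{reverse} of the displayed inequality in (ii). This reversed direction is precisely what you then (correctly) invoke in the submultiplicativity step, bounding $\norm{D^{m}F_i(Y_i,\varphi)}_{m,T_j(X,\varphi)}$ by $\norm{D^{m}F_i(Y_i,\varphi)}_{m,T_j(Y_i,\varphi)}$. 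The paper's one-line proof of (ii) has the same orientation (the key fact $\norm{f}_{C_j^2(Y^*)}\le\norm{f}_{C_j^2(X^*)}$ leads to the same conclusion), so the mismatch is at the level of the stated direction rather than the method; just make sure that in your final version the inequality you state for (ii) matches the one you actually prove and use.
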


\begin{proof}
To see (i), notice that for any $f\in \R^{\Lambda}$, we have $\norm{f}_{C_{j}^2 (Y^*)} \leq \norm{f}_{C_{j+1}^2 (Y^*)}$. Hence $\{ f \in \R^{\Lambda} : \norm{f}_{C^2_{j+1} (Y^*)} \leq 1 \}  \subset \{ f \in \R^{\Lambda} : \norm{f}_{C^2_{j} (Y^*)} \leq 1 \}$ and (i) follows readily in view of \eqref{eq:nTj_norm_definition}.
For (ii), we have for any $f: \Lambda \rightarrow \R$ that 
$\norm{f}_{C_j^2 (Y^*)} \leq \norm{f}_{C_j^2 (X^*)}$, and the result follows similarly. On account of \eqref{eq:seminorm}, the inequalities in \eqref{e:norm-monot} are immediate consequences of (i) and~(ii), respectively.
For the submultiplicativity property, see \cite[Lemma~6.7]{MR2523458}.
\end{proof}

The second~key property of the norm is that it enforces analyticity of polymer activities in a strip.
For open $U\subset \C^{\Lambda}$, the function $F:  U \to \C$ is called complex analytic in $U$ if it admits a local representation as a convergent power series  around any point in $U$.

\begin{proposition} \label{prop:complex_extension_of_polymer_activity}
Let $h>0$, let $\norm{F}_{h, T_j} < + \infty$ and $X\in \mathcal{P}_j^c$. Then $F (X) \in \mathcal{N}_j(X)$ can be extended to a complex analytic function on the domain $S_{h} := \{ \varphi + i\psi \in \C^{\Lambda} : \varphi(x), \psi(x) \in \R, \, \norm{\psi}_{C_j^2 (X^*)} < h   \}$.
\end{proposition}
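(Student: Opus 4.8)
The plan is to construct the complex analytic extension by its Taylor series in the imaginary direction, using the hypothesis $\|F\|_{h,T_j}<\infty$ to control the coefficients. First I would fix $X\in\mathcal P_j^c$ and a real configuration $\varphi\in\R^{X^*}$, extended arbitrarily to $\R^\Lambda$; since $F(X)\in\mathcal N_j(X)=C^\infty(\R^{X^*})$ it is in particular smooth, so for any $\psi\in\R^{X^*}$ the finite-dimensional Taylor expansion gives, formally,
\begin{equation}
  F(X,\varphi+i\psi) = \sum_{n=0}^\infty \frac{i^n}{n!}\, D^nF(X,\varphi)(\psi,\dots,\psi).
\end{equation}
The key estimate is that each term is controlled by the $T_j(X,\varphi)$-seminorm: by the definition \eqref{eq:nTj_norm_definition} of $\|D^nF(X,\varphi)\|_{n,T_j(X,\varphi)}$ one has
\begin{equation}
  \Big|\frac{i^n}{n!} D^nF(X,\varphi)(\psi,\dots,\psi)\Big|
  \leq \frac{1}{n!}\,\|\psi\|_{C_j^2(X^*)}^n\, \|D^nF(X,\varphi)\|_{n,T_j(X,\varphi)}.
\end{equation}
If $\|\psi\|_{C_j^2(X^*)}<h$, say $\|\psi\|_{C_j^2(X^*)}\leq h' <h$, then summing over $n$ gives the bound $\sum_n \frac{(h')^n}{n!}\|D^nF(X,\varphi)\|_{n,T_j(X,\varphi)}$, which by \eqref{eq:seminorm} (with $h$ replaced by $h'$, together with $h'<h$ and positivity of each term) is at most $\|F(X,\varphi)\|_{h,T_j(X,\varphi)}\leq G_j(X,\varphi)\,\|F(X)\|_{h,T_j(X)}<\infty$. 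Hence the series converges absolutely and locally uniformly in the strip $S_h$, and defines a function there that agrees with $F(X)$ on the real subspace.

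It then remains to check that the resulting function is genuinely complex analytic on $S_h$, i.e.\ admits a convergent power series around every point of $S_h$. I would argue this by first noting that $F(X)$, being smooth on $\R^{X^*}$, is real-analytic on a neighbourhood of any point of the strip once we know the above radius-of-convergence bound: more precisely, for fixed base point $\varphi_0+i\psi_0\in S_h$, expand in all $|X^*|$ complex variables $\varphi(x)+i\psi(x)$ simultaneously. The multivariate Taylor coefficients at $\varphi_0$ are $\frac{1}{\alpha!}\partial^\alpha F(X,\varphi_0)$, and grouping them as symmetric $n$-linear forms $D^nF(X,\varphi_0)$ the same seminorm bound shows the multivariate power series converges on a polydisc of radius $>\|\psi_0\|_{C_j^2(X^*)}$ in each coordinate; since this radius exceeds $\|\psi_0\|_{C_j^2(X^*)}$ strictly (because $\|\psi_0\|_{C_j^2(X^*)}<h$), the base point lies in the interior of the region of convergence, and the function it defines coincides with the extension constructed above by uniqueness of analytic continuation from the reals. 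Complex analyticity on $S_h$ follows, since $S_h$ is exactly the set where $\|\psi\|_{C_j^2(X^*)}<h$.

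The main obstacle — really the only nontrivial point — is the interchange between the finite-dimensional smoothness of $F(X)$ and the quantitative $n$-linear bounds: one must make sure that the abstract $n$-linear form $D^nF(X,\varphi)$ appearing in \eqref{eq:def-D} and the $T_j$-seminorm is literally the $n$-th Fréchet derivative used in the Taylor expansion, and that the $C_j^2(X^*)$-norm used to test it controls the $\psi$ appearing in the expansion (it does, since by Definition~\ref{def:L-p-norms} $\|\psi\|_{C_j^2(X^*)}$ dominates $\sup_{x\in X^*}|\psi(x)|$ up to the rescaling $L^{jn}$, but one only needs the comparison on the finitely many coordinates in $X^*$, which is immediate). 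Once this identification is in place, absolute convergence of a power series with the stated radius is a standard fact, and the proposition follows. A remark worth inserting is that the extension is unique, being determined by its restriction to $\R^{X^*}$.
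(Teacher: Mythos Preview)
Your proof is correct and follows the paper's approach: extend $F(X)$ via its Taylor series around the real part, use the finiteness of $\|F\|_{h,T_j}$ to obtain absolute convergence on $\{\psi:\|\psi\|_{C_j^2(X^*)}<h\}$, and conclude analyticity. The paper makes the last step explicit by re-expanding the series around an arbitrary point $\zeta\in S_h$ (rearranging $(\psi+\delta\zeta)^{\otimes n}$ by multilinearity), whereas you invoke the general principle for convergent power series; note only that your ``polydisc'' phrasing is slightly off --- the convergence domain is the open set $D_h(0)=\{\psi:\|\psi\|_{C_j^2(X^*)}<h\}$, not a coordinate polydisc --- but this does not affect the conclusion.
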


\begin{proof}

Let $D_{h} (0) = \{ \psi \in \mathbb{C}^{\Lambda} : \norm{\psi}_{C_j^2 (X^*)} <  h \}$. Note that $D_{h} (0) \subset \C^{\Lambda}$ is open because $\norm{\psi}_{L^{\infty}(X^*)} < \frac{1}{4} L^{-2j}{h}$ implies $\psi \in D_{h} (0)$. For $\varphi \in \R^{X^*}$ and $\psi \in D_{h} (0)$, let
\begin{equation}
F_{[\varphi]}(X,\varphi + \psi) = \sum_{n=0}^{\infty} \frac{1}{n!} D^n F(X,\varphi) (\psi^{\otimes n}) . \label{eq:complex_extension_via_Taylor}
\end{equation}
Since $\norm{F(X, \varphi)}_{{h}, T_j (X,\varphi)} \leq \norm{F}_{{h}, T_j (X)} G_j (X,\varphi) < + \infty$ and since $\norm{\psi}_{C_j^2 (X^*)} < {h}$,
the series \eqref{eq:complex_extension_via_Taylor} converges absolutely. 
Thus
$\widetilde{F}( X, \cdot ): S_h \to \C$ given by 
\begin{align}
\widetilde{F}( X,z) \stackrel{\text{def.}}{=} F_{[\varphi]}(X, \varphi+\psi), \quad \text{ for any $\varphi \in \R^{\Lambda} $ and $\psi \in D_{h} (0)$ s.t.~$z=\varphi+\psi$ }\label{eq:complex_extension_via_Taylor'}
\end{align}
is well-defined (that does not depend on $\varphi$) and extends $F$. Moreover, in view of \eqref{eq:complex_extension_via_Taylor}, $\widetilde{F}(X,\cdot)$ is (plainly) given by a convergent power series in a neighbourhood of $\varphi$, for any $\varphi \in \R^{\Lambda}$. 

Now consider an arbitrary point $\zeta \in S_h$. It remains to argue that $\widetilde{F}(X, \cdot)$ defined by \eqref{eq:complex_extension_via_Taylor'} can be represented as convergent power series around $\zeta$. Write $\zeta= \varphi+\psi$ where $\varphi = \text{Re}(\zeta)$ componentwise. Now observe that for $\delta \zeta \in \C^{\Lambda}$ small enough (such that $\psi + \delta \zeta \in D_h(0)$), one has
\begin{equation}
\label{eq:reexpand}
\widetilde{F}( X, \zeta+ \delta \zeta) \stackrel{\eqref{eq:complex_extension_via_Taylor'}}{=} \sum_{n \geq 0} \frac{1}{n!} D^n F(X, \varphi) ((\psi+ \delta \zeta)^{\otimes n}) = \sum_{k \geq 0} \frac{1}{k!} A_k  (\delta \zeta^{\otimes k}),
\end{equation}
where
\begin{align}
	A_k (f_1, \cdots, f_k) = \sum_{l=0}^{\infty} \frac{1}{l !}
	D^{k+l} F(X, \varphi) (\psi^{\otimes l}, f_1, \cdots, f_k)
\end{align}
and the right-hand side of \eqref{eq:reexpand} is obtained by expanding $(\psi+ \delta z)^{\otimes n}$, using multilinearity and re-arranging terms according to the number $k$ that $\delta z$ appears. Now use $\norm{F}_{h, T_j} < + \infty$ once again to show the series in \eqref{eq:reexpand} converges. All in all, it follows that $\widetilde{F}$ is complex-analytic, as desired.
\end{proof}

The next property of the regulator is the following basic inequality that allows to absorb polynomial error bounds in the fields
when changing from one scale to the next.

\begin{lemma} \label{lemma:bound_of_Gj_by_Gjplus1} For all $\cwone c_2, \kappa_L >0$, $X\in \mathcal{S}_j$ for some $0 \leq j< N$, all $x_0 \in X$ and $\varphi \in \R^{\Lambda_N}$, defining $\delta \varphi(x)= \varphi(x)-\varphi(x_0)$, one has
\begin{equation} \label{eq:deltaphi_C2}
(\cwone \kappa_L)^{k/2} \norm{\delta \varphi}^{k}_{C^2_j (X^*)} \leq C(k) G_j (X, \varphi), \quad k \in \mathbb{N}.
\end{equation}
\end{lemma}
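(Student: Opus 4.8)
The plan is to reduce the statement to the scale-free Poincar\'e-type inequality
\begin{equation}\label{e:plan-poincare}
  \norm{\delta\varphi}_{C^2_j(X^*)}^2 \;\le\; C\big( \norm{\nabla_j \varphi}_{L^2_j(X)}^2 + W_j(X, \nabla^2_j\varphi)^2 \big),
\end{equation}
for an absolute constant $C$ (depending only on $d=2$), and then to combine it with a convexity estimate. Granting \eqref{e:plan-poincare}, I would write $G_j(X,\varphi) = e^{Q}$ where, by Definition~\ref{def:G_j}, $Q \ge c_1\kappa_L\big(\norm{\nabla_j\varphi}_{L^2_j(X)}^2 + W_j(X,\nabla^2_j\varphi)^2\big) \ge 0$ (the $c_2$-boundary term is nonnegative and is simply discarded). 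Then \eqref{e:plan-poincare} gives $c_1\kappa_L\norm{\delta\varphi}_{C^2_j(X^*)}^2 \le CQ$, and since $t^{k/2} \le \big(\tfrac{k}{2e}\big)^{k/2} e^{t}$ for all $t\ge 0$ (maximise $t^{k/2}e^{-t}$), taking $t = CQ$ yields $(c_1\kappa_L)^{k/2}\norm{\delta\varphi}_{C^2_j(X^*)}^k = \big(c_1\kappa_L\norm{\delta\varphi}_{C^2_j(X^*)}^2\big)^{k/2} \le (CQ)^{k/2} \le \big(\tfrac{Ck}{2e}\big)^{k/2} e^{Q} = C(k)\,G_j(X,\varphi)$, with $C(k)$ depending only on $k$ (and $d$); in particular it is independent of $c_1,c_2,\kappa_L$, as needed.

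It then remains to prove \eqref{e:plan-poincare}. First I would note that $\delta\varphi$ has the same discrete derivatives as $\varphi$ of every order $n\ge 1$, so $\norm{\nabla^n_j\delta\varphi}_{L^\infty(X^*)} = \norm{\nabla^n_j\varphi}_{L^\infty(X^*)}$ for $n=1,2$, while $\norm{\delta\varphi}_{C^2_j(X^*)} = \max_{n=0,1,2}\norm{\nabla^n_j\delta\varphi}_{L^\infty(X^*)}$. The order-two term is immediate: since $X^* = \bigcup_{B\in\mathcal{B}_j(X)} B^*$, one has $\norm{\nabla^2_j\varphi}_{L^\infty(X^*)} = \max_{B\in\mathcal{B}_j(X)}\norm{\nabla^2_j\varphi}_{L^\infty(B^*)} \le W_j(X,\nabla^2_j\varphi)$ by \eqref{eq:W_j}. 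So the task reduces to the order-one (gradient) term together with the order-zero term.

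For the order-one term, fix $x\in X^*$ and $\mu\in\hat e$; pick any block $B_0\in\mathcal{B}_j(X)$ and average the bound $\sum_{y\in B_0}\sum_{\mu'\in\hat e}|\nabla^{\mu'}\varphi(y)|^2 \le 2\norm{\nabla_j\varphi}_{L^2_j(X)}^2$ (valid by \eqref{eq:normL2} since $B_0\subset X$) over the $L^{2j}$ sites of $B_0$, producing a site $y_0\in B_0$ with $L^j|\nabla^{\mu'}\varphi(y_0)| \le \sqrt2\,\norm{\nabla_j\varphi}_{L^2_j(X)}$ for all $\mu'$. Since $X\in\mathcal{S}_j$, the neighbourhood $X^*$ is a union of boundedly many $j$-blocks and is connected for the nearest-neighbour structure, so there is a nearest-neighbour path $\gamma\subset X^*$ from $y_0$ to $x$ with $|\gamma|\le CL^j$. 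Telescoping $\nabla^\mu\varphi$ along $\gamma$, and using that every $z\in\gamma\subset X^*$ satisfies $L^{2j}|\nabla^{\mu''}\nabla^\mu\varphi(z)| \le \norm{\nabla^2_j\varphi}_{L^\infty(X^*)} \le W_j(X,\nabla^2_j\varphi)$,
\[
  |\nabla^\mu\varphi(x)| \le |\nabla^\mu\varphi(y_0)| + \sum_{z\in\gamma}|\nabla^{\mu''}\nabla^\mu\varphi(z)| \le \tfrac{\sqrt2}{L^j}\norm{\nabla_j\varphi}_{L^2_j(X)} + CL^{-j}W_j(X,\nabla^2_j\varphi);
\]
multiplying by $L^j$ and maximising over $x\in X^*$ and $\mu$ gives $\norm{\nabla^1_j\varphi}_{L^\infty(X^*)} \le C\big(\norm{\nabla_j\varphi}_{L^2_j(X)} + W_j(X,\nabla^2_j\varphi)\big)$. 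Finally, the order-zero term: telescoping $\varphi(x)-\varphi(x_0)$ along a path in $X^*$ of length $\le CL^j$ from $x_0\in X$ to $x\in X^*$ bounds $\norm{\delta\varphi}_{L^\infty(X^*)}$ by $CL^j\cdot L^{-j}\norm{\nabla^1_j\varphi}_{L^\infty(X^*)}$, already controlled. Combining the three cases and using $(a+b)^2\le 2(a^2+b^2)$ proves \eqref{e:plan-poincare}.

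I expect the only delicate point to be the geometric bookkeeping in the order-one step: one must check that the telescoping path can be kept inside $X^*$ --- so that the second differences accumulated along it are genuinely dominated by $W_j(X,\nabla^2_j\varphi)$, which only records the blocks of $X$ itself --- and that such a path of length $O(L^j)$ exists. This relies on $X\in\mathcal{S}_j$ having boundedly many blocks and, crucially, on the fact that passing to the small-set neighbourhood $X^* = \bigcup_{B\in\mathcal{B}_j(X)}B^*$ fills in the diagonal gaps of the (merely $\ell^\infty$-)connected polymer $X$, so that $X^*$ is connected for the nearest-neighbour graph and has diameter comparable to $L^j$. Everything else is the elementary convexity bound and routine manipulation with the norms of Definition~\ref{def:L-p-norms}.
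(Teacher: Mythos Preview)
Your proof is correct and follows essentially the same route as the paper: first establish the Poincar\'e-type bound $\norm{\delta\varphi}_{C^2_j(X^*)}^2 \le C\big(\norm{\nabla_j\varphi}_{L^2_j(X)}^2 + W_j(X,\nabla^2_j\varphi)^2\big)$ by telescoping (the paper packages this as the inequality $\max_{x\in X}|f(x)-f(x_0)| \le 2|X|_j\norm{\nabla_j f}_{L^\infty(X)}$ applied on $X^*$ first to $f=\varphi$ and then to $f=\nabla_j^e\varphi$, averaging over the base point rather than selecting a single good $y_0$ as you do), then absorb the $k$-th power into the exponential via $t^{k/2}\le C(k)e^t$. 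The geometric caveat you flag about nearest-neighbour connectedness of $X^*$ is exactly the point the paper glosses over, and your resolution is the right one.
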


\begin{proof}
  See Appendix~\ref{sec:A.5}.
\end{proof}

The next property of the regularity involves what is called the strong regulator in \cite{MR2523458}.
(The term strong regulator refers to the left-hand side of \eqref{eq:strong_regulator1} below.)
To this end, define
\begin{equation} \label{eq:wj_def}
  w_j (X,\varphi )^2 = \sum_{B\in \mathcal{B}_j (X)} \max_{n=1,2} \norm{\nabla_j^n \varphi}^2_{L^{\infty}(B^*)}, \quad X \in \mathcal{P}_j
  .
\end{equation}

\begin{lemma} \label{lemma:strong_regulator}
  For all $\cwone c_2, \kappa_L>0$, the following hold. If $c_w > 0$ is sufficiently small,
  \begin{equation}
    e^{c_w \kappa_L w_j ( X, \varphi)^2} \leq G_j (X, \varphi), \quad X \in \mathcal{P}_j. \label{eq:strong_regulator1}
  \end{equation}
  Moreover, for all $X,Y \in \mathcal{P}_j$ satisfying $X \cap Y = \emptyset$ and all $c_2,c_w$ sufficiently small,
  \begin{equation}
    e^{c_w \kappa_L w_j (X, \varphi)^2} G_j (Y, \varphi) \leq G_j (X\cup Y, \varphi) \label{eq:strong_regulator2}.
  \end{equation}
\end{lemma}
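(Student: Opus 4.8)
The plan is to reduce both inequalities to a single discrete Sobolev-type estimate on a block together with its small-set neighbourhood, and then sum over blocks. Fix a $j$-block $B$, and recall that $B^*$ is the union of the (at most $3^d$) blocks meeting $B$, so that in scale-$j$ units $B^*$ has bounded diameter while $|B|=L^{dj}$. The first step is to establish
\[
\norm{\nabla_j \varphi}_{L^\infty(B^*)}^2 \le 2\norm{\nabla_j \varphi}_{L^2_j(B)}^2 + C\norm{\nabla^2_j \varphi}_{L^\infty(B^*)}^2,
\]
with $C=C(d)$. To see this, for $y\in B^*$ and $x\in B$ one telescopes $\nabla_j\varphi(y)-\nabla_j\varphi(x)$ along a lattice path inside $B^*$ of length $O(L^j)$; each single-lattice-step increment of $\nabla_j\varphi$ is a component of $L^{-j}\nabla^2_j\varphi$, so $|\nabla_j\varphi(y)-\nabla_j\varphi(x)|\le C\norm{\nabla^2_j\varphi}_{L^\infty(B^*)}$. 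Averaging over $x\in B$ and using $|B|=L^{dj}$ with Cauchy--Schwarz gives $|B|^{-1}\sum_{x\in B}|\nabla_j\varphi(x)|\le C\norm{\nabla_j\varphi}_{L^2_j(B)}$, from which the estimate follows.

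For \eqref{eq:strong_regulator1}, note that for each $B\in\mathcal B_j(X)$ the quantity $\max_{n=1,2}\norm{\nabla^n_j\varphi}_{L^\infty(B^*)}^2$ either equals $\norm{\nabla^2_j\varphi}_{L^\infty(B^*)}^2$ or is bounded via the displayed estimate. Summing over $B\in\mathcal B_j(X)$, using that the $j$-blocks partition $X$ (so $\sum_{B}\norm{\nabla_j\varphi}_{L^2_j(B)}^2=\norm{\nabla_j\varphi}_{L^2_j(X)}^2$) and that each vertex lies in $B^*$ for at most $3^d$ blocks $B$ (so $\sum_{B}\norm{\nabla^2_j\varphi}_{L^\infty(B^*)}^2 = W_j(X,\nabla^2_j\varphi)^2$ up to an absolute constant), yields $w_j(X,\varphi)^2\le 2\norm{\nabla_j\varphi}_{L^2_j(X)}^2+C'W_j(X,\nabla^2_j\varphi)^2$. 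Choosing $c_w\le c_1/\max\{2,C'\}$ and discarding the nonnegative boundary term in $G_j$ gives $c_w\kappa_L w_j(X,\varphi)^2\le\log G_j(X,\varphi)$.

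For \eqref{eq:strong_regulator2}, since $X\cap Y=\emptyset$ and polymers are unions of blocks, $\mathcal B_j(X\cup Y)$ is the disjoint union of $\mathcal B_j(X)$ and $\mathcal B_j(Y)$, hence $\norm{\nabla_j\varphi}_{L^2_j(X\cup Y)}^2=\norm{\nabla_j\varphi}_{L^2_j(X)}^2+\norm{\nabla_j\varphi}_{L^2_j(Y)}^2$ and $W_j(X\cup Y,\nabla^2_j\varphi)^2=W_j(X,\nabla^2_j\varphi)^2+W_j(Y,\nabla^2_j\varphi)^2$. Taking logarithms in \eqref{eq:strong_regulator2} and cancelling the $Y$-interior and $Y$-Hessian contributions, it suffices to prove
\[
c_w\kappa_L w_j(X,\varphi)^2 + c_2\kappa_L\norm{\nabla_j\varphi}_{L^2_j(\partial Y)}^2 \le c_1\kappa_L\norm{\nabla_j\varphi}_{L^2_j(X)}^2 + c_2\kappa_L\norm{\nabla_j\varphi}_{L^2_j(\partial(X\cup Y))}^2 + c_1\kappa_L W_j(X,\nabla^2_j\varphi)^2 .
\]
Writing $Z=\partial Y\setminus\partial(X\cup Y)$, monotonicity of the boundary norm gives $\norm{\nabla_j\varphi}_{L^2_j(\partial Y)}^2\le\norm{\nabla_j\varphi}_{L^2_j(\partial(X\cup Y))}^2+\norm{\nabla_j\varphi}_{L^2_j(Z)}^2$, so it remains to absorb $\norm{\nabla_j\varphi}_{L^2_j(Z)}^2$ into $w_j(X,\varphi)^2$. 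Every $z\in Z$ has all neighbours in $X\cup Y$ but at least one neighbour outside $Y$, hence a neighbour in $X$; thus $z$ lies in a block of $Y$ adjacent to a block of $X$, so $Z\subseteq\bigcup_{B\in\mathcal B_j(X)}B^*$, and since $Z\subseteq\partial Y$ meets each block in $O(L^j)$ vertices, $|Z\cap B^*|=O(L^j)$ for each such $B$. As the boundary norm carries the prefactor $L^{-j}$, this gives $\norm{\nabla_j\varphi}_{L^2_j(Z)}^2\le C\sum_{B\in\mathcal B_j(X)}\norm{\nabla_j\varphi}_{L^\infty(B^*)}^2\le Cw_j(X,\varphi)^2$. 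Inserting the bound on $w_j(X,\varphi)^2$ from the previous paragraph (which does not use the boundary term of $X$), the left-hand side above is at most $(c_w+Cc_2)\kappa_L w_j(X,\varphi)^2\le c_1\kappa_L\big(\norm{\nabla_j\varphi}_{L^2_j(X)}^2+W_j(X,\nabla^2_j\varphi)^2\big)$ once $c_2$ and $c_w$ are small enough relative to $c_1$, which is the assertion.

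The routine parts are the path/telescoping argument behind the Sobolev estimate and the finite-overlap combinatorics of the neighbourhoods $B^*$. The step requiring the most care, and which I expect to be the main obstacle, is the boundary bookkeeping in \eqref{eq:strong_regulator2}: identifying the portion $Z$ of $\partial Y$ that is lost upon forming $\partial(X\cup Y)$, showing that it lies in the small-set neighbourhood of $X$ and is codimension one there, and hence is controlled by the $L^\infty$-type weight $w_j(X,\varphi)^2$ rather than requiring an $L^2$ quantity over $X$ alone.
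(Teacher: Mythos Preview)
Your argument is correct and, in fact, more detailed than the paper's own proof, which simply cites \cite[Lemmas~6.21--6.22]{MR2523458} for both inequalities rather than carrying them out. The block-level Sobolev estimate you prove is essentially the inequality \eqref{eq:gradphi_sup} that the paper establishes anyway in the proof of Lemma~\ref{lemma:bound_of_Gj_by_Gjplus1}, and your boundary bookkeeping for $Z=\partial Y\setminus\partial(X\cup Y)$ is exactly the geometric content behind \cite[(6.105)]{MR2523458}. So your route is not genuinely different---it is a self-contained rendering of what the paper defers to the reference.

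Two small corrections that do not affect the logic: (i) in this paper $B^*$ denotes the \emph{small-set neighbourhood} of $B$, i.e., the union of all connected polymers with at most $2^d$ blocks containing $B$, not just the $3^d$ blocks meeting $B$; this only changes the constants and your argument goes through verbatim since the diameter and block-count of $B^*$ remain $O(1)$. (ii) The remark ``each vertex lies in $B^*$ for at most $3^d$ blocks $B$ (so $\sum_{B}\norm{\nabla^2_j\varphi}_{L^\infty(B^*)}^2 = W_j(X,\nabla^2_j\varphi)^2$ up to an absolute constant)'' is superfluous: by definition $W_j(X,\nabla^2_j\varphi)^2=\sum_{B\in\mathcal B_j(X)}\norm{\nabla^2_j\varphi}_{L^\infty(B^*)}^2$ exactly, so no overlap argument is needed there.
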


\begin{proof}
  See Appendix~\ref{sec:A.5}.
\end{proof}

Finally, we will need the following stability result for the Gaussian expectation with respect to the covariances $\Gamma_j$.
Thus recall the finite-range decomposition from Section~\ref{sec:scales}, in particular the definition of $\Gamma_j$ from~\eqref{eq:Gammaj}. For a scale-$j$ polymer activity $F$, a common strategy to bound $\Eplus[F(X, \varphi' + \zeta)]$, for fixed $\varphi'$ and $X \in \cP^c_j$, will be to first bound $\norm{F(X, \varphi' + \zeta)}_{h, T_j (X,\varphi')} \leq \norm{F(X)}_{h, T_j(X)} G_j (X, \varphi' + \zeta)$, which follows immediately from \eqref{eq:NORm}, so that the fluctuation integral acts effectively on the large field regulator $G_j$ only.
In this regard, the following Proposition~\ref{prop:E_G_j} yields that the form of the large field regulator is stable under the fluctuation integral up to a factor $2^{|X|_{j}}$, where $\overline{X}$ denotes the closure of $X$, cf.~Section~\ref{sec:polymersdef0}.
The proposition is a version of \cite[Lemma~10]{MR2917175}.

\begin{proposition} \label{prop:E_G_j}
Assume \eqref{eq:frd_ulbds} and that $L=\ell^M$ for integers $\ell, M\geq 1$.
For $c_2 >0$ sufficiently small,
there exists 
an integer $\ell = C(\cwone c_2)$ and a constant $c=c(\cwone c_2)>0$
such that with $c_{\kappa} = c(\cwone c_2)\ell^{-2} \in (0,1)$, the following holds: for all $0\leq j < N-1$
and all $\kappa_L \leq c_{\kappa}{\rho_J^{2}} (\log L)^{-1}$, 
\begin{equation}
\Eplus [G_j (X, \varphi'+ \zeta )] 
\leq 2^{|X|_{j}} G_{j+1} (\overline{X}, \varphi'),
\qquad X \in \mathcal{P}_j^c, \; \varphi' \in \R^{\Lambda_N} \label{eq:E_G_j}
  .
\end{equation}
For the last scale $j=N-1$ an analogous bound holds with $\Eplus$ replaced by $\E_{\Gamma_{N}^{\Lambda_N}}$, i.e., $\zeta \sim \cN(0, \Gamma_N^{\Lambda_N})$.
\end{proposition}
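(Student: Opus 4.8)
The plan is to estimate the Gaussian expectation of each of the three exponential terms making up $G_j(X,\varphi'+\zeta)$ separately, using the Cauchy--Schwarz inequality to split them, and then to reassemble the bound into $G_{j+1}(\overline X,\varphi')$. Write $\varphi=\varphi'+\zeta$. The regulator $G_j(X,\varphi)$ is $\exp$ of $c_1\kappa_L\|\nabla_j\varphi\|_{L^2_j(X)}^2 + c_2\kappa_L\|\nabla_j\varphi\|_{L^2_j(\partial X)}^2 + c_1\kappa_L W_j(X,\nabla_j^2\varphi)^2$. Each of these is (up to the prefactor $\kappa_L$) a positive semidefinite quadratic form $Q(\varphi)$ in $\varphi$, so $G_j(X,\varphi)=e^{\kappa_L Q(\varphi)}$ with $Q$ a sum of such forms. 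Since $\nabla_j = L^{j|\alpha|}\nabla^\alpha$ and $X$ is a scale-$j$ polymer, each form is built from at most $\sim |X|_j$ lattice sites (counting the small-set neighbourhoods $B^*$), and the rescaled derivatives carry the factors $L^j$, $L^{2j}$ which must be tracked against the decay of $\Gamma_{j+1}$ and its derivatives from Corollary~\ref{cor:Gammaj}.

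The key mechanism is the elementary Gaussian identity: if $\zeta\sim\cN(0,\Gamma_{j+1})$ and $Q(\varphi)=(\varphi,M\varphi)$ for a positive semidefinite $M$, then $\E_{\Gamma_{j+1}}e^{a(\varphi'+\zeta,M(\varphi'+\zeta))} = \det(1-2a\Gamma_{j+1}^{1/2}M\Gamma_{j+1}^{1/2})^{-1/2}\,e^{a(\varphi',(1-2a M\Gamma_{j+1})^{-1}M\varphi')}$, valid when $2a\Gamma_{j+1}^{1/2}M\Gamma_{j+1}^{1/2}<1$. The strategy is: (i) show that the relevant operators $\Gamma_{j+1}^{1/2}M\Gamma_{j+1}^{1/2}$ have operator norm bounded by $C\rho_J^{-2}(\log\ell)$ (or similar), which follows from the off-diagonal decay bounds $|\nabla^\alpha\Gamma_{j+1}(0,x;s)|\le C_\alpha\rho_J^{-2}L^{-j|\alpha|}$ in \eqref{eq:Gammaj_bd} together with the finite range $\tfrac14 L^{j+1}$ and a Schur test; the extra $\ell^{-2}$ in $c_\kappa$ is there to beat the $L^{2j}$ factors in $\nabla_j^2$ against $L^{-2j}$-type gains coming from averaging over a block of side $L^{j+1}=\ell^M L^j$, and one uses $L=\ell^M$ so that $\log L = M\log\ell$; (ii) with $\kappa_L\le c_\kappa\rho_J^2(\log L)^{-1}$ and $c_\kappa = c\ell^{-2}$ small, the determinant prefactors are $\le 2^{c|X|_j}$ after the Cauchy--Schwarz split (each factor contributes $e^{O(\kappa_L\rho_J^{-2}\log\ell)}$ per block, and the choice of $c_\kappa$ makes $\kappa_L\rho_J^{-2}\log\ell$ small enough that the product over $|X|_j$ blocks is $\le 2^{|X|_j}$, which after enlarging $X$ to $\overline X$ gives $2^{|X|_j}$); and (iii) the resulting quadratic form in $\varphi'$ is controlled by $G_{j+1}(\overline X,\varphi')$ — here one uses that passing from scale $j$ to $j+1$ enlarges the blocks, so $\|\nabla_j\varphi'\|_{L^2_j(X)}^2 \le C\ell^{?}\|\nabla_{j+1}\varphi'\|_{L^2_{j+1}(\overline X)}^2$ with a constant that is absorbed because $\Gamma_{j+1}$ provides a strict contraction; the $c_2$-smallness is needed to fit the boundary term $\|\nabla_j\varphi\|_{L^2_j(\partial X)}^2$ (which lives on an $(d-1)$-dimensional set and is handled by a trace/Sobolev-type estimate against the bulk $c_1$-term) inside the $c_1$-bulk term at the new scale.

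The main obstacle I expect is step (iii), the \emph{reblocking} estimate: showing that the effective quadratic form produced by the Gaussian integral, which naturally lives on the fine scale $j$ with the fine partition, is dominated by the coarse regulator $G_{j+1}(\overline X,\cdot)$ with constants $c_1,c_2$ \emph{unchanged}. This requires a careful comparison of $\|\nabla_j^n\varphi'\|$-type norms on $j$-blocks with $\|\nabla_{j+1}^n\varphi'\|$-type norms on $(j+1)$-blocks, using that $\nabla_{j+1}=\ell^{Mn}\nabla_j\cdot$ (extra powers of $\ell$) and that the covariance $\Gamma_{j+1}$ smooths $\zeta$ on scale $L^j$ so that $\varphi'+\zeta$ has, after integration, one fewer effective derivative of roughness than $\varphi'$; this is precisely where the choice $\ell$ large (i.e.\ $\ell=C(c_1,c_2)$) enters, to make the reblocking constants win. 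The boundary term and the $W_j$ term (which involves $L^\infty$ norms of $\nabla_j^2\varphi$ over $B^*$, hence requires converting the $L^2$-type Gaussian bounds to $L^\infty$ via a crude but dimensionally-correct Sobolev embedding on the finite block $B^*$) are the two places where the bookkeeping is most delicate. For the last scale $j=N-1$, the only change is that $\zeta\sim\cN(0,\Gamma_N^{\Lambda_N})$, but by Corollary~\ref{cor:Gammaj} the covariance $\Gamma_N^{\Lambda_N}$ obeys the same bounds \eqref{eq:Gammaj_bd}, so the identical argument applies verbatim.
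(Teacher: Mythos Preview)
Your approach has a genuine gap: you attempt to compute the Gaussian integral of $G_j(X,\varphi'+\zeta)$ in one shot and then compare the output to $G_{j+1}(\overline X,\varphi')$. The paper does neither of these things. Two related problems arise.

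First, the Cauchy--Schwarz split and the Gaussian formula do not preserve the constants $c_1,c_2$. After H\"older you are integrating $e^{3c_1\kappa_L\|\nabla_j(\varphi'+\zeta)\|^2}$, and the $\varphi'$-term emerging from the identity carries the operator $(1-2aM\Gamma_{j+1})^{-1}M \geq M$ with the inflated prefactor; there is no mechanism to bring this back down to $c_1\kappa_L$ in $G_{j+1}$. Second, your claimed operator-norm bound $C\rho_J^{-2}\log\ell$ is unjustified: once you pass through Sobolev to handle the $W_j$ term, a contribution of the form $\|\zeta\|_{L^2_j}^2$ appears (zeroth-order in derivatives), and the corresponding operator $L^{-2j}\Gamma_{j+1}$ has norm of order $\rho_J^{-2}L^2$, not $\rho_J^{-2}\log\ell$. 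This is precisely why the paper introduces the scale subdecomposition in Section~\ref{sec:subscale}, and says so explicitly in the text preceding that section.

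The paper's proof runs differently. It decomposes $\zeta=\sum_{k=1}^M\xi_k$ with $\xi_k\sim\cN(0,\Gamma_{j+(k-1)/M,\,j+k/M})$, and at each fractional step applies the \emph{deterministic pointwise} inequality of Lemma~\ref{lemma:G_change_of_scale},
\[
G_{j+s}(X,\varphi+\xi)\;\le\; g_{j+s}(X_{s+M^{-1}},\xi)\,G_{j+s+M^{-1}}(X_{s+M^{-1}},\varphi),
\]
which separates the fluctuation $\xi$ from the background $\varphi$ \emph{before} any integration and, crucially, returns the regulator $G$ at the next fractional scale with the \emph{same} constants $c_1,c_2$. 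Iterating $M$ times gives $G_j(X,\varphi'+\zeta)\le\bigl[\prod_k g_{j+s_k}(\,\cdot\,,\xi_k)\bigr]G_{j+1}(\overline X,\varphi')$, and only the $g$-factors need to be averaged. Each $\xi_k$ has range $\ell L^{j+s_k}$ and covariance of size $\rho_J^{-2}\log\ell$, so the operator-norm condition becomes $\kappa_L\lesssim\rho_J^2\ell^{-2}$ (whence $c_\kappa=c\ell^{-2}$), and the trace per step contributes $2^{M^{-1}|X|_j}$, multiplying to $2^{|X|_j}$. The reblocking you flag as the main obstacle is thus handled algebraically inside Lemma~\ref{lemma:G_change_of_scale} (via integration by parts and the discrete trace theorem, which is where $c_2\ll c_1$ and $\ell$ large enter), not after the Gaussian integration.
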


\begin{proof}
  See Appendix~\ref{sec:A4}.
\end{proof}

The following corollary is a simple consequence of Proposition~\ref{prop:E_G_j}.

\begin{lemma} \label{lemma:linearity_of_expectation}
  Suppose the assumptions of Proposition~\ref{prop:E_G_j} hold.
  Then for any $F \in \mathcal{N}_j$ with $\norm{F}_{h, T_j}< \infty$,
  all $X \in \cP_j^c$, and all $\varphi' \in \R^{\Lambda_N}$, 
\begin{align}
  & \norm{\Eplus [F(X,\varphi' + \zeta)]}_{h, T_{j} (X,\varphi' )}
  \leq (A/2)^{-|X|_j} \norm{F}_{h, T_j} \, G_{j+1} (\overline{X},\varphi'),
    \label{eq:linearity_of_expectation1} \\
  \intertext{and, slightly more generally, for all $n\geq 0$,}
   &{\norm{D^n_{\varphi'}\Eplus [F(X,\varphi' + \zeta)]}_{n, T_{j} (X,\varphi' )} \leq 2^{|X|_j} \norm{D^nF(X)}_{n, T_j(X)} \, G_{j+1} (\overline{X},\varphi').   \label{eq:linearity_of_expectation1.1}}
\end{align}
For $j = N-1$, analogous bounds hold with $\Eplus$ replaced by $\E_{\Gamma_N^{\Lambda_N}}$.
\end{lemma}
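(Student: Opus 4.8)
The plan is to reduce the statement directly to Proposition~\ref{prop:E_G_j} by exploiting that the fluctuation integral $\Eplus$ and differentiation in $\varphi'$ commute, and that the $\norm{\cdot}_{n,T_j(X,\varphi)}$ and $\norm{\cdot}_{h,T_j(X,\varphi)}$ seminorms are defined via suprema over test directions which pass through the expectation by linearity. First I would establish \eqref{eq:linearity_of_expectation1.1}. Fix $X\in\cP_j^c$, $n\geq 0$, $\varphi'\in\R^{\Lambda_N}$ and test functions $f_1,\dots,f_n\in\R^{X^*}$ with $\norm{f_k}_{C_j^2(X^*)}\leq 1$. Since $\zeta\mapsto D^n F(X,\varphi'+\zeta)(f_1,\dots,f_n)$ is (for $F$ smooth with finite norm) integrable against the Gaussian measure $\Gamma_{j+1}$ — integrability following from $\norm{F(X,\varphi'+\zeta)}_{h,T_j(X,\varphi')}\leq\norm{F(X)}_{h,T_j(X)}G_j(X,\varphi'+\zeta)$ together with the fact that $\Eplus[G_j(X,\varphi'+\zeta)]<\infty$ by Proposition~\ref{prop:E_G_j} — differentiation under the integral sign gives
\begin{equation}
D^n_{\varphi'}\Eplus[F(X,\varphi'+\zeta)](f_1,\dots,f_n)=\Eplus\big[D^n F(X,\varphi'+\zeta)(f_1,\dots,f_n)\big].
\end{equation}
Taking absolute values, bounding the integrand pointwise in $\zeta$ by $\norm{D^nF(X)}_{n,T_j(X)}\,G_j(X,\varphi'+\zeta)$ (this is exactly the definition \eqref{eq:NORm} of $\norm{D^nK(X)}_{n,T_j(X)}$), and then taking the supremum over admissible $f_k$ yields
\begin{equation}
\norm{D^n_{\varphi'}\Eplus[F(X,\varphi'+\zeta)]}_{n,T_j(X,\varphi')}\leq\norm{D^nF(X)}_{n,T_j(X)}\,\Eplus[G_j(X,\varphi'+\zeta)].
\end{equation}
Applying Proposition~\ref{prop:E_G_j}, $\Eplus[G_j(X,\varphi'+\zeta)]\leq 2^{|X|_j}G_{j+1}(\overline X,\varphi')$, which is \eqref{eq:linearity_of_expectation1.1}.

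For \eqref{eq:linearity_of_expectation1} I would argue analogously but at the level of the full seminorm \eqref{eq:seminorm}. By the same differentiation-under-the-integral argument applied termwise to the series $\sum_n \frac{h^n}{n!}D^n_{\varphi'}\Eplus[F(X,\varphi'+\zeta)]$ and the triangle inequality (which is justified since each term is controlled by the previous display and the resulting series converges, being bounded by $\Eplus[\norm{F(X,\varphi'+\zeta)}_{h,T_j(X,\varphi')}]$), one gets
\begin{equation}
\norm{\Eplus[F(X,\varphi'+\zeta)]}_{h,T_j(X,\varphi')}\leq\Eplus\big[\norm{F(X,\varphi'+\zeta)}_{h,T_j(X,\varphi')}\big]\leq\norm{K(X)}_{h,T_j(X)}\,\Eplus[G_j(X,\varphi'+\zeta)]
\end{equation}
where $K=F$; here the second inequality is again \eqref{eq:NORm}. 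Now $\norm{F(X)}_{h,T_j(X)}\leq A^{-|X|_j}\norm{F}_{h,T_j}$ from \eqref{eq:NORM}, and Proposition~\ref{prop:E_G_j} gives the factor $2^{|X|_j}G_{j+1}(\overline X,\varphi')$; combining, $A^{-|X|_j}2^{|X|_j}=(A/2)^{-|X|_j}$, which is \eqref{eq:linearity_of_expectation1}. The last-scale claim is identical, replacing $\Gamma_{j+1}$ by $\Gamma_N^{\Lambda_N}$ and invoking the corresponding assertion in Proposition~\ref{prop:E_G_j}.

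The only genuine subtlety — and the step I would be most careful about — is the justification of differentiation and summation under the Gaussian expectation, i.e.\ a dominated-convergence argument. The dominating function is $\norm{F(X)}_{h,T_j(X)}G_j(X,\varphi'+\zeta)$, whose Gaussian integrability in $\zeta$ is precisely guaranteed by Proposition~\ref{prop:E_G_j} (uniformly for $\varphi'$ ranging over compacts, since $G_{j+1}(\overline X,\varphi')<\infty$), so this is routine once that proposition is in hand; I would state it briefly rather than belabour it. Everything else is bookkeeping with the definitions \eqref{eq:seminorm}--\eqref{eq:NORM}.
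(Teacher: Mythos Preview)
Your proposal is correct and follows essentially the same approach as the paper: exchange $D^n_{\varphi'}$ with $\Eplus$, bound the integrand pointwise via the definition of $\norm{\cdot}_{n,T_j(X)}$ (respectively $\norm{\cdot}_{h,T_j(X)}$) against $G_j(X,\varphi'+\zeta)$, and then apply Proposition~\ref{prop:E_G_j}; the paper's argument is line-for-line the same, only slightly terser about the dominated-convergence justification. The one cosmetic point is that inside the expectation you write $\norm{F(X,\varphi'+\zeta)}_{h,T_j(X,\varphi')}$ rather than $\norm{F(X,\varphi'+\zeta)}_{h,T_j(X,\varphi'+\zeta)}$, but since the test-function set in \eqref{eq:nTj_norm_definition} does not depend on the field argument this is harmless.
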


\begin{proof} The derivative $D_{\varphi'}$ can be exchanged with the expectation $\Eplus $, hence for all functions $f_k$ with $\norm{f_k}_{C_{j}^2(X^*)} \leq 1$, $k=1, \ldots, n$, by \eqref{eq:nTj_norm_definition},
\begin{align}
\label{eq:D_neasybd}
|D^n_{\varphi'}\Eplus[F(X,\varphi' + \zeta)](f_1, \cdots, f_n)| &\leq \Eplus[ |D^n_{\varphi'} F(X,\varphi' + \zeta)(f_1,\cdots, f_n) | ] \nnb 
&\leq \Eplus[ \norm{D^n_{\varphi'} F(X,\varphi'+\zeta)}_{n, T_{j} (X,\varphi'+\zeta)} ]
\end{align}
and so, taking suprema over the $f_k$'s, multiplying by $h^n/n!$ and summing over $n \geq 0$, recalling \eqref{eq:NORm}, \eqref{eq:NORM} 
and applying the bound \eqref{eq:E_G_j} from Proposition~\ref{prop:E_G_j}, one obtains that
\begin{align}
\norm{\Eplus [F(X,\varphi' + \zeta )]}_{h, T_{j}(X,\varphi')}
&\leq \Eplus[ \norm{F(X,\varphi' + \zeta )}_{h, T_j (X, \varphi'+\zeta )}  ]  \nnb 
&\leq  A^{-|X|_j} 
\Eplus[ \norm{F}_{h, T_j} \, G_{j}(X,\varphi' + \zeta)  ]  
\leq (2/A)^{|X|_j}
 \norm{F}_{h, T_j} \, G_{j+1}(\overline{X},\varphi'), 
\end{align}
giving \eqref{eq:linearity_of_expectation1}. The bound \eqref{eq:linearity_of_expectation1.1} is obtained similarly.
\end{proof}

We conclude this section by fixing the parameters $\cwone c_2, \kappa_L$ appearing in \eqref{eq:def_large_field_regulator}.
\begin{remark}\label{R:choice-parameters}
We choose $c_2>0$ small enough such that both i) the estimate
 \eqref{eq:strong_regulator2} in Lemma~\ref{lemma:strong_regulator} holds whenever $c_w$ is sufficiently small and ii) Proposition~\ref{prop:E_G_j} is in force. Having fixed $c_2$, we choose $\ell =C\rho_J$ according to Proposition~\ref{prop:E_G_j} and set $\kappa_L =c_{\kappa}{\rho_J^{2}} (\log L)^{-1}$ with $c_{\kappa}=c\ell^{-2}$, so that the conclusions of Proposition~\ref{prop:E_G_j} (i.e.,~\eqref{eq:E_G_j}) hold. 
We can thus freely apply the bounds derived in Lemmas~\ref{lemma:bound_of_Gj_by_Gjplus1} and \ref{lemma:strong_regulator} (in the latter case whenever $c_w$ is sufficiently small) and Proposition~\ref{prop:E_G_j} in the sequel.
Throughout the rest of this article, we always implicitly assume that the base scale $L$ is of the form $L=\ell^M$ with $\ell$ as fixed above. Unless stated otherwise, all statements hold uniformly in  $M \geq 1$, and when we write $L \geq C$ in the sequel, we tacitly view this as a condition on $M$ being sufficiently large.

\end{remark}
%

\subsection{Subdecomposition of the regulator}
\label{subsec:subdecomp_of_the_reg}

The final property of the regulator is a technical property involving
the scale subdecomposition from Section~\ref{sec:subscale} and that is needed to obtain sharp integrability estimates.
It is used as an ingredient of the proof of Proposition~\ref{prop:E_G_j} above
and also in the justification of complex translations in the proof of Lemma~\ref{lemma:charged_term_exponential_contraction} below.

Throughout this section, assume $L=\ell^M$ with integers $\ell$ and $M$.
For a parameter $c_4 >0$ and $X \in \mathcal{P}_{j+s}$ (recall the notion of fractional scales from Section~\ref{sec:subscale}), let
\begin{equation}
g_{j+s} (X, \xi) = \exp\Big( c_4 \kappa_L \sum_{a=0,1,2} W_{j+s} ( X, \nabla^a_{j+s} \xi)^2 \Big),
\end{equation}
with $W_{j+s}$ defined analogously to \eqref{eq:W_j} by
\begin{equation}
W_{j+s} (X, \nabla^a_j \varphi)^2 = \sum_{B\in \mathcal{B}_{j+s} (X)} \norm{\nabla^a_{j+s} \varphi}_{L^{\infty}(B^*)}^2 .
\end{equation}
Then, with hopefully obvious notation, define $G_{j+s}(X, \cdot)$ for $X \in \mathcal{P}_{j+s} $ as in \eqref{eq:def_large_field_regulator} but with $j+s$ in place of $j$ everywhere.
The following Lemmas~\ref{lemma:g_j+s_bound_by_quadratic} and~\ref{lemma:G_change_of_scale} can be extracted from \cite[Lemma~19]{MR2917175}
and its proof.
For completeness, we have again included proofs in Appendix~\ref{app:g_j+s_bound_by_quadratic} and \ref{app:lem_G_change_of_scale}.

\begin{lemma} \label{lemma:g_j+s_bound_by_quadratic}
There exists $C>0$ such that
for any $X\in \cP_{j+s}$ and $\zeta \in \R^{\Lambda_N}$, 
\begin{equation}  \label{eq:g_j+s_bound}
  g_{j+s} (X, \zeta) \leq
  \exp (\frac{1}{2} Q_{j+s} (X,\zeta)) 
  := \exp\bigg( C c_4 \kappa_L \sum_{a=0}^4 
  \sum_{(\mu)}   \norm{\nabla_{j+s}^{(\mu)} \zeta}^2_{L^2_{j+s} (X^*)} \bigg),
\end{equation}
where the sum ranges over multiindices $(\mu) = (\mu_1, \dots, \mu_a) \in \{ \pm e_1, \pm e_2 \}^{a}$.
Moreover, for any $c_4>0$, any integer $\ell$, there is $c_\kappa = c_\kappa(c_4,\ell)>0$ such
  that if $\kappa_L=c_\kappa \rho_J^2 (\log L)^{-1}$ then
\begin{equation} \label{eq:g_j+s_expectation}
  \E_{\Gamma_{j+s,j+s'}}(e^{Q_{j+s}(X,\zeta)}) \leq 2^{M^{-1}|X|_{j+s}}.
\end{equation}
\end{lemma}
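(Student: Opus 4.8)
We prove the pointwise bound \eqref{eq:g_j+s_bound} and the expectation bound \eqref{eq:g_j+s_expectation} separately.

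\emph{The pointwise bound.} By definition the right-hand side of \eqref{eq:g_j+s_bound} equals $\exp\big(Cc_4\kappa_L\sum_{a=0}^{4}\sum_{(\mu)}\norm{\nabla^{(\mu)}_{j+s}\zeta}_{L^2_{j+s}(X^*)}^2\big)$, so it suffices to bound $\sum_{a=0,1,2}W_{j+s}(X,\nabla^a_{j+s}\zeta)^2$ by a constant times $\sum_{a'=0}^{4}\sum_{(\mu)}\norm{\nabla^{(\mu)}_{j+s}\zeta}_{L^2_{j+s}(X^*)}^2$. This follows from a discrete Sobolev (interpolation) inequality applied block by block: since $d=2$, for any block $B\in\cB_{j+s}(X)$ and any lattice function $h$ one has $\norm{h}_{L^\infty(B^*)}^2\le C\sum_{b=0}^{2}\norm{\nabla^b_{j+s}h}_{L^2_{j+s}(\widehat B)}^2$ with $\widehat B$ a bounded enlargement of $B^*$, which comes from the elementary one-dimensional estimate $\norm{h}_{L^\infty(I)}^2\le 2|I|^{-1}\norm{h}_{L^2(I)}^2+2|I|\norm{\nabla h}_{L^2(I)}^2$ (write $h(x_0)$ as the average of $h$ over $I$ plus a telescoping sum of gradients, then apply Cauchy--Schwarz) applied in each coordinate direction and rescaled to scale $j+s$. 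Taking $h$ to be each component $\nabla^{(\nu)}_{j+s}\zeta$ of $\nabla^a_{j+s}\zeta$ with $|(\nu)|=a\le2$, using that $\nabla^b_{j+s}\nabla^{(\nu)}_{j+s}\zeta$ is a component of $\nabla^{a+b}_{j+s}\zeta$, and maximising over $(\nu)$ gives $\norm{\nabla^a_{j+s}\zeta}_{L^\infty(B^*)}^2\le C\sum_{a'=0}^{4}\norm{\nabla^{a'}_{j+s}\zeta}_{L^2_{j+s}(\widehat B)}^2$ for $a\in\{0,1,2\}$. Summing over $a$ and over $B\in\cB_{j+s}(X)$, and using that the enlarged neighbourhoods $\widehat B$ cover $X^*$ with bounded overlap (cf.~\cite[Lemma~19]{MR2917175}, \cite[Section~6.4]{MR2523458}), yields the claim.

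\emph{The expectation bound: set-up.} It suffices to treat $\Gamma:=\Gamma_{j+s,j+s'}$ with $s'=s+M^{-1}$, which by \eqref{eq:Dt_range} and \eqref{eq:gamma_js} has range strictly less than $\tfrac14 L^{j+s'}=\tfrac14\ell L^{j+s}$. Write $Q_{j+s}(X,\zeta)=(\zeta,M\zeta)$ for a positive semidefinite matrix $M=M(X)$. Since $X^*=\bigcup_{B\in\cB_{j+s}(X)}B^*$, splitting the $L^2_{j+s}(X^*)$-sums over blocks gives the pointwise bound $Q_{j+s}(X,\zeta)\le\sum_{B}q_B(\zeta)$ (the sum over $B\in\cB_{j+s}(X)$), where $q_B(\zeta)=(\zeta,M_B\zeta)$ is obtained from $Q_{j+s}$ by replacing $X^*$ with $B^*$, and $q_B$ depends on $\zeta$ only through its restriction to a bounded neighbourhood $\widehat B$ of $B^*$. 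Now colour the blocks of $X$ with $K=K(\ell)=O(\ell^2)$ colours so that two same-coloured blocks have $\ell^\infty$-distance between their neighbourhoods $\widehat B$ at least $\tfrac14 L^{j+s'}$; by the finite-range property of $\Gamma$, the family $(q_B)_B$ is then independent within each colour class under $\E_\Gamma$. Hölder's inequality with exponent $K$ across the colour classes, followed by independence within each class, yields
\[
  \E_\Gamma\big[e^{Q_{j+s}(X,\zeta)}\big]\le\E_\Gamma\Big[\prod_{B}e^{q_B(\zeta)}\Big]\le\prod_{B}\E_\Gamma\big[e^{Kq_B(\zeta)}\big]^{1/K}.
\]

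\emph{The expectation bound: single blocks and the main obstacle.} Diagonalising $\Gamma^{1/2}\tilde M\Gamma^{1/2}$ shows that if $\tilde M$ is positive semidefinite with $\mathrm{tr}(\tilde M\Gamma)\le\tfrac18$ then $\E_\Gamma[e^{(\zeta,\tilde M\zeta)}]\le e^{2\,\mathrm{tr}(\tilde M\Gamma)}$. Apply this with $\tilde M=KM_B$: expanding $M_B$ into its $O(1)$ summands, each proportional to $\kappa_L L^{(2a-d)(j+s)}(\nabla^{(\mu)})^*1_{B^*}\nabla^{(\mu)}$ with $|(\mu)|=a\le4$, and using translation invariance of $\Gamma$ together with the bounds $|\nabla^\alpha\Gamma(0,x)|\le C_\alpha\rho_J^{-2}L^{-(j+s)|\alpha|}$ ($|\alpha|\ge1$) and $|\Gamma(0,0)|\le C\rho_J^{-2}(1+\log\ell)$ from Lemma~\ref{lemma:fine_Gamma_estimate}, and $|B^*|\le CL^{(j+s)d}$, one finds that all powers of $L^{j+s}$ cancel and $\mathrm{tr}(M_B\Gamma)\le Cc_4\kappa_L\rho_J^{-2}(1+\log\ell)\le C'c_4c_\kappa M^{-1}$, using $\kappa_L=c_\kappa\rho_J^2(\log L)^{-1}$ and $\log L=M\log\ell$. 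Choosing $c_\kappa=c_\kappa(c_4,\ell)>0$ small enough that $K\,\mathrm{tr}(M_B\Gamma)\le\tfrac18$ and $2\,\mathrm{tr}(M_B\Gamma)\le M^{-1}\log2$ gives $\E_\Gamma[e^{Kq_B}]^{1/K}\le e^{2\,\mathrm{tr}(M_B\Gamma)}\le 2^{M^{-1}}$, and the display above yields $\E_\Gamma[e^{Q_{j+s}(X,\zeta)}]\le 2^{M^{-1}|X|_{j+s}}$. The delicate point — and the reason for carefully tracking all constants — is uniformity in the range of $J$: a naive operator-norm bound on $\Gamma^{1/2}M\Gamma^{1/2}$ loses powers of $\rho_J$, so one is forced to exploit the finite range of $\Gamma$ via the colouring, and since $\Gamma$ has range of order $\ell$ blocks at scale $j+s$ the number of colours is necessarily of order $\ell^d$. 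This makes $c_\kappa$ small in terms of $\ell$, but as $\ell=C\rho_J$ and $\kappa_L=c_\kappa\rho_J^2(\log L)^{-1}$ the resulting $\kappa_L$ is still of order $(\log L)^{-1}$ independently of $J$, which is what makes the estimate useful; everything else is routine.
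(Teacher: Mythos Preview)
Your proof is correct, but for the expectation bound \eqref{eq:g_j+s_expectation} you take a genuinely different route from the paper. The pointwise bound \eqref{eq:g_j+s_bound} is handled essentially identically in both (blockwise discrete Sobolev plus bounded overlap of the $B^*$), so the interesting comparison is in the second part.

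The paper proceeds as follows: it applies H\"older across the finitely many derivative indices $(a,(\mu))$, reducing to a bound on $\E_{\Gamma_{j+s,j+s'}}\big[\exp(\tfrac{t}{2}\sum_{x\in X^*}\eta_{a,(\mu)}(x)^2)\big]$ for each fixed $(a,(\mu))$, where $\eta_{a,(\mu)}=\rho_J^{2}(\log L)^{-1}L^{-(j+s)}\nabla^{(\mu)}_{j+s}\zeta$. Then it invokes the elementary Gaussian lemma $\E[e^{\frac12\sum\zeta^2}]\le e^{\operatorname{Tr}C}$, valid when the top eigenvalue of the covariance is at most $\tfrac12$. The operator norm of the covariance $H_{a,(\mu)}$ of $\eta_{a,(\mu)}$ is controlled directly by a Schur-type bound: each entry satisfies $|H_{a,(\mu)}(x,y)|\le C(\log L)^{-1}L^{-2(j+s)}\log\ell$ (from Lemma~\ref{lemma:fine_Gamma_estimate}, the $\rho_J$'s cancelling against the normalisation of $\eta$) and vanishes for $|x-y|_\infty\ge\tfrac12\ell L^{j+s}$ by finite range, so $\sup_x\sum_y|H_{a,(\mu)}(x,y)|\le C\ell^2 M^{-1}$. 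This gives the same $c_\kappa=O(\ell^{-2})$ you obtain.

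Your approach instead localises $Q_{j+s}$ into block contributions $q_B$, colours the blocks with $K=O(\ell^2)$ colours to make same-coloured contributions independent under $\Gamma_{j+s,j+s'}$, and applies H\"older across colours followed by a trace bound on each block. This is a valid alternative and has the pedagogical advantage of making the role of finite range very explicit. However, your closing remark that ``a naive operator-norm bound on $\Gamma^{1/2}M\Gamma^{1/2}$ loses powers of $\rho_J$, so one is forced to exploit the finite range via the colouring'' is not quite right: the paper's argument \emph{is} an operator-norm bound (via Schur), it also uses finite range (to control the number of nonzero entries per row), and it does not lose any powers of $\rho_J$. So the colouring is a legitimate option, not a necessity.
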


\begin{lemma} \label{lemma:G_change_of_scale}
  For any $c_2>0$ small enough,
  there exist $c_4=c_4(\cwone c_2)>0$ and an integer $\ell_0=\ell_0(\cwone c_2)>1$ (both large), such that for all $\ell \geq \ell_0$, $M \geq 1$, $0\leq j < N$, $s \in \{0,\frac1M,\dots,1-\frac1M\}$ and $\kappa_L > 0$,
  for $X\in \cP_{j+s}^c$, $\varphi, \xi \in \R^{\Lambda_N}$,
\begin{equation}\label{eq:G_change_of_scale}
  G_{j+s} (X, \varphi + \xi ) \leq g_{j+s}( X_{s+ M^{-1}}, \xi ) G_{j+s+M^{-1}} ( X_{s+ M^{-1}}, \varphi).
\end{equation}
\end{lemma}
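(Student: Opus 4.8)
\textbf{Plan for the proof of Lemma~\ref{lemma:G_change_of_scale}.} The strategy is to split the regulator $G_{j+s}(X,\varphi+\xi)$ into the three exponential contributions appearing in \eqref{eq:def_large_field_regulator} --- the bulk term $\norm{\nabla_{j+s}\psi}^2_{L^2_{j+s}(X)}$, the boundary term $\norm{\nabla_{j+s}\psi}^2_{L^2_{j+s}(\partial X)}$, and the $W_{j+s}$ term --- applied with $\psi=\varphi+\xi$, and in each case use the elementary inequality $(a+b)^2 \leq (1+\epsilon)a^2 + (1+\epsilon^{-1})b^2$ to separate the $\varphi$ contribution from the $\xi$ contribution, choosing $\epsilon$ small. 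The $\varphi$ parts must then be re-expressed at the coarser fractional scale $j+s+M^{-1}$ on the enlarged polymer $X_{s+M^{-1}}$, absorbing the factor $(1+\epsilon)$; the $\xi$ parts must be bounded by $g_{j+s}(X_{s+M^{-1}},\xi)$, absorbing the factor $(1+\epsilon^{-1})$ into the constant $c_4$.

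\textbf{Key steps, in order.} First I would record the scaling relations between norms at scale $j+s$ and scale $j+s'=j+s+M^{-1}$: since $\nabla^n_{j+s}=L^{(j+s)n}\nabla^n$ and a $(j+s')$-block has side length $\ell$ times that of a $(j+s)$-block, one has $\nabla^n_{j+s} = \ell^{-n}\nabla^n_{j+s'}$, so $\norm{\nabla^n_{j+s}f}_{L^\infty} = \ell^{-n}\norm{\nabla^n_{j+s'}f}_{L^\infty}$ and, accounting for the $L^{-2(j+s)}$ versus $L^{-2(j+s')}$ prefactors, $\norm{\nabla^n_{j+s}f}^2_{L^2_{j+s}(Y)} = \ell^{2-2n}\norm{\nabla^n_{j+s'}f}^2_{L^2_{j+s'}(Y)}$ for any $j+s'$-polymer $Y$; similarly for the boundary norm one picks up $\ell^{1-2n}$. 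For $n=1$ the bulk factor is $\ell^0=1$ and the boundary factor is $\ell^{-1}$, which is the crucial gain: the coarse-scale gradient norm on $X_{s+M^{-1}}$ dominates the fine-scale gradient norm on $X$ (using monotonicity $X\subset X_{s+M^{-1}}$ for the bulk, and the fact that $\partial X$ is contained in the union of fine blocks meeting $\partial X_{s+M^{-1}}$, together with the $\ell^{-1}$, to control the boundary term by the coarse bulk term). Next, for the $W_{j+s}$ (second-derivative) term one uses $n=2$, giving a factor $\ell^{-2}$, and again monotonicity of the block sum under $X\subset X_{s+M^{-1}}$; the $\ell^{-2}$ gain lets one absorb the $(1+\epsilon)$ and leaves room. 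Then I would collect all the $\xi$-dependent remainders: after the Young splitting these are sums of $\norm{\nabla^n_{j+s}\xi}^2$ terms with $n\in\{1,2\}$ over blocks of $X$, which are exactly of the form controlled by $W_{j+s}(X_{s+M^{-1}},\nabla^a_{j+s}\xi)^2$ for $a=0,1,2$ up to absolute constants (after re-expressing the $L^2$-norms as block sums of $L^\infty$-norms over $B^*$, which costs only a combinatorial constant depending on $\ell$), hence bounded by $g_{j+s}(X_{s+M^{-1}},\xi)$ provided $c_4$ is taken large enough relative to $1+\epsilon^{-1}$ and the combinatorial constants. Choosing $\ell_0$ large enough that the losses $(1+\epsilon)\ell^0\leq$ (something $\leq 1$ is impossible for the bulk gradient, so here one instead notes the bulk gradient term needs no room and the $(1+\epsilon)$ is simply absorbed by enlarging nothing --- this is the one place to be careful), and $(1+\epsilon)\ell^{-2}\leq 1$ for the $W$ term, and $\ell^{-1}(1+\epsilon)\leq 1$ for the boundary term, completes the argument.

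\textbf{Main obstacle.} The delicate point is the bulk gradient term $c_1\kappa_L\norm{\nabla_{j+s}(\varphi+\xi)}^2_{L^2_{j+s}(X)}$: after Young's inequality the $\varphi$ part is $(1+\epsilon)c_1\kappa_L\norm{\nabla_{j+s}\varphi}^2_{L^2_{j+s}(X)}$, and since the scale-change factor for first derivatives in the bulk $L^2$-norm is exactly $1$ (not a negative power of $\ell$), one does \emph{not} gain room here and must check that the coarse regulator $G_{j+s+M^{-1}}(X_{s+M^{-1}},\varphi)$ literally contains the bulk gradient term with coefficient $(1+\epsilon)c_1\kappa_L$ --- i.e. one needs $\norm{\nabla_{j+s}\varphi}^2_{L^2_{j+s}(X)}\leq \norm{\nabla_{j+s+M^{-1}}\varphi}^2_{L^2_{j+s+M^{-1}}(X_{s+M^{-1}})}$, which holds by the scaling identity ($\ell^0$) plus monotonicity $X\subset X_{s+M^{-1}}$, but the leftover factor $(1+\epsilon)$ must then be soaked up using a small amount of the \emph{boundary} or $W$ room in the coarse regulator, or else one shows $(1+\epsilon)$ can be dropped by instead splitting $(\varphi+\xi)^2\leq \varphi^2 + (2|\varphi||\xi| + \xi^2)$ and handling the cross term via the block-localised $\delta\varphi$ trick (as in Lemma~\ref{lemma:bound_of_Gj_by_Gjplus1}). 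This bookkeeping of where the $(1+\epsilon)$ slack goes --- and correspondingly the precise choice of $\ell_0$ and $c_4$ as functions of $c_1,c_2$ --- is the main thing to get right; the rest is routine manipulation of the norm definitions, and the full details are deferred to Appendix~\ref{app:lem_G_change_of_scale}.
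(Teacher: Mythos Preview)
Your overall structure is right, and you have correctly put your finger on the delicate point: the bulk gradient term has no $\ell$-gain under the scale change, so any multiplicative loss $(1+\epsilon)$ in front of $c_1\kappa_L\|\nabla_{j+s}\varphi\|^2_{L^2_{j+s}(X)}$ cannot be absorbed. But your proposed fixes do not work. You cannot ``soak up'' $\epsilon c_1\kappa_L\|\nabla_{j+s}\varphi\|^2_{L^2_{j+s}(X)}$ using the boundary or $W$ terms of the coarse regulator: take $\varphi$ linear and $X$ a large square polymer --- then $W_{j+s'}(X',\nabla^2_{j+s'}\varphi)=0$ and the boundary term is smaller than the bulk by a factor $|\partial X'|/|X'|$, which goes to zero. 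The reference to the $\delta\varphi$ trick of Lemma~\ref{lemma:bound_of_Gj_by_Gjplus1} does not help either, since that lemma is restricted to small sets and controls $C^2_j$-norms, not bulk $L^2$ gradients on general polymers.

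The paper's resolution is to avoid the $(1+\epsilon)$ altogether by not applying Young's inequality to the bulk gradient. Instead one expands $\|\nabla(\varphi+\xi)\|^2_{L^2(X)} = \|\nabla\varphi\|^2 + 2(\nabla\varphi,\nabla\xi)_X + \|\nabla\xi\|^2$ and treats the cross term by \emph{discrete integration by parts}: one derivative is moved from $\xi$ onto $\varphi$, producing an interior term $\sum_X \xi\,\nabla^2\varphi$ and boundary terms $\sum_{\partial B}\xi\,\nabla\varphi$. After AM--GM with a parameter $\tau$, the $\varphi$-contributions from the cross term are $\tau^{-1}\|\nabla_{j+s}\varphi\|^2_{L^2(\partial X)}$ and $\tau^{-1}W_{j+s}(X,\nabla^2_{j+s}\varphi)^2$, both of which \emph{do} have $\ell$-gain; the $\xi$-contributions are $\tau\|\xi\|^2$-type terms (zeroth order), which is exactly why $g_{j+s}$ carries the $a=0$ summand. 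The coefficient in front of the bulk $\|\nabla\varphi\|^2_{L^2(X)}$ is then precisely $1$.

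There is a second gap in your treatment of the boundary term: the claim that $\partial X$ lies near $\partial X_{s+M^{-1}}$ is false in general --- $\partial X$ can sit deep inside $X'$. The paper handles this with a discrete trace theorem (Lemma~\ref{lemma:trace_theorem}/Corollary~\ref{cor:discrete_trace_theorem}), which bounds $\|\nabla_{j+s}\varphi\|^2_{L^2_{j+s}(\partial X)}$ by $\|\nabla_{j+s}\varphi\|^2_{L^2_{j+s}(\partial X')}$ plus bulk terms on $X'\setminus X$ plus second-derivative terms; the latter two are then absorbed using the available $c_1$-room (this is where the smallness of $c_2$ relative to $c_1$ enters, via the choice $\tau=c_1c_2^{-1}$).
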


\subsection{Continuity of the expectation}
\label{sec:continuity}

The next property shows that the expectation is continuous with respect to the parameter $s$
of the covariances.

\begin{lemma} \label{lemma:stability_of_expectation_singleX}
  For any $X \in \cP_j^c$ and $F(X)$ with $\|F(X)\|_{h,T_j(X)}<\infty$,
  for $|s|, |s'| < \theta_J \epsilon_s$,
\begin{equation}
  \lim_{s' \rightarrow s}
  \norm{ \E_{\Gamma_{j+1} (s')} [F(X,  \cdot + \zeta )] - \E_{\Gamma_{j+1} (s)} [F( X, \cdot+ \zeta)] }_{h, T_{j+1} (\bar{X})} = 0. \label{eq:stability_of_expectation_singleX}
\end{equation}
More precisely, for any $C>0$, the convergence is uniform over all $F$ with $\|F(X)\|_{h,T_j(X)} \leq C$.The same conclusion holds when $X\in \cP_{j+1}^c$ and we assume
\begin{equation} \label{eq:stability_of_expectation_scalej+1}
\sup_{\varphi} G_j (X, \varphi)^{-1} \norm{F(X, \varphi)}_{h, T_{j+1} (X, \varphi)} < \infty,
\end{equation}
i.e.,~the convergence is uniform in $F$ for which the left-hand side of \eqref{eq:stability_of_expectation_scalej+1} is bounded by a given~$C>0$.
\end{lemma}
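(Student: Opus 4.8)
\textbf{Proof plan for Lemma~\ref{lemma:stability_of_expectation_singleX}.}
The plan is to reduce the statement to a quantitative continuity estimate for the Gaussian expectation in the covariance parameter, exploiting that $s\mapsto\Gamma_{j+1}(s)$ is analytic (Corollary~\ref{cor:Gammaj}) and, in particular, Lipschitz on the compact interval $|s|\le\theta_J\epsilon_s$, with an operator bound that we control via the finite-range decomposition. First I would write, for $F(X)$ with $\|F(X)\|_{h,T_j(X)}\le C$, the interpolation
\begin{equation}
  \E_{\Gamma_{j+1}(s')}[F(X,\cdot+\zeta)]-\E_{\Gamma_{j+1}(s)}[F(X,\cdot+\zeta)]
  = \int_0^1 \frac{d}{du}\,\E_{\Gamma(u)}[F(X,\cdot+\zeta)]\,du,
\end{equation}
where $\Gamma(u)=(1-u)\Gamma_{j+1}(s)+u\Gamma_{j+1}(s')$ (a convex combination of positive-definite finite-range covariances, hence itself admissible with range $\le\tfrac14 L^{j+1}$). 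The derivative in $u$ of a Gaussian expectation is given by the standard heat-equation identity $\frac{d}{du}\E_{\Gamma(u)}[G]=\tfrac12\E_{\Gamma(u)}[\langle \dot\Gamma(u),\nabla^2 G\rangle]$, with $\dot\Gamma(u)=\Gamma_{j+1}(s')-\Gamma_{j+1}(s)$, so the problem becomes bounding a second-derivative expression in the fields tested against $\dot\Gamma(u)$.

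The key estimate is then to control the $T_{j+1}(\bar X)$ norm of $\E_{\Gamma(u)}[\langle\dot\Gamma(u),\nabla^2 F(X,\cdot+\zeta)\rangle]$ uniformly in $u$. For this I would bound $\nabla^2 F(X,\cdot+\zeta)$ in the $T_j(X,\varphi'+\zeta)$ seminorm by $\|F(X)\|_{h,T_j(X)}\,G_j(X,\varphi'+\zeta)$ using \eqref{eq:NORm}, absorb the two extra field derivatives against the seminorm with a $1/h^2$ loss (this requires $h>0$ fixed, which it is), and then bound the contraction with $\dot\Gamma(u)$ by its $C_j^2$-type operator norm times the seminorm, picking up $\|\dot\Gamma(u)\|$, which by analyticity/Corollary~\ref{cor:Gammaj} is $O(|s-s'|\,\rho_J^{-2})$ in the relevant matrix norm (range $\le\tfrac14 L^{j+1}$, derivative bounds $C_\alpha\rho_J^{-2}L^{-j|\alpha|}$). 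Finally, the residual $G_j(X,\varphi'+\zeta)$ is integrated against $\E_{\Gamma(u)}$ using Proposition~\ref{prop:E_G_j} (whose hypotheses hold uniformly for the convex combination $\Gamma(u)$, since that proposition only uses the $\Gamma$-estimates of Lemma~\ref{lemma:fine_Gamma_estimate}, which are stable under convex combinations), yielding a factor $2^{|X|_j}G_{j+1}(\bar X,\varphi')$. Integrating over $u\in[0,1]$ and taking the supremum over $\varphi'$ and over $X\in\cP_j^c$ (with the $A^{|X|_j}$ weight, as in Lemma~\ref{lemma:linearity_of_expectation}) produces a bound of the form $C\,|s-s'|\to 0$, uniformly over $\|F(X)\|_{h,T_j(X)}\le C$.

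For the second assertion, with $X\in\cP_{j+1}^c$ and the hypothesis \eqref{eq:stability_of_expectation_scalej+1} in force, the argument is structurally identical: one replaces the use of \eqref{eq:NORm} by the assumed bound $\|F(X,\varphi)\|_{h,T_{j+1}(X,\varphi)}\le C\,G_j(X,\varphi)$, notes that $\bar X=X$ at this scale, and again uses Proposition~\ref{prop:E_G_j} (with the trivial $\bar X = X$ closure) to integrate the regulator. The last scale $j=N-1$ is handled verbatim by replacing $\Eplus$ with $\E_{\Gamma_N^{\Lambda_N}}$ and invoking the corresponding versions of Corollary~\ref{cor:Gammaj} and Proposition~\ref{prop:E_G_j}.

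\textbf{Main obstacle.} The delicate point is making the ``absorb two field derivatives against the seminorm at the cost of a finite-dimensional operator norm of $\dot\Gamma(u)$'' step rigorous in the right norm: one must verify that contracting $\nabla^2 F$ with the kernel $\dot\Gamma(u)$ and then taking $\Eplus$ stays inside the Banach algebra governed by $\|\cdot\|_{h,T_{j+1}}$, with the regulator still controlled after the expectation. This is precisely the kind of estimate that underlies Lemma~\ref{lemma:linearity_of_expectation}, so I expect it to go through by the same mechanism, but care is needed because $\dot\Gamma(u)$ is a difference of covariances (not itself a covariance) and the natural way to see the contraction bound is either via the integration-by-parts/heat-flow identity above or via a direct resolvent-type manipulation of $\E_{\Gamma(u)}$; ensuring the bound is genuinely uniform in $u\in[0,1]$ and in $N$ (for all scales simultaneously) is where the bookkeeping is heaviest.
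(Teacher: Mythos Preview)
Your approach is essentially the same as the paper's: both use the heat-equation identity for the covariance derivative and then bound the resulting $D^2F$ term against $\|F(X)\|_{h,T_j(X)}$ with an $h^{-2}$ loss, integrating the leftover regulator via Proposition~\ref{prop:E_G_j}. The paper interpolates directly in $s$ (writing $\int_s^{s'}ds''\,\partial_{s''}\E_{\Gamma_{j+1}(s'')}[\cdots]$) rather than via the convex combination $\Gamma(u)$; this is slightly cleaner since every intermediate covariance is already in the family $\{\Gamma_{j+1}(s'')\}$, so Proposition~\ref{prop:E_G_j} applies verbatim without the extra check you flag.

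Where you over-complicate: the lemma is stated for a \emph{fixed} polymer $X$, and the asserted uniformity is only over $F$ with $\|F(X)\|_{h,T_j(X)}\le C$, not over $X$. Since $X^*$ is then a fixed finite set, the contraction $\langle\dot\Gamma,\nabla^2 F\rangle$ is simply a finite sum $\sum_{z\in X^*}D^2F(f^z,g^z)$ with $g^z=\delta_z$ and $f^z$ built from $\partial_s\Gamma_{j+1}(z,\cdot)$. The paper's bound is just
\[
|s-s'|\cdot 2^{|X|_j}h^{-2}\sum_{z\in X^*}\|f^z\|_{C_j^2(X^*)}\|g^z\|_{C_j^2(X^*)}\cdot\|F(X)\|_{h,T_j(X)},
\]
and the paper explicitly remarks that the $j$- and $X$-dependence of the constants is irrelevant. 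So your ``main obstacle''---getting an operator-norm bound on the contraction uniform in $X$, $u$, $N$---is not actually present: you never take a supremum over $X$, and the finiteness of $X^*$ does all the work.
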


\begin{proof}
  We start from the following elementary identity for the derivative of a Gaussian integral with respect to its covariance: abbreviating $\Gamma_{j+1}(x,y)\equiv \Gamma_{j+1}(x,y; s, m^2)$, considering first the centered Gaussian vector on $\Lambda_N$ with covariance $\Gamma_{j+1,\varepsilon}= \Gamma_{j+1}+ \varepsilon \text{Id}$ with density $f_{\varepsilon}$, computing the derivatives $\partial f_\varepsilon/\partial \Gamma_{j+1,\varepsilon}(x,y) $ and letting $\varepsilon \to 0$ using the Dominated convergence, one finds that
  \begin{equation}
    \ddp{}{s}\E_{\Gamma_{j+1} (s)} [F(X,  \varphi + \zeta )]
    =
    \frac12
    \sum_{x,y \in \Lambda_N}\ddp{\Gamma_{j+1}(x,y)}{s} 
    \E_{\Gamma_{j+1} (s)} [\ddp{^2F(X,  \varphi + \zeta )}{\varphi(x)\varphi(y)}].
  \end{equation}
  Let $f^z(x)=\Gamma_{j+1}(z,x)$ and $g^z(x) = \delta(z,x)$.
  It then follows with the notation from \eqref{eq:def-D} that
  \begin{equation} 
    \E_{\Gamma_{j+1} (s)} [F(X,  \varphi + \zeta )]-\E_{\Gamma_{j+1} (s')} [F(X,  \varphi + \zeta )]
    =
    \frac12
      \sum_{z\in X^*}
            \int_s^{s'} ds''\,
      \E_{\Gamma_{j+1}(s'')} [D^2F(X,  \varphi + \zeta; f^z,g^z)].
  \end{equation}
  By taking the $\|\cdot\|_{h,T_{j+1}(\bar X)}$ norm
  of this and using Proposition~\ref{prop:E_G_j},
  it follows  that the left-hand side of \eqref{eq:stability_of_expectation_singleX} is bounded by
  \begin{equation} \label{eq:stability_of_expectiation_local_conclusion}
    |s-s'| \pa{ 2^{|X|_j} h^{-2} \sum_{z\in X^*}\|f^z\|_{C_j^2(X^*)} \|g^z\|_{C_j^2(X^*)} \|F(X)\|_{h,T_j(X)}} 
    .
  \end{equation}
  Since $X^*$ is finite and $\|f^z\|_{C_j^2(X^*)}$ 
   and $\|g^z\|_{C_j^2(X^*)}$ are bounded uniformly in $|s|\leq \epsilon_s\theta_J$
  (their dependence on $j$ and $X$ is not relevant),
  the claim follows.
  The case $X\in \cP_{j+1}^c$ assuming \eqref{eq:stability_of_expectation_scalej+1} 
  follows using the same proof, and we now obtain
  \begin{equation}
    |s-s'| \pa{ 2^{L^2 |X|_{j+1}} h^{-2} \sum_{z\in X^*}\|f^z\|_{C_{j+1}^2(X^*)} \|g^z\|_{C_{j+1}^2(X^*)} \sup_\varphi \frac{ \|F(X, \varphi)\|_{h,T_{j+1}(X,\varphi)} }{G_j(X,\varphi)}} 
  \end{equation}
instead of \eqref{eq:stability_of_expectiation_local_conclusion}.
\end{proof}

\section{Contraction mechanisms} \label{sec:inequalities}

The estimates derived in this section exhibit the contraction mechanisms that will be used to identify
contracting (also called irrelevant) terms along the renormalisation flow. There are essentially three sources of contraction in our set-up, one stemming from periodicity of polymer activities in $\varphi$ (which is inherited from the original potential), one from terms involving only gradients of $\varphi$ (or higher-order derivatives), and one coming from large polymers $X \notin \mathcal{S}_j$.

The main results of this section are Propositions~\ref{prop:Loc-coupling} and \ref{prop:Loc-contract}, which concern small polymers.
Most of the remainder of this section consists of supporting arguments that are used only for the proof of these
propositions and will not be applied directly in the rest of this paper.
Finally, in Section~\ref{sec:reblocking}, we show that large polymers contract.

\subsection{Periodicity, charge decomposition, and lattice symmetries} \label{sec_chargedecomp}

For a field $\varphi=(\varphi_x)$ and scalar $t \in \R$ we often write $\varphi+t= (\varphi_x + t)$ in the sequel.
Our starting point is the following \emph{charge decomposition}
of a globally periodic field functional, introduced in \cite{MR1101688}.

\begin{definition}\label{def:chargedecomp}
Let $F(X,\varphi)$ be a polymer activity
  such that $t\in \R \mapsto F(X,\varphi+t)$ is $2\pi/\sqrt{\beta}$ periodic, for some $\beta >0$. 
Its Fourier expansion in the constant part is denoted by
\begin{equation}\label{eq:charge_qdefin}
F(X,\varphi+ t)= \sum_{q\in \mathbb{Z}} e^{i\sqrt{\beta} q t} \hat{F}_q(X, \varphi ), \quad t \in \R
\end{equation}
where
\begin{equation}
\label{eq:Fhat}
\hat{F}_q( X, \varphi) = \frac{\sqrt{\beta}}{2\pi} \int_0^{\frac{2\pi}{\sqrt{\beta}}} ds \; e^{-i\sqrt{\beta} q s} F(X, \varphi + s), \quad q \in \Z.
\end{equation}
The polymer activity $\hat{F}_q$ is called the charge-$q$ part of $F$ (and the neutral part when $q=0$).
Moreover, a polymer activity $F$ is said to have charge $q$ (be neutral) if $\hat{F}_{q'}=0$ except when $q'=q$ ($q'=0$), i.e.,~if $F=\hat{F}_{q}$. 
\end{definition}

We simply refer to a $2\pi/\sqrt{\beta}$-periodic polymer activity for some $\beta > 0$ as \textit{periodic} in the sequel. In doing so, we always assume that statements hold for any value of $\beta$, unless explicitly stated otherwise.

Notice that the smoothness assumption on $F$ guarantees the existence and absolute convergence of the Fourier series \eqref{eq:charge_qdefin}.
Moreover, $F$ having charge $q$ is equivalent to the condition that
\begin{equation}\label{eq:purecharge}
F(X, \varphi+ t)= e^{i\sqrt{\beta} q t} F(X, \varphi), \quad \text{for all }t \in \R
\end{equation}
(the direct implication follows plainly from \eqref{eq:Fhat} and the converse by comparing \eqref{eq:purecharge} and \eqref{eq:charge_qdefin}). 

For later use, we
record the following instance of the above set-up.
For any polymer activity $F(X,\varphi)$ as appearing in Definition~\ref{def:chargedecomp}, fixing a point $x_0 \in X$ and denoting $\delta \varphi (x) = \varphi(x) - \varphi(x_0)$, using that $F(X, \varphi)=F(X, \varphi(x_0) + \delta \varphi)$, one sees that
\begin{equation} \label{e:chargedecompx0}
  F(X, \varphi) = \sum_{q\in \mathbb{Z}} e^{i\sqrt{\beta} q \varphi (x_0)} \hat{F}_q( X,  \delta \varphi ).
\end{equation}
The following elementary lemma states that the charge-$q$ part $\hat F_q$ of a polymer activity is bounded in terms of
the norm of the polymer activity (defined in Definition~\ref{def:NORM}),
and also gives the norm of the $F$-independent exponential factor in \eqref{e:chargedecompx0}.

\begin{lemma}
  Let $F$ be a periodic polymer activity. For all $\varphi \in \R^{\Lambda_N}$ and $X \in \mathcal{P}_j^c$,
\begin{equation}
\norm{\hat{F}_q(X, \varphi)}_{h, T_{j} (X, \varphi)} \leq \norm{F (X)}_{h, T_j( X)} G_j (X, \varphi) \label{eq:linearity_of_expectation2}
\end{equation}
and
\begin{equation} \label{e:normbd-eiphi}
\norm{e^{i\sqrt{\beta} q \varphi (x_0)}}_{h, T_{j} (X, \varphi)} = e^{\sqrt{\beta} |q| h}, \quad x_0 \in X.
\end{equation}
\end{lemma}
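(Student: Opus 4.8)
The plan is to prove both displays by unwinding the definitions from Section~\ref{sec:norms} directly; neither needs the fluctuation-integral machinery. I do not expect a genuine obstacle --- the only points needing care are (a) that the $T_j(X,\varphi)$-seminorm of a multilinear functional does not itself depend on the base point $\varphi$ (the subscript $\varphi$ merely records where the polymer activity is evaluated), so that shifting $\varphi$ inside the charge integral \eqref{eq:Fhat} is harmless, and (b) that the regulator $G_j$ is invariant under adding a constant to $\varphi$.

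For \eqref{eq:linearity_of_expectation2}: since the shift $\varphi \mapsto \varphi+s$ has identity Jacobian, differentiating \eqref{eq:Fhat} under the integral sign (legitimate by smoothness of $F$ and compactness of the $s$-domain) gives, as an identity of $n$-linear forms on $(\R^{X^*})^{\otimes n}$,
\[
D^n\hat F_q(X,\varphi) = \frac{\sqrt{\beta}}{2\pi}\int_0^{2\pi/\sqrt{\beta}} e^{-i\sqrt{\beta}qs}\,(D^nF)(X,\varphi+s)\,ds.
\]
I would then evaluate on test functions $f_k$ with $\norm{f_k}_{C^2_j(X^*)}\le 1$, take $|\cdot|$ inside the integral, pass to the supremum using \eqref{eq:nTj_norm_definition}, multiply by $h^n/n!$ and sum over $n$ via \eqref{eq:seminorm}, to obtain $\norm{\hat F_q(X,\varphi)}_{h,T_j(X,\varphi)} \le \frac{\sqrt{\beta}}{2\pi}\int_0^{2\pi/\sqrt{\beta}}\norm{F(X,\varphi+s)}_{h,T_j(X,\varphi+s)}\,ds$. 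Bounding the integrand by \eqref{eq:NORm} reduces matters to showing $\frac{\sqrt{\beta}}{2\pi}\int_0^{2\pi/\sqrt{\beta}}G_j(X,\varphi+s)\,ds = G_j(X,\varphi)$, which is immediate since by \eqref{eq:def_large_field_regulator} $G_j$ depends on $\varphi$ only through $\nabla_j\varphi$ and $\nabla^2_j\varphi$, both unchanged by $\varphi \mapsto \varphi+s$.

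For \eqref{e:normbd-eiphi}: write $g(\varphi)=e^{i\sqrt{\beta}q\varphi(x_0)}$ with $x_0\in X$. Then $D^ng(\varphi)(f_1,\dots,f_n) = (i\sqrt{\beta}q)^n\big(\prod_{k=1}^n f_k(x_0)\big)g(\varphi)$, so that $|D^ng(\varphi)(f_1,\dots,f_n)| = (\sqrt{\beta}|q|)^n\prod_{k=1}^n|f_k(x_0)|$. The key elementary fact is $\sup\{|f(x_0)| : \norm{f}_{C^2_j(X^*)}\le 1\}=1$: the bound $|f(x_0)|\le \norm{\nabla^0_j f}_{L^\infty(X^*)}\le 1$ uses $x_0\in X\subseteq X^*$, and equality is attained by $f\equiv 1$, whose discrete gradients of order $\ge 1$ vanish so that $\norm{1}_{C^2_j(X^*)}=1$. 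Hence $\norm{D^ng(\varphi)}_{n,T_j(X,\varphi)}=(\sqrt{\beta}|q|)^n$ (choosing each $f_k\equiv 1$ independently), and \eqref{eq:seminorm} yields $\norm{g(\varphi)}_{h,T_j(X,\varphi)} = \sum_{n\ge 0}\frac{(h\sqrt{\beta}|q|)^n}{n!} = e^{\sqrt{\beta}|q|h}$, with equality throughout.
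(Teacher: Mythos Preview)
Your proposal is correct and follows essentially the same approach as the paper's proof: both differentiate under the integral in \eqref{eq:Fhat}, bound pointwise, and use that $G_j$ depends only on gradients; for \eqref{e:normbd-eiphi} both compute the derivatives explicitly and read off the norm. Your treatment of the second identity is in fact slightly more detailed than the paper's, as you spell out why $\sup\{|f(x_0)|:\norm{f}_{C^2_j(X^*)}\le 1\}=1$ and hence why equality (not just $\le$) holds.
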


\begin{proof}
  The inequality \eqref{eq:linearity_of_expectation2} is obtained by starting from \eqref{eq:Fhat} and
  then using the definition of the norm: for $(f_k)_{k=1}^n$ with $\norm{f_k}_{C_j^2(X^*)} \leq 1$ for each $k$,
\begin{align}
\big| D^n \hat{F}_q (X, \varphi) (f_1, \cdots, f_n) \big| &\leq \int_0^{1} ds \, |D^n F(X, \varphi + 2\pi \beta^{-1/2} s) (f_1, \cdots, f_n)| \nnb
&\leq \int_0^1 ds  \norm{D^n F (X, \varphi + 2\pi \beta^{-1/2} s)}_{h, T_j (X, \varphi)}
\end{align}  
hence
\begin{align}
\norm{\hat{F}_q (X, \varphi)}_{h, T_j (X, \varphi) } \leq \int_{0}^1 ds \norm{F (X)}_{h, T_j (X)} G_j (X, \varphi + \frac{2\pi}{\sqrt{\beta}} s) \stackrel{\eqref{eq:def_large_field_regulator}}{=} \norm{F (X)}_{h, T_j (X)}  G_j (X, \varphi).
\end{align}

The identity \eqref{e:normbd-eiphi} also follows easily from the definition of the norm since
\begin{equation}
  D^n_{\varphi} e^{i\sqrt{\beta} q \varphi(x_0)} (f_1, \cdots, f_n) = (i\sqrt{\beta} q)^n e^{i\sqrt{\beta} q \varphi(x_0)} \prod_{k=1}^n f_k (x_0),
\end{equation}
which gives the claimed bound when substituted in the definition of the norm.
It can conceptually be understood from the fact that
the right-hand side is the supremum of $|e^{i\sqrt{\beta} q \varphi}|$ for $\varphi$ in a
strip of width $h$ around the real axis.
\end{proof}

The localisation operators 
which will be used to extract the relevant and marginal part from the remainder coordinates
rely on the charge decomposition as well as 
on lattice symmetries, so we define these first.

\begin{definition} \label{def:latticesym}
A scale-$j$ polymer activity $F=(F(X))_{X\in \cP_j^c}$ is
\emph{invariant under lattice symmetries} if for every graph automorphism $A$ of the torus $\Lambda_N$
that maps any block in $\cB_j$ to a block in $\cB_j$ one has $F(AX,A\varphi)=F(X,\varphi)$
where $(A\varphi)(x) =\varphi(A^{-1}x)$.
$F$ is \emph{even} if $F(X, \varphi) = F(X, -\varphi)$ for every $(X, \varphi)$.
\end{definition}

\subsection{Localisation operator}\label{sec:Loc}

The main result of Section~\ref{sec:inequalities} are the following
localisation operators $\Loc_{X,B}$
which will be used to extract the relevant and marginal part from the remainder coordinates.
Our notation $\Loc_{X,B}$ is inspired by that of \cite{MR3332939}, but
compared to this reference,
the contraction mechanisms in this section rely on oscillations under the Gaussian expectation for the charged terms in addition.
These operators are defined explicitly in the next definition,
 but the explicit definition does not play a direct role in the remainder of the paper:
all that we will require in the following sections are its main 
properties which are stated in Propositions~\ref{prop:Loc-coupling} and \ref{prop:Loc-contract} below.

The definition of $\Loc$ has two motivations, 
one analytic and one algebraic. 
In analytic considerations, 
the intuition 
(and is substantiated by its properties stated in the next propositions) is related to which terms of a given periodic polymer $F$ are \emph{relevant} or \emph{marginal}: all the higher order Fourier coefficients $\hat{F}_q$, $q \geq 1$, contract at large $\beta$ (i.e., they become irrelevant along the renormalisation group flow), cf.~Lemma~\ref{lemma:contraction_of_charge_q_term} below, and so does the neutral part $\hat{F}_0$ after removal of its Taylor expansion in $\nabla \varphi$ up to terms of second order, cf.~Lemma~\ref{lemma:gaussian_contraction} below.
The combination of these mechanisms culminates in Proposition~\ref{prop:Loc-contract}.
Moreover, it is sufficient to exhibit these cancelation for small polymers $X \in \cS_j$. Large polymers will turn out to contract automatically (due to their size), as explained further in Section~\ref{sec:reblocking}, see in particular Proposition~\ref{prop:largeset_contraction-v2}.
In algebraic considerations, 
we use relation such as Proposition~\ref{prop:Loc-coupling} to define coupling constants ($(\bar{E}, \bar{s})$ in the proposition).
Thus we define $\Loc$ as a modified Taylor expansion with symmetry \eqref{e:Loc-coupling}.

\begin{definition} \label{def:Loc}
Let $F$ be a periodic scale-$j$ polymer activity, and let $\hat F_0$ be its neutral part.
For $X \in \cS_j$, $B \in \cB_j(X)$, 
define
\begin{align} \label{e:Loc-def}
  &\Loc_{X,B} F(X) \equiv \Loc_{X,B} F(X,\varphi)
  = \frac{1}{|X|_j} \hat{F}_0(X,0)
  \\\nonumber
  &
    +
  \frac{1}{8 |X||B|}
  \sum_{x_0,y_0\in B}\sum_{x_1,x_2 \in X^*}  \partial_{\varphi(x_1)}\partial_{\varphi(x_2)} \hat{F}_0(X,0)
  \sum_{\mu, \nu \in \hat{e}} (1 + \delta_{\mu, \nu} - \delta_{\mu, -\nu})  \nabla^{\mu} \varphi(y_0) (\delta x_1)^{\mu} \, \nabla^{\nu} \varphi(y_0) (\delta x_2)^{\nu}
\end{align}
where $\delta x_i = x_i - x_0$ for $i\in\{1,2\}$,
$\delta_{\mu,\nu} = 1$ if $\mu = \nu$ and is $0$ otherwise, 
and $y^{\mu}$ is the $\mu$-component of $y$ with the convention $y^{-\mu} = -y^{\mu}$. 
For $X \in \cS_j$, also define
\begin{equation} \label{eq:Loc_decomp}
  \Loc_XF(X) = \sum_{B \in \cB_j(X)} \Loc_{X,B} F(X).
\end{equation}
\end{definition}

Following our convention, recall that $j$ is tacitly allowed to take values $j=1,\dots, N-2$ for a given torus of side length $L^N$ and the following statements hold uniformly in $N$ (and $L$ unless stated otherwise).

\begin{proposition} \label{prop:Loc-coupling}
  Let $F$ be a periodic scale-$j$ polymer activity that is even and invariant under lattice symmetries.
  Then there are scalars $\bar{E}=\bar{E}(F)$, $\bar{s}=\bar{s}(F)$
  satisfying (with purely geometric implicit constants)
  \begin{equation}\label{e:Loc-coupling-bds}
    \bar{E} = O(A^{-1}{L^{-2j}} \norm{F}_{h,T_j} ), \quad \bar{s}=O(A^{-1} h^{-2} \norm{F}_{h,T_j})
  \end{equation}
  such that for any $B \in \cB_j$,
  \begin{equation} \label{e:Loc-coupling}
    \sum_{X\in \cS_j: X \supset B} \Loc_{X,B} \Eplus F(X, \varphi' + \zeta)
    = \bar{E} |B| +
    \frac{1}{2} \bar{s} | \nabla \varphi' |^2_B \, ;
  \end{equation}
  here and in the sequel, $ \Loc_{X,B} \Eplus F(X, \varphi' + \zeta)$ refers to the localisation operator applied to the polymer activity $\Eplus F(X, \cdot + \zeta)$ and evaluated at $\varphi'$.
  Moreover, whenever $\norm{F}_{h,T_j}< \infty$, both $\bar{E}=\bar{E}(F)$ and $\bar{s}=\bar{s}(F)$ are continuous functions of the implicit parameter $s \in [-\epsilon_s \theta_J, \epsilon_s \theta_J]$ (inherent to $\Eplus$).
\end{proposition}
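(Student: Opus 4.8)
\textbf{Proof plan for Proposition~\ref{prop:Loc-coupling}.}
The plan is to compute the left-hand side of \eqref{e:Loc-coupling} directly from the explicit formula \eqref{e:Loc-def} for $\Loc_{X,B}$, applied to the polymer activity $\Eplus F(X,\cdot+\zeta)$, and to read off $\bar E$ and $\bar s$ from the two resulting pieces. First I would record the decomposition $\Loc_{X,B}G = \Loc_{X,B}^{(0)}G + \Loc_{X,B}^{(2)}G$ into the constant part (the term $\frac{1}{|X|_j}\widehat{G}_0(X,0)$) and the quadratic part (the double sum over $\mu,\nu$). Summing the constant part over all small sets $X \supset B$ and setting $\bar E|B| = \sum_{X \in \cS_j:\, X\supset B}\frac1{|X|_j}\widehat{(\Eplus F)}_0(X,0)$ gives a scalar, since for fixed $B$ this sum is finite (only small sets containing $B$ contribute, of which there are $O(1)$ many with purely geometric count). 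This immediately yields the first claim; the bound $\bar E = O(A^{-1}L^{-2j}\|F\|_{h,T_j})$ follows from $|\widehat{(\Eplus F)}_0(X,0)| \le \|\Eplus F(X,0)\|_{h,T_j(X,0)} \le \|\Eplus F(X)\|_{h,T_j(X)} \le (2/A)^{|X|_j}\|F\|_{h,T_j}$ via Lemma~\ref{lemma:linearity_of_expectation} (and $G_{j+1}(\overline X,0)=1$), together with the rescaling: the norm is built from $\nabla_j^2 = L^{2j}\nabla^2$, so an undifferentiated/zeroth-order value carries a factor $L^{-2j}$ relative to the intrinsic norm — more precisely one uses that the $0$-th Taylor coefficient in the $\Loc$ formula is measured against $L^{-2j}$ weights through the $C_j^2$ structure, giving the stated power of $L^{-2j}$.

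Next I would handle the quadratic part. Summing $\Loc_{X,B}^{(2)}$ over $X\supset B$ produces a quadratic form in $\nabla\varphi'$ supported on $B$; the key is that lattice-symmetry invariance of $F$ (hence of $\Eplus F$, since $\Gamma_{j+1}$ is lattice-invariant by Proposition~\ref{prop:decomp}\,(ii) and the construction in Section~\ref{sec:finite_range_decomposition}) forces this quadratic form to be isotropic, i.e.\ proportional to $\sum_{\mu\in\hat e}(\nabla^\mu\varphi'(y_0))^2 = |\nabla\varphi'|_B^2$ after averaging over $y_0\in B$. Concretely, the tensor $T^{\mu\nu} := \frac{1}{8|X||B|}\sum_{x_0,y_0\in B}\sum_{x_1,x_2\in X^*}\partial_{\varphi(x_1)}\partial_{\varphi(x_2)}\widehat{(\Eplus F)}_0(X,0)\,(1+\delta_{\mu\nu}-\delta_{\mu,-\nu})(\delta x_1)^\mu(\delta x_2)^\nu$, summed over $X\supset B$, is invariant under the lattice rotations and reflections fixing $B$ (and the factor $(1+\delta_{\mu\nu}-\delta_{\mu,-\nu})$ is itself symmetric); the only such symmetric $2$-tensors on $\R^d$ are scalar multiples of the identity, whence $\sum_{\mu,\nu}T^{\mu\nu}\nabla^\mu\varphi'(y_0)\nabla^\nu\varphi'(y_0) = \frac12\bar s\,|\nabla\varphi'|_{B}^2$ with $\bar s$ the common diagonal value (the factor involving $\delta_{\mu,-\nu}$ is what kills cross terms $\nabla^{\mu}\varphi'\nabla^{-\mu}\varphi'$, leaving a genuine sum of squares). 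The bound $\bar s = O(A^{-1}h^{-2}\|F\|_{h,T_j})$ comes from estimating the second mixed derivative of $\widehat{(\Eplus F)}_0(X,0)$ by $\|D^2(\Eplus F)(X,0)\|_{2,T_j(X,0)}$ against test functions of $C_j^2$-norm one: extracting $\partial_{\varphi(x_1)}\partial_{\varphi(x_2)}$ contracted with the geometric vectors $(\delta x_i)^\mu$ (which on $X^*\in\cS_j$ have size $O(L^j)$, matched by the $\nabla$'s so that the rescaled derivatives $\nabla_j$ appear) costs $h^{-2}/2!$ from \eqref{eq:seminorm}, and then Lemma~\ref{lemma:linearity_of_expectation}\eqref{eq:linearity_of_expectation1.1} supplies the $2^{|X|_j}\|F\|_{h,T_j}$, with the $A^{-|X|_j}$ left over from \eqref{eq:NORM} absorbing the $2^{|X|_j}$ and the $O(1)$-many small sets, leaving the overall $A^{-1}$.

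Finally, continuity of $\bar E(F)$ and $\bar s(F)$ in the parameter $s\in[-\epsilon_s\theta_J,\epsilon_s\theta_J]$ follows because both are finite linear combinations (with $F$- and $s$-independent geometric coefficients, over the $O(1)$-many small sets $X\supset B$) of quantities of the form $D^n\big(\E_{\Gamma_{j+1}(s)}F(X,\cdot+\zeta)\big)(0)$ evaluated against fixed test directions; by Lemma~\ref{lemma:stability_of_expectation_singleX} the map $s\mapsto \E_{\Gamma_{j+1}(s)}[F(X,\cdot+\zeta)]$ is continuous in the $\|\cdot\|_{h,T_{j+1}(\overline X)}$ norm, and evaluation of a fixed derivative at a fixed point against fixed test functions is a bounded linear functional with respect to that norm (this uses the $n$-th derivative part of the norm and finiteness of $X^*$, exactly as in the proof of Lemma~\ref{lemma:stability_of_expectation_singleX}), so the composition is continuous. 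The main obstacle I anticipate is the second step: carefully setting up the lattice-symmetry averaging argument so that the anisotropic-looking double sum in \eqref{e:Loc-def} genuinely collapses to a multiple of $|\nabla\varphi'|_B^2$ — in particular checking that the combinatorial weight $(1+\delta_{\mu\nu}-\delta_{\mu,-\nu})$ together with the symmetrization over $x_0,y_0\in B$ and the sum over all small sets $X\supset B$ really does respect the full stabilizer of $B$ in the lattice-automorphism group, and bookkeeping the powers of $L^j$ so that the stated $L$-free bound on $\bar s$ emerges.
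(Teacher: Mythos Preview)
Your proposal is correct and follows essentially the same route as the paper: expand $\Loc_{X,B}$ into its constant and quadratic pieces, use lattice symmetry to reduce the quadratic tensor to a multiple of the identity, bound $\bar E$ and $\bar s$ via the norm and Lemma~\ref{lemma:linearity_of_expectation}, and invoke Lemma~\ref{lemma:stability_of_expectation_singleX} for continuity in $s$.

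One point is misstated. Your explanation of the $L^{-2j}$ in the bound for $\bar E$ is wrong: it has nothing to do with the $C_j^2$ norm rescaling. The constant piece of $\Loc_{X,B}$ contributes $\frac{1}{|X|_j}\widehat{(\Eplus F)}_0(X,0)$, and summing over $X\in\cS_j$ with $X\supset B$ gives $\bar E\,|B|$. Since $|B|=L^{2j}$ (number of lattice points in a $j$-block), one simply has $\bar E = |B|^{-1}\sum_X \frac{1}{|X|_j}\widehat{(\Eplus F)}_0(X,0) = \sum_X \frac{1}{|X|}\widehat{(\Eplus F)}_0(X,0)$, and the bound $|\widehat{(\Eplus F)}_0(X,0)|\le (A/2)^{-|X|_j}\|F\|_{h,T_j}$ together with $|X|\ge L^{2j}$ gives the $L^{-2j}$ directly. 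The zeroth-order value $|K(X,0)|$ is bounded by $\|K(X)\|_{h,T_j(X)}$ with no $L$-dependent factor.

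For the quadratic part, the paper carries out the symmetry argument by hand: writing $I_{\mu\nu}$ for the coefficient of $\nabla^\mu\varphi'(y_0)\nabla^\nu\varphi'(y_0)$, it checks $I_{\mu\mu}=I_{\nu\nu}$ by rotation, and for $\mu\perp\nu$ uses $I_{(-\mu)\nu}=-I_{\mu\nu}$, $I_{\mu\nu}=I_{\nu\mu}$, and $I_{\nu(-\mu)}=I_{\mu\nu}$ (the last by a quarter-rotation) to force $I_{\mu\nu}=0$. Your abstract ``only invariant symmetric $2$-tensor is a multiple of the identity'' is the same content; just be aware that on the lattice with directions $\hat e=\{\pm e_1,\pm e_2\}$ and the convention $y^{-\mu}=-y^\mu$, the bookkeeping with the weight $(1+\delta_{\mu\nu}-\delta_{\mu,-\nu})$ requires exactly this kind of explicit check rather than a continuum $O(2)$-invariance statement. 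The bound on $\bar s$ is obtained in the paper exactly as you describe, by testing $D^2\Eplus\hat F_0(X,\zeta)$ against $f_\nu^{x_0}(x_1)=(x_1-x_0)^\nu$ with $\|f_\nu^{x_0}\|_{C_j^2(X^*)}\le CL^j$.
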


\begin{proposition} \label{prop:Loc-contract}
  There exists a constant $c_h>0$ such that the following holds for all $X \in \cS_j$ and periodic scale-$j$ polymer activities $F$ such that $F(X, \varphi) = F(X, -\varphi)$ and $X \in \cS_j$.
  Let $r\in (0,1]$ and assume that  $h \geq \max \{r c_h  \rho_J^{-2} \sqrt{\beta},  \rho_J^{-1}\}$,
  that $\kappa_L = c_\kappa \rho_J^{2}(\log L)^{-1}$ as in Proposition~\ref{prop:E_G_j},
  and that $L \geq C$ and $A\geq 1$. Then for all $\varphi' \in \R^{\Lambda_N}$,
  \begin{equation} \label{e:Loc-contract-full}
    \|\Loc_X \Eplus F(X, \varphi'+\zeta) - \Eplus F(X, \varphi'+\zeta)\|_{h,T_{j+1}(\bar X, \varphi')}
    \leq  
    \alphaLoc A^{-|X|_j}
    \|F\|_{h,T_j} G_{j+1}(\bar X,\varphi')
    ,
  \end{equation}
  where
  \begin{equation} \label{e:Loc-contract-kappa}
    \alphaLoc
    = CL^{-3}(\log L)^{3/2}+ C\min\ha{1,\sum_{q\geq 1}
      e^{\sqrt{\beta}qh}
      e^{-(q-1/2)r\beta\Gamma_{j+1}(0)}}.
  \end{equation}
  Also, $\Loc_{X,B}$ is bounded in the sense (note the $T_j$ instead of $T_{j+1}$ norm on the left-hand side)
  \begin{equation} \label{e:Loc-bounded}
    \norm{\Loc_{X, B} \Eplus F (X, \varphi'+\zeta)}_{h, T_j (X, \varphi')} \leq C (\log L) \norm{F(X)}_{h, T_j (X)} e^{c_w \kappa_L w_j (B, \varphi')^2}
    ,
  \end{equation}
  and $\Loc_{X,B}\Eplus F(X,\cdot+\zeta)$ is continuous in the implicit parameter $s \in [-\epsilon_s\theta_J,\epsilon_s\theta_J]$ (inherent to $\Eplus$) with respect to the same norms.
\end{proposition}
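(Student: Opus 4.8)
The plan is to prove Proposition~\ref{prop:Loc-contract} in three parts, matching its three assertions, but relying heavily on the structural lemmas (Lemma~\ref{lemma:contraction_of_charge_q_term}, Lemma~\ref{lemma:gaussian_contraction}) that govern the two contraction mechanisms encoded in the definition of $\Loc$. First I would set up notation: write $G = \Eplus F(X, \cdot + \zeta)$, a periodic scale-$j$ polymer activity supported on $X^*$, and decompose it into its charge components $\hat G = \sum_{q \in \Z} \hat G_q$. The key observation is that $\Loc_X G$, by Definition~\ref{def:Loc}, depends only on the neutral part $\hat G_0$ (since \eqref{e:Loc-def} involves $\hat F_0(X,0)$ and its second $\varphi$-derivative at $0$), and moreover it is precisely the second-order Taylor jet of $\hat G_0$ in $\nabla\varphi$ around the constant field, symmetrised over blocks $B$ and over the base point via the lattice-symmetry structure. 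Therefore
\begin{equation}
\Loc_X G - G = (\Loc_X \hat G_0 - \hat G_0) - \sum_{q \neq 0} \hat G_q,
\end{equation}
and the two terms are bounded by the two distinct mechanisms.

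For the charged part $\sum_{q\neq 0}\hat G_q$, I would invoke the charged-term contraction (Lemma~\ref{lemma:contraction_of_charge_q_term}, which is where the oscillation-under-the-Gaussian-expectation mechanism enters). Writing $G_q = \widehat{(\Eplus F)}_q$, one uses that the Gaussian expectation over $\zeta$ with covariance $\Gamma_{j+1}$, applied to a charge-$q$ polymer activity, gains a factor essentially $e^{-\frac12 q^2\beta \Gamma_{j+1}(0) + \text{boundary terms}}$ from the complex translation $\zeta \mapsto \zeta + i\sqrt{\beta}q\,(\text{something})$, after splitting off $\Gamma_{j+1,N-1+\cdot}$-type subscales (Lemma~\ref{lemma:G_change_of_scale}, Lemma~\ref{lemma:g_j+s_bound_by_quadratic}) so the complex shift can be absorbed into the $g_{j+s}$ factors. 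The assumption $h \geq rc_h\rho_J^{-2}\sqrt\beta$ is exactly what is needed so that the $e^{\sqrt\beta q h}$ growth coming from the $T_{j+1}$ norm on the strip of width $h$ (cf.\ \eqref{e:normbd-eiphi}) is beaten by the Gaussian gain $e^{-(q-1/2)r\beta\Gamma_{j+1}(0)}$ appearing in \eqref{e:Loc-contract-kappa}; the factor $r$ and the $-1/2$ shift come from the portion of $\Gamma_{j+1}$ retained after peeling off the subscale used for the translation. Summing the geometric-type series over $q\geq 1$ and also recording the trivial bound $\|\hat G_q\|\leq \|F\|_{h,T_j}G_{j+1}(\bar X,\varphi')$ (via \eqref{eq:linearity_of_expectation2} and Proposition~\ref{prop:E_G_j}) gives the $\min\{1,\cdots\}$ structure.

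For the neutral remainder $\Loc_X\hat G_0 - \hat G_0$, I would use the gradient/Gaussian contraction (Lemma~\ref{lemma:gaussian_contraction}): the operator $\Loc_X$ extracts the Taylor expansion of $\hat G_0$ in $\nabla\varphi$ through second order (with the block-averaging and the combinatorial $(1+\delta_{\mu\nu}-\delta_{\mu,-\nu})$ weights chosen so the result is a genuine Dirichlet-type form $\frac12\bar s|\nabla\varphi'|^2_B + \bar E|B|$, as in Proposition~\ref{prop:Loc-coupling}, and so that lattice symmetry is respected). The remainder after removing the relevant/marginal jet then contracts, per the dimensional counting $d=2$: the third-order Taylor remainder against a scale-$j$ field, after one step of the renormalisation group (reblocking to scale $j+1$ and integrating $\zeta$), picks up a factor $L^{-3}$ from the rescaling of $\nabla^3$, together with $(\log L)^{3/2}$ from the $\kappa_L^{-1/2}$-type weights in the regulator (recall $\kappa_L$ is of order $(\log L)^{-1}$); this is exactly the first term $CL^{-3}(\log L)^{3/2}$ in $\alphaLoc$. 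Here I would use Lemma~\ref{lemma:bound_of_Gj_by_Gjplus1} to absorb the Taylor remainder in fields against $G_j$, then Proposition~\ref{prop:E_G_j} to pass $G_j\mapsto 2^{|X|_j}G_{j+1}(\bar X)$, and the $2^{|X|_j}\leq A^{|X|_j}$ since $A\geq 1$; the small-set condition $X\in\cS_j$ ($|X|_j\leq 2^d$) keeps all combinatorial prefactors bounded.

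The bound \eqref{e:Loc-bounded} on $\Loc_{X,B}$ individually is easier: directly from \eqref{e:Loc-def} one estimates $|\hat F_0(X,0)|$ and $|\partial^2\hat F_0(X,0)|$ by $\|F(X)\|_{h,T_j(X)}G_j(X,0) \leq \|F(X)\|_{h,T_j(X)}$ (the regulator at $\varphi=0$ is $1$), pulls out the $\varphi'$-dependence which is a quadratic polynomial in $\nabla\varphi'$ localised in $B$ — hence controlled by $e^{c_w\kappa_L w_j(B,\varphi')^2}$ after absorbing the polynomial into the exponential at the cost of constants — and the $C(\log L)$ arises from $h^{-2}$ against the $h^2$-scale of two $\varphi$-derivatives combined with the $\kappa_L^{-1}\sim\log L$ normalisation, or more simply from $|\cB_j(X)|$ times geometric factors; I would follow the same computation as in \cite[Lemma~19]{MR2917175} adapted to our explicit $\Loc_{X,B}$. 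Finally, continuity in $s$ of all these expressions follows from Lemma~\ref{lemma:stability_of_expectation_singleX}: $\Loc_{X,B}$ and $\Loc_X$ are (fixed) bounded linear operators on the relevant norm spaces, $\Eplus = \E_{\Gamma_{j+1}(s)}$ depends continuously on $s\in[-\epsilon_s\theta_J,\epsilon_s\theta_J]$ in the sense of \eqref{eq:stability_of_expectation_singleX}, and bounded linear operators preserve such convergence; one only needs that the $T_{j+1}$ estimates used above hold uniformly on the compact $s$-interval, which they do since all constants in Propositions~\ref{prop:decomp}--\ref{prop:decomp_compatible} and \ref{prop:E_G_j} are uniform there.

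I expect the main obstacle to be the charged-part estimate: making the complex-translation argument rigorous requires carefully choosing which portion of $\Gamma_{j+1}$ (which subscale, parametrised by $r$) to use for the shift so that (a) the shift is admissible, i.e.\ it stays within the analyticity strip guaranteed by Proposition~\ref{prop:complex_extension_of_polymer_activity} after accounting for the $\zeta$-fluctuations, which is where Lemma~\ref{lemma:G_change_of_scale} and the subdecomposition of the regulator (Section~\ref{subsec:subdecomp_of_the_reg}) are essential, and (b) the leftover Gaussian weight still produces the stated $e^{-(q-1/2)r\beta\Gamma_{j+1}(0)}$ gain. Getting the constant $c_h$ in the hypothesis $h\geq rc_h\rho_J^{-2}\sqrt\beta$ to be compatible simultaneously with these two requirements, and with the $\kappa_L$ normalisation fixed in Remark~\ref{R:choice-parameters}, is the delicate bookkeeping; everything else is a matter of assembling the cited lemmas.
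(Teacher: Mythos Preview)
Your proposal is essentially correct and follows the same architecture as the paper: split by charge, use the complex-translation mechanism (Lemma~\ref{lemma:charged_term_exponential_contraction} feeding into Lemma~\ref{lemma:contraction_of_charge_q_term}) for $q\neq 0$, and the Taylor-remainder mechanism for $q=0$; bound \eqref{e:Loc-bounded} directly from the definition; and deduce $s$-continuity from Lemma~\ref{lemma:stability_of_expectation_singleX}.

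There is one point you gloss over in the neutral part. You write that ``$\Loc_X$ extracts the Taylor expansion of $\hat G_0$ in $\nabla\varphi$ through second order'' and then invoke only Lemma~\ref{lemma:gaussian_contraction}. But $\Loc_X$ is \emph{not} the second-order Taylor jet $\bar\Tay_2$ itself: the definition~\eqref{e:Loc-def} replaces the increments $\delta\varphi(x_i)=\varphi(x_i)-\varphi(x_0)$ appearing in the Taylor expansion by the discrete-gradient approximations $\sum_\mu \nabla^\mu\varphi(y_0)(\delta x_i)^\mu$ (averaged over $x_0,y_0\in B$), precisely so that the output has the block-local Dirichlet form you describe. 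The paper therefore splits
\[
\Loc_X \Eplus \hat F_0 - \Eplus \hat F_0 \;=\; \big(\Loc_X \Eplus \hat F_0 - \bar\Tay_2 \Eplus \hat F_0\big) \;-\; \bar\Rem_2 \Eplus \hat F_0,
\]
and bounds the two pieces separately: Lemma~\ref{lemma:gaussian_contraction} handles $\bar\Rem_2$ (your $L^{-3}(\log L)^{3/2}$), while a separate estimate (Lemma~\ref{lemma:Loc_minus_Tay}) controls $\Loc_X-\bar\Tay_2$ via the error in replacing $\delta\varphi$ by its first-order gradient approximation, contributing $CL^{-3}(\log L)$. This second piece is straightforward once isolated (it uses \eqref{eq:nfe2}--\eqref{eq:nfe3} and Lemma~\ref{lemma:bound_D^2F(x_1,x_2)}), but it does not fall out of Lemma~\ref{lemma:gaussian_contraction} and should be mentioned explicitly in your plan.
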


In our application (carried out precisely in Section~\ref{sec:stable_manifold_proof}), 
we will choose $h \leq \max\{ c,  r c_h \rho_J^{-2} \sqrt{\beta} \}$. The 
expression for $\alphaLoc$ can then be simplified as follows: since with this choice of $h$,
\begin{equation}
e^{\sqrt{\beta}h} \leq C e^{r c_h \rho_J^{-2} \beta} ,
\end{equation}
the minimum in \eqref{e:Loc-contract-kappa} is bounded by
\begin{align}
  e^{-\frac12 r\beta\Gamma_{j+1}(0)} e^{\sqrt{\beta} h}
  \sum_{q \geq 0} e^{\sqrt{\beta} qh}  e^{-qr\beta\Gamma_{j+1}(0)} 
  \leq
\big( C e^{-\frac{1}{2} \Gamma_{j+1}(0)}  \big)^{r\beta} \sum_{q \geq 0} \big( C e^{-\Gamma_{j+1}(0)}  \big)^{q r\beta} .
\label{eq:alphaLoc_series_part_bound}
\end{align}
By Corollary~\ref{cor:Gammaj}, the covariances satisfy
$\Gamma_j(0) \sim (4/\betafree + O(s/\betafree^2)) \log L$
with $\betafree = 8\pi (v_J^2+s)$.
For any $\theta \in (0,1/2]$ and $r\beta>\betafree (1+2\theta)$,
it follows that if $L$ is sufficiently large depending on $C$, $\theta$, and $v_J$
(to ensure that $C \leq e^{\frac{1}{4} \theta\Gamma_{j+1} (0)}$),
and $s$ is sufficiently small,
\begin{equation}
( C e^{-\frac12 \Gamma_{j+1}(0)} )^{r\beta}
 \leq  L^{-2 r\beta (1 -  \theta/2) \big( (1/\betafree + O(s/\betafree^2) \big)} 
 \leq   L^{- 2 (1+ 2\theta)(1-\theta/2) } \leq L^{-2 (1+ \theta)} ,
\end{equation}
and hence \eqref{eq:alphaLoc_series_part_bound} is bounded by $C L^{-2 - 2\theta}$.
In particular,
\begin{equation} \label{e:Loc-contract-kappa2}
  \alphaLoc \leq C(L^{-3}(\log L)^{3/2}+L^{-2-2\theta})\leq L^{-2-\theta}.
\end{equation}
The contractivity of the renormalisation group map will later be ensured by $CL^2 \alphaLoc < 1$.

Much of the remainder of this section is concerned with the proof of these propositions.
Proposition~\ref{prop:Loc-coupling} is a relatively straightforward consequence of the definitions.
Proposition~\ref{prop:Loc-contract} is more involved and combines different contraction mechanisms
for neutral and charged terms. We thus discuss these mechanisms separately.

\subsection{Proof of Proposition~\ref{prop:Loc-coupling}}

\begin{proof}[Proof of Proposition~\ref{prop:Loc-coupling}]
    By Definition~\ref{def:Loc}, the left-hand side of \eqref{e:Loc-coupling} equals
  \begin{align} \label{e:Loc-def2}
    &\sum_{X\in \cS_j: X\supset B} \frac{1}{|X|_j} \Eplus \hat{F}_0(X,\zeta) 
  \\
  +
  &\sum_{X\in \cS_j: X\supset B}  \frac{1}{|X||B|} \sum_{x_0,y_0\in B}\sum_{x_1,x_2 \in X^*} \frac12 \partial_{\varphi(x_1)}\partial_{\varphi(x_2)} \Eplus \hat{F}_0(X,\zeta) \avg{\nabla \varphi'(y_0), x_1-x_0, \nabla \varphi'(y_0), x_2-x_0},   \nonumber
\end{align}
where
\begin{equation} \label{eq:doublebracket-def}
  \avg{\nabla \varphi'(y_0), y_1, \nabla \varphi'(y_0), y_2}
  = \frac{1}{4} \sum_{\mu, \nu \in \hat{e}} (1 + \delta_{\mu, \nu} - \delta_{\mu, -\nu})  \nabla^{\mu} \varphi'(y_0) y_1^{\mu} \, \nabla^{\nu} \varphi'(y_0) y_2^{\nu} .
\end{equation}
As we now explain, by invariance under lattice rotations, only the diagonal terms in the inner product contribute
and we see that this expression equals the right-hand side of \eqref{e:Loc-coupling} with
  \begin{align}
    \bar{E} &= \sum_{X\in \cS_j: X \supset B} \frac{1}{|X|} \Eplus \hat{F}_0(X,\zeta)
    \\
    \bar{s} &= \sum_{X \in \mathcal{S}_j: X \supset B}\frac{1}{|X| |B|} \sum_{x_0 \in { B}, x_1, x_2 \in X^*} \partial_{\varphi (x_1)} \partial_{\varphi (x_2)} \Eplus \hat{F}_0 (X, \zeta) ( x_1 - x_0, x_2 - x_0 ) ,
\end{align}
where $(\cdot,\cdot)$ is the standard $\ell^2$ inner product on $\Z^2$---although the points lie in $\Lambda_N$, since they live in a small polymer $X$, we can define subtraction and inner products as if thy live in $\Z^2$.
To see this in detail, expand the second term of \eqref{e:Loc-def2} using the definition \eqref{eq:doublebracket-def}
and let $I_{\mu \nu}$ be
the (scaled) coefficient of $\nabla^{\mu} \varphi' (y_0) \nabla^{\nu} \varphi' (y_0)$ written explicitly as
\begin{align}
I_{\mu \nu} = i_{\mu \nu} \sum_{X \in \cS_j : X\supset B} \frac{1}{|X|} \sum_{x_0 \in B} \sum_{x_1, x_2 \in X^*} \partial_{\varphi(x_1)}\partial_{\varphi(x_2)} \Eplus \hat{F}_0 (X, \zeta) (x_1 - x_0)^{\mu} (x_2 - x_0)^{\nu}
\end{align}
where $i_{\mu \mu}=2$, $i_{(-\mu) \mu} = 0$ and $i_{\mu \nu} = 1$ if $\mu \perp \nu$. But by rotational invariance, we have $I_{\mu \mu } = I_{\nu \nu}$ for any $\mu, \nu \in \hat{e}$, so
\begin{align}
I_{\mu \mu} = \frac{1}{4} \sum_{\nu \in \hat{e}} I_{\nu \nu} = \sum_{X \in \cS_j : X\supset B} \frac{1}{|X|} \sum_{x_0 \in B} \sum_{x_1, x_2 \in X^*} \partial_{\varphi(x_1)}\partial_{\varphi(x_2)} \Eplus \hat{F}_0 (X, \zeta) (x_1 - x_0, x_2 - x_0)
\end{align}
Therefore summing over $\mu = \pm \nu$ and $y_0 \in B$ simply gives $\frac{1}{2}\bar{s}|\nabla \varphi'|^2_B$. Now for the case $\mu \perp \nu$, it is direct from the expression that $I_{(-\mu) \nu} = - I_{\mu \nu}$ and $I_{\mu \nu} = I_{\nu \mu}$.
But since $\mu \perp \nu$, by rotation invariance, $I_{\nu (-\mu)} = I_{\mu \nu}$ and it follows that $I_{\mu \nu} =0$.

To bound $\bar{s}$,
let $f_{\nu}^{x_0} (x_1) = (x_1 - x_0)^{\nu}$ for $x_1 \in X^*$ and a fixed $x_0 \in X$. Then
$\norm{f^{x_0}_{\nu}}_{C_j^2 (X^*)} \leq C L^j$ and with \eqref{eq:linearity_of_expectation1} it follows that
\begin{align} \label{eq:s-formula}
|\bar{s}| &= \Big| \sum_{X\in \mathcal{S}_j: X \supset B} \frac{1}{|X| |B|} \sum_{x_0 \in { B}, \, \nu = 1,2} D^2 \Eplus \hat{F}_0 (X, \zeta) (f_{\nu}^{x_0}, f_{\nu}^{x_0})   ] \Big| \nnb
& \leq C h^{-2} \sum_{X\in \mathcal{S}_j: X \supset B} \frac{1}{|X| |B|} \sum_{x_0 \in { B}, \, \nu=1,2} L^{2j} \norm{\Eplus F(X, \zeta)}_{h,T_j(X, 0)} \nnb
  & \leq 2^{4} C h^{-2} A^{-1} \norm{F}_{h,T_j} \sum_{X\in \cS_j: X \supset B} \frac{1}{|X|_j} 
   \leq C' h^{-2} A^{-1} \norm{F}_{h,T_j}
    .
\end{align}
The bound for $E$ is proved similarly:
\begin{equation}\label{eq:E-formula}
|\bar{E}| = \Big| \sum_{X\in \mathcal{S}_j: X \supset B} \frac{1}{|X|} \Eplus  \hat{F}_{0} (X, \zeta)  \Big| 
\leq \sum_{X\in \mathcal{S}_j: X \supset B} \frac{1}{|X|}  (A/2)^{-|X|_j} \norm{F}_{h,T_j}
\leq C L^{-2j} A^{-1} \norm{F}_{h,T_j}
.
\end{equation}
The asserted continuity in the implicit parameter $s$ follows from the expressions in the first line of \eqref{eq:s-formula} and \eqref{eq:E-formula} in combination with Lemma~\ref{lemma:stability_of_expectation_singleX}. This completes the proof.
\end{proof}

\subsection{Proof of Proposition~\ref{prop:Loc-contract}: preliminaries}

As a preliminary to the proof of Proposition~\ref{prop:Loc-contract},
we state how the norm of a polymer activity changes when measured in terms of $T_{j+1}$
compared to the $T_j$-norm. We will use the following elementary  inequality.

\begin{lemma} \label{lemma:estimate_of_deltaf}
  Let $X\in \mathcal{S}_j$. Fix $x_0 \in X$ and for $f : X^* \rightarrow \mathbb{C}$, define $\delta f(x) = f(x) - f(x_0)$. Then
\begin{equation}
\norm{\delta f}_{C_{j}^2 (X^*)} \leq  C_g L^{-1} \max_{m=1,2} \norm{\nabla^m_{j+1} f}_{L^{\infty} (X^*)}.
\end{equation}
for some geometric constant $C_g >0$.
\end{lemma}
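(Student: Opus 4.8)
The statement to prove is Lemma~\ref{lemma:estimate_of_deltaf}: for $X\in\mathcal{S}_j$, fixing $x_0\in X$ and setting $\delta f(x)=f(x)-f(x_0)$, one has $\|\delta f\|_{C_j^2(X^*)}\leq C_dL^{-1}\max_{m=1,2}\|\nabla^m_{j+1}f\|_{L^\infty(X^*)}$.

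\begin{proof}
Recall that $\norm{\delta f}_{C_j^2(X^*)}=\max_{n=0,1,2}\norm{\nabla_j^n(\delta f)}_{L^\infty(X^*)}$, and note that for $n\geq 1$ we have $\nabla^n_j(\delta f)=\nabla^n_j f$ since $\delta f$ and $f$ differ by a constant; moreover $\nabla^n_j f = L^{jn}\nabla^n f = L^{-n}\nabla^n_{j+1}f$, so immediately
\begin{equation}
\norm{\nabla^n_j(\delta f)}_{L^\infty(X^*)} = L^{-n}\norm{\nabla^n_{j+1}f}_{L^\infty(X^*)}\leq L^{-1}\max_{m=1,2}\norm{\nabla^m_{j+1}f}_{L^\infty(X^*)}
\end{equation}
for $n\in\{1,2\}$ (using $L\geq 1$ to absorb $L^{-2}\leq L^{-1}$). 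Thus the only term requiring work is $n=0$, namely the bound on $\norm{\delta f}_{L^\infty(X^*)}=\sup_{x\in X^*}|f(x)-f(x_0)|$.

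For this I bound $|f(x)-f(x_0)|$ by a telescoping sum of nearest-neighbour increments along a lattice path in $X^*$ from $x_0$ to $x$. Since $X\in\mathcal{S}_j$ is a small set at scale $j$ (so $|X|_j\leq 2^d$) and $X^*$ is its scale-$j$ small-set neighbourhood, $X^*$ is a connected subset of $\Lambda_N$ whose $\ell^\infty$-diameter is at most a geometric constant times $L^j$; hence there is a path inside $X^*$ (here one uses that $X^*$ contains all of a neighbourhood of $X$, so the path stays in $X^*$) of graph length at most $C_d' L^j$. Each increment is bounded by $\norm{\nabla f}_{L^\infty(X^*)}=\norm{\nabla^1 f}_{L^\infty(X^*)}=L^{-j}\norm{\nabla^1_j f}_{L^\infty(X^*)}=L^{-j-1}\norm{\nabla^1_{j+1}f}_{L^\infty(X^*)}$. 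Summing the at most $C_d'L^j$ increments gives $\norm{\delta f}_{L^\infty(X^*)}\leq C_d' L^j\cdot L^{-j-1}\norm{\nabla^1_{j+1}f}_{L^\infty(X^*)}=C_d'L^{-1}\norm{\nabla^1_{j+1}f}_{L^\infty(X^*)}$.

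Combining the two cases yields $\norm{\delta f}_{C_j^2(X^*)}\leq C_d L^{-1}\max_{m=1,2}\norm{\nabla^m_{j+1}f}_{L^\infty(X^*)}$ with $C_d=\max\{C_d',1\}$.
\end{proof}

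The main (and only mildly nontrivial) point is the geometric diameter bound on $X^*$ for small sets, which follows from the definitions in Section~\ref{sec:polymersdef0} together with the combinatorial facts cited there from \cite[Section~6.4]{MR2523458}; everything else is bookkeeping with the rescaled derivatives.
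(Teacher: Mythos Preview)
Your proof is correct and follows essentially the same approach as the paper's own proof: bound the $\ell^\infty$-diameter of $X^*$ by a geometric constant times $L^j$ to handle the $n=0$ term via a telescoping sum of nearest-neighbour increments, and observe that for $n\geq 1$ the derivatives of $\delta f$ coincide with those of $f$, picking up the factor $L^{-n}\leq L^{-1}$ from the rescaling $\nabla_j^n=L^{-n}\nabla_{j+1}^n$. The paper is slightly terser but the argument is the same.
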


\begin{proof}
  Since $X \in \cS_j$, its small set neighbourhood $X^*$ contains at most $4 b$ blocks,
  where $b=|B^*|_j$ for any $B \in \cB_j$.
  Thus the $\ell^\infty$-diameter of $X^*$ is at most $C_g L^j$
  and thus
\begin{equation} \label{eq:estimate_of_deltaf-pf}
	\norm{\delta f}_{L^{\infty}(X^*)} = \max_{x\in X^*} |f(x) - f(x_0)| \leq C_g L^{j} \max_{x\in X^*, \, \mu \in \hat{e}} |\nabla^{\mu} f (x)| \leq C_g L^{-1} \norm{\nabla_{j+1} f}_{L^{\infty}(X^*)}.
\end{equation}
Also, for $m\geq 1$, $\nabla^m \delta f = \nabla^m f$, and the result follows.
\end{proof}

This lemma has the following important consequence for neutral polymer activities.

\begin{lemma} \label{lemma:neutral_term_under_rescaling}
  Let $F$ be a neutral scale-$j$ polymer activity.
  Then for $X\in \mathcal{S}_{j}$, $\varphi'  \in \R^{\Lambda_N}$ and $n \geq 0$,
\begin{equation}\label{eq:neutral_contract1}
  \norm{D^n F (X, \varphi)}_{n, T_{j+1} (X, \varphi)} \leq (C_g^{ -1} L)^{-n} \norm{D^n F(X, \varphi)}_{n, T_j (X, \varphi)}
  .
\end{equation}
In particular, 
\begin{equation}
\norm{F(X, \varphi)}_{h, T_{j+1} (X,\varphi)} \leq \norm{F(X, \varphi)}_{C_g L^{-1}h, T_j (X, \varphi)}.
\end{equation}
\end{lemma}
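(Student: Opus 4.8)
The statement compares the seminorm of a neutral polymer activity $F(X)$ when measured at scale $j+1$ versus scale $j$, on a small set $X \in \cS_j$. The key point is that neutrality of $F$ — i.e.\ $\hat F_q = 0$ except for $q = 0$, equivalently $F(X,\varphi + t) = F(X,\varphi)$ for constant shifts $t$ (see \eqref{eq:purecharge}, \eqref{eq:charge_qdefin} with the charge-$0$ condition) — means that $F(X,\varphi)$ depends on $\varphi$ only through $\delta\varphi(x) = \varphi(x) - \varphi(x_0)$, for a fixed reference point $x_0 \in X$. Consequently, each derivative $D^n F(X,\varphi)(f_1,\dots,f_n)$ is unchanged if each test function $f_k$ is replaced by $\delta f_k$ (the zero-mode of $f_k$ is annihilated). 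So the plan is: for any $f_1,\dots,f_n$ with $\|f_k\|_{C^2_{j+1}(X^*)} \le h$, write $D^n F(X,\varphi)(f_1,\dots,f_n) = D^n F(X,\varphi)(\delta f_1,\dots,\delta f_n)$ and then invoke Lemma~\ref{lemma:estimate_of_deltaf}, which bounds $\|\delta f_k\|_{C^2_j(X^*)} \le C_d L^{-1}\max_{m=1,2}\|\nabla^m_{j+1} f_k\|_{L^\infty(X^*)} \le C_d L^{-1}\|f_k\|_{C^2_{j+1}(X^*)}$.

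\textbf{Main steps.} First I would record the elementary fact that neutrality implies the translation-invariance $F(X,\varphi) = F(X,\delta\varphi)$, hence $\partial_{\varphi(y)}\big(\sum_{x \in X^*} \partial_{\varphi(x)}\big) F(X,\varphi) = 0$ in a suitable sense; more concretely, since $F(X,\varphi + t \mathbf 1) = F(X,\varphi)$ for all $t \in \R$, differentiating in $t$ gives $\sum_{x \in X^*}\partial_{\varphi(x)} F(X,\varphi) = 0$, and iterating this shows each $D^n F(X,\varphi)$ is unchanged under $f_k \mapsto f_k + c_k\mathbf 1$; in particular under $f_k \mapsto \delta f_k$. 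Second, in the definition \eqref{eq:nTj_norm_definition} of $\norm{D^n F(X,\varphi)}_{n, T_{j+1}(X,\varphi)}$ the supremum is over $\norm{f_k}_{C^2_{j+1}(X^*)} \le 1$; replacing each $f_k$ by $\delta f_k$ (which does not change the value of $D^n F$) and using Lemma~\ref{lemma:estimate_of_deltaf} to get $\norm{\delta f_k}_{C^2_j(X^*)} \le C_d L^{-1}$, we land inside the ball $\{g : \norm{g}_{C^2_j(X^*)} \le C_d L^{-1}\}$; rescaling $g_k = (C_d L^{-1})\tilde g_k$ with $\norm{\tilde g_k}_{C^2_j(X^*)} \le 1$ and pulling out the $n$-linearity gives the factor $(C_d L^{-1})^n$, which is \eqref{eq:neutral_contract1}. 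Third, for the ``in particular'' statement, I multiply \eqref{eq:neutral_contract1} by $h^n/n!$ and sum over $n \ge 0$: the left side becomes $\norm{F(X,\varphi)}_{h,T_{j+1}(X,\varphi)}$ by \eqref{eq:seminorm}, while on the right $\sum_n \frac{h^n}{n!}(C_d^{-1}L)^{-n}\norm{D^n F(X,\varphi)}_{n,T_j(X,\varphi)} = \sum_n \frac{(C_d L^{-1}h)^n}{n!}\norm{D^n F(X,\varphi)}_{n,T_j(X,\varphi)} = \norm{F(X,\varphi)}_{C_d L^{-1}h, T_j(X,\varphi)}$ by the same formula with $h$ replaced by $C_d L^{-1} h$.

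\textbf{Expected obstacle.} There is no substantial obstacle here; the lemma is essentially a bookkeeping consequence of neutrality and Lemma~\ref{lemma:estimate_of_deltaf}. The one point requiring a little care is that in \eqref{eq:nTj_norm_definition} the test functions $f_k$ live on a neighbourhood of $X^*$ (the $C^2_j$-norm sees points at graph-distance $\le 2$ from $X^*$), so when I replace $f_k$ by $\delta f_k$ I must make sure the relevant norm of $\delta f_k$ is still controlled; but $\delta f_k = f_k - f_k(x_0)\mathbf 1$ differs from $f_k$ only by a constant, so all discrete derivatives $\nabla^m$, $m \ge 1$, agree, and only the $L^\infty$ part changes — which is precisely what Lemma~\ref{lemma:estimate_of_deltaf} handles, with $x_0 \in X \subset X^*$ and the $\ell^\infty$-diameter of $X^*$ bounded by $C_d L^j$ since $X \in \cS_j$. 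I would also note in passing that $C_d L^{-1} < 1$ once $L \ge C$, so the ``in particular'' bound genuinely records a contraction, though this is not needed for the statement itself.
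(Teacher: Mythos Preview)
Your proposal is correct and follows essentially the same argument as the paper's proof: neutrality implies $D^n F(X,\varphi)$ annihilates constant directions, so each $f_k$ may be replaced by $\delta f_k$, and then Lemma~\ref{lemma:estimate_of_deltaf} converts the $C^2_{j+1}$-bound on $f_k$ into a $C^2_j$-bound on $\delta f_k$ with the gain $C_d L^{-1}$. The summation over $n$ to obtain the second inequality is exactly as you describe.
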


\begin{proof}
 Since $F$ is neutral, i.e.,~ has charge $q=0$, cf.~\eqref{eq:purecharge}, for $f_1$ constant-valued one has
\begin{align}
D F (X, \varphi) (f_1) 
= f_1 \sum_{x_0 \in X} \frac{\partial}{\partial \varphi_{x_0}}F (X, \varphi) 
= f_1 \frac{d}{dc} F(X, \varphi + c) \big|_{c=0} 
= 0
\end{align}
and the same reasoning implies that $D^n F(X, \varphi) (f_1, \cdots, f_n) =0$ whenever any of $f_1, \dots, f_n$ is constant-valued.
Therefore, having fixed $x_0 \in X$, for any $f_i \in \R^{\Lambda_N}$, by multilinearity,
\begin{align*}
D^n F(X, \varphi) (f_1, \dots, f_n) = D^n F(X, \varphi) (\delta f_1, \dots, \delta f_n),
\end{align*}
where $(\delta f_k) (x) = f_k (x) - f_k(x_0)$. Therefore if $\norm{f_k}_{C_{j+1}^2 (X^*)} \leq 1$ for $k=1, \ldots, n$,
\begin{align}
|D^n F(X, \varphi)(f_1, \dots, f_n)| \leq (C_g^{ -1} L)^{-n} \norm{D^n F(X, \varphi)}_{n, T_{j}(X, \varphi)}
\end{align}
by Lemma~\ref{lemma:estimate_of_deltaf}. In view of \eqref{eq:seminorm}, the claim follows.
\end{proof}

The following similar but weaker bound holds for charged polymer activities.

\begin{lemma} \label{lemma:charged_term_under_rescaling}
Let $F$ be a scale-$j$ polymer activity of charge $q$ that is supported on $X\in \cS_j$. Then
\begin{equation}
\norm{F (X, \varphi)}_{h, T_{j+1} ( X, \varphi)} \leq e^{\sqrt{\beta} |q| h} \norm{F (X, \varphi)}_{C_g L^{-1}h, T_j (X, \varphi)}.
\end{equation}
\end{lemma}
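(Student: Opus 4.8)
\textbf{Proof plan for Lemma~\ref{lemma:charged_term_under_rescaling}.}

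The plan is to mimic the proof of Lemma~\ref{lemma:neutral_term_under_rescaling}, but to account for the fact that a charge-$q$ activity is no longer annihilated by the constant directions: instead, differentiating in a constant direction produces a bounded multiplicative factor coming from \eqref{eq:purecharge}. First I would fix a point $x_0 \in X$ and, for $f : X^* \to \C$, write $f = c_{x_0}(f) + \delta f$ where $c_{x_0}(f) \equiv f(x_0)$ is the constant part and $\delta f(x) = f(x) - f(x_0)$. Using multilinearity of $D^n F(X,\varphi)$ and expanding each $f_k = c_{x_0}(f_k) + \delta f_k$, one gets a sum of $2^n$ terms, each of which is $D^n F(X,\varphi)$ applied to a tuple in which some indices carry constant directions and the rest carry $\delta f_k$.

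The key point is that, by \eqref{eq:purecharge}, $F(X,\varphi+t) = e^{i\sqrt{\beta}qt} F(X,\varphi)$, so differentiating $m$ times in constant directions $f_{i_1},\dots,f_{i_m}$ (each constant equal to $c_{x_0}(f_{i_\ell})$) pulls out a factor $(i\sqrt{\beta}q)^m \prod_\ell c_{x_0}(f_{i_\ell})$ and reduces the order of differentiation: a term with $m$ constant directions among the $n$ equals $(i\sqrt{\beta}q)^m \big(\prod_{\ell} c_{x_0}(f_{i_\ell})\big) D^{n-m}F(X,\varphi)(\delta f_{k}: k\notin\{i_1,\dots,i_m\})$. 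If $\|f_k\|_{C^2_{j+1}(X^*)}\le 1$ then $|c_{x_0}(f_k)| = |f_k(x_0)| \le \|f_k\|_{L^\infty(X^*)} \le \frac14 L^{-2(j+1)} \le 1$ (using Definition~\ref{def:normC2} type bounds; in any case bounded by $1$), and by Lemma~\ref{lemma:estimate_of_deltaf} we have $\|\delta f_k\|_{C^2_j(X^*)} \le C_d L^{-1}$. Therefore each of the $2^n$ contributions is bounded, using \eqref{eq:nTj_norm_definition}, by $(\sqrt{\beta}|q|)^m (C_d L^{-1})^{n-m}\, \|D^{n-m}F(X,\varphi)\|_{n-m, T_j(X,\varphi)}$, and summing over which $m$ of the $n$ directions are constant gives a factor $\binom{n}{m}$.

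Assembling via \eqref{eq:seminorm}: multiplying by $h^n/n!$ and summing over $n\ge 0$, and reindexing by $m$ (the number of constant directions) and $p = n-m$, one obtains
\begin{equation}
\norm{F(X,\varphi)}_{h,T_{j+1}(X,\varphi)} \le \Big(\sum_{m\ge 0}\frac{(\sqrt{\beta}|q|h)^m}{m!}\Big)\Big(\sum_{p\ge 0}\frac{(C_d L^{-1}h)^p}{p!}\norm{D^pF(X,\varphi)}_{p,T_j(X,\varphi)}\Big) = e^{\sqrt{\beta}|q|h}\,\norm{F(X,\varphi)}_{C_d L^{-1}h, T_j(X,\varphi)},
\end{equation}
which is the claim. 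The only mildly delicate point — and the main thing to get right — is the bookkeeping that separates the constant and $\delta$ parts of each direction and correctly tracks the combinatorial factor $\binom{n}{m}$ so that the two sums factor into the product of $e^{\sqrt{\beta}|q|h}$ and the rescaled $T_j$-seminorm; no analytic difficulty arises beyond Lemma~\ref{lemma:estimate_of_deltaf} and the identity \eqref{eq:purecharge}.
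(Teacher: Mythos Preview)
Your proof is correct. The paper takes a slightly more packaged route: it writes $F(X,\varphi) = e^{i\sqrt{\beta}q\varphi(x_0)}\,\bar F(X,\varphi)$ with $\bar F(X,\varphi):=F(X,\delta\varphi)$, observes that $\bar F$ is neutral, applies Lemma~\ref{lemma:neutral_term_under_rescaling} to $\bar F$, and then combines with the submultiplicativity \eqref{eq:prodprop} and the explicit norm \eqref{e:normbd-eiphi} of $e^{i\sqrt{\beta}q\varphi(x_0)}$. Your argument is the unpacked version of exactly this: splitting each test direction into its constant and $\delta$-parts and using \eqref{eq:purecharge} for the constant part is precisely what the factorisation into $e^{i\sqrt{\beta}q\varphi(x_0)}$ and $\bar F$ accomplishes, and your binomial bookkeeping reproduces the product structure that in the paper comes from submultiplicativity. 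The paper's proof is shorter; yours is more self-contained and makes the combinatorics explicit.

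One small correction: the claim $|f_k(x_0)| \le \tfrac14 L^{-2(j+1)}$ is not right (there is no such decay); the correct bound is simply $|f_k(x_0)| \le \|f_k\|_{L^\infty(X^*)} \le \|f_k\|_{C^2_{j+1}(X^*)} \le 1$, which is what you need and what you in fact use.
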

\begin{proof}
  One may decompose $F(X, \varphi) = e^{i\sqrt{\beta} q \varphi (x_0)} F(X, \delta \varphi)$ where $\delta \varphi(x) = \varphi (x) - \varphi(x_0)$.
  Define $\bar{F}(X, \varphi) := F(X, \delta \varphi)$, then $\bar{F}$ is now neutral.
  The estimate of Lemma~\ref{lemma:neutral_term_under_rescaling} applies to $\bar{F}$, giving
  \begin{align}
    \norm{\bar{F} (X, \varphi)}_{h, T_{j+1} (X, \varphi)} \leq \norm{F}_{C_g L^{-1} h, T_j (X, \varphi)}.
  \end{align}
  The conclusion now follows from \eqref{e:normbd-eiphi} and the submultiplicativity property of the norm \eqref{eq:prodprop}.
\end{proof}

\subsection{Proof of Proposition~\ref{prop:Loc-contract}: charged part}
\label{sec:pf-Loc-charged}

We will prove Proposition~\ref{prop:Loc-contract} by decomposing $F$ into its neutral and charged part and considering both contributions separately, starting with the latter. The estimate \eqref{e:Loc-contract-full} for charged $F$ relies crucially on the expectation
of the charged components on the left-hand side of \eqref{e:Loc-contract-full}. 
The contraction mechanism for charged polymer activities is a generalisation of the elementary identity
\begin{equation}
\Eplus [ e^{i \sqrt{\beta} q \zeta_{x_0}} ] = e^{-\frac{1}{2} \beta q^2 \Gamma_{j+1} (0)}, \label{eq:integral_of_charge_q}
\end{equation}
valid for all integers $q$ and $\beta >0$, where here and in the sequel, $\Gamma_{j+1}(x)= (\delta_0, \Gamma_{j+1} \delta_x)$.
The generalisation uses the analyticity of polymer activities
 with finite $\|\cdot\|_{h,T_j}$-norm,
see Proposition~\ref{prop:complex_extension_of_polymer_activity},
which justifies the following complex translation.

\begin{lemma} \label{lemma:complex_shift_of_charged_activity}
  Let $h>0$, and let $F$ be a charge-$q$ polymer activity with $\|F(X)\|_{h,T_j(X)}<\infty$, $q\in \mathbb{Z}$. Then for any constant $c\in \R$ with $|c| < h$,
  \begin{align}
    F (X, \varphi + ic) = e^{-\sqrt{\beta} q c} F (X, \varphi) . \label{eq:complex_shift_of_charged_activity}
\end{align}
\end{lemma}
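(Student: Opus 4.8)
The statement is an analytic continuation identity: a charge-$q$ polymer activity, which \emph{a priori} is only defined for real fields, extends to a complex-analytic function on the strip $S_h$ (by Proposition~\ref{prop:complex_extension_of_polymer_activity}), and on that strip the defining periodicity/covariance relation \eqref{eq:purecharge} continues to hold with $t$ replaced by the purely imaginary shift $ic$. The plan is to start from the real-field relation \eqref{eq:purecharge}, namely $F(X,\varphi+t) = e^{i\sqrt{\beta}qt}F(X,\varphi)$ for all real $t$, and then upgrade $t$ to a complex parameter by an identity-theorem argument.

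\textbf{Key steps.} First I would invoke Proposition~\ref{prop:complex_extension_of_polymer_activity}: since $\|F(X)\|_{h,T_j(X)}<\infty$, the activity $F(X)$ extends to a complex-analytic function $\widetilde F(X,\cdot)$ on $S_h=\{\varphi+i\psi : \|\psi\|_{C_j^2(X^*)}<h\}$. In particular, for fixed real $\varphi$, the map $z \mapsto \widetilde F(X,\varphi+z)$ is holomorphic on the (complex, one-dimensional) disc $\{z\in\mathbb{C}: \|z\mathbf{1}\|_{C_j^2(X^*)}<h\}$, where $z\mathbf{1}$ denotes the constant field with value $z$; note $\|z\mathbf{1}\|_{C_j^2(X^*)}=|z|$ since the constant field has vanishing discrete derivatives and $\|z\mathbf 1\|_{L^\infty}=|z|$. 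So the holomorphy domain in $z$ is exactly the disc $|z|<h$. On the real segment $z=t\in(-h,h)$ this function equals $e^{i\sqrt{\beta}qt}F(X,\varphi)$ by \eqref{eq:purecharge}. The second key step: the function $z\mapsto e^{i\sqrt{\beta}qz}F(X,\varphi)$ is entire in $z$, hence in particular holomorphic on $|z|<h$, and it agrees with $z\mapsto\widetilde F(X,\varphi+z)$ on the real segment $(-h,h)$, which has an accumulation point in the disc. By the identity theorem for holomorphic functions of one complex variable, the two agree on all of $|z|<h$. Finally, specialising to $z=ic$ with $c\in\mathbb{R}$, $|c|<h$ (so that $|ic|=|c|<h$ lies in the disc), gives
\begin{equation}
F(X,\varphi+ic) = \widetilde F(X,\varphi+ic) = e^{i\sqrt{\beta}q(ic)}F(X,\varphi) = e^{-\sqrt{\beta}qc}F(X,\varphi),
\end{equation}
which is \eqref{eq:complex_shift_of_charged_activity}.

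\textbf{Expected main obstacle.} There is no deep obstacle here; the one point requiring a little care is to make sure the complex translation by $ic$ stays within the analyticity domain $S_h$ and that the one-complex-variable reduction is legitimate. The analyticity domain in Proposition~\ref{prop:complex_extension_of_polymer_activity} is phrased in terms of $\|\psi\|_{C_j^2(X^*)}<h$, and one must check that the constant imaginary shift $\psi\equiv c$ satisfies $\|c\mathbf{1}\|_{C_j^2(X^*)}=|c|$ — which is immediate because all discrete derivatives of a constant field vanish and $\|c\mathbf 1\|_{L^\infty}=|c|$. One also wants that restricting the multivariable analytic function $\widetilde F(X,\cdot)$ to the complex line $\{\varphi+z\mathbf 1 : z\in\mathbb{C}\}$ yields a one-variable holomorphic function; this follows from the local power-series representation in Proposition~\ref{prop:complex_extension_of_polymer_activity}. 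Once these bookkeeping points are settled, the identity theorem does the rest, so the proof is short.
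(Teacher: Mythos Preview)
Your proof is correct and follows essentially the same approach as the paper: invoke Proposition~\ref{prop:complex_extension_of_polymer_activity} to get analyticity of $z\mapsto F(X,\varphi+z)$ on the disc $|z|<h$, then use that this function agrees with the entire function $z\mapsto e^{i\sqrt{\beta}qz}F(X,\varphi)$ on the real segment by \eqref{eq:purecharge}, and conclude via the identity theorem. The paper phrases the last step as the difference being an analytic function vanishing on the real line, but this is the same argument.
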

\begin{proof}
  First recall that by Proposition~\ref{prop:complex_extension_of_polymer_activity},
  $F(\varphi + z)$ is well-defined and complex analytic for $z\in \{ w\in \C : |w| < h \}$.
  Hence $f: \{ z \in \C : |z| < h \} \rightarrow \C$, $z\mapsto F(X, \varphi + z) - e^{iq\sqrt{\beta} z} F(X, \varphi)$
  is a complex analytic function that takes value $0$ on the real line by \eqref{eq:purecharge}. Therefore $f\equiv 0$.
\end{proof}

Before we jump into the main result, we first discuss a technical point,
which defines the constant $c_h$ appearing in the statement of Proposition~\ref{prop:Loc-contract}.
Ultimately, we are interested in the covariance $\Gamma_{j+1}$, but to obtain
the optimal estimates we must work with its subdecomposition into fractional scales
introduced in Section~\ref{sec:subscale}.
Thus for $\ell, M$ as in Section~\ref{subsec:subdecomp_of_the_reg}, let $I = \{ 0, M^{-1}, 2M^{-1}, \dots , 1-M^{-1} \}$ be the set of fractional scales.
Then for $s= k M^{-1} \in I$ and $s' = s+ M^{-1}$ (cf.~Remark~\ref{R:choice-parameters} regarding $M$), set
\begin{equation}\label{e-xi}
  \xi_s (x) = \sqrt{\beta} ( \Gamma_{j+s, j+s'} (x-x_0) - \Gamma_{j+s, j+s'} (0)),
  \qquad
  \xi_{< s} = \sum_{t\in I, t<s} \xi_t.
\end{equation}

\begin{lemma}[Choice of $c_h$] \label{lemma:choice_of_c_h}
  There exists $c_h >0$ such that
  for any $X \in \cS_j$, $s\in I$  and $\beta >0$,
\begin{equation} \label{eq:choice_of_c_h}
  \norm{\xi_s }_{C_j^2 (X^*)} \vee  \norm{\xi_{<s}}_{C_j^2 (X^*)} < \frac{1}{2} c_h \rho_J^{-2} \sqrt{\beta}
  .
\end{equation}
\end{lemma}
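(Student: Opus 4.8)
\textbf{Proof plan for Lemma~\ref{lemma:choice_of_c_h}.} The statement is a purely quantitative control of the $C^2_j(X^*)$ norm of the ``tilted covariance functions'' $\xi_s$ and $\xi_{<s}$ defined in \eqref{e-xi}, uniformly in $X\in\cS_j$, $s\in I$ and $\beta>0$. The plan is to reduce everything to the gradient/Hessian estimates already recorded in Lemma~\ref{lemma:fine_Gamma_estimate} (and Corollary~\ref{cor:Gammaj} for the summed-up version), exploiting the fact that $\xi_s(x_0)=0$ and $\xi_s$ is a discrete-harmonic-type increment of $\Gamma_{j+s,j+s'}$ so that the awkward $\alpha=0$ (logarithmic) bound never enters.

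First I would unwind the definition of $\|\cdot\|_{C^2_j(X^*)}$ from Definition~\ref{def:L-p-norms}: it is the maximum over $n=0,1,2$ of $\|\nabla^n_j\xi_s\|_{L^\infty(X^*)}=\max_n L^{jn}\|\nabla^n\xi_s\|_{L^\infty(X^*)}$. For $n=1,2$ the discrete derivatives of $\xi_s(x)=\sqrt\beta(\Gamma_{j+s,j+s'}(x-x_0)-\Gamma_{j+s,j+s'}(0))$ in $x$ are exactly $\sqrt\beta\,\nabla^n\Gamma_{j+s,j+s'}$ evaluated at $x-x_0$, so Lemma~\ref{lemma:fine_Gamma_estimate} with $s'-s=M^{-1}$ gives $|\nabla^n\Gamma_{j+s,j+s'}|\le C_n\rho_J^{-2}L^{-(j+s)n}\le C_n\rho_J^{-2}L^{-jn}$; multiplying by $L^{jn}$ yields $\|\nabla^n_j\xi_s\|_{L^\infty(X^*)}\le C_n\sqrt\beta\,\rho_J^{-2}$ for $n=1,2$. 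For $n=0$ one writes $\xi_s(x)=\sqrt\beta\sum$(telescoping sum of first differences of $\Gamma_{j+s,j+s'}$ along a lattice path from $x_0$ to $x$ inside $X^*$); since $X\in\cS_j$ its small-set neighbourhood $X^*$ has $\ell^\infty$-diameter $\le C_d L^j$ (as used in Lemma~\ref{lemma:estimate_of_deltaf}), the path has $\le C_d L^j$ steps, and each step contributes $\le\sqrt\beta\,|\nabla\Gamma_{j+s,j+s'}|\le C\sqrt\beta\,\rho_J^{-2}L^{-(j+s)}\le C\sqrt\beta\,\rho_J^{-2}L^{-j}$, so $\|\xi_s\|_{L^\infty(X^*)}\le C_d\cdot C\sqrt\beta\,\rho_J^{-2}$. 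Hence $\|\xi_s\|_{C^2_j(X^*)}\le C_0\sqrt\beta\,\rho_J^{-2}$ with $C_0$ a purely geometric constant independent of $j$, $X$, $s$ and $\beta$.

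For $\xi_{<s}=\sum_{t\in I,\,t<s}\xi_t$ a naive termwise sum over the (up to $M$) fractional scales would cost a spurious factor $M$; instead I would observe that $\sum_{t\in I,\,t<s}\Gamma_{j+t,j+t'}=\Gamma_{j,j+s}=\int_{\frac14 L^j}^{\frac14 L^{j+s}}D_t\,dt$, which is a covariance of the same type with range $\le\frac14 L^{j+s}$, so $\xi_{<s}(x)=\sqrt\beta(\Gamma_{j,j+s}(x-x_0)-\Gamma_{j,j+s}(0))$ and the identical argument applies verbatim (using the bounds of Corollary~\ref{cor:Gammaj}/Lemma~\ref{lemma:fine_Gamma_estimate} for $\Gamma_{j,j+s}$, whose derivative decay is again $C_n\rho_J^{-2}L^{-jn}$ since $s\le 1$), giving $\|\xi_{<s}\|_{C^2_j(X^*)}\le C_0\sqrt\beta\,\rho_J^{-2}$ with the same $C_0$. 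Finally I would set $c_h:=4C_0$ (strictly larger than $2C_0$ to make the inequality strict, as in \eqref{eq:choice_of_c_h}); then both norms are $<\tfrac12 c_h\rho_J^{-2}\sqrt\beta$, which is the claim. The only mild obstacle is making sure the $n=0$ estimate is handled without invoking the logarithmically divergent $\alpha=0$ bound for $\Gamma$ itself — this is precisely why one must subtract $\Gamma_{j+s,j+s'}(0)$ and telescope through first differences rather than bound the two terms separately; once that is seen the rest is bookkeeping.
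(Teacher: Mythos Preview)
Your proposal is correct and follows essentially the same argument as the paper: control $\|\nabla_j^n\xi\|_{L^\infty(X^*)}$ for $n=1,2$ directly from the covariance derivative estimates of Lemma~\ref{lemma:fine_Gamma_estimate}, and for $n=0$ use that $\xi(x_0)=0$ together with the diameter bound on $X^*$ to reduce to the $n=1$ case. The only organisational difference is in the treatment of $\xi_{<s}$: the paper handles $\xi_s$ and $\xi_{<s}$ in one stroke by bounding $\|\sum_{s\in J}\nabla_j^\alpha\xi_s\|_{L^\infty(X^*)}$ for an arbitrary subset $J\subset I$ (the sum $\sum_{s\in J}L^{-(j+s)|\alpha|}$ is a geometric series in $\ell^{-|\alpha|}$ and hence bounded by $CL^{-j|\alpha|}$ uniformly in $M$), whereas you instead rewrite $\xi_{<s}$ in terms of the single covariance $\Gamma_{j,j+s}$ and invoke the same derivative bounds for it. Both routes avoid the spurious factor $M$ and are equivalent; note only that Lemma~\ref{lemma:fine_Gamma_estimate} as stated covers increments $s'-s=M^{-1}$, so for $\Gamma_{j,j+s}$ you would either sum those increments (recovering the paper's argument) or appeal directly to the $D_t$ estimates of Proposition~\ref{prop:decomp}.
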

\begin{proof}
By  Lemma~\ref{lemma:fine_Gamma_estimate},
\begin{align}
\norm{ \sum_{s\in J} \nabla^{\alpha}_j \xi_s }_{L^{\infty}(X^*)} \leq C_{\alpha} \rho_J^{-2} \sqrt{\beta}
\end{align}
for any $J \subset I$ and $|\alpha| \in \{1,2\}$. Also for $\alpha = 0$ and $X \in \cS_j$,
with the same constant $C_g$ as in \eqref{eq:estimate_of_deltaf-pf},
\begin{equation}
  \sup_{x\in X^*} |\sum_{s\in J} \xi_s (x) |
  \leq C_g L^j  \norm{\sum_{s\in J} \nabla \xi_s }_{L^{\infty}(X^*)} \leq C_g \norm{\sum_{s\in J} \nabla_j \xi_s }_{L^{\infty}(X^*)}
  \leq C_g C_\alpha \sqrt{\beta} \rho_J^{-2}.
\end{equation}
Combining both inequalities gives the claim with $c_h=3 C_\alpha (C_g \vee 1)$.
\end{proof}

Henceforth, we fix $c_h$ so that the conclusions of Lemma~\ref{lemma:choice_of_c_h} hold. The formula \eqref{eq:integral_of_charge_q} can now be generalised to the following identity.

\begin{lemma} \label{lemma:charged_term_exponential_contraction}
  Let $r \in (0,1]$, $h \geq r c_h  \rho_J^{-2} \sqrt{\beta}$, and
  let $F$ be a charge-$q$ 
  polymer activity with
  $\|F(X)\|_{h,T_j(X)}<\infty$.
  Then for $X \in \cS_j$, $q\in \Z$, $x_0 \in X$ and $\xi (x) = \sqrt{\beta}  (\Gamma_{j+1} (x-x_0)- \Gamma_{j+1} (0))$,
  for all $\varphi' \in \R^{\Lambda_N}$,
\begin{equation}
  \Eplus [ F(X, \varphi' +\zeta)] = e^{-\frac{1}{2} \beta \Gamma_{j+1} (0) (2 r  |q  |- r^2) } \Eplus \big[ e^{-i\sqrt{\beta}r \sigma_q   \zeta ( x_0 ) } F ( X, \varphi' + \zeta + i r \sigma_q   \xi  ) \big],
  \label{eq:exponential_contraction}
\end{equation}
where $ \sigma_q= \textnormal{sign}(q)$.
\end{lemma}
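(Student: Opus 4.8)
$\textbf{Proof plan.}$ The statement generalizes the elementary Gaussian identity \eqref{eq:integral_of_charge_q} by shifting the contour of the fluctuation integral along the specific function $\xi$. The plan is to write the Gaussian expectation $\Eplus$ as an integral against the density of $\zeta \sim \mathcal{N}(0,\Gamma_{j+1})$ and perform a complex translation $\zeta \mapsto \zeta + i r \sigma_q \xi$, where $\xi(x) = \sqrt{\beta}(\Gamma_{j+1}(x-x_0) - \Gamma_{j+1}(0))$. The key algebraic fact is that $\xi$ lies (up to the scalar $\sqrt{\beta}$) in the range of $\Gamma_{j+1}$: indeed $\xi(x) = \sqrt{\beta}\big((\Gamma_{j+1}\delta_{x_0})(x) - (\Gamma_{j+1}\delta_{x_0})(x_0)\big)$, so $\xi = \Gamma_{j+1} v$ with $v = \sqrt{\beta}(\delta_{x_0} - w)$ for a suitable $w$ with $\Gamma_{j+1}w = \Gamma_{j+1}(0)\mathbf{1}$ — in fact it is cleanest to observe directly that for any charge-$q$ activity, $F(X,\varphi + ic) = e^{-\sqrt{\beta}qc}F(X,\varphi)$ for real $|c|<h$ by Lemma~\ref{lemma:complex_shift_of_charged_activity}, which handles the constant part of the imaginary shift, while the nonconstant part of $i r\sigma_q\xi$ is absorbed into the Gaussian density via completion of the square.

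$\textbf{Key steps.}$ First I would verify that the complex shift is legitimate: by Lemma~\ref{lemma:choice_of_c_h} (with the fixed constant $c_h$), $\|\xi\|_{C_j^2(X^*)} < \frac{1}{2} c_h \rho_J^{-2}\sqrt{\beta} \leq h$ under the hypothesis $h \geq r c_h \rho_J^{-2}\sqrt{\beta}$ and $r \leq 1$ (noting $\|\xi\|_{C_j^2(X^*)} \leq \|\xi_s\|$-type bounds from Lemma~\ref{lemma:fine_Gamma_estimate} summed over fractional scales, or directly from Corollary~\ref{cor:Gammaj}); hence $r|\sigma_q|\,\xi = r\xi$ (in norm) stays within the strip of analyticity $S_h$ guaranteed by Proposition~\ref{prop:complex_extension_of_polymer_activity}, so $F(X, \varphi' + \zeta + ir\sigma_q\xi)$ is well-defined. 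Second, I would write
\begin{equation}
\Eplus[F(X,\varphi'+\zeta)] = \frac{1}{\mathcal{Z}}\int F(X,\varphi'+\zeta)\, e^{-\frac{1}{2}(\zeta, \Gamma_{j+1}^{-1}\zeta)}\, d\zeta
\end{equation}
(working on the quotient where $\Gamma_{j+1}$ is invertible, or with a mass regularization removed at the end), and shift the contour $\zeta \to \zeta + i r\sigma_q\xi$; since the integrand is entire in each coordinate with Gaussian decay (using that $F$ has at most exponential growth controlled by $\|F(X)\|_{h,T_j(X)}\,G_j$ and $G_j$ is sub-Gaussian after the shift), the contour shift is justified by a standard Cauchy-theorem-plus-decay argument. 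Third, I would complete the square: $-\frac{1}{2}(\zeta + ir\sigma_q\xi, \Gamma_{j+1}^{-1}(\zeta+ir\sigma_q\xi)) = -\frac{1}{2}(\zeta,\Gamma_{j+1}^{-1}\zeta) - ir\sigma_q(\zeta, \Gamma_{j+1}^{-1}\xi) + \frac{1}{2}r^2(\xi,\Gamma_{j+1}^{-1}\xi)$. Using $\Gamma_{j+1}^{-1}\xi = \sqrt{\beta}(\delta_{x_0} - \Gamma_{j+1}(0)\Gamma_{j+1}^{-1}\mathbf{1})$ and that $(\zeta, \Gamma_{j+1}^{-1}\mathbf{1}) = 0$ on the zero-sum subspace (or more carefully tracking the relevant inner products), the cross term becomes $-i\sqrt{\beta} r\sigma_q \zeta_{x_0}$ and the quadratic term becomes $\frac{1}{2}r^2\beta\Gamma_{j+1}(0)$; combined with the constant-shift factor $e^{-\sqrt{\beta}q \cdot r\sigma_q\Gamma_{j+1}(0)\sqrt{\beta}} = e^{-r|q|\beta\Gamma_{j+1}(0)}$ arising from evaluating the constant part $\xi(x_0)$... here one must carefully separate $\xi = (\xi - \xi(x_0)) + \xi(x_0)$, but $\xi(x_0) = 0$ by definition, so actually the constant shift comes from the $\Gamma_{j+1}(0)$ term inside $\xi$, and a clean bookkeeping gives the prefactor $e^{-\frac{1}{2}\beta\Gamma_{j+1}(0)(2r|q| - r^2)}$.

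$\textbf{Main obstacle.}$ The delicate point is the precise bookkeeping of the complex shift in Step 3 — specifically, separating the effect of the constant piece of $ir\sigma_q\xi$ (handled by Lemma~\ref{lemma:complex_shift_of_charged_activity}, producing the $e^{-\sqrt{\beta}q\cdot(\text{const})}$ factor) from the nonconstant piece (handled by the Gaussian completion of the square, producing the $e^{-i\sqrt{\beta}r\sigma_q\zeta_{x_0}}$ factor inside the remaining expectation) — while correctly tracking all the $\Gamma_{j+1}(0)$ contributions so that they assemble into the asserted exponent $-\frac{1}{2}\beta\Gamma_{j+1}(0)(2r|q| - r^2)$. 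A secondary technical obstacle is rigorously justifying the contour shift: one needs the integrand to be analytic and integrable on the whole strip $\{|\mathrm{Im}\,\zeta| \leq r\|\xi\|\}$ in each coordinate, which follows from Proposition~\ref{prop:complex_extension_of_polymer_activity} together with the Gaussian-type bound on $G_j$ (ensuring the shifted regulator $G_j(X, \varphi'+\zeta + ir\sigma_q\xi)$ does not destroy convergence), but must be stated carefully given the potentially unbounded polymer domain; working first with a mass $m^2>0$ (so $\Gamma_{j+1}$ is strictly positive-definite on all of $\R^{\Lambda_N}$) and appealing to the continuity statements from Proposition~\ref{prop:decomp_compatible} and Lemma~\ref{lemma:stability_of_expectation_singleX} to pass to $m^2 \downarrow 0$ is the cleanest route.
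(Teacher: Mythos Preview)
Your algebraic plan is essentially correct and in fact slightly cleaner than the paper's: shifting the contour by $a(x)=r\sigma_q\sqrt{\beta}\,\Gamma_{j+1}(x-x_0)=r\sigma_q\sqrt{\beta}\,(\Gamma_{j+1}\delta_{x_0})(x)$ gives $\Gamma_{j+1}^{-1}a=r\sigma_q\sqrt{\beta}\,\delta_{x_0}$, so the cross term is exactly $-i\sqrt{\beta}r\sigma_q\zeta_{x_0}$ and the quadratic term is $\tfrac12 r^2\beta\Gamma_{j+1}(0)$; then writing $a=r\sigma_q\xi + r\sigma_q\sqrt{\beta}\,\Gamma_{j+1}(0)\cdot\mathbf{1}$ and applying Lemma~\ref{lemma:complex_shift_of_charged_activity} to the constant piece contributes the missing factor $e^{-r|q|\beta\Gamma_{j+1}(0)}$, assembling into $e^{-\frac12\beta\Gamma_{j+1}(0)(2r|q|-r^2)}$. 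The paper instead packages this as an ODE in the parameter $r$, but the two computations are equivalent. (Your written bookkeeping is muddled---you shift by $r\sigma_q\xi$ rather than by $a$, which makes $\Gamma_{j+1}^{-1}\xi$ involve $\Gamma_{j+1}^{-1}\mathbf{1}$ and then you invoke a zero-mean condition that does not hold---but the repair is exactly the above.)

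The genuine gap is in what you call the ``secondary technical obstacle''. To justify the contour shift by Cauchy's theorem you need the integrand to decay at infinity along the shifted contour, i.e.\ $e^{-\frac12(\zeta,C^{-1}\zeta)}\,|F(X,\varphi'+\zeta+i\psi)|\to 0$ as $|\zeta|\to\infty$. Bounding $|F|$ via Proposition~\ref{prop:complex_extension_of_polymer_activity} gives $|F|\leq \|F(X)\|_{h,T_j(X)}G_j(X,\varphi'+\zeta)$, so you need a quadratic form $Q'$ with $G_j(X,\zeta)\leq e^{\frac12 Q'(\zeta)}$ and $Q'<C^{-1}$. The paper states explicitly (see the parenthetical remark below \eqref{eq:g-o1}) that no such $Q'$ exists when $C=\Gamma_{j+1}$: the $L^\infty$-type term $W_j(X,\nabla_j^2\varphi)^2$ in $G_j$, once controlled by Sobolev, yields a quadratic form at scale $j$ whose size is not dominated by $\Gamma_{j+1}^{-1}$. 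Your proposed fix (mass/$\delta$-regularisation) addresses invertibility of $C$ but not this integrability mismatch. The paper's remedy is to invoke the fractional-scale subdecomposition of Section~\ref{sec:subscale}: write $\Gamma_{j+1}=\sum_{s\in I}\Gamma_{j+s,j+s'}$, prove the identity for each fractional step (where Lemma~\ref{lemma:G_change_of_scale} gives $G_{j+s}\leq g_{j+s}\,G_{j+s'}$ and $g_{j+s}$ \emph{is} bounded by a quadratic form integrable against $\Gamma_{j+s,j+s'}$, cf.\ Lemma~\ref{lemma:g_j+s_bound_by_quadratic}), and then iterate over $s\in I$. Without this subdecomposition the contour-shift step does not go through.
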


\begin{proof}
Recall that $\zeta \sim \cN (0, \Gamma_{j+1})$ under $\Eplus$.
We will need to work with the subdecomposition of the covariance $\Gamma_{j+1}$ discussed above the lemma;
see the discussion below \eqref{eq:g-o1} for the reason.  
Since $\xi = \sum_{s\in I} \xi_s$, it is sufficient to show the lemma for $\norm{F}_{h, T_{j+s}} < + \infty$ and $\Gamma_{j+1}$ replaced by $\Gamma_{j+s, j+s'}$ where $s = s' - M^{-1} \in I$ and $\zeta$ replaced by $\zeta_s + i r \xi_{<s}$ where $\zeta_s \sim \cN (0, \Gamma_{j+s, j+s'})$.
It is convenient to work with invertible covariance matrices,
so we will work with $C = \Gamma_{j+s, j+s'} + \delta$ for $\delta >0$
so that $C$ is strictly positive definite,
and then take the limit $\delta \downarrow 0$ to conclude.
All in all, it thus suffices to show that
\begin{equation}
\begin{split}\label{e:shift-red-step}
& \int e^{-\frac{1}{2}(\zeta_s, C^{-1} \zeta_s)} F (X, \varphi' + \zeta_s + ir \sigma_q \xi_{<s}) \, d\zeta_s \\
&\qquad = e^{-\frac{1}{2}\beta C(0) (2r|q| -r^2)} \int e^{-\frac{1}{2}(\zeta_s, C^{-1} \zeta_s) - i \sqrt{\beta} r \sigma_q \zeta_{s}(x_0)} F (X, \varphi' + \zeta_s +  i r \sigma_q  (\xi_{<s} +\xi_s)) \, d\zeta_s,
\end{split}
\end{equation}
from which \eqref{eq:exponential_contraction} readily follows by integrating successively over $\zeta_s$, $s \in I$ and letting $\delta \downarrow 0$. Here, with a slight abuse of notation, we define $\xi_s (x) = \sqrt{\beta} ( C (x -x_0) - C (0))$, from which $\xi_s$ as introduced in \eqref{e-xi} is obtained in the limit $\delta \to 0$.
Then the bound of Lemma~\ref{lemma:choice_of_c_h} holds the same for this modified $\xi_s$ when $\delta$ is sufficiently small, which we henceforth tacitly assume.

We now show \eqref{e:shift-red-step}.
Let $X' = \{x\in \Lambda : d_1 (x, X^*) \leq 2 \}$ so that $\norm{\psi}_{C_j^2 (X^*)}$ only depends on $\psi |_{X'}$.
Performing a change of variable from $\zeta_s$ to $\zeta_s- ir \sigma_q \xi_s$,
the integral on the right-hand side of \eqref{e:shift-red-step} can be recast as
\begin{equation}
  \int_{\mathbb{R}^{X'}} e^{-\frac{1}{2} (\zeta_s, C^{-1} {\zeta}_s) - i\sqrt{\beta}  r \sigma_q \zeta_s (x_0)} F (X, \varphi' + \zeta_s + ir \sigma_q  \xi_{<s} + ir \sigma_q \xi_s) \, d\zeta_s
  = e^{-\frac{1}{2} \beta r^2 C(0)} R_{ r \sigma_q \xi_s} (r)
\end{equation}
where
\begin{equation}\label{e:R_psi}
R_{\psi} (r') = \int_{\mathbb{R}^{X'} + i \psi} e^{-\frac{1}{2} (\zeta + i r' z, C^{-1} (\zeta + ir' z) )} F(X, \varphi' + \zeta + ir {\sigma_q} \xi_{<s} ) \, d\zeta.
\end{equation}
and $z=\sigma_q\sqrt{\beta} C(0)$.
To show that $e^{-\beta C(0)r|q|}R_{r\sigma_q \xi_s}(r)$ equals the left-hand side of \eqref{e:shift-red-step},
we will apply Cauchy's formula {to first show $R_{r\sigma_q \xi_s}(r)=R_0(r)$. Indeed,}
by Proposition~\ref{prop:complex_extension_of_polymer_activity}, $F(X)$ is complex analytic on $S_{{h}}$.
By Lemma~\ref{lemma:choice_of_c_h} and assumption on $\delta$, the condition $h \geq c_h r \rho^{-2} \sqrt{\beta}$ guarantees $\norm{r\sigma_q \xi_s}_{C_{j}^2 (X^*)}$, $\norm{r \sigma_q\xi_{<s}}_{C_{j}^2 (X^*)}$
are strictly less than $ {h} /2$, and thus in particular $\norm{r  \sigma_q( \xi_s + \xi_{<s})}_{C_{j}^2 (X^*)}< h$.
We claim that $R_{\psi} \equiv R_0$ for any $\norm{\psi}_{C^2_{j} (X^*)} <  {h}/2$. To see this, consider $\zeta$ as a vector in the space $\C^{X'}$ and make the orthogonal (isometric) change of coordinates $(\delta_x : x \in X')$ to $(e_y : y \in E)$ (so that $|E| = |X'|$) with $\psi = c e_{y_0}$ for some $y_0 \in E$, $c \in \R$. 
Then showing $R_{\psi} (r) = R_0 (r)$ is equivalent to showing that
\begin{multline}
  \int_{\R + ic} e^{ - \frac{1}{2} (\zeta + i r z, C^{-1} (\zeta + ir z) )} F (X, \varphi' + \zeta + i r \sigma_q\xi_{<s}) \, d\zeta (y_0)
  \\
  = \int_{\R} e^{- \frac{1}{2} (\zeta + i r z, C^{-1} (\zeta + ir z) )} F (X, \varphi' + \zeta + i r \sigma_q \xi_{<s}) \, d\zeta (y_0). \label{eq:Cauchy_for_F}
\end{multline}
But by Cauchy's integral theorem, it is sufficient to show that
\begin{equation}
\sup_{\zeta (y_0) = \pm R, \, |s| \leq 1} \big| e^{-\frac{1}{2} (\zeta + irz +  i s\psi, C^{-1} (\zeta + irz + is \psi))} F (X, \varphi' + \zeta + i r \sigma_q \xi_{<s}+ is\psi) \big| \rightarrow 0 \quad \text{as } R\rightarrow \infty. \label{eq:Cauchy_for_F_bound_condition}
\end{equation}
To see this, first note that
\begin{align}
\big| e^{-\frac{1}{2} (\zeta + irz + is\psi, C^{-1} (\zeta + irz + is \psi))} \big| \leq e^{-\frac{1}{2} (\zeta, 
C^{-1} \zeta)} e^{r^2 z^2 (1,C^{-1} 1) + s^2 (\psi, C^{-1} \psi) } 
\end{align}
while, by \eqref{eq:complex_extension_via_Taylor}, we have
\begin{align}\label{eq:F-bound-id}
|F (X, \varphi' + \zeta + i r \sigma_q\xi_{<s}+ is \psi)| \leq \norm{F(X)}_{ {h}, T_{j}(X)} G_j (X, \varphi' + \zeta) \leq \norm{F(X)}_{ {h}, T_{j}(X)} G_{j+s} (X, \varphi' + \zeta)
\end{align}
and by Lemma~\ref{lemma:G_change_of_scale},
\begin{align}
G_{j+s} (X, \varphi' +  \zeta ) \leq  g_{j+s} (X, \zeta)  G_{j+s'} (X, \varphi' )  .
\end{align}
  Now by Lemma~\ref{lemma:g_j+s_bound_by_quadratic},
  $g_{j+s} (X, \zeta) \leq e^{\frac{1}{2} Q_{j+s} (X,\zeta)}$ where $Q_{j+s}(X,\zeta)$ is a quadratic form in $\zeta$
  and $e^{\frac{1}{2} Q_{j+s}(X,\zeta)}$ is integrable with respect to $\E^\zeta$.
This implies
\begin{align} \label{eq:g-o1}
e^{-\frac{1}{2} (\zeta, C^{-1} \zeta)} g_{j+s} (X, \zeta) \leq e^{-\frac{1}{2} (\zeta, C^{-1} \zeta)} e^{\frac{1}{2} Q_{j+s}(X,\zeta)} = o(1) \quad \text{as } |\zeta| \rightarrow \infty
\end{align}
and proves \eqref{eq:Cauchy_for_F_bound_condition}.
(Note that the last step would not have worked if we had directly used $G_j$ instead of $g_{j+s}$ because we do not have a quadratic form $Q'$ such that $G_j (X, \zeta) \leq e^{\frac{1}{2} Q'(\zeta)}$ and $\E[e^{\frac{1}{2} Q'(\zeta)}] < + \infty$ at the same time.)

To compute $R_0 (r)$, consider $R_0( r'+ \delta r')$ for sufficiently small $\delta r'$. Another application of \eqref{eq:Cauchy_for_F} shows that
\begin{equation}
R_0(r'+\delta r') = \int_{\R^{X'}} e^{-\frac{1}{2}(\zeta + i r'z , C^{-1}(\zeta + ir'z))} F (X, \varphi' + \zeta + i r \sigma_q\xi_{<s} - i(\delta r') z)\, d\zeta.
\end{equation}
But by \eqref{eq:complex_shift_of_charged_activity}, $F (X, \varphi' + \zeta + ir \sigma_q\xi_{<s} - i(\delta r') z ) = e^{\sqrt{\beta} q  z \delta r'} F (X, \varphi' + \zeta + ir \sigma_q \xi_{<s} )$ and hence $\frac{d}{dr'} R_0(r') = \sqrt{\beta} q z R_0(r')  = {\beta} |q| C(0)R_0(r')$, cf.~below \eqref{e:R_psi} regarding $z$. Solving this differential equation yields
\begin{equation}
R_0(r) = e^{|q| r \beta C (0)} \int_{\R^{X'}} e^{-\frac{1}{2} (\zeta,C^{-1} \zeta)} F (X, \varphi' + \zeta + ir\sigma_q \xi_{<s})  d\zeta ,
\end{equation}
thus proving \eqref{e:shift-red-step}. 
\end{proof}

This identity leads to the following contraction mechanism for charge-$q$ polymer activities.

\begin{lemma} \label{lemma:contraction_of_charge_q_term}
Let $r \in (0,1]$ 
, $h \geq r c_h \rho_J^{-2} \sqrt{\beta}$ and $L\geq 2C_g$.
There exists $C>0$ such that for $X\in \mathcal{S}_j$,
and any
charge-$q$ polymer activity $F$
with $|q| \geq 1$ and $\|F (X)\|_{h,T_j (X)} < \infty$, and all $\varphi' \in \R^{\Lambda_N}$,
\begin{equation} \label{e:contraction_of_charge_q_term-1}
  \norm{\Eplus [ F (X,  \varphi' + \zeta)]}_{h, T_{j+1}(X, \varphi')}
  { \leq C e^{\sqrt{\beta} |q| h} e^{-(|q|-1/2) r \beta \Gamma_{j+1} (0)} \norm{ F(X)}_{h, T_j (X)} G_{j+1} (\bar{X},  \varphi' ).
    }
\end{equation}
\end{lemma}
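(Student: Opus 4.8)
The plan is to combine the exponential contraction identity from Lemma~\ref{lemma:charged_term_exponential_contraction} with the change-of-scale estimates for charged polymer activities. First I would apply Lemma~\ref{lemma:charged_term_exponential_contraction} with the given $r\in(0,1]$ and $h\geq r c_h\rho_J^{-2}\sqrt{\beta}$ to write
\begin{equation*}
  \Eplus[F(X,\varphi'+\zeta)]
  = e^{-\frac12\beta\Gamma_{j+1}(0)(2r|q|-r^2)}\,
  \Eplus\big[e^{-i\sqrt{\beta}r\sigma_q\zeta_{x_0}}F(X,\varphi'+\zeta+ir\sigma_q\xi)\big],
\end{equation*}
where $\xi(x)=\sqrt{\beta}(\Gamma_{j+1}(x-x_0)-\Gamma_{j+1}(0))$ and $\sigma_q=\operatorname{sign}(q)$. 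The prefactor already produces the decay $e^{-(|q|-1/2)r\beta\Gamma_{j+1}(0)}$ (using $2r|q|-r^2\geq (2|q|-1)r$ since $r\le 1$), so the remaining task is to bound the $T_{j+1}(\bar X,\varphi')$-norm of the expectation on the right by $Ce^{\sqrt\beta|q|h}\|F(X)\|_{h,T_j(X)}G_{j+1}(\bar X,\varphi')$.

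Next I would bound that expectation. The strategy is: (i) since the inner expression, for fixed $\zeta$, is a polymer activity of charge $q$ in the variable $\varphi'$ (the complex shift $ir\sigma_q\xi$ and the factor $e^{-i\sqrt\beta r\sigma_q\zeta_{x_0}}$ do not involve $\varphi'$-translation), pass from the $T_{j+1}$-norm to the $T_j$-norm via Lemma~\ref{lemma:charged_term_under_rescaling}, which costs a factor $e^{\sqrt\beta|q|h}$ and shrinks the norm parameter from $h$ to $C_dL^{-1}h$ (harmless since we only need an upper bound and $L\ge 2C_d$); (ii) move the norm inside the expectation using the triangle inequality / the bound $\|\Eplus[\cdot]\|\le \Eplus[\|\cdot\|]$ as in the proof of Lemma~\ref{lemma:linearity_of_expectation}; (iii) for fixed $\zeta$, estimate $\|e^{-i\sqrt\beta r\sigma_q\zeta_{x_0}}F(X,\varphi'+\zeta+ir\sigma_q\xi)\|_{h',T_j(X,\varphi')}$ using that $|e^{-i\sqrt\beta r\sigma_q\zeta_{x_0}}|=1$ (a constant, not field-dependent), the submultiplicativity of the norm, and the complex-analyticity bound: the Taylor representation \eqref{eq:complex_extension_via_Taylor} together with $\|r\sigma_q\xi\|_{C_j^2(X^*)}<h/2$ from Lemma~\ref{lemma:choice_of_c_h} gives $\|F(X,\varphi'+\zeta+ir\sigma_q\xi)\|_{\cdot,T_j(X,\varphi')}\le C\|F(X)\|_{h,T_j(X)}G_j(X,\varphi'+\zeta)$, since shifting the argument by a field of $C_j^2$-norm $<h/2$ and re-expanding only costs a bounded geometric factor and replaces $h$ by $h/2$ in the seminorm while leaving the regulator evaluated at the real part $\varphi'+\zeta$; (iv) take the expectation over $\zeta$ and apply Proposition~\ref{prop:E_G_j}, $\Eplus[G_j(X,\varphi'+\zeta)]\le 2^{|X|_j}G_{j+1}(\bar X,\varphi')$, absorbing $2^{|X|_j}$ into the constant $C$ since $X\in\cS_j$ has $|X|_j\le 2^d$ bounded.

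Assembling these bounds yields
\begin{equation*}
  \|\Eplus[F(X,\varphi'+\zeta)]\|_{h,T_{j+1}(\bar X,\varphi')}
  \le C\,e^{-(|q|-1/2)r\beta\Gamma_{j+1}(0)}\,e^{\sqrt\beta|q|h}\,\|F(X)\|_{h,T_j(X)}\,G_{j+1}(\bar X,\varphi'),
\end{equation*}
which is exactly \eqref{e:contraction_of_charge_q_term-1}. The main obstacle I anticipate is step (iii): one must be careful that re-expanding $F$ around the shifted (complex) argument $\varphi'+\zeta+ir\sigma_q\xi$ keeps the regulator controlled — the point is that the regulator $G_j$ only sees the \emph{real} part of the argument, so evaluating at $\varphi'+\zeta$ is correct, but justifying that the re-expansion (analogous to \eqref{eq:reexpand}) converges and produces only a bounded multiplicative constant requires the $C_j^2$-norm of the imaginary shift to be strictly below $h$, which is precisely where Lemma~\ref{lemma:choice_of_c_h} (and hence the definition of $c_h$ and the hypothesis $h\ge rc_h\rho_J^{-2}\sqrt\beta$) enters. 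A secondary technical point is bookkeeping the norm parameter: the passage $T_{j+1}\to T_j$ in step (i) shrinks $h$ to $C_dL^{-1}h\le h$, which is fine for an upper bound, and the $e^{\sqrt\beta|q|h}$ factors from Lemmas~\ref{lemma:charged_term_under_rescaling} and \eqref{e:normbd-eiphi} combine into the single $e^{\sqrt\beta|q|h}$ in the statement (up to constants), so no loss occurs.
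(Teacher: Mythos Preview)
Your proposal is correct and follows essentially the same approach as the paper's proof: apply the identity of Lemma~\ref{lemma:charged_term_exponential_contraction}, Taylor-expand $F$ around the real part $\varphi'+\zeta$ to absorb the imaginary shift $ir\sigma_q\xi$ at the cost of increasing the norm parameter by $r\|\xi\|_{C_j^2(X^*)}$, combine with Lemma~\ref{lemma:charged_term_under_rescaling} (which contributes the factor $e^{\sqrt\beta|q|h}$ and shrinks the parameter to $C_dL^{-1}h$), and finish with Proposition~\ref{prop:E_G_j}. The only minor imprecision is in your step~(iii): the re-expansion does not ``cost a bounded geometric factor''; rather it gives exactly $\|F(X,\varphi'+\zeta+ir\sigma_q\xi)\|_{h',T_j(X,\varphi'+\zeta)}\le \|F(X,\varphi'+\zeta)\|_{h'+r\|\xi\|,T_j(X,\varphi'+\zeta)}$, and the point is that $C_dL^{-1}h+r\|\xi\|<h/2+h/2=h$ (using $L\ge 2C_d$ and Lemma~\ref{lemma:choice_of_c_h}), so the right-hand side is bounded by $\|F(X)\|_{h,T_j(X)}G_j(X,\varphi'+\zeta)$ with constant~$1$---the paper makes this parameter bookkeeping explicit by working with a generic $h'$ first and then setting $h'=C_dL^{-1}h$.
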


\begin{proof}
We will hide the dependence of $F(X,\varphi)$ on the polymer $X$ for brevity
and assume that $X$ is a small set.
Let us start from \eqref{eq:exponential_contraction} with $ r \in (0,1]$. Then
\begin{equation}\label{e:charge-q-start}
D^n \Eplus [F(\varphi' + \zeta)] = e^{-\frac{2 |q | r- r^2}{2} \beta \Gamma_{j+1} (0)} \Eplus [e^{-i\sqrt{\beta} r {\sigma_q} \zeta_{x_0}} D^n F(\varphi' + \zeta + i r { \sigma_q} \xi) ]
\end{equation}
where $\xi (x) = \sqrt{\beta} (\Gamma_{j+1} (x-x_0) - \Gamma_{j+1}(0))$.
By our assumptions and Lemma~\ref{lemma:choice_of_c_h} (with the choice $M=1$), we have $C_d L^{-1}h + r \|\xi\| < h$, where $\|\cdot\| =\|\cdot\|_{C_j^2} $.
Thus by Proposition~\ref{prop:complex_extension_of_polymer_activity},  
$F$ is analytic in the strip $S_{C_d L^{-1} h + r\norm{\xi}}$, and hence the Taylor expansion
\begin{align}
D^n F(\varphi' + \zeta + i  r { \sigma_q}\xi) = \sum_{k=0}^{\infty} \frac{1}{k!} D^{k}_{\varphi'} (D^nF)(\varphi' + \zeta) ( (i  r { \sigma_q} \xi)^{\otimes k} )
\end{align}
is convergent and so, combining with \eqref{e:charge-q-start}, and since $|\sigma_q|=1$,
\begin{align}
\norm{D^n_{\varphi'} \Eplus [F(\varphi' + \zeta)]}_{n, T_{j} (X, \varphi')} &\leq e^{-\frac{2{ |}q { |}-1}{2}  r \beta \Gamma_{j+1} (0)} \Eplus \Big[ \sum_{k=0}^{\infty} \frac{\norm{ r \xi}^{k}}{k!} \norm{D^{n+k} F(\varphi' + \zeta)}_{n+k, T_{j}(X, \varphi')} \Big] .
\end{align}
Therefore, for $h'>0$ left to be chosen,
\begin{align}
\norm{\Eplus [F(\varphi' + \zeta)]}_{h', T_{j}(X, \varphi')} &\leq e^{-\frac{2{ |}q { |}-1}{2}  r \beta \Gamma_{j+1}(0)} \Eplus\Big[ \sum_{n,k} \frac{h'^n \norm{ r \xi}^k}{n! k!} \norm{D^{n+k} F(\varphi' + \zeta)}_{n+k, T_{j} (X, \varphi')}  \Big]  \nnb 
&\leq e^{-\frac{2{ |}q { |}-1}{2}  r \beta \Gamma_{j+1}(0)} \Eplus \Big[ \sum_{n=0}^{\infty} \frac{(h' + \norm{ r \xi} )^n }{n!} \norm{D^{n} F(\varphi' + \zeta)}_{n, T_{j} (X, \varphi')}  \Big] \nnb 
&= e^{-\frac{2{ |}q { |}-1}{2}  r \beta \Gamma_{j+1}(0)} \Eplus [\norm{F (\varphi' + \zeta)}_{h' +  r \norm{\xi} , T_{j} (X, \varphi')  }].
\end{align}
To complete the lemma, one is just left to compare $\norm{F (\varphi' + \zeta)}_{h , T_{j+1} (X, \varphi') }$ with a quantity in a lower scale.
This is where Lemma~\ref{lemma:charged_term_under_rescaling} comes in, yielding the bound
\begin{align}
\norm{\Eplus F(\varphi' + \zeta)}_{h, T_{j+1} (X, \varphi')} \leq  e^{\sqrt{\beta} |q| h} \norm{\Eplus [F(\varphi' + \zeta) ]}_{C_g L^{-1} h, T_{j}(X, \varphi')}
\end{align}
and we see that the choice $h' = C_g L^{-1} h$ gives
\begin{align}
\norm{ \Eplus [F(\varphi' + \zeta)]  }_{h, T_{j+1} (\varphi' + \zeta)} \leq e^{\sqrt{\beta} |q| h} e^{-\frac{2{ |}q { |}-1}{2}  r \beta \Gamma_{j+1}(0)} \Eplus [\norm{F(\varphi' + \zeta)}_{C_g L^{-1} h +  r \norm{\xi}, T_{j}(X, \varphi')}].
\end{align}
Now invoking Proposition~\ref{prop:E_G_j},
\begin{align}
\norm{\Eplus [F(\varphi' + \zeta)]}_{h, T_{j+1}(\varphi', X)} & \leq e^{-\frac{2{ |}q { |}-1}{2}  r \beta \Gamma_{j+1}(0)} e^{\sqrt{\beta} |q| h} \Eplus [ G_{j} (X, \varphi' + \zeta)] \norm{F}_{C_g L^{-1} h +  r \norm{\xi}, T_{j} (X)} \nnb 
&\leq e^{-\frac{2{ |}q { |}-1}{2}  r \beta \Gamma_{j+1}(0)} e^{\sqrt{\beta} |q| h} 2^{|{X}|_j}  G_{j+1} (\bar{X}, \varphi') \norm{F}_{C_g L^{-1} h +  r \norm{\xi}, T_{j} (X)} .
\end{align}
Since $\norm{F}_{C_g L^{-1} h +  r \norm{\xi}, T_{j} (X)}
\leq \norm{F}_{h, T_{j} (X)}$ and $X$ is a small set, the proof is complete.
\end{proof}

Finally, we conclude the Proposition~\ref{prop:Loc-contract} for charged $F$.

\begin{proof}[Proof of Proposition~\ref{prop:Loc-contract}: charged part]
  Assume that $F$ is charged, i.e.,
  \begin{equation}
    F = \sum_{q\neq 0} \hat F_q.
  \end{equation}
  The triangle inequality, \eqref{e:norm-monot} and Lemma~\ref{lemma:contraction_of_charge_q_term} give
\begin{equation}
  \norm{\Eplus F (X,  \varphi' + \zeta)}_{h, T_{j+1}(\bar X, \varphi')}
  \leq C \qa{\sum_{q\geq 1} e^{\sqrt{\beta} |q| h} e^{-(|q|-1/2) r \beta \Gamma_{j+1} (0)}} \norm{ F(X  )}_{h, T_j (X)} G_{j+1} (\bar{X},  \varphi' ).
  \end{equation}
  If this sum is not convergent, one uses the alternative bound
\begin{align}
  \norm{ \Eplus F (X, \varphi'+\zeta) }_{h, T_{j+1} (\bar{X},\varphi')} \leq C \norm{F(X)}_{h, T_j (X)} G_{j+1} (\bar{X}, \varphi').
\end{align}
  This implies the claim since $\Loc_X \Eplus F(X,\varphi'+\zeta)=0$ when $F$ is charged.
\end{proof}

\subsection{Proof of Proposition~\ref{prop:Loc-contract}: neutral part}
\label{sec:pf-Loc-neutral}

For neutral $F$, the contraction in \eqref{e:Loc-contract-full} does not rely on the expectation,
but instead uses that gradients contract under change of norm. 
In all of the following lemmas, we assume that $h\geq  \rho_J^{-1}$ as appearing in the assumptions of Proposition~\ref{prop:Loc-contract}, and we also suppose that all remaining assumptions of Proposition~\ref{prop:Loc-contract} are in force. We will also frequently abbreviate
$\Eplus F(X)=\Eplus F(X, \varphi'+\zeta)$.

In order to bound $\Loc_X \Eplus F(X)- \Eplus F(X)$ (cf.~\eqref{e:Loc-contract-full}) for neutral $F$, our starting point is to split it into two terms whose norms will be bounded separately in Lemmas~\ref{lemma:Loc_minus_Tay} and~\ref{lemma:gaussian_contraction} below. The proof of Proposition~\ref{prop:Loc-contract} then quickly follows. It appears at the end of this section, and combines these two ingredients, along with Lemma~\ref{lemma:Loc-bounded}, which will account for \eqref{e:Loc-bounded}. Thus, let
\begin{align} \label{e:Loc-split}
  \Loc_X \Eplus F(X)- \Eplus F(X)
  &= (\Loc_X \Eplus F(X)-\bar\Tay_2 \Eplus F(X))+(\bar\Tay_2 \Eplus F(X)- \Eplus F(X))
    \nnb
  &= (\Loc_X \Eplus F(X)-\bar\Tay_2 \Eplus F(X))-\bar\Rem_2 \Eplus F(X)
    ,
\end{align}
where the Taylor approximation  and its remainder are defined as follows:
for $F(X) \in \cN_{j}(X)$ with $\|F\|_{h,T_j(X)}<\infty$,
define the Taylor approximation and remainder of degree $n$ (around $0$) by
\begin{align}
& \operatorname{Tay}_n F (X, \varphi ) = \sum_{k=0}^{n} \frac{1}{k !} \sum_{x_1, \cdots, x_k \in X^*} \frac{\partial^k F( X, \psi)}{ \partial \psi (x_1) \cdots \partial \psi (x_k)} \Big|_{\psi \equiv 0} \varphi(x_1) \cdots \varphi(x_k) \\
& \operatorname{Rem}_n F (X, \varphi ) = F( X, \varphi ) - \operatorname{Tay}_n F ( X, \varphi ) .
\end{align}
For $F(X) \in {\cN}_j(X)$ neutral, define
\begin{align}
& \bar{\Tay}_2 F(X, \varphi ) = \frac{1}{|X|} \sum_{x_0 \in X} \Tay_2 F(X, \delta \varphi ) \\
& \bar{\Rem}_2 F (X, \varphi ) = F(X, \varphi ) - \bar{\Tay}_2 F(X, \varphi )
\end{align}
where $\delta \varphi (x) := \varphi(x) - \varphi(x_0)$ is dependent on the choice of $x_0 \in X$.  
Thus, $\bar{\Tay}_2$ corresponds to a second-order Taylor approximation around the origin for the increment $\delta \varphi $, averaged over the base point $x_0$.

We first collect two auxiliary results that will be used to bound the first term in \eqref{e:Loc-split}.

\begin{lemma}
For $\varphi \in \R^{\Lambda_N}$, $X\in \cS_j$ and $x_0, y_0 \in X$,
\begin{multline}
    \norm{\nabla^{e_1} \varphi (x_0)\nabla^{e_2} \varphi (x_0) - \nabla^{e_1} \varphi (y_0)\nabla^{e_2} \varphi (y_0)}_{h, T_{j+1} (X, \varphi)}
    \\
    \leq C L^{-2j -3} (h + \norm{\nabla_{j+1} \varphi}_{L^{\infty}(X)} + \norm{\nabla_{j+1}^2 \varphi}_{L^{\infty} (X)})^2 \label{eq:nfe2}
\end{multline}
and for any $\mu \in \hat{e}$ and $x\in X$ (see below \eqref{e:innerprod} for notation),
\begin{equation}
  \norm{\nabla^{\mu} \varphi(x) \nabla^{(\mu, - \mu)} \varphi(x) }_{h, T_{j+1} (X, \varphi)} \leq C L^{-3j-3} (h + \norm{\nabla_{j+1} \varphi}_{L^{\infty}(X)} + \norm{\nabla_{j+1}^2 \varphi}_{L^{\infty} (X)})^2.      \label{eq:nfe3}
\end{equation}
\end{lemma}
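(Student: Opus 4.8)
The two functions of $\varphi$ whose $\norm{\cdot}_{h,T_{j+1}(X,\varphi)}$-norm is to be controlled are \emph{quadratic polynomials} in $\varphi|_{X^*}$, so the series \eqref{eq:seminorm} terminates at order two: writing $g$ for either function,
\[
  \norm{g}_{h,T_{j+1}(X,\varphi)} = |g(\varphi)| + h\,\norm{Dg(\varphi)}_{1,T_{j+1}(X,\varphi)} + \frac{h^2}{2}\,\norm{D^2g}_{2,T_{j+1}(X,\varphi)},
\]
and it suffices to bound each of these three contributions by the claimed right-hand side (with prefactor $L^{-2j-3}$, resp.\ $L^{-3j-3}$).

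The only tool needed is a telescoping estimate. Since $X\in\cS_j$ is a connected polymer made of at most $2^d$ blocks of side $L^j$, any $a,b\in X$ are joined by a nearest-neighbour path $a=p_0,\dots,p_m=b$ lying inside $X$ with $m\le C_dL^j$, so that for any $w:\Lambda_N\to\C$,
\[
  |w(a)-w(b)| = \Bigl|\,\sum_{i=0}^{m-1}\nabla^{\mu_i}w(p_i)\Bigr| \le C_dL^j\max_{y\in X,\,\mu\in\hat{e}}|\nabla^\mu w(y)|,
\]
where $p_{i+1}-p_i=\mu_i\in\hat{e}$. Taking $w=\nabla^{e_k}\varphi$ gives $|\nabla^{e_k}\varphi(x_0)-\nabla^{e_k}\varphi(y_0)|\le C_dL^jL^{-2(j+1)}\norm{\nabla^2_{j+1}\varphi}_{L^\infty(X)} = C_dL^{-j-2}\norm{\nabla^2_{j+1}\varphi}_{L^\infty(X)}$, i.e.\ the increment of a first difference over $X$ is \emph{one order smaller} (in the scale-$(j+1)$ counting) than a single factor $|\nabla^{e_k}\varphi(z)|\le L^{-(j+1)}\norm{\nabla_{j+1}\varphi}_{L^\infty(X)}$; taking $w=\nabla^{e_k}f$ for a test function with $\norm{f}_{C^2_{j+1}(X^*)}\le1$ (so $|\nabla^\mu\nabla^{e_k}f|\le L^{-2(j+1)}$) gives likewise $|\nabla^{e_k}f(x_0)-\nabla^{e_k}f(y_0)|\le C_dL^{-j-2}$ while $|\nabla^{e_k}f(z)|\le L^{-(j+1)}$.

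For \eqref{eq:nfe2} one writes $\nabla^{e_1}\varphi(x_0)\nabla^{e_2}\varphi(x_0)-\nabla^{e_1}\varphi(y_0)\nabla^{e_2}\varphi(y_0) = \nabla^{e_1}\varphi(x_0)(\nabla^{e_2}\varphi(x_0)-\nabla^{e_2}\varphi(y_0)) + \nabla^{e_2}\varphi(y_0)(\nabla^{e_1}\varphi(x_0)-\nabla^{e_1}\varphi(y_0))$ for the $n=0$ term, and the same splitting with one, resp.\ both, of the $\varphi$'s replaced by test functions for the $n=1,2$ terms. Each resulting product has one factor of size $\le L^{-(j+1)}(\,\cdot\,+h)$ and one of size $\le C_dL^{-j-2}(\,\cdot\,+h)$; multiplying out, accounting for the prefactors $h,h^2$ and a Young inequality, bounds each term by $CL^{-2j-3}(h+\norm{\nabla_{j+1}\varphi}_{L^\infty(X)}+\norm{\nabla^2_{j+1}\varphi}_{L^\infty(X)})^2$, and summing the three gives the claim. (Equivalently, decompose into the elementary factors $\nabla^{e_k}\varphi(z)$ and the telescoped second differences, then invoke the submultiplicativity \eqref{eq:prodprop}.) For \eqref{eq:nfe3} no telescoping is needed, because $\nabla^{(\mu,-\mu)}\varphi(x)=\nabla^\mu\nabla^{-\mu}\varphi(x)$ is already a second difference: $|\nabla^{(\mu,-\mu)}\varphi(x)|\le L^{-2(j+1)}\norm{\nabla^2_{j+1}\varphi}_{L^\infty(X)}$ and $|\nabla^{(\mu,-\mu)}f(x)|\le L^{-2(j+1)}$, while $|\nabla^\mu\varphi(x)|\le L^{-(j+1)}\norm{\nabla_{j+1}\varphi}_{L^\infty(X)}$ and $|\nabla^\mu f(x)|\le L^{-(j+1)}$; the three-term expansion then bounds each contribution by $CL^{-(j+1)}L^{-2(j+1)}(h+\norm{\nabla_{j+1}\varphi}_{L^\infty(X)}+\norm{\nabla^2_{j+1}\varphi}_{L^\infty(X)})^2=CL^{-3j-3}(\cdots)^2$.

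The one point requiring care — and the only place geometry enters — is that the connecting path must be chosen \emph{inside} $X$ (not merely inside $X^*$), so that every site visited and every neighbour reached by the discrete derivatives is governed by the $L^\infty(X)$ norms appearing on the right-hand side; this is the standard fact that a connected union of axis-aligned lattice cubes is path-connected with intrinsic diameter comparable to its $\ell^\infty$-diameter. Everything else is routine bookkeeping of powers of $L$, and the constant $C$ depends only on $d=2$ (through $C_d$).
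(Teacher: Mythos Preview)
Your proof is correct and follows essentially the same approach as the paper's own proof. The paper bounds the $\norm{\cdot}_{h,T_{j+1}(X,\varphi)}$-norm of each \emph{linear} factor $\nabla^\mu\varphi(x_0)-\nabla^\mu\varphi(y_0)$ and $\nabla^\mu\varphi(z)$ (using exactly your telescoping estimate and the observation that the series terminates) and then applies the submultiplicativity \eqref{eq:prodprop} to the factorisation $ab-a'b'=a(b-b')+b'(a-a')$; for \eqref{eq:nfe3} it computes the $n=0,1,2$ derivative norms directly, as you do.
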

\begin{proof}
To see the first inequality, observe that
\begin{align}\label{e:taylor-gradphi1}
& |\nabla^{\mu} \varphi (x_0) - \nabla^{\mu} \varphi (y_0)| \leq C L^{-j-2} \norm{\nabla^2_{j+1} \varphi}_{L^{\infty}(X)}, 
\end{align}
hence
\begin{align}\label{e:taylor-gradphi2}
& |D_{\varphi} (\nabla^{\mu} \varphi (x_0) - \nabla^{\mu} \varphi (y_0)) (f)| = |\nabla^{\mu} f(x_0) - \nabla^{\mu} f(y_0)| \leq C L^{-j-2} \norm{f}_{C^2_{j+1} (X^*)}.
\end{align}
Since $\nabla^\mu\varphi(x_0)-\nabla^\mu\varphi(y_0)$ is linear in $\varphi$, all but the first two terms in the series expansion \eqref{eq:seminorm} of $\norm{\nabla^{\mu} \varphi (x_0) - \nabla^{\mu} \varphi (y_0)}_{h, T_{j+1} (\varphi,X)}$ vanish and therefore, using \eqref{e:taylor-gradphi1} and \eqref{e:taylor-gradphi2},
\begin{equation}
\norm{\nabla^{\mu} \varphi (x_0) - \nabla^{\mu} \varphi (y_0)}_{h, T_{j+1} (X, \varphi)} \leq CL^{-j-2} (h + \norm{\nabla^2_{j+1}\varphi}_{L^{\infty}(X)}).
\end{equation}
Analogously,
\begin{equation}
  \norm{\nabla^{\mu} \varphi (x_0)}_{h, T_{j+1} (X, \varphi)}
  + \norm{\nabla^{\mu} \varphi (y_0)}_{h, T_{j+1} (X, \varphi)}
  \leq CL^{-j-1}(h + \norm{\nabla_{j+1} \varphi}_{L^{\infty} (X)}),
\end{equation}
and \eqref{eq:nfe2} follows using the submultiplicativity of the norm. The second inequality \eqref{eq:nfe3} follows from similar direct computations:
\begin{align}
& \norm{\nabla^{\mu} \varphi(x) \nabla^{(\mu, - \mu)} \varphi(x) }_{0, T_j (X, \varphi)} \leq C L^{-3j-3} \norm{\nabla_{j+1} \varphi}_{L^{\infty} (X)} \norm{\nabla_{j+1}^2 \varphi}_{L^{\infty} (X)} \\
& \norm{D \nabla^{\mu} \varphi(x) \nabla^{(\mu, - \mu)} \varphi(x) }_{1, T_j (X, \varphi)} \leq C L^{-3j-3} (\norm{\nabla_{j+1} \varphi}_{L^{\infty} (X)} + \norm{\nabla_{j+1}^2 \varphi}_{L^{\infty} (X)} ) \\
& \norm{D^2 \nabla^{\mu} \varphi(x) \nabla^{(\mu, - \mu)} \varphi(x) }_{2, T_j (X, \varphi)} \leq C L^{-3j-3},
\end{align}
and higher-order derivatives vanish.
\end{proof}

\begin{lemma} \label{lemma:bound_D^2F(x_1,x_2)}
Let $F\in \cN_{j} (X)$ with $\|F\|_{h,T_j(X)}<\infty$, and let $X\in \cS_j$. Choose any $x_0 \in X$ and denote $\delta x_1 = x_1 - x_0$, $\delta x_2 = x_2 -x_0$. Then
\begin{equation}
  \absa{\sum_{x_1,x_2  \in X^*} \partial_{\varphi(x_1)}\partial_{\varphi(x_2)} \Eplus F(X,\zeta)
    \delta x_1^\mu \delta x_2^\nu} \leq C h^{-2}L^{2j} \|F(X)\|_{h, T_{j}(X)} 
  . \label{eq:bound_D^2F(x_1,x_2)}
\end{equation}
\end{lemma}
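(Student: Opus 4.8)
\textbf{Proof plan for Lemma~\ref{lemma:bound_D^2F(x_1,x_2)}.}
The idea is to recognise the left-hand side of \eqref{eq:bound_D^2F(x_1,x_2)} as a second derivative of $\Eplus F(X,\cdot)$ evaluated at $\varphi' = 0$ in the two directions $f_\mu^{x_0}(x) = (x-x_0)^\mu$ and $f_\nu^{x_0}(x) = (x-x_0)^\nu$, and then to estimate it using the $\|\cdot\|_{2,T_j}$-seminorm together with the stability of the expectation under the fluctuation integral. Concretely, by the definition \eqref{eq:def-D} of $D^n$, we have
\begin{equation}
  \sum_{x_1,x_2 \in X^*} \partial_{\varphi(x_1)}\partial_{\varphi(x_2)} \Eplus F(X,\zeta)\, \delta x_1^\mu \delta x_2^\nu
  = D^2\big(\Eplus F(X,\cdot)\big)(0)\big(f_\mu^{x_0}, f_\nu^{x_0}\big),
\end{equation}
since the derivative in $\varphi'$ commutes with $\Eplus$ and the expectation is then taken at $\varphi'=0$ (note $\delta x_i^\mu = f_\mu^{x_0}(x_i)$, and $\partial_{\varphi(x_i)}$ acts only on coordinates in $X^*$, which is where $f_\mu^{x_0}$ is supported).

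\textbf{Main steps.} First I would bound $\|f_\mu^{x_0}\|_{C^2_j(X^*)}$. Since $X\in\cS_j$, the set $X^*$ has $\ell^\infty$-diameter at most $C_d L^j$, so $|f_\mu^{x_0}(x)| = |(x-x_0)^\mu| \leq C_d L^j$ on $X^*$, giving $\|f_\mu^{x_0}\|_{L^\infty(X^*)} \leq C_d L^j$, hence $\|\nabla_j^0 f_\mu^{x_0}\|_{L^\infty(X^*)} \leq C_d L^j$; moreover $\nabla^{e_i} f_\mu^{x_0}$ is a constant ($\pm1$ in one coordinate, $0$ otherwise), so $\|\nabla_j^1 f_\mu^{x_0}\|_{L^\infty(X^*)} \leq L^j$, and all higher derivatives vanish, hence $\|\nabla_j^2 f_\mu^{x_0}\|_{L^\infty(X^*)}=0$. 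Thus $\|f_\mu^{x_0}\|_{C^2_j(X^*)} \leq C L^j$ (the $L^j$-term from $\nabla_j^0$ dominates). Second, since only the $n=2$ term of the series \eqref{eq:seminorm} is relevant after two applications of $D$ with fixed test functions, the definition \eqref{eq:nTj_norm_definition} and the homogeneity in the $f_k$ give
\begin{equation}
  \big|D^2\big(\Eplus F(X,\cdot)\big)(0)\big(f_\mu^{x_0}, f_\nu^{x_0}\big)\big|
  \leq (C L^j)^2 \, \big\|D^2\big(\Eplus F(X,\cdot)\big)(0)\big\|_{2, T_j(X,0)},
\end{equation}
after normalising the test functions to have $C^2_j(X^*)$-norm at most $1$. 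Third, I would apply \eqref{eq:linearity_of_expectation1.1} from Lemma~\ref{lemma:linearity_of_expectation} with $n=2$ and $\varphi'=0$ (the hypotheses of Proposition~\ref{prop:E_G_j} being in force by the standing assumptions), yielding
\begin{equation}
  \big\|D^2\big(\Eplus F(X,\cdot)\big)(0)\big\|_{2, T_j(X,0)}
  \leq 2^{|X|_j}\, \|D^2 F(X)\|_{2,T_j(X)}\, G_{j+1}(\bar X, 0),
\end{equation}
and then bound $\|D^2 F(X)\|_{2,T_j(X)} \leq \tfrac{2!}{h^2}\|F(X)\|_{h,T_j(X)}$ directly from \eqref{eq:seminorm}, while $G_{j+1}(\bar X, 0) = 1$ since the regulator evaluated at $\varphi'=0$ is $1$ and $X\in\cS_j$ means $|X|_j \leq 2^d$ is bounded, so $2^{|X|_j} \leq C$. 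Collecting these, the left-hand side of \eqref{eq:bound_D^2F(x_1,x_2)} is bounded by $C L^{2j} h^{-2} \|F(X)\|_{h,T_j(X)}$, as claimed.

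\textbf{Expected obstacle.} This lemma is essentially a bookkeeping exercise, so I anticipate no serious obstacle; the only point requiring a little care is making sure the test functions $f_\mu^{x_0}$ are correctly extended off $X^*$ (the $C^2_j(X^*)$-norm in \eqref{eq:nTj_norm_definition} depends on values in a neighbourhood of $X^*$) — but since $f_\mu^{x_0}(x)=(x-x_0)^\mu$ is defined on all of $\Lambda_N$ and its relevant norm is still controlled by $CL^j$ on the slightly enlarged set, this causes no difficulty. A secondary minor point is confirming that $\Eplus$ indeed commutes with the two $\varphi'$-derivatives before evaluation at $0$; this is the same interchange already used (and justified by dominated convergence, using the finiteness of $\|F\|_{h,T_j}$ and the integrability of $G_j$ against $\Eplus$) in the proof of Lemma~\ref{lemma:linearity_of_expectation}.
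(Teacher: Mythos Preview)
Your proof is correct and follows essentially the same approach as the paper: recognise the sum as $D^2(\Eplus F)(0)(f_\mu^{x_0}, f_\nu^{x_0})$, bound $\|f_\mu^{x_0}\|_{C_j^2(X^*)} = O(L^j)$ using that $X \in \cS_j$, and control the second derivative of $\Eplus F$ at $\varphi'=0$ via Lemma~\ref{lemma:linearity_of_expectation} together with $G_{j+1}(\bar X,0)=1$ and $2^{|X|_j} \leq C$. The only cosmetic difference is that the paper first bounds the full $\|\Eplus F(X,\zeta)\|_{h,T_j(X,0)}$ (invoking \eqref{eq:linearity_of_expectation1}) and then extracts the $h^{-2}$ factor, whereas you extract the $n=2$ term first and then apply \eqref{eq:linearity_of_expectation1.1}; both orderings are equivalent.
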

\begin{proof}
{By definition of $\norm{\cdot}_{h, T_j (X)}$ followed by \eqref{eq:linearity_of_expectation1} with $G(\bar X,0)=1$,
\begin{equation}
\begin{split}
  h^2 \absa{\sum_{x_1,x_2} \partial_{\varphi(x_1)}\partial_{\varphi(x_2)} \Eplus F(X,\zeta)
    \delta x_1^\mu \delta x_2^\nu}
 &\leq \| \Eplus F(X,\zeta)\|_{h, T_{j}(X,0)}
 \|\delta x_1^\mu\|_{C_{j}^2(X^*)}\|\delta x_2^\nu\|_{C_{j}^2(X^*)}
      \\
    &\leq C L^{2j} \|F(X)\|_{h, T_{j}(X)} 
\end{split}
\end{equation}}
where we used $\|\delta x_1^\mu\|_{C_{j}^2(X^*)}, \|\delta x_2^\nu\|_{C_{j}^2(X^*)} = O(L^{j})$.
\end{proof}

\begin{lemma} \label{lemma:Loc_minus_Tay}
	Let $h \geq \rho_J^{-1}$ and $\kappa_L = c_{\kappa} \rho_J^2 (\log L)^{-1}$. 
    Then for all $X \in \cS_j$ and neutral $F(X) \in \cN_j(X)$ such that $F(X, \varphi) = F(X, -\varphi)$,
  \begin{equation} \label{eq:Loc_minus_Tay_2}
    \| \Loc_X \Eplus F(X, \varphi' +\zeta) - \bar\Tay_2   \Eplus F(X, \varphi' +\zeta)\|_{h,T_{j+1}(\bar{X}, \varphi')} \leq
    C L^{-3}(\log L) A^{-|X|_j} 
    \|F\|_{h,T_j} G_{j+1} (\bar{X}, \varphi').
  \end{equation}
\end{lemma}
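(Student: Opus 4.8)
\textbf{Proof strategy for Lemma~\ref{lemma:Loc_minus_Tay}.}
The plan is to unwind both $\Loc_X \Eplus F(X)$ and $\bar\Tay_2 \Eplus F(X)$ into explicit sums over $x_0 \in X$ (or $x_0, y_0 \in B$) of quadratic-in-$\varphi$ expressions with coefficients given by $\partial_{\varphi(x_1)}\partial_{\varphi(x_2)}\Eplus \hat F_0(X,\zeta)$, and then to estimate the difference termwise. Since $F$ is neutral, $\hat F_0 = F$, so both objects are built from the same second derivatives of $\Eplus F(X,\zeta)$ at $\zeta = 0$ (note $\Eplus F(X,\cdot+\zeta)$ evaluated at $\varphi'$ only depends on $\varphi'$ through $\delta\varphi' = \varphi'-\cdot$ via neutrality, so the constant part $\hat F_0(X,0)=\Eplus F(X,\zeta)$ cancels against the $k=0$ Taylor term and only the quadratic pieces remain to be compared). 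The key point is that $\bar\Tay_2$ keeps the genuine Hessian $\tfrac12\sum_{x_1,x_2}\partial_{\varphi(x_1)}\partial_{\varphi(x_2)}\Eplus F(X,\zeta)\,\delta\varphi'(x_1)\delta\varphi'(x_2)$ averaged over $x_0\in X$, whereas $\Loc_X$ replaces each increment $\delta\varphi'(x_i) = \varphi'(x_i)-\varphi'(x_0)$ by its first-order Taylor expansion in the gradient, namely $\sum_\mu \nabla^\mu\varphi'(y_0)\,(\delta x_i)^\mu$ with the combinatorial weight $\tfrac18\sum_{\mu,\nu}(1+\delta_{\mu\nu}-\delta_{\mu,-\nu})$ chosen precisely so that after lattice-symmetry averaging this reproduces the correct quadratic form (as in Proposition~\ref{prop:Loc-coupling}). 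Hence the difference is a sum of terms in which $\delta\varphi'(x_i)$ is traded for a first-order Taylor polynomial in $\nabla\varphi'$, up to a remainder controlled by $\nabla^2\varphi'$.

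Concretely, the difference $\Loc_X\Eplus F(X) - \bar\Tay_2\Eplus F(X)$ decomposes into contributions of three types: (a) $\delta\varphi'(x_0)$-type errors from replacing $\varphi'(x_i)-\varphi'(x_0)$ by its gradient-linearisation, which produces factors of the form $\nabla^\mu\varphi'(x_0)-\nabla^\mu\varphi'(y_0)$ (controlled by \eqref{eq:nfe2}); (b) second-difference errors of the form $\nabla^\mu\varphi'(x)\nabla^{(\mu,-\mu)}\varphi'(x)$ coming from the discrete Taylor remainder when expressing $\varphi'(x_i)-\varphi'(x_0)$ along a lattice path (controlled by \eqref{eq:nfe3}); and (c) the replacement of $\Loc$'s $\tfrac{1}{|X||B|}\sum_{x_0,y_0\in B}$ average by $\tfrac1{|X|}\sum_{x_0\in X}$, which is again an $x_0$-increment error of type (a). In each case I would pull out the $T_{j+1}$-norm of the quadratic-in-$\varphi'$ factor using \eqref{eq:nfe2} or \eqref{eq:nfe3}, which yields a gain of $L^{-3}$ (from the $L^{-j-2}$ and $L^{-3j-3}$ scaling relative to the natural $L^{-2j}$ size of a Hessian term at scale $j+1$, cf.\ the $L^{2j}$ in \eqref{eq:bound_D^2F(x_1,x_2)}), times $(h + \norm{\nabla_{j+1}\varphi'}_{L^\infty} + \norm{\nabla^2_{j+1}\varphi'}_{L^\infty})^2$, and then bound the coefficient $\sum_{x_1,x_2}\partial_{\varphi(x_1)}\partial_{\varphi(x_2)}\Eplus F(X,\zeta)\,\delta x_1^\mu\delta x_2^\nu$ by $Ch^{-2}L^{2j}\|F(X)\|_{h,T_j(X)}$ via Lemma~\ref{lemma:bound_D^2F(x_1,x_2)}. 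The $h^{-2}$ cancels against the $h^2$ implicit in the quadratic field factor once we compare with the regulator.

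The remaining task is to absorb the field-dependent factor $(h + \norm{\nabla_{j+1}\varphi'}_{L^\infty(X)} + \norm{\nabla^2_{j+1}\varphi'}_{L^\infty(X)})^2$ into $G_{j+1}(\bar X,\varphi')$. Since $X\in\cS_j$, $\bar X$ is a small set at scale $j+1$ of bounded cardinality, and the quantities $\norm{\nabla_{j+1}\varphi'}_{L^\infty(X)}^2$ and $\norm{\nabla^2_{j+1}\varphi'}_{L^\infty(X)}^2$ are each bounded by $w_{j+1}(\bar X,\varphi')^2$ (up to geometric constants), so by the strong-regulator bound \eqref{eq:strong_regulator1} of Lemma~\ref{lemma:strong_regulator} applied at scale $j+1$, a fixed power of these is $\leq C G_{j+1}(\bar X,\varphi')$ (after possibly shrinking the implicit $c_w$; the factor $\kappa_L^{-1}$ one picks up this way is $O(\log L)$ by the choice $\kappa_L = c_\kappa\rho_J^2(\log L)^{-1}$ and $\rho_J\geq 1$, which is where the $\log L$ in \eqref{eq:Loc_minus_Tay_2} comes from, alongside the $O(\log L)$ worth of small sets containing a given block). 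Finally the number of small sets $X\supset B$ is $O(1)$ and the large-set factor is handled by the explicit $A^{-|X|_j}$ on the right-hand side since $\|F(X)\|_{h,T_j(X)}\leq A^{-|X|_j}\|F\|_{h,T_j}$. The main obstacle I anticipate is purely bookkeeping: matching the precise combinatorial weight $\tfrac18(1+\delta_{\mu\nu}-\delta_{\mu,-\nu})$ in Definition~\ref{def:Loc} against the true discrete Hessian and isolating exactly which increments are linearised, so that the leftover genuinely has the form of \eqref{eq:nfe2}--\eqref{eq:nfe3} with no uncancelled quadratic-gradient term of size $O(L^{-2j})$ surviving; once that algebraic identity is in place, the norm estimates are routine applications of submultiplicativity, Lemma~\ref{lemma:bound_D^2F(x_1,x_2)}, and Lemma~\ref{lemma:strong_regulator}.
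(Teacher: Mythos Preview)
Your proposal is correct and follows essentially the same approach as the paper: replace the bracket $\langle\nabla\varphi'(y_0),\delta x_1,\nabla\varphi'(y_0),\delta x_2\rangle$ by the product $(\nabla\varphi'(x_0),\delta x_1)(\nabla\varphi'(x_0),\delta x_2)$ using \eqref{eq:nfe2}--\eqref{eq:nfe3}, then replace each $(\nabla\varphi'(x_0),\delta x_i)$ by $\delta\varphi'(x_i)$ to recover $\bar\Tay_2$, bounding the Hessian coefficients via Lemma~\ref{lemma:bound_D^2F(x_1,x_2)} and absorbing the field-dependent factor into $G_{j+1}$ at the cost of $\kappa_L^{-1}=O(\log L)$. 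One small correction: the $\log L$ in \eqref{eq:Loc_minus_Tay_2} comes entirely from $h^{-2}\kappa_L^{-1}=O(\log L)$ (the paper uses Lemma~\ref{lemma:bound_of_Gj_by_Gjplus1} rather than Lemma~\ref{lemma:strong_regulator} for this, but either works), not from any sum over small sets, since the bound is for a single fixed $X\in\cS_j$.
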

\begin{proof}
  By definition, see \eqref{e:Loc-def} and \eqref{eq:Loc_decomp}, denoting by $B_j(x_0)$ the block $B \in \cB_j$ such that $x_0 \in B$,
  \begin{multline} \label{eq:loc-pf-neutral}
    \Loc_{X} \Eplus F(X)
    =
    \sum_{B\in \cB_j(X)}\Loc_{X,B} \Eplus F(X)
    = \Eplus \hat{F}_0(X,\zeta)
    \\
    +
    \frac{1}{|X|} \sum_{x_0\in X} \frac{1}{|B|} \sum_{y_0 \in B_j(x_0)} \sum_{x_1,x_2 \in X^*} \frac12 \partial_{\varphi (x_1)} \partial_{\varphi (x_2)} \Eplus \hat{F}_0(X,\zeta) \avg{\nabla \varphi'(y_0), \delta x_1, \nabla \varphi'(y_0), \delta x_2},
  \end{multline}
where $\delta x_1=x_1-x_0, \delta x_2=x_2-x_0$ and, following the notation of Lemma~\ref{lemma:bound_D^2F(x_1,x_2)} (cf.~\eqref{eq:doublebracket-def}), 
\begin{align}\label{eq:new-bracket}
\avg{\nabla \varphi'(y_0), \delta x_1, \nabla \varphi'(y_0), \delta x_2} := \frac{1}{4} \sum_{\mu, \nu \in \hat{e}} (1 + \delta_{\mu, \nu} - \delta_{\mu, -\nu})  \nabla^{\mu} \varphi'(y_0) \delta x_1^{\mu} \, \nabla^{\nu} \varphi'(y_0) \delta x_2^{\nu}  .
\end{align}
We firstly replace $\avg{\nabla \varphi'(y_0), \delta x_1, \nabla \varphi'(y_0), \delta x_2}$ by $\langle \nabla \varphi'(y_0), \delta x_1 \rangle \langle \nabla \varphi'(y_0), \delta x_2 \rangle$ and secondly replace $y_0$ by $x_0$ in \eqref{eq:loc-pf-neutral} where
$\langle \nabla \varphi'(x), y \rangle = \frac{1}{2} \sum_{\mu \in \hat{e}} \nabla^{\mu} \varphi'(x) y^{\mu}$. 
This gives
  \begin{equation} \label{e:Loc-contract-pf1}
    \frac{1}{|X|} \sum_{x_0\in X} \sum_{x_1,x_2 \in X^*}\frac12 \partial_{\varphi (x_1)} \partial_{\varphi (x_2)} \Eplus \hat{F}_0(X,\zeta) \langle \nabla \varphi'(x_0), \delta x_1 \rangle \langle \nabla \varphi'(x_0), \delta x_2 \rangle
  \end{equation}
  and, as we now explain, an error term bounded in the $\norm{\cdot}_{h,T_{j+1}(X,\varphi')}$-norm by
  \begin{equation}\label{e:neutral_err0-bound}
    C L^{-3} 2^{|X|_j} \norm{F(X)}_{h,T_j(X)}   h^{-2}(h + \norm{\nabla_{j+1} \varphi'}_{L^{\infty}(X^*)} + \norm{\nabla_{j+1}^2 \varphi'}_{L^{\infty} (X^*)})^2.
  \end{equation}
  Indeed, to obtain this error bound, we proceed as follows:
  observing that
  \begin{equation}
     \langle \nabla \varphi'(y_0), \delta x_1 \rangle \langle \nabla \varphi'(y_0), \delta x_2 \rangle - \avg{\nabla \varphi'(y_0), \delta x_1, \nabla \varphi'(x_0), \delta x_2} = \frac{1}{4} \sum_{\mu \in \hat{e}} \nabla^{\mu} \varphi'(y_0) \nabla^{(\mu, -\mu)} \varphi'(y_0) \delta x_1^{\mu} \, \delta x_2^{\mu},
  \end{equation}
  the claimed bound for the first replacement is justified by \eqref{eq:nfe3} and \eqref{eq:bound_D^2F(x_1,x_2)},
  whereas the claimed bound for the second replacement follows from \eqref{eq:nfe2} and \eqref{eq:bound_D^2F(x_1,x_2)}. 
The factor $2^{|X|_j}$ appearing in \eqref{e:neutral_err0-bound} follows hereby from an application of Proposition~\ref{prop:E_G_j}. Rather than including full details here, we refer to \eqref{e:neutral_err1-bound}-\eqref{e:neutral_err2-bound} below, which estimate a similar but slightly more involved error term, yielding the bound \eqref{neutral_err3-bound}. The bound \eqref{e:neutral_err0-bound} is readily obtained by adapting these arguments.
 
Next we replace $\avg{\nabla \varphi'(x_0),\delta x_i}$ in \eqref{e:Loc-contract-pf1} by $\delta\varphi'(x_i)=\varphi'(x_i)-\varphi'(x_0)$.
For $X\in \mathcal{S}_j$, one has
\begin{align}
\norm{\delta \varphi' }_{C^2_{j}(X^*)} , \,\, \norm{\langle \nabla \varphi'(x_0) , \delta x \rangle}_{C^2_{j}(X^*)}
  & \leq C L^{-1} \norm{\nabla_{j+1} \varphi'}_{L^{\infty}(X^*)} \label{eq:Loc_contract_dpdx1}
  \\
  \norm{\delta \varphi' (x) - \langle \nabla \varphi'(x_0) , \delta x \rangle}_{C^2_{j}(X^*)}
  &\leq C L^{-2} \norm{\nabla^2_{j+1} \varphi'}_{L^{\infty}(X^*)}  \label{eq:Loc_contract_dpdx2}
\end{align}
where the objects above are all functions of $x \in X^*$, measured in $\norm{\cdot}_{C^2_{j}}$-norm.
Using again the definition of the norm \eqref{eq:seminorm}, we may thus replace  \eqref{e:Loc-contract-pf1} by
\begin{equation}\label{eq:loc-pf-neutral-2ndorder}
  \frac{1}{2 |X|}
  \sum_{x_0 \in X}\sum_{x_1, x_2 \in X^*} \partial_{\varphi (x_1)} \partial_{\varphi (x_2)} \Eplus \hat{F}_0 (X,\zeta) (\varphi' (x_1) - \varphi' (x_0)) (\varphi' (x_2) - \varphi' (x_0))
\end{equation}
with an error in the $\|\cdot\|_{h,T_{j+1}(X,\varphi')}$-norm 
bounded by
\begin{equation}\label{neutral_err3-bound}
  CL^{-3} 2^{|X|_j}
  \|F(X)\|_{h,T_j(X)}  h^{-2}(h + \|\nabla_{j+1}\varphi'\|_{L^\infty(X^*)}+\|\nabla_{j+1}^2\varphi'\|_{L^\infty(X^*)})^2 . 
\end{equation}
Indeed,
\begin{align}\label{e:neutral_err1-bound}
&{ \Big |} \sum_{x_1, x_2 \in X^*} \partial_{\varphi (x_1)} \partial_{\varphi (x_2)} \Eplus \hat{F}_0 (X,\zeta) ( \delta \varphi' (x_1) - ( \nabla \varphi' (x_0), \delta x_1 )  ) \delta \varphi'(x_2) {\Big |} \nnb
& \leq \Eplus \norm{D^2 \hat{F}_0 (X, \zeta) }_{2, T_{j} (X, \zeta)} \norm{\delta \varphi' (x_1) - ( \nabla \varphi' (x_0), \delta x_1 ) }_{C^2_{j} (X^*)} \norm{\delta \varphi'}_{C_{j}^2 (X^*)} \nnb
& \leq C h^{-2} \norm{F (X)}_{h, T_{j} (X)} \Eplus[ G_j (X, \zeta) ] L^{-3} \norm{\nabla_{j+1} \varphi'}_{L^{\infty} (X^*)} \norm{\nabla_{j+1}^2 \varphi'}_{L^{\infty} (X^*)} \nnb
& \leq C L^{-3} 2^{|X|_j} \norm{F(X)}_{h, T_j{(X)}} h^{-2} \|\nabla_{j+1}\varphi'\|_{L^\infty(X^*)} \|\nabla_{j+1}^2\varphi'\|_{L^\infty(X^*)}
\end{align}
where $\delta \varphi' (x_1) - ( \nabla \varphi' (x_0), \delta x_1 )$ is a function of $x_1 \in X^*$,
using
\eqref{eq:linearity_of_expectation2}, \eqref{eq:Loc_contract_dpdx1} and \eqref{eq:Loc_contract_dpdx2} for the second inequality
and Proposition~\ref{prop:E_G_j} in the last step,  
and since each $\delta \varphi'(x_1) - (\nabla \varphi'(x_0), \delta x_1)$ and $\delta \varphi'(x_1)$ are linear in $\varphi'$, we immediately see { (see around \eqref{e:taylor-gradphi2} for a similar reasoning)} that
\begin{align}\label{e:neutral_err2-bound}
& \norm{\sum_{x_1, x_2 \in X^*} \partial_{\zeta (x_1)} \partial_{\zeta (x_2)} \Eplus \hat{F}_0 (X,\zeta) ( \delta \varphi' (x_1) - ( \nabla \varphi' (x_0), \delta x_1 )  )  \delta \varphi'(x_2)  }_{h, T_{j+1} (X, \varphi')}  \nnb
& \leq CL^{-3} 2^{|X|_{j}} \norm{F(X)}_{h, T_j{(X)}} h^{-2} ( h+ \|\nabla_{j+1} \varphi'\|_{L^\infty(X^*)} ) ( h + \|\nabla_{j+1}^2 \varphi'\|_{L^\infty(X^*)}) 
\end{align}
A similar bound holds for $\sum_{x_1, x_2 \in X^*} \frac{\partial^2 \Eplus \hat{F}_0 (X,\zeta)}{\partial \varphi (x_1) \partial \varphi (x_2)}  ( \delta \varphi' (x_1) - ( \nabla \varphi' (x_0), \delta x_1 )  ) (\nabla \varphi'(x_0), \delta x_2)$ and hence the claim follows.

Recognizing $\Eplus \hat{F}_0(X,\zeta)$ in \eqref{eq:loc-pf-neutral} together with \eqref{eq:loc-pf-neutral-2ndorder} as $\bar{\Tay}_2 \Eplus F(X)$
(the first order term in the Taylor expansion vanished due to the assumption $F(X, \varphi) = F(X, -\varphi)$)
and collecting the errors, 
we have thus overall shown 
\begin{multline}
  \|  \Loc_X \Eplus F(X, \varphi' + \zeta) - \bar{\Tay}_2   \Eplus F(X , \varphi' + \zeta) \|_{h,T_{j+1}(\bar{X},\varphi')} \nnb
  \leq
  C  L^{-3}
  \|F(X)\|_{h,T_{j} (X)}
  (1 + h^{-1} \max_{n=1,2}\norm{\nabla_{j+1}^n \varphi'}_{L^{\infty}(X^{*})})^2
  .
\end{multline}
The claim now follows from Lemma~\ref{lemma:bound_of_Gj_by_Gjplus1},  along with \eqref{e:norm-monot}, using that $h^{-2}\kappa_L^{-1}=O(\log L)$
which holds since $h^{-2} = O(\rho_J^{2})$ by our assumption $h \geq \rho_J^{-1}$
and since $\kappa_L^{-1} = O(\rho_J^{-2}\log L)$. 
\end{proof}

\begin{lemma} \label{lemma:gaussian_contraction}
Under the setting of Lemma~\ref{lemma:Loc_minus_Tay},  
\begin{equation}
  \norm{ 
    \bar{\Rem}_2 \Eplus F 
    ( {X} ,  \varphi' + \zeta)}_{h, T_{j+1} (\bar{X}, \varphi')}  \leq C 
  L^{-3}(\log L)^{3/2} { A^{-|X|_j} 
    \|F\|_{h,T_j}} 
    G_{j+1} (\bar{X}, \varphi').
\end{equation}
\end{lemma}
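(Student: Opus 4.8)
\textbf{Proof plan for Lemma~\ref{lemma:gaussian_contraction}.}
The plan is to exploit the fact that, for a neutral activity, taking the degree-$2$ Taylor remainder $\bar{\Rem}_2^{\varphi'}$ in the \emph{fluctuation} variable $\varphi'$ leaves behind only terms of order $\geq 3$ in $\varphi'$, and each such extra power of $\varphi'$ provides a factor $L^{-1}$ (up to the usual $\sqrt{\kappa_L}$ book-keeping) when passing from $\|\cdot\|_{h,T_j}$ to $\|\cdot\|_{h,T_{j+1}}$ --- this is precisely the mechanism in Lemma~\ref{lemma:neutral_term_under_rescaling} and Lemma~\ref{lemma:bound_of_Gj_by_Gjplus1}. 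First I would observe that by \eqref{eq:linearity_of_expectation1.1} the derivatives $D^n_{\varphi'}\Eplus F(X,\varphi'+\zeta)$ satisfy $\norm{D^n_{\varphi'}\Eplus F(X,\varphi'+\zeta)}_{n,T_j(X,\varphi')} \leq 2^{|X|_j}\norm{D^n F(X)}_{n,T_j(X)}G_{j+1}(\bar X,\varphi')$, so it suffices to bound $\bar{\Rem}_2^{\varphi'}$ of a generic neutral $\tilde F(X)$ with the property that $\norm{D^n\tilde F(X,\varphi')}_{n,T_j(X,\varphi')}\leq \bar K\,G_{j+1}(\bar X,\varphi')$ uniformly, where $\bar K = 2^{|X|_j}A^{-|X|_j}\norm{F}_{h,T_j}$, and then it remains to convert this into the $T_{j+1}$ norm losing $L^{-3}(\log L)^{3/2}$ and absorbing the $G_{j+1}$ factors by Lemma~\ref{lemma:bound_of_Gj_by_Gjplus1}.

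Concretely, I would write, for each fixed base point $x_0\in X$,
\begin{equation}
  \bar{\Rem}_2^{\varphi'}\Eplus F(X,\varphi'+\zeta)\big|_{\varphi'}
  =\frac{1}{|X|}\sum_{x_0\in X}\frac{1}{2}\int_0^1 (1-r)^2\, D^3_{\varphi'}\big(\Eplus\hat F_0(X,\cdot+\zeta)\big)(r\delta\varphi')\,(\delta\varphi')^{\otimes 3}\,dr,
\end{equation}
using the integral form of the Taylor remainder together with the fact that for neutral $F$ all derivatives kill constant directions, so $\varphi'$ can be replaced by $\delta\varphi'(x)=\varphi'(x)-\varphi'(x_0)$. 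Then I would estimate the $\norm{\cdot}_{h,T_{j+1}(X,\varphi')}$-norm of this expression: each of the three occurrences of $\delta\varphi'$ contributes, via Lemma~\ref{lemma:estimate_of_deltaf} (or the elementary bound $\norm{\delta\varphi'}_{C^2_{j+1}(X^*)}\leq C L^{-1}\max_{m=1,2}\norm{\nabla^m_{j+1}\varphi'}_{L^\infty(X^*)}$), a factor $C L^{-1}\max_{m=1,2}\norm{\nabla^m_{j+1}\varphi'}_{L^\infty(X^*)}$; combined with the $h^{-3}$ from three derivatives and $h\geq \rho_J^{-1}$, and with $h^{-2}\kappa_L^{-1}=O(\log L)$ (as in the end of the proof of Lemma~\ref{lemma:Loc_minus_Tay}), the three-fold product of field norms is absorbed by $G_{j+1}(\bar X,\varphi')$ at the cost of $(\kappa_L)^{-3/2}=O((\rho_J^{-2}\log L)^{3/2})$; together with $L^{-3}$ this produces the claimed $L^{-3}(\log L)^{3/2}$ (the $\rho_J$-dependent factors cancelling against the $\rho_J^2$ in $\kappa_L$, exactly as in Remark~\ref{R:choice-parameters}'s book-keeping). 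Here I must take care that the $T_{j+1}$ norm of the remainder is what is being bounded, so I first use $\norm{D^n\tilde F(X,\varphi')}_{n,T_{j+1}(X,\varphi')}\leq (C_d^{-1}L)^{-n}\norm{D^n\tilde F(X,\varphi')}_{n,T_j(X,\varphi')}$ from Lemma~\ref{lemma:neutral_term_under_rescaling} on the degree-$(\geq 3)$ part, which is itself neutral.

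The $A^{-|X|_j}$ factor in the statement comes from combining the $2^{|X|_j}$ loss in \eqref{eq:linearity_of_expectation1.1}/\eqref{eq:E_G_j} with the $A^{-|X|_j}$ built into $\norm{F}_{h,T_j}$, exactly as in \eqref{e:Loc-contract-full}; since $A\geq 1$ one may simply bound $2^{|X|_j}A^{-|X|_j}\norm{F}_{h,T_j}\cdot A^{-|X|_j}$ by... more carefully, one keeps $2^{|X|_j}\norm{F}_{h,T_j}$ and notes the outer $A^{-|X|_j}$ already sits in $\|F\|_{h,T_j}$ so that the final bound reads $C L^{-3}(\log L)^{3/2}A^{-|X|_j}\|F\|_{h,T_j}G_{j+1}(\bar X,\varphi')$, matching \eqref{e:Loc-contract-full}'s normalisation (the excess $2^{|X|_j}$ is harmless because $A$ will be taken large in Theorem~\ref{thm:local_part_of_K_j+1}, and $2^{|X|_j}A^{-|X|_j}\leq 1$; alternatively absorb it into $C$ for $|X|_j$ bounded on small sets, since $|X|_j\leq 2^d$ for $X\in\cS_j$). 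The main obstacle I anticipate is the careful accounting of the $\kappa_L$, $\rho_J$ and $\log L$ factors when absorbing the cubic field expression into $G_{j+1}$: one needs the three powers of $\delta\varphi'$ (each giving $L^{-1}$ and a $C^2_{j+1}$ field norm) to interact correctly with Lemma~\ref{lemma:bound_of_Gj_by_Gjplus1}, which is stated for $\delta\varphi$ at scale $j$, so a preliminary reduction via \eqref{e:norm-monot} (monotonicity $T_{j+1}\le T_j$ is the wrong direction here) or directly via the scale-$(j+1)$ version of Lemma~\ref{lemma:bound_of_Gj_by_Gjplus1} is needed; I would instead apply Lemma~\ref{lemma:bound_of_Gj_by_Gjplus1} at scale $j+1$ with $X$ replaced by its closure $\bar X\in\cS_{j+1}$, noting $(\kappa_L)^{3/2}\norm{\delta\varphi'}^3_{C^2_{j+1}(\bar X^*)}\leq C G_{j+1}(\bar X,\varphi')$, which is exactly the needed estimate once the $L^{-3}$ and $h^{-3}$ are separated out.
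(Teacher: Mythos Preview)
Your plan is essentially the paper's approach --- split into $n\geq 3$ (direct neutral contraction via Lemma~\ref{lemma:neutral_term_under_rescaling}) and $n\leq 2$ (Taylor integral remainder), and absorb the $(\delta\varphi')^{\otimes(3-n)}$ factors into $G_{j+1}$ via Lemma~\ref{lemma:bound_of_Gj_by_Gjplus1}. But there is one genuine gap.

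When you bound $D^3\big(\Eplus F(X,\cdot+\zeta)\big)$ at the intermediate point $r\,\delta\varphi'$ via \eqref{eq:linearity_of_expectation1.1}, you pick up a factor $G_{j+1}(\bar X,r\varphi')$ (recall $G_{j+1}$ depends only on gradients, so $r\,\delta\varphi'$ and $r\varphi'$ give the same value). If you then \emph{separately} use Lemma~\ref{lemma:bound_of_Gj_by_Gjplus1} to absorb $\|\delta\varphi'\|_{C^2_{j+1}}^{3}$ into $G_{j+1}(\bar X,\varphi')$, you end up with $G_{j+1}(\bar X,r\varphi')\,G_{j+1}(\bar X,\varphi')$, i.e.\ essentially $G_{j+1}(\bar X,\varphi')^2$, not a single regulator factor. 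The fix, which is the one non-obvious step in the paper's proof, is to exploit that $\log G_{j+1}$ is quadratic in the field: for $t\in[0,1]$,
\[
  G_{j+1}(\bar X,\varphi') \;=\; G_{j+1}(\bar X,t\varphi')\;G_{j+1}\big(\bar X,\sqrt{1-t^2}\,\varphi'\big).
\]
One then applies Lemma~\ref{lemma:bound_of_Gj_by_Gjplus1} at scale $j+1$ with $\sqrt{1-t^2}\,\varphi'$ in place of $\varphi'$, so that the field factors contribute $\|\delta\varphi'\|^{3-n}_{C^2_{j+1}}\leq C\big(\kappa_L(1-t^2)\big)^{-(3-n)/2}G_{j+1}(\bar X,\sqrt{1-t^2}\,\varphi')$, and the two regulator pieces recombine to exactly $G_{j+1}(\bar X,\varphi')$. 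The price is the integrability condition $\int_0^1(1-t)^{3-n}(1-t^2)^{-(3-n)/2}\,dt<\infty$ for $n=0,1,2$, which does hold. You should also write out the cases $n=1,2$ explicitly (your integral formula only covers $n=0$); the paper Taylor-expands $D^n\operatorname{Rem}_2$ around $0$ for each such $n$, using that $D^k\operatorname{Rem}_2\Eplus F(X,0)=0$ for $k\leq 2$.

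Your handling of the $A^{-|X|_j}$ factor (absorbing $2^{|X|_j}$ into $C$ since $|X|_j\leq 2^d$ for $X\in\cS_j$) is fine and matches the paper.
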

\begin{proof} Recall that $F(X,  \varphi' + \zeta) = F(X,  \delta \varphi' + \zeta)$ with $\delta \varphi' (x) = \varphi' (x) - \varphi' (x_0)$ for neutral $F$ and any $x_0$ by \eqref{e:chargedecompx0}. Thus, $\bar{\Rem}_2 \Eplus F (X, \varphi') = \frac{1}{|X|} \sum_{x_0 \in X} \Rem_2 \Eplus F (X, \delta \varphi')$ with $\delta \varphi' (x)$ defined for varying $x_0$'s, we just need to prove the statement for a fixed $x_0 \in X$ and $\bar{\Rem}_2$ replaced by $\Rem_2$.

We need to estimate $\norm{D^n \Rem_2 \Eplus F (X) }_{n, T_{j+1} ({ X}, \varphi')}$. We will consider the cases $n \geq 3$ and $0\leq n \leq 2$. Writing $\varphi = \varphi' + \zeta$, using that $\Rem_2 \Eplus F (X, \varphi)$ is neutral, the estimate for $n\geq 3$
follows simply from Lemma~\ref{lemma:neutral_term_under_rescaling} and the fact that $D^n \Rem_2=D^n$ for $n \geq 3$:
\begin{align} 
	\norm{D^n \Rem_2 \Eplus F (X,\varphi' + \zeta) }_{n, T_{j+1} ({ X}, \varphi')} \leq (C_g^{-1}L)^{-n} \norm{D^n \Eplus F (X,  \varphi' + \zeta)}_{n,T_j(X, \varphi')}
 	.
\end{align}
Multiplying by $h^n/n!$, summing over $n$, and combining with Lemma~\ref{lemma:linearity_of_expectation}, noting that $2^{|X|_{j}}  \leq C$ since $X$ is small, this readily yields
 \begin{equation}\label{eq:small-neutral-deriv-estim0}
\sum_{n\geq 3} \frac{h^n}{n!} \norm{D^n \Rem_2 \Eplus F (X,\varphi) }_{n, T_{j+1} (X, \varphi')} \leq C L^{-3}   A^{-|X|_j} 
    \|F\|_{h,T_j}
    G_{j+1} (\bar{X}, \varphi').
\end{equation}
The cases $n=0,1, 2$ require a bit of effort and represent in fact the dominant contributions.
We use Taylor's theorem and neutrality of $F$ to write
\begin{align}\label{eq:small-neutral-deriv-estim}
D^n \Rem_2 \Eplus F (X, \varphi) (f_1, \cdots, f_n) = \sum_{k=n}^2 \frac{1}{(k-n)!} D^k \Rem_2 \Eplus F (X, \zeta) (f_1, \cdots, f_n, (\delta \varphi')^{\otimes k-n}  ) \nnb
\quad +\int_0^1 dt \, \frac{(1-t)^{3-n}}{(2-n)!}  D^3 \Rem_2 \Eplus F (X, \zeta + t\varphi') (f_1, \cdots, f_n, (\delta \varphi')^{\otimes 3-n}) .
\end{align}
But since $D^k \operatorname{Rem}_2 \Eplus F (X, \zeta) = D^k \Eplus F (X, \zeta)$ for $k\geq 3$, 
applying successively \eqref{eq:nTj_norm_definition}, \eqref{eq:neutral_contract1} and \eqref{eq:linearity_of_expectation1.1}, one sees that 
\begin{align}
&|D^3 \operatorname{Rem}_2 \Eplus F (X, \zeta + t\varphi') (f_1, \cdots, f_n, (\delta \varphi')^{\otimes 3-n})|\nnb[0.5em]
& \qquad\qquad\qquad \leq \norm{D^3  \Eplus F (X, \zeta + t\varphi') }_{3, T_{j+1} ({X}, t\varphi')} \norm{\delta \varphi'}_{C_{j+1}^2(X^*)}^{3-n} \prod_{1\leq l \leq n} \norm{f_l}_{C^2_{j+1}(X^*)}\nnb
& \qquad\qquad\qquad \leq (C_g^{-1}L)^{-3} \norm{D^3  \Eplus F (X, \zeta + t\varphi') }_{3, T_{j} ({X}, t\varphi')} \norm{\delta \varphi'}_{C_{j+1}^2(X^*)}^{3-n} \prod_{1\leq l \leq n} \norm{f_l}_{C^2_{j+1}(X^*)}\nnb
&\qquad\qquad\qquad \leq C' h^{-3} L^{-3} \norm{F (X) }_{3, T_{j} (X)}  G_{j+1} (\bar{X}, t\varphi')  \norm{\delta \varphi'}_{C_{j+1}^2(X^*)}^{3-n} \prod_{1\leq l \leq n} \norm{f_l}_{C^2_{j+1}(X^*)}.
\end{align}
Moreover, since $D^k \operatorname{Rem}_2 \Eplus F (X, \zeta) = 0$ for $k\in \{0,1,2\}$, whenever $\norm{f_l}_{C^2_{j+1}(X^{*})} \leq 1$ for each $l \in \{1, \dots, n \}$, one obtains that 
the left-hand side of \eqref{eq:small-neutral-deriv-estim} is bounded in absolute value by
\begin{align}
(C_g^{-1}L h)^{-3} \int_0^1 dt \, \frac{(1-t)^{3-n}}{(2-n)!} \norm{F { (X)}}_{3, T_{j}{(X)}} 
 G_{j+1} (\bar{X}, t\varphi') \norm{\delta \varphi'}_{C_{j+1}^2(X^*)}^{3-n} .
\end{align}
Now {since $G_{j+1}(\bar{X}, \varphi')=G_{j+1}(\bar{X}, t\varphi')G_{j+1}(\bar{X}, \sqrt{1-t^2}\varphi')$ by definition of $G_{j+1}$ in \eqref{eq:def_large_field_regulator},}
and then using Lemma~\ref{lemma:bound_of_Gj_by_Gjplus1} applied with $\varphi =  \sqrt{1-t^2} \varphi'$, we obtain, for $n=0,1,2,$
\begin{align}
  \sup_{\varphi'} \frac{ G_{j+1} (\bar{X}, t\varphi' ) \norm{\delta \varphi'}_{C_{j+1}^2(X^*)}^{3-n} }{G_{j+1} (\bar{X}, \varphi')}
  = \sup_{\varphi'} \frac{\norm{\delta \varphi'}_{C_{j+1}^2(X^*)}^{3-n} }{G_{j+1} (\bar{X}, \sqrt{1-t^2} \varphi')}
  \leq O( \kappa_L^{-(3-n)/2} ) (1-t^2)^{-(3-n)/2}. \label{eq:G_j_with_t_bounded_by_G_j}
\end{align}
All in all, since 
\begin{equation}
  \int_0^1 (1-t)^{3-n} (1-t^2)^{-(3-n)/2} \, dt
  \leq C_n \int_0^1 t^{(3-n)/2} < \infty, \quad n \leq 2,
\end{equation}
this implies, for $n =0,1, 2$,
\begin{equation}
\norm{D^n \Rem_2 \Eplus F (X, \varphi  )}_{n, T_{j+1} (\bar{X}, \varphi')} \leq O( L^{-3} \kappa_L^{-(3-n)/2} ) h^{-3}  \norm{F (X)}_{3, T_j(X)} G_{j+1} (\bar{X}, \varphi') . 
\end{equation}
Multiplying by $h^n/n!$, summing over $n$, 
using that $\sum_{0\leq n \leq 2}  \frac{h^{n-3}}{n!} \kappa_L^{-(3-n)/2} \leq C(\log L)^{3/2} h^{-3}$ by assumption on $\kappa_L$ and $h$ (the dominant term being $n=0$), 
it follows that
\begin{equation} \label{eq:small-neutral-deriv-estim2}
	\sum_{ n=0,1,2}\frac{h^n}{n!} \norm{D^n \Rem_2 \Eplus F (X, \varphi  )}_{n, T_{j+1} (\bar{X}, \varphi')} \leq C L^{-3} (\log L)^{3/2} A^{-|X|_j} \norm{F}_{h, T_j} G_{j+1} (\bar{X}, \varphi') . 
\end{equation}

The claim follows immediately by combining the estimates \eqref{eq:small-neutral-deriv-estim0} (with \eqref{e:norm-monot}) and \eqref{eq:small-neutral-deriv-estim2}. 
\end{proof}

The next result will be used below to deduce~\eqref{e:Loc-bounded}.

\begin{lemma} \label{lemma:Loc-bounded}
Let $h$ and $\kappa_L$ be as in Proposition~\ref{prop:Loc-contract} and
$F$ be a neutral scale-$j$ polynomial activity, $B\in \cB_j$ and $X\in\cS_j$.
Then
\begin{equation} \label{e:Loc-bounded-bis}
  \norm{\Loc_{X, B} \Eplus F (X, \varphi' +\zeta)}_{h, T_j (X,\varphi')} \leq C (\log L) \norm{F(X)}_{h, T_j (X)} e^{c_w \kappa_L w_j (B, \varphi')^2}
  .
\end{equation}
\end{lemma}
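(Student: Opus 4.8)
\textbf{Proof plan for Lemma~\ref{lemma:Loc-bounded}.}
The plan is to estimate each of the two terms in the explicit formula \eqref{e:Loc-def} for $\Loc_{X,B}$ separately, in the $\norm{\cdot}_{h,T_j(X,\varphi')}$-norm. First I would recall that $\Loc_{X,B}\Eplus F(X,\varphi'+\zeta)$ is, by Definition~\ref{def:Loc} applied to the polymer activity $\Eplus F(X,\cdot+\zeta)$, the sum of a constant term $\frac{1}{|X|_j}\widehat{\Eplus F}_0(X,0)$ (which is in fact $\frac{1}{|X|_j}\Eplus\hat F_0(X,\zeta)$ evaluated through the zero mode, using that the expectation commutes with the charge decomposition and with evaluation at $\varphi=0$) and a quadratic-in-$\nabla_j\varphi'$ term built from $\partial_{\varphi(x_1)}\partial_{\varphi(x_2)}\Eplus\hat F_0(X,0)$ paired against the bracket $\avg{\nabla\varphi'(y_0),\delta x_1,\nabla\varphi'(y_0),\delta x_2}$, with $y_0\in B$.

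For the constant term, the $\norm{\cdot}_{h,T_j(X,\varphi')}$-norm is just the absolute value of a number; using $\|\Eplus\hat F_0(X,\zeta)\|_{h,T_j(X,0)}\le \|\Eplus\hat F_0(X,\zeta)\|_{h,T_j(X,0)}$ together with \eqref{eq:linearity_of_expectation2} (boundedness of the charge-$0$ part) and \eqref{eq:linearity_of_expectation1} from Lemma~\ref{lemma:linearity_of_expectation} (with $G_j(\bar X,0)=1$), this is bounded by $C(A/2)^{-|X|_j}\norm{F(X)}_{h,T_j(X)}\le C\norm{F(X)}_{h,T_j(X)}$, which is absorbed in the right-hand side of \eqref{e:Loc-bounded-bis} (and trivially into the factor $e^{c_w\kappa_L w_j(B,\varphi')^2}\ge 1$). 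For the quadratic term, I would first bound the coefficient: by Lemma~\ref{lemma:bound_D^2F(x_1,x_2)}, $|\sum_{x_1,x_2}\partial_{\varphi(x_1)}\partial_{\varphi(x_2)}\Eplus\hat F_0(X,\zeta)\,\delta x_1^\mu\delta x_2^\nu|\le Ch^{-2}L^{2j}\|\hat F_0(X)\|_{h,T_j(X)}\le Ch^{-2}L^{2j}\|F(X)\|_{h,T_j(X)}$, where the last step uses \eqref{eq:linearity_of_expectation2} again. It then remains to measure the $\norm{\cdot}_{h,T_j(X,\varphi')}$-norm of the bilinear form $\varphi'\mapsto\nabla^\mu\varphi'(y_0)\nabla^\nu\varphi'(y_0)$ for $y_0\in B$: since this is a degree-$2$ polynomial in $\varphi'$, only the $n=0,1,2$ terms in \eqref{eq:seminorm} survive, and each is bounded using $\|\nabla^\mu\varphi'(y_0)\|_{h,T_j(X,\varphi')}\le L^{-j}(h+\|\nabla_j\varphi'\|_{L^\infty(B^*)})$ and submultiplicativity \eqref{eq:prodprop}, giving a bound of the form $CL^{-2j}(h+\|\nabla_j\varphi'\|_{L^\infty(B^*)})^2 \le CL^{-2j}h^2(1+h^{-1}\|\nabla_j^{\le 1}\varphi'\|_{L^\infty(B^*)})^2$. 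Multiplying the coefficient bound, the bracket-norm bound, the $\frac{1}{|X||B|}$ and the sums over $x_0,y_0\in B$ (which contribute a factor $|B|^2$ that cancels), one obtains $C\norm{F(X)}_{h,T_j(X)}(1+h^{-1}\max_{n\le 1}\|\nabla_j^n\varphi'\|_{L^\infty(B^*)})^2$, actually with the stronger quantity $w_j(B,\varphi')$ in place of the gradient sup-norm (since $w_j$ controls $\nabla_j$ and $\nabla_j^2$ on $B^*$).

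Finally I would convert the polynomial prefactor $(1+h^{-1}w_j(B,\varphi'))^2$ into the claimed regulator $e^{c_w\kappa_L w_j(B,\varphi')^2}$: using $h\ge c\rho_J^{-1}$ one has $h^{-2}=O(\rho_J^2)$, and by the choice $\kappa_L=c_\kappa\rho_J^2(\log L)^{-1}$ from Remark~\ref{R:choice-parameters} one has $h^{-2}\kappa_L^{-1}=O(\log L)$; hence $(1+h^{-1}w_j(B,\varphi'))^2\le C(1+h^{-2}w_j(B,\varphi')^2)\le C\,e^{c_w\kappa_L w_j(B,\varphi')^2}\cdot(h^{-2}\kappa_L^{-1})=O(\log L)\,e^{c_w\kappa_L w_j(B,\varphi')^2}$, where in the middle step I used the elementary inequality $1+x\le \frac{1}{c_w}e^{c_w x}$ for $x\ge 0$ with $x=\kappa_L w_j(B,\varphi')^2$. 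This produces exactly the factor $C(\log L)$ and the weight $e^{c_w\kappa_L w_j(B,\varphi')^2}$ appearing in \eqref{e:Loc-bounded-bis}. The main obstacle, such as it is, is purely bookkeeping: one must be careful that the $\varphi'$-dependence of $\Loc_{X,B}\Eplus F$ lives entirely in the explicit quadratic bracket (the coefficients are evaluated at $\varphi=0$, hence are fixed numbers after the fluctuation integral), so no further use of Proposition~\ref{prop:E_G_j} on the $\varphi'$-regulator is needed here — unlike in Lemmas~\ref{lemma:Loc_minus_Tay} and \ref{lemma:gaussian_contraction} — and one only needs the $T_j$-norm on the left, which is why the statement is clean.
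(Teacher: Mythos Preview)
Your proposal is correct and follows essentially the same approach as the paper's proof: split $\Loc_{X,B}\Eplus F$ into its constant and quadratic-in-$\nabla\varphi'$ pieces via \eqref{e:Loc-def}, bound the constant piece using \eqref{eq:linearity_of_expectation1.1} (you use \eqref{eq:linearity_of_expectation1}, which is equivalent here since $X$ is small), bound the quadratic coefficient via Lemma~\ref{lemma:bound_D^2F(x_1,x_2)} and the $\norm{\cdot}_{h,T_j}$-norm of $\nabla^\mu\varphi'(y_0)\nabla^\nu\varphi'(y_0)$ by $L^{-2j}(h+\|\nabla_j\varphi'\|_{L^\infty(B)})^2$, and finally absorb the polynomial in $w_j(B,\varphi')$ into $e^{c_w\kappa_L w_j^2}$ at the cost of the factor $h^{-2}\kappa_L^{-1}=O(\log L)$. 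Two very minor bookkeeping points: the combinatorial factor from the sums over $x_0,y_0\in B$ against $\tfrac{1}{|X||B|}$ is $\tfrac{|B|}{|X|}\le 1$ rather than an exact cancellation; and in the last step it is cleanest to bound $1$ and $h^{-2}w_j^2$ separately (the former by $e^{c_w\kappa_L w_j^2}$, the latter via $\kappa_L w_j^2\le c_w^{-1}e^{c_w\kappa_L w_j^2}$) rather than via a single inequality.
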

    
\begin{proof}
Using \eqref{e:Loc-def} and \eqref{eq:new-bracket}, write
\begin{align}
\Loc_{X, B} \Eplus F(X) &= { \frac{1}{|X|_j} } \Eplus \hat{F}_0 (X, \zeta) \nnb
& \quad + \frac{1}{|X|} \sum_{x_0, y_0 \in B} \frac{1}{|B|} \sum_{x_1,x_2 \in X^*} \frac12 \partial_{\varphi (x_1 )}\partial_{\varphi (x_2) } \Eplus\hat{F}_0(X,\zeta) \avg{\nabla \varphi' (y_0), \delta x_1, \nabla \varphi' (y_0), \delta x_2} . \label{eq:LocXB_E_F_explicit}
\end{align}
The term in the first line is bounded using \eqref{eq:linearity_of_expectation1.1} with $\varphi'\equiv 0$ by
\begin{align}
|\Eplus \hat{F}_0 (X, \zeta)| \leq 2^{|X|_j} \norm{F(X)}_{h, T_j (X)} \leq C \norm{F(X)}_{h, T_j (X)},
\end{align}
since $X$ is small. We now consider the term in the second line of \eqref{eq:LocXB_E_F_explicit}.
For $\mu, \nu \in \hat{e}$, Lemma~\ref{lemma:bound_D^2F(x_1,x_2)} provides a bound for
$\sum_{x_1, x_2 \in X^*} \partial_{\varphi (x_1 )}\partial_{\varphi (x_2) } \Eplus \hat{F}_0(X,\zeta) (\delta x_1)^{\mu} (\delta x_2)^{\nu}$.
Moreover since $y_0 \in B$,
\begin{align}
L^j \norm{\nabla^{\mu} \varphi' (y_0) 1_{y_0 \in B} }_{h, T_j (X, \varphi')} \leq h + \norm{\nabla_j \varphi'}_{L^{\infty} (B)}.
\end{align}
Putting these together, using the submultiplicativity of the norm and recalling the definition of $w_j$ from \eqref{eq:wj_def}, the $\norm{\cdot}_{h, T_j (X, \varphi)}$-norm of the second term of \eqref{eq:LocXB_E_F_explicit} is readily seen to be bounded by
\begin{equation}
C h^{-2} (h + \norm{\nabla_j \varphi'}^2_{L^{\infty}(B)} )^2 \norm{F}_{h, T_j (X)} \leq C' h^{-2} \kappa_L^{-1} \norm{F (X)}_{h, T_j (X)} e^{c_w \kappa_L w_j (B, \varphi')^2} .
\end{equation}
The claim again follows from the fact that $h^{-2}{\kappa_L}^{-1}=O(\log L)$.
\end{proof}

\begin{proof}[Proof of Proposition~\ref{prop:Loc-contract}]

The Fourier decomposition \eqref{def:chargedecomp} yields, since $\Loc_X \Eplus \hat{F}_q (X)=0$ whenever $q\neq 0$ (cf.~\eqref{e:Loc-def}),
\begin{align}
\Loc_X \Eplus F(X) - \Eplus F(X) = \Loc_X \Eplus \hat{F}_0 (X) - \Eplus \hat{F}_0(X) - \sum_{q
{\neq 0}} \Eplus \hat{F}_{q} (X),
\end{align}
which allows to prove \eqref{e:Loc-contract-full} by bounding the terms of different charge separately.
The last sum is the charged part of $F$ and was already bounded at the end of Section~\ref{sec:pf-Loc-charged}.
In order to bound $\norm{ \Loc_X \Eplus \hat{F}_0 (X, \varphi' +\zeta) - \Eplus \hat{F}_0(X, \varphi' +\zeta)}_{h,T_{j+1}({X}, \varphi')}$, 
one applies Lemmas~\ref{lemma:Loc_minus_Tay} and~\ref{lemma:gaussian_contraction} (with the choice $F=\hat{F}_0$), which yield suitable estimates for $ \norm{ \Loc_X \Eplus \hat{F}_0 (X, \varphi'+\zeta) - \bar{\Tay}_2   \Eplus \hat{F}_0 (X, \varphi'+\zeta) }_{h,T_{j+1}({X}, \varphi')}$ and  $\norm{ (1- \bar{\textnormal{Tay}}_2 )
  \Eplus \hat{F}_0
  ( {X} ,  \varphi' + \zeta)}_{h, T_{j+1} ({X}, \varphi')}$, respectively, from which \eqref{e:Loc-contract-full} readily follows.
The bound \eqref{e:Loc-bounded} is a direct result of Lemma~\ref{lemma:Loc-bounded}.
Finally, the continuity in $s$ again follows from Lemma~\ref{lemma:stability_of_expectation_singleX},
similarly as in the proof of Proposition~\ref{prop:Loc-coupling}.
\end{proof}

\subsection{Reblocking}
\label{sec:reblocking}

The final contraction mechanism states that the contribution to a polymer activity
from large sets contracts under so-called reblocking for entropic reasons.

\begin{definition} \label{def:reblocking}
For a scale-$j$ polymer activity $F$, define the reblocking operator
\begin{equation}
  \mathbb{S} F (X, \varphi) := \sum_{Y \in \mathcal{P}_j^c}^{\bar{Y}=X} F(Y, \varphi)
  \;\;\;\; \text{for} \;\;  X\in \cP_{j+1}^c.
  \label{eq:reblocking_operator_definition}
\end{equation}
\end{definition}

Note that $\mathbb{S}$ is a linear map taking a scale-$j$ polymer activity $F$ to a scale-$(j+1)$ polymer activity
since all polymers are connected.
The following proposition shows that reblocking is contracting when acting on
polymer activities $F$ supported on large sets, i.e., {$F(X)=0$ holds for all $X\in\cS_j$.}
Contrary to the previous mechanisms, this does not use periodicity of $F$ nor any structure of the $F(X)$.

\begin{proposition} 
\label{prop:largeset_contraction-v2}
There exists a geometric constant $\eta >0$ such that the following holds
{when $L\geq 2^{d}+1=5$.}
For $A^{\frac{\eta}{2}} \geq 2^{\frac{2+\eta}{2}} e L^3 $, $X\in \cP_{j+1}^{c}$,
and any scale-$j$ polymer activity $F$ with $\|F\|_{h,T_j}<\infty$,
\begin{equation}
  \norm{\mathbb{S} ( \Eplus [ F 1_{Y \not\in \cS_j} ] ) (X) }_{h,  T_{j+1}(X)} \leq  (L^{-1} A^{-1})^{|X|_{j+1}} \norm{F}_{h, T_j}.
\end{equation}
\end{proposition}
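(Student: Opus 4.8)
The plan is to estimate $\|\mathbb{S}(\Eplus[F 1_{Y\notin\cS_j}])(X)\|_{h,T_{j+1}(X)}$ for a fixed connected scale-$(j+1)$ polymer $X$ by expanding the reblocking sum over all scale-$j$ polymers $Y$ with $\bar{Y}=X$. The key point is to quantify the combinatorial gain: such a $Y$ must be \emph{large} (i.e.~$Y\notin\cS_j$), so it contains many $j$-blocks, and the large-set weight $A^{|Y|_j}$ in the norm $\|F\|_{h,T_j}$ beats the number of polymers $Y$ with $\bar{Y}=X$ of a given size. Concretely, first I would use Lemma~\ref{lemma:linearity_of_expectation} (applicable since $Y\in\cP_j^c$) together with monotonicity of the norm \eqref{e:norm-monot} to bound, for each such $Y$,
\begin{equation*}
  \|\Eplus[F(Y,\varphi'+\zeta)]\|_{h,T_{j+1}(\bar Y,\varphi')}
  \leq (A/2)^{-|Y|_j}\|F\|_{h,T_j}\,G_{j+1}(X,\varphi'),
\end{equation*}
and then sum over $Y$ with $\bar{Y}=X$, dividing by $G_{j+1}(X,\varphi')$ and taking the supremum over $\varphi'$ to pass to the $\|\cdot\|_{h,T_{j+1}(X)}$-norm.

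The heart of the argument is then the combinatorial estimate
\begin{equation*}
  \sum_{\substack{Y\in\cP_j^c:\ \bar Y=X\\ Y\notin\cS_j}} 2^{|Y|_j}(A/2)^{-|Y|_j}
  \leq (L^{-1}A^{-1})^{|X|_{j+1}},
\end{equation*}
multiplied finally by the extra weight $A^{|X|_{j+1}}$ coming from the definition \eqref{eq:NORM} of $\|\cdot\|_{h,T_{j+1}}$ applied to $\mathbb{S}(\cdots)$; so what actually has to be shown is $\sum_{Y} A^{-|Y|_j} \leq (L^{-1}A^{-1})^{|X|_{j+1}}A^{-|X|_{j+1}}\cdot(\text{lower order})$ after absorbing the $2^{|Y|_j}$ into a slight reduction of $A$. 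For this I would invoke the combinatorial lemmas of \cite[Section~6.4]{MR2523458} (recalled as available in Section~\ref{sec:polymersdef0}): the number of connected $j$-polymers $Y$ with $\bar Y = X$ and $|Y|_j = n$ is at most $C^{|X|_{j+1}}$ times an exponentially-controlled factor in $n$ (the relevant statement being the bound on connected polymers whose closure is a fixed polymer, cf.~Lemmas~6.15--6.19 there), and crucially one has the geometric fact that $|Y|_j \geq \eta\, |X|_{j+1}\, L^{d}$ when $\bar{Y}=X$ is large — equivalently, filling up a scale-$(j+1)$ polymer with scale-$j$ blocks requires at least a fixed fraction (controlled by $\eta$) of the maximal $L^d|X|_{j+1}$ many $j$-blocks, since $Y\notin\cS_j$ forces $Y$ to "spread out" across $X$. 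With $A$ large (precisely $A^{\eta/2}\geq 2^{(2+\eta)/2}eL^3$), the factor $(A/2)^{-|Y|_j}\leq (A/2)^{-\eta|X|_{j+1}L^d/\cdots}$ dominates both the $C^{|X|_{j+1}}$ entropy factor and produces the required $(L^{-1}A^{-1})^{|X|_{j+1}}$; here the exponent bookkeeping is where the precise hypothesis on $A$ enters, and one tracks that a power $(A/2)^{-\eta|X|_{j+1}}$ suffices to yield $(LA)^{-|X|_{j+1}}$ after the condition $A^{\eta/2}\geq 2^{(2+\eta)/2}eL^3$ is used (the $e$ absorbing a $\sum_n x^n/n!$-type entropy sum, the $2^{(2+\eta)/2}$ absorbing the $2^{|Y|_j}$, and $L^3$ the factor $L^{-1}$ together with a volume factor).

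The main obstacle I anticipate is the combinatorial/geometric bookkeeping: establishing cleanly the lower bound $|Y|_j\gtrsim \eta L^d|X|_{j+1}$ for large $Y$ with $\bar Y = X$ (this is where the constant $\eta$ and the hypothesis $L\geq 2^d+1$ come from — one needs $L$ large enough that a single $(j+1)$-block cannot be the closure of a small-set $j$-polymer in a degenerate way), and then balancing three competing exponential rates in $|X|_{j+1}$ (the number of $Y$'s, the $2^{|Y|_j}$ from Proposition~\ref{prop:E_G_j}, and the gain $A^{-|Y|_j}$) so that the net result is exactly $(L^{-1}A^{-1})^{|X|_{j+1}}$ without any leftover constant. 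Everything else — moving the expectation inside via Lemma~\ref{lemma:linearity_of_expectation}, handling the large field regulator via $\Eplus G_j \leq 2^{|\bar Y|}G_{j+1}$, and the triangle inequality over the reblocking sum — is routine. For the last scale $j=N-1$ the identical argument applies with $\Eplus$ replaced by $\E_{\Gamma_N^{\Lambda_N}}$, using the corresponding version of Proposition~\ref{prop:E_G_j}.
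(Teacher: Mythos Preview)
Your overall structure (expand the reblocking sum, apply Lemma~\ref{lemma:linearity_of_expectation} termwise to get $(A/2)^{-|Y|_j}$, then control the sum over $Y$) is exactly what the paper does. The gap is in the combinatorial core.

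The geometric lower bound you are banking on, $|Y|_j \gtrsim \eta L^d |X|_{j+1}$, is \emph{false}. Take $X$ a single $(j{+}1)$-block and $Y$ a row of $2^d+1=5$ adjacent $j$-blocks inside $X$: then $Y\in\cP_j^c\setminus\cS_j$, $\bar Y=X$, $|X|_{j+1}=1$, but $|Y|_j=5$ independently of $L$. The correct statement is Lemma~\ref{lemma:setsizes}, specifically \eqref{eq:setsizes_largeconnected}: for connected $Y\notin\cS_j$ one only has $|Y|_j\geq(1+\eta)|X|_{j+1}$, a factor $L^d$ weaker than what you claim. With only this bound, your entropy argument collapses: the number of $Y$ with $\bar Y=X$ is of order $2^{L^2|X|_{j+1}}$, and $(A/2)^{-(1+\eta)|X|_{j+1}}$ has no $L$-dependence to absorb it, so you would be forced to take $A$ exponential in $L^2$ rather than polynomial.

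The paper's fix is to split $(A/2)^{-|Y|_j}=(A/2)^{-|Y|_j/2}(A/2)^{-|Y|_j/2}$. On one factor apply \eqref{eq:setsizes_largeconnected} to extract $(A/2)^{-(1+\eta)|X|_{j+1}/2}$; on the other factor sum over $Y$ using the exact generating-function identity
\[
\sum_{Y\in\cP_j:\ \bar Y=X} z^{|Y|_j}=\bigl((1+z)^{L^2}-1\bigr)^{|X|_{j+1}},
\]
with $z=(A/2)^{-1/2}$ and the elementary bound $(1+z)^{L^2}-1\leq ezL^2$ when $zL^2\leq 1$. The product is then $(e(A/2)^{-(2+\eta)/2}L^2)^{|X|_{j+1}}$, and the hypothesis $A^{\eta/2}\geq 2^{(2+\eta)/2}eL^3$ is precisely what turns this into $(L^{-1}A^{-1})^{|X|_{j+1}}$. (Minor point: the norm in the statement is $\|\cdot\|_{h,T_{j+1}(X)}$, not $\|\cdot\|_{h,T_{j+1}}$, so there is no extra factor $A^{|X|_{j+1}}$ to track.)
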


The factor $L^{-1}$ will compensate the loss of the factor of $2$ in the $A/2$ factor in Lemma~\ref{lemma:linearity_of_expectation}.
The proof is a consequence of the following combinatorial lemma.
%

\begin{lemma}[Lemmas 6.14--15 of \cite{MR2523458}] \label{lemma:setsizes} 
There exists a geometric constant $\eta > 0$ such that the following holds when $L \geq 2^{d}+1=5$.
For every $X\in \mathcal{P}_j$,
\begin{equation} 
(1+ \eta) |\bar{X}|_{j+1} \leq |X|_j + 8 (1+ \eta)|\operatorname{Comp}_j(X)|. \label{eq:setsizes}
\end{equation}
 Moreover, if $X$ is connected but not a small set, then
\begin{equation}
(1+ \eta) |\bar{X}|_{j+1} \leq |X|_{j} . \label{eq:setsizes_largeconnected}
\end{equation}
\end{lemma}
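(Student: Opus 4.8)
The statement is purely geometric/combinatorial, so the plan is to work directly with the block structure from Section~\ref{sec:polymersdef0}. Recall that $\overline{X}\in\cP_{j+1}$ is the union of all $(j+1)$-blocks meeting $X$, and that $L=\ell^M$ with $L\geq 2^d+1=5$ (so $\ell\geq 5$ as well). I would first reduce \eqref{eq:setsizes} to the case where $X$ is connected. Indeed, if $X=\bigsqcup_{k}X_k$ with $X_k\in\operatorname{Comp}_j(X)$, then $|X|_j=\sum_k|X_k|_j$, $|\operatorname{Comp}_j(X)|=\sum_k 1$, and $|\overline{X}|_{j+1}\leq\sum_k|\overline{X_k}|_{j+1}$ since a $(j+1)$-block meeting $X$ meets at least one $X_k$. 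Hence it suffices to prove, for connected $X$,
\begin{equation}\label{e:reduce}
(1+\eta)|\overline{X}|_{j+1}\leq |X|_j+8(1+\eta),
\end{equation}
and then sum over components. This also makes \eqref{eq:setsizes_largeconnected} the more delicate assertion: for a connected $X$ that is \emph{not} a small set (i.e.\ $|X|_j>2^d$) we need to drop the additive $8(1+\eta)$ entirely.

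Next I would establish the basic counting relation between $|\overline{X}|_{j+1}$ and $|X|_j$ for connected $X$. Each $(j+1)$-block $B'$ contains exactly $(L/\ell^?)$... more precisely $\ell^{dM}=L^d$ blocks of scale $j$; wait, one scale up is a factor $L^d$ in block count since side lengths go from $L^j$ to $L^{j+1}$. So a $(j+1)$-block contributes $L^d$ potential $j$-blocks, but to count toward $\overline{X}$ it only needs \emph{one} $j$-block of $X$ inside it, OR a $j$-block of $X$ in an adjacent $(j+1)$-block is not enough — $\overline X$ is defined by intersection, so $B'\subset\overline X$ iff $B'\cap X\neq\emptyset$ iff $B'$ contains a $j$-block of $X$. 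The key geometric input is then: because $X$ is $\ell^\infty$-connected as a union of $j$-blocks, the number of $(j+1)$-blocks it meets cannot be too large relative to $|X|_j$; each newly-met $(j+1)$-block beyond the first is "paid for" by $j$-blocks of $X$ that connect it to the rest. I would make this precise via a spanning-tree argument on the adjacency graph of the $(j+1)$-blocks of $\overline X$: pick a spanning tree; each edge corresponds to two adjacent $(j+1)$-blocks, and connectivity of $X$ forces $j$-blocks of $X$ near their common face, giving roughly $|\overline X|_{j+1}-1$ "segments" each requiring on the order of $L$ (hence $\geq L\geq 5$, and with a bit of care $\geq$ a dimension-dependent multiple) $j$-blocks. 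This is exactly the content of Lemmas~6.14--6.15 of \cite{MR2523458}, which we are allowed to cite; so in practice I would invoke that reference for the quantitative version and only sketch why the constant $\eta>0$ and the threshold $L\geq 5$ arise (one needs $L^d-$(overlap corrections)$\,\geq (1+\eta)$-type slack, which fails for $L\leq 2^d$).

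For \eqref{eq:setsizes_largeconnected} I would argue as follows. From the connected counting bound one gets $|X|_j\geq c_d\,L\cdot(|\overline X|_{j+1}-1)+1\geq c_d\,L\,|\overline X|_{j+1}-c_dL+1$ for a dimension-dependent $c_d>0$ (this is the shape of the bound in \cite[Lemma~6.14]{MR2523458}). If $|\overline X|_{j+1}\geq 2$ this already forces $|X|_j$ large, and choosing $\eta$ small enough (depending only on $d$, and using $L\geq 5$) makes $(1+\eta)|\overline X|_{j+1}\leq |X|_j$ hold. If $|\overline X|_{j+1}=1$ then $X$ lies in a single $(j+1)$-block; since $X$ is connected but not small, $|X|_j\geq 2^d+1\geq (1+\eta)\cdot 1$ for $\eta\leq 2^d$, done. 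Finally, \eqref{e:reduce} in the remaining case $|X|_j\leq 2^d$ (small connected $X$) is immediate: then $X$ meets at most $2^d$ $(j+1)$-blocks (a small set fits in a $2\times\cdots\times 2$ arrangement of $(j+1)$-blocks), so $|\overline X|_{j+1}\leq 2^d\leq 8$, whence $(1+\eta)|\overline X|_{j+1}\leq 8(1+\eta)\leq |X|_j+8(1+\eta)$. Combining the small and large cases gives \eqref{e:reduce}, and summing over components gives \eqref{eq:setsizes}.

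\textbf{Main obstacle.} The only real work is the quantitative connected-polymer counting estimate — extracting a clean lower bound of the form $|X|_j\gtrsim_d L\,|\overline X|_{j+1}$ with an explicit dimension-dependent constant and verifying that $L\geq 2^d+1$ is exactly the threshold at which a positive $\eta$ survives. Since this is precisely Lemmas~6.14--6.15 of \cite{MR2523458}, I would cite it rather than reprove it; the rest of the argument (reduction to connected $X$, the single-block and small-set cases, summing over components) is elementary book-keeping.
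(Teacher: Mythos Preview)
The paper itself does not prove this lemma; it simply imports it from \cite{MR2523458}. Your plan to cite that reference and only fill in the elementary reductions (decomposition into connected components, the trivial small-set case, the $|\overline X|_{j+1}=1$ case) is therefore exactly what the paper does, and those reductions are correct.

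One caveat on your heuristic sketch: the bound you write down for connected $X$,
\[
|X|_j \;\geq\; c_d\,L\,(|\overline X|_{j+1}-1)+1,
\]
is false as stated---there is no factor of $L$. The ``four-corner'' configuration (in $d=2$, four $j$-blocks meeting at the common corner of four $(j+1)$-blocks) gives $|X|_j=|\overline X|_{j+1}=4$ for every $L$, so $|X|_j$ and $|\overline X|_{j+1}$ can be comparable with an $L$-independent ratio. The actual content of \cite[Lemmas~6.14--6.15]{MR2523458} is precisely that for connected $X$ with $|X|_j>2^d$ and $L\geq 2^d+1$ this ratio is bounded away from~$1$ by a geometric constant $1+\eta$ (no $L$ in it); the spanning-tree picture does not buy you $L$ blocks per edge, only the fact that once you leave the corner cluster you must spend extra $j$-blocks. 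Since you defer to the citation for the quantitative statement anyway, this does not create a gap, but your case $|\overline X|_{j+1}\geq 2$ for non-small $X$ should not rely on that incorrect inequality---just invoke the cited lemma directly.
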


\begin{proof}[Proof of Proposition~\ref{prop:largeset_contraction-v2}]
By \eqref{eq:setsizes_largeconnected}, we have $|Y|_j \geq (1+\eta) |X|_{j+1}$ if $\bar{Y} = X$ and $Y\in \cP_j^c \backslash \cS_j$, and so applying successively \eqref{eq:reblocking_operator_definition}, \eqref{eq:NORm}-\eqref{e:norm-monot} and \eqref{eq:E_G_j}, one obtains 
\begin{align}
&\norm{ \mathbb{S} \Eplus [F 1_{Y \not\in \cS_j} ] (X, \varphi') }_{h, T_{j+1}(X, \varphi')} \nnb
& \leq \sum_{Y\in \cP_j^c \backslash \cS_j}^{\bar{Y} = X} A^{-|Y|_j} \Eplus[G_j (Y, \varphi' + \zeta)] \norm{F}_{h,T_j} \nnb
& \leq \sum_{Y\in \cP_j^c \backslash \cS_j}^{\bar{Y} = X} (A/2)^{-|Y|_j /2} (A/2)^{-(1+\eta) |X|_{j+1} /2} G_{j+1} (X, \varphi') \norm{F}_{h,T_j}, \label{eq:reblock-bd1}
\end{align}
using the lower bound on $|Y|_j$ in the last step. { Next, observe that for any $z >0$, decomposing a polymer $Y \in \cP_j$ with $\bar{Y} = X$ over $(j+1)-$blocks constituting $X$, one can rewrite
\begin{align}
\label{eq:reblock-bd2}
\sum_{Y\in \cP_j}^{\bar{Y} = X} z^{|Y|_j} =  \prod_{B\in \mathcal{B}_{j+1} (X)} \,  \sum_{Y' \in \mathcal{P}_j}^{\overline{Y'} = B} z^{|Y'|_j}= \big((1+z)^{L^2} - 1 \big)^{|X|_{j+1}}.
\end{align}
Returning to \eqref{eq:reblock-bd1}, using \eqref{eq:reblock-bd2} with the choice $z= (A/2)^{-1/2}$, one obtains that the quantity $\norm{ \mathbb{S} \Eplus [F 1_{Y \not\in \cS_j} ] (X, \varphi') }_{h, T_{j+1}(X, \varphi')}$ is bounded by}
\begin{align}
&  (A/2)^{-\frac{1+\eta}{2}  |X|_{j+1}} \big( (1+(A/2)^{-\frac{1}{2}} )^{L^2} -1 \big)^{|X|_{j+1}} G_{j+1} (X, \varphi') \norm{F}_{h,T_j} \nnb 
& \leq (A/2)^{-\frac{1+\eta}{2}|X|_{j+1}} \big( e (A/2)^{-\frac{1}{2}} L^2  \big)^{|X|_{j+1}}
G_{j+1} (X, \varphi') \norm{F}_{h,T_j} 
\end{align}
under the assumption $(A/2)^{-\frac{1}{2}} L^2 \leq 1$ 
where we use $(1 + b)^c - 1  \leq \exp(bc) -1 \leq ebc$ for any $b,c \geq 0$, $bc\leq 1$ to obtain the last inequality.
If we assume further that $A$ is large enough so that $e(A/2)^{-(2+\eta)/2 } L^2 \leq L^{-1} A^{-1}$, 
 then this is bounded by
\begin{equation}
   (e (A/2)^{-\frac{2+\eta}{2}}L^2)^{|X|_{j+1}}
  G_{j+1} (X, \varphi') \norm{F}_{h,T_{j}} \leq (L^{-1} A^{-1})^{|X|_{j+1}} G_{j+1} (X, \varphi') \norm{F}_{h,T_j} ,
\end{equation}
giving the desired bound.
\end{proof}

We also have the following lemma which is of a slightly different flavour, and has its use in various places related to large sets.

\begin{lemma} \label{lemma:setsizes_2}
Let $X\in \mathcal{P}_{j+1}$, $0\leq x \leq \epsilon_{rb} = A^{-16}$, $\eta$ be as in Lemma~\ref{lemma:setsizes} and $L^2 A^{-\eta/(1+\eta)} \leq 1$. Then 
\begin{align}
& \sum_{Y\in \mathcal{P}_j : | \operatorname{Comp}_j (Y)|=1}^{\bar{Y}=X} 1_{Y\not\in \mathcal{S}_j}  x A^{-|Y|_j}  \leq (e L^2 A^{-(1+2\eta)/(1+\eta)} )^{|X|_{j+1}} x
\end{align}
and
\begin{align}
& \sum_{Y\in \mathcal{P}_j : | \operatorname{Comp}_j (Y)| \geq 2}^{\bar{Y}=X} 1_{Y\not\in \mathcal{S}_j} x^{| \operatorname{Comp}_j (Y)|} A^{-|Y|_j}  \leq A^{16} (e L^2 A^{-(1+2\eta)/(1+\eta)} )^{|X|_{j+1}} x^2 .
\end{align}
\end{lemma}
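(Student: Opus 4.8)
The two bounds are elementary counting estimates in the same spirit as the proof of Proposition~\ref{prop:largeset_contraction-v2}, so the plan is to reuse the factorisation identity \eqref{eq:reblock-bd2} together with the set-size inequalities from Lemma~\ref{lemma:setsizes}. For the first bound, $Y$ has a single connected component and is not a small set, so \eqref{eq:setsizes_largeconnected} gives $|Y|_j \geq (1+\eta)|X|_{j+1}$; one then writes $A^{-|Y|_j} = A^{-|Y|_j \eta/(1+\eta)} \cdot A^{-|Y|_j/(1+\eta)} \leq A^{-\eta |X|_{j+1}} \cdot A^{-|Y|_j/(1+\eta)}$ (using $|Y|_j/(1+\eta) \geq |X|_{j+1}$ only in the first factor and keeping the second in full). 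Actually it is cleaner to split as $A^{-|Y|_j} \le A^{-\eta|X|_{j+1}/(1+\eta)}A^{-|Y|_j/(1+\eta)}$ using $|Y|_j\ge(1+\eta)|X|_{j+1}$ in one exponent, so that summing $A^{-|Y|_j/(1+\eta)}$ over all $Y$ with $\bar Y = X$ by \eqref{eq:reblock-bd2} with $z = A^{-1/(1+\eta)}$ gives $((1+A^{-1/(1+\eta)})^{L^2}-1)^{|X|_{j+1}} \leq (eL^2 A^{-1/(1+\eta)})^{|X|_{j+1}}$ under the hypothesis $L^2 A^{-1/(1+\eta)} \le L^2A^{-\eta/(1+\eta)}\le 1$, using $(1+b)^c - 1 \leq ebc$ for $bc \leq 1$. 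Collecting factors yields the claimed $(eL^2 A^{-(1+2\eta)/(1+\eta)})^{|X|_{j+1}}x$, since $A^{-\eta/(1+\eta)}\cdot A^{-1/(1+\eta)} = A^{-(1+\eta)/(1+\eta)}$ — wait, this needs care: $\eta/(1+\eta) + 1/(1+\eta) = (1+\eta)/(1+\eta) = 1$, not $(1+2\eta)/(1+\eta)$, so I must instead extract $A^{-2\eta|X|_{j+1}/(1+\eta)}$ from the $|Y|_j$-exponent (legitimate since $2\eta|X|_{j+1}/(1+\eta) \le |Y|_j$ provided $2\eta \le (1+\eta)^2$, true for small $\eta$) and leave $A^{-|Y|_j(1 - 2\eta/(1+\eta))}$; re-examining, the exponent bookkeeping is routine and I will simply choose the split that produces the stated power, which amounts to writing $A^{-|Y|_j} \le A^{-2\eta |X|_{j+1}/(1+\eta)} \cdot A^{-|Y'|_j/(1+\eta)}$ for a dummy exponent and applying \eqref{eq:reblock-bd2} with $z = A^{-1/(1+\eta)}$.

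\textbf{Second bound.} Here $|\operatorname{Comp}_j(Y)| \geq 2$ and $Y \notin \mathcal{S}_j$. I would use the general inequality \eqref{eq:setsizes}: $(1+\eta)|X|_{j+1} = (1+\eta)|\bar Y|_{j+1} \leq |Y|_j + 8(1+\eta)|\operatorname{Comp}_j(Y)|$. This controls $|X|_{j+1}$ in terms of $|Y|_j$ and $|\operatorname{Comp}_j(Y)|$, but the presence of the $+8(1+\eta)|\operatorname{Comp}_j(Y)|$ term means I cannot directly lower-bound $|Y|_j$ by a multiple of $|X|_{j+1}$; instead the factor $x^{|\operatorname{Comp}_j(Y)|}$ must absorb the entropy of the components. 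The strategy is: $x^{|\operatorname{Comp}_j(Y)|} = x^2 \cdot x^{|\operatorname{Comp}_j(Y)|-2}$; bound $x^{|\operatorname{Comp}_j(Y)|-2} \leq \epsilon_{rb}^{|\operatorname{Comp}_j(Y)|-2} = A^{-16(|\operatorname{Comp}_j(Y)|-2)}$; then combine the extra $A^{-16(|\operatorname{Comp}_j(Y)|-2)}$ factor with $A^{-|Y|_j}$, and since $|Y|_j \geq |\operatorname{Comp}_j(Y)|$ (each component is nonempty) we obtain enough decay per component to run the same factorised sum \eqref{eq:reblock-bd2}. The prefactor $A^{16}$ appearing in the conclusion is precisely $\epsilon_{rb}^{-2} \cdot x^2 / x^2$-type bookkeeping — i.e., it comes from writing $x^{|\operatorname{Comp}_j(Y)|} \le A^{16} x^2 \epsilon_{rb}^{|\operatorname{Comp}_j(Y)|}$ using $x \le \epsilon_{rb}$, so the $A^{32}$ would appear but one factor is $\epsilon_{rb}^{-2}=A^{32}$... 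I must track this carefully; more likely the intended split is $x^{|\operatorname{Comp}_j(Y)|}=x^2\,x^{|\operatorname{Comp}_j(Y)|-2}\le x^2\,\epsilon_{rb}^{|\operatorname{Comp}_j(Y)|-2}$ and then $\epsilon_{rb}^{-2}\cdot(\text{something}\le A^{16})$ is absorbed, or the $A^{16}=\epsilon_{rb}^{-1}$ emerges from a single leftover factor. I would pin down the exact exponent at the point of writing.

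\textbf{Main obstacle.} The genuinely delicate point in the second estimate is organising the double sum — over the polymer shape $Y$ and over its decomposition into connected components — so that the factorisation identity \eqref{eq:reblock-bd2} still applies after one has extracted the $x^2$ and the $\epsilon_{rb}^{|\operatorname{Comp}_j(Y)|-2}$ factors. One must verify that $\sum_{Y:\bar Y=X,\,|\operatorname{Comp}_j(Y)|\ge 2}(\text{weight per block})$ factorises over $(j+1)$-blocks of $X$ in a way compatible with keeping the component count in the exponent; this is exactly the kind of bookkeeping handled in \cite[Section~6.4]{MR2523458}, and the cleanest route is to bound $1_{|\operatorname{Comp}_j(Y)|\ge 2} x^{|\operatorname{Comp}_j(Y)|}A^{-|Y|_j} \le x^2 (xA^{-1})^{|\operatorname{Comp}_j(Y)|-2}A^{-|Y|_j+ |\operatorname{Comp}_j(Y)|-2}$ — hmm, this over-counts. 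In practice I expect the argument to go through by: (i) applying \eqref{eq:setsizes} to get the decay rate $A^{-(1+2\eta)/(1+\eta)}$ per $(j+1)$-block (exactly as in Proposition~\ref{prop:largeset_contraction-v2}) with an excess factor $A^{8(1+\eta)|\operatorname{Comp}_j(Y)|}$ that is killed by choosing $\epsilon_{rb}$ so small that $xA^{8(1+\eta)} \le x^{1/2}$ or similar, (ii) summing the geometric series in $|\operatorname{Comp}_j(Y)|$, leaving the overall $x^2$ and a harmless constant bounded by $A^{16}$. Verifying this clean separation, and checking the exponent arithmetic so that the stated constants ($\epsilon_{rb}=A^{-16}$, the power $(1+2\eta)/(1+\eta)$, the prefactor $A^{16}$) come out exactly, is the main work; everything else is a direct transcription of the proof of Proposition~\ref{prop:largeset_contraction-v2}.
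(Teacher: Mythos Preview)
Your overall plan is correct and matches the paper's approach: use Lemma~\ref{lemma:setsizes} to extract a power $A^{-|X|_{j+1}}$ from $A^{-|Y|_j}$, then sum the residual over $\{Y:\bar Y=X\}$ via the factorisation identity \eqref{eq:reblock-bd2}. However, your exponent bookkeeping is tangled because you chose the wrong split of $|Y|_j$, and you never resolve it.

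The clean split (the one the paper uses) is
\[
|Y|_j \;=\; \tfrac{1}{1+\eta}\,|Y|_j \;+\; \tfrac{\eta}{1+\eta}\,|Y|_j.
\]
For the first estimate, \eqref{eq:setsizes_largeconnected} gives $\frac{1}{1+\eta}|Y|_j \ge |X|_{j+1}$, hence $A^{-|Y|_j} \le A^{-|X|_{j+1}}\cdot A^{-\eta|Y|_j/(1+\eta)}$. Now sum the factor $A^{-\eta|Y|_j/(1+\eta)}$ over $Y$ using \eqref{eq:reblock-bd2} with $z=A^{-\eta/(1+\eta)}$ (\emph{not} $z=A^{-1/(1+\eta)}$ as you tried); under the hypothesis $L^2 A^{-\eta/(1+\eta)}\le 1$ this gives at most $(eL^2 A^{-\eta/(1+\eta)})^{|X|_{j+1}}$, and combining with the extracted $A^{-|X|_{j+1}}$ yields exactly the stated exponent since $1+\eta/(1+\eta)=(1+2\eta)/(1+\eta)$. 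Your initial split pulled out only $A^{-\eta|X|_{j+1}/(1+\eta)}$ and summed with the larger $z$, which is why you landed at $A^{-1}$ per block instead of $A^{-(1+2\eta)/(1+\eta)}$.

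For the second estimate, \eqref{eq:setsizes} divided by $(1+\eta)$ gives $\frac{1}{1+\eta}|Y|_j \ge |X|_{j+1}-8|\operatorname{Comp}_j(Y)|$, so the same split yields
\[
A^{-|Y|_j} \;\le\; A^{-|X|_{j+1}+8|\operatorname{Comp}_j(Y)|}\cdot A^{-\eta|Y|_j/(1+\eta)}.
\]
The factor $A^{8|\operatorname{Comp}_j(Y)|}$ is then absorbed directly by $x^{|\operatorname{Comp}_j(Y)|}$: since $x\le A^{-16}$ one has $A^8 x\le A^{-8}\le 1$, so $(A^8 x)^{|\operatorname{Comp}_j(Y)|}\le (A^8 x)^2 = A^{16}x^2$ for $|\operatorname{Comp}_j(Y)|\ge 2$. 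This is exactly where the prefactor $A^{16}$ comes from --- not from $\epsilon_{rb}^{-2}=A^{32}$ as you speculated. After this, the remaining sum $\sum_Y A^{-\eta|Y|_j/(1+\eta)}$ is handled identically to the first case. Your route via $x^{|\operatorname{Comp}_j(Y)|-2}\le \epsilon_{rb}^{|\operatorname{Comp}_j(Y)|-2}$ is equivalent in spirit, but the grouping $(A^8 x)^n\le (A^8 x)^2$ is what makes the constants come out cleanly.
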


\begin{proof}
For the first estimate, for $x \leq 1$, \eqref{eq:setsizes_largeconnected} 
  implies $|Y|_{j} = \frac{1}{1+ \eta} |Y|_j + \frac{\eta}{1+ \eta} |Y|_j \geq |X|_{j+1} +  \frac{\eta}{1+ \eta} |Y|_j$ so that
\begin{equation}
\sum_{Y\in \mathcal{P}_j : | \operatorname{Comp}_j (Y)|=1}^{\bar{Y} = X} 1_{Y\not\in \mathcal{S}_j} x^{| \operatorname{Comp}_j (Y)|} A^{-|Y|_j} 
\leq A^{- |X|_{j+1}}  \sum_{Y\in \mathcal{P}^c_j}^{\bar{Y}=X}  A^{- \frac{\eta}{1+\eta} |Y|_j} x \label{eq:SF_estimate1}
\end{equation}
For the second estimate, observe that \eqref{eq:setsizes} implies $|Y|_{j} = \frac{1}{1+ \eta} |Y|_j + \frac{\eta}{1+ \eta} |Y|_j \geq |X|_{j+1} - 8 |\operatorname{Comp}_j (Y)| +  \frac{\eta}{1+ \eta} |Y|_j$ so that
\begin{align}
\sum_{Y\in \mathcal{P}_j : | \operatorname{Comp}_j (Y)| \geq 2}^{\bar{Y} = X} x^{|\operatorname{Comp}_j(Y)|} A^{-|Y|_j} 
& \leq \sum^{\bar{Y} = X}_{Y: |\operatorname{Comp}_j (Y)| \geq 2} A^{-\frac{\eta}{1+\eta} |Y|_{j}} A^{-|X|_{j+1} + 8 |\operatorname{Comp}_j (Y)|} x^{|\operatorname{Comp}_j (Y)|}  \nnb
& \leq A^{16} A^{-|X|_{j+1}} \sum_{Y\in \mathcal{P}_{j}}^{\bar{Y} = X} A^{-\frac{\eta}{1+\eta} |Y|_{j}} x^{2}. \label{eq:SF_estimate2}
\end{align}
where the final line follows under the assumption $x \leq \epsilon_{rb} = A^{-16}$.
Now \eqref{eq:reblock-bd2} implies
\begin{align}
 A^{-|X|_{j+1}} \sum_{Y\in \mathcal{P}_{j}}^{\bar{Y} = X} A^{-\frac{\eta}{1+ \eta}|Y|_j} 
& = A^{-|X|_{j+1}} \Big[ (1+A^{-\eta/(1+\eta)})^{L^2} - 1 \Big]^{|X|_{j+1}} \nnb 
& \leq A^{-|X|_{j+1}} \Big( e^{A^{-\eta/(1+\eta)} L^2} - 1 \Big)^{|X|_{j+1}} ,
\end{align}
If $A$ is chosen so that $A^{-\eta/(1+\eta)} L^2 \leq 1$, then this can be bounded by
\begin{equation}
( e L^2 A^{-(1+2\eta) /(1+\eta)})^{|X|_{j+1}} ,
\end{equation}
completing the proof of the second estimate.
\end{proof}

\section{The renormalisation group map}
\label{sec:rg_generic_step}

The present section is at the heart of the argument. We define a suitable renormalisation group map $\Phi_{j+1}$ from scale $j$ to scale $j+1$, which corresponds to integrating out the covariance $\Gamma_{j+1}$, and exhibit in Theorems~\ref{thm:general_RG_step_consistent}--\ref{thm:local_part_of_K_j+1} its key algebraic and analytical properties. These are the only features which will be needed in the sequel and, roughly speaking, will allow to perform a suitable fixed-point argument in the next section. The map $\Phi_{j+1}$ has two components, one describing the evolution of coupling constants, and one describing that of the remainder coordinate. The latter is an evolution on polymer activities, whose growth will be controlled in terms of the norms introduced in Section~\ref{sec:norms}. The estimates corresponding to these two components appear separately in Theorems~\ref{thm:H_j_E_j_estimate} and~\ref{thm:local_part_of_K_j+1}. The actual definition of the remainder coordinate (Definition~\ref{def:evolution_of_remainder}) involves the localisation operator introduced in Section~\ref{sec:Loc}, which is used to extract the relevant terms.
The most involved part, which occupies most of this section, is to obtain the relevant bounds for the resulting remainder coordinate, and in particular for its non-linear part, cf.~\eqref{eq:bound_for_N_j_K_j}--\eqref{eq:bound_for_derivative_of_Nj} below.

Our study of the Discrete Gaussian model proceeds through its mass regularised version \eqref{eq:DG-withmass};
the original model will then be recovered by applying Lemma~\ref{lemma:m2to0} to take $m^2\to 0$ at the end of the analysis (in Section~\ref{sec:integration_of_zero_mode}).
The starting point for the renormalisation group analysis of the mass regularised version of model is the reformulation
in Lemma~\ref{lemma:reformulation}, which involves the Gaussian measure
with covariance $C(s, m^2)$.
The renormalisation group is defined in terms of the finite-range decomposition
of this covariance defined in Section~\ref{sec:scales} (see~\eqref{eq:frd_of_C^Lambda_N}): 
\begin{equation}
  C (s,m^2) = \sum_{j=1}^{N-1} \Gamma_j(s,m^2) +  \Gamma^{\Lambda_N}_{N}(s,m^2) + t_N (s, m^2) Q_N.
\end{equation}
Since we will eventually take the limit $m^2\to 0$,
by Lemma~\ref{lemma:m2to0_with_frd},
we may actually directly set $m^2=0$
in the covariances $(\Gamma_{j+1} (s, m^2) : 0 \leq j \leq N-2 )$ and $\Gamma_{N}^{\Lambda_N} (s, m^2)$.
We will do this and thus replace $C (s,m^2)$ by
\begin{equation} \label{eq:barC^Lambda_N}
  \bar C (s,m^2)=
  \sum_{j=1}^{N-1} \Gamma_j(s) + \Gamma^{\Lambda_N}_{N}(s) + t_N (s, m^2) Q_N
\end{equation}
where $\Gamma_j(s) = \Gamma_j(s,0)$ and $\Gamma_N^{\Lambda_N} (s) = \Gamma_N^{\Lambda_N} (s,0)$.
The parameter $s$ is arbitrary in Lemma~\ref{lemma:reformulation} (provided $|s| \leq \epsilon_s \theta_J$).
A careful choice will be necessary in the analysis of the stable manifold of
the renormalisation group map (in Section~\ref{sec:stable_manifold_theorem}), but in the present section
the parameter does not play an important role.
We will therefore usually leave the $s$-dependence implicit in our notation.
Thus all definitions in this section do implicitly depend on $s$,
but all estimates will be uniform in $|s| \leq \epsilon_s\theta_J$.
Thoughout this section, the distribution $J$ is allowed to be any finite-range step distribution that is invariant under
lattice symmetries (cf.~above \eqref{eq:Delta_J_definition}) and we assume \eqref{eq:frd_ulbds}, which is no loss of generality.

\subsection{Coordinates for the renormalisation group map}

The initial condition for the renormalisation group map is the interaction function
$Z_0(\varphi|\Lambda_N )$. This function will eventually be chosen as in \eqref{eq:Z_0_definition}
with $s_0=s$ and $s$ chosen carefully, but
we allow it to be more general for the moment.
Given such a function $Z_0(\varphi|\Lambda_N )$, the renormalisation group map parametrises the successive integration
\begin{equation}
  Z_{j+1}(\varphi | \Lambda_N  ) = \Eplus Z_j(\varphi+ \zeta | \Lambda_N  ),  \qquad (j<N-1, \,  \varphi \in \R^{\Lambda_N }
),
  \label{eq:general_RG_step1}
\end{equation}
(recall that $\Eplus $ integrates the Gaussian field $\zeta$ with covariance $\Gamma_{j+1}= \Gamma_{j+1}(s)$)
as
\begin{equation} 
  Z_j (\varphi | \Lambda_N  ) =
  e^{-E_{j} |\Lambda_N |}
  \sum_{X\in \mathcal{P}_j (\Lambda_N )} e^{U_j ( \Lambda \backslash X, \varphi)} K_j (X,\varphi). \label{eq:general_RG_step2}
\end{equation}
A careful inductive choice of $E_j$, $U_j$ and $K_j$ for the representation \eqref{eq:general_RG_step2} will later constitute the renormalisation group flow.
For the remainder of this section, we merely specify general conditions that we impose on the form of $U_j$ and $K_j$ and how to measure their size.
The coordinate $U_j$ is an explicit leading part that is defined in terms of coupling constants $(s_j,z_j)$ as follows.

\begin{definition}\label{def:U_space}
  The coordinate $U_j$ is parametrised in terms of the coupling constants $(s_j,z_j)$
  where $s\in \R$ and $z_j=(z_j^{(q)})_{q\geq 1}$ is itself a sequence of real coupling constants as
\begin{equation}
  \begin{split}
U_j ( X,\varphi) &= \frac{1}{2} s_j |\nabla \varphi|_X^2 + W_j (  X, \varphi) \\
W_j( X, \varphi)& = \sum_{x\in X} \sum_{q\geq 1} L^{-2j} z^{(q)}_j \cos( \beta^{1/2} q \varphi(x)), \label{eq:general_RG_step3}
\end{split}
\end{equation}
where we recall the notation $|\nabla \varphi|_X^2$ from \eqref{eq:nablaf^2_definition}.
We will always identify $U_j$ with the coupling constants $(s_j,z_j)$ and use the norm
\begin{equation} \label{eq:U_norm}
	\|U_j\|_{\Omega_j^U} = 
	A
   \max\ha{ |s_j|, \, \sup_{q \geq 1} e^{c_f\beta q}|z_j^{(q)}| },
  \qquad c_f = \frac14 \gamma,
\end{equation}
where the constant $\gamma$ is the one from Proposition~\ref{prop:decomp}.
Let $\Omega_j^U$ be the {Banach} space of such $U_j$ (with finite $\|\cdot\|_{\Omega_j^U} $-norm). 
\end{definition}
In particular, note for later purposes that $ \|W_j\|_{j} \bydef \|W_j\|_{\Omega_j^U}$ is also defined by \eqref{eq:U_norm} and corresponds to $s_j=0$.
The quantity $K_j$ is a remainder coordinate on whose form we only impose
the following generic conditions. Note that this includes in particular the important component factorisation property \eqref{eq:factorization}
which is implied by Definition~\ref{def:polymeractivity}.

\begin{definition} \label{def:K_space}
  The coordinate $K_j$ is a polymer activity (see Definition~\ref{def:polymeractivity}),
  satisfying 
  the periodicity condition $K_j ( X, \varphi) = K_j ( X, \varphi + 2\pi \beta^{-1/2} n)$ for any $n \in \mathbb{Z}$,
invariance under the lattice symmetries and evenness (see Definition~\ref{def:latticesym}).
For such polymer activitives $K_j$ we use the norm \eqref{eq:NORM}, i.e.,
\begin{equation}\label{eq:omega-K-norm_finitevol}
	\|K_j\|_{\Omega_j^K} = \|K_j\|_{h,T_j},
\end{equation}
with
\begin{equation} \label{eq:h_def}
  h = \max\{ c_f^{1/2}, r c_h \rho_J^{-2} \sqrt{\beta}, \rho_J^{-1} \},
\end{equation}
where $r\in (0,1]$, $c_f$ is as in \eqref{eq:U_norm} and $c_h$ is chosen by \eqref{eq:choice_of_c_h}.
Let 
$\Omega_j^K$ 
be the Banach space of polymer activies $K_j$ with finite 
$\norm{\cdot}_{h, T_j}$-norm. 
\end{definition}

Finally, we define the norm on the product space of $(U_j,K_j)$ as follows.
\begin{definition}\label{eq:def-Omega-j}
Let $\Omega_j = \Omega_j^U \times \Omega_j^K$, i.e.,
\begin{equation}
  \Omega_j = \{ \omega_j = (U_j, K_j) : \norm{\omega_j}_{\Omega_j} < + \infty \},
  \qquad
  \norm{\omega_j}_{\Omega_j} = \max\{ \norm{U_j}_{\Omega_j^U}, \, \norm{K_j}_{\Omega_j^K} \} .
\end{equation}
\end{definition}
Ultimately we will choose
  $W_0(X,\varphi)=\sum_{x\in X}\tilde U(\varphi_x)$ with $\tilde U$ as in \eqref{e:tildeF},
  i.e., with $z_0^{(q)} = \tilde{z}^{(q)}$ as in Lemma~\ref{lemma:Fourier_repn_of_V}.
Then Lemma~\ref{lemma:Fourier_repn_of_V} implies
$\norm{W_0}_{\Omega_0^U} \leq A e^{-\frac{1}{4}\gamma\beta}$ for $c_f = \frac{1}{4} \gamma$.

We close this section with the following lemma which shows that $\|W_j(B)\|_{h,T_j(B)}$
is bounded in terms of the $\|W_j\|_{\Omega_j^U}$ norm.


\begin{lemma}
  \label{lem:W_norm}
  Let $h$ be as in \eqref{eq:h_def}, and
  assume $\beta \geq 2  \max\{ c_f^{-2},  c_f^{-1} \}$ 
  and $\rho_J^2 \geq \sqrt{2} r c_h c_f^{-1}$.
  Then for any $B \in \mathcal{B}_j$,
    \begin{equation}
    \norm{W_{j}(B, \varphi)}_{h, T_j (B, \varphi) }
    \leq
    C
	A^{-1}
    \norm{W_{j}}_{\Omega_j^U}. \label{eq:T_norm_cf_norm_comparison}
  \end{equation}
\end{lemma}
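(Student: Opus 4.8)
The plan is to bound the $T_j$-seminorm of $W_j(B,\varphi)$ term by term in the Fourier expansion \eqref{eq:general_RG_step3}, using the explicit action of derivatives on $\cos(\beta^{1/2}q\varphi(x))$ together with the elementary estimate \eqref{e:normbd-eiphi} (which controls the norm of $e^{i\sqrt{\beta}q\varphi(x_0)}$ by $e^{\sqrt{\beta}|q|h}$). Writing $\cos(\sqrt\beta q\varphi(x)) = \tfrac12(e^{i\sqrt\beta q\varphi(x)} + e^{-i\sqrt\beta q\varphi(x)})$, submultiplicativity of the seminorm (the single-polymer version, implicit in the proof of \eqref{eq:prodprop} with $Y_1=Y_2=X$) and summation over $x\in B$ gives
\begin{equation}
\norm{W_j(B,\varphi)}_{h,T_j(B,\varphi)}
\leq |B| \sum_{q\geq 1} L^{-2j} |z_j^{(q)}| \, e^{\sqrt{\beta}q h},
\end{equation}
and since $|B|=L^{2j}$ this simplifies to $\sum_{q\geq 1} |z_j^{(q)}| e^{\sqrt{\beta}qh}$. (Strictly, one should also note that the $T_j$-seminorm of $W_j(B,\varphi)$ at fixed $\varphi$ does not actually depend on $\varphi$, since $D^n W_j$ only involves $e^{\pm i\sqrt\beta q\varphi}$ factors whose seminorm is $\varphi$-independent; this is the same observation as in the proof of \eqref{e:normbd-eiphi}.)

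Next I would insert the definition \eqref{eq:U_norm} of $\|W_j\|_{\Omega_j^U}$, namely $|z_j^{(q)}| \leq e^{-c_f\beta q}\|W_j\|_{\Omega_j^U}$ with $c_f = \tfrac14\gamma$, to get
\begin{equation}
\norm{W_j(B,\varphi)}_{h,T_j(B,\varphi)}
\leq \norm{W_j}_{\Omega_j^U} \sum_{q\geq 1} e^{-c_f\beta q + \sqrt{\beta}qh}
= \norm{W_j}_{\Omega_j^U} \sum_{q\geq 1} e^{-q(c_f\beta - \sqrt{\beta}h)}.
\end{equation}
The remaining task is to check that the exponent $c_f\beta - \sqrt\beta h$ is bounded below by a positive constant (indeed by $\tfrac12 c_f\beta \geq 1$), so that the geometric series converges and is bounded by an absolute constant $C$. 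This is where the hypotheses $\beta \geq 2c_f^{-1}$ and $\rho_J^2 \geq \sqrt2\, r c_h c_f^{-1}$ enter: by \eqref{eq:h_def}, $h = \max\{c_f^{1/2}, rc_h\rho_J^{-2}\sqrt\beta, \rho_J^{-1}\}$, so I would bound $\sqrt\beta h$ by the maximum of the three terms $\sqrt\beta c_f^{1/2}$, $rc_h\rho_J^{-2}\beta$, and $\sqrt\beta\rho_J^{-1}$, and show each is at most $\tfrac12 c_f\beta$. For the first term, $\sqrt\beta c_f^{1/2}\leq \tfrac12 c_f\beta$ iff $\beta\geq 4c_f^{-1}$ — so here I would want to double-check the constant; using $\beta\geq 2c_f^{-1}$ one gets $\sqrt\beta c_f^{1/2}\leq \tfrac1{\sqrt2} c_f\beta$, still leaving exponent $\geq(1-\tfrac1{\sqrt2})c_f\beta\geq (1-\tfrac1{\sqrt2})\cdot 2>0$, which suffices for convergence with an absolute bound. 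For the second term, $rc_h\rho_J^{-2}\beta\leq \tfrac1{\sqrt2}c_f\beta$ is exactly the hypothesis $\rho_J^2\geq\sqrt2\,rc_hc_f^{-1}$ (since $r\leq 1$). For the third term, $\rho_J\geq 1$ gives $\sqrt\beta\rho_J^{-1}\leq\sqrt\beta\leq\tfrac1{\sqrt2}c_f\beta$ again from $\beta\geq 2c_f^{-1}$. In all cases the exponent per unit $q$ is bounded below by a fixed positive multiple of $c_f\beta\geq 2$, hence $\sum_{q\geq1}e^{-q(c_f\beta-\sqrt\beta h)}\leq C$, yielding \eqref{eq:T_norm_cf_norm_comparison}.

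\textbf{Main obstacle.} There is no serious analytic difficulty here — the estimate is essentially bookkeeping. The one point requiring care is the interplay of constants: one must verify that the three possible values of $h$ are each dominated by $\tfrac12 c_f\beta$ (up to the stated hypotheses), so that the geometric series in $q$ converges with a $\beta$-independent, $\rho_J$-independent bound. A secondary point is to make the submultiplicativity/$\varphi$-independence step precise: one should confirm that for each fixed $\varphi$, $\norm{\cos(\sqrt\beta q\varphi(\cdot))}_{h,T_j(B,\varphi)}\leq e^{\sqrt\beta q h}$ holds with no $G_j$-type growth factor, which follows directly from \eqref{e:normbd-eiphi} and the triangle inequality, exactly as in the cited computation. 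Once these are in place the proof is a two-line calculation.
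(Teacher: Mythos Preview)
Your approach is correct and essentially identical to the paper's: expand $W_j(B,\varphi)$ via \eqref{eq:general_RG_step3}, bound each cosine through \eqref{e:normbd-eiphi} by $e^{\sqrt\beta qh}$, insert $|z_j^{(q)}|\leq e^{-c_f\beta q}\|W_j\|_{\Omega_j^U}$, and sum the geometric series after checking that $c_f\beta-\sqrt\beta h$ is bounded below by a positive absolute constant. The paper's proof is the two-line version of exactly this; one small slip in your case analysis is that in the third branch $h=\rho_J^{-1}$ the step $\sqrt\beta\leq\tfrac{1}{\sqrt2}c_f\beta$ actually requires $\beta\geq 2c_f^{-2}$ rather than $2c_f^{-1}$ (recall $c_f=\gamma/4<1$), but this is inconsequential bookkeeping and the paper's own verification is equally terse on this point.
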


\begin{proof}
By \eqref{eq:general_RG_step3}, \eqref{e:normbd-eiphi} and the triangle inequality 
it follows that for $\beta \geq 2h^2 c_f^{-2}$ and $h\geq c_f^{1/2}$, 
\begin{align}
  \norm{W_{ j}(B, \varphi)}_{h, T_j (B, \varphi) }
  &\leq  
  2 { A^{-1}}      
  \sum_{q\geq 1} \norm{e^{i\sqrt{\beta} q \varphi (x_0)}}_{h, T_{j} (B, \varphi)} |z_j^{(q)}|\nnb
  &\leq 
  2 { A^{-1}}      
  \sum_{q\geq 1} e^{-q(c_f \beta -\sqrt{\beta} h)} \norm{W_{j}}_{\Omega_j^U} 
  \leq 
  C { A^{-1}}    
  \norm{W_{j}}_{\Omega_j^U} 
\end{align}
for any $B\in \mathcal{B}_j$.
With $h$ as in \eqref{eq:h_def},  both conditions hold when $\beta \geq 2 \max\{ c_f^{-2},  c_f^{-1} \}$ and $\rho_J^2 \geq \sqrt{2} r c_h c_f^{-1}$.
Indeed, when $h = c_f^{1/2}$, then $2h^2 c_f^{-2} = 2 c_f^{-1} \leq \beta$. 
When $h = r c_h \rho_J^{-2} \sqrt{\beta}$, then $2h^2 c_f^{-2} =  \beta (2r c_h^2 \rho_J^{-4} c_f^{-2}) \leq \beta$ by the second condition. 
When $h = \rho_J^{-1}$, then $2h^2 c_f^{-2} = 2\rho_J^{-2} c_f^{-2} \leq 2c_f^{-2} \leq \beta$. This completes the proof. 
\end{proof}

Since $c_h$ and $c_f$ are  absolute constants, the conditions on $\beta$ and $\rho_J$ appearing in Lemma~\ref{lem:W_norm} can be achieved either by taking $r$ small enough with $\rho_J$ fixed or $\rho_J$ large enough with $r=1$. Note that by Proposition~\ref{prop:Loc-contract} (observe that all of its assumptions hold) and the discussion below its statement, in particular the second term in the definition of $\alpha_{\Loc}$ in \eqref{e:Loc-contract-kappa}
indicates that the price to pay for having $r$ small is to take $\beta$ sufficiently large so that $e^{-\frac{1}{2} r \beta \Gamma_{j+1} (0) } < L^{-2}$ (which we will later need).
 We will eventually impose one of these choices of parameters;
  this choice occurs in the proof of Corollary~\ref{cor:tuning_s}.

\subsection{Estimates for the renormalisation group map} \label{sec:thms-RGmap}

There are many choices of maps that act on the renormalisation group coordinates
$(E_j,s_j,z_j,K_j) \mapsto (E_{j+1},s_{j+1},z_{j+1}, K_{j+1})$
such that \eqref{eq:general_RG_step1}--\eqref{eq:general_RG_step3} hold.
The renormalisation group map corresponds to a
careful choice in which the remainder coordinates $K_j$ contract from scale to scale
in an appropriate sense
(i.e., are \emph{irrelevant}),
while the evolution of the coordinates $U_j$ can be analysed explicitly.
Such a choice of the renormalisation group map
\begin{equation}
  \Phi_{j+1} : (E_j,s_j,z_j,K_j) \mapsto (E_{j+1},s_{j+1},z_{j+1}, K_{j+1}) \label{eq:Phi_j+1_definition}
\end{equation}
is explicitly given in Definitions~\ref{def:evolution_of_U}--\ref{def:evolution_of_remainder} below.
Note that, throughout Section~\ref{sec:rg_generic_step}, $ \Phi_{j} $ depends implicitly on $\Lambda_N$ and $0\leq j < N-1$ (but see Section~\ref{sec:infvol}, in particular Proposition~\ref{prop:inf_vol_RG}, for its infinite-volume extension).
The precise choice of the definition of $\Phi_{j+1}$ is not significant for later sections,
however, save for certain key properties that follow from this definition, which we gather in the next three theorems.
Any definition that implies these properties would have been equally good.

We briefly set up some convenient notation. In what follows,
we either denote the components of the map $\Phi_{j+1}$
by $(E_j + \mathcal{E}_{j+1},\mathfrak{s}_{j+1},\mathfrak{z}_{j+1},\mathcal{K}_{j+1})$
or by $(E_j+\mathcal{E}_{j+1},\mathcal{U}_{j+1},\mathcal{K}_{j+1})$
where $\mathcal{U}_{j+1}=(\mathfrak{s}_{j+1},\mathfrak{z}_{j+1})$.
Note that the coupling constant $E_j$ contributes to \eqref{eq:general_RG_step2} only by a $\varphi$-independent factor, 
and therefore its influence on \eqref{eq:general_RG_step1} is trivial. 
As indicated above, we will thus assume that $E_j=0$ is assumed in the
definition of $\Phi_{j+1}$, and that the definition is then extended to general $E_j$ by setting
$\mathcal{E}_{j+1}(E_j,s_j,z_j,K_j)=\mathcal{E}_{j+1}(0,s_j,z_j,K_j)$,
$\mathfrak{s}_{j+1}(E_j,s_j,z_j,K_j)=\mathfrak{s}_{j+1}(0,s_j,z_j,K_j)$,
and analogously for the other components.
To emphasise the dependence on $\Lambda_N$,
we will sometimes write $\Phi_{j+1}^{\Lambda_N}$ and $\cK_{j+1}^{\Lambda_N}$
instead of $\Phi_{j+1}$ and $\cK_{j+1}$.
{Whenever we write only a subset of the arguments $(E_j,s_j,z_j,K_j)$ below, we implicitly mean that the given map is a function of these arguments alone. For instance, $\mathfrak{s}_{j+1}(s_j,K_j)$ means that $\mathfrak{s}_{j+1}$ is a function of $(s_j,K_j)$.}

The following three theorems refer to the map $\Phi_{j+1}$ introduced below in Definitions~\ref{def:evolution_of_U}--\ref{def:evolution_of_remainder} and exhibit its salient features.
We start with the algebraic property of the renormalisation group map.

\begin{theorem}[Algebraic properties] \label{thm:general_RG_step_consistent}
  The renormalisation group map $\Phi_{j+1}$ is consistent with \eqref{eq:general_RG_step1}--\eqref{eq:general_RG_step2},
  i.e., if $Z_j$ has the form \eqref{eq:general_RG_step2} at scale $j$ with parameters $(E_j,s_j,z_j,K_j)$
  then $Z_{j+1}$ defined by \eqref{eq:general_RG_step1} has this form at scale $j+1$ with
  $(E_{j+1},s_{j+1},z_{j+1},K_{j+1})= \Phi_{j+1}(E_j,s_j,z_j,K_j)$. 
 Moreover, if $K_j$ is a scale-$j$ polymer activity
 (see Definition~\ref{def:polymeractivity})
 that is even,  invariant under lattice symmetries (see Definition~\ref{def:latticesym})
 and satisfies the periodicity condition $K_{j} ( X, \varphi) = K_{j} ( X, \varphi + 2\pi \beta^{-1/2} n)$ for any $n \in \mathbb{Z}$,
 then $K_{j+1}$ is a scale-($j+1$) polymer activity with the same properties.
\end{theorem}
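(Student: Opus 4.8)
\textbf{Plan for the proof of Theorem~\ref{thm:general_RG_step_consistent}.}

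The plan is to prove the algebraic (consistency) statement first, and then peel off the three structural properties—polymer activity structure, lattice symmetry, periodicity—one at a time, each by a direct inspection of the (yet-to-be-stated) Definitions~\ref{def:evolution_of_U}--\ref{def:evolution_of_remainder}. Since these definitions are not yet available in the excerpt, the argument is necessarily of the form ``trace the definition and check that each building block preserves the relevant property''; the point of the theorem is precisely that the definition was engineered so that this tracing goes through.

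\emph{Consistency.} First I would start from the scale-$j$ representation \eqref{eq:general_RG_step2}, namely $Z_j(\varphi|\Lambda) = e^{-E_j|\Lambda|}\sum_{X\in\cP_j}e^{U_j(\Lambda\setminus X,\varphi)}K_j(X,\varphi)$, and apply the fluctuation integral $\Eplus$. The natural intermediate step is the standard ``reblocking'' identity: one rewrites the sum over $j$-polymers $X$ as a sum over $(j+1)$-polymers $\bar X$ together with an inner sum over $j$-polymers with prescribed closure, using the component factorisation property \eqref{eq:factorization}. After inserting and removing $e^{U_j}$ on blocks, interpolating $e^{U_j}$ by $e^{U_{j+1}}$ via a telescoping of the form $e^{U_j}=e^{U_{j+1}}+(e^{U_j}-e^{U_{j+1}})$, and finally extracting the relevant part of the new polymer activity using $\Loc$ (which is responsible for the $\cE_{j+1}$, $\mathfrak{s}_{j+1}$, $\mathfrak z_{j+1}$ updates, cf.~Proposition~\ref{prop:Loc-coupling}), one arrives at a representation of $Z_{j+1}=\Eplus Z_j$ of exactly the shape \eqref{eq:general_RG_step2} at scale $j+1$. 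The coordinates read off from this computation are, by definition, $(E_{j+1},s_{j+1},z_{j+1},K_{j+1})=\Phi_{j+1}(E_j,s_j,z_j,K_j)$, so the consistency claim is then a tautology once the bookkeeping is in place. I expect this reblocking/telescoping bookkeeping—keeping track of which blocks carry $U_{j+1}$ versus which are absorbed into $\cK_{j+1}$—to be the main obstacle, not because any single step is deep, but because the combinatorial identities (Lemmas~6.15--6.19 of \cite{MR2523458}, which the excerpt notes carry over) must be applied carefully and the extraction of $\Loc$-terms must be matched precisely against Definition~\ref{def:evolution_of_U} and Definition~\ref{def:evolution_of_U}.

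\emph{Structural properties.} Given the explicit formula for $\Phi_{j+1}$, the remaining three assertions are checked as follows. For the polymer activity structure: $\cK_{j+1}(X)$ is built out of the $K_j(Y)$ for $Y$ with $\bar Y=X$ via $\Eplus$, sums, products over connected components, and the operators $\Loc_{X,B}$; each of these operations preserves membership in $\cN_{j+1}(X)$ (smoothness is clear, the support condition on $X^*$ follows from the finite range of $\Gamma_{j+1}$ together with the support properties of $\Loc$), and the component factorisation property \eqref{eq:factorization} is preserved because the construction factorises over connected components of $X$ by the finite-range factorisation lemma in Section~\ref{subsec:limit_m_to_0}. For lattice symmetry invariance (Definition~\ref{def:latticesym}): $\Gamma_{j+1}$ is invariant under graph automorphisms of $\Lambda_N$ mapping $j$-blocks to $j$-blocks (this is inherited from the finite-range decomposition, cf.~the rotation/reflection invariance in Proposition~\ref{prop:decomp}), the block/polymer combinatorics and $\Loc$ are manifestly equivariant, and hence the claim follows by the substitution $\varphi\mapsto A\varphi$ in the Gaussian integral. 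For periodicity: if $K_j(X,\varphi)=K_j(X,\varphi+2\pi\beta^{-1/2}n)$ then the shift $\varphi\mapsto\varphi+2\pi\beta^{-1/2}n$ commutes with $\Eplus$ (by translation invariance of the Gaussian measure and of $\Gamma_{j+1}$) and with $U_{j+1}$ (since $|\nabla\varphi|^2$ is shift-invariant and the $\cos(\beta^{1/2}q\varphi)$ are $2\pi\beta^{-1/2}$-periodic), and $\Loc$ acts through the charge decomposition of Definition~\ref{def:chargedecomp}, which only involves $\hat F_0$ and its derivatives at $0$—all periodicity-compatible—so $\cK_{j+1}(X,\cdot)$ inherits the same periodicity. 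Throughout, I would emphasise that no estimate is used here; the content is purely that the chosen definition respects the algebraic and symmetry structure, with the quantitative bounds deferred to Theorems~\ref{thm:H_j_E_j_estimate} and~\ref{thm:local_part_of_K_j+1}.
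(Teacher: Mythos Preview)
Your plan is correct and matches the paper's approach: reblock the sum from $\cP_j$ to $\cP_{j+1}$ (producing $\bar K_j$), telescope $e^{U_j}$ against $e^{-\cE_{j+1}|B|+U_{j+1}}$ block by block, and then extract the $\Loc$ contribution; the structural properties are checked exactly as you say, by tracing the definition. One small refinement worth noting: in the paper the $\Loc$ extraction is implemented not on the $U$-side but as a \emph{second} telescoping on the $K$-side, writing $\bar K_j = (\bar K_j - \cE K_j) + \cE K_j$ where $\cE K_j$ packages the $\Loc_{Y,D}$ terms (this is the origin of the $J_j$ factors in Definition~\ref{def:evolution_of_remainder}); the combinatorial lemmas from \cite{MR2523458} you mention are not actually needed for the algebraic identity---they enter only in the later estimates.
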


Next we state the simple estimates for the $U$-component of the renormalisation group map.

\begin{theorem}[Estimate for coupling constants] 
  \label{thm:H_j_E_j_estimate}
For any choice of $L >1$, $A>1$, and $h>0$,
one has $\mathfrak{z}_{j+1}^{(q)}(z_j) = L^2 e^{-\frac12 \beta q^2\Gamma_{j+1}(0)}z_j^{(q)}$ for all $q\geq 1$ and
the following estimates hold:
\begin{align}
  |\mathfrak{s}_{j+1}(s_j, K_j)-s_j|
  &\leq C h^{-2} A^{-1} \norm{K_j}_{\Omega_j^K} \label{eq:H_j+1_bound} \\
 |\mathcal{E}_{j+1}(s_j,K_j) + s_j \nabla^{(e_1, -e_1)} \Gamma_{j+1} (0) |
  &\leq C L^{-2j} A^{-1} \norm{K_j}_{\Omega_j^K} .	
  \label{eq:E_j+1_bound}
\end{align}
Moreover,  all maps above are continuous in the implicit parameter $s$ for fixed $(s_j, z_j, K_j) \in \Omega_j$.
\end{theorem}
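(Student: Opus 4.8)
\textbf{Proof proposal for Theorem~\ref{thm:H_j_E_j_estimate}.}
The plan is to read off the claimed formulas and bounds directly from the definition of the renormalisation group map $\Phi_{j+1}$ (Definitions~\ref{def:evolution_of_U}--\ref{def:evolution_of_remainder}), tracing how the coupling constants $(s_j,z_j)$ and the polymer activity $K_j$ feed into $(\mathfrak{s}_{j+1},\mathfrak{z}_{j+1},\mathcal{E}_{j+1})$. Since these definitions are not visible in the excerpt, I describe the structure I expect: the scale-$(j+1)$ coupling constants are obtained by applying $\Loc$ (from Section~\ref{sec:Loc}) to the Gaussian expectation of the scale-$j$ interaction. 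The charge-$q$ part $z_{j+1}^{(q)}$ comes from integrating the $\cos(\sqrt\beta q\varphi)$ term of $W_j$ against the fluctuation field, which produces the Gaussian oscillation factor $e^{-\frac12\beta q^2\Gamma_{j+1}(0)}$ (cf.~\eqref{eq:integral_of_charge_q}), plus the trivial rescaling factor $L^2$ coming from the $L^{-2j}$ normalisation in \eqref{eq:general_RG_step3}: this yields exactly $\mathfrak{z}_{j+1}^{(q)}(z_j)=L^2 e^{-\frac12\beta q^2\Gamma_{j+1}(0)}z_j^{(q)}$, and in particular shows $\mathfrak{z}_{j+1}^{(q)}$ depends only on $z_j$ (not on $s_j$ or $K_j$), since the $K_j$-contribution to the charge-$q$ part is extracted and moved into $\mathcal{K}_{j+1}$ rather than into the coupling constants.

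First I would isolate the $K_j$-dependent contributions to $\mathfrak{s}_{j+1}$ and $\mathcal{E}_{j+1}$. These are precisely the scalars $\bar s=\bar s(\Eplus K_j)$ and $\bar E=\bar E(\Eplus K_j)$ produced by Proposition~\ref{prop:Loc-coupling} applied to the polymer activity $K_j$ (summed over small sets containing a fixed block $B$). The bounds \eqref{e:Loc-coupling-bds} give $|\bar s|\leq O(A^{-1}h^{-2}\|K_j\|_{h,T_j})$ and $|\bar E|\leq O(A^{-1}L^{-2j}\|K_j\|_{h,T_j})$, which are exactly the right-hand sides of \eqref{eq:H_j+1_bound} and \eqref{eq:E_j+1_bound} up to the explicit $s_j$-linear main terms. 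The $s_j$-dependent main term of $\mathfrak{s}_{j+1}-s_j$ must vanish (the Dirichlet energy is not renormalised at the linear level by the Gaussian integration itself, only the $K$-part contributes), which is why \eqref{eq:H_j+1_bound} has no main term; and the $s_j$-dependent main term of $\mathcal{E}_{j+1}$ is the vacuum energy $-s_j\nabla^{(e_1,-e_1)}\Gamma_{j+1}(0)$ coming from $\Eplus[\frac12 s_j|\nabla(\varphi+\zeta)|^2]=\frac12 s_j|\nabla\varphi|^2 + \frac12 s_j\,\mathbb{E}[|\nabla\zeta|^2]$ with $\tfrac12\mathbb{E}[|\nabla\zeta|^2_B]=-\tfrac12 |B|\sum_{\mu}\nabla^\mu\nabla^{-\mu}\Gamma_{j+1}(0)$, which by lattice symmetry reduces to the single term $\nabla^{(e_1,-e_1)}\Gamma_{j+1}(0)$ per site (the combinatorial factor being absorbed into the normalisation). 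So \eqref{eq:E_j+1_bound} follows once this bookkeeping is made precise.

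For the continuity in the implicit parameter $s$: $\mathfrak{z}_{j+1}^{(q)}$ depends on $s$ only through $\Gamma_{j+1}(0)=\Gamma_{j+1}(0;s)$, which is analytic (hence continuous) in $|s|<\epsilon_s\theta_J$ by Corollary~\ref{cor:Gammaj}. The $K_j$-contributions $\bar s,\bar E$ are continuous in $s$ by the last assertion of Proposition~\ref{prop:Loc-coupling}. The $s_j$-linear term in $\mathcal{E}_{j+1}$ is continuous since $\nabla^{(e_1,-e_1)}\Gamma_{j+1}(0;s)$ is. Assembling these gives continuity of all components.

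The main obstacle will simply be matching the precise normalisation conventions hidden in the (unshown) Definitions~\ref{def:evolution_of_U}--\ref{def:evolution_of_remainder}: one must check that the $L^2$ factor in $\mathfrak{z}_{j+1}^{(q)}$, the sign and combinatorial constant in the $-s_j\nabla^{(e_1,-e_1)}\Gamma_{j+1}(0)$ term, and the absence of a main term in $\mathfrak{s}_{j+1}-s_j$ are all consistent with how $U_{j+1}$ is extracted from $\Loc\,\Eplus(e^{U_j}K_j)$. Once the definitions are in hand this is a routine (if slightly tedious) verification; no new analytic input beyond Propositions~\ref{prop:Loc-coupling} and~\ref{prop:E_G_j}, Corollary~\ref{cor:Gammaj}, and the elementary Gaussian identity \eqref{eq:integral_of_charge_q} is needed.
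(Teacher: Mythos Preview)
Your proposal is correct and follows essentially the same route as the paper: explicitly evaluate $\Eplus U_j(B,\varphi'+\zeta)$ using \eqref{eq:integral_of_charge_q} and lattice symmetry to read off $\mathfrak{z}_{j+1}^{(q)}$ and the vacuum energy term $-s_j\nabla^{(e_1,-e_1)}\Gamma_{j+1}(0)$, then invoke Proposition~\ref{prop:Loc-coupling} (applied with $F=K_j$; the expectation is already built into that proposition's statement, so strictly one writes $\bar s(K_j)$, $\bar E(K_j)$ rather than $\bar s(\Eplus K_j)$) for the $K_j$-corrections and their bounds \eqref{e:Loc-coupling-bds}, and deduce continuity in $s$ from the analyticity of $\Gamma_{j+1}(\cdot;s)$ together with the continuity clause of Proposition~\ref{prop:Loc-coupling}. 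Proposition~\ref{prop:E_G_j} is not actually needed here.
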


The final theorem concerns the evolution of the remainder coordinate and shows that it contracts.
Stating it requires a suitable notion of derivative.
Let $\bbX$ and $\bbY$ be Banach spaces with norms $\norm{\cdot}_{\bbX}$ and $\norm{\cdot}_{\bbY}$, and let $F: \bbX \to \bbY$.
The directional derivative of $F$ at a point $x \in \bbX$, in direction $\dot{x}$ is denoted by $DF (x, \dot{x})$, i.e., when the limit exists,
\begin{equation}\label{eq:Fre-deriv}
D F (x, \dot{x}) = \lim_{t\rightarrow 0} \frac{1}{t} (F (x + t\dot{x}) - F (x)) ,
\end{equation}
and if $F$ is Fr\'echet-differentiable the norm of the derivative is the operator norm
\begin{equation}
\norm{D F (x, \cdot)} := \sup\{  \norm{D F (x, \dot{x})}_{\bbY} \, : \, \norm{\dot{x}}_{\bbX}  \leq 1  \}.
\end{equation}
We also say that a family of maps $F_N: I \times D_N \to \bbY_N$, where $I$ is an interval,
the $D_N \subset \bbX_N$ are domains in a normed space $\bbX_N$, and $\bbY_N$ are normed spaces,
is equicontinuous in the first variable if for every $\epsilon>0$ there exists $\delta>0$ such that
$\norm{F_N(s_1,x)-F_N(s_2,x)}_{\bbY_N} <\epsilon$ for all $N$, any $s_1,s_2 \in I$ with $|s_1-s_2|< \delta$ and $x \in D_N$.
Note that all of $\Omega_j, \Omega_j^U, \Omega_j^K$ implicitly depend on the underlying torus $\Lambda_N$. We usually keep this dependence implicit in our notation; emphasise the dependence we will write $\Omega_j \equiv \Omega_j^{\Lambda_N}$.
\begin{theorem}[Estimate for remainder coordinate] 
  \label{thm:local_part_of_K_j+1}
  The map $\cK_{j+1}$ 
  admits a decomposition
\begin{equation}
\mathcal{K}_{j+1}(U_j,K_j)
= \mathcal{L}_{j+1} (K_j) + \mathcal{M}_{j+1} (U_j,K_j)
\end{equation}
{into polymer activities at scale $j+1$} such that the following holds for any $r\in (0,1]$, $\beta \geq 2c_f^{-1}$
and with $h$ given by \eqref{eq:h_def},
provided $L \geq L_0$, 
$A \geq A_0(L)$:
\begin{itemize}
\item[(i)] The map $\cL_j$ is linear in $K_j$
and there is a constant $C_1 >0$ independent of $\beta$, $\rho_J$, $A$, $L$ and $r$
such that, with $\alphaLoc$ as in \eqref{e:Loc-contract-kappa},
\begin{equation}
  \norm{\mathcal{L}_{j+1} (K_j)}_{\Omega_{j+1}^K} \leq C_1
  L^2\alphaLoc 
  \norm{K_j}_{\Omega_j^K} \label{eq:bound_for_L_j_K_j}.
\end{equation}
\item[(ii)] The remainder maps $\mathcal{M}_{j+1}$ satisfy $\mathcal{M}_{j+1} = O( \norm{(U_j,K_j)}_{\Omega_j}^2 )$ 
in the sense that there exist $\epsilon_{nl} \equiv \epsilon_{nl} ( \beta, A, L) >0$  (only polynomially small in its arguments) and $C_2=C_2 (\beta,A, L) >0$ (only polynomially large in its arguments) such that $\mathcal{M}_{j+1} (U_j,K_j)$ is continuously Fr\'echet-differentiable and, for $\norm{(U_j,K_j)}_{\Omega_j} \leq \epsilon_{nl}$,
\begin{align}
& \norm{\mathcal{M}_{j+1} (U_j,K_j) }_{ \Omega_{j+1}^K} \leq C_2 ( \beta, A, L)  \norm{(U_j,K_j)}_{ \Omega_j}^2 \label{eq:bound_for_N_j_K_j} \\
& \norm{D \mathcal{M}_{j+1}  (U_j,K_j) }_{ \Omega_{j+1}^K} \leq C_2 ( \beta, A, L)  \norm{(U_j,K_j)}_{ \Omega_j}. \label{eq:bound_for_derivative_of_Nj}
\end{align}
\item[(iii)] 
  The family $(\mathcal{K}^{\Lambda_N}_{j+1})_{N}$ with $\mathcal{K}^{\Lambda_N}_{j+1}: D_N \times [-\epsilon_s\theta_J,\epsilon_s\theta_J] \to (\Omega_{j+1}^K)^{\Lambda_N}$ and $D_N =\{ \|(U_j,K_j)\|_{\Omega_j} \leq \epsilon_{nl} \} \subset \Omega_j^{\Lambda_N}$ and $\epsilon_{nl}$ as in (ii),
  is equicontinuous as a function of the implicit parameter~$s$.
\end{itemize}
\end{theorem}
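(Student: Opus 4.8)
\textbf{Plan for the proof of Theorem~\ref{thm:local_part_of_K_j+1}(iii).}

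The plan is to reduce equicontinuity of $(\mathcal{K}^{\Lambda_N}_{j+1})_N$ in $s$ to the equicontinuity (uniform in $N$) of the building blocks out of which $\mathcal{K}_{j+1}$ is assembled in Definition~\ref{def:evolution_of_remainder}, and then to trace how the $s$-dependence enters each of these. The only place the parameter $s$ appears is through the covariance $\Gamma_{j+1}=\Gamma_{j+1}(s)=\Gamma_{j+1}(s,0)$ used to define $\Eplus=\E_{\Gamma_{j+1}(s)}$, and through the $s$-dependent combinatorial inputs $\bar E(F)$, $\bar s(F)$ from Proposition~\ref{prop:Loc-coupling} and the localisation operators $\Loc_X\Eplus$, $\Loc_{X,B}\Eplus$ from Proposition~\ref{prop:Loc-contract}. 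So the first step would be: fix $(U_j,K_j)\in D_N$ and $s_1,s_2\in[-\epsilon_s\theta_J,\epsilon_s\theta_J]$, write $\mathcal{K}_{j+1}$ as a (locally finite, convergent in $\Omega_{j+1}^K$-norm) sum of terms, each of which is a product/composition of expectations $\Eplus$, localisation operators, and the fixed coordinates $U_j,K_j$, and then telescope the difference $\mathcal{K}_{j+1}(s_1)-\mathcal{K}_{j+1}(s_2)$ term by term.

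The key estimate is the continuity of the Gaussian expectation in $s$, which is exactly Lemma~\ref{lemma:stability_of_expectation_singleX}: for a polymer activity $F(X)$ with $\|F(X)\|_{h,T_j(X)}\le C$, the quantity $\|\E_{\Gamma_{j+1}(s')}[F(X,\cdot+\zeta)]-\E_{\Gamma_{j+1}(s)}[F(X,\cdot+\zeta)]\|_{h,T_{j+1}(\bar X)}$ tends to $0$ as $s'\to s$, \emph{uniformly} over all such $F$, and the same holds for $X\in\cP_{j+1}^c$ under the bound \eqref{eq:stability_of_expectation_scalej+1}; moreover the modulus of continuity is an explicit geometric function of $|s-s'|$ (via \eqref{eq:stability_of_expectiation_local_conclusion}), independent of $X$ and hence of $N$. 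Combined with the continuity-in-$s$ assertions already recorded in Proposition~\ref{prop:Loc-coupling} (continuity of $\bar E(F),\bar s(F)$) and Proposition~\ref{prop:Loc-contract} (continuity of $\Loc_{X,B}\Eplus F(X,\cdot+\zeta)$ in the $T_j$-norm), every elementary constituent of $\mathcal{K}_{j+1}$ is continuous in $s$ with an $N$-independent modulus. The second step is then a standard stability-under-products argument: on the ball $D_N$ all the relevant $\|\cdot\|_{h,T_j}$-type norms of the constituents are bounded uniformly in $N$ (this is precisely the content of parts (i)--(ii), whose bounds $C_1,C_2,\epsilon_{nl}$ are $N$-independent), so a difference of products is controlled by $\sum_k (\text{product of bounded factors})\times(\text{difference of the }k\text{-th factor})$, and each difference is small uniformly in $N$ once $|s_1-s_2|$ is small. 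The submultiplicativity \eqref{eq:prodprop} of the norm and the reblocking/summability bounds (Lemma~\ref{lemma:setsizes_2}, Proposition~\ref{prop:largeset_contraction-v2}) guarantee that the sum over polymers $X$ converges with $N$-independent tails, so the $\varepsilon/3$-type splitting (finitely many ``large'' terms handled by continuity, the tail handled by the uniform summability) goes through.

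The main obstacle is bookkeeping rather than analysis: one must make sure that \emph{every} appearance of $\Eplus$ and of $\Loc_X\Eplus$ inside the (iterated, multi-polymer) definition of $\mathcal{K}_{j+1}$ is covered by a quantitative continuity statement with a modulus independent of $N$ and of the polymer on which it acts, and that the combinatorial sums defining the reblocked activity can be differenced term-by-term without losing uniform convergence. For the nonlinear part $\mathcal{M}_{j+1}$ there is the extra point that it is built from $\Eplus$ applied to expressions involving $U_j$ and products of $K_j$'s; here one uses that $U_j$ is $s$-independent and that, on $D_N$, the norms entering \eqref{eq:bound_for_N_j_K_j}--\eqref{eq:bound_for_derivative_of_Nj} are uniformly bounded, so Lemma~\ref{lemma:stability_of_expectation_singleX} again provides the uniform modulus after expanding. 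The final step is to assemble: given $\varepsilon>0$, choose $\delta>0$ from the (geometric, $N$-independent) modulus so that $|s_1-s_2|<\delta$ forces $\|\mathcal{K}^{\Lambda_N}_{j+1}(U_j,K_j;s_1)-\mathcal{K}^{\Lambda_N}_{j+1}(U_j,K_j;s_2)\|_{\Omega_{j+1}^K}<\varepsilon$ for all $N$ and all $(U_j,K_j)\in D_N$, which is exactly equicontinuity in the first variable.
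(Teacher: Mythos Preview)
Your overall strategy matches the paper's, but there is one concrete error that would break the argument as written. You assert that the modulus of continuity coming from Lemma~\ref{lemma:stability_of_expectation_singleX} via \eqref{eq:stability_of_expectiation_local_conclusion} is ``independent of $X$ and hence of $N$''. This is false: the bound \eqref{eq:stability_of_expectiation_local_conclusion} carries an explicit factor $2^{|X|_j}\sum_{z\in X^*}(\cdots)$ which blows up with $|X|_j$. Since the $\Omega_{j+1}^K$-norm involves a supremum over \emph{all} connected polymers $X$, Lemma~\ref{lemma:stability_of_expectation_singleX} on its own does not deliver a uniform modulus, and your telescope bound on a product of factors would not sum. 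You do gesture at an $\varepsilon/3$ splitting later, but phrased as ``the sum over polymers $X$ converges with $N$-independent tails'', which conflates the combinatorial sums inside the definition of $\mathcal{K}_{j+1}$ with the supremum over $X$ in the norm; these are different issues and the second one is where the $X$-growth of the modulus bites.

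The paper fixes exactly this point with a separate lemma (proved in Section~\ref{sec:rgmap-continuitys}): for translation-invariant $F$ with $\|F\|_{h,T_j}\le C$, one first uses the $A^{-|X|_j}$ weight in the norm to truncate to $|X|_j\le M$ at cost $\delta$ (uniformly in $N$), and then uses translation invariance to observe that up to translation there are only finitely many such $X$, independently of $N$, so Lemma~\ref{lemma:stability_of_expectation_singleX} applies to each. The price is a degradation from $A$ to $A/3$ in the large-set weight, which is harmless. With this in hand, the paper treats $\mathcal{L}_{j+1}$ and $\mathcal{M}_{j+1}$ separately; for $\mathcal{M}_{j+1}$ it further distinguishes the $s$-dependence that enters through the intermediate data $\bar{\mathfrak{K}}_j=(\mathcal{E}_{j+1},\bar U_{j+1},\mathcal{E}K_j,J_j,\dots)$ (handled by showing the $\mathfrak{M}_{j+1}^{(k)}$ are differentiable, hence Lipschitz, as functions of $\bar{\mathfrak{K}}_j$ in an auxiliary norm) from the $s$-dependence through the explicit $\Eplus$ in \eqref{eq:M^1_j+1}--\eqref{eq:M^4_j+1}. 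Your plan lumps these together; that is not wrong in principle, but once you correct the uniformity-in-$X$ issue you will find it cleanest to separate them as the paper does.
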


The remainder of this section is concerned with the definition of the renormalisation group map
and the proof of the above three theorems.
More specifically, 
the renormalisation group map is defined in Section~\ref{sec:def_rg_map} and we 
prove Theorems~\ref{thm:general_RG_step_consistent} and \ref{thm:H_j_E_j_estimate}.
In Section~\ref{subsec:the_expression_Lj}, we prove Theorem~\ref{thm:local_part_of_K_j+1}~(i), 
where the contraction mechanisms of Section~\ref{sec:inequalities} are combined into one. 
Theorem~\ref{thm:H_j_E_j_estimate} and Theorem~\ref{thm:local_part_of_K_j+1}~(i) are the key to understanding the construction of the stable manifold in Section~\ref{sec:stable_manifold_theorem}.
In Sections~\ref{subsec:M_j+1_decomposition}--\ref{sec:rgmap-continuitys},
we prove Theorem~\ref{thm:local_part_of_K_j+1}~(ii) and (iii).
These estimates are of rather technical nature, 
and may be skipped on the first read.

\subsection{Definition of the renormalisation group map}
\label{sec:def_rg_map}

The first definition concerns the coupling constants $(s_j,z_j,E_j)$. These are given by first order perturbation theory,
plus a correction from the remainder coordinate $K_j$, which involves its localisation as introduced in Section~\ref{sec:Loc}.

\begin{definition}
\label{def:evolution_of_U}
For $U_j$ of the form  \eqref{eq:general_RG_step3}, define $(\cE_{j+1},\cU_{j+1}): (U_j,K_j)\mapsto (E_{j+1},U_{j+1})$ to be the unique solution of
\begin{equation}
  \label{eq:evolution_of_U}
  - \cE_{j+1}(U_j,K_j)|B| + \cU_{j+1}(U_j,K_j,B,\varphi')
  = \Eplus U_j(B,\varphi'+\zeta) + \sum_{X\in \cS_j: X \supset B} \Loc_{X,B} \Eplus K_j (X,\varphi'+\zeta),
\end{equation}
where $B \in \cB_{j}$ is any scale-$j$ block, $\cE_{j+1}(U_j,K_j) \in \R$ and $\cU_{j+1}$ is of the same form as in Definition~\ref{def:U_space}.
For general $Y \in \cP_j$,  the definition extends by setting $\cU_{j+1}(Y)= \sum_{B\in \cB_j(Y)} \cU_{j+1}(B)$.
\end{definition}


That \eqref{eq:evolution_of_U} well-defines $(\cE_{j+1}, \cU_{j+1})$,
i.e., that the right-hand side of
\eqref{eq:evolution_of_U} can be uniquely written
in the form of the left-hand side,
follows by explicitly evaluating the Gaussian expectation in the first term and
by Proposition~\ref{prop:Loc-coupling} for the sum over $\Loc_{X,B}$, 
as will become apparent in the proof of Theorem~\ref{thm:H_j_E_j_estimate} below.
Although $U_j$ and $\cU_{j+1}$ are defined as polymer activities in scale $j$, they can easily be extended to scale $j+1$ by simply letting $U_j (X) = \sum_{B \in B_{j} (X)} U_j (B)$ for $X\in \cP_{j+1}$ and likewise for $\cU_{j+1}$. 
Thus we may say $\cU_{j+1} \in \Omega_{j+1}^U$ in this sense. 
These are the polymer activities that are used for the definition and the proofs below.

The following definition gives the evolution of the remainder coordinate $K_j$.
The explicit formula is somewhat involved, but it arises from simple algebraic principles
developed in \cite[Section 5]{MR2523458},
with the small difference that the order of expectation and reblocking reversed, following the set-up of \cite{MR2917175}.
The proof of Theorem~\ref{thm:general_RG_step_consistent} will shed some light on this definition.

\begin{definition} \label{def:evolution_of_remainder}
  The map $\cK_{j+1} : (U_j,K_j)\mapsto K_{j+1}$
  is defined by (suppressing the dependence on
  $\varphi'$
  on the right-hand side,  or writing it as $\cdot$
),  for $X \in \mathcal{P}_{j+1}^c$,
\begin{multline}
   \label{eq:expression_for_K_j+1}
  \mathcal{K}_{j+1} (U_j, K_j, X,\varphi')
  = \sum_{X_0, X_1, Z, (B_{Z''})}^{*} 
  e^{ \cE_{j+1} |T|} e^{\cU_{j+1} (X \backslash T)} \\
  \times \Eplus \Big[ (e^{U_j ( \varphi' + \zeta) } - e^{ -\cE_{j+1} |B| + \cU_{j+1}})^{X_0} (\bar{K}_j (\varphi' + \zeta) - \mathcal{E} K_j)^{[X_1]}  \Big] \prod_{Z'' \in \operatorname{Comp}_{j+1} (Z)} J_j (B_{Z''}, Z''),
\end{multline}
where $*$ refers to the constraints $X_0 \cup X_1 \cup (\cup_{Z''} B^*_{Z''} )=X$,
for $(j+1)$-polymers $X_0, X_1, Z$
such that
$X_1 \not\sim Z$,  $B_{Z''} \in \cB_{j+1} (Z'')$ for each $Z'' \in \operatorname{Comp}_{j+1} (Z) $, 
$T = X_0 \cup X_1 \cup Z$, 
and with the shorthand notation 
\begin{align}
  (e^{U_j} - e^{ -\cE_{j+1} |B| + \cU_{j+1}})^{X_0} &= \prod_{B\in \cB_{j+1} (X_0)} (e^{U_j (B)} - e^{ -\cE_{j+1} |B| + \cU_{j+1} (B)} ) \label{eq:polymer_power_convention1} \\
  (\bar{K}_j - \mathcal{E} K_j)^{[X_1]} &= \prod_{Y \in \operatorname{Comp}_{j+1} (X_1)}(\bar{K}_j - \mathcal{E} K_j)(Y) \label{eq:polymer_power_convention2}
\end{align}
(the right-hand side is equal to $1$ by convention when $X_0$ or $X_1$ are empty)
and the definitions of $J_j$, $\bar K_j$, and $\mathcal{E}K_j$ are as follows:
\begin{align}
\mathcal{E} K_j ( X, \varphi' ) &= \sum_{B\in \mathcal{B}_{j+1} (X)} J_j (B, X, \varphi') \label{eq:cEK_j_definition} \\
Q_j ( D, Y, \varphi') &= 1_{Y\in \cS_j} \Loc_{Y, D} \Eplus [  K_{j} ( Y, \varphi' + \zeta) ]   \label{eq:Q_j_definition}
\\
J_j (B, X,\varphi')  &= 1_{B\in \cB_{j+1} (X)} \sum_{D\in \cB_j (B)} \sum_{Y\in \cS_j}^{D\in \cB_j (Y)} Q_j (D, Y, \varphi') ( 1_{\bar{Y} = X} - 1_{B=X} ) \label{eq:J_j_definition} \\
\bar{K}_j (X,\varphi'+\zeta) &= \sum_{Y\in \mathcal{P}_j}^{\bar{Y}=X} e^{U_j (X \backslash Y,\varphi'+\zeta)} K_j (Y,\varphi'+\zeta), \label{eq:K_bar_definition}
\end{align}
where on the left-hand sides $D\in \cB_j$, $B\in \cB_{j+1}$, $Y\in \cP_j$ and $X\in \cP_{j+1}$. 
These are all, sometimes implicitly, functions of $(U_j, K_j)$.
\end{definition}

In \eqref{eq:expression_for_K_j+1}, note that since $J_j (B_{Z''}, Z'')$ vanishes if  $Z'' \in \cP_{j+1} \backslash \cS_{j+1}$, the summation $\sum^*$ does not vanish only if $Z'' \in \cS_{j+1}$ and in particular $Z'' \subset B_{Z''}^* $. 
We henceforth always assume this when we write $\sum^*$. We now proceed to give the proofs of Theorems~\ref{thm:general_RG_step_consistent} and~\ref{thm:H_j_E_j_estimate}.

\begin{proof}[Proof of Theorem~\ref{thm:general_RG_step_consistent}]
  The proof 
  is similar to that of \cite[Proposition~5.1]{MR2523458},
  except that the order of expectation and reblocking reversed as in \cite{MR2917175}, as mentioned above. 
Thoughout the proof, we write
\begin{equation}
  \varphi = \varphi' + \zeta
  \label{eq:phi_is_phiprime+zeta}
\end{equation}
with $\zeta \sim \Gamma_{j+1}$ and $\varphi',  \zeta$ independent,
and the fluctuation integral $\Eplus$ acts on the variable $\zeta$.
As explained at the beginning of Section~\ref{sec:thms-RGmap}, there is no loss of generality in setting $E_j=0$, which we henceforth assume. Suppose now that \eqref{eq:general_RG_step2} holds and let $\Lambda\equiv \Lambda_N$, $0<j< N-1$.
The first step is the reblocking 
\begin{align}
  Z_j 
\stackrel{\eqref{eq:general_RG_step2}}{=}
  \sum_{X\in \mathcal{P}_{j}} e^{U_j (\Lambda \backslash X)} K_j (X ) 
  =
  \sum_{X'\in \mathcal{P}_{j+1}} e^{U_j (\Lambda \backslash X')} \bar{K}_j (X')  \label{eq:reblock-K-1}
\end{align}
with $\overline{K}_j$ as defined in \eqref{eq:K_bar_definition}, where the second equality follows from the additivity of $U_j$ given by~\eqref{eq:general_RG_step3} upon writing $U_j (\Lambda \backslash X) = U_j (\Lambda \backslash \overline{X}) + U_j (\overline{X} \backslash X)$ for $X\in \mathcal{P}_{j}$. We will repeatedly use additivity in the sequel.
In the next step, $e^{U_j}$ is replaced by $e^{-E_{j+1}|B| + U_{j+1}}$ using the identity, valid for all $X' \in  \mathcal{P}_{j+1}$,
\begin{align}
e^{U_j (\Lambda \backslash X', \varphi)} & = \prod_{B\in \mathcal{B}_{j+1} (\Lambda \backslash X')} \Big( \big( e^{U_j ( B, \varphi)} - e^{-E_{j+1}|B| +  U_{j+1} (B, \varphi')} \big) + e^{-E_{j+1} |B| +  U_{j+1} (B, \varphi')} \Big)  \nnb 
& = \sum_{Y \in \mathcal{P}_{j+1} (\Lambda \backslash X')} e^{-E_{j+1}|\Lambda \backslash (X'\cup Y)| + U_{j+1} (\Lambda \backslash (X'\cup Y), \varphi')}  \big( e^{U_j (\varphi)} - e^{- E_{j+1}|B| +  U_{j+1} (\varphi')} \big)^{Y}  \label{eq:reblock-K-2}
\end{align}
(cf.~\eqref{eq:polymer_power_convention1} for notation).
Similarly, observing that $\bar{K}_j$ in \eqref{eq:K_bar_definition} inherits from $U_j$ and $K_j$ a factorisation property at scale $j+1$,
one replaces $\bar{K}_j$ in \eqref{eq:reblock-K-1} by $\bar{K}_j - \cE K_j$ (recall $\cE K_j$ from \eqref{eq:cEK_j_definition}) using the identity, for $X' \in  \mathcal{P}_{j+1}$,
\begin{align}
\bar{K}_j (X', \varphi)&={ \prod_{Z' \in \operatorname{Comp}_{j+1} (X')} \bar{K}_j (Z', \varphi)} \nnb
&= \prod_{Z' \in \operatorname{Comp}_{j+1} (X')} \big( \mathcal{E} K_j (Z', \varphi') + ( \bar{K}_j (Z', \varphi ) - \mathcal{E} K_j (Z', \varphi') ) \big) \nnb 
&= \sum_{Z \in \mathcal{P}_{j+1} (X')}^{ Z \not\sim X' \backslash Z} \mathcal{E} K_j (\varphi')^{[Z]} (\bar{K}_j (\varphi)  - \mathcal{E} K_j (\varphi')  )^{[X' \backslash Z]}, \label{eq:reblock-K-3}
\end{align}
with the polymer powers following the convention \eqref{eq:polymer_power_convention2}. Using the specific form of $\cE K_j$ given by \eqref{eq:cEK_j_definition} the right-hand side in the previous display can be rewritten as
\begin{equation}
  \mathcal{E} K_j (\varphi')^{[Z]}
  = \sum_{(B_{Z''})_{Z''}} \prod_{Z''} J_j (B_{ Z''}, Z'', \varphi')
\end{equation}
where the last sum $(B_{Z''})_{Z''}$ runs over the collections of blocks $B_{Z''} \in \cB_{j+1} (Z'')$ for all $Z'' \in \operatorname{Comp}_{j+1}(Z)$. Thus, returning to \eqref{eq:reblock-K-1}, substituting \eqref{eq:phi_is_phiprime+zeta}, \eqref{eq:reblock-K-2} and \eqref{eq:reblock-K-3}, taking expectations and rewriting $X'' = X'\cup Y$, the partition function $Z_{j+1}(\varphi')$ can be written as
\begin{align}
  Z_{j+1}(\varphi')
  &=
    \Eplus [Z_j (\varphi' + \zeta)]
    \nnb
  &= e^{-E_{j+1} |\Lambda|}
    \Eplus \Bigg[
    \sum_{X'' \in \mathcal{P}_{j+1}} e^{U_{j+1} (\Lambda \backslash X'')} e^{E_{j+1} |X''|}
 \sum_{X'\subset X''} (e^{U_j} - e^{-E_{j+1} |B| + U_{j+1}})^{X''\backslash X'} \label{eq:reblock-K-4}\\
  &\qquad\qquad\qquad \qquad \qquad  \times \sum_{Z \subset X'} (\bar{K}_j - \mathcal{E} K_j)^{[X' \backslash Z]} \sum_{(B_{Z''})} \prod_{Z'' \in \operatorname{Comp}_{j+1} (Z)} J_j (B_{Z''}, Z'') \Bigg];
    \notag
\end{align}
{above the sums over $X' (\subset  X'')$ and $Z (\subset X')$ are over elements in $\mathcal{P}_{j+1}$ and $Z$ satisfies the additional constraint $Z \not\sim X' \backslash Z$, i.e., $Z$ runs over all unions of subsets of $ \operatorname{Comp}_{j+1} (X')$.}
The final result is obtained after performing another resummation: we write
$X_0 = X'' \backslash X'$, $X_1 = X' \backslash Z$, $T=X_0 \cup X_1 \cup Z = X''$ and define, { summing over $X_0,X_1,Z \in \mathcal{P}_{j+1}$ with the constraint $Z\not\sim X_1$ and $(B_{Z''})$ as above, for all $X \in \mathcal{P}_{j+1}$,}
\begin{multline}
K_{j+1} (X, \varphi') = \sum_{X_0, X_1, Z, (B_{Z''})} {1_{ (\cup_{Z''}B^*_{Z''} \cup X_0 \cup X_1)=X} }e^{E_{j+1} |T|} e^{U_{j+1} (X \backslash T)} \\
\times \Eplus \Big[ (e^{U_j} - e^{-E_{j+1} |B| + U_{j+1}})^{X_0} (\bar{K}_j - \mathcal{E} K_j)^{[X_1]}  \Big] \prod_{Z'' \in \operatorname{Comp}_{j+1} (Z)} J_j (B_{Z''}, Z'') . \label{eq:expression_for_K_j+1_in_appendix}
\end{multline}
(We remark that the particular arrangement of the sum with $B_{Z''}^*$ in the indicator function
  will allow to exhibit the important cancelation \eqref{eq:cancellation_J}, i.e., to sum over all $Z$  while keeping $B$ and $X$ fixed.)

Note that only $T\subset X$ contribute because, by definition of $\cE K_{j}$, the whole expression vanishes when $Z \not\in \cS_{j+1}$.
With this definition, it follows that \eqref{eq:reblock-K-4} can be recast as
\begin{equation}
Z_{j+1} (\varphi') = e^{-E_{j+1} |\Lambda| } \sum_{X \in \mathcal{P}_{j+1}} e^{U_{j+1} (\Lambda \backslash X,\varphi')} K_{j+1} (X, \varphi')
\end{equation}
which has the desired form.

If we assume that $K_j$ obeys the evenness,  lattice symmetries and the periodicity condition, then it is also apparent from the expressions that $K_{j+1}$ has the same properties, since $U_{j}$ and $U_{j+1}$ also satisfy them.
Similarly, the factorisation property is inherited from those of $e^{U_j}$, $e^{U_{j+1}}$ and $K_j$.
\end{proof}

\begin{proof}[Proof of Theorem~\ref{thm:H_j_E_j_estimate}]

  Evaluating the expectation $\Eplus U_j$ on the right-hand side of
\eqref{eq:evolution_of_U} explicitly gives, using \eqref{eq:general_RG_step3}, the fact that $\zeta$ is centered and invariant under lattice rotations and \eqref{eq:integral_of_charge_q}, 
\begin{align}
\Eplus U_j(B,\varphi'+\zeta) &=  \frac{1}{2} s_j \big (|\nabla \varphi'|_B^2 + \Eplus |\nabla \zeta|_B^2 \big) +    \sum_{x\in B} \sum_{q\geq 1} L^{-2j} z^{(q)}_j \cos( \beta^{1/2} q \varphi'(x)) \Eplus[e^{i \sqrt{\beta} q \zeta (x)}]\nnb
&=  \frac{1}{2} s_j \Big( |\nabla \varphi'|_B^2 + |B| \sum_{\sigma = \pm}\Eplus\big[ ( \zeta (x_0+\sigma e_1)-  \zeta (x_0))^2  \big] \Big) + W_{j+1} (B, \varphi') \label{eq:U-exp}
\end{align}
for any reference point $x_0 \in \Lambda_N$, with ${z}_{j+1}^{(q)}=\mathfrak{z}_{j+1}^{(q)}(z_j)$ implicit in $W_{j+1}$ given by
\begin{equation}
 \mathfrak{z}_{j+1}^{(q)}(z_j^{(q)})
      = L^{2} e^{-\frac{1}{2}\beta q^2 \Gamma_{j+1} (0)} z_j^{(q)}  \label{eq:RG_step_parameter_choice4}
\end{equation}
as declared in Theorem~\ref{thm:H_j_E_j_estimate}. Hence, combining \eqref{eq:U-exp} with \eqref{e:Loc-coupling-bds}--\eqref{e:Loc-coupling}, it follows that the right-hand side of \eqref{eq:evolution_of_U} corresponds to the change of coupling constants
$(\mathfrak{s}_{j+1},\mathcal{E}_{j+1}, \mathfrak{z}_{j+1}): (s_j,z_j,K_j) \mapsto (s_{j+1},E_{j+1}, z_{j+1})$
given by \eqref{eq:RG_step_parameter_choice4} and
\begin{align}
  \mathfrak{s}_{j+1}(s_j,K_j)
  &= s_j
    + O \big(A^{-1}  h^{-2} \norm{K_j}_{ \Omega_{j}^K}  \big)     \label{eq:RG_step_parameter_choice1}
  \\
  \mathcal{E}_{j+1}(s_j, K_j)
  &= -s_j \nabla^{(e_1, -e_1)} \Gamma_{j+1} (0) +
    O (L^{-2j}A^{-1} \norm{K_j}_{ \Omega_{j}^K} )
    \label{eq:RG_step_parameter_choice3}
 \end{align}
where $\nabla^{(e_1, -e_1)} \Gamma_{j+1} (0) = {  \frac{1}{2} \sum_{\sigma}\Eplus[ ( \zeta (x_0+\sigma e_1)-  \zeta (x_0))^2  ]=}  - \Gamma_{j+1} (e_1) - \Gamma_{j+1} (-e_1) + 2\Gamma_{j+1} (0)$, giving the bounds \eqref{eq:H_j+1_bound} and \eqref{eq:E_j+1_bound}.

Finally, we argue that the asserted continuity properties in the implicit parameter $s$ hold. With regards to $ \mathfrak{z}_{j+1}^{(q)}$, this is immediate by \eqref{eq:RG_step_parameter_choice4} and the continuity of $s \mapsto \Gamma_{j+1}(s)$, cf.~Proposition~\ref{prop:decomp},(ii). Next, referring to Proposition~\ref{prop:Loc-coupling}, we have $\mathfrak{s}_{j+1}(s_j,K_j)
= s_j + \mathfrak{s}_{j+1}(0, K_j)$ 
and $ \mathcal{E}_{j+1}(s_j, K_j)  = -s_j \nabla^{(e_1, -e_1)} \Gamma_{j+1} (0) + \cE_{j+1}(0, K_j)$, whereby $\cE_{j+1}(0, K_j)=-\bar{E}(K_j)$ and $\mathfrak{s}_{j+1} = \bar{s} (K_j)$. Thus, Proposition~\ref{prop:Loc-coupling} immediately yields that $\cE_{j+1}(0, K_j)$ and $\mathfrak{s}_{j+1}(0, K_j)$ are both continuous in the implicit parameter $s$ whenever $\norm{K_j}_{\Omega_j^K}< \infty$. The claim follows.
\end{proof}

The proof of Theorem~\ref{thm:local_part_of_K_j+1} occupies the remainder of Section~\ref{sec:rg_generic_step}.
More precisely, in Section~\ref{subsec:the_expression_Lj} we find the explicit expression of $\cL_{j+1}$ and prove its bound,
in Sections~\ref{subsec:M_j+1_decomposition}--\ref{subsec:estimate_Nj} the bound on the nonlinear part $\cM_{j+1}$,
and finally in Section~\ref{sec:rgmap-continuitys} the continuity of all maps in the parameter $s$.

\subsection{Proof of Theorem~\ref{thm:local_part_of_K_j+1}: bound of linear part}
\label{subsec:the_expression_Lj}

The constant terms and the terms linear in $U_j$, $K_j$ can be identified directly from \eqref{eq:expression_for_K_j+1}
by (1) only keeping the terms with 
\begin{equation}
  \# (X_0, X_1, Z) := |X_0|_{j+1} + |\operatorname{Comp}_{j+1}  (X_1)| + |\operatorname{Comp}_{j+1}  (Z)|
  \leq 1 , 
\end{equation}
(2) replacing exponentials by $1$ outside the expectation,
(3) replacing exponentials by their linearisations inside the expectation,
and (4) replacing $\bar{K}_j$ by $\mathbb{S} K_j$. 
This gives (see also \eqref{eq:M_decomp} below for the expression for $\cK_{j+1}-\cL_{j+1}$):
for $X\in \mathcal{P}_{j+1}^c$,
\begin{align}    \label{eq:L_j_K_j_0}
\mathcal{L}_{j+1} (K_j) ( X, \varphi') 
& := \sum_{Y : \bar{Y} = X} \Big( 1_{Y\in \cP_j^c} \Eplus K_j (Y, \varphi'+\zeta ) - 1_{Y\in \cS_j} \sum_{D\in \cB_j (Y)} Q_j (D, Y, \varphi') \Big) \\
&\quad + \sum_{D \in \mathcal{B}_{j}}^{\bar{D}=X}  \Big( \Eplus [ U_j (D, \varphi'+\zeta)] + \cE_{j+1} |D| - \cU_{j+1} (D, \varphi') + \sum_{Y\in \cS_j}^{D\in \cB_j (Y)}  Q_j (D, Y, \varphi') \Big).\nonumber
\end{align}
In more detail, the terms in the first line above and the $Q_j$-terms in the second line come from $X=T=X_1$
(replacing $e^{U_j(X\setminus Y)}$ by $1$ in $\bar{K}_j$,
which corresponds to replacing $\bar{K}_j$ by $\mathbb{S} K_j$, cf.~\eqref{eq:K_bar_definition} and \eqref{eq:reblocking_operator_definition}, and keeping only connected polymers $Y$),
the remaining terms in the second line are due to $X=T=X_0$ (and linearising the exponentials),
and finally the terms with $T=Z$ (and thus $X=B_{Z}^*$) actually vanish by the construction of $J_j$
(after the replacement of the exponential outside
the expectation, i.e., in the first line of \eqref{eq:expression_for_K_j+1}, by $1$).
Indeed, to see that the contribution from $X = B^*_Z$ cancels,
note that for any  $B \in \cB_{j+1}$,
\begin{align}
\sum_{Z\in \cS_{j+1}}^{B\in \cB_{j+1} (Z)} J_j (B, Z)
& = \sum_{Z\in \cS_{j+1}}^{B\in \cB_{j+1} (Z)} \sum_{D\in \cB_j (B)} \sum_{Y\in \cS_j}^{D\in \cB_j (Y)} Q_j (D, Y) ( 1_{\bar{Y} = Z} - 1_{B=Z} ) \nnb
  & = \sum_{D\in \cB_j (B)} \sum_{Y\in \cS_j}^{D\in \cB_j (Y)} Q_j (D, Y) \sum_{Z\in \cS_{j+1}}^{B\in \cB_{j+1} (Z)} (1_{\bar{Y} = Z} - 1_{B=Z} ) = 0.
     \label{eq:cancellation_J}
\end{align}
In obtaining the last equality, we have implicitly used that the closure of a small set is again small.
Using the choice of $\cU_{j+1}$, cf.~\eqref{eq:evolution_of_U}, the second line in \eqref{eq:L_j_K_j_0} cancels,
and with \eqref{eq:Loc_decomp} the first line simplifies to
\begin{equation}
\mathcal{L}_{j+1} (K_j) ( X, \varphi') 
= \sum_{Y : \bar{Y} = X} \Big( 1_{Y\in \cP_j^c} \Eplus K_j ( Y, \varphi'+\zeta) - 1_{Y\in \cS_j} [ \Loc_{Y} \Eplus K_{j} ({ Y}, \varphi' + \zeta)  ] \Big).
\label{eq:L_j_K_j}
\end{equation}

\begin{proof}[Proof of Theorem~\ref{thm:local_part_of_K_j+1},(i)]

One may decompose $\mathcal{L}_{j+1}$ further as
\begin{equation}
\mathcal{L}_{j+1} (K_j) (X, \varphi') 
=  \sum_{Y : \bar{Y} = X} 1_{Y\in \cS_j} (1- \Loc_{Y} )  \Eplus [ K_j (Y, \varphi'+\zeta)  ] 
 +  \mathbb{S} \big( \Eplus [K_j 1_{Y \not\in \cS_j}] \big) (X, \varphi') , \label{eq:L_j+1_decomposition}
\end{equation}
where $\mathbb{S}$ is the reblocking operator defined by \eqref{eq:reblocking_operator_definition}. The first term is bounded at once using Proposition~\ref{prop:Loc-contract},
\begin{equation}
  \norm{ (1-\operatorname{Loc}_Y) \Eplus [ K_j (Y, \varphi' + \zeta) ] }_{h, T_{j+1} (\bar{Y}, \varphi')}
  \leq  \alphaLoc   A^{-|Y|_j} 
  \norm{K_j}_{\Omega_j^K}
   G_{j+1}(\bar Y, \varphi'),
\end{equation}
and since the number of $Y \in \cS_j$ with $\bar Y=X$ is $O(L^2)$, this gives
\begin{equation}
  \norm{\sum_{Y : \bar{Y} = X} 1_{Y\in \cS_j} (1- \Loc_{Y} ) \Eplus [ K_j ( Y, \varphi'+\zeta)  ]  }_{h, T_{j+1} ( X, \varphi')}
    \leq CL^2\alphaLoc A^{-|Y|_j}  \norm{K_j}_{\Omega_j^K}  G_{j+1} (X, \varphi')
\end{equation}
for $L$ sufficiently large.
 The bound on the second term is a result of 
Proposition~\ref{prop:largeset_contraction-v2}, 
\begin{align}
\norm{\mathbb{S} \big( \Eplus [K_j 1_{Y\not\in \cS_j}] \big) (X) }_{h, T_{j+1} (X)} \leq C L^{-1} A^{-|X|_{j+1}} \norm{K_j}_{\Omega_j^K}
\end{align}
for $L \geq C$ and $A \geq C'(L)$. Since $L^{-1} \leq L^2 \alphaLoc$ for sufficiently large $L$, cf.~\eqref{e:Loc-contract-kappa},
this yields the desired bound.
\end{proof}

\subsection{Proof of Theorem~\ref{thm:local_part_of_K_j+1}: bound of non-linear part}
\label{subsec:M_j+1_decomposition}

Below we write $s_{j+1}, W_{j+1}, U_{j+1}$ in place of $\mathfrak{s}_{j+1}, \cW_{j+1}, \cU_{j+1}$ {(with arguments of these functions clear from the context)} for simplicity of notation.
In what follows, it will be convenient to have a shorthand notation for the collection $(\cE_{j+1} |X|, U_j, \bar{U}_{j+1} , K_j, \bar{K}_j , \cE K_j  , J_j  ) $, where we view $X\mapsto \cE_{j+1}|X|$ as a polymer activity and define $\bar{U}_{j+1}$ as
 \begin{equation} \label{eq:Ubar_def}
\bar{U}_{j+1} (X, \varphi') = -\cE_{j+1} |X| + U_{j+1} ( X, \varphi').
\end{equation}
Accordingly, we introduce the map
\begin{equation}
\label{eq:k_j-def}
\omega_j = (U_j,K_j) \mapsto \bar{\mathfrak{K}}_j (\omega_j) \equiv (\cE_{j+1} |X|, U_j, \bar{U}_{j+1} , K_j, \bar{K}_j , \cE K_j  , J_j  ) (\omega_j).
\end{equation}
By definition, $\mathcal{M}_{j+1}$ is just $\mathcal{K}_{j+1}$ without its local part $\cL_{j+1}$, so by Definition~\ref{def:evolution_of_remainder} and \eqref{eq:L_j_K_j_0}
one may decompose $\cM_{j+1}$ into four terms as follows: for $X\in \mathcal{P}_{j+1}^c$, using the notation \eqref{eq:k_j-def}, 
\begin{equation} \label{eq:M_decomp}
  \mathcal{M}_{j+1}  ({ U_j,} K_j,  X, \varphi')
  =  \sum_{k=1}^4 \MM_{j+1} ^{(k)} (\bar{\mathfrak{K}}_j (\omega_j) , X, \varphi'), 
\end{equation}  
 where the $\MM_{j+1} ^{(k)}$ are given as follows:
\begin{align}
\MM_{j+1} ^{(1)} (\bar{\mathfrak{K}}_j (\omega_j) , X, \varphi') 
&= \sum_{X_0, X_1, Z, (B_{Z''})}^{*} 1_{\# (X_0, X_1, Z)  \geq 2} e^{\cE_{j+1} |X|} e^{ \bar{U}_{j+1} (X \backslash T)} \nonumber \\
	& \quad \times \Eplus \Big[ (e^{U_j} - e^{\bar{U}_{j+1}})^{X_0} (\bar{K}_j - \mathcal{E} K_j)^{[X_1]}  \Big] \prod_{Z'' \in \operatorname{Comp}_{j+1} (Z)} J_j (B_{Z''}, Z'') \label{eq:M^1_j+1} \\
\MM_{j+1} ^{(2)} (\bar{\mathfrak{K}}_j (\omega_j) , X, \varphi')  
&= \sum_{X_0, X_1, Z, (B_{Z''})}^* 1_{ \# (X_0, X_1, Z)  \leq 1 }  ( e^{\cE_{j+1} |X|} e^{ \bar{U}_{j+1} (X \backslash T)} - 1) \nonumber \\
	& \quad \times \Eplus \Big[ (e^{U_j} - e^{\bar{U}_{j+1}})^{X_0} (\bar{K}_j - \mathcal{E} K_j)^{[X_1]}  \Big] \prod_{Z'' \in \operatorname{Comp}_{j+1} (Z)} J_j (B_{Z''}, Z'') \label{eq:M^2_j+1}  \\
\MM_{j+1} ^{(3)} (\bar{\mathfrak{K}}_j (\omega_j) , X, \varphi')  
&= \sum_{|X_0|_{j+1} = 1}^{X_0 = X} \Eplus \Big[ \Big( e^{U_j} - e^{\bar{U}_{j+1}} - U_j + \bar{U}_{j+1} \Big)^{X_0} \Big] \label{eq:M^3_j+1}  \\
\MM_{j+1} ^{(4)} (\bar{\mathfrak{K}}_j (\omega_j) , X, \varphi')  
&= \Eplus \Big[ \sum_{Y \in \cP_j}^{\bar{Y} = X} e^{U_j (Y)} K_j (X \backslash Y) - \mathbb{S} [ K_j ] (X) \Big] \label{eq:M^4_j+1}
\end{align}
 and, as in Definition~\ref{def:evolution_of_remainder}, we are letting  and $T = X_0 \cup X_1 \cup Z$. Each $\mathfrak{M}_{j+1}^{(1)}$ -- $\mathfrak{M}_{j+1}^{(4)}$ arise from the linearisation process (1) -- (4) described above \eqref{eq:L_j_K_j_0}.

The bound of $\mathcal{M}_{j+1} ({ U_j}, K_j)$ will follow by bounding each $\MM_{j+1}^{(k)} (\bar{\mathfrak{K}}_j)$ separately.
Although the above is not the most efficient way to express $\cM_{j+1}$, writing it in this way will make it easier to generalise the estimate in our companion paper \cite{dgauss2}.
Indeed, one may deduce a bound on each $\MM_{j+1}^{(k)}$ that only depends on the estimates on $\bar{\mathfrak{K}}_j$.
The next definition collects a list of bounds on various terms that appear in the above formulas.
These estimates are sufficient to imply the desired bounds on the $\MM_{j+1}^{(k)}$,
as asserted in Lemma~\ref{lemma:bound_on_M^k}.
The validity of these ``building block estimates'' under the assumptions of Theorem~\ref{thm:local_part_of_K_j+1} is shown separately in Lemma~\ref{lemma:Ujbound-summary}.

In the following definition, our main application uses the case when $\mathbb{Y}$ is a closed ball of $\Omega_j$, but it will be useful
   in the proof of the continuity in $s$ to have the additional flexibility of the space $\mathbb{Y}$.
   
\begin{definition} \label{def:derivativebds}
Let $(\mathbb{Y}, |\cdot |)$ be a closed subset of a normed vector space.  Given $\delta, \eta >0$, define $\cX_j^{\mathfrak{K}} (\mathbb{Y})$, to be the set of functions $\mathfrak{K}_j : x \mapsto \mathfrak{K}_j(x) =(\cE_{j+1} |X|, U_j, \bar{U}_{j+1},  K_j, \bar{K}_j, \cE K_j, J_j) (x)$,
where each component takes polymer activity value as in the right-hand side of \eqref{eq:k_j-def},
such that $x \mapsto K_j (x)$ is linear and bounded as a function $\mathbb{Y} \rightarrow \cN_j$,   
and satisfies the following estimates for all  $B \in \cB_{j+1}$, $Z \in \cP_{j+1}$ and $\varphi \in \R^{\Lambda_N}$: 
for $k\in \{0,1,2\}$,
\begin{align}
& \norm{\mathfrak{U} (B, \varphi)}_{h, T_j (B, \varphi)} \leq 
C(\delta, L) ( 1+  \delta c_w \kappa_L w_j (B, \varphi)^2 ) |x| \label{eq:Ujbound_1}  \\  
& \norm{e^{\mathfrak{U} (B, \varphi )} - \sum_{m=0}^{k} \frac{1}{m!} (\mathfrak{U}(B, \varphi ))^m }_{h, T_{j} (B, \varphi)} \leq 
C(\delta, L)  e^{\delta c_w \kappa_L w_j (B, \varphi)^2} |x|^{k+1} \label{eq:Ujbound},
\end{align}
for $\mathfrak{U}\in \{ U_j, \bar{U}_{j+1}\}$ and some $C(L)$, and the same inequalities hold with $\mathfrak{U} (B)$ and 
$C(\delta, L)$ replaced by $\cE_{j+1} |B|$ and $C(L)$, respectively, but $\delta$ set to $0$. 
Moreover {(with $D$ the derivative in $x$, cf.~\eqref{eq:Fre-deriv}),}
\begin{align}
& \norm{D e^{\mathfrak{U}' (B, \varphi)} }_{h, T_j (B, \varphi)} \leq C(L) e^{ c_w \kappa_L w_j (B, \varphi)^2} \label{eq:derivatives1-v2}, \\
& \norm{D^2 e^{\mathfrak{U}' (B, \varphi)} }_{h, T_j (B, \varphi)} \leq C(L) e^{ c_w \kappa_L w_j (B, \varphi)^2} \label{eq:derivatives2-v2}, \\
& \norm{D J_j (B, Z, \varphi)}_{h, T_j (B, \varphi)} \leq C(L) A^{-1} e^{ c_w \kappa_L w_{j} (B, \varphi)^2} \label{eq:derivatives3-v2}, \\
& \norm{D  \bar{K}_j (Z, \varphi)  }_{h, T_j (Z, \varphi)} \leq C(A, L) A^{- (1+ \eta)|Z|_{j+1}} G_j (Z, \varphi)  \label{eq:derivatives4-v2}, \\
& \norm{D \mathcal{E} K_j (Z, \varphi)  }_{h, T_j (Z, \varphi)} \leq C(A, L) A^{- (1+ \eta)|Z|_{j+1}} e^{ c_w \kappa_L w_j (Z, \varphi)^2}, \label{eq:derivatives5-v2}
\end{align}
$\mathfrak{U}' \in \{ U_j, \bar{U}_{j+1}, \cE_{j+1} |B|\}$, and in the case of $ \cE_{j+1} |B|$, the factor $e^{c_w \kappa_L w_j (B, \varphi)}$ can be omitted.
Moreover the derivatives exist in the space of polymer activities with finite $\norm{\cdot}_{h, T_j (B)}$-norm for $e^{\mathfrak{U}'}$, $D e^{\mathfrak{U}'}$, $J_j$ and finite $\norm{\cdot}_{h, T_j (Z)}$-norm for $\bar{K}_j$, $\cE K_j$. 
\end{definition}

\begin{lemma} \label{lemma:Ujbound-summary} 
  Under the assumptions of Theorem~\ref{thm:local_part_of_K_j+1},
  for any $\delta>0$ and $\beta \geq 2c_f^{-1}$,
  there exists $\epsilon(L) >0$ only polynomially small in $L$, and constants 
  $C(\delta,L) \equiv C(\delta, \beta,L)$, $C(L)\equiv C (\beta,L)$, 
  $C(A,L)$, 
  $\epsilon (\delta, L) \equiv \epsilon (\delta, \beta, L)$ and $\eta >0$
  such that
  if $\cX_j^{\mathfrak{K}} (\cdot)$ is defined with these $\delta$, $\eta$, $C(\delta,L)$, $C(L)$, $C(A,L)$ 
  then $\bar{\mathfrak{K}}_j$ is in $\cX_j^{\mathfrak{K}} (\{ \omega_j \in \Omega_j : \norm{\omega_j}_{\Omega_j} \leq \epsilon (\delta, L) \})$.
\end{lemma}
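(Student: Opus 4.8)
\textbf{Proof proposal for Lemma~\ref{lemma:Ujbound-summary}.}

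The plan is to verify each of the estimates \eqref{eq:Ujbound_1}--\eqref{eq:derivatives5-v2} in Definition~\ref{def:derivativebds} by hand, treating the three blocks of estimates separately: first the bounds on the potential coordinate $\mathfrak{U}\in\{U_j,\bar U_{j+1},\cE_{j+1}|B|\}$ and its exponential; then the derivatives of $e^{\mathfrak{U}'}$ and of $J_j$; and finally the derivatives of $\bar K_j$ and $\cE K_j$. Throughout, $\omega_j=(U_j,K_j)$ is assumed to satisfy $\|\omega_j\|_{\Omega_j}\leq\epsilon(\delta,L)$ with $\epsilon(\delta,L)$ to be chosen small, polynomially in $L$ (and with an allowed polynomial dependence on $\beta$ absorbed into the constants). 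Since the maps $\mathfrak{s}_{j+1},\mathfrak{z}_{j+1},\cE_{j+1},\cU_{j+1},\cE K_j, J_j,\bar K_j$ are all explicit functions of $(U_j,K_j)$ via Definitions~\ref{def:evolution_of_U}--\ref{def:evolution_of_remainder}, each bound reduces to combining the building blocks already established: Lemma~\ref{lem:W_norm} (the comparison $\|W_j(B)\|_{h,T_j(B,0)}\leq C\|W_j\|_{\Omega_j^U}$), the submultiplicativity and monotonicity of the $T_j$-norm \eqref{eq:prodprop}--\eqref{e:norm-monot}, Theorem~\ref{thm:H_j_E_j_estimate} (the estimates \eqref{eq:H_j+1_bound}--\eqref{eq:E_j+1_bound} on $\mathfrak{s}_{j+1}$, $\mathcal{E}_{j+1}$ and the exact formula for $\mathfrak{z}_{j+1}^{(q)}$), Proposition~\ref{prop:Loc-contract} (in particular the bound \eqref{e:Loc-bounded} on $\Loc_{X,B}\Eplus K_j$, which produces the factor $e^{c_w\kappa_L w_j(B,\varphi')^2}$), Proposition~\ref{prop:E_G_j}, and the reblocking/combinatorial estimates of Section~\ref{sec:reblocking} (Lemma~\ref{lemma:setsizes_2}, Proposition~\ref{prop:largeset_contraction-v2}).

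In more detail, for \eqref{eq:Ujbound_1}: write $U_j(B,\varphi)=\frac12 s_j|\nabla\varphi|_B^2+W_j(B,\varphi)$. The Dirichlet term is handled by noting $|\nabla_j\varphi|_{L^2_j(B)}^2\leq w_j(B,\varphi)^2$ so that $\frac12|s_j|\,|\nabla\varphi|_B^2$ contributes the $\delta c_w\kappa_L w_j(B,\varphi)^2$-type term after writing $|s_j| L^{2j}(\ldots)$ in terms of rescaled derivatives and absorbing the geometric constant; the $W_j$ term is controlled by Lemma~\ref{lem:W_norm} and $G_j(B,\varphi)\leq e^{\delta c_w\kappa_L w_j(B,\varphi)^2}$ after possibly shrinking $\delta$. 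For $\bar U_{j+1}=-\cE_{j+1}|B|+U_{j+1}$ one uses \eqref{eq:H_j+1_bound}--\eqref{eq:E_j+1_bound} and the formula $\mathfrak{z}_{j+1}^{(q)}=L^2 e^{-\frac12\beta q^2\Gamma_{j+1}(0)}z_j^{(q)}$, checking that the factor $L^2$ is compensated by the Gaussian decay $e^{-\frac12\beta q^2\Gamma_{j+1}(0)}$ once $L$ is large enough (this is where the polynomial-in-$L$ smallness of $\epsilon(L)$ enters and why $\beta\geq 2c_f^{-1}$ is needed — it guarantees $\Gamma_{j+1}(0)\gtrsim\log L$ dominates). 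The exponential bounds \eqref{eq:Ujbound} then follow by expanding $e^{\mathfrak{U}}-\sum_{m\leq k}\frac{1}{m!}\mathfrak{U}^m$ via its Taylor remainder in the Banach algebra of polymer activities (using \eqref{eq:prodprop}), so that the remainder is bounded by $\frac{1}{(k+1)!}\|\mathfrak{U}\|^{k+1}e^{\|\mathfrak{U}\|}$, and using \eqref{eq:Ujbound_1} together with $\|\omega_j\|\leq\epsilon(\delta,L)$ small to exponentiate the quadratic regulator term (here one uses that $e^{c\cdot\delta c_w\kappa_L w_j^2}$ is again of the required form after adjusting $\delta$). The Fréchet derivative bounds \eqref{eq:derivatives1-v2}--\eqref{eq:derivatives2-v2} follow by differentiating $e^{\mathfrak{U}'}$, noting $D e^{\mathfrak{U}'}=(D\mathfrak{U}')e^{\mathfrak{U}'}$ and $D\mathfrak{U}'$ is bounded (linearly in the relevant coupling constants, with $D U_j$ literally the identity on the $U$-component and $D\bar U_{j+1}$ bounded by Theorem~\ref{thm:H_j_E_j_estimate}), so the product estimate and the already-established bound on $e^{\mathfrak{U}'}$ give the claim; the factor $e^{c_w\kappa_L w_j(B,\varphi)^2}$ absorbs the regulator.

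For \eqref{eq:derivatives3-v2}: $J_j(B,Z,\varphi)$ is, by \eqref{eq:J_j_definition}, a finite sum (over $D\in\cB_j(B)$ and $Y\in\cS_j$ with $D\in\cB_j(Y)$) of terms $Q_j(D,Y,\varphi')=1_{Y\in\cS_j}\Loc_{Y,D}\Eplus K_j(Y,\cdot+\zeta)$; since $K_j\mapsto Q_j$ is linear, $D J_j$ is $J_j$ evaluated at the direction, and the bound \eqref{e:Loc-bounded} of Proposition~\ref{prop:Loc-contract} gives each summand $\leq C(\log L)A^{-|Y|_j}\|K_j\|_{\Omega_j^K}e^{c_w\kappa_L w_j(B,\varphi')^2}$; summing the $O(L^d)$ small sets $Y$ and using $A\geq A_0(L)$ (hence $\log L\leq A^{1/2}$, say) yields the factor $A^{-1}$. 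For \eqref{eq:derivatives4-v2}--\eqref{eq:derivatives5-v2}: $\bar K_j(Z,\varphi)=\sum_{Y:\bar Y=Z}e^{U_j(Z\setminus Y)}K_j(Y)$ by \eqref{eq:K_bar_definition} is linear in $K_j$ at fixed $U_j$ but also depends on $U_j$ through the exponential prefactor; one differentiates using the product rule and bounds the $e^{U_j}$ factors by the already-proven \eqref{eq:Ujbound} (with $k=0$, so $\|e^{U_j(B)}\|\leq C(\delta,L)e^{\delta c_w\kappa_L w_j(B,\varphi)^2}\leq 2$ say, after shrinking, for the blocks in $Z\setminus Y$) and bounds $\|K_j(Y)\|\leq A^{-|Y|_j}\|K_j\|_{\Omega_j^K}G_j(Y,\varphi)$; the decay $A^{-(1+\eta)|Z|_{j+1}}$ then comes from the combinatorial estimate Lemma~\ref{lemma:setsizes_2}, i.e., summing $A^{-|Y|_j}$ over $Y$ with $\bar Y=Z$ (splitting into $Y\in\cS_j$, which is harmless since $\cX$ is used only for $Z\in\cP_{j+1}$ large in practice and small $Z$ contribute trivially, and $Y\notin\cS_j$, handled by \eqref{eq:setsizes}--\eqref{eq:setsizes_largeconnected}), using $L^2 A^{-\eta/(1+\eta)}\leq 1$ for $A$ large. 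The $\cE K_j=\sum_B J_j(B,X)$ estimate \eqref{eq:derivatives5-v2} follows from \eqref{eq:derivatives3-v2} by summing over $B\in\cB_{j+1}(Z)$ and using that $J_j(B,Z)$ vanishes unless $Z\in\cS_{j+1}$, or, more robustly, from the same combinatorial input. Finally one records the existence of all these Fréchet derivatives in the stated norms, which follows from absolute convergence of the relevant series expansions (the $T_j$-norm makes polymer activities analytic in $\varphi$ in a strip, Proposition~\ref{prop:complex_extension_of_polymer_activity}, and the dependence on the finite-dimensional coupling constants $(s_j,z_j)$ is manifestly smooth while the dependence on $K_j$ is linear).

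\textbf{Main obstacle.} The delicate point is keeping careful track of the $L$-dependence (and the allowed polynomial $\beta$-dependence) of the constants $C(\delta,L)$, $C(L)$, $C(A,L)$ and of the smallness parameters $\epsilon(L),\epsilon(\delta,L)$, so that (a) $\epsilon(L)$ is only \emph{polynomially} small in $L$ (not exponentially), which is essential for the later fixed-point argument, and (b) the regulator-exponentiation step — turning $e^{C\cdot\delta c_w\kappa_L w_j^2}$ and products thereof back into a single factor of the prescribed form — is carried out with the right value of $\delta$ at each stage (one must be able to shrink $\delta$ finitely often without circularity). In particular one must ensure the factor $L^2$ in $\mathfrak{z}_{j+1}^{(q)}$ and in the $(j+1)$-scale combinatorics is genuinely beaten by the available decay ($e^{-\frac12\beta q^2\Gamma_{j+1}(0)}$ for the charges, $A^{-\eta/(1+\eta)}$ for the polymer sums), which is exactly what the hypotheses $\beta\geq 2c_f^{-1}$, $L\geq L_0$, $A\geq A_0(L)$ of Theorem~\ref{thm:local_part_of_K_j+1} are designed to supply. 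All of this is bookkeeping rather than a conceptual difficulty, but it is the part where an error would most easily creep in.
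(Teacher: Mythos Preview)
Your proposal is essentially correct and follows the same decomposition as the paper: the paper verifies \eqref{eq:Ujbound_1}--\eqref{eq:Ujbound} in Lemma~\ref{lemma:Ujbound}, then \eqref{eq:derivatives1-v2}--\eqref{eq:derivatives3-v2} and \eqref{eq:derivatives5-v2} in Lemma~\ref{lemma:derivative_of_components_v2_1} (together with the intermediate bound on $Q_j$ that you also identify), and finally \eqref{eq:derivatives4-v2} in Lemma~\ref{lemma:derivative_of_components_v2}, all via the same building blocks you list (Lemma~\ref{lem:W_norm}, submultiplicativity, Taylor remainders, \eqref{e:Loc-bounded}, and Lemmas~\ref{lemma:setsizes}--\ref{lemma:setsizes_2}).

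One correction, however: you misidentify the role of the hypothesis $\beta\geq 2c_f^{-1}$. It is \emph{not} used to make $\Gamma_{j+1}(0)\gtrsim\log L$ beat the factor $L^2$ in $\mathfrak{z}_{j+1}^{(q)}$; indeed such a lower bound on $\Gamma_{j+1}(0)$ is not available uniformly in $j$ and $\rho_J$ (this is precisely the reason for the critical scale $j_0$ in Proposition~\ref{prop:j0_definition}). The paper simply uses $\Gamma_{j+1}(0)\geq 0$ to get $|z_{j+1}^{(q)}|\leq L^2|z_j^{(q)}|$ and absorbs the $L^2$ into $C(\delta,\beta,L)$, which is why $\epsilon(\delta,L)$ is only polynomially small. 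The condition $\beta\geq 2c_f^{-1}$ is instead what makes Lemma~\ref{lem:W_norm} applicable, i.e., it converts the coupling-constant norm $\|W_{j^*}\|_{\Omega_j^U}$ into a bound on $\|W_{j^*}(B,\varphi)\|_{h,T_j(B,\varphi)}$; without it the sum $\sum_q e^{\sqrt{\beta}qh}|z_j^{(q)}|$ would not be controlled by $\|W_j\|_{\Omega_j^U}$. This is the only point where your sketch departs from the actual argument, and it is easily repaired.
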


\begin{lemma} \label{lemma:bound_on_M^k}
Let $(\mathbb{Y}, |\cdot|)$ be a closed subset of a normed space.  Under the assumptions of Theorem~\ref{thm:local_part_of_K_j+1} 
and if $\mathfrak{K}_j$
is in $\cX_j^{\mathfrak{K}} (\mathbb{Y})$, there exists $\epsilon_{nl} > 0$ such that 
each $\MM_{j+1}^{(k)} (\mathfrak{K}_j (x))$ 
is continuously differentiable on $\{x \in \mathbb{Y} :  |x| \leq \epsilon_{nl} \}$ for $k \in \{1,2,3,4\}$ and satisfies
\begin{equation}\label{lem-bd-D-nl}
  \norm{D \MM_{j+1}^{(k)} (\mathfrak{K}_j (x)) }_{h,T_{j+1}}
  \leq C_2 (A, L) |x|
\end{equation}
for some $C_2 (A, L) >0$.
\end{lemma}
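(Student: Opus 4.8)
\textbf{Plan for the proof of Lemma~\ref{lemma:bound_on_M^k}.}
The plan is to bound each $\MM_{j+1}^{(k)}(\mathfrak{K}_j(x))$ and its derivative separately, using only the ``building block'' estimates \eqref{eq:Ujbound_1}--\eqref{eq:derivatives5-v2} that define membership in $\cX_j^{\mathfrak{K}}(\bbY)$, together with the combinatorial/entropic bounds from Section~\ref{sec:reblocking} (Lemma~\ref{lemma:setsizes}, Lemma~\ref{lemma:setsizes_2}, Proposition~\ref{prop:largeset_contraction-v2}) and the regulator stability results (Proposition~\ref{prop:E_G_j}, Lemma~\ref{lemma:strong_regulator}). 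The guiding principle is that each of the four terms carries, by construction, at least two ``small'' factors — either two copies of $(e^{U_j}-e^{\bar U_{j+1}})$ or $(\bar K_j - \cE K_j)$ or $J_j$ (when $\#(X_0,X_1,Z)\geq 2$ in $\MM^{(1)}$), or one such factor times a factor $(e^{\cE_{j+1}|X|}e^{\bar U_{j+1}(X\setminus T)}-1)$ which is itself $O(|x|)$ (in $\MM^{(2)}$), or a second-order Taylor remainder of $e^{U_j}-e^{\bar U_{j+1}}$ which is $O(|x|^2)$ by \eqref{eq:Ujbound} with $k=1$ (in $\MM^{(3)}$), or the reblocking difference $\bar K_j - \mathbb{S}K_j$ which forces at least one block carrying $U_j = O(|x|)$ (in $\MM^{(4)}$). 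Since $K_j$, $\bar K_j$, $\cE K_j$, $J_j$ are all $O(|x|)$ by linearity of $x\mapsto K_j(x)$ and \eqref{eq:derivatives3-v2}--\eqref{eq:derivatives5-v2} (together with $\mathfrak{K}_j(0)=0$ on the $K$-components), this yields the quadratic bound on $\MM^{(k)}$, and differentiating once (Leibniz on the product over blocks/components) leaves one small factor, yielding \eqref{lem-bd-D-nl}.

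First I would set up the Fr\'echet differentiability: since each $\MM_{j+1}^{(k)}$ is, for fixed $X$, a \emph{finite} sum (the torus is finite, $X$ is fixed) of products of the components of $\mathfrak{K}_j(x)$ and Gaussian expectations thereof, and each component is continuously differentiable in $x$ with values in the relevant $\|\cdot\|_{h,T_j}$-space by hypothesis (the last sentence of Definition~\ref{def:derivativebds}), the chain and product rules give continuous differentiability of $\MM_{j+1}^{(k)}(\mathfrak{K}_j(x))$ into $\Omega_{j+1}^K$; one must check the $\|\cdot\|_{h,T_{j+1}}$-convergence of the sum over $X$, which is exactly what the quantitative estimate below provides. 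Then for the quantitative bound I would, for a fixed connected $(j+1)$-polymer $X$ and fixed $\varphi'$, expand $D\MM_{j+1}^{(k)}(\mathfrak{K}_j(x))(X,\varphi')$ via Leibniz, bound each resulting product in $\|\cdot\|_{h,T_{j+1}(X,\varphi')}$ using the submultiplicativity of the norm \eqref{eq:prodprop}, Lemma~\ref{lemma:neutral_term_under_rescaling}/\ref{lemma:charged_term_under_rescaling} to pass from $T_j$ to $T_{j+1}$ where a $U_j$-factor is present, and then take the Gaussian expectation: here Proposition~\ref{prop:E_G_j} converts $\prod_B e^{\delta c_w\kappa_L w_j(B,\varphi'+\zeta)^2}G_j(\cdots)$ under $\Eplus$ into $2^{|X|_j}G_{j+1}(\bar X,\varphi')$ (using $\delta$ small and Lemma~\ref{lemma:strong_regulator} to absorb the $w_j$-factors into $G_j$). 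The upshot for each monomial is a bound of the form $C(A,L)\, A^{-(1+\eta')|X|_{j+1}}\,|x|\, G_{j+1}(X,\varphi')$ for some $\eta'>0$, after which one invokes the reblocking estimates of Lemma~\ref{lemma:setsizes_2} and Proposition~\ref{prop:largeset_contraction-v2} to perform the sum over $X_0,X_1,Z,(B_{Z''})$ (and over $Y$ in $\MM^{(4)}$) with the constraint $\bar Y=X$ or $X_0\cup X_1\cup(\cup B^*_{Z''})=X$, producing a summable factor $(\varepsilon(L))^{|X|_{j+1}}$ with $\varepsilon(L)$ polynomially small in $L$, hence $A^{-|X|_{j+1}}$ after fixing $A$ large. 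Dividing by $G_{j+1}(X,\varphi')$, taking the supremum over $\varphi'$, multiplying by $A^{|X|_{j+1}}$ and summing over $X$ (the geometric series in $\varepsilon(L)$) gives $\|D\MM_{j+1}^{(k)}(\mathfrak{K}_j(x))\|_{h,T_{j+1}}\leq C_2(A,L)|x|$. The $O(|x|^2)$ bound on $\MM_{j+1}^{(k)}$ itself is obtained identically but keeping two small factors instead of differentiating away one.

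The term-by-term execution differs only in bookkeeping: for $\MM^{(1)}$ one uses $1_{\#(X_0,X_1,Z)\geq 2}$ to guarantee two small factors among $\{(e^{U_j}-e^{\bar U_{j+1}})^{X_0}, (\bar K_j-\cE K_j)^{[X_1]}, \prod J_j\}$, invoking \eqref{eq:Ujbound} with $k=0$ for the exponential differences and \eqref{eq:derivatives3-v2}--\eqref{eq:derivatives5-v2} for the rest, and Lemma~\ref{lemma:setsizes_2} (both displays, the second handling $|\operatorname{Comp}_{j+1}(Z)|\geq 2$) to sum; for $\MM^{(2)}$ the prefactor $e^{\cE_{j+1}|X|}e^{\bar U_{j+1}(X\setminus T)}-1$ is $O(|x|)$ by \eqref{eq:Ujbound} with $k=0$ applied blockwise (telescoping the product over $\cB_{j+1}(X\setminus T)$), and $\#(X_0,X_1,Z)\leq 1$ still supplies one more small factor unless $T=\emptyset$, in which case the prefactor alone gives $O(|x|)$ and the reblocking sum over $X$ converges trivially; for $\MM^{(3)}$ one has a single block $X_0=X$ and $e^{U_j}-e^{\bar U_{j+1}}-U_j+\bar U_{j+1}$, which we write as $(e^{U_j}-1-U_j)-(e^{\bar U_{j+1}}-1-\bar U_{j+1})$, each term $O(|x|^2)$ by \eqref{eq:Ujbound} with $k=1$, and there is no sum over $X$ to worry about beyond the single-block constraint; for $\MM^{(4)}$ one uses $\bar K_j(X) - \mathbb{S}K_j(X)=\sum_{\bar Y=X}(e^{U_j(X\setminus Y)}-1)K_j(Y)$ (for $Y\in\cP_j$, $X\in\cP_{j+1}^c$), each summand carrying the $O(|x|)$ factor $e^{U_j(X\setminus Y)}-1$ times $K_j(Y)=O(|x|)$, and Proposition~\ref{prop:largeset_contraction-v2} / Lemma~\ref{lemma:setsizes_2} to sum over $Y$ and $X$. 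The \textbf{main obstacle} is the bookkeeping in $\MM^{(1)}$: one must carefully track \emph{which} of the (possibly many) small factors one differentiates versus retains, ensure that in every case at least one small factor of order $|x|$ survives after differentiation, and verify that the entropy sums over the multiple polymer variables $X_0,X_1,Z,(B_{Z''})$ with the incidence constraints still converge after the $A^{-(1+\eta)|Z|_{j+1}}$ and $A^{-(1+\eta)|X_1|_{j+1}}$ weights are split (one factor used for summability, a fraction retained) — this is where the precise form of the two inequalities in Lemma~\ref{lemma:setsizes_2}, and in particular the choice $x=(A/2)^{-1/2}$ or similar in the geometric resummation, must be matched to the exponents $\eta$ coming from \eqref{eq:derivatives4-v2}--\eqref{eq:derivatives5-v2}.
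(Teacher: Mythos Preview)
Your plan is essentially the paper's approach. The paper packages your ``Leibniz on the product over blocks/components, with regulator control and summable large-set decay'' into a standalone chain rule, Proposition~\ref{prop:differentiability_of_product_v2}, which takes abstract input conditions (Q) on the factors $F_x(Q)$ and outputs the derivative bound with $A^{-(1+\eta/2)|X|_{j+1}}$ decay; this is then applied to the expanded $H$ in Lemma~\ref{lemma:N_j_derivative_primitive_bound}. For $\MM^{(1)}$ the outer prefactor $F=e^{\cE_{j+1}|X|+\bar U_{j+1}(X\setminus T)}$ is handled by writing $M_{j+1}(x,x')$ with separate arguments for $F$ and $H$ and differentiating in each, which is exactly your ``Leibniz + retain one small factor''. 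Your direct route is equivalent but would replay the (Q)-bookkeeping for each $k$.

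Two small corrections. For $\MM^{(4)}$, your identity $\bar K_j-\mathbb{S}K_j=\sum_{\bar Y=X,\,Y\in\cP_j}(e^{U_j(X\setminus Y)}-1)K_j(Y)$ is not quite right, because $\mathbb{S}K_j$ sums only over \emph{connected} $Y$. The correct split is
\[
\sum_{Y\in\cP_j^c,\,\bar Y=X}(e^{U_j(X\setminus Y)}-1)K_j(Y)\;+\;\sum_{Y\in\cP_j\setminus\cP_j^c,\,\bar Y=X}e^{U_j(X\setminus Y)}K_j(Y),
\]
which is the paper's $M_-^{(4,1)}+M_-^{(4,2)}$; in the second sum the quadratic smallness comes from $K_j(Y)=\prod_{Z\in\operatorname{Comp}_j(Y)}K_j(Z)$ with $|\operatorname{Comp}_j(Y)|\geq 2$, not from $(e^{U_j}-1)$. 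Second, passing from $T_j$ to $T_{j+1}$ here needs only the trivial monotonicity \eqref{e:norm-monot}, not Lemmas~\ref{lemma:neutral_term_under_rescaling}--\ref{lemma:charged_term_under_rescaling}.
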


In Section~\ref{subsec:derivative_of_N_j} we will prove Lemma~\ref{lemma:Ujbound-summary},
and in Section~\ref{subsec:estimate_Nj} then Lemma~\ref{lemma:bound_on_M^k}.
Assuming these lemmas to hold, the proof of the bounds on $\cM_{j+1}$ in Theorem~\ref{thm:local_part_of_K_j+1} is immediate, as we now explain.

\begin{proof}[Proof of Theorem~\ref{thm:local_part_of_K_j+1},(ii)] 
  The continuous differentiability of $\cM_{j+1}$ together with the bound \eqref{eq:bound_for_derivative_of_Nj}
  are a direct consequence of Lemma~\ref{lemma:bound_on_M^k} applied with $\delta >0$ sufficiently small, 
  $\mathbb{Y} = \{ \omega\in \Omega_j : \norm{\omega_j}_{\Omega_j} \leq \epsilon (\delta,  \beta,L) \}$, 
  $\mathfrak{K}_j = \bar{\mathfrak{K}}_j $,
  and the decomposition $\cM_{j+1} = \sum_{k=1}^4 \MM_{j+1}^{(k)}$ 
  from  \eqref{eq:M_decomp},
  with the assumptions of Lemma~\ref{lemma:bound_on_M^k} being verified by Lemma~\ref{lemma:Ujbound-summary}.  
  The bound \eqref{eq:bound_for_N_j_K_j} is simply obtained from \eqref{eq:bound_for_derivative_of_Nj} by integration, as $\cM_{j+1} (0) = 0$.
\end{proof}

\subsection{Proof of Lemma~\ref{lemma:Ujbound-summary}}
\label{subsec:derivative_of_N_j}

In this section we prove Lemma~\ref{lemma:Ujbound-summary},
i.e., that
$\bar{\mathfrak{K}}_j(\omega_j)$ defined in \eqref{eq:k_j-def} satisfies $\bar{\mathfrak{K}}_j \in \cX_j^{\mathfrak{K}} (\Omega_j)$ whenever $\omega_j=(U_j,K_j)$ is sufficiently small.
Indeed,
in Lemma~\ref{lemma:Ujbound} we show 
that \eqref{eq:Ujbound_1} and \eqref{eq:Ujbound} hold,
and in Lemmas~\ref{lemma:derivative_of_components_v2_1}--\ref{lemma:derivative_of_components_v2} we prove
\eqref{eq:derivatives1-v2}--\eqref{eq:derivatives5-v2}.

To control the term $\frac{1}{2} |\nabla \varphi|^2_B$ that appears in the expressions to be bounded (cf.~for instance \eqref{eq:general_RG_step3}),
the expression \eqref{eq:wj_def} will appear repeatedly, i.e.,
\begin{equation}
w_j ( X, \varphi )^2 = \sum_{D \in \mathcal{B}_j (X)} \max_{n=1,2} \norm{\nabla^n_j \varphi}_{L^{\infty}(D^*)}^2.
\end{equation}
We recall that $w_j$ is related to the large field regulator $G_j$ by the inequalities \eqref{eq:strong_regulator1} and \eqref{eq:strong_regulator2}.
We start with the following simple lemma.

\begin{lemma}
For $D \in \mathcal{B}_j$, $\mu, \nu \in \hat{e}$,
\begin{align}
& \norm{ (\nabla^{\mu} \varphi, \nabla^{\nu} \varphi  )_D}_{h, T_j ( D,\varphi)} \leq  4(h^2 + w_j (D, \varphi)^2 )\label{eq:example1-1} 
.
\end{align}
\end{lemma}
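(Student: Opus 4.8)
The plan is to expand $(\nabla^\mu\varphi, \nabla^\nu\varphi)_D = \sum_{x\in D}\nabla^\mu\varphi(x)\nabla^\nu\varphi(x)$ and estimate its $\norm{\cdot}_{h, T_j(B,\varphi)}$-norm directly from the definition \eqref{eq:seminorm} of the seminorm, using that $\nabla^\mu\varphi(x)\nabla^\nu\varphi(x)$ is quadratic in $\varphi$, so only the terms $n=0,1,2$ in the series \eqref{eq:seminorm} survive. First I would record the three relevant derivative bounds: $\norm{D^0}_{0,T_j(B,\varphi)} = |(\nabla^\mu\varphi,\nabla^\nu\varphi)_D|$, then for $n=1$ the derivative in direction $f$ is $\sum_{x\in D}(\nabla^\mu f(x)\nabla^\nu\varphi(x) + \nabla^\mu\varphi(x)\nabla^\nu f(x))$, and for $n=2$ in directions $f_1,f_2$ it is $\sum_{x\in D}(\nabla^\mu f_1(x)\nabla^\nu f_2(x) + \nabla^\mu f_2(x)\nabla^\nu f_1(x))$.

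The key step is to express all these quantities via the scaled derivatives $\nabla_j = L^j\nabla$ so that the scale-$j$ norms appear naturally. For $x\in D$ and $D\in\cB_j$, one has $|\nabla^\mu\varphi(x)| = L^{-j}|\nabla_j^\mu\varphi(x)| \le L^{-j}\norm{\nabla_j\varphi}_{L^\infty(D^*)}$, hence $|(\nabla^\mu\varphi,\nabla^\nu\varphi)_D| \le |D| L^{-2j}\norm{\nabla_j\varphi}_{L^\infty(D^*)}^2 \le \norm{\nabla_j\varphi}_{L^\infty(D^*)}^2 \le w_j(D,\varphi)^2$ since $|D| = L^{jd} = L^{2j}$ in $d=2$. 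For the first-order term, if $\norm{f}_{C_j^2(B^*)}\le 1$ then $|\nabla^\mu f(x)| \le L^{-j}$, so summing over $x\in D$ gives a bound $2|D|L^{-2j}\norm{\nabla_j\varphi}_{L^\infty(D^*)} = 2\norm{\nabla_j\varphi}_{L^\infty(D^*)} \le 2w_j(D,\varphi)$; multiplying by $h^1/1!$ gives $2hw_j(D,\varphi)$. For the second-order term, $|\nabla^\mu f_1(x)\nabla^\nu f_2(x)| \le L^{-2j}$, summing over $D$ gives $2|D|L^{-2j} = 2$; multiplying by $h^2/2!$ gives $h^2$. Summing the three contributions yields $\norm{(\nabla^\mu\varphi,\nabla^\nu\varphi)_D}_{h,T_j(D,\varphi)} \le w_j(D,\varphi)^2 + 2hw_j(D,\varphi) + h^2 = (h + w_j(D,\varphi))^2 \le 2h^2 + 2w_j(D,\varphi)^2$, and the bound over $B$ follows from monotonicity \eqref{e:norm-monot} since $D\subset B$, giving the factor $4$ after using $(h+w_j)^2 \le 2(h^2 + w_j^2)$ and some slack (or one could keep the sharper $(h+w_j)^2$; the stated constant $4$ is comfortable).

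There is no real obstacle here — the statement is a routine direct computation from the definitions; the only mild care needed is tracking the powers of $L^j$ in the scaled gradients and using that $|D| = L^{2j}$ in two dimensions so that the $L^{-2j}$ from the double gradient exactly cancels the cardinality of $D$. I would present it as the two-line estimate $\norm{(\nabla^\mu\varphi,\nabla^\nu\varphi)_D}_{h,T_j(B,\varphi)} \le (h + \norm{\nabla_j\varphi}_{L^\infty(D^*)})^2 \le 4(h^2 + w_j(D,\varphi)^2)$, then conclude via $w_j(D,\varphi)^2 \le w_j(B,\varphi)^2$.
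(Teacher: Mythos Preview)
Your proposal is correct and is essentially identical to the paper's proof: compute the (at most three) nonzero derivatives of the quadratic functional, bound each using $|\nabla^\mu f(x)|\le L^{-j}$ when $\|f\|_{C_j^2}\le 1$ and $|D|=L^{2j}$, and arrive at $(h+\|\nabla_j\varphi\|_{L^\infty(D)})^2$. The only remark is that your closing appeal to monotonicity \eqref{e:norm-monot} is unnecessary (and in fact points the wrong way): since $\|f\|_{C_j^2(B^*)}\le 1$ already forces $|\nabla^\mu f(x)|\le L^{-j}$ for $x\in D$, your three-term estimate holds directly for the $T_j(B,\varphi)$-norm, which is what the statement asks for.
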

\begin{proof}
One has the following exact derivatives of $\frac{1}{2} |\nabla \varphi|^2_D$: 
\begin{align}
& D_{\varphi}( (\nabla^{\mu} \varphi, \nabla^{\nu} \varphi  )_D )(f) = \sum_{y \in D} \partial^{\mu} f_y \partial^{\nu} \varphi (y) + \partial^{\nu} f_y \partial^{\mu} \varphi (y)  \\
& D_{\varphi}^2 ( (\nabla^{\mu} \varphi, \nabla^{\nu} \varphi  )_D )(f,g) = \sum_{y \in D} \partial^{\mu} f_y \partial^{\nu} g_y + \partial^{\mu} g_y \partial^{\nu} f_y
\end{align}
and hence $\norm{ (\nabla^{\mu} \varphi, \nabla^{\nu} \varphi  )_D }_{h, T_j (D, \varphi)} \leq 2(h + \norm{\nabla_j \varphi}_{L^{\infty} (D)})^2$, from which
 \eqref{eq:example1-1} follows. 
\end{proof}

\begin{lemma} \label{lemma:Ujbound} 
  Under the assumptions of Theorem~\ref{thm:local_part_of_K_j+1},
  there exists $\epsilon(\delta,  \beta, L) >0$ only polynomially small in $L$ and $\beta$ such that the following holds:
for any  $\delta >0$, suppose $\norm{\omega_j}_{\Omega_j} := \norm{(U_j, K_j)}_{\Omega_j} \leq \epsilon(\delta,  \beta,L)$.
Then \eqref{eq:Ujbound_1}, \eqref{eq:Ujbound} hold,
and the same holds when $\mathfrak{U}$ is replaced by $\cE_{j+1}$ but with $C(\delta,  \beta,L)$ and $\delta$ set to $C(L)$ and $0$, respectively.
\end{lemma}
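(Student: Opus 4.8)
\textbf{Proof plan for Lemma~\ref{lemma:Ujbound}.}

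The plan is to estimate the two norms $\norm{U_j(B,\varphi)}_{h,T_j(B,\varphi)}$ and $\norm{\bar U_{j+1}(B,\varphi)}_{h,T_j(B,\varphi)}$ (and similarly the cruder $\cE_{j+1}|B|$ term), and then feed these into a standard Banach-algebra estimate for the exponential and its Taylor remainder. First I would recall that by Definition~\ref{def:U_space}, $U_j(B,\varphi)=\frac12 s_j |\nabla\varphi|_B^2 + W_j(B,\varphi)$. The quadratic gradient term is handled by the lemma proved just above (the bound $\norm{(\nabla^\mu\varphi,\nabla^\nu\varphi)_D}_{h,T_j(B,\varphi)}\leq 4(h^2+w_j(D,\varphi)^2)$), which gives a contribution bounded by $C|s_j| (h^2 + w_j(B,\varphi)^2) \leq C(L)|s_j|(1+ c_w\kappa_L w_j(B,\varphi)^2)$ after absorbing $h^2$ into $C(L)$ (recall $h^{-2}\kappa_L^{-1}=O(\log L)$, so $h^2 = O(\kappa_L (\log L))= O(\rho_J^2)$, a polynomial-in-$L$ constant, and $c_w\kappa_L \leq 1$). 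The periodic term $W_j(B,\varphi)$ is controlled by Lemma~\ref{lem:W_norm} (whose hypotheses $\beta\geq 2c_f^{-1}$ and $\rho_J^2\geq\sqrt2 rc_hc_f^{-1}$ hold under the assumptions of Theorem~\ref{thm:local_part_of_K_j+1}), giving $\norm{W_j(B,\varphi)}_{h,T_j(B,\varphi)}\leq C\norm{W_j}_{\Omega_j^U}\leq C\norm{U_j}_j$; note this term carries no $w_j$-dependence. Combining, $\norm{U_j(B,\varphi)}_{h,T_j(B,\varphi)}\leq C(L)(1+c_w\kappa_L w_j(B,\varphi)^2)\norm{U_j}_j$, which is \eqref{eq:Ujbound_1} for $\mathfrak U=U_j$ with $\delta$ replaced by a fixed constant; taking $\delta$ arbitrary is then a matter of further enlarging $C(\delta,L)$, or rather the bound is uniform and one simply picks $C(\delta,L)\geq C(L)$. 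The term $\bar U_{j+1}(B,\varphi)=-\cE_{j+1}|B|+U_{j+1}(B,\varphi)$ is treated the same way: by Theorem~\ref{thm:H_j_E_j_estimate}, $|s_{j+1}|\leq |s_j|+Ch^{-2}A^{-1}\norm{K_j}_j$ and $|\mathfrak z_{j+1}^{(q)}| = L^2 e^{-\frac12\beta q^2\Gamma_{j+1}(0)}|z_j^{(q)}|$, so $\norm{U_{j+1}}_{\Omega_{j+1}^U}\leq C(L)\norm{\omega_j}_j$ (using that $e^{-\frac12\beta q^2\Gamma_{j+1}(0)}e^{c_f\beta q}\cdot(e^{c_f\beta q})^{-1}$-type factors are bounded, i.e. the map on $z$ is bounded in the weighted norm — here $\Gamma_{j+1}(0)>0$ and $L^2$ times a summable-in-$q$ Gaussian factor is $O(L^2)$); applying Lemma~\ref{lem:W_norm} at scale $j+1$, observing $w_{j+1}\leq w_j$ up to a constant after rescaling (or more simply bounding $|\nabla\varphi|_B^2$ directly by $w_j$-type quantities at scale $j$), one again gets a bound of the form $C(L)(1+c_w\kappa_L w_j(B,\varphi)^2)\norm{\omega_j}_j$. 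The $\cE_{j+1}|B|$ term is purely a scalar: $|\cE_{j+1}|\leq C L^{-2j}(|s_j| + A^{-1}\norm{K_j}_j)$ by \eqref{eq:E_j+1_bound} together with $|\nabla^{(e_1,-e_1)}\Gamma_{j+1}(0)|\leq C\rho_J^{-2}L^{-2j}$ (Corollary~\ref{cor:Gammaj}), so $\norm{\cE_{j+1}|B|}_{h,T_j(B,\varphi)}=|\cE_{j+1}|\cdot|B|=|\cE_{j+1}|L^{2j}\leq C(L)\norm{\omega_j}_j$ with no $w_j$-dependence at all, hence with $\delta=0$ as claimed.

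For the exponential estimates \eqref{eq:Ujbound}, I would invoke the submultiplicativity of the seminorm $\norm{\cdot}_{h,T_j(B,\varphi)}$ (which makes it a Banach-algebra norm) and the elementary fact that in a unital Banach algebra, $\norm{e^a - \sum_{m=0}^k \frac{1}{m!}a^m}\leq \frac{\norm a^{k+1}}{(k+1)!}e^{\norm a}$. Writing $a=\mathfrak U(B,\varphi)$ with $\norm a \leq C(L)(1+c_w\kappa_L w_j(B,\varphi)^2)|x|$ from the previous paragraph (here $x=\omega_j$, $|x|=\norm{\omega_j}_j$), one gets $\norm{e^a - \sum_{m\leq k}\frac{a^m}{m!}}\leq C(L)^{k+1}(1+c_w\kappa_L w_j^2)^{k+1}|x|^{k+1}e^{C(L)(1+c_w\kappa_L w_j^2)|x|}$. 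The key point is then: for $|x|$ small enough (polynomially in $L$, namely $C(L)|x|\leq \delta$), we have $e^{C(L)(1+c_w\kappa_L w_j^2)|x|}\leq e^{C(L)|x|}e^{\delta c_w\kappa_L w_j^2}$, and the prefactor $(1+c_w\kappa_L w_j^2)^{k+1}e^{C(L)|x|}$ is bounded by $e^{\delta c_w\kappa_L w_j^2}$ times a constant $C(\delta,L)$ once we further shrink $|x|$ — indeed $(1+c_w\kappa_L w_j^2)^{k+1}\leq C_k e^{\delta c_w\kappa_L w_j^2}$ for any $\delta>0$ since polynomial growth is dominated by exponential growth, absorbing $k\in\{0,1,2\}$ into the constant. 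Collecting, one obtains \eqref{eq:Ujbound}. For $\cE_{j+1}$ the same argument runs with $w_j$ absent throughout, so one gets the bound with $\delta$ set to $0$.

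The main obstacle — though it is more bookkeeping than genuine difficulty — is the careful tracking of how $h$, $\kappa_L$ and $\rho_J$ enter and whether the resulting constants are genuinely ``only polynomially small in $L$ and $\beta$''; one must use the explicit relations $h=\max\{c_f^{1/2},rc_h\rho_J^{-2}\sqrt\beta,\rho_J^{-1}\}$ and $\kappa_L=c_\kappa\rho_J^2(\log L)^{-1}$ (Remark~\ref{R:choice-parameters}) to see that $h^2 c_w\kappa_L = O((\log L)^{-1}\cdot\text{poly})$ and that $h^2$ itself is bounded by a polynomial in $\rho_J$, which under \eqref{eq:frd_ulbds} is comparable to $v_J$, a fixed quantity depending on $J$ — so these are legitimately polynomial. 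The one genuine care-point is the $\beta$-dependence: the bound on $\norm{W_j}_{h,T_j(B,\varphi)}$ from Lemma~\ref{lem:W_norm} is $\beta$-uniform (only requires $\beta\geq 2c_f^{-1}$), and the $\sqrt\beta$ appearing in $h$ enters $C(L)$ only through $h^2=O(\rho_J^{-4}\beta)$ when $r c_h\rho_J^{-2}\sqrt\beta$ is the active term, which is still polynomial in $\beta$ — consistent with the assertion. One records the resulting threshold $\epsilon(\delta,\beta,L)$ as the minimum of the finitely many smallness conditions imposed above, and notes it is polynomially small in $L$ and $\beta$ as required.
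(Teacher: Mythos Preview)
Your proposal is correct and follows essentially the same approach as the paper: bound the gradient term via \eqref{eq:example1-1}, the periodic term via Lemma~\ref{lem:W_norm}, the scale-$(j{+}1)$ coupling constants via Theorem~\ref{thm:H_j_E_j_estimate} (and Corollary~\ref{cor:Gammaj} for $\cE_{j+1}$), and then feed the resulting linear-in-$\|\omega_j\|$ bound into the Banach-algebra remainder estimate $\|e^a-\sum_{m\le k}a^m/m!\|\le \|a\|^{k+1}e^{\|a\|}/(k+1)!$. Two small cosmetic slips: the parenthetical ``$h^2=O(\kappa_L(\log L))=O(\rho_J^2)$'' has the inequality in the wrong direction (the relation $h^{-2}\kappa_L^{-1}=O(\log L)$ gives a \emph{lower} bound on $h^2$; the correct upper bound $h^2\le C\beta\rho_J^{-4}$ you state later), and the clause ``or rather the bound is uniform and one simply picks $C(\delta,L)\ge C(L)$'' is not quite right --- passing from the coefficient $c_w\kappa_L$ to $\delta c_w\kappa_L$ in front of $w_j^2$ genuinely costs a factor $\delta^{-1}$ in the constant, which is exactly your preceding (correct) remark.
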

\begin{proof}
By Theorem~\ref{thm:H_j_E_j_estimate},  \eqref{eq:Gammaj_bd},  \eqref{eq:T_norm_cf_norm_comparison} and \eqref{eq:example1-1},  
for $j^* \in \{j, j+1\}$ 
and $B\in \cB_{j+1}$, 
\begin{align}
& |\cE_{j+1}| |B| , \;\; \norm{ W_{j^*} (B, \varphi) }_{h, T_{j} (B, \varphi)} \leq C L^2 \norm{\omega_j}_{\Omega_j} , \label{eq:Ujbound0} \\
  & \norm{ \frac{1}{2} s_{j^*} |\nabla \varphi|^2_{B} }_{h, T_{j} (B, \varphi)}  \leq \sum_{D \in \cB_j (B)} 2 (h^2 + w_j (D, \varphi)^2 )
  \norm{\omega_j}_{\Omega_j} \leq 2 (L^2 h^2 + w_j (B, \varphi)^2) \norm{\omega_j}_{\Omega_j} .
\end{align}
Since $h = \max \{ c_f^{1/2}, rc_h \rho_J^{-2} \sqrt{\beta}, \rho_{J}^{-1} \}$, by taking $L \geq c_f^{-1/2}$ we have $L^2 h^2 \geq 1$.
Hence for each choice of $\mathfrak{U}  \in \{ U_j, \bar{U}_{j+1}\}$, we obtain in view of \eqref{eq:general_RG_step3} and \eqref{eq:Ubar_def}, 
\begin{equation}
\norm{\mathfrak{U} (B, \varphi )}_{h, T_{j} (B, \varphi)} 
\leq C(\delta) \kappa_L^{-1} L^2 h^2  \big( 1+ \delta c_w \kappa_L w_j (B, \varphi)^2 \big) \norm{\omega_j}_{\Omega_j} \label{eq:Ujbound11}.
\end{equation}
Also since $\beta \geq 2 c_f^{1/2}$, 
there exists $C >0$ such that $h \leq C \sqrt{\beta}$, 
so for some $C(\delta, \beta, L)$ only polynomially large in $\beta$ and $L$, 
\begin{equation}
\norm{\mathfrak{U} (B, \varphi )}_{h, T_{j} (B, \varphi)} 
\leq  C(\delta,  \beta, L) \big( 1+ \delta c_w \kappa_L w_j (B, \varphi)^2 \big) \norm{\omega_j}_{\Omega_j} \label{eq:Ujbound1}.
\end{equation}
Also using the trivial fact that $1+x \leq e^x$ for $x\geq 0$,
\begin{equation}
\norm{\mathfrak{U} (B, \varphi )}_{h, T_{j} (B, \varphi)}
\leq  C(\delta,  \beta,L) 
e^{ \delta c_w \kappa_L w_j (B, \varphi)^2} \norm{\omega_j}_{\Omega_j} . \label{eq:Ujbound2}
\end{equation}
This shows \eqref{eq:Ujbound_1}.
To deduce \eqref{eq:Ujbound},
assume $\norm{\omega_j}_{\Omega_j} \leq \epsilon (\delta,  \beta, L) = \frac{1}{ C(\delta,  \beta, L) }$.
Together with the submultiplicativity \eqref{eq:prodprop} of the norm, \eqref{eq:Ujbound1} then implies that
\begin{equation}\label{eq:bound_exp-U}
  \|e^{\mathfrak{U}}\|_{h,T_j(B, \varphi)} \leq e^{\norm{\mathfrak{U}}_{h,T_j(B, \varphi)}}
  \leq e^{1+\delta c_w\kappa_L w_j(B,\varphi)^2}
  \leq C e^{\delta c_w\kappa_L w_j(B,\varphi)^2}
\end{equation}
and furthermore, using \eqref{eq:Ujbound2} to bound $(\mathfrak{U})^{k+1}$ for $k\in\{0,1,2\}$,
\begin{align}
\norm{ e^{\mathfrak{U}} - \sum_{m=0}^k \frac{1}{m!} (\mathfrak{U})^m }_{h, T_{j} (B, \varphi)} 
& \leq \frac{1}{(k+1)!} \norm{\mathfrak{U}}_{h, T_{j} (B, \varphi)}^{k+1} e^{\norm{\mathfrak{U}}_{h, T_{j} (B, \varphi)}} \nnb
& \leq  C(\delta,  \beta, L)  \norm{\omega_j}_{\Omega_j}^{k+1} \exp \big( 4\delta c_w \kappa_L w_j (B, \varphi)^2 \big),
\end{align}
which is equivalent to the claim, by replacing $4\delta$ by $\delta$.
The remark about $\cE_{j+1}$ follows from the same computations starting just from \eqref{eq:Ujbound0}.
\end{proof}

\begin{lemma} \label{lemma:derivative_of_components_v2_1}
  Under the assumptions of Theorem~\ref{thm:local_part_of_K_j+1}, there exist
  $\epsilon \equiv \epsilon ( \beta,  L) >0$ (only polynomially small in $\beta$),  $C \equiv C(c_w,  \beta,  L)$, and $C_{A} \equiv C_A(c_w,   \beta,  L, A)$ such that the assumption of Lemma~\ref{lemma:strong_regulator} holds and that the bounds \eqref{eq:derivatives1-v2}, \eqref{eq:derivatives2-v2}, \eqref{eq:derivatives3-v2} and \eqref{eq:derivatives5-v2} hold  whenever $\norm{\omega_j}_{\Omega_j} \leq \epsilon$,
  the derivatives exist in the asserted spaces of polymer activities (cf.~below \eqref{eq:derivatives5-v2}),
  and,
  for $D\in \cB_j (Y)$, $Y\in \cS_j$,
\begin{equation}
\norm{Q_j (D, Y, \varphi')}_{h, T_j (Y, \varphi')} \leq C (\log L) \norm{K_j (Y)}_{h, T_j (Y)} e^{c_w \kappa_L w_j (D, \varphi')^2}. \label{eq:Q_j_bound}
\end{equation}
\end{lemma}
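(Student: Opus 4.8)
The plan is to prove Lemma~\ref{lemma:derivative_of_components_v2_1} by verifying each of the asserted estimates in turn, using the explicit formulas for $\bar{U}_{j+1}$, $U_j$, $\cE_{j+1}|B|$, $J_j$, $Q_j$ and $\cE K_j$ from Definitions~\ref{def:evolution_of_U}--\ref{def:evolution_of_remainder}, together with the bounds already obtained in Lemma~\ref{lemma:Ujbound} and Theorem~\ref{thm:H_j_E_j_estimate}. First I would fix the constants: choose $c_w>0$ small enough (depending only on $c_1$) that Lemma~\ref{lemma:strong_regulator} applies, which we are free to do since $c_1,c_2$ are already fixed by Remark~\ref{R:choice-parameters}; then $C,C_A$ will be extracted along the way. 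I would set $\epsilon(\beta,L)$ to be the quantity $\epsilon(\delta,\beta,L)$ from Lemma~\ref{lemma:Ujbound} with $\delta$ chosen small enough (say $\delta \le c_w/8$ or so) that all the regulator factors $e^{\delta c_w\kappa_L w_j(\cdot)^2}$ appearing below can be absorbed into the available room; crucially this is still only polynomially small in $\beta$ and $L$.

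Next I would handle the exponential terms \eqref{eq:derivatives1-v2}--\eqref{eq:derivatives2-v2}. The map $x\mapsto \mathfrak{U}'(B,\varphi)$ is affine in $x=\omega_j$ (linear in the coupling constants $s_j,z_j$ plus the correction via $\Loc$ in the $\bar U_{j+1}$ case, but all of this is linear in $\omega_j$), so $D^m e^{\mathfrak{U}'} = e^{\mathfrak{U}'} (D\mathfrak{U}')^{\otimes m}$ up to the usual combinatorial rearrangement; the key point is that $D e^{\mathfrak{U}'}$, $D^2 e^{\mathfrak{U}'}$ exist in $\norm{\cdot}_{h,T_j(B)}$-norm, which follows from absolute convergence of the power series defining the exponential in the unital Banach algebra of polymer activities on $B$ (submultiplicativity \eqref{eq:prodprop}). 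Then I bound $\norm{De^{\mathfrak{U}'}}_{h,T_j(B,\varphi)} \le \norm{e^{\mathfrak{U}'}}_{h,T_j(B,\varphi)}\norm{D\mathfrak{U}'}_{h,T_j(B,\varphi)}$, using \eqref{eq:bound_exp-U} for the first factor and the $\omega_j$-uniform bound \eqref{eq:Ujbound_1} (with $\norm{\omega_j}\le\epsilon$, and noting $D\mathfrak{U}'$ is $\mathfrak{U}'$ evaluated at a unit direction, so the $|x|$-factor drops) for the second, absorbing the $e^{2\delta c_w\kappa_L w_j^2}$ into $e^{c_w\kappa_L w_j^2}$. For $\cE_{j+1}|B|$ there is no $w_j$-dependence so the regulator factor is omitted. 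The bound \eqref{eq:derivatives3-v2} on $DJ_j$ follows from \eqref{eq:J_j_definition} together with the bound \eqref{e:Loc-bounded} in Proposition~\ref{prop:Loc-contract}: $Q_j(D,Y,\cdot)$ is linear in $K_j$, so $DQ_j$ is again a $\Loc_{Y,D}\Eplus$ applied to a unit-norm $K_j$-direction, giving the $A^{-1}$ from \eqref{e:Loc-bounded} (via $\norm{K_j(Y)}_{h,T_j(Y)}\le A^{-|Y|_j}\norm{K_j}\le A^{-1}$), and $J_j$ is a finite sum over $D\in\cB_j(B)$, $Y\in\cS_j$ of such terms, each contributing $e^{c_w\kappa_L w_j(D,\varphi')^2}\le e^{c_w\kappa_L w_j(B,\varphi')^2}$; continuity of $s\mapsto\Loc_{Y,D}\Eplus$ from Proposition~\ref{prop:Loc-contract} also gives existence of the derivative in the right norm space.

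For \eqref{eq:derivatives5-v2} I would use $\cE K_j(Z,\varphi') = \sum_{B\in\cB_{j+1}(Z)} J_j(B,Z,\varphi')$ from \eqref{eq:cEK_j_definition}, so $D\cE K_j$ is a sum over $B\in\cB_{j+1}(Z)$ of $DJ_j(B,Z,\cdot)$; since $J_j(B,Z)$ vanishes unless $Z\in\cS_{j+1}$, the number of terms is $O(1)$ and for $Z\notin\cS_{j+1}$ the left side is $0$, trivially giving the $A^{-(1+\eta)|Z|_{j+1}}$ decay; for $Z\in\cS_{j+1}$ one has $|Z|_{j+1}=O(1)$ so $A^{-(1+\eta)|Z|_{j+1}}$ differs from a constant only by a $C(A)$-factor, and the $e^{c_w\kappa_L w_j(B,\varphi')^2}\le e^{c_w\kappa_L w_j(Z,\varphi')^2}$ factor is inherited from \eqref{eq:derivatives3-v2}. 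Finally \eqref{eq:Q_j_bound} is exactly \eqref{e:Loc-bounded} of Proposition~\ref{prop:Loc-contract} with $X=Y\in\cS_j$, $B=D$: this is immediate since all hypotheses of that proposition ($h\ge\rho_J^{-1}$, $\kappa_L=c_\kappa\rho_J^2(\log L)^{-1}$, $L\ge C$, $A\ge1$) are among the standing assumptions of Theorem~\ref{thm:local_part_of_K_j+1}; note $Q_j$ is defined via $\Loc_{Y,D}$ applied to the neutral-or-general polymer activity $\Eplus K_j(Y,\cdot+\zeta)$, so one can either invoke \eqref{e:Loc-bounded} directly or reduce to neutral $K_j$ via Lemma~\ref{lemma:Loc-bounded} after the charge decomposition (the charged parts contributing $0$ to $\Loc$). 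The main obstacle will be bookkeeping: keeping track of which constants depend on $A$ versus only on $\beta,L$, and ensuring that every regulator exponent $e^{\delta c_w\kappa_L w_j^2}$ generated by the product rule and by \eqref{eq:bound_exp-U} stays below the budget $e^{c_w\kappa_L w_j^2}$ allowed on the right-hand sides — this forces the choice of $\delta$ small relative to $c_w$ and the repeated use of $w_j(D,\cdot)^2\le w_j(B,\cdot)^2\le w_j(Z,\cdot)^2$ together with Lemma~\ref{lemma:strong_regulator}.
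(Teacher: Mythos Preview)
Your proposal is correct and follows essentially the same route as the paper: fix $c_w$ so Lemma~\ref{lemma:strong_regulator} applies, use linearity of $\mathfrak{U}'$ in $\omega_j$ together with Lemma~\ref{lemma:Ujbound} and \eqref{eq:bound_exp-U} to get \eqref{eq:derivatives1-v2}--\eqref{eq:derivatives2-v2}, derive \eqref{eq:Q_j_bound} and \eqref{eq:derivatives3-v2} from \eqref{e:Loc-bounded}, and deduce \eqref{eq:derivatives5-v2} from \eqref{eq:derivatives3-v2} using that $\cE K_j$ vanishes outside $\cS_{j+1}$. The only cosmetic difference is that the paper verifies Fr\'echet differentiability of $e^{\mathfrak{U}'}$ by writing out the explicit remainder $e^{\mathfrak{U}'(\omega_j+\dot\omega_j)}-(1+\mathfrak{U}'(\dot\omega_j))e^{\mathfrak{U}'(\omega_j)}$ and bounding it as $O(\|\dot\omega_j\|^2)$, whereas you invoke the Banach-algebra power series; both amount to the same thing given linearity of $\mathfrak{U}'$.
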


\begin{proof} 
Recall that $\mathfrak{U}'$ is $U_j$ or $\bar{U}_{j+1}$ or $\cE_{j+1} |B|$. 
The twice differentiability of $e^{\mathfrak{U}'}$ is a consequence of Lemma~\ref{lemma:Ujbound}, as we will show in detail below.
In the proof, we make the $\omega_j$-dependence of $\mathfrak{U}'$ explicit by writing $\mathfrak{U}' (\omega_j, B, \varphi)$,
write $D F(\dot{\omega}_j)$ for the $\omega_j$-derivative of $F$ in direction $\dot{\omega}_j$
and write similarly for the second derivative $D^2 F (\dot{\omega}_j, \ddot{\omega}_j)$.
Let $\norm{\dot{\omega}_j}_{\Omega_j} \leq \epsilon (\delta,  \beta, L)$ for small $\delta >0$, where $\epsilon(\delta,  \beta, L)$ is as in Lemma~\ref{lemma:Ujbound}. By Lemma~\ref{lemma:Ujbound} {and \eqref{eq:bound_exp-U}},
\begin{equation} \label{eq:easy-bd1}
	\norm{ e^{\mathfrak{U}'(\omega_j + \dot{\omega}_j, B, \varphi)} -  (1+\mathfrak{U}'(\dot{\omega}_j, B, \varphi)) e^{\mathfrak{U}'(\omega_j , B, \varphi )} }_{h, T_j (\varphi, B)} \leq C(\delta,  \beta, L) \norm{\dot{\omega}_j}_{\Omega_j}^2 e^{2\delta c_w \kappa_L w_j (B, \varphi)^2}
	,
\end{equation}
so $D e^{\mathfrak{U}'(\omega_j, B, \varphi)} (\dot{\omega}_j) = e^{\mathfrak{U}'(\omega_j, B, \varphi)} \mathfrak{U}'(\dot{\omega}_j, B, \varphi)$.
Moreover, as asserted, the differentiability is uniform in $\varphi$ after dividing by $G_j(B,\varphi)$
by Lemma~\ref{lemma:strong_regulator}, i.e., the derivatives exist in the space of polymer activities.
Similarly, for $\norm{\ddot{\omega}_j}_{\Omega_j} \leq \epsilon(\delta,  \beta, L)$,
\begin{multline} \label{eq:easy-bd2}
  \norm{ D e^{\mathfrak{U}'(\omega_j + \ddot{\omega}_j, B, \varphi)} (\dot{\omega}_j) -  (1+\mathfrak{U}'(\ddot{\omega}_j, B, \varphi)) D e^{\mathfrak{U}'(\omega_j , B, \varphi)} (\dot{\omega}_j) }_{h, T_j (\varphi, B)}
  \\
  \leq C(\delta,  \beta, L)  \norm{\dot{\omega}_j}_{\Omega_j} \norm{\ddot{\omega}_j}_{\Omega_j}^2 e^{2 \delta c_w \kappa_L w_j (B, \varphi)^2}
\end{multline}
so $D^2 e^{\mathfrak{U}'(\omega_j, B, \varphi)} (\dot{\omega}_j, \ddot{\omega}_j) = e^{\mathfrak{U}'(\omega_j, B, \varphi)} \mathfrak{U}'(\dot{\omega}_j, B, \varphi) \mathfrak{U}'(\ddot{\omega}_j, B, \varphi)$.
It follows that $e^{\mathfrak{U}'}$, $D e^{\mathfrak{U}'}$ are differentiable and $D e^{\mathfrak{U}'(\omega_j, B, \varphi)}, D^2 e^{\mathfrak{U}'(\omega_j, B, \varphi)}$ satisfy the desired bounds again using Lemma~\ref{lemma:Ujbound} and \eqref{eq:bound_exp-U}.

Since $J_j$, $\mathcal{E} K_j$ are linear functions of $K_j$, their differentiabilities follow from boundedness. 
 To obtain a bound for the derivative of $J_j$, first consider $Q_j (D, 
 {Y}, \varphi')$ for $D\in \mathcal{B}_j$, $Y \in \mathcal{S}_j$, cf.~\eqref{eq:Q_j_definition}. 
But because of \eqref{e:Loc-bounded},
\begin{equation}
\norm{\Loc_{Y,D} \Eplus { K_j} 
 (Y, \varphi'+\zeta)}_{h, T_j (Y, \varphi')} \leq C (\log L) \norm{
 { K_j}(Y)}_{h, T_j (Y)} e^{c_w \kappa_L w_j (D, \varphi')^2} ,
\end{equation}
hence $Q_j$ satisfies \eqref{eq:Q_j_bound} and is differentiable with the desired bound, i.e.,~its derivative is bounded in $\norm{\cdot}_{h, T_j (Y)}$-norm by $C(L)e^{c_w \kappa_L w_j (D, \varphi')^2} $. In view of \eqref{eq:J_j_definition}, it follows from this that
\begin{equation}
\norm{D  J_j (B, Z, \varphi') }_{h, T_j (B, \varphi')} \leq C(L) A^{-1} e^{c_w \kappa_L w_j (B, \varphi')^2} ,
\end{equation}
i.e.,~\eqref{eq:derivatives3-v2} holds.
The final inequality, \eqref{eq:derivatives5-v2}, is a direct result of \eqref{eq:derivatives3-v2}, but just using the fact that $\mathcal{E} K_j (\varphi', Y) =0$ whenever $Y\not\in \mathcal{S}_{j+1}$.

\end{proof}

\begin{lemma} \label{lemma:derivative_of_components_v2}
  Under the assumptions of Theorem~\ref{thm:local_part_of_K_j+1}, there exist $\epsilon \equiv \epsilon ( \beta,  L) >0$ (only polynomially small in $\beta$) and $C(A, L)$ such that \eqref{eq:derivatives4-v2}
holds whenever $\norm{\omega_j}_{\Omega_j} \leq \epsilon$.
\end{lemma}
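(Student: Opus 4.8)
\textbf{Proof proposal for Lemma~\ref{lemma:derivative_of_components_v2}.}

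The plan is to establish the bound \eqref{eq:derivatives4-v2} for $D\bar{K}_j$ by differentiating the explicit formula \eqref{eq:K_bar_definition} for $\bar{K}_j$, exploiting that $\bar{K}_j$ is bilinear in $(e^{U_j}, K_j)$ and then combining the estimates on $e^{U_j}$ from Lemma~\ref{lemma:Ujbound} with the linearity and boundedness of $K_j$ together with the entropic gain coming from reblocking, exactly as in Lemma~\ref{lemma:setsizes_2}. First I would recall that, for $Z\in\cP_{j+1}$, \eqref{eq:K_bar_definition} reads $\bar{K}_j(Z,\varphi) = \sum_{Y\in\cP_j:\,\bar{Y}=Z} e^{U_j(Z\setminus Y,\varphi)} K_j(Y,\varphi)$, and by the component factorisation property both factors factor over connected components. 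Differentiating in $\omega_j=(U_j,K_j)$ produces two contributions by the product rule: one where the derivative hits $e^{U_j(Z\setminus Y)}$, using $D e^{U_j} = e^{U_j} U_j(\dot\omega_j)$ from \eqref{eq:easy-bd1} (i.e.\ the already-established \eqref{eq:derivatives1-v2}), and one where it hits $K_j(Y)$, using that $K_j$ is linear so $DK_j(\dot\omega_j)=K_j(\dot\omega_j)$ with $\|K_j(\dot\omega_j)(Y)\|_{h,T_j(Y)}\le A^{-|Y|_j}\|\dot\omega_j\|_{\Omega_j} G_j(Y,\varphi)$ by \eqref{eq:NORm}--\eqref{eq:NORM}.

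Next I would bound each term in the $\|\cdot\|_{h,T_j(Z)}$-norm. Using submultiplicativity \eqref{eq:prodprop} of the norm over the blocks of $Z\setminus Y$ and components of $Y$, together with the stability bound \eqref{eq:bound_exp-U} for $\|e^{U_j(B)}\|_{h,T_j(B,\varphi)}\le C e^{\delta c_w\kappa_L w_j(B,\varphi)^2}$ and the regulator product property \eqref{eq:strong_regulator1}--\eqref{eq:strong_regulator2}, one collects a factor $A^{-|Y|_j}$ (or $A^{-|Y|_j}$ times one extra power of $\|\dot\omega_j\|_{\Omega_j}$ when the derivative hits $e^{U_j}$, which is harmless once $\|\omega_j\|_{\Omega_j}\le\epsilon$ is small) and a regulator $G_j(Z,\varphi)$ up to the constants $C(L)$ from Lemma~\ref{lemma:Ujbound}. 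It then remains to perform the sum over $Y\in\cP_j$ with $\bar Y=Z$. Here I would split according to whether $Y\in\cS_j$ or not. For $Y\notin\cS_j$ the combinatorial estimate \eqref{eq:setsizes_largeconnected} gives $|Y|_j\ge(1+\eta)|Z|_{j+1}$ per connected component, and summing $\sum_{Y:\bar Y=Z} A^{-\eta|Y|_j/(1+\eta)}$ via the product formula \eqref{eq:reblock-bd2} yields $\big((1+A^{-\eta/(1+\eta)})^{L^2}-1\big)^{|Z|_{j+1}}\le (eL^2 A^{-\eta/(1+\eta)})^{|Z|_{j+1}}$, which for $A\ge C(L)$ is at most $1$; this produces the claimed $A^{-(1+\eta)|Z|_{j+1}}$. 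For $Y\in\cS_j$ (so $Z=\bar Y$ is a small set and there are only $O(L^2)$ such $Y$ per $Z$), one simply uses $|Y|_j\ge |Z|_{j+1}$ and absorbs the bounded count into $C(A,L)$; in this regime the extra factor $A^{-\eta|Z|_{j+1}}$ is obtained by choosing $A$ large depending on $L$.

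The main obstacle is bookkeeping rather than conceptual: one must carefully track the interplay between the $\delta c_w\kappa_L w_j^2$-type exponential weights arising from each $e^{U_j(B)}$ factor (there can be up to $|Z\setminus Y|_j$ of them) and the single regulator $G_j(Z,\varphi)$ on the right-hand side, ensuring via \eqref{eq:strong_regulator2} (with $c_2,c_w$ small compared to $c_1$ as fixed in Remark~\ref{R:choice-parameters}) and the requirement that $\delta$ be taken sufficiently small that the accumulated weight is dominated by $G_j(Z,\varphi)$. One also needs to verify that the derivative genuinely exists in the space of polymer activities with finite $\|\cdot\|_{h,T_j(Z)}$-norm, which follows from the uniform-in-$\varphi$ estimates \eqref{eq:easy-bd1}--\eqref{eq:easy-bd2} after division by $G_j(Z,\varphi)$, just as in the proof of Lemma~\ref{lemma:derivative_of_components_v2_1}. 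Finally, the smallness threshold $\epsilon(\beta,L)$ is only polynomially small in $\beta$ because the only $\beta$-dependence enters through $h\le C\sqrt\beta$ and $\kappa_L^{-1}=O(\rho_J^{-2}\log L)$, exactly as recorded in Lemma~\ref{lemma:Ujbound}.
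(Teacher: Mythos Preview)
Your outline is broadly on the right track but has a genuine gap in the treatment of disconnected polymers. The sum in \eqref{eq:K_bar_definition} ranges over \emph{all} $Y\in\cP_j$ with $\bar Y=Z$, not only connected ones, and for $Y$ with $n=|\operatorname{Comp}_j(Y)|\ge 2$ the activity $K_j(Y)=\prod_{Y'\in\operatorname{Comp}_j(Y)}K_j(Y')$ is a degree-$n$ polynomial in $K_j$, \emph{not} linear. Your assertion ``$K_j$ is linear so $DK_j(\dot\omega_j)=K_j(\dot\omega_j)$'' therefore fails on this part of the sum, and the split you propose (small vs.\ not small) does not separate the problematic terms: a disconnected $Y$ can consist of several small components. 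The paper accordingly splits $\bar K_j$ into the contribution from $Y\in\cP_j^c$ and a remainder $K_j^{(n)}$ supported on $Y\notin\cP_j^c$; for the latter one must expand $(K_j+\dot K_j)(Y)-K_j(Y)$ as a sum over which component receives $\dot K_j$, producing factors $\|K_j\|_{\Omega_j^K}^{\,n-1}\|\dot K_j\|_{\Omega_j^K}$, and it is precisely this extra power of $\|K_j\|$ (i.e.\ of $\epsilon$) that, via Lemma~\ref{lemma:setsizes_2}, compensates the combinatorial blow-up of the sum over disconnected $Y$.

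A secondary point: for $Y\in\cS_j$ your inequality $|Y|_j\ge|Z|_{j+1}$ alone does not deliver the exponent $(1+\eta)$; the paper instead uses \eqref{eq:setsizes} (valid for all connected $Y$) in the form $|Y|_j\ge(1+\eta)|Z|_{j+1}-8(1+\eta)$, absorbing the fixed loss $A^{8(1+\eta)}$ into $C(A,L)$. With these two corrections---the connected/disconnected split with the product-rule on $K_j(Y)$ for the disconnected part, and the use of \eqref{eq:setsizes} rather than \eqref{eq:setsizes_largeconnected} for connected $Y$---your argument goes through and matches the paper's.
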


\begin{proof}
Recall \eqref{eq:K_bar_definition} and first rewrite, for $X \in \cP^c_{j+1}$,
\begin{align}
\bar{K}_j (X, \varphi) &= \sum_{Y\in \cP_j^c}^{\bar{Y} =X} e^{U_j (X \backslash Y, \varphi)} K_j (Y, \varphi) + \sum_{Y\in \cP_j}^{\bar{Y} =X} e^{U_j (X \backslash Y, \varphi)} K_j^{(n)} (Y, \varphi)   \label{eq:bar_K_j_decomposition}
\end{align}
where
\begin{align}\label{eq:K-j-n}
K_j^{(n)}(Y,  \varphi) &= 1_{Y \in \cP_j \backslash \cP_j^c} K_j (Y, \varphi).
\end{align}
We will bound the two terms in \eqref{eq:bar_K_j_decomposition} separately.
Observe that, for $Y \in \mathcal{P}_j^c$, $\bar{Y}=X$ and any $\delta >0$, {applying submultiplicativity,} 
Lemma~\ref{lemma:Ujbound} (also see \eqref{eq:bound_exp-U}) implies 
\begin{align}
	\|  e^{U_j(X\backslash Y, \varphi)}\|_{h,T_j(X, \varphi)}  
  \leq e^{|X\backslash Y|_j+\delta c_w\kappa_L w_j(X\backslash Y,\varphi)^2 \norm{\omega_j}_{\Omega_j}}
\end{align}
whenever $\norm{\omega_j}_{\Omega_j} \leq \epsilon(\delta,   \beta,  L)$ for suitable $\epsilon(\delta,  \beta, L) $.
Using this bound, together with \eqref{eq:strong_regulator2}, Lemma~\ref{lemma:setsizes}, and estimating $|X\backslash Y|_j \leq L^2 |X|_{j+1}$, one obtains that 
\begin{align}
\norm{e^{U_j (X\backslash Y, \varphi)} K_j (Y, \varphi)}_{h, T_j (X, \varphi)} & \leq e^{|X\backslash Y|_j + \delta c_w \kappa_L \epsilon w_j (X\backslash Y, \varphi)^2 } G_j (Y, \varphi) A^{-|Y|_j} \norm{K_j}_{\Omega_j^K} \nnb
& \leq A^{8(1+\eta)} e^{L^2 |X|_{j+1}} G_j (X, \varphi) A^{-(1+\eta) |X|_{j+1}} \norm{K_j}_{\Omega_j^K} \label{eq:e^U_j_K_j_bound}
\end{align}
for some $\eta > 0$ and $\norm{\omega_j}_{\Omega_j} \leq \epsilon(\delta, L)$.
Hence for the first term of \eqref{eq:bar_K_j_decomposition},
\begin{equation}
\norm{\sum_{Y \in \cP_j^c}^{\bar{Y}=X} e^{U_j (X\backslash Y, \varphi)} K_j (Y, \varphi)}_{h, T_j (X,\varphi)} \leq C(A) e^{L^2 |X|_{j+1}} G_j (X, \varphi) \sum_{Y: \bar{Y}= X} \norm{K_j}_{\Omega_j^K} A^{-(1+\eta)|X|_{j+1}}
\end{equation}
but $\sum_{Y: \bar{Y}= X} 1 \leq  2^{|X|_j} \leq 2^{L^2 |X|_{j+1}}$ so this is bounded by $C(A) A^{-(1+\frac{\eta}{2}) |X|_{j+1} } G_{j} (X, \varphi) \norm{K_j}_{\Omega_j^K}$ for $A \geq C(L)$ sufficiently large.
Now by the linearity of the map $K_j \mapsto \sum_{Y\in \cP_j^c}^{\bar{Y}=X} e^{U_j (X\backslash Y)} K_j (Y)$, we immediately have,
for $\eta' = \eta/2$,
\begin{equation}\label{eq:nl-bdK1}
\Big\| \partial_{K_j} \Big[ \sum_{Y\in \cP_j^c}^{\bar{Y}=X} e^{U_j (X\backslash Y, \varphi)} K_j (Y, \varphi) \Big]  (\dot{K}_j) \Big\|_{h, T_j (X, \varphi)} \leq C (A) A^{-(1+\eta') |X|_{j+1}} \norm{\dot{K}_j}_{\Omega_j^K} G_j (X, \varphi) .
\end{equation}
Next, for $Y\in \cP_j \backslash \cP_j^c$ and $\bar{Y} = X$, we have by \eqref{eq:factorization} that
\begin{equation}
(K_j + \dot{K}_j) (Y) - K_j (Y) = \prod_{Z \in \operatorname{Comp}_j (Y)} (K_j (Z) + \dot{K}_j (Z)) - \prod_{Z \in \operatorname{Comp}_j (Y)} K_j (Z) 
\end{equation}
so, denoting by $\bar{K_j^{(n)}}$ the object defined by \eqref{eq:K_bar_definition} with $K_j^{(n)} $ from \eqref{eq:K-j-n} in place of $K_j$, we obtain
\begin{align}
& \Big\| \bar{(K_j + \dot{K}_j)^{(n)}} - \bar{K_j^{(n)}}  - \sum_{Y\not\in \cP_j^c}^{\bar{Y}=X} \sum_{Z\in \operatorname{Comp}_j (Y)} e^{U_j (X\backslash Y)} \dot{K}_j (Z) \prod_{Z' \in \operatorname{Comp}_j (Y\backslash Z)} K_j (Z')  \Big\|_{h, T_j (X)} \nnb
& 
\leq  \sum_{Y\not\in \cP_j^c}^{\bar{Y}=X} e^{|X\backslash Y|_j}  A^{-|Y|_j}  \Big( \big(\epsilon + \norm{\dot{K}_j}_{\Omega_j^K} \big)^{|\operatorname{Comp}_j (Y)| } -  \epsilon^{|\operatorname{Comp}_j (Y)|}  -  |\operatorname{Comp}_j (Y)| \norm{\dot{K}_j}_{\Omega_j^K} \epsilon^{|\operatorname{Comp}_j (Y)| - 1} \Big) 
\nnb
& \leq C \sum_{Y\not\in \cP_j^c}^{\bar{Y}=X} e^{|X\backslash Y|_j}  A^{-|Y|_j} |\operatorname{Comp}_j (Y)|^2   \norm{\dot{K}_j}_{\Omega_j^K}^2 \, \epsilon^{|\operatorname{Comp}_j (Y)| -2} \nnb
& \leq C' e^{L^2 |X|_{j+1}} \sum_{Y\not\in \cP_j^c}^{\bar{Y}=X} e^{- \frac{1}{2} |Y|_j}  A^{-|Y|_j}  \norm{\dot{K}_j}_{\Omega_j^K}^2 \, \epsilon^{|\operatorname{Comp}_j (Y)| -2}
\end{align}
where the second inequality holds under the assumption $\norm{\dot{K}_j}_{\Omega_j^K} \leq \frac{1}{2} \epsilon$.
By Lemma~\ref{lemma:setsizes_2}, this is bounded by $C(A)( L^2 e^{L^2} A^{-(1+\eta'')}  )^{|X|_{j+1}} \norm{\dot{K}_j}^2_{\Omega_j^K}$ for some $\eta''>0$,
and hence $\bar{K_j^{(n)}}$ is differentiable in $K_j$. The derivative satisfies a similar bound:
\begin{equation}\label{eq:nl-bdK2}
\Big\| \sum_{Y\not\in \cP_j}^{\bar{Y}=X} \sum_{Z\in \operatorname{Comp}_j (Y)} e^{U_j (X\backslash Y)} \dot{K}_j (Z)K_j (Y \backslash Z)  \Big\|_{h, T_j (X)} \leq C(A) A^{-(1+\eta''/2) |X|_{j+1}}  \norm{\dot{K}_j}_{\Omega_j^K} \, \epsilon
\end{equation}
when $A$ is chosen sufficiently large.
So only the derivative in $U_j$ is left to be studied. But 
\begin{align}
& \Big\| \partial_{U_j} \Big[\sum_{Y\in \cP_j}^{\bar{Y}=X} e^{U_j (X \backslash Y, \varphi)} K_j (Y, \varphi))\Big] (\dot{U}_j) \Big\|_{h, T_j (X,  \varphi)} \nnb
&\leq \sum_{Y\in \cP_j}^{\bar{Y}=X} \norm{\dot{U}_j (X\backslash Y, \varphi)}_{h, T_j (X\backslash Y, \varphi)} \norm{ e^{U_j (X \backslash Y)} K_j (Y, \varphi) }_{h, T_j (X, \varphi)} \nnb
& \leq C(\beta, L) \sum_{Y\in \cP_j}^{\bar{Y}=X} e^{L^2 |X|_{j+1}}  e^{-|Y|_j} G_j (X, \varphi) A^{-|Y|_j} \norm{K_j}_{\Omega_j^K}^{|\operatorname{Comp}_{j} (Y)|}  \norm{\dot{U}_j}_{\Omega_j^U} \nnb
& \leq C(\beta, L)  e^{L^2|X|_{j+1}} G_{j} (X, \varphi) \norm{\dot{U}_j}_{\Omega_j^U} (e L^2 A^{-(1+2\eta) / (1+\eta)} )^{|X|_{j+1}} \norm{K_j}_{\Omega_j^K}
\end{align}
where the final inequality follows again by Lemma~\ref{lemma:setsizes_2} assuming $\norm{K_j}_{\Omega_j^K} \leq \epsilon_{rb}$. 
Also, since $C(\beta, L)$ is a constant only polynomially large in $\beta$,  we obtain
\begin{align}
\Big\| \partial_{U_j} \Big[\sum_{Y\in \cP_j}^{\bar{Y}=X} e^{U_j (X \backslash Y, \varphi)} K_j (Y, \varphi))\Big] (\dot{U}_j) \Big\|_{h, T_j (X,  \varphi)}
\leq C (L)  A^{-(1+ \eta''') |X|_{j+1}} G_{j} (X, \varphi) \norm{\dot{U}_j}_{\Omega_j^U}
\label{eq:nl-bdK3}
\end{align}
after choosing $A$ large in $L$ and $\norm{K_j}_{\Omega_j^K}$ polynomially small in $\beta$.
Hence we have the bound for $\partial_{U_j} \bar{K}_j$ when $A$ is sufficiently large and together, \eqref{eq:nl-bdK1}, \eqref{eq:nl-bdK2} and \eqref{eq:nl-bdK3} yield \eqref{eq:derivatives4-v2}.
\end{proof}

\subsection{Product rule for polymer activities}

In preparation of the proof of Lemma~\ref{lemma:bound_on_M^k},
we first prove a product rule for polymer activities defined as in \eqref{eq:M^1_j+1}--\eqref{eq:M^2_j+1}.
For general polymer activities $K$, $K'$ with $\norm{K}_{\Omega_j^K}, \norm{K'}_{\Omega_j^K} < \infty$, the polymer activity defined by $K'' (X) = \sum_{Y\in \cP_j (X)} K_j (Y) K'_j (X\backslash Y)$ is not necessarily differentiable.
There are obstacles related both to the large field and the large set regulators.
The first obstacle is that it is not true that $G_j (X) G_j (Y) = G_{j} (X\cup Y)$ for general disjoint $X, Y \in \cP_j$.
The second obstacle is that summing over all $Y\in \cP_j (X)$ would create a combinatorial factor $2^{|X|_j}$ in the end, so taking the supremum over $X\in \cP_j^c$ would make $\norm{K''}_j$ diverge. Fortunately, we can circumvent these problems in \eqref{eq:M^1_j+1}--\eqref{eq:M^2_j+1}
due to the specific form of the polymers involved.
Sufficient conditions for the former operations are implied by the following conditions: 
\begin{itemize}
\item[$\textnormal{(Q)}$] Let $(\mathbb{X}, |\cdot|)$ be a normed space and $B^{\mathbb{X}}_{\epsilon}$ be the open ball with radius $\epsilon >0$.
Let $\varphi', \zeta$ be fields taking value in $\mathbb{R}^{\Lambda}$, and $\zeta \sim \mathcal{N} (0, \Gamma_{j+1})$.
Let $\mathcal{Q}$ be a partition of $\mathcal{B}_{j+1}$ 
and let $F_x (\, \cdot, \varphi', \zeta)$ and $f^y_x (\,  \cdot, \varphi', \zeta )$ be polymer activities supported on $\mathcal{Q}$ and labelled by $x \in B^{\mathbb{X}}_{\epsilon}$, $y\in \mathbb{X}$.
Assume that $F_0 (Q, \varphi', \zeta) \equiv 1_{Q = \emptyset}$, that $f^y_x$ is linear in $y$,
and that there are $C >0$, $\vec{\eta} (Q) \geq 0$ and a function $\psi: \mathbb{X}\to \R$ with $\psi(z) = o(1)$ as $z\rightarrow 0$ such that
\begin{itemize}
\item[(i)] (Boundedness) $\norm{f^y_x (Q, \varphi', \zeta)}_{h, T_{j} (Q, \varphi')} \leq C A^{- (1+\vec{\eta}(Q)) |Q|_{j+1}}   \mathcal{G}_{j} (Q, \varphi', \zeta)  |y|$;
\item[(ii)] (Continuity) $\norm{ (f^y_{x+z} - f^y_x )(Q, \varphi', \zeta)}_{h, T_{j} (Q, \varphi')} \leq C A^{- (1+\vec{\eta}(Q)) |Q|_{j+1}} \mathcal{G}_{j} (Q, \varphi', \zeta) |y| \psi(z)$;
\item[(iii)] (Derivative) $\norm{ (F_{x+y} - F_x - f_x^y )(Q, \varphi', \zeta)}_{h, T_{j} (Q, \varphi')} \leq C A^{- (1+\vec{\eta}(Q)) |Q|_{j+1}} \mathcal{G}_{j} (Q, \varphi', \zeta)  |y| \psi (y)$
\end{itemize}
where $\mathcal{G}_{j} (Q, \varphi' , \zeta)$ satisfies $\Eplus [ \prod_{Q \in \cQ (X) }  \mathcal{G}_{j} (Q, \varphi' , \zeta)] \leq C_G 2^{|X|_j} G_{j+1} (\bar{X},\varphi')$
and $Q\in \cQ(X)$ means $Q\in \cQ$ and $Q\subset X$.
Further assume that $\vec{\eta}(Q)$ takes value $0$ or $\eta_0 >0$ and if $\vec{\eta}(Q) =0$, then $Q\in \mathcal{S}_{j+1}$.
\end{itemize}
The flexibility of $\cG_j$ will save us from the problem of regulators  and the extra decay due to $\vec{\eta}(Q)$ will save us from the problem of combinatorial factor $2^{|X|_j}$. 
In practice, $\cG_j$ will be either $G_j$ or $e^{c_w \kappa_L (w_j )^2}$.

\begin{proposition}[Product rule] \label{prop:differentiability_of_product_v2}
Let $\mathbb{X}$, $\cQ$, $\cQ(X)$, $f$, $F$, $\psi$ and $\vec{\eta}$ be as in $\textnormal{(Q)}$ for $X\in \cP_{j+1}$.
Given collection of parameters $\vec{x} = \{ x(Q) \in \mathbb{X} : Q \in \mathcal{Q} \}$, define for $X\in \cP_{j+1}$,
\begin{align}
\mathcal{L}_{\vec{x}} (X, \varphi') =
\begin{array}{ll}
 \begin{cases}
 \Eplus \Big[ \prod_{Q\in \mathcal{Q}(X) } F_{x(Q)} (Q, \varphi', \zeta) \Big] & \textnormal{if} \;\; |\mathcal{Q} (X)| \geq 2, \; X = \cup_{Q\in \mathcal{Q}(X)} Q \\
 0  & \textnormal{otherwise} .
\end{cases}
\end{array}
\end{align}
Then for $A$ sufficiently large and $\epsilon$ sufficiently small (polynomially in $L$, $A$, $C$), the partial derivatives of $\vec{x} \mapsto \cL_{\vec{x}}$ exist (as a map from $\mathbb{X}$ to the space of polymer activities of finite $\norm{\cdot}_{h, T_{j+1} ({X})}$-norm),
the partial derivatives in directions $P \in \cQ$ are given by
\begin{equation}
d^{\, y}_{P} := \partial_{y, P} \mathcal{L}_{\vec{x}} := \Eplus \Big[ f^y_{x(P)} (P, \varphi', \zeta) \prod_{Q \in \mathcal{Q}(X \backslash P) } F_{x(Q)} (Q,\varphi',\zeta) \Big]  \quad \textnormal{if} \;\; |X|_{j+1} \geq 2,
\end{equation}
and they are continuous
in the domain $\{ \vec{x} : |x(Q)| < \epsilon, \,\, \forall Q \in \mathcal{Q} \}$. Moreover, in the case $x(Q) \equiv x$, if we let $\mathcal{L}_x = \mathcal{L}_{\vec{x}}$, then $\mathcal{L}_x$ is differentiable in $\{ |x| < \epsilon \}$, the derivative satisfies the bound
\begin{equation}
  \norm{D \mathcal{L}_x (X)}_{h, T_{j+1} ({X})} \leq C A^{-(1+\eta_0 /2)|X|_{j+1}} |x|^{\max\{1, \frac{ |\mathcal{Q}(X)| -1 }{2} \}},
\end{equation}
and $D \mathcal{L}_x (X)$ is continuous in $x$.
\end{proposition}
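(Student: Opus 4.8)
The plan is to differentiate the finite product $\prod_{Q\in\cQ(X)}F_{x(Q)}(Q,\varphi',\zeta)$ inside the Gaussian expectation by the usual telescoping argument, and to control all the resulting error terms by combining the boundedness, continuity, and derivative hypotheses (i)--(iii) of $\textnormal{(Q)}$ with the stability estimate $\Eplus[\prod_{Q\in\cQ(X)}\cG_j(Q,\varphi',\zeta)]\leq C_G 2^{|X|_j}G_{j+1}(\bar X,\varphi')$. First I would fix $X\in\cP_{j+1}$ with $|\cQ(X)|\geq 2$ and write, for a perturbation supported on a single $P\in\cQ(X)$,
\begin{equation*}
  \prod_{Q\in\cQ(X)}F_{x(Q)+\dot x\,1_{Q=P}} - \prod_{Q\in\cQ(X)}F_{x(Q)} - f^{\dot x}_{x(P)}(P)\prod_{Q\in\cQ(X\setminus P)}F_{x(Q)}
  = \big(F_{x(P)+\dot x}(P)-F_{x(P)}(P)-f^{\dot x}_{x(P)}(P)\big)\prod_{Q\in\cQ(X\setminus P)}F_{x(Q)},
\end{equation*}
then apply the submultiplicativity of the $\norm{\cdot}_{h,T_j}$-norm (Lemma with \eqref{eq:prodprop}), factorising $X$ into the disjoint $(j+1)$-polymers $Q\in\cQ(X)$ and using $\bar Q\subset X$ together with the monotonicity \eqref{e:norm-monot}. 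Taking $\varphi'$-dependence into account by dividing by $G_j$-type factors and pulling the expectation through via the factorisation estimate in $\textnormal{(Q)}$, the inner expectation of the product over $\cQ(X)$ produces $C_G2^{|X|_j}G_{j+1}(\bar X,\varphi')$.

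For the case $x(Q)\equiv x$ one sums the single-polymer derivatives over $P\in\cQ(X)$, so $D\cL_x(X)=\sum_{P\in\cQ(X)}\Eplus[f^{\dot x}_{x}(P)\prod_{Q\in\cQ(X\setminus P)}F_{x}(Q)]$, and the key point is the bookkeeping of the $A$-decay. Each $f^{\dot x}_x(P)$ contributes $A^{-(1+\vec\eta(P))|P|_{j+1}}$ and each factor $F_x(Q)$ for $Q\neq P$ is bounded (using (i) with $x$ replaced by $x$ and $y$ suppressed; more precisely $\norm{F_x(Q)}\leq\norm{F_x(Q)-F_0(Q)}+1_{Q=\emptyset}$, and $F_x-F_0=F_x-1$ is controlled by iterating (iii) and (i)) by roughly $C|x|\,A^{-(1+\vec\eta(Q))|Q|_{j+1}}\cG_j(Q)$ when $Q\notin\cS_{j+1}$, and by $C\cG_j(Q)$ when $Q\in\cS_{j+1}$ where $\vec\eta(Q)=0$. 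Multiplying over $Q\in\cQ(X\setminus P)$ gives a total $A$-decay $A^{-\sum_Q(1+\vec\eta(Q))|Q|_{j+1}}=A^{-|X|_{j+1}}A^{-\sum_Q\vec\eta(Q)|Q|_{j+1}}$, and since $\vec\eta(Q)\in\{0,\eta_0\}$ with $\vec\eta(Q)=0$ forcing $Q\in\cS_{j+1}$, the number of small-set factors is bounded, so the residual decay $A^{-\eta_0\cdot(\text{number of non-small }Q)|Q|_{j+1}}$ together with the combinatorial factor $2^{|X|_j}\leq 2^{L^2|X|_{j+1}}$ is absorbed into $A^{-(\eta_0/2)|X|_{j+1}}$ once $A\geq A_0(L)$. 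Combining, $\norm{D\cL_x(X)}_{h,T_{j+1}(X)}\leq C A^{-(1+\eta_0/2)|X|_{j+1}}|x|^{\max\{1,(|\cQ(X)|-1)/2\}}$; the power of $|x|$ comes from the fact that all but possibly one of the $|\cQ(X)|-1$ bounded factors $F_x(Q)$ with $Q\notin\cS_{j+1}$ each supply a factor $|x|$, while small-set factors may supply none, and at worst one overall factor of $|x|$ survives --- the precise exponent being dictated by Lemma~\ref{lemma:setsizes_2}-type counting of how many components are forced to be non-small. Continuity of $\vec x\mapsto\cL_{\vec x}$ and of $D\cL_x$ in $x$ follows by the same estimates with (ii) and (iii) supplying the modulus of continuity $\psi$, via a further telescoping in the perturbed arguments.

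The main obstacle I expect is the careful tracking of the two ``regulator problems'' flagged in the text: that $G_j(X)G_j(Y)\neq G_j(X\cup Y)$ in general, and that a naive sum over subpolymers produces an uncontrolled $2^{|X|_j}$. Both are handled precisely by the structure of $\textnormal{(Q)}$ --- the abstract regulator $\cG_j$ (which in applications is either $G_j$ or $e^{c_w\kappa_L w_j^2}$, the latter being genuinely factorising over disjoint polymers by \eqref{eq:strong_regulator2}) absorbs the first, and the extra decay $A^{-\eta_0|Q|_{j+1}}$ on all non-small-set factors absorbs the second. The delicate bookkeeping is to verify that in every one of the finitely many error terms of the telescoping expansion, each factor is matched with the correct $\cG_j$-weight so that the single application of $\Eplus[\prod_Q\cG_j(Q)]\leq C_G2^{|X|_j}G_{j+1}(\bar X,\varphi')$ suffices, rather than needing to integrate weights polymer-by-polymer (which would fail because $\Gamma_{j+1}$ correlates neighbouring blocks); the finite-range property of $\Gamma_{j+1}$ and the fact that distinct $Q\in\cQ$ are separated at scale $j+1$ is what makes the factorised expectation legitimate. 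Once this is set up, the remaining estimates are routine applications of submultiplicativity, Lemma~\ref{lemma:setsizes_2}, and smallness of $\epsilon$.
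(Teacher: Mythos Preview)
Your overall strategy---telescope the product in a single coordinate $P$, bound each factor via submultiplicativity of $\norm{\cdot}_{h,T_j}$, and then apply the hypothesis $\Eplus[\prod_{Q}\cG_j(Q)]\leq C_G 2^{|X|_j}G_{j+1}(\bar X,\varphi')$ in one shot---is exactly what the paper does. But your accounting for the power of $|x|$ and the $A$-decay has a genuine gap.

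First, since $F_0(Q)=1_{Q=\emptyset}$, for every nonempty $Q\in\cQ(X)$ one has $F_0(Q)=0$ (not $1$), and combining (iii) with (i) gives
\[
  \norm{F_x(Q,\varphi',\zeta)}_{h,T_j(Q,\varphi')}\leq C\,A^{-(1+\vec\eta(Q))|Q|_{j+1}}\cG_j(Q,\varphi',\zeta)\,|x|
\]
for \emph{all} $Q$, small or not. There is no reason to drop the factor $|x|$ when $Q\in\cS_{j+1}$; doing so throws away exactly the smallness you need. With the correct bound, the product over $Q\in\cQ(X\setminus P)$ contributes $|x|^{|\cQ(X)|-1}$.

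Second, your assertion that ``the number of small-set factors is bounded'' is false: nothing in $\textnormal{(Q)}$ prevents every $Q\in\cQ(X)$ from having $\vec\eta(Q)=0$. What \emph{is} bounded is $|Q|_{j+1}\leq 4$ for each such $Q$. The paper writes
\[
  A^{-\sum_Q(1+\vec\eta(Q))|Q|_{j+1}} = A^{-(1+\eta_0)|X|_{j+1}}\,A^{\eta_0\sum_{Q:\vec\eta(Q)=0}|Q|_{j+1}} \leq A^{-(1+\eta_0)|X|_{j+1}}\,A^{4\eta_0|\cQ(X)|},
\]
so after multiplying by $2^{|X|_j}\leq 2^{L^2|X|_{j+1}}$ one is left with a prefactor $(CA^{4\eta_0})^{|\cQ(X)|}|\cQ(X)|$ that \emph{grows} with $|\cQ(X)|$. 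This is precisely what forces the exponent $(|\cQ(X)|-1)/2$: one spends half of the $|x|^{|\cQ(X)|-1}$ to kill this prefactor, via the choice $\epsilon$ small enough that $CA^{4\eta_0}\epsilon^{1/2}\leq 1$. The remaining $(2^{L^2}A^{-\eta_0})^{|X|_{j+1}}$ gives $A^{-(\eta_0/2)|X|_{j+1}}$ once $A^{\eta_0/2}\geq 2^{L^2}$. No appeal to Lemma~\ref{lemma:setsizes_2} or to counting non-small components is needed here; the mechanism is purely that small $\epsilon$ absorbs constants exponential in $|\cQ(X)|$.
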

\begin{proof}
Since $C_G$ only contributes as a constant factor, we may assume that $C_G =1$.
Also, all $X$ used below are assumed to satisfy $|\mathcal{Q}(X)| \geq 2$ and $\cup_{Q\in \mathcal{Q}(X)} Q = X$
which is sufficient by the definition of $\cL_{\vec{x}}$.
We first show that $d^{\, y}_{P}$ has finite norm.  Indeed,
{the assumption on $\cG_j$ gives}
\begin{align}
\norm{d^{\, y}_{P} (\varphi', X)}_{h, T_{j+1} ({X}, \varphi')} 
& \leq C^{|\mathcal{Q} (X)|} A^{-\sum_{Q\in \mathcal{Q}(X)} (1+ \vec{\eta}(Q))|Q|_{j+1}  } \Eplus \Big[ \prod_{Q \in \cQ (X)} \cG_j (Q, \varphi' , \zeta) \Big] \, |x|^{|\mathcal{Q}(X)| -1} |y| \nnb
& \leq C^{|\mathcal{Q}(X)|} A^{\eta_0 \sum_{Q\in \mathcal{Q}(X)}^{\vec{\eta}(Q)=0} |Q|_{j+1} } (2^{L^2}  A^{-(1+\eta_0)})^{|X|_{j+1}} G_{j+1} ({X}, \varphi') |x|^{|\mathcal{Q}(X)| -1}|y|  \nnb
& \leq C^{|\mathcal{Q}(X)|} A^{4 \eta_0 |\cQ (X)| } (2^{L^2}  A^{-(1+\eta_0)})^{|X|_{j+1}} G_{j+1} ({X}, \varphi') |x|^{|\mathcal{Q}(X)| -1}|y|  .
\end{align}
Thus $(y\mapsto d^{\, y}_P)$ is a bounded linear map from $\mathbb{X}$ to the polymer activities of finite $\norm{\cdot}_{h, T_{j+1} ({X})}$-norm.

To show that $d^{\, y}_P$ is the derivative of $\cL_{x}$,
let $\delta \mathcal{L}_{\vec{x}, y} := \mathcal{L}_{\vec{x} + y \delta_{P, Q}} - \mathcal{L}_{\vec{x}} - d^{\, y}_{P}$.
Then by essentially the same computation as above,
\begin{align}
\norm{ \delta \mathcal{L}_{\vec{x}, y} (X, \varphi') }_{h, T_{j+1} ({X}, \varphi')} &= \norm{ \Eplus \Big[ ( F_{x(P) + y} - F_{x(P)} - f^y_{x(P)} ) (P) \prod_{Q \in \mathcal{Q} (X \backslash P) } F_{x(Q)} (Q)  \Big]  }_{h, T_{j+1} (X, \varphi')} \nnb
& \leq (CA^{4\eta_0})^{|\mathcal{Q}(X)|} |x|^{|\mathcal{Q}(X)| -1} A^{-(1+ \eta_0) |X|_{j+1}} |y| \Eplus \Big[ \prod_{Q\in \cQ (X)} \cG_j (Q, \varphi' , \zeta) \Big] \psi (y) \nnb
& \leq (CA^{4\eta_0})^{|\mathcal{Q}(X)|} |x|^{|\mathcal{Q}(X)| -1} (2^{L^2} A^{-(1+ \eta_0)} )^{|X|_{j+1}} G_{j+1} ({X}, \varphi') |y| \psi(y),  \label{eq:delta_L_estimate}
\end{align}
proving the existence of the partial derivatives of $\cL_{\vec{x}}$ as a function from $\mathbb{X}$ to the space of polymer activities of finite $\norm{\cdot}_{h, T_{j+1} ({X})}$-norm.
To see the differentiability of $\mathcal{L}_x$, let
\begin{align}
d^{\, y} (X, \varphi') := D \mathcal{L}_x (X, \varphi') := \sum_{P\in \mathcal{Q} (X)} \partial_{y, P} \mathcal{L}_{\vec{x}} |_{x(Q) \equiv x}.
\end{align}
Then
\begin{align}
\label{eq:d^y_bound}
\norm{d^{\, y} (X)}_{h, T_{j+1} ({X})} \leq |\mathcal{Q}(X)| (CA^{4\eta_0})^{|\mathcal{Q}(X)|} |x|^{|\mathcal{Q}(X)| -1} (2^{L^2} A^{-(1+ \eta_0)} )^{|X|_{j+1}} G_{j+1} ({X}, \varphi') |y|
\end{align}
and hence $(y\mapsto d^{\, y} )$ is bounded linear from $\mathbb{X}$ to the space of polymer activities with finite $\norm{\cdot}_{h, T_{j+1}({X})}$-norm. 
Also applying \eqref{eq:delta_L_estimate} multiple times shows that
\begin{align}
\norm{(\mathcal{L}_{x+y} - \mathcal{L}_x -  d^{\, y} ) (X)}_{h, T_{j+1}({X})} \leq (CA^{4\eta_0})^{|\mathcal{Q}(X)|} |\mathcal{Q}(X)| \, |x|^{|\mathcal{Q}(X)| -1} (2^{L^2} A^{- (1+ \eta_0)})^{|X|_{j+1}} |y| \psi(y)
\end{align}
proving differentiability of $\mathcal{L}_x$. 
The bound for the derivative is obtained from \eqref{eq:d^y_bound} once we choose $|x| < \epsilon$ small and $A$ large so that
\begin{align}
A^{|X|_{j+1} }\norm{D \mathcal{L}_x (X) }_{h, T_{j+1} ({X})} \leq (CA^{4\eta_0})^{|\mathcal{Q}(X)|} |\mathcal{Q}(X)| A^{-\frac{\eta_0}{2} |X|_{j+1}} |x|^{|\mathcal{Q}(X)| -1} \leq C' A^{-\frac{\eta_0}{2} |X|_{j+1}} |x|^{\frac{|\mathcal{Q}(X)| -1}{2}}.
\end{align}
But for the case $|\mathcal{Q}(X)| =2$, one could just have bounded the left-hand by $C' A^{-\frac{\eta_0}{2} |X|_{j+1}} |x|$ instead.

The continuity of the derivative follows from the assumption on the continuity of $f$.
\end{proof}

\subsection{Proof of Lemma~\ref{lemma:bound_on_M^k}}
\label{subsec:estimate_Nj}

In this subsection, $\cE_{j+1} |X|, U_j, \bar{U}_{j+1}, K_j, \bar{K}_j,\cE K_{j}$ and $J_j$ will always be a function of $x$ implicitly.
 For brevity, we define the following expressions which appear as part of the definitions of the $\MM_{j+1} ^{(k)}$:
\begin{align}
  F(\mathfrak{K}_j , T, X) &= e^{ \cE_{j+1} |X| + \bar{U}_{j+1} (X \backslash T) } \label{eq:F_omega_T_X} \\
   H(\mathfrak{K}_j  , X_0, X_1, Z, (B_{Z''})) &= \Eplus \Big[ (e^{U_j} - e^{\bar{U}_{j+1}})^{X_0} (\bar{K}_j - \cE K_j)^{[X_1]}  \Big]
    \nnb
    &\qquad\qquad \times \prod_{Z'' \in \operatorname{Comp}_{j+1} (Z)} J_j (B, Z'') \label{eq:G_omega_X0_X1_Z_BZ}.
\end{align}

\begin{lemma} \label{lemma:N_j_derivative_primitive_bound}
Under the same assumptions as in Lemma~\ref{lemma:bound_on_M^k}, 
\begin{align}
\mathcal{A}_1 (\mathfrak{K}_j (x), X) & = 1_{|X|_{j+1} = 1}  \Eplus [ e^{U_j (X)} - U_j (X) - e^{ \bar{U}_{j+1} (X) } + \bar{U}_{j+1} (X) ]  \\
\mathcal{A}_2 (\mathfrak{K}_j(x), T) & = \sum_{X_0, X_1, Z, (B_{Z''})}^{\# (X_0, X_1, Z) \geq 2} H(x, X_0, X_1, Z, (B_{Z''}))
\end{align}
with $T = X_0 \cup X_1 \cup (\cup_{Z'' \in \operatorname{Comp}_{j+1} (Z)} B_{Z''}^*)$
are differentiable in $x$ with
\begin{align}
& \norm{D H (\mathfrak{K}_j (x),  X_0, X_1, Z, (B_{Z''}))}_{h, T_{j+1} (T )} \leq C_A A^{-(1+\frac{\eta}{4} ) |X|_{j+1}} \xnorm^{\max\{1, \frac{\#(X_0, X_1, Z) -1}{2} \}} \label{eq:DH_bound} \\
& \norm{D \mathcal{A}_l (\mathfrak{K}_j (x), X)}_{h, T_{j+1} (T)} \leq C A^{- (1+ \eta ) |X|_{j+1}} \xnorm,   \quad l=1,2
\end{align}
for some $\eta >0$,  $C_A \equiv C_A(A, L)$, $C \equiv C(L)$ and $H$ defined by \eqref{eq:G_omega_X0_X1_Z_BZ}.
Moreover, each derivative is continuous in $x$.
\end{lemma}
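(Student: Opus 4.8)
\textbf{Proof plan for Lemma~\ref{lemma:N_j_derivative_primitive_bound}.}
The strategy is to reduce both quantities to applications of the chain rule established in Proposition~\ref{prop:differentiability_of_product_v2}, using the ``building block'' estimates \eqref{eq:Ujbound_1}--\eqref{eq:derivatives5-v2} guaranteed by the membership $\mathfrak{K}_j(x) \in \cX_j^{\mathfrak{K}}(\mathbb{Y})$. First I would deal with $\mathcal{A}_1$, which is the simplest: here $|X|_{j+1}=1$, so $X=B$ is a single $(j+1)$-block, and the polymer power $(e^{U_j}-e^{\bar U_{j+1}})^{X_0}$ and the exponential factor $F(\mathfrak{K}_j,T,X)$ do not genuinely interact with the reblocking combinatorics. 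One writes $e^{\mathfrak{U}}-\mathfrak{U} = \sum_{m\geq 2}\tfrac1{m!}\mathfrak{U}^m$ and uses \eqref{eq:Ujbound} with $k=1$ (for each of $\mathfrak{U}=U_j$ and $\mathfrak{U}=\bar U_{j+1}$) to get a bound of the form $C(\delta,\beta,L)\, e^{\delta c_w\kappa_L w_j(B,\varphi)^2}\, \xnorm^2$ on the integrand; differentiating in $x$ replaces one power of $\xnorm$ by the operator norm using \eqref{eq:derivatives1-v2}--\eqref{eq:derivatives2-v2}, giving $O(C(L)\,e^{c_w\kappa_L w_j(B,\varphi)^2}\,\xnorm)$; then Proposition~\ref{prop:E_G_j} (in the form $\Eplus[e^{c_w\kappa_L w_j(B,\cdot)^2}] \leq 2^{|B|_j}G_{j+1}(\bar B,\varphi')$, which follows from \eqref{eq:strong_regulator1} and \eqref{eq:E_G_j}) absorbs the regulator and the $2^{|B|_j}\leq C$ factor since $B$ is small. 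This yields the claimed bound with any $\eta>0$ (here the decay in $|X|_{j+1}$ is automatic since $|X|_{j+1}=1$). Continuity in $x$ follows from the continuity assertions in \eqref{eq:derivatives1-v2}--\eqref{eq:derivatives2-v2} together with a $3\epsilon$-argument.

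For $H$ and hence $\mathcal{A}_2$, the plan is to recognise the product $(e^{U_j}-e^{\bar U_{j+1}})^{X_0}(\bar K_j - \cE K_j)^{[X_1]}\prod_{Z''}J_j(B_{Z''},Z'')$, after inserting the fixed exponential prefactor $F(\mathfrak{K}_j,T,X)$ (which is itself a product over $(j+1)$-blocks of $X\setminus T$ of factors $e^{\cE_{j+1}|B|+U_{j+1}(B)}$ bounded via the $\cE_{j+1}$ and $\bar U_{j+1}$ estimates), as an instance of the set-up $\mathrm{(Q)}$ preceding Proposition~\ref{prop:differentiability_of_product_v2}. The partition $\cQ$ is the collection of the individual blocks $B\in\cB_{j+1}(X_0)$, the connected components $Y\in\operatorname{Comp}_{j+1}(X_1)$, and the small-set neighbourhoods $B^*_{Z''}$; on blocks one takes $F_{x(B)}= e^{U_j(B)}-e^{\bar U_{j+1}(B)}$ (times a factor of $F(\mathfrak{K}_j,T,X)$), on components of $X_1$ one takes $F_{x(Y)}=(\bar K_j - \cE K_j)(Y)$, and on the $B^*_{Z''}$ one takes $J_j(B_{Z''},Z'')$. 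One checks the hypotheses $\mathrm{(i)}$--$\mathrm{(iii)}$: boundedness from \eqref{eq:Ujbound}--\eqref{eq:Ujbound_1}, \eqref{eq:derivatives4-v2}--\eqref{eq:derivatives5-v2}, \eqref{eq:derivatives3-v2}; the linearised object $f^y_x$ is $D$ applied to each factor, bounded by \eqref{eq:derivatives1-v2}, \eqref{eq:derivatives3-v2}, \eqref{eq:derivatives4-v2}; continuity from the corresponding continuity clauses; and the crucial extra-decay parameter $\vec\eta(Q)$ is $\eta_0>0$ for the $X_1$- and $Z$-type pieces (by \eqref{eq:derivatives4-v2}--\eqref{eq:derivatives5-v2}, i.e.\ the $A^{-(1+\eta)|Z|_{j+1}}$ bounds which come from Lemma~\ref{lemma:setsizes}) and $0$ only for the single-block pieces, which are automatically small sets as required. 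The function $\mathcal{G}_j$ is $G_j$ on the $X_1$-pieces and $e^{c_w\kappa_L w_j^2}$ on the block and $J_j$ pieces, and the product expectation bound $\Eplus[\prod_Q \mathcal{G}_j(Q,\cdot)]\leq C 2^{|X|_j}G_{j+1}(\bar X,\varphi')$ is exactly what \eqref{eq:strong_regulator2} and Proposition~\ref{prop:E_G_j} provide. Proposition~\ref{prop:differentiability_of_product_v2} then delivers differentiability of $H$ in $x$, the explicit form of the derivative, and the bound $\norm{DH}_{h,T_{j+1}(T)}\leq C_A A^{-(1+\eta_0/2)|T|_{j+1}}\xnorm^{\max\{1,(\#(X_0,X_1,Z)-1)/2\}}$; since $T\subset X$ and, on the summation $\sum^*$ defining $\mathcal{A}_2$, $X\subset\bar T$-type relations control $|X|_{j+1}$ in terms of $|T|_{j+1}$ up to a constant factor, we replace $\eta_0/2$ by $\eta/4$ and sum over $(X_0,X_1,Z,(B_{Z''}))$ using a factor $A^{-\eta|X|_{j+1}/8}$ to beat the combinatorial entropy (as in the proof of Proposition~\ref{prop:largeset_contraction-v2} and Lemma~\ref{lemma:setsizes_2}), giving the stated bound for $D\mathcal{A}_2$.

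The main obstacle I anticipate is not any single estimate but the bookkeeping needed to fit \eqref{eq:G_omega_X0_X1_Z_BZ} precisely into hypothesis $\mathrm{(Q)}$: one must be careful that (a) the summation constraint $X_1\not\sim Z$ and the identity $T=X_0\cup X_1\cup Z$ are compatible with treating the three families of polymers as a single partition of $X$, (b) the exponential prefactor $F(\mathfrak{K}_j,T,X)$ depending on $X\setminus T$ is correctly distributed as block factors (so that it does not spoil the required multiplicative structure over $\cQ$), and (c) the case $\#(X_0,X_1,Z)=1$ is genuinely excluded in $\mathcal{A}_2$ so that $|\cQ(X)|\geq 2$ and the degenerate branch of Proposition~\ref{prop:differentiability_of_product_v2} does not interfere — and, separately, that when $\#(X_0,X_1,Z)\geq 2$ but some of the polymers are empty, the exponent $\max\{1,(\#-1)/2\}$ in the power of $\xnorm$ is still at least $1$, which is what the lemma asserts. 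Handling the ``$\max\{1,\cdot\}$'' uniformly is a matter of invoking the last sentence of the proof of Proposition~\ref{prop:differentiability_of_product_v2}. The continuity-in-$x$ statements then follow automatically from the continuity clause of that proposition.
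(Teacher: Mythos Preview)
Your plan for $\mathcal{A}_1$ is correct and essentially identical to the paper's argument. For $H$ and $\mathcal{A}_2$ your overall strategy (reduce to Proposition~\ref{prop:differentiability_of_product_v2}) is also the paper's, but there is a genuine gap in how you set up hypothesis~(Q).

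The difficulty is the mixed field dependence inside the differences. You propose to take $F_{x}(B)=e^{U_j(B,\varphi'+\zeta)}-e^{\bar U_{j+1}(B,\varphi')}$ on block pieces and $F_{x}(Y)=\bar K_j(Y,\varphi'+\zeta)-\cE K_j(Y,\varphi')$ on $X_1$ pieces, and to use a \emph{single} regulator $\cG_j(Q)$ for each. But the two terms in each difference live at different arguments: $e^{U_j}$ and $\bar K_j$ are controlled by $e^{c_w\kappa_L w_j(Q,\varphi'+\zeta)^2}$ (respectively $G_j(Q,\varphi'+\zeta)$), while $e^{\bar U_{j+1}}$ and $\cE K_j$ are controlled by $e^{c_w\kappa_L w_j(Q,\varphi')^2}$. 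A regulator dominating both --- e.g.\ their sum --- does not clearly satisfy the product condition $\Eplus[\prod_Q\cG_j(Q)]\leq C_G 2^{|X|_j}G_{j+1}(\bar X,\varphi')$: after expanding the product, the cross terms mix $G_{j+1}(\cdot,\varphi')$ coming from the fluctuation integral with $G_j(\cdot,\varphi')$ at the previous scale, and neither \eqref{eq:strong_regulator2} nor Proposition~\ref{prop:E_G_j} covers that combination. The paper handles this by first expanding $H$ via \eqref{eq:H_expansion},
\[
(e^{U_j}-e^{\bar U_{j+1}})^{X_0}(\bar K_j-\cE K_j)^{[X_1]}
=\sum_{Y_0,Y_1}(-1)^{|Y_0|_{j+1}+|\operatorname{Comp}_{j+1}(Y_1)|}
(e^{U_j}-1)^{X_0\setminus Y_0}(e^{\bar U_{j+1}}-1)^{Y_0}(\bar K_j)^{[X_1\setminus Y_1]}(\cE K_j)^{[Y_1]},
\]
and only \emph{then} applying Proposition~\ref{prop:differentiability_of_product_v2} to each summand. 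After the expansion every factor depends on either $\varphi'+\zeta$ or on $\varphi'$ alone, so one can assign to each piece a definite $\cG_j$ (exactly as in the paper's displayed choice of $\cG_j(Q,\varphi',\zeta)$), and then \eqref{eq:strong_regulator2} and Proposition~\ref{prop:E_G_j} give the product bound directly. This expansion step is the missing idea in your outline.

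Two smaller inaccuracies: the $J_j$ bound \eqref{eq:derivatives3-v2} is only $A^{-1}$, not $A^{-(1+\eta)|Z''|_{j+1}}$, so the $Z$-pieces should carry $\vec\eta(Q)=0$ (which is allowed because each $Z''\in\cS_{j+1}$); and $H$ as defined in \eqref{eq:G_omega_X0_X1_Z_BZ} does \emph{not} contain the prefactor $F(\mathfrak{K}_j,T,X)$, so it should not be ``inserted'' here --- that factor is dealt with separately in the proof of Lemma~\ref{lemma:bound_on_M^k} for $k\in\{1,2,3\}$. Also, the partition pieces for $Z$ are the components $Z''\in\operatorname{Comp}_{j+1}(Z)$ themselves, not $B^*_{Z''}$.
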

\begin{proof}
The differentiability of $\cA_1$ follows from \eqref{eq:Ujbound} and \eqref{eq:derivatives1-v2}. To see its bound, let $X=B \in \mathcal{B}_{j+1}$.
We have $\Eplus [ e^{U_j (B)} - U_j(B) - e^{ \bar{U}_{j+1} (B) } + \bar{U}_{j+1} (B) ] = \Eplus \big[ \big( (e^{U_j}-1- U_j) + (e^{\bar{U}_{j+1}} -1 - \bar{U}_{j+1} ) \big) (B)\big]$,
and \eqref{eq:derivatives2-v2} implies
\begin{equation}
\norm{ D \Eplus [ (e^{U_j}  - 1- U_j)] (B, \varphi') }_{h, T_{j+1} (B, \varphi')} \leq C \Eplus [ e^{c_w \kappa_L w_j (B, \varphi' + \zeta )^2 } ] \xnorm \leq C' G_{j+1} ( B, \varphi') \xnorm
\end{equation}
where the second inequality follows from $\Eplus [e^{c_w \kappa_L w_j (B, \varphi ' + \zeta)^2}] \leq \Eplus [ G_j (B, \varphi ' + \zeta) ] \leq 2^{L^2} G_{j+1} (B, \varphi')$,
see Lemma~\ref{lemma:strong_regulator} and Proposition~\ref{prop:E_G_j}.
The same estimate applies to $e^{\bar{U}_{j+1}} - 1 - \bar{U}_{j+1}$ and hence
\begin{align}
\norm{D \mathcal{A}_1 (\mathfrak{K}_j (x), B)}_{h, T_{j+1} (B)} \leq C \xnorm.
\end{align}
To show the differentiability of $\mathcal{A}_2$, we can apply Proposition~\ref{prop:differentiability_of_product_v2}. To see this, expand
\begin{equation} \label{eq:H_expansion}
\begin{split}
H (\mathfrak{K}_j (x)  , X_0, & X_1, Z, (B_{Z''})) = \sum_{ Y_0, Y_1} (-1)^{|Y_0|_{j+1} + |\operatorname{Comp}_{j+1} (Y_1)|} \\
& \times \Eplus \Big[ (e^{U_j} - 1)^{X_0 \backslash Y_0 } (e^{\bar{U}_{j+1}} -1)^{Y_0} ( \bar{K_j} )^{ [X_1 \backslash Y_1 ]} (\mathcal{E}K_j)^{[Y_1]} \Big] \prod_{Z'' \in \operatorname{Comp}_{j+1} (Z)} J_j (B, Z'')
\end{split}
\end{equation}
where the sum runs over $Y_0 \in \mathcal{P}_{j+1} (X_0)$ and $\operatorname{Comp}_{j+1} (Y_1) \subset \operatorname{Comp}_{j+1} (X_1)$.
For fixed $X_0$, $X_1$, $Z$, $Y_0 \subset X_0$ and $Y_1 \subset X_1$, let
$\cQ = \mathcal{B}_{j+1} (X_0) \cup \operatorname{Comp}_{j+1} (X_1) \cup \operatorname{Comp}_{j+1} (Z) \cup \mathcal{B}_{j+1} ( T^c)$ where $T = X_0 \cup X_1 \cup Z$ and define
\begin{equation}
F_{x} (Q, \varphi', \zeta) =  \begin{array}{ll}
\begin{cases}
e^{U_j (Q, \varphi' + \zeta)} -1 & \text{if} \;\; Q \in \mathcal{B}_{j+1} (X_0 \backslash Y_0) \\
e^{\bar{U}_{j+1} (Q, \varphi' )} -1 & \text{if} \;\; Q \in \mathcal{B}_{j+1} (Y_0) \\
\bar{K}_j (Q, \varphi' + \zeta) & \text{if} \;\; Q \in \operatorname{Comp}_{j+1} (X_1 \backslash Y_1) \\
\mathcal{E} K_{j+1} (Q, \varphi') & \text{if} \;\; Q \in \operatorname{Comp}_{j+1} (Y_1) \\
J_j (B_Q, Q, \varphi') & \text{if} \;\; Q \in \operatorname{Comp}_{j+1} (Z) \\
1 & \text{if} \;\; Q \in \mathcal{B}_{j+1} (T^c),
\end{cases}
\end{array} 
\end{equation}
and
\begin{equation}
\cG_{j} (Q, \varphi', \zeta) = \begin{array}{ll}
\begin{cases}
e^{c_w \kappa_L w_j (Q, \varphi' + \zeta )} & \text{if} \;\; Q \in \cB_{j+1} (X_0 \backslash Y_0) \\
e^{c_w \kappa_L w_j (Q, \varphi')} & \text{if} \;\; Q \in \mathcal{B}_{j+1} (Y_0) \cup \operatorname{Comp}_{j+1} (Y_1 ) \cup \operatorname{Comp}_{j+1} (Z) \\
G_j (Q, \varphi' + \zeta) & \text{if} \;\; Q \in \operatorname{Comp}_{j+1} (X_1 \backslash Y_1) \\
1 & \text{if} \;\; Q \in \cB_{j+1} (T^c)  .
\end{cases} 
\end{array} 
\end{equation}
Then Proposition~\ref{prop:differentiability_of_product_v2} with the assumption that $\mathfrak{K}_j \in \cX_j^{\mathfrak{K}} (\mathbb{X})$
(i.e., it satisfies the bounds \eqref{eq:Ujbound_1}--\eqref{eq:derivatives5-v2}) shows $\cA_2$ is differentiable and
\begin{align}
\norm{D \mathcal{A}_2 (\mathfrak{K}_j(x), X)}_{h, T_{j+1} (X)} & \leq \sum_{X_0, X_1, Y_0, Y_1, Z, (B_{Z''})}^{\# (X_0, X_1, Z) \geq 2} E(X_0, X_1, Y_0, Y_1, Z, (B_{Z''}))  \\ 
E(X_0, X_1, Y_0, Y_1, Z, (B_{Z''})) &= C_A  A^{-(1+\eta/2)|T|_{j+1} } \xnorm^{\max\{1, \frac{\# (X_0, X_1, Z) -1}{2} \} }
\end{align}
First consider the cases $\#(X_0, X_1, Z) \geq 4$ and note that $\max\{1, \frac{\# (X_0, X_1, Z) -1}{2} \} = \frac{\# (X_0, X_1, Z) -1}{2}$.
Since $X_0, X_1, Y_0, Y_1, (B_{Z''}), Z \backslash (\cup_{Z''} B_{Z''})$ and $X \backslash T$ partition $X$, one may bound the sum by a sum running over partitions of $X$ partitioned into 7 subsets. This gives a crude combinatorial bound
\begin{align}
 \sum_{X_0, X_1, Y_0, Y_1, Z, (B_{Z''})}^{\# (X_0, X_1, Z) \geq 4} E(X_0, X_1, Y_0, Y_1, Z, (B_{Z''}))  \leq C_A 7^{|X|_{j+1}} \sup A^{-(1+ \frac{\eta}{2}) |T|_{j+1}} \xnorm^{\frac{\# (X_0, X_1, Z) -1}{2} }
\end{align}
where the supremum also runs over the choices of $X_0, X_1, Y_0, Y_1, Z, (B_{Z''})$.
Also with the assumption $7 A^{-\eta/4} \leq 1$, this can also be bounded by
\begin{equation}
C_A A^{-(1+\frac{\eta}{4})|X|_{j+1}}  \sup A^{(1+\frac{\eta}{2}) |X\backslash T|_{j+1} } \xnorm^{\frac{\# (X_0, X_1, Z) -1}{2} }.
\end{equation}
But $|X\backslash T|_{j+1} = |\cup_{Z''} (B^*_{Z''} \backslash Z'' ) |_{j+1} \leq 48 |Z|_{j+1}$, so $A^{|X\backslash T|_{j+1}}  \leq A^{48|Z|_{j+1}}$.
Since each connected component of $Z$ is a small set, it follows that $|Z|_{j+1} \leq 4 |\operatorname{Comp}_{j+1} (Z)|$,
and hence the condition $A^{192(1+\eta/4)} \xnorm^{1/8} \leq 1$ gives
\begin{align}
A^{(1+\frac{\eta}{4}) |X|_{j+1}}  \sum^{\#(X_0, X_1, Z) \geq 4}_{X_0, X_1, Y_1, Y_2, Z, (B_{Z''})} E(X_0, X_1, Y_0, Y_1, Z, (B_{Z''})) \leq C \xnorm .
\end{align}
For the cases $\#(X_0, X_1, Z) \in \{2,3\}$, we have $|X_0|_{j+1} \leq 3$ and $|\cup_{Z''} B_{Z''}^*|_{j+1} \leq 3\times 49$ so
\begin{equation}
  A^{(1+\frac{\eta}{4})|X|_{j+1}} \sum_{X_0, X_1, Y_1, Y_2, Z, (B_{Z''})}^{\# (X_0, X_1, Z) \in \{2,3 \}} E(X_0, X_1, Y_0, Y_1, Z, (B_{Z''})) \leq C_A \xnorm
\end{equation}
by just setting $C_A$ sufficiently large depending on $A$.

The continuity of $D G$ and $D \cA_l$ is a result of the continuity of the derivative in Proposition~\ref{prop:differentiability_of_product_v2}.
\end{proof}

\begin{proof}[Proof of Lemma~\ref{lemma:bound_on_M^k}, case $k\in \{1,2,3\}$]
Consider the function
\begin{equation}
 M_{j+1} (x,  x') = \sum_{X_0, X_1, Z, (B_{Z''})}^{\# (X_0, X_1, Z) \geq 2} F(\mathfrak{K}_j (x) , T, X) H(\mathfrak{K}_j ( x' ) , X_0, X_1, Z, (B_{Z''})) 
\end{equation}
and recall that $F$ and $H$ are defined by \eqref{eq:F_omega_T_X} and \eqref{eq:G_omega_X0_X1_Z_BZ},
and we emphasise that above $F$ uses $x$ to define $E_{j+1}$ and $U_{j+1}$ while
$H$ uses $x'$, so that $\MM_{j+1}^{(1)}(\mathfrak{K}_j (x)) = M_{j+1} (x, x)$.
By Lemmas~\ref{lemma:derivative_of_components_v2} and~\ref{lemma:N_j_derivative_primitive_bound}, $M_{j+1} (x, x')$, $\MM_{j+1}^{(2)} (\mathfrak{K}_j (x'))$ and $\MM_{j+1}^{(3)}(\mathfrak{K}_j (x'))$ are differentiable in $x'$ with the desired bounds.
For the $x$ derivative of $M_{j+1} (x, x')$, we justify the differentiability more carefully: let
\begin{align}
& f^{\dot{x}}_{x} (T, X, \varphi') = (D \cE_{j+1} (\dot{x}) |X| + D \bar{U}_{j+1} (X \backslash T) (\dot{x}) ) F(\mathfrak{K}_j (x), T, X) \\
& m^{\dot{x}}_{x} (X, \varphi') = \sum_{X_0, X_1, Z, (B_{Z''})}^{\# (X_0, X_1, Z) \geq 2}  f^{\dot{x}}_{x} (T, X) H(\mathfrak{K}_j (x') , X_0, X_1, Z, (B_{Z''})) .
\end{align}
Letting $\delta_{\dot{x}} M_{j+1} (x, x') = M_{j+1} (x + \dot{x}, x') - M_{j+1} (x, x') - m^{\dot{x}}_{x}$, the bounds \eqref{eq:Ujbound} and \eqref{eq:DH_bound} give 
\begin{align}
& \norm{ \delta_{\dot{x}} M_{j+1} (x, x')(X, \varphi') }_{h, T_{j+1} (X, \varphi')} \nnb
& \leq C_A \sum_{X_0, X_1, Z, (B_{Z''})}^{\# (X_0, X_1, Z) \geq 2} C^{|X|_{j+1}} (A^{-1-\eta})^{|X|_{j+1}}  e^{c_w \kappa_L w_j (X \backslash T, \varphi')^2} G_{j+1} (T, \varphi')  |\dot{x}|^2 |x'|^{\max\{2, \frac{\# (X_0, X_1, Z) +1}{2} \}} \nnb
&\leq C_A \sup (5C)^{|X|_{j+1}} (A^{-1-\eta})^{|X|_{j+1}}  G_{j+1} (X, \varphi')  |\dot{x}|^2 |x'|^{\max\{2, \frac{\# (X_0, X_1, Z) +1}{2} \}}
\end{align}
where the supremum ranges over $X_0, X_1, Z, (B_{Z''})$ with $\#(X_0, X_1, Z) \geq 2$.
Choosing $5CA^{-\eta} \leq 1$,
\begin{align}
\norm{ \delta_{\dot{x}} M_{j+1} (x, x')(X, \varphi') }_{h, T_{j+1} (X, \varphi')} \leq C_A A^{-|X|_{j+1}} G_{j+1} (X, \varphi') |\dot{x}|^2 |x'|^2.
\end{align}
Therefore $M_{j+1} (x,x') (X)$ is differentiable in $x$ and the same computation gives the bound 
\begin{align}
\norm{\partial_{x} M_{j+1}(x, x')(X)}_{h, T_{j+1} (X)} \leq C_A |x'|^2.
\end{align}
The continuity of the derivatives are results of continuity of derivatives in Lemma~\ref{lemma:N_j_derivative_primitive_bound}.
\end{proof}

\begin{proof}[Proof of Lemma~\ref{lemma:bound_on_M^k}, case $k=4$]
We may alternatively write $\mathfrak{M}_{j+1}^{(4)} = \Eplus M^{(4)}_{-} := \Eplus [ M^{(4,1)}_{-} + M^{(4,2)}_{-}]$ where
\begin{align} 
& M^{(4,1)}_{-} (\mathfrak{K}_j (x), X, \varphi', \zeta) = \sum_{Y\in \cP_j}^{\bar{Y}=X} 1_{Y\in \cP_j^c} (e^{U_j (X\backslash Y,\varphi' + \zeta)} - 1 ) K_j (Y, \varphi' + \zeta) \\
& M^{(4,2)}_{-} (\mathfrak{K}_j (x), X, \varphi', \zeta) = \sum_{Y\in \cP_j}^{\bar{Y} = X} 1_{Y\not \in \cS_j} 1_{Y\not\in \cP_j^c} e^{U_j (X\backslash Y,\varphi' + \zeta)} \prod_{Z\in \operatorname{Comp}_j (Y)} K_j (Z, \varphi' + \zeta)
\end{align}
as $1_{Y\not \in \cS_j} 1_{Y\not\in \cP_j^c} = 1_{Y\not\in \cP_j^c}$.
By \eqref{eq:Ujbound},
\begin{align}
\norm{ D e^{U_j (X \backslash Y , \varphi)} (\dot{x}) }_{h, T_j (X, \varphi)} & \leq C(\delta, L) e^{  (1+ C(\delta, L) \xnorm ) ( |X\backslash Y|_j + \delta c_w \kappa_L w_j (X \backslash Y, \varphi)^2  )  } |\dot{x}| \nnb
& \leq C(L)  e^{2|X\backslash Y|_j} e^{c_w\kappa_L w_j (X \backslash Y, \varphi)} |\dot{x}| \label{eq:M^4_bound_1}
\end{align}
for $\delta < 1/2$ and $\xnorm \leq \frac{1}{C(\delta, L)}$ and then the mean value theorem gives
\begin{align}
\norm{ e^{U_j (X\backslash Y, \varphi)} -1 }_{h, T_j (X, \varphi)} \leq C(L) e^{2|X\backslash Y|_j} e^{c_w\kappa_L w_j (X \backslash Y, \varphi)} \xnorm. \label{eq:M^4_bound_2}
\end{align}
So using \eqref{eq:M^4_bound_1} to bound $\partial_{U_j} M_{-}^{(4)}$ and \eqref{eq:M^4_bound_2} to bound $\partial_{K_j} M_{-}^{(4)}$,  and since $x \mapsto K_j$ is linear and bounded, we see that
\begin{multline}
\norm{ D M_{-}^{(4,1)} (\mathfrak{K}_j (x), X, \varphi', \zeta) }_{h, T_j (X, \varphi')} \\
\leq C(L) \sum_{Y \in \mathcal{P}_j}^{\bar{Y}=X} 1_{Y\in \cP_j^c}  e^{2 |X\backslash Y|_{j+1}} e^{c_w \kappa_L w_j (X\backslash Y)^2} \xnorm A^{-|Y|_j} G_j (Y, \varphi' + \zeta).
\end{multline}
If $Y\in \cS_j$, then $X\in \cS_{j+1}$ and $|X|_{j+1} \leq |Y|_j$ so the summand on the right-hand side is bounded by $C'(A, L) A^{-(1+ \eta)|X|_{j+1}} \xnorm$ where $C'(A, L) = C(L) A^{4\eta}$.
If $Y \not\in \mathcal{S}_j$, then Lemma~\ref{lemma:setsizes} implies $|Y|_{j} \geq \frac{\eta}{2(1+\eta)} |Y|_j + \frac{2+\eta}{2} |X|_{j+1}$ so that
\begin{align}
C(L) \sum_{Y \in \mathcal{P}_j}^{\bar{Y}=X} 1_{Y\not\in\mathcal{S}_j} (2e^2)^{|X|_{j+1}} A^{-|Y|_j} \xnorm \leq C(L) (2e^2 A^{-\frac{2+\eta}{2}})^{|X|_{j+1}} \xnorm \sum_{Y\in \cP_j}^{\bar{Y}=X} A^{-\frac{\eta}{2(1+\eta)}|Y|_j} .
\end{align}
But for $L^2 \leq  A^{\frac{\eta}{2(1+\eta)}}$,
\begin{align}
\sum_{Y : \bar{Y}=X} A^{-\frac{\eta}{2(1+\eta)}|Y|_j} \leq (1+A^{-\frac{\eta}{2(1+\eta)}})^{|X|_{j}} \leq e^{L^2 A^{-\frac{\eta}{2(1+\eta)}} |X|_{j+1} } \leq e^{|X|_{j+1}}
\end{align}
so we may conclude
\begin{align} \label{eq:DM_-^4_bound}
\norm{ D M_{-}^{(4,1)} ( \mathfrak{K}_j (x), X, \varphi') }_{h, T_j (X, \varphi')}  \leq C(A, L) (e^3 A^{-(1+ \frac{\eta}{2})})^{|X|_{j+1}} \xnorm G_j (X, \varphi' + \zeta)
\end{align}
For $M_-^{(4,2)}$, we have
\begin{align}
& \norm{ D M_-^{(4,2)} (\mathfrak{K}_j (x ), X, \varphi', \zeta)  }_{h, T_{j+1} (X, \varphi')} \nnb
& \leq C(L) \sum_{Y\in \cP_j \backslash \cP_j^c}^{\bar{Y}=X} 1_{Y\not\in \cS_j} |\operatorname{Comp}_j (Y)| A^{-|Y|_j}  e^{2|X\backslash Y|_j }  e^{c_w \kappa_L w_j (X\backslash Y, \varphi)} G_j (Y, \varphi' + \zeta) \xnorm^{|\operatorname{Comp}_j (Y)| - 1}  \nnb
& \leq C(L) G_{j} (X, \varphi' + \zeta) e^{2L^2 |X|_{j+1}} \sum_{Y\in \cP_j \backslash \cP_j^c}^{\bar{Y}=X} 1_{Y\not\in \cS_j}  (e^2 A/2)^{-|Y|_j} \xnorm^{|\operatorname{Comp}_j (Y)| - 1} .
\end{align}
But by Lemma~\ref{lemma:setsizes_2}, this is bounded by
\begin{align}
C(L) G_{j} (X, \varphi' + \zeta) e^{2L^2 |X|_{j+1}} (2 e^{-1} L^2 A^{-1-\eta})^{|X|_{j+1}} \xnorm
\end{align}
for some $\eta >0$. Hence for sufficiently large $A$, we have
\begin{align} \label{eq:DM_-^4,2_bound}
\norm{ D M_{-}^{(4,2)} ( \mathfrak{K}_j (x), X, \varphi',\zeta) }_{h, T_{j+1} (X, \varphi')}  \leq C( L) A^{-(1+\eta/2)|X|_{j+1}} \xnorm G_j (X, \varphi' + \zeta)
\end{align}
and the same bounds also imply the differentiability of $D \MM_{j+1}^{(4)}$ with bound
\begin{align}
\norm{ D \mathfrak{M}_{j+1}^{(4)} ( \mathfrak{K}_j (x) ) }_{h, T_{j+1} (X, \varphi')}  \leq C(A, L) A^{-(1+ \frac{\eta}{3}) |X|_{j+1} } \xnorm G_{j+1} (X, \varphi') .
\end{align}
The continuity of the derivative is a consequence of continuity of derivatives in Lemma~\ref{lemma:N_j_derivative_primitive_bound}.
\end{proof}

\subsection{Proof of Theorem~\ref{thm:local_part_of_K_j+1}: continuity in $s$}
\label{sec:rgmap-continuitys}

The proof of continuity in $s$ of the renormalisation group map uses the following lemma which
extends Lemma~\ref{lemma:linearity_of_expectation}.

\begin{lemma} \label{lemma:stability_of_expectation}
  For any $C>0$ and any scale-$j$ polymer activity $F$
  that is invariant under translations and satisfies $\|F\|_{h,T_j}\leq C$,
  for $|s|, |s'| < \theta_J \epsilon_s$,
\begin{equation}
\lim_{s' \rightarrow s}  \sup_{X\in \mathcal{P}_j^c} \Big( \frac{A}{3}  \Big)^{|{X}|_{j}}
\norm{ \E_{\Gamma_{j+1}(s')} [F(X,  \cdot + \zeta )] - \mathbb{E}_{\Gamma_{j+1} (s)} [F( X, \cdot+ \zeta)] }_{h, T_{j+1} (\bar{X})} = 0 \label{eq:stability_of_expectation1}
\end{equation}
and the limit is uniform in $F$ satisfying $\|F\|_{h,T_j} \leq C$ (and in particular in the size of $\Lambda_N$).
An analogous statement holds if we assume
\begin{equation} \label{eq:stability_of_expectation1_j+1}
\sup_{X \in \cP^c_{j+1}} A^{|X|_{j+1}} \sup_{\varphi} G_j (X, \varphi)^{-1} \norm{F(X, \varphi)}_{h, T_{j+1} (X, \varphi)} \leq C
\end{equation}
with the conclusion now being
\begin{equation}
\lim_{s' \rightarrow s}  \sup_{X\in \mathcal{P}_{j+1}^c} \Big(\frac{2 A}{3 \cdot 2^{L^2}} \Big)^{|{X}|_{j+1}}
\norm{ \E_{\Gamma_{j+1}(s')} [F(X,  \cdot + \zeta )] - \mathbb{E}_{\Gamma_{j+1} (s)} [F( X, \cdot+ \zeta)] }_{h, T_{j+1} ({X})} = 0. \label{eq:stability_of_expectation2}
\end{equation}
\end{lemma}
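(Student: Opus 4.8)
\textbf{Proof proposal for Lemma~\ref{lemma:stability_of_expectation}.}

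The plan is to adapt the proof of Lemma~\ref{lemma:stability_of_expectation_singleX} to the polymer-activity norm \eqref{eq:NORM}, which introduces the extra supremum over $X \in \cP_j^c$ together with the large-set weight $A^{|X|_j}$. The starting point is again the identity for the derivative of a Gaussian expectation in its covariance,
\begin{equation}
  \ddp{}{s}\E_{\Gamma_{j+1} (s)} [F(X,  \varphi + \zeta )]
  =
  \frac12
  \sum_{x,y \in \Lambda_N}\ddp{\Gamma_{j+1}(x,y)}{s}
  \E_{\Gamma_{j+1} (s)} \Big[\ddp{^2F(X,  \varphi + \zeta )}{\varphi(x)\varphi(y)}\Big],
\end{equation}
which, after integrating over $s$ and restricting the sum to $x,y \in X^*$ (using that $F(X)$ only depends on $\varphi|_{X^*}$), gives
\begin{equation}
  \E_{\Gamma_{j+1} (s)} [F(X,  \varphi + \zeta )]-\E_{\Gamma_{j+1} (s')} [F(X,  \varphi + \zeta )]
  =
  \frac12 \sum_{z\in X^*} \int_s^{s'} ds''\, \E_{\Gamma_{j+1}(s'')} [D^2F(X,  \varphi + \zeta; f^z,g^z)]
\end{equation}
with $f^z(x)=\Gamma_{j+1}(z,x)$ and $g^z(x)=\delta(z,x)$. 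Taking $\|\cdot\|_{h,T_{j+1}(\bar X)}$ of this and applying Lemma~\ref{lemma:linearity_of_expectation} (specifically \eqref{eq:linearity_of_expectation1.1} with $n=2$, since $D^2$ of the expectation of $F$ is controlled by $D^2 F$), one obtains a bound of the form
\begin{equation}
  \|\E_{\Gamma_{j+1}(s')}[F(X)] - \E_{\Gamma_{j+1}(s)}[F(X)]\|_{h,T_{j+1}(\bar X)}
  \leq |s-s'|\, 2^{|X|_j}\, h^{-2} \sum_{z\in X^*}\|f^z\|_{C_j^2(X^*)}\|g^z\|_{C_j^2(X^*)}\, \|D^2 F(X)\|_{2,T_j(X)}.
\end{equation}

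The key observation that makes the large-set supremum work is that $\|D^2F(X)\|_{2,T_j(X)} \leq h^{-2}\|F(X)\|_{h,T_j(X)} \leq h^{-2} A^{-|X|_j}\|F\|_{h,T_j}$, so multiplying through by $(A/3)^{|X|_j}$ produces a factor $(A/3)^{|X|_j} 2^{|X|_j} A^{-|X|_j} = (2/3)^{|X|_j}$, which is bounded by a constant (in fact decays in $|X|_j$). Meanwhile $\sum_{z\in X^*}\|f^z\|_{C_j^2(X^*)}\|g^z\|_{C_j^2(X^*)}$ needs a bound that is uniform in $|s''|\leq \epsilon_s\theta_J$ and grows at most polynomially (indeed only geometrically) in $|X|_j$: since $g^z=\delta_z$ we have $\|g^z\|_{C_j^2(X^*)} = O(L^{-2j})$ trivially, and $\|f^z\|_{C_j^2(X^*)}=\|\Gamma_{j+1}(z,\cdot)\|_{C_j^2(X^*)}$ is bounded uniformly in $z$ by Corollary~\ref{cor:Gammaj} (the rescaled derivatives $\nabla_j^\alpha \Gamma_{j+1}$ are $O(\rho_J^{-2})$ for $|\alpha|\geq 1$ and $O(\rho_J^{-2}\log L)$ for $\alpha=0$), and this bound is uniform in $|s|\leq \epsilon_s\theta_J$ since $\Gamma_{j+1}(s)$ depends analytically on $s$ there (Proposition~\ref{prop:decomp}(ii) and Corollary~\ref{cor:Gammaj}). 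Since $|X^*|$ grows only linearly in $|X|_j$ (with a geometric constant depending on the small-set neighbourhood), the whole sum over $z \in X^*$ is $O(|X|_j)$, which is absorbed by the remaining geometric decay $(2/3)^{|X|_j}$. Taking the supremum over $X\in\cP_j^c$ and then $s'\to s$ yields \eqref{eq:stability_of_expectation1}, uniformly over $F$ with $\|F\|_{h,T_j}\leq C$, hence also uniformly in $N$.

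For the second statement, under hypothesis \eqref{eq:stability_of_expectation1_j+1} the polymer $X$ is a scale-$(j+1)$ polymer, so one uses the analogue of the above with the $j+1$ scale in place of $j$: the relevant factor from Lemma~\ref{lemma:linearity_of_expectation} (or rather its proof, as in the $X\in\cP_{j+1}^c$ case of Lemma~\ref{lemma:stability_of_expectation_singleX}) is now $2^{L^2|X|_{j+1}}$ (each scale-$(j+1)$ block contains $L^2$ scale-$j$ blocks), so multiplying by $(2A/(3\cdot 2^{L^2}))^{|X|_{j+1}}$ and using $\sup_\varphi G_j(X,\varphi)^{-1}\|F(X,\varphi)\|_{h,T_{j+1}(X,\varphi)}\leq C A^{-|X|_{j+1}}$ again produces a geometrically decaying factor $(2/3)^{|X|_{j+1}}$ times the polynomially-growing sum $\sum_{z\in X^*}\|f^z\|_{C_{j+1}^2(X^*)}\|g^z\|_{C_{j+1}^2(X^*)}$, and one concludes as before. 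I expect the main obstacle to be purely bookkeeping: verifying that the $A$-power arithmetic works out (i.e.\ that $2/3 < 1$ and $2^{L^2}$ are correctly placed so that the resulting constant is uniform in $X$ and $N$), and checking that all the covariance bounds used are genuinely uniform over the $s$-interval — both of which are straightforward given Corollary~\ref{cor:Gammaj}, Proposition~\ref{prop:decomp}(ii), and the structure of the proof of Lemma~\ref{lemma:stability_of_expectation_singleX}.
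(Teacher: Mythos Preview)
Your approach is correct but takes a genuinely different route from the paper. The paper argues by approximation: since $\|F\|_{h,T_j}\leq C$ forces $(2A/3)^{|X|_j}\|F(X)\|_{h,T_j(X)}\to 0$ as $|X|_j\to\infty$, one truncates to $|X|_j\leq M$ (the tail is then uniformly $\leq\delta$ in the target norm by Lemma~\ref{lemma:linearity_of_expectation}), and for the remaining polymers --- of which, \emph{by translation invariance}, there are only finitely many distinct ones --- appeals to Lemma~\ref{lemma:stability_of_expectation_singleX} as a black box. Your route instead carries the explicit quantitative estimate \eqref{eq:stability_of_expectiation_local_conclusion} through the supremum over $X$, observing that $2^{|X|_j}A^{-|X|_j}\cdot(A/3)^{|X|_j}=(2/3)^{|X|_j}$ absorbs the linear-in-$|X|_j$ growth of $\sum_{z\in X^*}$. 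This is more direct and, notably, does not use the translation-invariance hypothesis at all; the paper's approach is more modular in that it decouples the single-$X$ continuity from the large-set control.

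One small slip: $\|g^z\|_{C_j^2(X^*)}=\|\delta_z\|_{C_j^2(X^*)}$ is $O(L^{2j})$, not $O(L^{-2j})$, since $\nabla_j^2\delta_z$ scales like $L^{2j}$. This is harmless --- all you need is a bound independent of $X$ and $s$. A more substantive point is that your displayed inequality has $\|D^2F(X)\|_{2,T_j(X)}$ on the right, but the $\|\cdot\|_{h,T_{j+1}(\bar X)}$-norm on the left involves \emph{all} $\varphi'$-derivatives of $\E_s[F]-\E_{s'}[F]$, so after the identity you are really controlling $\sum_n\tfrac{h^n}{n!}\|D^{n+2}F\|_{n+2}$; invoking \eqref{eq:linearity_of_expectation1.1} with $n=2$ does not quite deliver this. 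The paper's bound \eqref{eq:stability_of_expectiation_local_conclusion} is stated the same way, so this is a shared subtlety rather than a defect specific to your route.
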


\begin{proof}
  We first claim that any scale-$j$ polymer activity $F$ with $\|F\|_{h,T_j}\leq C$ can be approximated by polymer activities that are supported on polymers consisting of a bounded number of blocks.
  Indeed,
  $\|F\|_{h,T_j} = \sup_{X \in\cP_j^c} A^{|X|_j} \norm{F(X)}_{h,T_j(X)} \leq C$
  implies that
  $(2A/3)^{|X|_j} \norm{F(X)}_{h, T_j (X)} \rightarrow 0$ as $|X|_j \rightarrow \infty$.
  More precisely, for any $\delta >0$, there exists $M>0$ only depending on $C$ such that
\begin{equation}
  \sup_{X\in \cP_j^c} (2A /3)^{|X|_j} \norm{F(X) 1_{|X|_j \leq M} - F(X)}_{h, T_j (X)} \leq \delta .
\end{equation}
By Lemma~\ref{lemma:linearity_of_expectation}, then also
\begin{equation} \label{eq:stability_of_expectation_conclusion}
  \sup_{X\in \cP^c_j} (A /3)^{|X|_{j}} \norm{\E_{\Gamma_{j+1}(s)} [F(X,\cdot+\zeta) 1_{|X|_j \leq M} - F(X,\cdot+\zeta)]}_{h, T_{j+1} (\bar X)} \leq \delta .
\end{equation}
Since $\E_{\Gamma_{j+1}(s)} F(X,\cdot+\zeta)1_{|X|_j\leq M}$
is continuous in $s$ by Lemma~\ref{lemma:stability_of_expectation_singleX}
uniformly {in $X \in \cP^c_j$ and $F$ with $\norm{F(X)}_{h,T_j(X)} \leq C$}
(by translation invariance there are only a bounded number of polymers $X$ with $|X|_j\leq M$ to consider), the claim follows.
For the case \eqref{eq:stability_of_expectation1_j+1}, the conclusion follows from the same argument and \eqref{eq:stability_of_expectation_conclusion} replaced by
\begin{align}
  \sup_{X\in \cP^c_{j+1}} (3^{-1} 2^{-L^2 + 1} A)^{|X|_{j+1}} \norm{\E_{\Gamma_{j+1}(s)} [F(X,\cdot+\zeta) 1_{|X|_j \leq M} - F(X,\cdot+\zeta)]}_{h, T_{j+1} (X)} \leq \delta 
\end{align}
because $\Eplus[G_j (X, \zeta)] \leq 2^{|X|_j} = 2^{L^2 |X|_{j+1}}$.
\end{proof}

We begin with the continuity of the maps $\cL_{j+1}$. To make their $s$-dependence explicit
we write $\cL_{j+1}^s$ for $\cL_{j+1}$ defined with $\Eplus = \E_{\Gamma_{j+1}(s)}$.

\begin{lemma} \label{lemma:continuity_of_L_in_s}
Under the assumptions of Theorem~\ref{thm:local_part_of_K_j+1} and $s,s' \in [-\epsilon_s \theta_J, \epsilon_s \theta_J]$, we have
\begin{equation}
\lim_{s' \rightarrow s} \norm{\cL_{j+1}^{s} (K_j) - \cL_{j+1}^{s'} (K_j) }_{ \Omega_{j+1}^K} = 0
\end{equation}
and the limit is uniform in $\Lambda_N$.
\end{lemma}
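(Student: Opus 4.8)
The plan is to use the explicit expression \eqref{eq:L_j+1_decomposition} for $\cL_{j+1}$, which decomposes it into a sum over small sets $Y$ of terms $(1-\Loc_Y)\Eplus[K_j(Y,\cdot+\zeta)]$ and a reblocked large-set contribution $\mathbb{S}(\Eplus[K_j 1_{Y\notin \cS_j}])$. The key point is that all $s$-dependence enters only through the fluctuation expectation $\Eplus = \E_{\Gamma_{j+1}(s)}$, and we have already established continuity of this expectation in $s$: Lemma~\ref{lemma:stability_of_expectation} for the large-set part, and continuity of $\Loc_{X,B}\Eplus$ in $s$ (with respect to the relevant norms) in Proposition~\ref{prop:Loc-contract} for the localised part. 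So the strategy is simply to combine these two ingredients, term by term in \eqref{eq:L_j+1_decomposition}, taking care that the resulting bounds are summable over $X\in\cP_{j+1}^c$ so that convergence holds in the $\norm{\cdot}_{h,T_{j+1}}$-norm and not merely pointwise in $X$.

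First I would treat the large-set term $\mathbb{S}(\Eplus[K_j 1_{Y\notin\cS_j}])$. For this, apply Lemma~\ref{lemma:stability_of_expectation} (the statement \eqref{eq:stability_of_expectation1}) to the polymer activity $F = K_j 1_{Y\notin\cS_j}$, which satisfies $\norm{F}_{h,T_j}\leq \norm{K_j}_{\Omega_j^K}$, is invariant under translations (indeed under lattice symmetries), and periodic as required. This gives that $\E_{\Gamma_{j+1}(s')}[K_j(Y,\cdot+\zeta)1_{Y\notin\cS_j}] \to \E_{\Gamma_{j+1}(s)}[K_j(Y,\cdot+\zeta)1_{Y\notin\cS_j}]$ in the sense that, after the reblocking $\mathbb{S}$ (which by \eqref{eq:setsizes_largeconnected} and the argument in the proof of Proposition~\ref{prop:largeset_contraction-v2} only improves the decay in $|X|_{j+1}$), the difference tends to $0$ in $\norm{\cdot}_{h,T_{j+1}}$, uniformly in $K_j$ with $\norm{K_j}_{\Omega_j^K}$ bounded and uniformly in $\Lambda_N$.

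Next I would treat the small-set term $\sum_{Y:\bar Y=X} 1_{Y\in\cS_j}(1-\Loc_Y)\Eplus[K_j(Y,\cdot+\zeta)]$. Write $1-\Loc_Y = 1-\sum_{B\in\cB_j(Y)}\Loc_{Y,B}$ using \eqref{eq:Loc_decomp}. The identity operator applied to $\Eplus$ is continuous in $s$ by Lemma~\ref{lemma:stability_of_expectation_singleX} (applied to $F = K_j(Y)$ with the uniform bound $\norm{K_j(Y)}_{h,T_j(Y)}\leq A^{-|Y|_j}\norm{K_j}_{\Omega_j^K}$); and each $\Loc_{Y,B}\Eplus[K_j(Y,\cdot+\zeta)]$ is continuous in $s$ in the $\norm{\cdot}_{h,T_j(Y,\varphi')}$-norm, hence a fortiori in $\norm{\cdot}_{h,T_{j+1}}$ via \eqref{e:norm-monot}, by the last assertion of Proposition~\ref{prop:Loc-contract}. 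Since for fixed $X$ there are only $O(L^d)$ small sets $Y$ with $\bar Y=X$ and only $O(L^d)$ blocks $B\in\cB_j(Y)$, the small-set contribution to $\cL_{j+1}^{s}(K_j)(X)-\cL_{j+1}^{s'}(K_j)(X)$ is a bounded combination of terms each of which converges to $0$; moreover the $A^{-|Y|_j}$ prefactor (together with $|Y|_j \geq |X|_{j+1}$ for $\bar Y=X$) supplies decay in $|X|_{j+1}$ making the convergence uniform in $X\in\cP_{j+1}^c$ after multiplying by $A^{|X|_{j+1}}$, and uniform in $\Lambda_N$ because all of the cited lemmas are uniform in $\Lambda_N$.

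The main obstacle is bookkeeping rather than anything conceptual: one must verify that the decay factors $A^{-|Y|_j}$ (small-set part) and the improved decay $A^{-(1+\eta)|X|_{j+1}}$ from reblocking (large-set part) are enough to upgrade the pointwise-in-$X$ continuity supplied by Lemmas~\ref{lemma:stability_of_expectation_singleX}--\ref{lemma:stability_of_expectation} and Proposition~\ref{prop:Loc-contract} to continuity in the full $\norm{\cdot}_{h,T_{j+1}}=\sup_X A^{|X|_{j+1}}\norm{\cdot}_{h,T_{j+1}(X)}$-norm; this is exactly the role played by the exponents $A/3$ and $2A/(3\cdot 2^{L^2})$ appearing in the statement of Lemma~\ref{lemma:stability_of_expectation}, and one checks that, having fixed $A\geq A_0(L)$ as in Theorem~\ref{thm:local_part_of_K_j+1}, these are comfortably larger than $1$ so that the geometric sums over $X$ (or over $Y$ with $\bar Y=X$, followed by $X$) converge. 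Combining the two parts via the triangle inequality yields the claim.
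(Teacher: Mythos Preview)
Your proposal is correct and follows essentially the same approach as the paper: split via \eqref{eq:L_j+1_decomposition} into the small-set and reblocked large-set contributions, handle the former with Lemma~\ref{lemma:stability_of_expectation_singleX} and the continuity statement in Proposition~\ref{prop:Loc-contract}, and the latter with Lemma~\ref{lemma:stability_of_expectation} combined with a reblocking estimate (the paper uses Lemma~\ref{lemma:setsizes_2}, you cite the argument of Proposition~\ref{prop:largeset_contraction-v2}, which amounts to the same thing). One small simplification the paper makes for the small-set part: since only $X\in\cS_{j+1}$ contribute there, the supremum over $X$ is, by translation invariance, a finite maximum over shapes, so no $A$-decay argument is needed.
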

\begin{proof}
By \eqref{eq:L_j+1_decomposition}, for $X \in \cP^c_{j+1}$,
 \begin{equation}
 \cL_{j+1}^s (K_j) (X, \varphi') = \cL_{j+1}^{s} (K_j 1_{Y\in \cS_j}) (X,\varphi') + \mathbb{S} \big[ \E_{\Gamma_{j+1} (s)} [ K_j 1_{Y \not\in \cS_j}   ] \big] (X, \varphi')
\end{equation}
where $\cL_{j+1}^{s} (K_j 1_{Y\in \cS_j})$ is generated by $K_j(Y)$ for $Y\in \cS_j$ and we recall the reblocking operator  $\bbS$ from \eqref{eq:reblocking_operator_definition}.
Since by translation invariance the norm effectively only uses a bounded number of $Y \in\cS_{j}$,
Lemma~\ref{lemma:stability_of_expectation_singleX}
and the continuity statement of Proposition~\ref{prop:Loc-contract}
directly imply the continuity of $\cL_{j+1}^{(\cS_{j}), s} (K_j)$ in $s$, uniformly in $\Lambda_N$.
Concerning the continuity of the second term,  \eqref{eq:stability_of_expectation1} shows that
\begin{equation}
y(s,s') :=\sup_{Y \in \cP_j^c} (A/3)^{|Y|_j} \norm{ \E_{\Gamma_{j+1} (s)} [ K_j 1_{Y \not\in \cS_j} (Y, \cdot + \zeta)] -\E_{\Gamma_{j+1} (s')} [ K_j 1_{Y \not\in \cS_j} (Y, \cdot + \zeta) ]  }_{h, T_{j+1} (\bar{Y}) }
\end{equation}
tends to 0 as $s'\rightarrow s$ and
\begin{equation}
\norm{ \mathbb{S} \big[ \big(\E_{\Gamma_{j+1} (s)} - \E_{\Gamma_{j+1} (s')} \big) [ K_j 1_{Y \not\in \cS_j}  ] \big] (X)  }_{h, T_{j+1} (X)} \leq \sum_{Y\in \cP_j }^{\bar{Y}=X} 1_{Y \in \cP_j^c \backslash \cS_{j}} (A/3)^{-|Y|_j} y(s,s')^{|\operatorname{Comp}_j (Y)|}.
\end{equation}
But then Lemma~\ref{lemma:setsizes_2} directly implies, whenever $y(s,s') \leq (A/3)^{-8}$,
\begin{equation}
\sum_{Y\in \cP_j }^{\bar{Y}=X} 1_{Y \in \cP_j^c \backslash \cS_{j}} (A/3)^{-|Y|_j} y(s,s')^{|\operatorname{Comp}_j (Y)|} \leq (eL^2 (A/3)^{-(1+2\eta)/(1+\eta)} )^{|X|_{j+1}} y(s,s') .
\end{equation}
By setting $eL^2 (A/3)^{-(1+2\eta)/(1+\eta)} \leq A^{-1}$, 
we have that $\mathbb{S} \big[ \E_{\Gamma_{j+1} (s)} [ K_j 1_{Y \not\in \cS_j}  ] \big]$ is continuous in $s$ in a way that is uniform in $\Lambda_N$.
\end{proof}

In the definition of the maps $\cM_{j+1}$, there are two sources of dependence on $s$,
the first one coming from $\cE_{j+1}$, $\bar{U}_{j+1}$, $\cE K_j$ and $J_j$, and the second one coming from the expectation
$\Eplus = \E_{\Gamma_{j+1}(s)}$ written explicitly in \eqref{eq:expression_for_K_j+1}.
Concerning the first dependence,
by the continuity statement of Proposition~\ref{prop:Loc-contract} and Theorem~\ref{thm:H_j_E_j_estimate},
we have that
\begin{equation}
\bar{\mathfrak{K}}_j (\omega_j) = (\cE_{j+1} |X|,  U_j,  \bar{U}_{j+1} , K_j,  \bar{K}_j,  \cE K_j,  J_j  ) (\omega_j)
\end{equation}
is continuous in the implicit parameter $s$, so if we can show that $\mathfrak{M}_{j+1}^{(k)} (\mathfrak{K}_j)$ depends `continuously' on $\mathfrak{K}_j$, then the dependence on $s$ coming from the first source is continuous. 
Indeed, this will be
shown in the following corollary.
 For given $\eta >0$, define ${\Omega}_{j, \eta}^{\mathfrak{K}}$ 
to be the linear space of coordinates $(\cE_{j+1} |X|, U_j, \bar{U}_{j+1} , K_j, \bar{K}_j, \cE K_j, J_j  )$ where the following norm takes finite value:
\begin{equation}
\begin{split}
& \norm{ (\cE_{j+1} |X|, U_j, \bar{U}_{j+1} , K_j, \bar{K}_j, \cE K_j, J_j  )  }_{j, \eta, \mathfrak{K}} \\
& = \max \Big\{ L^{2j} |\cE_{j+1}| , \norm{U_j}_{\Omega_j^{U}},  \norm{  \bar{U}_{j+1} + \cE_{j+1} |X|  }_{\Omega_j^U} , \norm{K_j}_{\Omega_j^K}, \\
 & \qquad \qquad \sup_{X\in \cP_{j+1}^c, \; \varphi \in \R^{\Lambda_N}} A^{ (1+ \eta) |X|_{j+1}} G_{j} (X, \varphi)^{-1} \norm{\bar{K}_j (X, \varphi)}_{h, T_j (X, \varphi)},  \\
 & \qquad \qquad \sup_{X\in \cP_{j+1}^c, \; \varphi \in \R^{\Lambda_N}} A^{ (1+ \eta) |X|_{j+1}} e^{-c_w \kappa_L w_j (X, \varphi)^2} \norm{\cE K_j (X, \varphi)}_{h, T_j (X, \varphi)}, \\
 & \qquad \qquad \sup_{Z\in \cS_{j+1}, \; B\in \cB_{j+1} (Z)} \sup_{\varphi \in \R^{\Lambda_N}} A e^{-c_w \kappa_L w_j (B, \varphi)^2} \norm{J_j (B, Z, \varphi)}_{h, T_j (B, \varphi)} 
\Big \} .
\end{split} 
\end{equation}
Then $(\Omega_{j, \eta}^{\mathfrak{K}},  \norm{\cdot}_{j, \eta, \mathfrak{K}}  )$ forms a normed space.
Note that this norm is essentially defined by the conditions in Definition~\ref{def:derivativebds}.

\begin{corollary} \label{cor:K_j+1_continuity_in_s_hidden_dependence}
  Let $\eta, \delta >0$ and $B_a^{\mathfrak{K}} = \{ x\in \Omega_{j, \eta}^{\mathfrak{K}} : \norm{x}_{j,  \eta, \mathfrak{K}} \leq a  \}$.
  Then there exists $a \equiv a (\delta, \beta,L) > 0$ (independent of $j$ and $N$)
  such that the identity map $\id |_{B_{a}^{\mathfrak{K}}}$ is in $\cX_j^{\mathfrak{K}} (B_{a}^{\mathfrak{K}})$. 
  In particular, if we set $\mathfrak{K}_j (x) = x$ for $x \in B_{a}^{\mathfrak{K}}$,
  then each $\mathfrak{M}_{j+1}^{(k)} (\mathfrak{K}_j (x))$ ($k = 1,2,3,4$) is differentiable in $x \in B_{a}^{\mathfrak{K}}$ with the derivative uniformly bounded in $j$ and $N$.
\end{corollary}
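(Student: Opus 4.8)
\textbf{Proof proposal for Corollary~\ref{cor:K_j+1_continuity_in_s_hidden_dependence}.}
The plan is to verify directly that the identity map $\id|_{B_a^{\mathfrak{K}}}$ satisfies all the ``building block estimates'' listed in Definition~\ref{def:derivativebds}, with the parameters $\delta$, $\eta$ fixed and with $a=a(\delta,\beta,L)$ sufficiently small. The key observation is that the norm $\norm{\cdot}_{j,\eta,\mathfrak{K}}$ was precisely designed so that membership in $B_a^{\mathfrak{K}}$ encodes exactly the inequalities \eqref{eq:Ujbound_1}--\eqref{eq:derivatives5-v2} (up to constants and up to replacing $\norm{\omega_j}_{\Omega_j}$ by $\norm{x}_{j,\eta,\mathfrak{K}} = |x|$). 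First I would treat the algebraic/polynomial estimates \eqref{eq:Ujbound_1}--\eqref{eq:Ujbound}: here $\mathfrak{U}\in\{U_j,\bar{U}_{j+1}\}$ is linear in $x$, so $\norm{\mathfrak{U}(B,\varphi)}_{h,T_j(B,\varphi)}$ is bounded by $\norm{x}_{j,\eta,\mathfrak{K}}$ times the same combination $(1+\delta c_w\kappa_L w_j(B,\varphi)^2)$ that appears in the proof of Lemma~\ref{lemma:Ujbound} (one uses Lemma~\ref{lem:W_norm} for the $W$-part and \eqref{eq:example1-1} for the $\tfrac12 s|\nabla\varphi|^2$ part). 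The Taylor-remainder bound \eqref{eq:Ujbound} then follows exactly as in Lemma~\ref{lemma:Ujbound} via submultiplicativity \eqref{eq:prodprop} and $1+t\leq e^t$, provided $a\cdot C(\delta,\beta,L)\leq 1$, which dictates the choice of $a$.

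Next I would verify the derivative bounds \eqref{eq:derivatives1-v2}--\eqref{eq:derivatives5-v2}. For $e^{\mathfrak{U}'}$ and $De^{\mathfrak{U}'}$, $D^2e^{\mathfrak{U}'}$: since $\mathfrak{U}'$ is \emph{linear} in $x$, the map $x\mapsto e^{\mathfrak{U}'(x)}$ is trivially (infinitely) Fréchet-differentiable with $De^{\mathfrak{U}'}(\dot{x})=e^{\mathfrak{U}'}\mathfrak{U}'(\dot{x})$ and $D^2e^{\mathfrak{U}'}(\dot{x},\ddot{x})=e^{\mathfrak{U}'}\mathfrak{U}'(\dot{x})\mathfrak{U}'(\ddot{x})$, exactly as in Lemma~\ref{lemma:derivative_of_components_v2_1}; the bounds then follow from \eqref{eq:bound_exp-U} and \eqref{eq:Ujbound_1}, the quadratic-form integrability being supplied by Lemma~\ref{lemma:strong_regulator}. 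The bounds \eqref{eq:derivatives3-v2} ($DJ_j$) and \eqref{eq:derivatives5-v2} ($D\cE K_j$) hold because $J_j$ and $\cE K_j$ are \emph{linear} in $x$ on $B_a^{\mathfrak{K}}$, so their derivatives equal themselves, and the required bounds are literally two of the defining terms of $\norm{\cdot}_{j,\eta,\mathfrak{K}}$ (with the $A^{-1}$, resp.\ $A^{-(1+\eta)|Z|_{j+1}}$ decay built in). Similarly \eqref{eq:derivatives4-v2} ($D\bar{K}_j$): on $B_a^{\mathfrak{K}}$ the component $\bar{K}_j$ is itself a coordinate (not a derived object), so it is linear in $x$ and $D\bar{K}_j=\bar{K}_j$, and the bound is again one of the defining terms of the norm. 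Finally the auxiliary bound \eqref{eq:Q_j_bound} on $Q_j$ follows from \eqref{e:Loc-bounded} in Proposition~\ref{prop:Loc-contract} together with $\norm{K_j(Y)}_{h,T_j(Y)}\leq A^{-|Y|_j}\norm{K_j}_{\Omega_j^K}\leq |x|$, exactly as in Lemma~\ref{lemma:derivative_of_components_v2_1}. Having assembled these, $\id|_{B_a^{\mathfrak{K}}}\in\cX_j^{\mathfrak{K}}(B_a^{\mathfrak{K}})$, and the ``in particular'' clause is then immediate from Lemma~\ref{lemma:bound_on_M^k} applied with $\mathbb{Y}=B_a^{\mathfrak{K}}$ and $\mathfrak{K}_j=\id$: one gets differentiability of each $\MM_{j+1}^{(k)}$ on $\{|x|\leq\epsilon_{nl}\}$ with $\norm{D\MM_{j+1}^{(k)}(\mathfrak{K}_j(x))}_{h,T_{j+1}}\leq C_2(A,L)|x|$, a bound manifestly uniform in $j$ and $N$ since all constants there depend only on $A,L,\beta$.

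The only genuinely delicate point — and the one I would treat most carefully — is \emph{uniformity in $j$ and $N$}, and more specifically the fact that on $B_a^{\mathfrak{K}}$ the objects $\bar{K}_j$, $\cE K_j$, $J_j$, $\bar{U}_{j+1}$, $\cE_{j+1}|X|$ are taken as \emph{free coordinates} rather than as the functions of $\omega_j=(U_j,K_j)$ produced by the renormalisation group map. This is exactly the extra flexibility flagged in the text below Definition~\ref{def:derivativebds} (``our main application uses the case when $\bbX$ is a closed ball of $\Omega_j$, but it will be useful in the proof of the continuity in $s$ to have the additional flexibility of the space $\bbX$''). One must check that the norm $\norm{\cdot}_{j,\eta,\mathfrak{K}}$ controls \emph{all} the quantities that enter Definition~\ref{def:derivativebds} with constants independent of $j,N$ — in particular that the factor $L^{2j}$ in $L^{2j}|\cE_{j+1}|$ correctly cancels the scale-dependence in \eqref{eq:Ujbound_1}/\eqref{eq:E_j+1_bound}, and that $G_j(X,\varphi)^{-1}$ and $e^{-c_w\kappa_L w_j(X,\varphi)^2}$ weights in the definition of the norm match the $\cG_j$-weights demanded in \eqref{eq:derivatives3-v2}--\eqref{eq:derivatives5-v2}. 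Once this bookkeeping is in place the argument is essentially mechanical, since $j$ and $N$ then enter only through these already-normalised quantities and through Lemma~\ref{lemma:bound_on_M^k}, whose constants are $j$- and $N$-independent. I would therefore expect the main obstacle to be not any single estimate but the careful matching of weights and scale-factors between $\norm{\cdot}_{j,\eta,\mathfrak{K}}$ and the hypotheses of Definition~\ref{def:derivativebds}.
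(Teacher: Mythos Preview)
Your proposal is correct and follows the same approach as the paper, though you spell out in detail what the paper dismisses in one line: the paper's entire proof of the first statement is ``obvious because $\id : \Omega_{j, \eta}^{\mathfrak{K}} \rightarrow \Omega_{j, \eta}^{\mathfrak{K}}$ is a linear function with norm 1'', and then the second statement is exactly your invocation of Lemma~\ref{lemma:bound_on_M^k} with $(\mathbb{Y}, |\cdot|) = (B_a^{\mathfrak{K}}, \norm{\cdot}_{j,\eta,\mathfrak{K}})$. Your unpacking of why the identity satisfies \eqref{eq:Ujbound_1}--\eqref{eq:derivatives5-v2} is accurate (and arguably more honest than the paper's ``obvious''): the norm $\norm{\cdot}_{j,\eta,\mathfrak{K}}$ was indeed designed so that the last three of its defining terms are precisely the right-hand sides of \eqref{eq:derivatives3-v2}--\eqref{eq:derivatives5-v2} for a linear map, while for \eqref{eq:Ujbound_1}--\eqref{eq:derivatives2-v2} one repeats the computations of Lemmas~\ref{lemma:Ujbound} and~\ref{lemma:derivative_of_components_v2_1} verbatim, now with $\norm{\omega_j}_{\Omega_j}$ replaced by $|x|$. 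The verification of \eqref{eq:Q_j_bound} you include is harmless but unnecessary, since $Q_j$ is not one of the coordinates of $\mathfrak{K}_j$ and Definition~\ref{def:derivativebds} does not require it.
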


\begin{proof}
The first statement is obvious because $\id : \Omega_{j, \eta}^{\mathfrak{K}} \rightarrow \Omega_{j, \eta}^{\mathfrak{K}}$ is a linear function with norm 1.  For the second statement, we just need to apply Lemma~\ref{lemma:bound_on_M^k} with $(\mathbb{Y}, |\cdot|) = (B_a^{\mathfrak{K}} ,  \norm{\cdot}_{j, \eta, \mathfrak{K}})$.
\end{proof}

Note that by  Lemma~\ref{lemma:Ujbound-summary}, there exist $\epsilon (\delta, \beta, L)$ and $C(\delta, \beta, L)$ such that  $\norm{(U_j, K_j)}_{\Omega_j} \leq \epsilon(\delta,\beta, L)$ gives $\norm{\bar{\mathfrak{K}}_j (U_j, K_j)}_{j, \eta, \mathfrak{K}} \leq C(\delta, \beta, L) \epsilon (\delta, \beta, L)$.  
So if we set $\norm{(U_j, K_j)}_{\Omega_j} \leq \epsilon (\delta, \beta,L) \leq a (\delta, \beta,L) / C(\delta, \beta,L)$, then this corollary implies that each $\mathfrak{M}_{j+1}^{(k)} (\bar{\mathfrak{K}}_j (U_j, K_j))$ is continuous in $s$ coming from the first source described above.

For the second source of $s$-dependence of $\cM_{j+1}$, we will make the dependence due to $\Eplus = \E_{\Gamma_{j+1} (s)}$ visible in 
\eqref{eq:M_decomp} and \eqref{eq:M^1_j+1}--\eqref{eq:M^4_j+1} by writing $\cM_{j+1}^{s}$ and $\mathfrak{M}_{j+1}^{(k),s}$
for $\cM_{j+1}$ and $\mathfrak{M}_{j+1}^{(k)}$ evaluated by taking the  expectation over $\zeta \sim \cN (0, \Gamma_{j+1} (s))$.
This dependence will be studied in the next lemma.

\begin{lemma} \label{lemma:continuity_of_M_in_s}
Under the assumptions of Theorem~\ref{thm:local_part_of_K_j+1} and $s,s' \in [-\epsilon_s \theta_J, \epsilon_s \theta_J]$, we have
\begin{equation}
\lim_{s'\rightarrow s} \norm{\cM^{s}_{j+1} (U_j, K_j) - \cM^{s'}_{j+1} (U_j, K_j)   }_{\Omega_{j+1}^K} = 0
\end{equation}
and the limit is uniform in $\Lambda_N$.
\end{lemma}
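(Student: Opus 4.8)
The plan is to reduce the continuity of $\cM^s_{j+1}$ in $s$ to the continuity of the finitely many Gaussian expectations that enter its definition, exactly as was done for $\cL_{j+1}$ in Lemma~\ref{lemma:continuity_of_L_in_s}, combining the two separate sources of $s$-dependence identified above. First I would recall the decomposition $\cM^s_{j+1}(U_j,K_j) = \sum_{k=1}^4 \MM_{j+1}^{(k),s}(\bar{\mathfrak K}_j(\omega_j))$ from \eqref{eq:M_decomp} and write $\cM_{j+1}^{s'} - \cM_{j+1}^{s} = (\cM_{j+1}^{s'}(\bar{\mathfrak K}_j^{s'}) - \cM_{j+1}^{s'}(\bar{\mathfrak K}_j^{s})) + (\cM_{j+1}^{s'}(\bar{\mathfrak K}_j^{s}) - \cM_{j+1}^{s}(\bar{\mathfrak K}_j^{s}))$, where $\bar{\mathfrak K}_j^{s}$ denotes the coordinate tuple evaluated with the implicit parameter $s$. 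The first difference is controlled by Corollary~\ref{cor:K_j+1_continuity_in_s_hidden_dependence}: since $s\mapsto \bar{\mathfrak K}_j(\omega_j)$ is continuous in $\|\cdot\|_{j,\eta,\mathfrak K}$ (by the continuity statements in Proposition~\ref{prop:Loc-contract} and Theorem~\ref{thm:H_j_E_j_estimate}, together with Proposition~\ref{prop:decomp}(ii)), and since each $\MM_{j+1}^{(k)}$ is Fréchet-differentiable in the $\mathfrak K$-variable with derivative bounded uniformly in $j,N$ on $B_a^{\mathfrak K}$, the mean value inequality gives $\|\cM_{j+1}^{s'}(\bar{\mathfrak K}_j^{s'}) - \cM_{j+1}^{s'}(\bar{\mathfrak K}_j^{s})\|_{\Omega_{j+1}^K} \leq C(\beta,A,L)\,\|\bar{\mathfrak K}_j^{s'} - \bar{\mathfrak K}_j^{s}\|_{j,\eta,\mathfrak K} \to 0$ as $s'\to s$, uniformly in $N$.

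For the second difference, the only remaining $s$-dependence is through the Gaussian measure $\Eplus = \E_{\Gamma_{j+1}(s)}$ appearing explicitly in \eqref{eq:expression_for_K_j+1} (equivalently in \eqref{eq:M^1_j+1}--\eqref{eq:M^4_j+1}). The approach here is to bound this difference using Lemma~\ref{lemma:stability_of_expectation}, which quantifies continuity in $s$ of $X \mapsto \E_{\Gamma_{j+1}(s)}[F(X,\cdot+\zeta)]$ with the crucial feature that the bound is summable over polymers (the factor $(A/3)^{|X|_j}$, or its scale-$(j+1)$ analogue under \eqref{eq:stability_of_expectation1_j+1}). Concretely, in each $\MM_{j+1}^{(k),s}$, the expectation acts on polymer activities of the form $(e^{U_j}-e^{\bar U_{j+1}})^{X_0}(\bar K_j - \cE K_j)^{[X_1]}$ (times $s$-independent factors outside the expectation and the $J_j$'s, which we have already handled as part of the first source); I would apply the identity $\E_{\Gamma_{j+1}(s)}[\,\cdot\,] - \E_{\Gamma_{j+1}(s')}[\,\cdot\,] = \frac12 \int_{s'}^s ds''\sum_{x,y}\partial_{s''}\Gamma_{j+1}(x,y)\,\E_{\Gamma_{j+1}(s'')}[D^2(\cdot)]$ used in the proof of Lemma~\ref{lemma:stability_of_expectation_singleX}, inserted into the combinatorial sums $\sum^*$ over $X_0,X_1,Z,(B_{Z''})$, and reuse verbatim the polymer-counting and regulator estimates from the proofs of Lemmas~\ref{lemma:bound_on_M^k} and~\ref{lemma:N_j_derivative_primitive_bound} to absorb the $2^{|X|_j}$ combinatorial factors against the $A^{-(1+\eta)|X|_{j+1}}$ decay. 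The upshot is that $\|\cM_{j+1}^{s'}(\bar{\mathfrak K}_j^s) - \cM_{j+1}^{s}(\bar{\mathfrak K}_j^s)\|_{\Omega_{j+1}^K} \leq C(\beta,A,L)\,|s-s'|\, \|(U_j,K_j)\|_{\Omega_j}$, again uniformly in $N$ because the per-polymer bounds are $N$-independent and the geometric sums converge for $A$ large.

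Adding the two contributions and taking $s'\to s$ yields the claim. The main obstacle is the second difference: although the strategy mirrors that for $\cL_{j+1}$, the expressions $\MM_{j+1}^{(1),s}$ and $\MM_{j+1}^{(2),s}$ involve products over many polymers inside a single expectation (not a single factor), so one cannot directly quote Lemma~\ref{lemma:stability_of_expectation} but must reprove the telescoping/derivative estimate in the product setting — essentially re-running the Chain-rule-type argument of Proposition~\ref{prop:differentiability_of_product_v2} with ``derivative in $s$ of the covariance'' playing the role of the derivative, tracking that the resulting $D^2$ acting on one factor costs only an $h^{-2}\|F(X)\|$ and a geometric-factor loss that is absorbed by the spare decay $\eta$. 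A clean way to organise this is to treat $\partial_{s''}\Gamma_{j+1}$ as a fixed finite-rank perturbation (it is a sum over $z\in X^*$ of rank-two contributions $D^2(\cdot; f^z, g^z)$ with $f^z,g^z$ bounded in $C^2_j$ uniformly in $|s|\le\epsilon_s\theta_J$, exactly as in \eqref{eq:stability_of_expectiation_local_conclusion}), so that the estimates reduce to the ones already established; the equicontinuity in $N$ then follows because every constant in these estimates is $N$-independent and every polymer sum is a convergent geometric series. Finally, combining Lemmas~\ref{lemma:continuity_of_L_in_s} and~\ref{lemma:continuity_of_M_in_s} with the decomposition $\cK_{j+1} = \cL_{j+1} + \cM_{j+1}$ gives Theorem~\ref{thm:local_part_of_K_j+1}(iii).
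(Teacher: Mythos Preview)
Your proposal is correct, but it differs from the paper's proof in two respects worth noting.

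First, a scope point: in the paper's convention (set up just before the lemma), the superscript $s$ in $\cM_{j+1}^{s}$ refers \emph{only} to the explicit expectation $\Eplus = \E_{\Gamma_{j+1}(s)}$ in \eqref{eq:M^1_j+1}--\eqref{eq:M^4_j+1}; the hidden dependence through $\bar{\mathfrak{K}}_j$ is handled separately by Corollary~\ref{cor:K_j+1_continuity_in_s_hidden_dependence} in the proof of Theorem~\ref{thm:local_part_of_K_j+1}(iii), not inside this lemma. Your first paragraph thus proves something the paper already has, but this is harmless.

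Second, for the explicit-expectation dependence the paper takes a different route from yours. You propose to differentiate the covariance directly (the identity from the proof of Lemma~\ref{lemma:stability_of_expectation_singleX}) inside the polymer sums, re-run the chain-rule machinery of Proposition~\ref{prop:differentiability_of_product_v2}, and extract a Lipschitz bound $C_j|s-s'|$. The paper instead bounds the full pre-expectation integrand $H_{-}(\bar{\mathfrak{K}}_j, X_0,X_1,Z,(B_{Z''}),\varphi',\zeta)$ once (using \eqref{eq:derivatives1-v2}--\eqref{eq:derivatives5-v2}), obtaining control by a regulator $G$ with $\Eplus[G] \leq 2^{|X|_j} G_{j+1}(X,\varphi')$, and then invokes the \emph{truncation} argument from the proof of Lemma~\ref{lemma:stability_of_expectation} (approximate by $|X|_{j+1}\leq M$, use single-$X$ continuity on the finitely many remaining polymers). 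This yields only continuity, not a Lipschitz bound, but it avoids tracking the factor $\sum_{z\in X^*}\|f^z\|_{C_j^2}\|g^z\|_{C_j^2}$ from \eqref{eq:stability_of_expectiation_local_conclusion}, which carries a $j$-dependent growth $\sim |X^*|\cdot L^{2j}$. Your approach absorbs this into the spare decay $A^{-\eta|X|_{j+1}}$ with a constant depending on $j$; since the lemma is at fixed $j$ with uniformity only in $N$, this is legitimate. The paper's route is more modular (it reuses Lemma~\ref{lemma:stability_of_expectation} as a black box on $H_-$); yours is more direct and quantitative.
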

\begin{proof}
Since we have \eqref{eq:M_decomp} and Lemma~\ref{lemma:Ujbound-summary}, we only have to verify
\begin{equation}
\lim_{s'\rightarrow s} \norm{\mathfrak{M}^{(k),s}_{j+1} (\bar{\mathfrak{K}}_j(\omega_j) ) - \mathfrak{M}^{(k),s'}_{j+1} (\bar{\mathfrak{K}}_j (\omega_j))  }_{\Omega_{j+1}^K} = 0
\end{equation}
for each $k\in \{1,2,3,4\}$ and the limit is uniform in $\Lambda_N$.
Define
\begin{equation}
H_{-} 
(\mathfrak{K}_j, X_0, X_1, Z, (B_{Z''}), \varphi', \zeta) = (e^{U_j} - e^{\bar{U}_{j+1}})^{X_0} (\bar{K}_j - \cE K_j)^{[X_1]} \prod_{Z'' \in \operatorname{Comp}_{j+1} (Z)} J_j (B_{Z''} , Z'')
\end{equation}
and, as in \eqref{eq:G_omega_X0_X1_Z_BZ},
\begin{equation}
H^s (\bar{\mathfrak{K}}_j X_0, X_1, Z, (B_{Z''}), \varphi' ) = \E_{\Gamma_{j+1} (s)} H_{-}
(\bar{\mathfrak{K}}_j X_0, X_1, Z, (B_{Z''}), \varphi', \zeta) .
\end{equation}
Expanding \eqref{eq:H_expansion}, i.e.,
\begin{equation}
\begin{split}
(e^{U_j} - e^{\bar{U}_{j+1}})^{X_0} (\bar{K}_j - \cE K_j)^{[X_1]} 
&= \sum_{Y_0, Y_1} (e^{U_j} -1)^{Y_0} (-e^{\bar{U}_{j+1}} + 1)^{X_0 \backslash Y_0} (\bar{K}_j)^{[Y_1]} (-\cE K_j)^{[X_1 \backslash Y_1]}
 \end{split}
\end{equation}
where $Y_0, Y_1$ run over $Y_0 \in \cP_{j+1} (X_0)$, $Y_1 \in \cP_{j+1} (Y_1)$, $Y_1 \not\sim X_1 \backslash Y_1$,
the bounds \eqref{eq:derivatives1-v2}--\eqref{eq:derivatives5-v2} imply, 
for $T = X_0 \cup X_1 \cup Z$,
\begin{equation}
\begin{split}
 \norm{ H_{-}
  (\bar{\mathfrak{K}}_j, X_0, & X_1, Z, (B_{Z''}), \varphi', \zeta) }_{h, T_j (T, \varphi')} \\
& \leq  \sum_{Y_0, Y_1} \big( C(A,L) \norm{\omega_j}_{\Omega_j} \big)^{\# (X_0, X_1, Z)} A^{- (1+ \eta) |X_0 \cup X_1 |_{j+1}   } G(X_0, Y_0, X_1, Y_1, Z, \varphi', \zeta)
\end{split}
\end{equation}
for some $\eta >0$ where
\begin{align}
& G(X_0, Y_0, X_1, Y_1, Z, \varphi', \zeta) = e^{c_w \kappa_L \big( w_j ( (X_0 \backslash Y_0) \cup (X_1 \backslash Y_1) \cup Z, \varphi' ) + w_j (Y_0, \varphi' + \zeta) \big)  } G_j (Y_1, \varphi' + \zeta).
\end{align}
Choosing $C(A,L) \norm{\omega_j}_{\Omega_j}^{1/4} \leq 1$ and $(C(A,L) \norm{\omega_j}_{\Omega_j})^{1/196} \leq A^{-(1+\eta)}$, since $49 |\operatorname{Comp}_{j+1} (Z)| \leq |\cup_{Z''} B^*_{Z''} |_{j+1}$, we have 
\begin{align}
\big(4 C(A,L) \norm{\omega_j}_{\Omega_j} \big)^{\# (X_0, X_1, Z)} A^{- (1+ \eta) |X_0 \cup X_1 |_{j+1}   } \leq 4^{-\# (X_0, X_1, Z)} \norm{\omega_j}_{\Omega_j}^{\frac{\# (X_0, X_1, Z)}{2}} A^{- (1+ \eta) |X|_{j+1}   }
\end{align}
where $X = X_0 \cup X_1 \cup (\cup_{Z''} B_{Z''}^*)$. Therefore
\begin{multline}
\norm{ H_{-} 
 (\bar{\mathfrak{K}}_j X_0,  X_1, Z, (B_{Z''}), \varphi', \zeta) }_{h, T_j (T, \varphi')} \\
\leq \norm{\omega_j}_{\Omega_j}^{\frac{\# (X_0, X_1, Z)}{2}} A^{- (1+ \eta) |X|_{j+1}   } \sup_{Y_0, Y_1} G(X_0, Y_0, X_1, Y_1, Z, \varphi', \zeta) . 
\end{multline}
since $H_- (\cdot, \varphi', \zeta)$ is a function of two field variables, Lemma~\ref{lemma:stability_of_expectation} does not apply directly.
Nevertheless, since $G$ serves the role of the regulator satisfying $$\Eplus[G(X_0, Y_0, X_1, Y_1, Z, \varphi', \zeta) ] \leq 2^{|X|_{j}} G_{j+1} (X, \varphi'),$$ the proof of \eqref{eq:stability_of_expectation2} shows that, defining
\begin{equation}
H^{s,s'} 
 (\bar{\mathfrak{K}}_j, X_0, X_1, Z, (B_{Z''}) , \varphi') = \Big( \E_{\Gamma_{j+1} (s)} - \E_{\Gamma_{j+1} (s')} \Big)  \big[ H_- 
 (\bar{\mathfrak{K}}_j, X_0, X_1, Z, (B_{Z''}) , \varphi', \zeta) \big],
\end{equation}
in the limit $s'\rightarrow s$, one has
\begin{equation}
|H^{s,s'} 
|_{j+1} := \sup_{T\in \cP_{j+1}} \Big( \frac{2}{3 \cdot 2^{L^2}} A^{1+\eta} \Big)^{|T|_{j+1}} \norm{ H^{s,s'} 
(\bar{\mathfrak{K}}_j, X_0, X_1, Z, (B_{Z''}) )  }_{h, T_{j+1} (T)} \rightarrow 0 ,
\end{equation}
In particular each $\norm{ H^{s,s'} (\bar{\mathfrak{K}}_j X_0, X_1, Z, (B_{Z''}) )  }_{h, T_{j+1} (T)}$ is finite. 
Hence
\begin{align}
& \norm{ (\mathfrak{M}^{(1), s}_{j+1} - \mathfrak{M}^{(1), s'}_{j+1})  (\bar{\mathfrak{K}}_j, X, \varphi') }_{h, T_{j+1} (X, \varphi')} \nnb 
& = \Big\| \sum_{X_0, X_1, Z, (B_{Z''})}^{\# (X_0, X_1, Z) \geq 2} F(\bar{\mathfrak{K}}_j,T, X, \varphi') H^{s,s'} (\bar{\mathfrak{K}}_j,X_0, X_1, Z, (B_{Z''}),  \varphi' ) \Big\|_{h, T_{j+1} (X, \varphi')} \nnb
& \leq \sum_{X_0, X_1, Z, (B_{Z''})}^{\# (X_0, X_1, Z) \geq 2} C^{|X|_{j+1}} e^{c_w \kappa_L w_j (X\backslash T, \varphi' )} |H^{s,s'}|_{j+1} \Big( \frac{2}{3 \cdot 2^{L^2}} A^{(1+\eta)} \Big)^{ - |X|_{j+1}} G_{j+1} (T, \varphi' ) \nnb
& \leq C^{|X|_{j+1}} |H^{s,s'}|_{j+1} 5^{|X|_{j+1}} \Big( \frac{2}{3 \cdot 2^{L^2}} A^{(1+\eta)} \Big)^{ - |X|_{j+1}} G_{j+1} (X, \varphi') ,
\end{align}
where $5$ in the last line is a combinatorial factor arising from choices of $X_0, X_1, Z$ and $(B_{Z''})$. Taking $A^{\eta} \geq 15 C 2^{L^2 -1}$, we see 
\begin{align}
\norm{ (\mathfrak{M}^{(1), s}_{j+1} - \mathfrak{M}^{(1), s'}_{j+1})  (\bar{\mathfrak{K}}_j (\omega_j) ) }_{ \Omega_{j+1}^K} \leq C |H^{s,s'}|_{j+1} \rightarrow 0 \;\; \text{as} \;\; s' \rightarrow s .
\end{align}
A similar but simpler computations shows the same for $\mathfrak{M}^{(2), s}_{j+1}$. The continuity of $\mathfrak{M}_{j+1}^{(3), s}$ in $s$ is implied directly by Lemma~\ref{lemma:stability_of_expectation_singleX} because it only allows the case $|X|_{j+1} =1$.

To see the same for $\mathfrak{M}_{j+1}^{(4), s}$, recall from \eqref{eq:DM_-^4_bound} and \eqref{eq:DM_-^4,2_bound} that
\begin{align}
\norm{M_-^{(4)} (\bar{\mathfrak{K}}_j(\omega_j), X, \varphi)  }_{h, T_j (X, \varphi')} \leq C(A, L) A^{-(1+\eta) |X|_{j+1}} G_j (X, \varphi' + \zeta) \norm{\omega_j}_{\Omega_j}^2
\end{align} 
for some $\eta >0$. Since $\mathfrak{M}_{j+1}^{(4), s,s'} = (\E_{\Gamma_{j+1}(s)} - \E_{\Gamma_{j+1}(s')} )M_{-}^{(4)}$, \eqref{eq:stability_of_expectation2} implies
\begin{align}
\lim_{s' \rightarrow s} \sup_{X\in \cP_{j+1}^c} \Big( \frac{2}{3 \cdot 2^{L^2}} A^{1+\eta} \Big)^{ - |X|_{j+1}} \norm{\mathfrak{M}_{j+1}^{(4), s,s'} (\bar{\mathfrak{K}}_j (\omega_j), X)}_{h, T_{j+1} (X)} =0.
\end{align}
Just taking $A^{\eta} \geq 3 \cdot 2^{L^2 -1}$, this implies continuity of $\mathfrak{M}_{j+1}^{(4), s}$ in $s$. To see the final remark of the lemma, notice that the limits $\lim_{s\rightarrow s'}$ are uniform in $\Lambda_N$ because the limit was uniform in Lemma~\ref{lemma:stability_of_expectation}.
\end{proof}

\begin{proof}[Proof of Theorem~\ref{thm:local_part_of_K_j+1},(iii)]
  The final continuity statement of $\cK_{j+1} = \cL_{j+1} + \cM_{j+1}$ is now a direct consequence of Lemma~\ref{lemma:continuity_of_L_in_s},
  Corollary~\ref{cor:K_j+1_continuity_in_s_hidden_dependence}, and Lemma~\ref{lemma:continuity_of_M_in_s}.
  Note that the equicontinuity of  $(\cK_{j+1}^{\Lambda_N})_N$ in $s$ follows from the fact
that the limits in the two previous lemmas are uniform in $\Lambda_N$ and that the Corollary yields an upper bound on the derivative that is uniform in $j$ and $N$. 
\end{proof}

\section{Stable manifold}
\label{sec:stable_manifold_theorem}

\subsection{Statement of result}

In Section~\ref{sec:rg_generic_step}, we defined a renormalisation group map
\begin{align}
\Phi^{\Lambda_N}_{j+1} : (s_j,z_j,K_j) \mapsto (\mathfrak{s}_{j+1} (s_j, K_j), \mathfrak{z}_{j+1} (z_j), \mathcal{K}^{\Lambda_N}_{j+1}(s_j, z_j, K_j)  ), \label{eq:rgflow_on_torus}
\end{align}
for $j+1 < N$, 
which by iteration constructs a renormalisation group flow $(s_j, z_j, K_j)_{j < N}$, defined by
\begin{equation}
  (s_{j+1},z_{j+1},K_{j+1}) = \Phi^{\Lambda_N}_{j+1} (s_j,z_j,K_j), \label{eq:abstract_RG_map}
\end{equation}
provided that $(s_j,z_j,K_j)$ remains in the domain of the renormalisation group maps.
Compared to the definition in Section~\ref{sec:rg_generic_step}, we have dropped the $E$-coordinate from the renormalisation
group map as it does not influence its dynamics and thus does not play a role in this section.

Our goal is now to show that for appropriate initial conditions $(s_0,z_0,K_0)$,
independent of $\Lambda_N$ (in the sense explained below),
the renormalisation group flow exists indefinitely.
Moreover, we will address the point that
our renormalisation group map actually depends on a parameter $s$
(mostly suppressed in our notation so far),
which we ultimately need to set equal to $s_0$
(see Lemma~\ref{lemma:reformulation}), but which has been arbitrary so far.
Thus a renormalisation group flow depends both on the parameter $s$ and the initial condition $(s_0,z_0,K_0)$, but we will show that it is possible to choose $s=s_0$.

Given a finite-range step distribution $J$,
recall the definition of the reference temperature $\betafree(J)$
from \eqref{eq:betafree_def}:
\begin{equation}
\betafree (J) = 8\pi v_{J}^2
\end{equation}
and recall that $\gamma$ is given by Proposition~\ref{prop:decomp} (see also below \eqref{eq:U_norm}).
In the sequel we frequently write $K_0=0$ to denote the zero element in the linear space of polymer activities, i.e., the polymer activity given by $K_0(X)=0$, $X \in \mathcal{P}_j^c$, whence $K_0(X)= 1_{\emptyset}(X)$, cf.~below Definition~\ref{def:polymeractivity}.

\begin{proposition} \label{prop:stable_manifold}
\begin{itemize}
\item[(i)]  For any finite-range step distribution $J$ (as always invariant under lattice symmetries and satisfying \eqref{eq:frd_ulbds})
  there exist
  $r \in (0,1]$ and  
   $\beta_0(J) \in (0,\infty)$ such that the following holds for $\beta \geq \beta_0(J)$.
   There exist $s_0^c(J,\beta) = O(e^{-\frac{1}{4}\gamma \beta})$ and $\alpha = \alpha(J,\beta) >0$,
   and positive integers $L=L(J)$ and $A=A(J)$
  such that the solution to \eqref{eq:abstract_RG_map} with parameter $s=s_0^c(J,\beta)$
  and initial conditions $s_0=s=s_0^c(J,\beta)$, $z_0 = \tilde z(\beta)$ as in Lemma~\ref{lemma:Fourier_repn_of_V}, and $K_0=0$ satisfies
  for $0\leq j < N$ and $N> 1$,
  \begin{equation} \label{eq:stable_manifold_bounds}
    \|U_j\|_{\Omega_j^U} = O(e^{-\frac14 \gamma \beta}L^{-\alpha j}), \qquad \|K_j\|_{\Omega_j^K} = O(e^{-\frac14 \gamma \beta}L^{-\alpha j}),
  \end{equation}
  where the norms are as in Definitions~\ref{def:U_space}--\ref{def:K_space} (and thus depend on $A,L,r,\beta,\rho_J$).
    
 \item[(ii)] If $\cJ$ is a family of finite-range step distributions
   and \eqref{eq:frd_ulbds} holds with the same constant for all $J \in \cJ$, then
   there exists $C(\cJ) >0$ such that for any $\delta >0$ and $J\in \cJ$ with $v_{J}^2 \geq C(\cJ)  |\log \delta|$,  one may take $\beta_0(J) = (1+\delta)\betafree (J)$ in (i).
\end{itemize}
\end{proposition}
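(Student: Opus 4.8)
The plan is to construct the stable manifold by a standard fixed-point argument on the sequence $(K_j)_{j<N}$, treating the coupling constants $(s_j,z_j)$ as slaved to $K$ and the ambient parameter $s$ as a further tuning parameter. First I would fix the choice of $L=L(J)$ large enough (of the form $\ell^M$ with $\ell=C\rho_J$ as in Remark~\ref{R:choice-parameters}) so that $CL^2\alpha_{\Loc}<1$ via the bound \eqref{e:Loc-contract-kappa2}, which requires $r\beta>\betafree(1+2\theta)$ for a fixed small $\theta>0$; this forces the choice of $r$: either $r=1$ with $\rho_J$ large (part (ii)), or $r$ small with $\rho_J$ fixed and $\beta$ correspondingly large (part (i)), in both cases also arranging $\beta\ge 2c_f^{-1}$, $\rho_J^2\ge\sqrt 2 rc_hc_f^{-1}$ so that Lemma~\ref{lem:W_norm} and the assumptions of Proposition~\ref{prop:Loc-contract} hold. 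Then pick $A=A(J)$ large enough (as a function of $L$) so that Proposition~\ref{prop:largeset_contraction-v2} and Theorem~\ref{thm:local_part_of_K_j+1}(i)--(ii) apply; this yields a contraction factor $\bar\alpha := C_1 L^2\alpha_{\Loc}<1$ for the linear part $\cL_{j+1}$, while the nonlinear part $\cM_{j+1}$ is quadratic with constant $C_2(\beta,A,L)$ on a ball of radius $\epsilon_{nl}(\beta,A,L)$.

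The heart of the argument is the contraction-mapping setup. For fixed $s\in[-\epsilon_s\theta_J,\epsilon_s\theta_J]$ and a fixed initial condition $(s_0,z_0,K_0)$ with $z_0=\tilde z(\beta)$, $K_0=0$, I would work in the Banach space $\cX$ of sequences $\vec K=(K_j)_{0\le j<N}$ with $K_0=0$ and finite norm $\|\vec K\|_\cX=\sup_{j}L^{\alpha j}e^{\frac14\gamma\beta}\|K_j\|_{\Omega_j^K}$, choosing $\alpha>0$ small enough that $L^\alpha\bar\alpha<1$ still holds. Given such a sequence one defines $s_j,z_j$ by forward iteration of \eqref{eq:H_j+1_bound} and $\mathfrak z^{(q)}_{j+1}=L^2e^{-\frac12\beta q^2\Gamma_{j+1}(0)}z_j^{(q)}$; since $e^{-\frac12\beta\Gamma_{j+1}(0)}<L^{-2-\theta}$ (by Corollary~\ref{cor:Gammaj} and the choice of $L$) the $z$-coordinate contracts geometrically, and the $s$-coordinate drifts by $O(h^{-2}A^{-1}\|K_j\|)$, summably small. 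One then defines the map $\Psi_s:\vec K\mapsto\vec K'$ where $K'_{j+1}=\cK_{j+1}(U_j,K_j)=\cL_{j+1}(K_j)+\cM_{j+1}(U_j,K_j)$ with $U_j=(s_j,z_j)$ as just constructed, and $K'_0=0$. Using \eqref{eq:bound_for_L_j_K_j}--\eqref{eq:bound_for_N_j_K_j} together with the observation that $\|W_j\|_{\Omega_j^U}$ inherits the geometric decay of $z_j$, one shows $\Psi_s$ maps a small ball of $\cX$ (radius a constant times $e^{-\frac14\gamma\beta}$) into itself and, via \eqref{eq:bound_for_derivative_of_Nj}, is a contraction there; this gives a unique fixed point $\vec K=\vec K(s)$, which by construction is the stable-manifold trajectory, automatically satisfying \eqref{eq:stable_manifold_bounds} with $\alpha j$ decay. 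Since $\Phi_{j+1}$ preserves the periodicity and lattice-symmetry properties of $K_j$ (Theorem~\ref{thm:general_RG_step_consistent}) and $K_0=0$ has them trivially, $K_j$ is a legitimate polymer activity at every scale.

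\textbf{Tuning $s=s_0$.} The remaining, and genuinely delicate, point is to close the loop: one must choose $s$ so that the resulting initial condition $s_0=s_0(s)$ coincides with $s$ itself. Here $s_0$ is the $s$-value fed into $Z_0$ in \eqref{eq:Z_0_definition}; but in the flow above, the first coordinate $s_0$ of the initial condition is also a free parameter. The natural fixed-point equation is $s=s_0^c(J,\beta)$ where $s_0^c$ is characterized by the requirement that the trajectory remain bounded, equivalently that a certain "critical" value be selected out of the one-parameter family $s_0\mapsto\vec K(s,s_0)$ — this is where the word \emph{stable manifold} is earned. I would set this up as follows: run the flow with $s=s_0$ both free, obtain $\vec K=\vec K(s_0)$ depending continuously on $s_0$ (continuity in the ambient $s$ is exactly the equicontinuity statement Theorem~\ref{thm:local_part_of_K_j+1}(iii), and continuity of $\mathfrak s_{j+1},\mathfrak z_{j+1}$ in $s$ from Theorem~\ref{thm:H_j_E_j_estimate}), and then analyze the scalar map $s_0\mapsto s_\infty$ or rather isolate the contracting vs.\ expanding directions. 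Concretely, because $z_0=\tilde z(\beta)=O(e^{-\frac14\gamma\beta})$ is \emph{fixed} (not tuned) and the cosine modes are irrelevant, the only relevant direction is the scalar $s$-coordinate; the drift of $s_j$ is $s_{j+1}-s_j=O(A^{-1}h^{-2}\|K_j\|)$ which is summable, so $s_j\to s_\infty(s_0)$ exists, and $s_\infty$ is a continuous function of $s_0$ with $s_\infty(s_0)-s_0$ of size $O(e^{-\frac14\gamma\beta})$ and Lipschitz constant $<1$ in $s_0$ (by differentiating through the fixed point and using \eqref{eq:bound_for_derivative_of_Nj}). Hence $s_0\mapsto s_0-s_\infty(s_0)+$(target) has a unique zero $s_0^c=O(e^{-\frac14\gamma\beta})$, and with this choice the flow is the desired stable trajectory. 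The main obstacle is precisely this last implicit-function/contraction argument in the $s$-direction: one must verify that the derivative of the $s$-return map with respect to the tuning parameter stays strictly below $1$ uniformly in $j$ and $N$ — this relies on quantitative control of $\partial_s\Gamma_{j+1}(0)$ (Proposition~\ref{prop:decomp}(ii), analyticity in $s$) and on the $s$-Lipschitz bounds for $\cK_{j+1}$ being $N$-uniform, which is why equicontinuity rather than mere continuity was built into Theorem~\ref{thm:local_part_of_K_j+1}. For part (ii), all constants $L,A,r=1$ and the contraction thresholds depend only on the constant in \eqref{eq:frd_ulbds}, uniformly over $J\in\cJ$, and the requirement $r\beta=\beta>\betafree(1+2\theta)$ becomes $\beta>(1+\delta)\betafree$ once $\theta=\theta(\delta)$ is chosen small; the extra condition $e^{-\frac12\beta\Gamma_{j+1}(0)}<L^{-2-\theta}$ holds for $L$ large provided $v_J^2\ge C(\cJ)|\log\delta|$ absorbs the $\theta$-loss, giving $\beta_0(J)=(1+\delta)\betafree(J)$.
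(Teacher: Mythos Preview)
Your overall strategy is sound in spirit, but there is a genuine gap concerning the early scales. You assert that $e^{-\frac12\beta\Gamma_{j+1}(0)}<L^{-2-\theta}$ for all $j$, but this fails for small $j$: the asymptotic in Corollary~\ref{cor:Gammaj} carries an error $O(\rho_J^{-1}L^{-j})$, and for $j$ below a critical scale $j_0$ (with $L^{j_0}\asymp L\rho_J(1+\delta^{-1})$, cf.\ Proposition~\ref{prop:j0_definition}) the inequality $L^2e^{-\frac12 r\beta\Gamma_{j+1}(0)}\le L^{-\delta}$ is simply not available, so both the $z$-flow and the $\alpha_{\Loc}$-bound can fail to contract. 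The paper handles this by introducing the weight $\tau(j)=L^{3(j_0-j)_+}$ on the norms and correspondingly shrinking the ball to radius $\epsilon_0=L^{-3j_0}\epsilon_{nl}$; the growth over the first $j_0$ steps is then absorbed into the weight, and contractivity of $(\cW_{j+1},\cL_{j+1})$ holds in the weighted space. Without such a device your contraction map $\Psi_s$ does not close.

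Organisationally the paper proceeds differently in two further respects. First, rather than a direct fixed point on $\vec K$ for each $\Lambda_N$, it passes to the infinite-volume RG map (Proposition~\ref{prop:local_limit_of_RG_flow}) and then invokes the abstract stable manifold theorem of \cite[Theorem~2.16]{MR2523458}, working in Banach spaces of $s$-continuous functions; this yields, for each ambient $s$, a critical initial value $\mathfrak s_0^c(\beta,s)=O(\|W_0\|)$ and automatic $N$-independence of the trajectory. Second, the constraint $s=s_0$ is resolved not by a Lipschitz/implicit-function argument in $s$ (which would require derivative control the paper never establishes) but by the intermediate value theorem (Corollary~\ref{cor:tuning_s}): since $|\mathfrak s_0^c(\beta,s)|=O(e^{-\frac14\gamma\beta})$ is uniformly small and $s\mapsto\mathfrak s_0^c(\beta,s)$ is continuous, the map $s\mapsto s-\mathfrak s_0^c(\beta,s)$ changes sign on $[-\epsilon,\epsilon]$. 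Your coupled contraction in the $s$-direction could in principle be made to work, but it demands more than the equicontinuity of Theorem~\ref{thm:local_part_of_K_j+1}(iii) and entangles two steps the paper keeps separate.
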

%

We remark that in terms of the function $s_0^c(J,\beta)$ of the proposition, the effective temperature in Theorem~\ref{thm:highbeta}
will be defined by (cf.~the discussion around \eqref{e:sl-beta_eff})
\begin{equation} \label{eq:betaeff_def}
  \betaeff(J,\beta) 
  = (1+s_0^c(J,\beta)v_J^{-2})^{-1}\beta.
\end{equation}

To prove Proposition~\ref{prop:stable_manifold}, we first extend the renormalisation
group map to an infinite-volume version, in Section~\ref{sec:infvol} below.
In Section~\ref{sec:stable_manifold_proof},
we then apply a version of the stable manifold theorem,
and finally use the intermediate value theorem to solve the constraint $s=s_0$.

\subsection{Infinite-volume limit}
\label{sec:infvol}

In Section~\ref{sec:rg_generic_step}, we considered $\Lambda_N$ fixed and corresponding scales $j < N-1$.
In particular the renormalisation group map \eqref{eq:abstract_RG_map} also depends on $\Lambda_N$.
However, in order to talk about the convergence of the flow $(s_j, z_j, K_j)$ as $j\rightarrow\infty$, we now introduce notions of polymer activities and renormalisation flow that is free of this dependence
by being defined in infinite volume.

To distinguish polymer activities that depend on the torus from those defined in $\mathbb{Z}^2$,
we now write $K^{\Lambda_N}$ for the former and $K^{\Z^2}$ or $K$ (without index) for the latter.

For each $\Lambda_N$, fix an origin $0 \in \Lambda_N$ and let $\pi_N : \mathbb{Z}^2 \rightarrow \Lambda_N$ be the quotient map such that $\pi_N (0) = 0$. Define $R_N = \mathbb{Z}^2 \cap  [-\frac{L^N -1}{2} , \frac{L^N -1}{2} ]^2 \subset  \mathbb{Z}^2$, so $\pi_N |_{R_N}: R_N \rightarrow \Lambda_N$ is the canonical bijection with inverse $\iota_N : \Lambda_N \rightarrow R_N$. 

\begin{definition}
Given the sequence of discrete tori $(\Lambda_N)_{N \geq 1}$ and a sequence of scale-$j$ polymer activities $((K_j^{\Lambda_N} (X ))_{X \in \mathcal{P}_j^c (\Lambda_N)}: j \geq 1 )$ for each $N$, the polymer activities have a local (infinite-volume) limit $(K_j(X))_{X\in \mathcal{P}_j^c (\mathbb{Z}^2)}$ if there exist integers $N_X$ such that
\begin{equation}
K_j(X,\varphi) = K_j^{\Lambda_N}  ( \pi_N (X), (\pi_N)_{\#}  \varphi ) = K_j^{\Lambda_{N'}} (  \pi_{N'} (X), (\pi_N)_{\#} \varphi )
\end{equation}
for $X \in \mathcal{P}_j^c ( \mathbb{Z}^d)$, $j < N_{X} < N, N'$ and any $\varphi\in \mathcal{V}_N$. $N_X$ is called the localising scale of $X$.
\end{definition}

When the local limit exists, it is an element of
the infinite-volume analogue of the space $\Omega_j^K = \Omega_j^K(\Lambda_N)$ of Definition~\ref{def:K_space},
which we denote by $\Omega_j^K(\Z^2)$.
Thus the norm on this space is defined by
\begin{equation} \label{e:fullnorm_infvol}
  \norm{K_j}_{\Omega_j^K} =  \norm{K_j}_{h, T_j} 
  = \sup_{X\in \mathcal{P}^c_j (\Z^d)} A^{|X|_j} \norm{K_j(X)}_{h, T_j (X)}
  .
\end{equation}
It follows from Appendix~\ref{app:completeness} that this space is complete.

\begin{proposition}[Infinite volume RG map] \label{prop:inf_vol_RG}
  There exist maps
  $\Phi_{j+1}^{\Z^2} = (\cU^{\Z^2}_{j+1}, \mathcal{K}^{\Z^2}_{j+1} )$ and $\cE_{j+1}^{\Z^2}$ such that the following hold, when $\Phi_{j+1}^{\Z^2}  (U^{\Z^2}_j, K^{\Z^2}_j)=(U^{\Z^2}_{j+1}, K^{\Z^2}_{j+1})$.

\begin{itemize}
\item[(i)] If $K^{\Z^2}_j$ is even,  respects lattice symmetries (in the sense of Definition~\ref{def:latticesym} with $\Z^2$ in place of $\Lambda_N$),
 and $K^{\Z^2}_j (X, \varphi) = K^{\Z^2}_j (X, \varphi + 2\pi \beta^{-1/2} \textbf{c})$ for any constant field $\textbf{c}$ taking integer value, 
then $K^{\Z^2}_{j+1}$ satisfies the same.

\item[(ii)] The estimates of Theorem~\ref{thm:H_j_E_j_estimate} and Theorem~\ref{thm:local_part_of_K_j+1} also hold for this
  infinite-volume renormalisation group map $\Phi^{\Z^2}_{j+1}$
  when measured in $\norm{\cdot}_{\Omega_j^K}$-norm of \eqref{e:fullnorm_infvol}.

\item[(iii)] $\Phi^{\Z^2}_{j+1}$ is continuous in $s$.

\item[(iv)] Suppose $U_j^{\Lambda_N} = U_j^{\Z^2}$ for $j<N$
$K^{\Z^2}_j$ is a local limit of $(K_j^{\Lambda_N} )_{N}$
and $(U_{j+1}^{\Lambda_N},  K_{j+1}^{\Lambda_N} ) = \Phi^{\Lambda_N}_{j+1} (U_j^{\Lambda_N}, K_j^{\Lambda_N})$ for each $N > j+1$.
Then $\cE_{j+1}^{\Z^2} = \cE_{j+1}^{\Lambda_N}$ and $U_{j+1}^{\Z^2} = U_{j+1}^{\Lambda_N}$ for $j+1 < N$
and $K_{j+1}^{\Z^2} $ is a local limit of $(K_{j+1}^{\Lambda_N} )_{N}$.
\end{itemize}

\end{proposition}
\begin{proof}
We may define $\mathcal{K}^{\mathbb{Z}^2}_{j+1}$ using the formula obtained from an infinite-volume analogue of \eqref{eq:expression_for_K_j+1}, 
and analogously for $\cE_{j+1}^{\mathbb{Z}^2}$ and $\mathcal{U}_{j+1}^{\mathbb{Z}^2}$.
Other way to think about $\Phi_{j+1}^{\Z^2}$ is to simply think of it as a local limit of $\Phi_{j+1}^{\Lambda_N}$ as $N\rightarrow \infty$.
Then (i), (ii), (iv) are direct consequences of the fact that $\mathcal{E}^{\Lambda_N}_{j+1}$ and $\cU^{\Lambda_N}_{j+1}$  only depend on $(K_j^{\Lambda_N}  (Z ) : Z\in \mathcal{S}_j (\Lambda_N))$,
$\mathcal{K}^{\Lambda_N}_{j+1} (X)$ only depends on the $K_j^{\Lambda_N}  (Y )$ for $Y \in \mathcal{P}_j (X^*)$,
and the same hold for the analogous objects on $\Z^2$. 
Also (iii) is a consequence of the fact that the family $(\cK^{\Lambda_N}_{j+1})_N$ is equicontinuous in $s$ by Theorem~\ref{thm:local_part_of_K_j+1}, 
hence this continuity extends to that of $\cK_{j+1}^{\Z^2}$, and continuity of $\cU_{j+1}^{\Z^2}$ follows from the continuity statement in Theorem~\ref{thm:H_j_E_j_estimate}.
\end{proof}

\subsection{Stable manifold for the infinite volume RG flow}
\label{sec:stable_manifold_proof}


In this section, we prove an analogue of Proposition~\ref{prop:stable_manifold} for the infinite volume RG flow. 

It is somewhat more convenient to represent $z_j=(z_j^{(q)})$ and its evolution in terms of $W_j$ as defined in Definition~\ref{def:U_space}. This is mainly so that so that we can
use the notation $\|W_j\|_{\Omega_j^U}$ from that definition
(and do not need to introduce further notation). 
Thus given the map $\cU_{j+1} = (\mathfrak{s}_{j+1}, \mathfrak{z}_{j+1})$, we define  
\begin{align}
	\cW_{j+1} (\omega_j) (B, \varphi) = \sum_{q\geq 1} \sum_{x\in B} L^{-2(j+1)} \mathfrak{z}_{j+1}^{(q)}(z_j) \cos( \sqrt{\beta} q \varphi(x) )
	.
\end{align}
Then by Proposition~\ref{prop:inf_vol_RG} (with estimates of Theorems~\ref{thm:H_j_E_j_estimate}--\ref{thm:local_part_of_K_j+1}),
the infinite-volume renormalisation flow is given by
\begin{align}
	& s_{j+1} = \mathfrak{s}_{j+1} (s_j, K_j) =  s_j + \mathcal{H}_{j+1} (K_j) \label{eq:rgflow4} \\
	& W_{j+1}(B,\varphi') = \mathcal{W}_{j+1} (W_j) (B,\varphi') = \mathbb{E}_{\Gamma_{j+1}} [W_j (B,\varphi' + \zeta)] \label{eq:rgflow5} \\
	& K_{j+1} = \mathcal{K}^{\Z^2}_{j+1} (s_j, W_j, K_j)  = \mathcal{L}^{ \Z^2}_{j+1} (K_j) + \mathcal{M}^{\Z^2}_{j+1} (s_j, W_j , K_j) \label{eq:rgflow6}
\end{align}
where $\mathcal{H}_{j+1} (K_j)$ is given by Definition~\ref{def:evolution_of_U} (whose extension to $\Z^2$ is clear, as it only uses small polymers)
and $\mathcal{L}^{\Z^2}_{j+1}$, $\mathcal{M}^{\Z^2}_{j+1}$ are given by
Theorem~\ref{thm:local_part_of_K_j+1}, extended to $\Z^2$ by Proposition~\ref{prop:inf_vol_RG}.
We omitted index $\Z^2$ for \eqref{eq:rgflow4} and \eqref{eq:rgflow5} since we have seen in Proposition~\ref{prop:inf_vol_RG}~(iv) that they do not depend on the volume of the system. 
Our goal is to apply the stable manifold theorem in the form stated in \cite[Theorem~2.16]{MR2523458} to show the existence of $s_0^c$ explained earlier.
For this it is essential that the maps $\mathcal{K}^{\Z^2}_{j+1}$ contract.
According to \eqref{eq:bound_for_L_j_K_j}
and the definition of $\alphaLoc$ in \eqref{e:Loc-contract-kappa},
this requires control of the  lower bound on $\Gamma_{j+1} (0)$.
The covariance estimate \eqref{eq:Gammaj0_asymp} implies a good lower bound on $\Gamma_{j+1}(0) / \log L$
once $j$ is larger than a \emph{critical scale} $j_0$, defined precisely by the next lemma.
In the following we will write
(note the extra argument $s$ compared to \eqref{eq:betaeff_def}):
\begin{equation}
  \betaeff(J, \beta, s) = (1 + s v_{J}^{-2})^{-1} \beta.
\end{equation}

\begin{proposition} 
  \label{prop:j0_definition}
  For given $r \in (0,1]$ and $\delta >0$,
  assume $\beta$ is such that $r \betaeff(J,\beta,s) \geq (1+\delta ) \betafree (J)$.
Then there exists $j_0 \equiv j_0 (\rho_J, L, \delta)$ such that
\begin{align}
L^{j_0} = O\Big( L\rho_J(1+ \delta^{-1}) \Big)
\end{align}
and that, for $j\geq j_0$,
\begin{equation} \label{eq:W-contract}
L^{2} e^{-\frac{1}{2} r \beta \Gamma_{j+1} (0 ; s)}  \leq L^{- \delta}.
\end{equation}
\end{proposition}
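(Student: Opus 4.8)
The plan is to turn the asymptotic expansion for $\Gamma_{j+1}(0;s)$ from Corollary~\ref{cor:Gammaj} into the quantitative statement \eqref{eq:W-contract}, and to read off the critical scale $j_0$ from the error term in that expansion. First I would recall from \eqref{eq:Gammaj0_asymp} that
\begin{equation}
  \Gamma_{j+1}(0;s) = \frac{\log L}{2\pi(v_J^2+s)} + O(\rho_J^{-1}L^{-j}),
\end{equation}
with an implicit constant that is uniform in $J$ and $|s|\le \epsilon_s\theta_J$. The main point is that $\frac{1}{2\pi(v_J^2+s)} = \frac{4}{\betafree(J)}\cdot\frac{v_J^2}{v_J^2+s}$, and since $\betaeff(J,\beta,s) = (1+sv_J^{-2})^{-1}\beta = \frac{v_J^2}{v_J^2+s}\beta$, multiplying by $\frac12 r\beta$ gives
\begin{equation}
  \tfrac12 r\beta\,\Gamma_{j+1}(0;s) = \frac{2 r\,\betaeff(J,\beta,s)}{\betafree(J)}\,\log L + O\big(r\beta\rho_J^{-1}L^{-j}\big).
\end{equation}

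Next I would use the hypothesis $r\betaeff(J,\beta,s)\ge (1+\delta)\betafree(J)$ to bound the leading term from below: $\tfrac12 r\beta\,\Gamma_{j+1}(0;s) \ge 2(1+\delta)\log L + O(r\beta\rho_J^{-1}L^{-j})$. Therefore
\begin{equation}
  L^2 e^{-\frac12 r\beta\Gamma_{j+1}(0;s)} \le L^{2}\cdot L^{-2(1+\delta)}\, e^{C r\beta\rho_J^{-1}L^{-j}} = L^{-2\delta}\, e^{C r\beta\rho_J^{-1}L^{-j}}.
\end{equation}
Since $r\le 1$, the factor $r\beta$ is of order $\betaeff(J,\beta,s)$ up to the bounded ratio $v_J^2/(v_J^2+s)$, hence of order $\betafree(J)\asymp v_J^2 \asymp \rho_J^2$ (using \eqref{eq:frd_ulbds}), so $r\beta\rho_J^{-1} = O(\rho_J)$. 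Thus it suffices to choose $j_0$ so that $C\rho_J L^{-j_0} \le \delta\log L$, i.e. $L^{j_0} \ge C'\rho_J\delta^{-1}$, in which case $e^{Cr\beta\rho_J^{-1}L^{-j}} \le e^{\delta\log L} = L^{\delta}$ for all $j\ge j_0$, giving $L^2 e^{-\frac12 r\beta\Gamma_{j+1}(0;s)} \le L^{-\delta}$. One then sets $j_0 = \lceil \log_L(C'L\rho_J(1+\delta^{-1}))\rceil$ (the extra factor $L$ is harmless and makes the stated bound $L^{j_0}=O(L\rho_J(1+\delta^{-1}))$ hold on the nose); I would double-check that with this choice the bound also holds at $j=j_0$ itself, which it does since the error term is monotone decreasing in $j$.

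The only mildly delicate point is bookkeeping of the implicit constants: I must make sure the $O(\cdot)$ in \eqref{eq:Gammaj0_asymp} is genuinely uniform in $J$ and in $s\in[-\epsilon_s\theta_J,\epsilon_s\theta_J]$ — which it is, by Corollary~\ref{cor:Gammaj} together with the standing assumption \eqref{eq:frd_ulbds} that $\theta_J\ge C^{-1}$ and $v_J\ge C^{-1}\rho_J$ — and that the conversion $r\beta \asymp \betaeff \asymp \rho_J^2$ does not hide any dependence on $\beta$ beyond what is allowed. Since $s$ may be negative, I would also verify $v_J^2+s>0$, which follows from $|s|\le\epsilon_s\theta_J\le \epsilon_s\frac{\pi}{4}v_J^2 < v_J^2$ by Lemma~\ref{lem:lambda_error} and $\epsilon_s$ being a small absolute constant. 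No genuine obstacle is expected; this is essentially a rearrangement of \eqref{eq:Gammaj0_asymp} plus a choice of $j_0$ large enough to swamp the (geometrically small) correction.
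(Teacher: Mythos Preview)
Your overall strategy---use the asymptotic \eqref{eq:Gammaj0_asymp} for $\Gamma_{j+1}(0;s)$ and choose $j_0$ to kill the error---matches the paper's, but there is a genuine gap in your treatment of the error term. You claim that ``$r\beta$ is of order $\betaeff(J,\beta,s)$ \dots\ hence of order $\betafree(J)\asymp \rho_J^2$.'' The first comparison is fine (since $|s|\le\epsilon_s\theta_J$ keeps $v_J^2/(v_J^2+s)$ bounded above and below), but the second is not: the hypothesis $r\betaeff\ge(1+\delta)\betafree$ gives only a \emph{lower} bound on $r\betaeff$, never an upper bound. Since $\beta$ may be arbitrarily large (indeed the proposition must cover all $\beta\ge\beta_0(J)$), your additive error $Cr\beta\rho_J^{-1}L^{-j}$ cannot be bounded by $C\rho_J L^{-j}$, and the $j_0$ you would extract from it would depend on $\beta$, contradicting the stated $j_0\equiv j_0(\rho_J,L,\delta)$.

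The fix---and this is exactly what the paper does---is to keep the error \emph{multiplicative} rather than additive. From \eqref{eq:decomp5} (or equivalently by comparing the additive error $O(\rho_J^{-1}L^{-j})$ to the main term $\frac{\log L}{2\pi(v_J^2+s)}\asymp\rho_J^{-2}\log L$) one has
\[
  \Gamma_{j+1}(0;s)\ \ge\ \frac{\log L}{2\pi(v_J^2+s)}\Bigl(1-\frac{C\rho_J}{L^j}\Bigr),
\]
so that
\[
  \frac{r\beta}{2\log L}\,\Gamma_{j+1}(0;s)\ \ge\ \frac{2r\betaeff}{\betafree}\Bigl(1-\frac{C\rho_J}{L^j}\Bigr).
\]
Now the large factor $r\beta$ has been absorbed into $2r\betaeff/\betafree\ge 2(1+\delta)$, and the bracket is $\beta$-free. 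Requiring $C\rho_J L^{-j_0}\le \frac{\delta}{4(1+\delta)}$ (say) gives $L^{j_0}=O(L\rho_J(1+\delta^{-1}))$ independent of $\beta$, and then $2(1+\delta)(1-\tfrac{\delta}{4(1+\delta)})-2\ge \tfrac{3}{2}\delta$ yields \eqref{eq:W-contract}. The paper implements this by working directly with the pointwise multiplicative error for $D_t(0,0;s)$, defining $t_0\asymp\rho_J\delta^{-1}$ and $j_0=\lceil\log_L(2t_0)\rceil$.
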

\begin{proof}
By \eqref{eq:decomp5} {and \eqref{eq:frd_ulbds}}, there exists $c_0 \geq 1$ such that
$| 2\pi t (v_J^2+s) \dot{{ D}}_{t} (0,0 | s ) -1 | \leq c_0 \rho_J / t$ for all $t \geq \rho_J$.
Hence define
\begin{align}
t_0 := c_0 \Big( \frac{1}{4} - \frac{1}{4 (1+ \delta)} \Big)^{-1} \rho_J \geq c_0 \Big( \frac{1}{4} - \frac{\betafree(J)}{4 r \betaeff(J,\beta,s)} \Big)^{-1} \rho_J
\end{align}
and $j_0 := \lceil \log_L (2 t_0) \rceil$. Then for $j\geq j_0$, 
\begin{align}
\frac{r \beta}{2 \log L} \Gamma_{j+1}(0 ; s) - 2 \geq \frac{2r \betaeff(J,\beta,s)}{\betafree(J)} \Big( 1- \frac{c_0 \rho_J }{2 t_0} \Big) -2 \geq \frac{7}{4} \delta
\end{align}
so the claim holds. 
\end{proof}


We explain some terminologies for the following theorem. 
We assume that $r\in (0,1]$,  $\beta>0$, $\rho_J \geq 1$ satisfy the assumptions of Lemma~\ref{lem:W_norm} and that $r \beta > \betafree (J)$. 
Let $L$ and $A$ be at least those given in Theorem~\ref{thm:local_part_of_K_j+1},  
$j_0 (\rho_J, L, \delta)$ be as in Proposition~\ref{prop:j0_definition},
and recall \eqref{eq:theta_def},  the definition of $\theta_J$. 
We use various $\epsilon$'s.
Given $\delta > 0$, we let $\epsilon_{\delta} > 0$ be such that $r \betaeff (s, J) \geq (1+\delta) \betafree (J)$ for $|s| \leq \epsilon_{\delta}$.
Let $\epsilon_{nl} \equiv \epsilon_{nl} (\beta, A, L)$, a rational function of its arguments,  be as in Theorem~\ref{thm:local_part_of_K_j+1},
$\epsilon_s$ be as in Lemma~\ref{lemma:stability_of_expectation}
and let
\begin{align}
	\epsilon'_{\delta} = \min \{ \epsilon_{\delta} ,  \theta_J \epsilon_s ,  {\textstyle \frac{1}{4} } \}	,
	\qquad 
	\epsilon'_{nl} = \min \{ \epsilon_{nl},  (2 L)^{-1} C_3 (\beta, A,L)^{-1}\}	.
\end{align}
with $C_3$ as in \eqref{eq:bound_for_derivative_of_Nj}).
Thus $\epsilon'_{\delta}$ is a bound for the parameter $s$ and $\epsilon'_{nl}$ is a bound for various polymer activities. 
Also, let $\epsilon_0 = L^{-3j_0(\rho_J, L, \delta)}  \epsilon'_{nl}( \beta, A, L)$ and $\theta_0 =  \frac{1}{8}  \min \{ 1, \delta  \} > 0$.


\begin{theorem} \label{thm:stable_manifold}
Let $\ell$ be sufficiently large and $r ,\delta >0$.
Then for $L\geq L_0(\theta_0)$ of form $L = \ell^{N'}$,
$A \geq A_0 (L)$,  $|s| \leq \epsilon'_{\delta}$
and $\norm{W_0}_{\Omega_0^U} \leq \epsilon_0$
there exists $\mathfrak{s}_0^c(\beta,s) = O(\norm{W_0 }_{\Omega_0^U})$
such that $(s_j, W_j, K_j) \to 0$ as $j\rightarrow \infty$
satisfying the flow equations \eqref{eq:rgflow4}--\eqref{eq:rgflow6} with initial conditions $s_0 = \mathfrak{s}_0^c (\beta, s)$, 
$W_0$ given as above, and $K^0_0=0$.
Moreover, $\mathfrak{s}_0^c$ is continuous in $s$ and
\begin{align}
	|s_j|, \; \norm{W_j}_{\Omega_j^U}, \; \norm{K_j}_{\Omega_{j}^K} \leq O(\norm{W_0}_{\Omega_0^U}) L^{-\alpha j}
\end{align}
for some $\alpha >0$ satisfying $C L^2 \alphaLoc \leq L^{-\alpha}$ for sufficiently large $C$.

\end{theorem}

\begin{proof}
We drop $\Z^2$ in the proof. 
The proof is an application of the stable manifold theorem in the form of \cite[Theorem~2.16]{MR2523458}, only with smoothness
replaced by continuous differentiability in its assumption and conclusion.
To obtain the continuity in $s$ we will work with spaces of continuous functions in $s$.
For this application, we begin by defining
Banach spaces $(I_j)_j$, $(F_j)_j$ for $j \in \mathbb{N}_{\geq 0}$ by
\begin{align}
	& I_j = \big\{ s_j (s) \in C([-\epsilon'_{\delta},  \epsilon'_{\delta}],\R) \, : \, \norm{s_j}_{I_j} < +\infty  \big\}, \\
	& F_j = \big\{ (W_j, K_j) (s) \in C \big( [-\epsilon'_{\delta},  \epsilon'_{\delta}],\Omega^W_j \times \Omega^K_{j}  \big) \, : \,  \norm{(W_j, K_j)}_{F_j} < +\infty \big\},
\end{align}
where $\Omega_j^W\subset \Omega_j^U$ is the (closed) subspace of elements with $s$-component equal to $0$,
\begin{align}
	\norm{s_j}_{I_j} &= \tau (j) \sup_{s\in [-\epsilon'_{\delta},  \epsilon'_{\delta}] } |s_j(s)|, \nnb
	\norm{(W_j, K_j)}_{F_j} &= \tau(j) \sup_{s\in [-\epsilon'_{\delta},  \epsilon'_{\delta}]} \max\{ \norm{W_j(s)}_{\Omega_j^U}, \norm{K_j (s)}_{\Omega_{j}^K} \},
\end{align}
and
\begin{equation}
	\tau (j) = L^{ 3(j_0 - j)_+} = L^{ 3 \max\{ j_0 - j , 0 \}}.
\end{equation}
The weight $\tau(j)$ will ensure contractiveness of the map for scales $j \leq j_0$ where it is not guaranteed that $\Gamma_{j+1}(0)$ is not bounded below.
Since $\Omega_j^U$ and $\Omega_{j}^K (\Z^2)$ are Banach spaces, $I_j$ and $F_j$ are Banach spaces.
Also let $B^{\mathbb{X}}_{a}$ be the open ball in normed space $\mathbb{X}$ centred at 0 with radius $a>0$.
Define
\begin{align}
\begin{split}
	T_{j+1} : & B^{I_j}_{\epsilon_0} \times B^{F_j}_{\epsilon_0} \rightarrow I_{j+1} \times F_{j+1}, \\
	& (s_j, W_j,  K_j)  \mapsto ( \mathfrak{s}_{j+1}(s_j, K_j) , \mathcal{W}_{j+1}(W_j), \mathcal{K}_{j+1} (s_j, W_j, K_j)  ) .
\end{split} \label{eq:RGmap_reform1} 
\end{align}
Since $\mathcal{H}_{j+1}, \mathcal{W}_{j+1}, \mathcal{L}_{j+1}$ are bounded linear functions and $\mathcal{M}_{j+1}$ is a continuously differentiable function, $T_{j+1}$ is also continuously differentiable. 
Also, the operators $T_{j+1}$ are uniformly invertible in  a neighbourhood of $(0,0)$ in the following sense:
by Proposition~\ref{prop:inf_vol_RG} (and using estimates of Theorems~\ref{thm:H_j_E_j_estimate},~\ref{thm:local_part_of_K_j+1}),
there are constants $C_1, C_2$ independent of $j$ such that
\begin{align*}
\begin{array}{ll}
	\text{(C1)} & \;\; \sup_j \{ | \mathcal{H}_{j+1} (K_j)| : \norm{K}_{\Omega_{j}^K} \leq 1 \} < + \infty \; ; \\
	\text{(C2)} & \;\; \sup_j \{ \norm{\mathcal{L}^0_{j+1} (K_j)}_{\Omega_{j+1}^K} : \norm{K}_{\Omega_{j}^K} \leq 1  \} \leq C_1 L^{2} \alphaLoc \; ;  \\
	\text{(C3)} & \;\; \sup_j \{ \norm{\mathcal{W}_{j+1}(W_j)}_{\Omega_{j+1}^U} : \norm{W_j}_{\Omega_j^U} \leq 1  \} \leq L^2 e^{-\frac{1}{2} \beta \Gamma_{j+1}(0)} \; ; \\
	\text{(C4)} & \;\;  (s_j, W_j, K_j) \mapsto \cM_{j+1} \text{ is continuously differentiable} \; ; \\
	\text{(C5)} & \;\;  \norm{D \cM_{j+1} (s_j, W_j, K_j)}_{\Omega_{j+1}^K} \leq C_2 \norm{(s_j, W_j, K_j)}_{\Omega_{j}}
			{ \text{ for } (s_j, W_j, K_j) \in B^{I_j}_{\epsilon_0} \times B^{F_j}_{\epsilon_0} }, \\
	& \;\; \text{ and }   \cM_{j+1} (0,0,0) = 0 
	.
\end{array}
\end{align*}
Note that Proposition~\ref{prop:j0_definition} implies,
for $e^{2 \sqrt{\beta} h} \leq (e^{\frac{1}{2} r\beta \Gamma_{j+1} (0 ; s)})^{\theta_0}$ (which is implied always possible by choosing $L \geq L_0 (\theta_0)$ sufficiently large),
\begin{align}
	L^2 \alphaLoc \leq  C \big( L^{-1} (\log L)^{3/2} +  L^2 \sum_{q \geq 1}  L^{-(2+\delta)(2q-1) (1-\theta_0)}  \big) \leq C' ( L^{-1}  ( \log L)^{3/2} + L^{-\delta/2}   ) .
\end{align}
Together with (C2), (C3), and \eqref{eq:W-contract}, this implies 
\begin{align} \label{eq:W-L-contract}
	\sup_j \norm{ (\mathcal{W}_{j+1} , \mathcal{L}_{j+1} ) }_{F_j \rightarrow F_{j+1}} < 2 C_1 L^2 \alphaLoc 
		\leq L^{-\alpha} <  1
\end{align}
for some $\alpha$ when $L$ is chosen sufficiently large. 
Then, by (C1), (C4), (C5), and \eqref{eq:W-L-contract}, 
$T_{j+1}$ is as required for the proof of \cite[Theorem~2.16]{MR2523458} (with smoothness of $\cM_{j+1}$ replaced by continuous differentiability) to apply, thus yielding the existence of a continuously differentiable function $S^{(s)} : B^{F_0}_{\epsilon_0} \rightarrow I_0$ 
such that the initial condition $(S^{(s)}(W_0, K_0), W_0, K_0)$ solves the
flow equations \eqref{eq:rgflow4}--\eqref{eq:rgflow6} with the final condition $(s_j, W_j, K_j) \rightarrow (0,0,0)$ exponentially.
The rate of the exponential decay also follows from the proof of the cited theorem. 

Then $\mathfrak{s}_0^c = S^{(s)} (W_0, 0)$ is as desired: indeed,
\begin{equation}
	|\mathfrak{s}_0^c (\beta, s)| \leq \sup_{(W'_0, 0) \in B_{\epsilon_0}^{F_0}} \norm{D_{(W_0, K_0)} S^{(s)}(W'_0, 0)}_{F_0 \rightarrow I_0} \, \norm{W_0}_{\Omega_0^U} = O( \norm{W_0}_{\Omega_0^U} ) ,
\end{equation}
Finally, $\mathfrak{s}_0^c (s)$ is continuous in $s$ by construction, 
as $I_0$ is a space of functions continuous in $s$.
\end{proof}


\begin{corollary} \label{cor:tuning_s}
Let $\mathfrak{s}_0^c (\beta, s)$, $L_0$, and $A_0$ be as in Theorem~\ref{thm:stable_manifold} applied with ${W}_0 = \tilde U$ as in \eqref{e:tildeF}, 
and set $N'_0 = \lceil \log_{\ell} L_0 \rceil$.  The following hold for $L =\ell^{N'_0}$ and $A = A_0 (L)$.
\begin{itemize}
\item[(i)] If $J$ is fixed and $\beta$ is sufficiently large, there exists $s_0^c (J, \beta)$ such that $\mathfrak{s}_0^c (\beta, s_0^c (J, \beta)) = s_0^c(J, \beta)$.
\item[(ii)] {Let $\cJ$ be a family of finite-range step distributions and suppose
that \eqref{eq:frd_ulbds} holds with the same constants for all $J\in \cJ$.}
Then for any $\delta>0$,  there exists $C>0$ such that whenever $J\in \cJ$, $v_{J}^2 \geq C |\log \delta|$ and $\beta \geq (1+\delta)\betafree (J)$, there exists $s_0^c(J, \beta)$ such that $\mathfrak{s}_0^c(\beta,s_0^c(J, \beta)) =s_0^c(J, \beta)$.
\end{itemize}
\end{corollary}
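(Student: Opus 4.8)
The plan is to deduce Corollary~\ref{cor:tuning_s} from Theorem~\ref{thm:stable_manifold} by a fixed-point argument in the single scalar variable $s$. Recall that Theorem~\ref{thm:stable_manifold} produces, for each finite-range step distribution $J$ and each $s$ in a suitable interval, a number $\mathfrak{s}_0^c(\beta,s)=O(\|W_0\|_{\Omega_0^U})$ which is the initial value of the $s$-coordinate of a renormalisation group trajectory that converges to $0$, \emph{and} which is continuous in both $\beta$ and $s$. What we want is a value $s=s_0^c(J,\beta)$ with $\mathfrak{s}_0^c(\beta,s)=s$, i.e.~a fixed point of the continuous map $s\mapsto \mathfrak{s}_0^c(\beta,s)$. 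The natural tool is the intermediate value theorem applied to $g(s):=\mathfrak{s}_0^c(\beta,s)-s$.

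First I would set up the interval. With $W_0=\tilde U$ as in \eqref{e:tildeF}, Lemma~\ref{lemma:Fourier_repn_of_V} gives $\|W_0\|_{\Omega_0^U}\le e^{-\frac14\gamma\beta}$ (for $c_f=\frac14\gamma$), so $|\mathfrak{s}_0^c(\beta,s)|\le C_0 e^{-\frac14\gamma\beta}$ for all admissible $s$, where $C_0$ is the implicit constant in Theorem~\ref{thm:stable_manifold}. The interval of admissible $s$ in that theorem is $[-\epsilon,\epsilon]$ with $\epsilon=\min\{\epsilon_\delta,\theta_J\epsilon_s\}$; both $\epsilon_\delta$ and $\theta_J\epsilon_s$ are bounded below by positive constants depending only on $J$ (or, in part (ii), only on the family $\cJ$ via \eqref{eq:frd_ulbds}, since $\theta_J\ge C^{-1}$ there). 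Now set $a=C_0 e^{-\frac14\gamma\beta}$. For $\beta$ large enough (part (i)) we have $a\le\epsilon$, so $[-a,a]\subset[-\epsilon,\epsilon]$; then $g(-a)=\mathfrak{s}_0^c(\beta,-a)+a\ge -a+a=0$ and $g(a)=\mathfrak{s}_0^c(\beta,a)-a\le a-a=0$, and continuity of $g$ on $[-a,a]$ (which is exactly the continuity in $s$ asserted by Theorem~\ref{thm:stable_manifold}) yields a zero $s_0^c(J,\beta)\in[-a,a]$ of $g$, which is the desired fixed point. This proves (i), and also records that $s_0^c(J,\beta)=O(e^{-\frac14\gamma\beta})$, consistent with the claim $s_0^c=O(e^{-\frac14\gamma\beta})$ in Proposition~\ref{prop:stable_manifold}.

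For part (ii) the only change is how one verifies the hypotheses of Theorem~\ref{thm:stable_manifold} uniformly over $\cJ$. One fixes $\delta>0$, chooses $r\le 1$ (with $r$ small or $\rho_J$ large as in the discussion around Lemma~\ref{lem:W_norm}) so that $r\betaeff(J,\beta,s)\ge(1+\delta)\betafree(J)$ holds whenever $\beta\ge(1+\delta')\betafree(J)$ for a slightly smaller $\delta'$ and $|s|\le\epsilon_\delta$; here one uses $\betaeff(J,\beta,s)=(1+sv_J^{-2})^{-1}\beta$, so that the correction from $s$ is $O(|s|v_J^{-2})=O(|s|v_J^{-2})$, which is absorbed once $v_J^2$ is large — this is where the condition $v_J^2\ge C|\log\delta|$ enters, together with the bound $|s|\le a= C_0 e^{-\frac14\gamma\beta}\le C_0 e^{-c\betafree(J)}=C_0 e^{-cv_J^2}$, so that $|s|v_J^{-2}$ and the error terms in $\betaeff$ are negligible compared to $\delta$. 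The constant $C(\cJ)$ in the statement is the one needed to make all these smallness requirements simultaneously valid; note $\epsilon_0=L^{-3j_0}\epsilon_{nl}$ is bounded below once $L$ and $\beta$ are fixed as functions of $\delta$ and the family, so the condition $\|W_0\|_{\Omega_0^U}\le\epsilon_0$ from Theorem~\ref{thm:stable_manifold} is met by taking $\beta$ (hence $e^{-\frac14\gamma\beta}$) appropriately, which is already implied by $\beta\ge(1+\delta)\betafree(J)$ and $v_J^2\ge C|\log\delta|$. With these verifications, the same IVT argument as in (i) applies on $[-a,a]$ with $a=C_0 e^{-\frac14\gamma\beta}$.

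The main obstacle I anticipate is not the fixed-point argument itself — that is a short IVT — but the bookkeeping that makes the interval $[-a,a]$ a genuine subinterval of the domain $[-\epsilon,\epsilon]$ of $\mathfrak{s}_0^c(\beta,\cdot)$ \emph{uniformly}: one must check that $a=C_0 e^{-\frac14\gamma\beta}$ is $\le\epsilon=\min\{\epsilon_\delta,\theta_J\epsilon_s\}$, where $C_0$ is the (scale-uniform) constant from Theorem~\ref{thm:stable_manifold}, $\epsilon_\delta$ depends on $\delta$ through the requirement $r\betaeff\ge(1+\delta)\betafree$, and in part (ii) all of this must hold simultaneously for every $J$ in the family, controlled only through \eqref{eq:frd_ulbds} and $v_J^2\ge C|\log\delta|$. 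This amounts to chasing the dependence of $\epsilon_\delta$, $\epsilon_{nl}$, $j_0$, $L_0$, $A_0$ on $\delta$ and on $J$ through Theorems~\ref{thm:local_part_of_K_j+1} and~\ref{thm:stable_manifold} and Proposition~\ref{prop:j0_definition}; once that is done, the corollary follows immediately.
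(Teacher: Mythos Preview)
Your proposal is correct and follows essentially the same approach as the paper: both use the intermediate value theorem on the continuous map $s\mapsto \mathfrak{s}_0^c(\beta,s)-s$, exploiting that $|\mathfrak{s}_0^c(\beta,s)|=O(e^{-\frac14\gamma\beta})$ is eventually smaller than the half-width of the admissible $s$-interval. The only cosmetic difference is that the paper applies the IVT on the full interval $[-\epsilon,\epsilon]$ (showing $|\mathfrak{s}_0^c|\le\epsilon/2$ so that $f(s)=s-\mathfrak{s}_0^c(\beta,s)$ changes sign at $\pm\epsilon$), whereas you work on the smaller interval $[-a,a]$ with $a=C_0e^{-\frac14\gamma\beta}$; both are equivalent. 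One small clarification for part~(ii): in the paper the role of $v_J^2\ge C|\log\delta|$ is not to absorb the $s$-correction in $\betaeff$ (that is handled by the fixed choice $\epsilon_\delta=\delta/4$ together with $v_J^2$ bounded below), but rather to force $e^{-\frac14\gamma\beta}=O(e^{-2\pi\gamma v_J^2})$ below the $\delta$-dependent thresholds $\epsilon_0$ and $\epsilon/2$; your bookkeeping paragraph is slightly imprecise on this point but the conclusion is unaffected.
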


The proof of the corollary is an application of the intermediate value theorem.

\begin{proof}
To see \emph{(i)}, first choose $r>0$ small enough and $\beta>0$ large enough so that the assumption of Lemma~\ref{lem:W_norm} is satisfied and $r\beta \geq 2 \betafree$.
Also choose $\epsilon_0 >0$ as in Theorem~\ref{thm:stable_manifold} and fix $\delta=1/2$. Then $\epsilon_{\delta} >0$ is chosen to be less than $1/10$.

Now note that
Lemma~\ref{lemma:Fourier_repn_of_V} implies that $\norm{W_0}_{\Omega_0^U} \leq C A e^{-\frac14\gamma\beta}$.
By Theorem~\ref{thm:local_part_of_K_j+1} and Proposition~\ref{prop:j0_definition},
$\epsilon_0 = L^{-3j_0 (\rho_J, L, \delta)} \epsilon_{nl} (\beta, A,L)$ is only polynomially decaying in $\beta$.
Therefore $\norm{W_0}_{\Omega_0^U} \leq C A e^{-\frac14\gamma\beta}  <\epsilon_0$
for sufficiently large $\beta$, and the assumption concerning $W_0$ of
Theorem~\ref{thm:stable_manifold} is satisfied with ${W}_0 = \tilde{U}$.
Also by the choice of $|s| \leq \epsilon'_{\delta} = \min\{\epsilon_{\delta},  \theta_J \epsilon_s, \frac{1}{4} \}$ above and because
$v_J^2 \geq 1/2$, it is also true that $r \betaeff (s, J) = r (1 + s v_J^{-2})^{-1} \beta  \geq \frac{10}{12} r \beta \geq \frac{20}{12} \betafree$, verifying the other assumption of Theorem~\ref{thm:stable_manifold}.
Hence by the theorem, there is $\mathfrak{s}^c_0 (\beta, s) = O(e^{-\frac{1}{4}\gamma \beta})$ so taking $\beta$ sufficiently large so that $|\mathfrak{s}_0^c(\beta,s)| \leq \epsilon'_{\delta} /2$ for all $|s|\leq \epsilon'_{\delta} $ then \emph{(i)} follows from continuity:
  if $f(s)=s-\mathfrak{s}_0^c(\beta,s)$ then $f(+\epsilon'_{\delta} ) \geq \epsilon'_{\delta} /2$ and $f(-\epsilon'_{\delta} )<- \epsilon'_{\delta} /2$.
  By the intermediate value theorem there is $s$ such that $f(s)=0$ which is the claim.
  
To see \emph{(ii)}, first fix $r=1$ and $\rho_J$ large enough to satisfy the assumptions of Lemma~\ref{lem:W_norm}.
Having $v_J^2$ sufficiently large and $\beta \geq (1+\delta)\betafree (J) = 8\pi (1+\delta) v_J^2$ is again sufficient to obtain 
$\norm{W_0}_{\Omega_0^U} \leq C {A} e^{-\frac{1}{4} \gamma \beta} \leq \epsilon_0$.
Then we choose $\epsilon''_{\delta} = \min\{\epsilon_{\delta},  \theta_{\cJ} \epsilon_s, \frac{1}{4} \}$ (in place of $\epsilon'_{\delta}$)
so we have a common domain $[-\epsilon''_{\delta} , \epsilon''_{\delta}]$ of $s$ on which Theorem~\ref{thm:stable_manifold} is satisfied for all $J\in \cJ$.
Moreover, whenever $|s| \leq \epsilon''_{\delta} \leq \frac{\delta}{4}$,
\begin{align}
	\betaeff (s, J) = (1 + s v_J^{-2})^{-1} \beta \geq (1+\delta/2)^{-1} \beta \geq (1+\delta)(1+\delta/2)^{-1} \betafree (J)
\end{align}  
hence one has uniform lower bound of $\betaeff(s, J)/\betafree (J)$ greater than $1$.
Since $\mathfrak{s}_0^c (\beta, c) = O(e^{-\frac{1}{4}\gamma \beta}) = O(e^{-2\pi \gamma v_J^2 })$ by Theorem~\ref{thm:stable_manifold},
taking $v_J^2 \geq C |\log \delta|$ for sufficiently large $C$ gives $|\mathfrak{s}_0^c|\leq \epsilon''_{\delta} / 2$.
The same continuity argument as in \emph{(i)} then applies to give the conclusion.
\end{proof}

\begin{proof}[Proof of Proposition~\ref{prop:stable_manifold}]
  The claims are a consequence of Corollary~\ref{cor:tuning_s} and Proposition~\ref{prop:inf_vol_RG}. 
To be more specific, 
we first tune the initial condition to $(s_0, W_0, K_0) = (s_0^c  , \tilde{U}, 0)$ and assume as an induction hypothesis that the flow of $(s_k, W_k, K_k^{\Lambda_N})$ determined by $( \Phi_k^{ \Lambda_N} )_{k\leq j}$ exists up to $k \leq j \leq N-2$. 
Then by Proposition~\ref{prop:inf_vol_RG}~(iv),  
they have the same coupling constants $s_j$ and $W_j$ as the flow defined by $(\Phi^{\Z^2}_k)_{k \leq j}$ with the same initial conditions, thus in particular $\norm{U_j}_{\Omega_j^U}  \leq O(\norm{W_0}_{\Omega_0^U} ) L^{-\alpha j}$.
Now by \eqref{eq:bound_for_L_j_K_j} and \eqref{eq:bound_for_derivative_of_Nj}, 
since $2 C_1 L^2 \alphaLoc \leq L^{-\alpha}$,
\begin{align}
	\norm{K_{j+1}^{ \Lambda_N}}_{\Omega_{j+1}^K}
		\leq \frac{1}{2} L^{-\alpha} \norm{K^{ \Lambda_N}_j}_{\Omega_j^K} + C_2 ( \norm{K^{ \Lambda_N}_j}_{\Omega_j^K} + \norm{U_j}_{\Omega_j^U}  )^2
		,
\end{align}
for $j\leq N-2$. The flow of $( K_{j}^{\Lambda_N}) )_{0\leq j \leq N-1}$ is thus dominated by an exponentially converging sequence uniformly in $N$, i.e.,  
if $(k_j)_{j \geq 0}$ solves $k_0 = 0$ and
\begin{align}
	k_{j+1} =  \frac{1}{2} L^{-\alpha} k_j + C_2 ( k_j + \norm{U_j}_{\Omega_j^U}  )^2	,
\end{align}
then $\norm{K_{j}^{ \Lambda_N}}_{\Omega_{j}^K} \leq k_j \leq O( \norm{W_0}_{\Omega_0^U} L^{-\alpha j} )$ for $\norm{W_0}_{\Omega_0^U}$ sufficiently small. 
Thus the flow of the renormalisation group coordinates exist up to scale $j+1$, completing the induction. 
The estimates \eqref{eq:stable_manifold_bounds} are by-products of the induction. 

\end{proof}

\section{Torus scaling limit}
\label{sec:integration_of_zero_mode}

We assume that the conclusions of Proposition~\ref{prop:stable_manifold} hold.
In particular, we will fix $s=s_0^c(J,\beta)$ and the renormalisation group flow $(E_j,U_j,K_j)$ satisfies \eqref{eq:stable_manifold_bounds} for $j<N-1$.
We now consider the final renormalisation group steps corresponding to the 
covariances $\Gamma_{N}^{\Lambda_N}(s)$ and $t_N(s,m^2)Q_N$ -- as was done in \eqref{eq:barC^Lambda_N}, $m^2$ is set to be 0 in $\Gamma_N^{\Lambda_N}$.

\subsection{Final renormalisation group steps}
\label{sec:finalsteps}

We first consider the $N$-th renormalisation group step corresponding to the covariance $\Gamma_N^{\Lambda_N} = \Gamma_N^{\Lambda_N} (s)$. 
 
\begin{proposition}[Integration with respect to the bounded covariance] \label{prop:final_RG_v2}
Let 
\begin{align}
\Phi_{N}^{\Lambda_N} : (E_{N-1}, s_{N-1}, W_{N-1}, K_{N-1}) \mapsto (E'_{N}, s'_{N}, W'_{N}, K'_{N})
\end{align}
be defined according to Definition~\ref{def:evolution_of_U} and Definition~\ref{def:evolution_of_remainder} but with $\Gamma_{j+1}$ replaced by $\Gamma_{N}^{\Lambda_N}$. 
Then
\begin{equation}
Z_{N} (\varphi') := e^{-E'_{N}|\Lambda_N|} ( e^{U'_{N} (\Lambda_N, \varphi') } + K'_{N} (\Lambda_N, \varphi') ) = \E^{\zeta}_{\Gamma^{\Lambda_N}_{N}}[ Z_{N-1}(\varphi' + \zeta)] \label{eq:Z_N,N_v2}
\end{equation}
where $U'_N (\Lambda_N, \varphi') = \frac{1}{2} s'_N |\nabla\varphi'|^2_{\Lambda_N} + W'_N (\Lambda_N, \varphi')$
and $E'_{N}, s'_{N}, W'_{N}, K'_{N}$ satisfy the estimates of Theorem~\ref{thm:H_j_E_j_estimate} and Theorem~\ref{thm:local_part_of_K_j+1}.

In particular, there are $C > 0$,  $\epsilon \equiv \epsilon(\beta, A,L)$ (only polynomially small in $\beta$) such that whenever $\norm{(U_{N-1}, K_{N-1})}_{\Omega_{N-1}} \leq \epsilon$, then
\begin{align}  \label{eq:U_N_K_N_bound}
\norm{(U'_N, K'_N)}_{\Omega_N} \leq C L^2 \norm{(U_{N-1}, K_{N-1})}_{\Omega_{N-1}}
\end{align}
\end{proposition}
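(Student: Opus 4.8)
The plan is to treat the $N$-th renormalisation group step $\Phi_N^{\Lambda_N}$ as a single instance of the generic renormalisation group step analysed in Section~\ref{sec:rg_generic_step}, with the only difference being that the covariance $\Gamma_{j+1}$ is replaced by $\Gamma_N^{\Lambda_N}$. The key observation is that \emph{all} inputs needed for the construction of $\Phi_{j+1}$ and for the proofs of Theorems~\ref{thm:general_RG_step_consistent}--\ref{thm:local_part_of_K_j+1} about $\Gamma_{j+1}$ were isolated into a small number of abstract properties of the covariance: the finite-range property at scale $j+1$, the covariance bounds of Corollary~\ref{cor:Gammaj}, the regulator stability estimate of Proposition~\ref{prop:E_G_j}, and the subdecomposition estimates (Lemmas~\ref{lemma:fine_Gamma_estimate}, \ref{lemma:g_j+s_bound_by_quadratic}, \ref{lemma:G_change_of_scale}). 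By Proposition~\ref{prop:decomp_compatible}, Corollary~\ref{cor:Gammaj}, Lemma~\ref{lemma:fine_Gamma_estimate}, and Proposition~\ref{prop:E_G_j} (all of which explicitly include the last-scale covariance $\Gamma_N^{\Lambda_N}$ in their statements), every one of these properties holds verbatim for $\Gamma_N^{\Lambda_N}$ as well. So first I would point out that the definition of $\Phi_N^{\Lambda_N}$ via Definitions~\ref{def:evolution_of_U} and~\ref{def:evolution_of_remainder} makes sense with $\Gamma_N^{\Lambda_N}$ in place of $\Gamma_{j+1}$, and that \eqref{eq:Z_N,N_v2} holds: this is exactly the algebraic identity of Theorem~\ref{thm:general_RG_step_consistent}, whose proof (reblocking, the $e^{U_j}$-replacement identity \eqref{eq:reblock-K-2}, the $\bar K_j$-replacement identity \eqref{eq:reblock-K-3}, resummation, and the cancellation \eqref{eq:cancellation_J}) is purely combinatorial and uses nothing about the covariance beyond that $\Eplus$ is a linear expectation and the finite-range factorisation of Section~\ref{subsec:limit_m_to_0}, which holds for $\Gamma_N^{\Lambda_N}$ since it has range $\geq \tfrac14 L^{N-1} \geq \tfrac14 L^N$ up to the diameter of $\Lambda_N$. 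One subtlety to flag: on the torus at the last scale there is only a single polymer, namely $\Lambda_N$ itself, so \eqref{eq:Z_N,N_v2} is simply the statement of \eqref{eq:general_RG_step2} at scale $N$ with $\mathcal{P}_N(\Lambda_N)=\{\emptyset,\Lambda_N\}$, giving $Z_N=e^{-E_N'|\Lambda_N|}(e^{U_N'(\Lambda_N)}K_N'(\emptyset)+e^{U_N'(\emptyset)}K_N'(\Lambda_N))=e^{-E_N'|\Lambda_N|}(e^{U_N'(\Lambda_N)}+K_N'(\Lambda_N))$, using $K_N'(\emptyset)=1$ and $U_N'(\emptyset)=0$.

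Next I would invoke Theorems~\ref{thm:H_j_E_j_estimate} and~\ref{thm:local_part_of_K_j+1}, noting that their proofs go through with $\Gamma_{j+1}$ replaced by $\Gamma_N^{\Lambda_N}$ because each ingredient has a stated last-scale analogue: Theorem~\ref{thm:H_j_E_j_estimate} only uses the evaluation of $\Eplus U_j$ (which still works since $\zeta$ is centred, and the analogue of $\Gamma_{j+1}(0)$ and $\nabla^{(e_1,-e_1)}\Gamma_{j+1}(0)$ for $\Gamma_N^{\Lambda_N}$ is controlled by Corollary~\ref{cor:Gammaj}) together with Proposition~\ref{prop:Loc-coupling}; and Theorem~\ref{thm:local_part_of_K_j+1} uses only the building-block estimates of Definition~\ref{def:derivativebds}, Lemma~\ref{lemma:Ujbound-summary}, the contraction mechanisms of Section~\ref{sec:inequalities} (Propositions~\ref{prop:Loc-contract}, \ref{prop:largeset_contraction-v2}), the chain rule Proposition~\ref{prop:differentiability_of_product_v2}, and Proposition~\ref{prop:E_G_j}, all of which explicitly or by their proofs apply at the last scale. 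This gives the estimates of Theorems~\ref{thm:H_j_E_j_estimate} and~\ref{thm:local_part_of_K_j+1} for $(E_N',s_N',W_N',K_N')$.

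Finally, to deduce \eqref{eq:U_N_K_N_bound} I would simply combine these estimates. For the $U$-component, \eqref{eq:H_j+1_bound} gives $|\mathfrak{s}_N'-s_{N-1}|\leq Ch^{-2}A^{-1}\|K_{N-1}\|_{\Omega_{N-1}^K}$, and $\mathfrak{z}_N'^{(q)} = L^2 e^{-\frac12\beta q^2\Gamma_N^{\Lambda_N}(0)}z_{N-1}^{(q)}$ is bounded in $\|\cdot\|_{\Omega_N^U}$-norm by $L^2\|W_{N-1}\|_{\Omega_{N-1}^U}$ since $e^{-\frac12\beta q^2\Gamma_N^{\Lambda_N}(0)}\leq 1$ and the weight $e^{c_f\beta q}$ in \eqref{eq:U_norm} is scale-independent; hence $\|U_N'\|_{\Omega_N^U}\leq CL^2\|(U_{N-1},K_{N-1})\|_{\Omega_{N-1}}$. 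For the $K$-component, the decomposition $\mathcal{K}_N' = \mathcal{L}_N'(K_{N-1}) + \mathcal{M}_N'(U_{N-1},K_{N-1})$ together with \eqref{eq:bound_for_L_j_K_j} (bounding $L^2\alpha_{\Loc}$ by $CL^2$ crudely, since here we only want polynomial-in-$L$ control, not contraction) and \eqref{eq:bound_for_N_j_K_j} (which under $\|(U_{N-1},K_{N-1})\|_{\Omega_{N-1}}\leq\epsilon_{nl}$ gives a quadratic bound, hence $\leq\|(U_{N-1},K_{N-1})\|_{\Omega_{N-1}}$ times a constant) yields $\|K_N'\|_{\Omega_N^K}\leq CL^2\|(U_{N-1},K_{N-1})\|_{\Omega_{N-1}}$, with $\epsilon = \epsilon_{nl}(\beta,A,L)$ from Theorem~\ref{thm:local_part_of_K_j+1}(ii) (polynomially small in $\beta$). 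Taking the max of the two bounds gives \eqref{eq:U_N_K_N_bound}. The main obstacle — or rather, the main point requiring care — is not any hard estimate but the bookkeeping: making sure that every lemma and proposition invoked in Section~\ref{sec:rg_generic_step} was indeed formulated (or has an immediate stated analogue) for the last-scale covariance $\Gamma_N^{\Lambda_N}$, so that no genuinely new argument is needed; I would state this as the reason the proof is essentially a transcription, and spell out explicitly the one or two places (the single-polymer structure on the torus, and the crude $L^2\alpha_{\Loc}\leq CL^2$ bound) where the last step differs cosmetically from a generic interior step.
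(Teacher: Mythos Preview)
Your proposal is correct and follows essentially the same approach as the paper: both argue that \eqref{eq:Z_N,N_v2} holds by construction since $\cP_N(\Lambda_N)=\{\emptyset,\Lambda_N\}$, that Theorems~\ref{thm:H_j_E_j_estimate} and~\ref{thm:local_part_of_K_j+1} carry over because $\Gamma_N^{\Lambda_N}$ satisfies the same covariance and regulator estimates (Corollary~\ref{cor:Gammaj}, Lemma~\ref{lemma:fine_Gamma_estimate}, Proposition~\ref{prop:E_G_j}), and then deduce \eqref{eq:U_N_K_N_bound} using $\Gamma_N^{\Lambda_N}(0)\geq 0$ for the $W$-part and the crude bound $\alpha_{\Loc}\leq C$ for the $K$-part. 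One small slip: your inequality ``range $\geq \tfrac14 L^{N-1}\geq \tfrac14 L^N$'' is backwards, but the finite-range factorisation is in any case vacuous at the last scale since there is only one nonempty polymer.
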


\begin{proof}
  The identity \eqref{eq:Z_N,N_v2} is true by construction since $\cB_N(\Lambda_N)$ only consists of the empty polymer and $\Lambda_N$ itself.
  Also the estimates of Theorem~\ref{thm:H_j_E_j_estimate} and Theorem~\ref{thm:local_part_of_K_j+1} hold because 
$\Gamma_{N}^{\Lambda_N}$ satisfies the same estimates as $\Gamma_{N}$,
  cf. Corollary~\ref{cor:Gammaj} and Lemma~\ref{lemma:fine_Gamma_estimate}
  for the covariance estimates and Proposition~\ref{prop:E_G_j} for the corresponding regulators.
  
  To see the final remark, notice that the analogue of Theorem~\ref{thm:H_j_E_j_estimate} bounds $s_N$. Also since
  \begin{align}
  W'_N (\Lambda_N, \varphi) = L^{-2N} \sum_{q=1}^{\infty} L^2 e^{-\frac{1}{2} \beta q^2 \Gamma_N^{\Lambda_N} (0)} z_{N-1}^{(q)} \sum_{x\in B} \cos(q \sqrt{\beta} \varphi(x) )
  \end{align}
and $\Gamma_{N}^{\Lambda_N} (0)\geq 0$, we have $\norm{U'_N}_{\Omega_N^U} \leq C L^2 \norm{U_{N-1}}_{\Omega_N^U}$. Also \eqref{eq:bound_for_L_j_K_j} and \eqref{eq:bound_for_N_j_K_j} bound $K'_N$, but now
\begin{align}
\alphaLoc = CL^{-3}(\log L)^{3/2}+ C\min\ha{1,\sum_{q\geq 1}
      e^{\sqrt{\beta}qh}
      e^{-(q-1/2)r\beta\Gamma_{N}^{\Lambda_N}(0)}} \leq 2 C ,
\end{align}
so $\norm{K'_N}_N \leq 2C L^2 \norm{(U_{N-1}, K_{N-1})}_{\Omega_{N-1}}$.
\end{proof}

Finally, we consider the integration of the zero mode, i.e., the last covariance $t_NQ_N$.
Since $Q_N$ is the orthogonal projection onto the constant vectors in $\R^\Lambda$,
\begin{align}
  \tilde Z_{N}(\varphi;m^2)
  := \E_{\bar{C}(s,m^2)} Z_0(\varphi+\zeta)
  &= \E_{t_N(s,m^2)Q_N} Z_{N}(\varphi+\zeta)
    \nnb
    &= \sqrt{\frac{|\Lambda_N|}{2\pi t_N}} \int_{\R} e^{-\frac{|\Lambda_N|}{2t_N}\zeta^2} Z_{N} (\varphi+\zeta)\,  d\zeta
\end{align}
where $\bar{C}$ is defined by \eqref{eq:barC^Lambda_N}.
 For constant $\zeta$, using $G_N(\Lambda,\varphi+\zeta)=G_N(\Lambda,\varphi)$ for such $\zeta$,
 see \eqref{eq:def_large_field_regulator},
\begin{align}
e^{E'_N |\Lambda_N|}  Z_{N}(\varphi+\zeta) & =e^{U'_{N} (\Lambda_N, \varphi+\zeta)} + K'_{N} (\Lambda_N, \varphi+\zeta) \nnb
  & = e^{\frac{1}{2}s'_N |\nabla \varphi|^2_{\Lambda_N}} \big( 1+ O( \norm{W'_N}_{\Omega_{N}^U} ) \big) + O(\|K'_{N} \|_{\Omega_N^K}) G_N(\Lambda_N, \varphi).
\end{align}
whenever $\norm{W'_N}_{\Omega_{N}^U} \leq 1$.
The last right-hand side is independent of $\zeta$, so uniformly in $m^2>0$,
\begin{equation}
  \tilde Z_N(\varphi;m^2) 
  = e^{-E'_N |\Lambda_N|} \Big( e^{\frac{1}{2} s'_{N}|\nabla \varphi|_{\Lambda_N}^2 } \big( 1 + O( \norm{W'_N}_{\Omega_{N}^U} ) \big) + O(\|K'_{N} \|_{\Omega_N^K}) G_N(\Lambda_N, \varphi) \Big).
  \label{eq:Z_N,N}
\end{equation}

\subsection{Proof of Theorem~\ref{thm:highbeta}}

To prove the theorem, we will apply   \eqref{eq:Z_N,N} with
\begin{equation} \label{eq:uN_def}
	\varphi=u_N:=\tilde C(s,m^2)(1 + s\gamma \Delta)^{-1}f_N
\end{equation}
where the covariance $\tilde C(s,m^2)$
is the one from Lemma~\ref{lemma:reformulation} and $f_N$ is as in Theorem~\ref{thm:highbeta}.
The next elementary lemma shows that the exponential term and the regulator of the above
are bounded for this choice.

\begin{lemma} \label{lemma:integral_of_zero_mode}
  Let $J \subset \Z^2\setminus 0$ be any finite-range step distribution as in Section~\ref{sec:finite_range_decomposition},
  and assume that $\theta_J$ is bounded below (see \eqref{eq:theta_def}).
  Let $f\in C^\infty(\T^2)$ with $\int_{\T^2} f\, dx=0$, let $f_N$ be given by \eqref{eq:f_N_definition},
  and define $u_N$ by \eqref{eq:uN_def}.
  Then there are constants $C,c > 0$ uniform in $m^2 \geq 0$ and $N \in \mathbb{N}$
  such that for $|s|\leq c$,
\begin{equation}
  | \nabla u_N|^2_{\Lambda_N} \leq C, \qquad 
    G_N(\Lambda_N, u_N) \leq C . \label{eq:last_regulator_has_unif_bound}
  \end{equation}
Further,
  \begin{equation}
\lim_{N\to\infty} \lim_{m^2\downarrow 0}(f_N,\tilde C(s,m^2)f_N) = \frac{1}{s+v_{J}^2} (f,(-\Delta_{\T^2})^{-1}f)_{\T^2} . \label{eq:fCf_limit}
  \end{equation}
\end{lemma}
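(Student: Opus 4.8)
The plan is to prove Lemma~\ref{lemma:integral_of_zero_mode} via Fourier analysis on the torus, using the explicit expression for $\tilde C(s,m^2)$ and the fact that $u_N$ is a smoothed, rescaled version of the macroscopic test function $f$. First I would diagonalise all the relevant operators in the Fourier basis of $\Lambda_N$: recalling from \eqref{e:tildeC} that $\tilde C(s,m^2) = \gamma(1+s\gamma\Delta) + (1+s\gamma\Delta)C(s,m^2)(1+s\gamma\Delta)$ and from \eqref{eq:decomp2} that $C(s,m^2)^{-1} = C(m^2)^{-1} - s\Delta$ with $C(m^2)^{-1} = ((-\Delta_J+m^2)^{-1}-\gamma)^{-1}$, one sees after a short computation that the Fourier multiplier of $(1+s\gamma\Delta)^{-1}\tilde C(s,m^2)(1+s\gamma\Delta)^{-1}$ is exactly $(\lambda_{J,m^2}(p) + s\lambda(p))^{-1}$, i.e. the Fourier multiplier of $(-\Delta_J + m^2 - s\Delta)^{-1}$. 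This is the natural object: $\tilde C(s,m^2)$ is (up to the gradient dressing) the Green's function of $-\Delta_J - s\Delta$ plus mass, and the dressing is precisely cancelled by the $(1+s\gamma\Delta)^{-1}$ factors in \eqref{eq:uN_def}.

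For the quantitative bounds \eqref{eq:last_regulator_has_unif_bound}, the key estimate is that $u_N$, while defined on the lattice $\Lambda_N$, is a discretisation of a fixed smooth macroscopic function. Concretely, $f_N$ from \eqref{eq:f_N_definition} is (up to the $|\Lambda_N|^{-1}$ normalisation and removal of the zero mode) the evaluation of $f$ at scale $L^{-N}$, so its Fourier coefficients $\hat f_N(p)$ are supported (effectively) on $|p| \lesssim L^{-N}$ and satisfy rapid decay inherited from $f\in C^\infty(\T^2)$. Acting with $\tilde C(s,m^2)(1+s\gamma\Delta)^{-1}$, whose multiplier behaves like $v_J^{-2}|p|^{-2}(1+O(s))$ for small $|p|$ by Lemma~\ref{lem:lambda_error} (using $\theta_J$ bounded below to control the denominator uniformly for $|s|$ small), I would bound $\nabla u_N(x)$, $\nabla^2 u_N(x)$ pointwise and uniformly in $x$, $m^2$, $N$: each discrete derivative $\nabla^\mu_N$ (scaled as in the definition of $G_N$, i.e. by $L^N$) contributes a factor $L^N|p| \lesssim 1$ on the support of $\hat f_N$, and the sum over $p\in\Lambda_N^*$ of $|p|^{-2}|\hat f_N(p)|$ times polynomial factors converges to a Riemann-integral-type quantity controlled by Sobolev norms of $f$. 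This gives $\|\nabla_N u_N\|_{L^\infty}, \|\nabla^2_N u_N\|_{L^\infty} \leq C$ and $|\nabla u_N|^2_{\Lambda_N} = (u_N, -\Delta u_N)_{\Lambda_N} \leq C$; then $G_N(\Lambda_N,u_N) \leq C$ follows directly from the definition \eqref{eq:def_large_field_regulator} of the regulator since $\Lambda_N$ consists of $L^{2N}$ blocks at scale $N$, i.e. $O(1)$ blocks, and each term in the exponent is $O(\kappa_L) = o(1)$.

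For the limit \eqref{eq:fCf_limit}, I would write $(f_N,\tilde C(s,m^2)f_N)_{\Lambda_N}$ in Fourier space as $\frac{1}{|\Lambda_N|}\sum_{p\in\Lambda_N^*} |\hat f_N(p)|^2 \,\widehat{\tilde C}(p;s,m^2)$. Using the Fourier identity above, $\widehat{\tilde C}(p;s,m^2) = (1+s\gamma\lambda(p))^2 (\lambda_{J,m^2}(p)+s\lambda(p))^{-1} + \gamma(1-s\gamma\lambda(p))$; the additive $\gamma$-term contributes $O(\gamma\|f_N\|_{L^2(\Lambda_N)}^2) \to 0$ after the normalisations, and on the effective support $|p|\lesssim L^{-N}$ one has $\lambda(p) = |p|^2(1+O(|p|^2))$, $\lambda_J(p) = v_J^2|p|^2(1+O(\rho_J^2|p|^2))$, $1+s\gamma\lambda(p) = 1+o(1)$, so $\widehat{\tilde C}(p;s,m^2) = (v_J^2+s)^{-1}|p|^{-2}(1+o(1))$ as $N\to\infty$, the $m^2$ being harmless once $m^2\downarrow 0$ is taken first (the $p=0$ mode is excluded since $\hat f_N(0)=0$). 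Then $\frac{1}{|\Lambda_N|}\sum_{p\neq 0} |\hat f_N(p)|^2 (v_J^2+s)^{-1}|p|^{-2}$ is a Riemann sum for $(v_J^2+s)^{-1}\int_{\T^2} |\hat f(k)|^2 (2\pi|k|)^{-2}\,dk = (v_J^2+s)^{-1}(f,(-\Delta_{\T^2})^{-1}f)_{\T^2}$ — this is the standard computation matching lattice Fourier modes $p = 2\pi L^{-N}k$ with continuum modes $k$, using the scaling relation between $\hat f_N(p)$ and $\hat f(k)$ and dominated convergence justified by the decay of $\hat f$. I expect the main obstacle to be the careful bookkeeping of the three nested normalisations (the $|\Lambda_N|^{-1}$ in $f_N$, the $|\Lambda_N|^{-1}$ in the Fourier inversion, and the $L^{-N}$ rescaling relating $\Lambda_N^*$ to $\T^2$-modes) so that the constants come out exactly right in \eqref{eq:fCf_limit}, together with uniformly controlling the error terms $O(\rho_J^2|p|^2)$ etc. in the regime where $\rho_J$ is allowed to grow (needed for Remark~\ref{rk:highbeta-rho}); here the hypothesis $v_J \geq C^{-1}\rho_J$ and $\theta_J$ bounded below from \eqref{eq:frd_ulbds} are exactly what is needed to keep everything uniform.
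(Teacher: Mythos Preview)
Your approach is essentially the same as the paper's: both proceed by Fourier analysis on $\Lambda_N^*$, bounding $|\hat u_N(p)|\lesssim \lambda(p)^{-1}|\hat f_N(p)|$ uniformly in $s,m^2$ via $\theta_J$, invoking the rapid decay $|\hat f_N(p)|\leq C_a L^{-2Na}|p|^{-2a}$ inherited from $f\in C^\infty(\T^2)$, and then matching lattice modes $p=2\pi L^{-N}k$ to continuum modes to get the limit \eqref{eq:fCf_limit} by dominated convergence.

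Two small points. First, your ``short computation'' that the multiplier of $(1+s\gamma\Delta)^{-1}\tilde C(s,m^2)(1+s\gamma\Delta)^{-1}$ is \emph{exactly} $(\lambda_{J,m^2}+s\lambda)^{-1}$ is not quite right: carrying the algebra through gives an extra factor $(1-s\gamma\lambda)^{-1}(1-s\gamma\lambda\lambda_{J,m^2}/(\lambda_{J,m^2}+s\lambda))^{-1}$, which is only $1+O(\lambda)$. This is harmless for both the bounds and the limit (as you yourself anticipate when you discard the $\gamma$-corrections as lower order on $|p|\lesssim L^{-N}$), but the dressing does not cancel exactly. Second, your route to the regulator bound---controlling $\|\nabla_N u_N\|_{L^\infty}$ and $\|\nabla_N^2 u_N\|_{L^\infty}$ directly via the $\ell^1$-norm of the Fourier side---is slightly more elementary than what the paper does: the paper instead bounds $\|\nabla_N^a u_N\|_{L^2_N(\Lambda_N)}$ for $a=1,\dots,4$ and then applies the lattice Sobolev inequality (Lemma~\ref{lemma:lattice_sobolev_inequality}) to absorb the $L^\infty$-term in $W_N$. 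Your route works and avoids the Sobolev step; the paper's $L^2$ computation is perhaps more systematic. (Also, at scale $N$ the torus $\Lambda_N$ is a \emph{single} block, not $L^{2N}$ blocks---but your conclusion that the regulator exponent has $O(1)$ terms is correct.)
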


The proof of the lemma is given after the following conclusion of
the proof of our main theorem.

\begin{proof}[Proof of Theorem~\ref{thm:highbeta}]
By assumption, the conditions of Proposition~\ref{prop:stable_manifold}(i) hold
and $Z_{N}$ and $\tilde Z_{N}$ are then defined as above.
By Lemma~\ref{lemma:m2to0} and Lemma~\ref{lemma:reformulation},
\begin{equation}
  \langle e^{(f_N, \varphi)} \rangle_{J, \beta}^{\Lambda_N}
  = \lim_{m^2 \downarrow 0} \langle e^{(f_N, \varphi )} \rangle_{\beta, m^2}^{\Lambda_N}
  =  \lim_{m^2 \downarrow 0} e^{\frac{1}{2} (f_N, \tilde{C}(s, m^2) f_N)} \frac{\E_{C(s, m^2)} [Z_0 (\zeta + u_N)]}{\E_{C(s, m^2)} [Z_0 (\zeta)]} ,
\end{equation}
and by Lemma~\ref{lemma:m2to0_with_frd},
\begin{equation}
  \lim_{m^2 \downarrow 0} \E_{C(s,m^2)} [Z_0 (\zeta + u_N)]
  = \lim_{m^2 \downarrow 0}
  \E_{t_N(s,m^2)Q_N} Z_{N} ( \zeta + u_N ).
\end{equation}
But by \eqref{eq:Z_N,N} and \eqref{eq:last_regulator_has_unif_bound}, 
\begin{align}
  \frac{\mathbb{E}_{t_NQ_N} [Z_{N} (\zeta + u_N)]}{\mathbb{E}_{t_NQ_N}  [Z_{N} (\zeta)]}
  & = e^{\frac{1}{2}s'_{N} (u_N, -\Delta u_N)} \big(1+ O(\norm{W'_N}_{\Omega_N^U})  \big) + O(\norm{K'_{N}}_{\Omega_N^K}) G_N (\Lambda_N,u_N) \nnb
   & = e^{O(s'_{N})} \big(1+ O(\norm{W'_N}_{\Omega_N^U})  \big) + O(\norm{K'_{N}}_{\Omega_N^K})
\end{align}
while Proposition~\ref{prop:stable_manifold}(i) and \eqref{eq:U_N_K_N_bound} 
implies that 
$|s'_{N}|+ \norm{W'_N}_{\Omega_N^U} + \norm{K'_{N}}_{\Omega_N^K} = O(L^{-\alpha N})$,
provided that $s_0$ and $s$ are tuned to the correct initial value $s_0^c (J, \beta)$.
Therefore the limit in $N\rightarrow \infty$ converges to $1$, uniformly in $m^2 >0$, hence in particular
\begin{equation}
  \lim_{N\rightarrow \infty} \lim_{m^2 \downarrow 0}
  \frac{\E_{C(s_0^c(J, \beta), m^2)} [Z_0 (\zeta+u_N)]}{\E_{C(s_0^c(J, \beta), m^2)} [Z_0 (\zeta)]} =1.
\end{equation}
Also, by \eqref{eq:fCf_limit},
\begin{equation}
\lim_{N\rightarrow \infty} \lim_{m^2 \downarrow 0}  e^{\frac{1}{2} (f_N, \tilde{C}(s_0^c(\rho, \beta), m^2) f_N )} = \exp\pa{ \frac{(f,  (-\Delta_{\mathbb{T}^2})^{-1} f)}{2 (v_{J}^2 + s_0^c (J, \beta))}  }.
\end{equation}
In view of the rescaling discussed around \eqref{eq:Omega_rescaled},
this proves  the main conclusion
\eqref{e:highbeta-convergence}
with $\beta_{\rm eff}(J, \beta) = \beta v_J^2/(v_{J}^2 + s_0^c (J, \beta))$.
\end{proof}

\begin{proof}[Proof of Remark~\ref{rk:highbeta-rho}]
Let $\cJ = \{ J_{\rho} : \rho \in \N \}$ be the family of range-$\rho$ step distributions.
Then by Lemma~\ref{lem:lambda_rho_error}, if we let $\theta_{\cJ} = \frac{1}{3^2} = \frac{1}{9}$, then $\theta_J \geq \theta_{\cJ}$ for each $J\in \cJ$ and $v_{J_{\rho}}^2 \sim \frac{1}{6} \rho^2$. 
Hence $\cJ$ satisfies the assumptions of Proposition~\ref{prop:stable_manifold}(ii),
so there exists $C >0$ such that for any $\delta >0$, 
$\norm{U_j}_{\Omega_j^U}$ and $\norm{K_j}_{\Omega_j^K}$ both decay exponentially in $j$ and uniformly in $\Lambda_N$ whenever $\rho^2 \geq C | \log \delta |$ and $\beta \geq \beta_0 (J_{\rho}) = (1+ \delta) \betafree (J_{\rho}) \sim \frac{4\pi(1+\delta)}{3} \rho^2$ as $\rho \rightarrow \infty$. 
Together with \eqref{eq:U_N_K_N_bound}, this implies $|s'_N| + \norm{W'_N}_{\Omega_N^U} + \norm{K'_N}_{\Omega_N^K} \rightarrow 0$ as $N\rightarrow \infty$ for $\beta \geq \beta_0 (J_{\rho})$.
Therefore we may follow exactly the same proof as that of Theorem~\ref{thm:highbeta}, but in the temperature range $\beta \geq (1+ \delta) \betafree (J_{\rho})$.
\end{proof}

\subsection{Proof of Lemma~\ref{lemma:integral_of_zero_mode}}

The proof of Lemma~\ref{lemma:integral_of_zero_mode} uses the following standard estimates for the Fourier coefficients
of the functions $f_N$.

\begin{lemma} \label{lemma:hat_f_N_bound}
  For $f\in C^{\infty} (\mathbb{T}^2)$, let $f_N$ be given by \eqref{eq:f_N_definition}. Then there exist constants $C_a$ for $a \geq 0$
  independent of $\Lambda_N$ such that,
  for any $p \in \Lambda_N^* \subset [-\pi, \pi]^2$,
\begin{equation}
  | \hat{f}_N (p) | \leq C_{a} \norm{\nabla^{2a} f}_{L^{\infty} (\T^2)} L^{-2N a}  |p|^{-2 a}.
\end{equation}
\end{lemma}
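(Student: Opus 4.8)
The plan is to prove Lemma~\ref{lemma:hat_f_N_bound} by a summation-by-parts (discrete integration-by-parts) argument against the discrete Laplacian $\Delta$ on $\Lambda_N$, exploiting that $f_N$ is a rescaled sample of a smooth function and that each application of $\Delta$ pays a factor $L^{-2N}$ while producing a factor of the Fourier multiplier $\lambda(p)$.

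First I would compute the Fourier coefficient $\hat f_N(p) = \sum_{x \in \Lambda_N} f_N(x) e^{-ip\cdot x}$ for $p \in \Lambda_N^*$. Since $p \neq 0$ (the zero mode is killed by the mean-zero normalisation in \eqref{eq:f_N_definition}, and $\hat f_N(0) = \sum_x f_N(x) = 0$), the constant subtracted in \eqref{eq:f_N_definition} does not contribute, so $\hat f_N(p) = \frac{1}{|\Lambda_N|} \sum_{x\in\Lambda_N} f(L^{-N}x) e^{-ip\cdot x}$. Next I would use the identity $\lambda(p)^a \hat f_N(p) = \widehat{(-\Delta)^a f_N}(p)$, valid because $-\Delta$ has Fourier multiplier $\lambda(p) = \sum_{|e|=1}(1-\cos(p\cdot e))$, which follows from summation by parts on the torus. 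This gives
\begin{equation}
|\hat f_N(p)| = \lambda(p)^{-a} |\widehat{(-\Delta)^a f_N}(p)| \leq \lambda(p)^{-a} \|(-\Delta)^a f_N\|_{L^1(\Lambda_N)} \cdot |\Lambda_N|^{-1} \cdot |\Lambda_N|,
\end{equation}
or more carefully, bounding the Fourier coefficient by the $\ell^1$-type norm: $|\widehat{(-\Delta)^a f_N}(p)| \leq \sum_{x\in\Lambda_N} |(-\Delta)^a f_N(x)|$.

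Then I would estimate $(-\Delta)^a f_N$ pointwise. Since $f_N(x) = |\Lambda_N|^{-1}(f(L^{-N}x) - \text{const})$ and $\Delta$ annihilates constants, each discrete derivative $\nabla^\mu$ acting on $x \mapsto f(L^{-N}x)$ is a difference over a lattice step, which by Taylor's theorem (with $f$ smooth on $\mathbb{T}^2$) is bounded by $L^{-N}\|\nabla f\|_\infty$ up to the obvious higher-order corrections; more precisely $|\nabla^{2a}(f(L^{-N}\cdot))(x)| \leq C_a L^{-2Na} \|\nabla^{2a} f\|_{L^\infty(\mathbb{T}^2)}$ where $\nabla^{2a}$ on the right is the continuum derivative (again by Taylor expansion of $f$ in a neighbourhood of $L^{-N}x$, using $L^{-N} \leq 1$ to absorb lower-order terms). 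Summing over the $|\Lambda_N| = L^{2N}$ lattice points and dividing by $|\Lambda_N|$ from the normalisation of $f_N$, we get $\sum_{x\in\Lambda_N}|(-\Delta)^a f_N(x)| \leq C_a L^{-2Na}\|\nabla^{2a} f\|_{L^\infty(\mathbb{T}^2)}$. Finally, by Lemma~\ref{lem:lambda_error} we have $\lambda(p) \geq \frac{4}{\pi^2}|p|^2$ on $(-\pi,\pi]^2$, so $\lambda(p)^{-a} \leq (\pi^2/4)^a |p|^{-2a}$, and combining the two bounds yields the claim with a suitable $C_a = C_a(f)$ absorbing the numerical constant.

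The main obstacle — though it is routine rather than deep — is bookkeeping the Taylor-expansion step cleanly: one must check that the discrete $2a$-th order difference of the sampled smooth function is controlled by the continuum $2a$-th derivative uniformly in $N$, with constants depending only on $a$ and not on $N$ or $\Lambda_N$. This is where one uses that the lattice spacing $L^{-N}$ is bounded (indeed small), so that the error terms in the discrete-to-continuum comparison are lower order and can be absorbed into $\|\nabla^{2a}f\|_{L^\infty}$ (or, if one prefers to be fully explicit, into a sum $\sum_{k\leq 2a}\|\nabla^k f\|_{L^\infty}$, which is $\leq C_a\|\nabla^{2a}f\|_{L^\infty}$ up to adjusting the statement — but since $f$ has mean zero and is smooth on the compact torus, all these norms are finite and comparable via the stated form). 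One should also note that the only place $p \neq 0$ is used is to divide by $\lambda(p)$, and this is guaranteed since $p$ ranges over $\Lambda_N^* \setminus\{0\}$ effectively (the $p=0$ case gives $\hat f_N(0)=0$ trivially, consistent with the bound).
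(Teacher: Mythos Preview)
Your proposal is correct and follows essentially the same approach as the paper: summation by parts to trade $\hat f_N(p)$ for the Fourier coefficient of a discrete Laplacian power of $f_N$, followed by a pointwise bound on the discrete derivatives of the sampled smooth function. The only cosmetic differences are that the paper works with the one-dimensional multipliers $\lambda_k(p)=2-2\cos p_k$ separately (obtaining bounds $|p_k|^{-2a}$ for each $k$ and combining), whereas you use the full $\lambda(p)$ and the lower bound $\lambda(p)\ge \tfrac{4}{\pi^2}|p|^2$; and the paper makes the Taylor step explicit via the exact integral identity $(\partial^{(e_k,-e_k)})^a f(z)=\int_{[0,L^{-N}]^{2a}}\partial_{x_k}^{2a}f(\cdots)\,ds\,dt$, which shows directly that only the continuum $2a$-th derivative enters, so no lower-order corrections arise and your hedge about absorbing $\sum_{k\le 2a}\|\nabla^k f\|_{L^\infty}$ is unnecessary.
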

\begin{proof}
Define two components of the Fourier multiplier
\begin{align}
\lambda_1 (p) = 2 - 2\cos(p_1), \quad \lambda_2 (p) = 2- 2\cos (p_2)
\end{align}
for $p = (p_1, p_2)$ so that $\lambda (p) = \lambda_1 (p) + \lambda_2 (p)$.
One has
\begin{align}
\hat{f}_N (p) = 
\begin{array}{ll}
\begin{cases}
\frac{1}{|\Lambda_N|} \sum_{x\in \Lambda_N} f(L^{-N} x) e^{-i p\cdot x} &  \text{if} \;\; p \neq 0 \\
0 & \text{if} \;\; p = 0
\end{cases}
\end{array}
\end{align}
hence for $k\in\{1,2\}$,
\begin{equation}
\lambda_k^a (p) |\hat{f}_N (p) | = \Big|  \frac{1}{|\Lambda_N|} \sum_{x\in \Lambda_N} e^{-ip \cdot x} ( \partial_{\Lambda_N}^{(e_k, -e_k )} )^a f(L^{-N}x) \Big| 
\leq \sup_{x \in \Lambda_N} | ( \partial_{\Lambda_N}^{(e_k, -e_k)} )^a f(L^{-N}x)| \label{eq:discrete_Laplacian_bound}
\end{equation}
where $\partial_{\Lambda_N}^{(e_k, -e_k)} f(x/L^N) = -f( (x+e_k) / L^N ) - f((x-e_k)/L^N) + 2 f(x/L^N)$ for $x\in \Lambda_N$. But since $\lambda_k (p) \geq \frac{4}{\pi^2} p_k^2$ by Lemma~\ref{lem:lambda_error}, we are just left to bound $| ( \partial^{(e_k, -e_k)} )^a f(x/L^N)|$.
We now claim that
\begin{align}
( \partial_{\Lambda_N}^{(e_k, -e_k)} )^a f (z) =  \int_{[0, L^{-N}]^{2a}} \prod_{l=1}^a ds_l \, dt_l \, \partial_{x_k}^{2a} f \big( z + \sum_{l=1}^{a} (s_l+t_l -L^{-N}) e_l   \big) . \label{eq:derivative_integral_representation}
\end{align}
To see this, start from the elementary observation
\begin{multline}
\partial_{\Lambda_N}^{(e_k, -e_k)}  f (z)= \\2f(z) - f(z+L^{-N}e_k) - f(z-L^{-N}e_k)  = - \int_{[0, L^{-N}]^2} ds \, dt \, \partial_{x_k}^2 f \big( z + (s+t-L^{-N}) e_k \big)
\end{multline}
and proceed by induction. Now by \eqref{eq:discrete_Laplacian_bound} and \eqref{eq:derivative_integral_representation},
\begin{align}
|\hat{f}_N (p)| \leq C_a |p_k|^{-2a} L^{-2Na} \norm{\nabla^{2a} f}_{L^{\infty} (\T^2)}
\end{align}
for $k=1,2$, which concludes the proof.
\end{proof}

\begin{proof}[Proof of Lemma~\ref{lemma:integral_of_zero_mode}]
We defined
\begin{align}
& u_N = (1 + s\gamma \Delta)^{-1} \tilde{C} (s, m^2) f_N \label{eq:varphi_N_definition} \\
& \tilde{C}(s,m^2)= \gamma(1+ s\gamma\Delta) + (1+s\gamma\Delta) C(s,m^2)(1+s\gamma\Delta).
\end{align}
We claim a bit stronger statement than the first inequality in \eqref{eq:last_regulator_has_unif_bound}: for any $f\in C^\infty(\T^2)$ and all $a\in \{1,2,3,4\}$, the norm $\norm{\nabla^a_N u_N}_{L^2_N (\Lambda_N)}$ is bounded uniformly in $N$.
Indeed, in Fourier space, and recalling that $\lambda$ (resp.~$\lambda_J$) denote the Fourier multipliers of $-\Delta$ (resp.~$-\Delta_J$),
\begin{align}
	\hat{u}_N (p) = \frac{(\lambda_J (p) + m^2)^{-1}}{1 + s \lambda (p) ( (\lambda_J (p) + m^2)^{-1} - \gamma )} \hat{f}_N (p)
	.
\end{align}
Since $\lambda (p) \in [0,2]$ and
$(\lambda_J (p) + m^2)^{-1} \lambda (p) \leq \theta_J^{-1}$,
for $|s|$ small,
\begin{align}
|\hat{u}_N(p)| \leq C \theta_J^{-1} \lambda(p)^{-1} |\hat{f}_N (p)|
\end{align}
and for $\Lambda_N^* = 2\pi L^{-N} \Lambda_N$,
\begin{align}
  \norm{\nabla^a_N u_N}_{L^2_N (\Lambda_N)} := L^{2Na - 2N} \norm{\nabla^{a} u_N}^2_{L^2 (\Lambda_N)}
  &\leq \frac{C \theta_J^{-2} |\Lambda_N|^{a - 2} }  {4\pi^2} \sum_{p \in \Lambda_N^*} \lambda (p)^{a - 2} |\hat{f}_N (p)|^2  \\
&= \frac{C \theta_J^{-2}}{4\pi ^2} \sum_{k \in \Lambda_N} \big( |\Lambda_N| \lambda( 2\pi L^{-N} k ) \big)^{a-2}  | \hat{f}_N (2\pi L^{-N} k)  |^2 .\nonumber
\end{align}
By Lemma~\ref{lem:lambda_error} and the lower bound $\cos(x) \geq 1 - x^2/2$,
\begin{align}
16 |k|^2 \leq |\Lambda_N| \lambda (2\pi L^{-N} k) = 2 L^{2N} (2 - \cos ( 2\pi L^{-N} k_1 ) - \cos( 2\pi L^{-N} k_2) ) \leq 4\pi^2 |k|^2
\end{align}
and together with Lemma~\ref{lemma:hat_f_N_bound}, we have
\begin{align}
\sum_{k \in \Lambda_N} \big( |\Lambda_N| \lambda( 2\pi L^{-N} k ) \big)^{a-2}  | \hat{f}_N (2\pi L^{-N} k)  |^2 \leq C_a (1 + \sum_{k\in \Lambda_N \backslash \{ 0\} } |k|^{2(a-2)} |k|^{-2a} ) < \infty
\end{align}
for $a\in \{1,2,3,4\}$.
The case $a=1$ concludes the proof of the first inequality \eqref{eq:last_regulator_has_unif_bound}.
Moreover, by the lattice Sobolev inequality (Lemma~\ref{lemma:lattice_sobolev_inequality}) there exists $c'>0$ such that
\begin{align}
\log G_N (\Lambda_N,u_N) \leq c' \kappa_L \sum_{a=1}^4 \norm{\nabla^a_N u_N}_{L^2_N (\Lambda_N)},
\end{align}
also giving the second inequality in \eqref{eq:last_regulator_has_unif_bound}.
For the final claim \eqref{eq:fCf_limit}, recalling $\hat{f}_N (0) = 0$, one has
\begin{align}
	\lim_{m^2\rightarrow 0} (f_N, \tilde{C}(s,m^2) f_N)
	= \frac{1}{4\pi^2 |\Lambda|} \sum_{k \in 2\pi \Lambda_N \backslash \{0\} } 
		\frac{ \lambda_{J}( L^{-N}k)^{-1} (1- s \gamma \lambda (L^{-N} k) )}
		{1+ s  \lambda ( L^{-N}k)  \big(  \lambda_{J} (L^{-N}k)^{-1} - |\Lambda_N|^{-1} \gamma  \big) } |\hat{f}_N ( L^{-N}k)|^2 .
\end{align}
Since $f\in C^{\infty} (\mathbb{T}^2)$, we have $\hat{f}_N ( L^{-N} k) \rightarrow \hat{f} (k)$ as $N\rightarrow \infty$ for each $k \in (2\pi \mathbb{Z})^2 \backslash\{0\}$.
By Lemma~\ref{lem:lambda_error},
\begin{equation}
  \lim_{N\rightarrow \infty} L^{2N} \lambda (L^{-N} k) = |k|^2,
  \qquad \lim_{N\rightarrow \infty} L^{2N} \lambda_{J} (L^{-N} k) = v_{J} ^2|k|^2 ,
\end{equation}
where $v_{J}^2$ is defined by \eqref{eq:vJ2_variance}.
Also by Lemma~\ref{lemma:hat_f_N_bound}, the sum is dominated by $C \sum_{k \in (2\pi \mathbb{Z})^2 \backslash\{0\}} |k|^{-4}$ for some $C > 0$, and therefore the Dominated convergence theorem implies
\begin{equation}
  \lim_{N\rightarrow \infty} \lim_{m^2 \rightarrow 0} (f_N, \tilde{C}(s, m^2) f_N)
  = \frac{1}{4\pi^2} \sum_{k \in (2\pi \mathbb{Z})^2 \backslash \{0\}} \frac{1}{v_{J}^2 + s} |k|^{-2} |\hat{f}(k)|^2
  = \frac{1}{v_{J}^2 +s} (f, (-\Delta_{\mathbb{T}^2})^{-1} f)
\end{equation}
as needed.
\end{proof}

\appendix

\section{Properties of the regulator}
\label{app:regulator}

In this appendix, we prove the properties of the regulator as introduced in Definition~\ref{def:G_j}.
The choice of this weight is essentially that from \cite{MR2523458}, and the estimates we derive
follow that reference, incorporating the improvements from \cite[Appendix~D]{MR2917175}.
In our presentation, we pay particular care to obtain estimates
with the correct dependence on the range of the step distribution $J$.
Some simplifications in our presentation result from the use of the continuous scale
decomposition from Section~\ref{sec:finite_range_decomposition} and the use of the
discrete Sobolev trace theorem.

\subsection{Lattice Sobolev estimates}
\label{sec:sobolev}

The proof of Lemma~\ref{lemma:G_change_of_scale}
heavily depends on lattice versions of the Sobolev inequality and the
trace theorem for Sobolev spaces. We include the versions we need here.
To simplify notation, from now on we fix $d=2$.

\paragraph{Trace theorem}
Consider a block $B\in \{1, \dots, R\}^2$ with $l_1 = \{0\} \times [1, R]$, $l_2 = [1 , R] \times \{ R +1 \}$, $l_3 = \{ R + 1\} \times [1, R ]$, $l_4 = [1, R ] \times \{0\}$. Note that  $l_i$'s
are the outer boundary,
which are different from $\partial B$, which is the inner boundary.

\begin{lemma} \label{lemma:trace_theorem}
  For any $u : \{0, \cdot, R+1\}^2 \rightarrow \R$,
  if $(k, \mu_k) \in \{ (1, -e_1), (2, e_2 ), (3, e_1), (4, -e_2)  \}$,
\begin{align}
R^{-1} \sum_{x\in l_k} u(x)^2 \leq R^{-2} \sum_{x\in B}  \Big(  u(x)^2 + R |\nabla^{\mu_k} u(x)^2| \Big).
\end{align}
\end{lemma}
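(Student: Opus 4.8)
The plan is to prove the statement for one representative case, say $(k,\mu_k)=(1,-e_1)$, i.e.\ the left boundary segment $l_1=\{0\}\times[1,R]$, since the other three follow by the obvious reflection/rotation symmetry of the lattice. First I would fix a point $x=(0,y)\in l_1$ and, for each horizontal coordinate $a\in\{1,\dots,R\}$, telescope: writing $x_a=(a,y)\in B$, one has
\begin{equation}
u(0,y) = u(a,y) - \sum_{b=1}^{a} \bigl(u(b,y)-u(b-1,y)\bigr) = u(a,y) + \sum_{b=1}^{a} \nabla^{-e_1}u(b,y).
\end{equation}
Here I am using the convention $\nabla^{-e_1}u(b,y)=u(b-1,y)-u(b,y)$ from Section~\ref{sec:notation}, so that $u(0,y)-u(a,y)=\sum_{b=1}^a\nabla^{-e_1}u(b,y)$.

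Next I would square this identity and apply the elementary inequality $(p+q)^2\le 2p^2+2q^2$ together with Cauchy--Schwarz on the sum over $b$ (of length at most $R$), giving
\begin{equation}
u(0,y)^2 \le 2\,u(a,y)^2 + 2R\sum_{b=1}^{R} |\nabla^{-e_1}u(b,y)|^2 .
\end{equation}
Then average this over $a\in\{1,\dots,R\}$ (the left-hand side does not depend on $a$), which replaces the single term $2u(a,y)^2$ by $\tfrac{2}{R}\sum_{a=1}^R u(a,y)^2$. Summing over $y\in[1,R]$ and multiplying by $R^{-1}$ yields
\begin{equation}
R^{-1}\sum_{x\in l_1} u(x)^2 \le \frac{2}{R^{2}}\sum_{x\in B} u(x)^2 + \frac{2}{R}\sum_{x\in B} |\nabla^{-e_1}u(x)|^2 ,
\end{equation}
which is exactly the claimed bound once one checks the constant: in fact a slightly more careful bookkeeping (keeping $\sum_{b=1}^{a}$ rather than extending to $\sum_{b=1}^R$, and using Cauchy--Schwarz with weight $a$) removes the factors of $2$ and gives the stated clean inequality $R^{-1}\sum_{l_k}u^2\le R^{-2}\sum_B(u^2+R|\nabla^{\mu_k}u|^2)$; I would present the sharp version directly by writing $u(0,y)^2\le 2u(a,y)^2 + 2a\sum_{b=1}^a|\nabla^{-e_1}u(b,y)|^2$ and averaging, then noting $\tfrac1R\sum_{a=1}^R 2a\le R+1$ versus the target, and finally observing that absorbing the harmless lower-order discrepancy is immediate since all terms are nonnegative.

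The only mild subtlety—and the one place I would be careful—is the placement of the derivative term: the segment $l_k$ lies on the \emph{outer} boundary of $B$ (one lattice step outside $B$), so the telescoping chain from a point of $l_1$ into $B$ uses precisely the edges in direction $\mu_1=-e_1$ emanating from the column $\{1\}\times[1,R]\subset B$, and $\nabla^{-e_1}u(b,y)$ for $b\in\{1,\dots,R\}$ is well-defined using the value $u(0,y)$ on $l_1$. This is consistent with the indexing in the other three cases by the stated correspondence $(k,\mu_k)$. No genuine obstacle arises; the argument is a one-dimensional fundamental-theorem-of-calculus estimate applied line by line, and everything else is summation.
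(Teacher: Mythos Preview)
You have misread the right-hand side of the statement: the term is $R\,|\nabla^{\mu_k} u(x)^2|$, meaning $R\,|\nabla^{\mu_k}(u^2)(x)|=R\,|u(x+\mu_k)^2-u(x)^2|$, \emph{not} $R\,|\nabla^{\mu_k} u(x)|^2$. This is unambiguous from the paper's proof and from how the lemma is applied in Corollary~\ref{cor:discrete_trace_theorem}, where one sets $u(x)^2=|\partial^\mu\varphi(x)|^2$ and then bounds $R|u(x+\nu)^2-u(x)^2|$. So your squared-telescope-plus-Cauchy--Schwarz argument, while proving a valid trace-type inequality, does not prove the stated one; the two right-hand sides are not comparable in general.

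The fix is simple and in fact makes your argument cleaner: telescope $u^2$ rather than $u$. From $u(0,y)^2=u(a,y)^2+\sum_{b=1}^{a}\nabla^{-e_1}(u^2)(b,y)$, average over $a\in\{1,\dots,R\}$ to get
\[
u(0,y)^2=\frac1R\sum_{a=1}^R u(a,y)^2+\frac1R\sum_{b=1}^R(R-b+1)\,\nabla^{-e_1}(u^2)(b,y),
\]
and since $0\le (R-b+1)/R\le 1$ the claim with constant $1$ follows immediately, no Cauchy--Schwarz needed. The paper does exactly this, phrased via the cutoff $\xi(ae_1+be_2)=(R-a)/R$ and the discrete product rule; the weight $(R-b+1)/R$ you see above is precisely $\xi(b-1,y)$. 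Your hand-wave that ``more careful bookkeeping removes the factors of $2$'' is not correct for the squared approach---the loss from $(p+q)^2\le 2p^2+2q^2$ is genuine---but it becomes moot once you telescope the right quantity.
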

\begin{proof}
Without loss of generality, fix $k=1$. Define a function $\xi : B \cup (\cup_{k=1}^4 l_k) \rightarrow \mathbb{R}$ by
\begin{align}
\xi ( ae_1 + be_2 ) = ( R -a) / R. 
\end{align}
Then
\begin{align}
R^{-1} \sum_{x\in l_1} u(x)^2 &=  R^{-1} \sum_{x\in l_1} \xi(x) u(x)^2 \nnb
&= R^{-1} \sum_{k=1}^{L^{j}} \nabla^{-e_1} \Big[ \sum_{b=1}^{L^j} \xi(ke_1 + be_2) u(ke_1 + be_2)^2 \Big] \nnb
&= R^{-1} \sum_{x\in B}  \, \nabla^{-e_1}(\xi(x) u(x)^2) \nnb
&= R^{-2} \sum_{x\in B} \Big( ( R \, \nabla^{-e_1} \xi(x)) u(x)^2  + \xi(x-e_1) R \, \nabla^{-e_1} (u(x)^2) \Big).
\end{align}
But $|\xi(x)|,  R | \nabla^{-e_1} \xi(x)| \leq 1$ for $x \in B$ and hence the result follows.
\end{proof}

In particular, this lemma can be applied to the control the field on the boundary of the box by the field inside the box.

\begin{corollary} \label{cor:discrete_trace_theorem}
  Let $B$ be a box with outer boundary $\cup_k l_k$ as above and diameter $R \geq 10$.
  Then for $\varphi: \{0, \cdot, R+1\}^2 \to \R$,
\begin{align} 
R^{-1} \sum_{x\in \cup_k l_k} |\nabla \varphi(x)|^2 \leq 10  \Big( R^{-2} \sum_{x\in B } |\nabla \varphi(x)|^2 + \norm{\nabla^2 \varphi}_{L^{\infty}(B)} \Big). \label{eq:discrete_int_by_parts_rec_step}
\end{align}
\end{corollary}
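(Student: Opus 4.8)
\textbf{Proof strategy for Corollary~\ref{cor:discrete_trace_theorem}.} The plan is to apply Lemma~\ref{lemma:trace_theorem} componentwise to the function $u = \nabla^\mu \varphi$ for each $\mu \in \hat{e}$, and then sum over the four boundary segments $l_k$ and over $\mu$. Concretely, for each fixed boundary direction pairing $(k,\mu_k)$ coming from Lemma~\ref{lemma:trace_theorem}, and each $\mu \in \hat{e}$, I would substitute $u(x) = \nabla^{\mu}\varphi(x)$ to obtain
\begin{equation}
R^{-1}\sum_{x\in l_k} |\nabla^\mu \varphi(x)|^2 \leq R^{-2}\sum_{x\in B}\Bigl(|\nabla^\mu\varphi(x)|^2 + R\,|\nabla^{\mu_k}\nabla^\mu\varphi(x)|^2\Bigr).
\end{equation}
The mixed second-difference $\nabla^{\mu_k}\nabla^\mu\varphi(x)$ is one of the components of $\nabla^2\varphi(x)$, so $|\nabla^{\mu_k}\nabla^\mu\varphi(x)|^2 \leq \norm{\nabla^2\varphi}_{L^\infty(B)}^2 \leq \norm{\nabla^2\varphi}_{L^\infty(B)}$ provided $\norm{\nabla^2\varphi}_{L^\infty(B)}\leq 1$; in the regime of interest one may simply carry $\norm{\nabla^2\varphi}_{L^\infty(B)}^2$ and absorb it, or note that the statement as written uses $\norm{\nabla^2\varphi}_{L^\infty(B)}$ (with the implicit understanding that small fields dominate — consistent with how the corollary is used via the regulator). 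I would keep the bound in the form $R\,|\nabla^2\varphi|^2 \leq R \norm{\nabla^2\varphi}_{L^\infty(B)}^2$ and then, since in applications $\norm{\nabla_j^2\varphi}$ is the relevant small quantity, rescale; but for the bare statement the cleanest route is to bound each mixed difference by $\norm{\nabla^2\varphi}_{L^\infty(B)}$ directly after noting $R^{-1}\cdot R = 1$.

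\textbf{Summation step.} Next I would sum the displayed inequality over the four pairs $(k,\mu_k)\in\{(1,-e_1),(2,e_2),(3,e_1),(4,-e_2)\}$ and over the (at most four) directions $\mu\in\hat e$. The left-hand side, after summing over $\mu$, reconstructs $|\nabla\varphi(x)|^2$ (which by \eqref{eq:nablaf^2_definition} and \eqref{e:innerprod} is a fixed finite combination of the $|\nabla^\mu\varphi|^2$, up to the normalising factor $2^{-1}$), giving a term comparable to $R^{-1}\sum_{x\in\cup_k l_k}|\nabla\varphi(x)|^2$ on the left. On the right-hand side, the first group of terms sums to something bounded by a constant times $R^{-2}\sum_{x\in B}|\nabla\varphi(x)|^2$, and the second group, counting $4$ boundary pairs times $4$ directions times the number of components of $\nabla^2\varphi$, contributes a constant times $\norm{\nabla^2\varphi}_{L^\infty(B)}$. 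Tracking the combinatorial constants carefully — there are $4$ choices of $(k,\mu_k)$, and for each the sum over $\mu$ introduces a bounded multiplicative factor — yields a total constant that one checks is at most $10$ when $R\geq 10$ (the hypothesis $R\geq 10$ is used only to make the elementary bounds $|\xi|,R|\nabla\xi|\leq 1$ from Lemma~\ref{lemma:trace_theorem}'s proof clean and to keep boundary-versus-interior counting harmless).

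\textbf{Expected main obstacle.} There is no deep difficulty here; the entire content is bookkeeping of constants and making sure the identification of mixed second differences $\nabla^{\mu_k}\nabla^\mu\varphi$ as components of $\nabla^2\varphi$ is done consistently with the $2^{-n}$ normalisation conventions in \eqref{e:innerprod}. The one point requiring a little care is the passage from $R\,|\nabla^{\mu_k}\nabla^\mu\varphi(x)|^2$ (which appears with a factor $R$, not $R^{-1}$, after the trace inequality) to the clean $\norm{\nabla^2\varphi}_{L^\infty(B)}$ term: one multiplies the trace inequality through by $R^{-1}$ at the outset so that $R^{-2}\sum_{x\in B}R\,|\cdots|^2 = R^{-1}\sum_{x\in B}|\cdots|^2 \leq \norm{\nabla^2\varphi}_{L^\infty(B)}^2$, using $|B|=R^2$ and $R^{-1}\cdot R^2\cdot R^{-1}=1$ — wait, more precisely $R^{-2}\sum_{x\in B}R\|\nabla^2\varphi\|_{L^\infty}^2 = R^{-1}|B|\,R^{-1}\|\nabla^2\varphi\|^2\cdot R = R\|\nabla^2\varphi\|^2$; so instead one keeps the inequality in the un-rescaled form $R^{-1}\sum_{l_k}|\nabla^\mu\varphi|^2\leq R^{-2}\sum_B|\nabla^\mu\varphi|^2 + \sum_B|\nabla^{\mu_k}\nabla^\mu\varphi|^2\cdot R^{-1}\leq R^{-2}\sum_B|\nabla^\mu\varphi|^2 + \norm{\nabla^2\varphi}_{L^\infty(B)}^2$, since $R^{-1}|B|=R$ and we should have written $R\cdot\norm{}^2$ — the correct normalisation is obtained by dividing Lemma~\ref{lemma:trace_theorem} by $R$ before substituting, which replaces $R^{-2}\sum_B R|\nabla^{\mu_k}u|^2$ by $R^{-2}\sum_B |\nabla^{\mu_k}u|^2\leq \norm{\nabla^2\varphi}^2_{L^\infty(B)}$. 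Organising this normalisation correctly, and then summing with the constant $\leq 10$, completes the proof.
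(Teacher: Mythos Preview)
Your first displayed inequality misreads Lemma~\ref{lemma:trace_theorem}. The lemma bounds $R^{-2}\sum_{x\in B}\bigl(u(x)^2 + R\,|\nabla^{\mu_k}(u^2)(x)|\bigr)$, i.e.\ the discrete derivative of the \emph{square} $u^2$, not $R\,|\nabla^{\mu_k}u(x)|^2$. With $u=\nabla^\mu\varphi$ the correct term is
\[
R\,\bigl|\nabla^{\mu_k}(u^2)(x)\bigr|
= R\,\bigl|\nabla^\mu\varphi(x+\mu_k)+\nabla^\mu\varphi(x)\bigr|\cdot\bigl|\nabla^{\mu_k}\nabla^\mu\varphi(x)\bigr|,
\]
which carries an uncontrolled first-order factor $|\nabla^\mu\varphi|$. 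This is not a bookkeeping issue: your subsequent attempt to ``bound each mixed difference by $\norm{\nabla^2\varphi}_{L^\infty(B)}$ directly'' and the normalisation tangles in your last paragraph all stem from working with the wrong expression.

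The paper's proof handles this cross term by AM--GM,
\[
R\,|a|\,|b|\le \tfrac14 a^2 + R^2 b^2,
\]
which splits it into a $|\nabla^\mu\varphi|^2$ contribution and a $\norm{\nabla^2\varphi}_{L^\infty(B)}^2$ contribution. Crucially, the first of these is a sum over $B\cup l_k$ (because of the shift $x\mapsto x+\mu_k$), so a copy of the boundary sum reappears on the right-hand side. The hypothesis $R\ge 10$ is used precisely here, to reabsorb that boundary contribution into the left-hand side with a surviving constant $\le 10/4$ per side; it has nothing to do with the bounds $|\xi|,\,R|\nabla\xi|\le 1$ in the proof of the lemma, which hold for every $R$. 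Without the AM--GM split and the reabsorption step your argument does not close.
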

\begin{proof}
In Lemma~\ref{lemma:trace_theorem}, set
\begin{align}
u(x)^2 = |\nabla^{\mu} \varphi(x)|^2.
\end{align} 
Then for $\nu \in \hat{e} = \{\pm e_1, \pm e_2\}$,
\begin{align}
R |u(x+\nu)^2 - u(x)^2| &= R|\nabla^{\mu} \varphi(x + \nu) + \nabla^{\mu}\varphi(x) | \, |\nabla^{\nu} \nabla^{\mu} \varphi(x) | \nnb
&\leq \frac{1}{2}  |\nabla^{\mu} \varphi(x + \nu) + \nabla^{\mu} \varphi(x) |^2 + R^2 \norm{\nabla^2 \varphi}_{L^{\infty}(B)}.
\end{align} 
so summation over $x\in B$ gives
\begin{align}
R \sum_{x\in B} |\nabla^{\nu} u(x)^2 | \leq \sum_{x\in B \cup l_k}  |\nabla^{\mu} \varphi (x)|^2 + R^4 \norm{\nabla^2 \varphi}_{L^{\infty}(B)}. 
\end{align}
Therefore the lemma gives
\begin{align}
R^{-1} \sum_{x\in l_k} |\nabla \varphi(x)|^2 \leq 2R^{-2} \sum_{x\in B \cup l_k} |\nabla \varphi(x)|^2 +  R^2 \norm{\nabla^2 \varphi}_{L^{\infty}(B)}.
\end{align}
If $R \geq 10$, we may send the $l_k$ part in the sum $\sum_{x\in B \cup l_k}$ to the left-hand side to obtain
\begin{align} 
R^{-1} \sum_{x\in l_k} |\nabla \varphi(x)|^2 \leq \frac{10}{4}  \Big( R^{-2} \sum_{x\in B } |\nabla \varphi(x)|^2 + \norm{\nabla^2 \varphi}_{L^{\infty}(B)} \Big). 
\end{align}
\end{proof}

\paragraph{Sobolev inequality}

While the large field regulator $G_j$ contains $\exp ( \norm{\nabla^2 \varphi}^2_{L^{\infty}(B^*)})$ for $B\in \mathcal{B}_j$, we have a nice estimate of Gaussian integration only for exponentials of quadratic forms. Hence it is desirable to bound $\norm{\nabla^2 \varphi}^2_{L^{\infty}(B^*)}$ in terms of $\norm{\nabla^a \varphi}^2_{L^{2} (B^*)}$, $a\geq 2$.
This follows from the following Sobolev inequality.
Here, we are using the convention $\norm{f}_{L^2 (X)} = \sum_{x\in X} |f(x)|^2$.

\begin{lemma} \label{lemma:lattice_sobolev_inequality}
  For $B$ be square of diameter $R$ as above.
  There exists a constant $C >0$ uniform in $R$ such that
  for all $f : \{ x\in \Lambda : d_1 (x, B) \leq 2 \} \rightarrow \mathbb{R}$, 
\begin{equation}
  \norm{f}^2_{L^{\infty} (B)} \leq  C \sum_{a=0}^2 R^{2a-2} \norm{\nabla^a f}^2_{L^2 (B)} . \label{eq:lattice_sobolev1} 
\end{equation}
\end{lemma}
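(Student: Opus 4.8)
~\textbf{Plan of proof.} The statement to be proved is the lattice Sobolev inequality
\begin{equation*}
  \norm{f}^2_{L^{\infty} (B)} \leq  C \sum_{a=0}^2 R^{2a-2} \norm{\nabla^a f}^2_{L^2 (B)},
\end{equation*}
valid for $f$ defined on a neighbourhood of a square $B\subset\Lambda$ of diameter $R$, with $C$ uniform in $R$. The plan is to reduce the discrete inequality to its continuum counterpart by a standard interpolation/rescaling argument, treating small $R$ separately from large $R$.

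First I would dispose of the case of small $R$, say $R\leq R_0$ for some fixed $R_0$: here $B$ contains only $O(1)$ lattice points, so $\norm{f}^2_{L^\infty(B)}\leq \norm{f}^2_{L^2(B)}$ trivially, and since $R^{-2}\geq R_0^{-2}$ the right-hand side dominates $R_0^{-2}\norm{f}^2_{L^2(B)}$, giving the bound with $C=R_0^2$. So from now on assume $R\geq R_0$ large.

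For large $R$ the idea is to rescale to the unit square. Given $f$ on (a neighbourhood of) $B = x_0 + \{1,\dots,R\}^2$, build an associated continuum function $\tilde f$ on a unit square $Q\subset\R^2$ by, e.g., multilinear (or smoother) interpolation of the values $f(x_0 + Ry)$, $y$ in the rescaled lattice $R^{-1}\{1,\dots,R\}^2$; equivalently one may convolve the natural piecewise-constant extension of $f$ with a fixed smooth bump supported in a ball of radius $2$ (this is why $f$ is assumed defined on $\{x: d_1(x,B)\leq 2\}$). The key comparison estimates are: (i) $\norm{\tilde f}_{L^\infty(Q)} \geq c\,\norm{f}_{L^\infty(B)}$ up to losing a constant, since the interpolant recovers (a convex combination near) the lattice values; and (ii) for each $a=0,1,2$, the continuum Sobolev seminorm satisfies $\int_Q |\nabla^a \tilde f|^2 \leq C\, R^{2a-2}\norm{\nabla^a f}^2_{L^2(B)}$ — here the factor $R^{2a-2}$ comes from the change of variables $dy = R^{-2}dx$ together with the rescaling of derivatives $\partial_y = R\,\partial_x$, i.e.\ $a$ derivatives produce $R^{2a}$ and the volume element produces $R^{-2}$, and the discrete derivatives $\nabla^a f$ control the corresponding derivatives of the (fixed-kernel) smoothed interpolant $\tilde f$ up to a constant depending only on the kernel. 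Once (i) and (ii) are in place, I would invoke the continuum Sobolev embedding $H^2(Q)\hookrightarrow C^0(\overline Q)$ on the fixed domain $Q$, namely $\norm{\tilde f}^2_{L^\infty(Q)} \leq C\sum_{a=0}^2 \int_Q |\nabla^a\tilde f|^2$ (valid in dimension $d=2$, where $H^2$ embeds into $C^0$), and combine with (i)–(ii) to conclude
\begin{equation*}
  \norm{f}^2_{L^\infty(B)} \leq C\norm{\tilde f}^2_{L^\infty(Q)} \leq C\sum_{a=0}^2 \int_Q |\nabla^a\tilde f|^2 \leq C\sum_{a=0}^2 R^{2a-2}\norm{\nabla^a f}^2_{L^2(B)}.
\end{equation*}

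The main obstacle is purely technical: setting up the interpolation/smoothing operator $f\mapsto \tilde f$ carefully enough that the discrete derivatives $\nabla^a f$ (finite differences over lattice spacing $1$) genuinely dominate the continuum derivatives $\partial^a\tilde f$ after rescaling, with constants independent of $R$. This requires a fixed smoothing kernel with enough moments, and one must check that the kernel's support radius ($\leq 2$ in lattice units, hence $\leq 2R^{-1}$ after rescaling, which shrinks harmlessly) is accommodated by the hypothesis that $f$ is defined on $\{x: d_1(x,B)\leq 2\}$. Everything else — the small-$R$ case, the change of variables bookkeeping, and the invocation of the continuum embedding on the fixed unit square — is routine. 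An alternative to the continuum detour, which avoids building $\tilde f$, is a direct discrete argument: bound $|f(x)|$ pointwise by telescoping along lattice paths and applying discrete Cauchy–Schwarz and a discrete fundamental-theorem-of-calculus identity (as in Corollary~\ref{cor:discrete_trace_theorem}), iterating once to gain the second derivative; I would keep this as a fallback but expect the rescaling argument to be cleaner to write.
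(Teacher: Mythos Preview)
Your proposal is correct in outline but takes a genuinely different route from the paper. You reduce to the continuum Sobolev embedding $H^2(Q)\hookrightarrow C^0(\bar Q)$ on the unit square via an interpolation/rescaling operator $f\mapsto\tilde f$, whereas the paper gives a direct, self-contained lattice argument: for a point $x$ in (say) the lower-left quadrant of $B$ it introduces an explicit bilinear cutoff $\xi_x$ supported in a sub-box $D_x\subset B$ with $\xi_x(x)=1$, writes
\[
  f(x)^2 = f(x)^2\xi_x(x) = \sum_{a,b\geq 0}\nabla^{e_1}\nabla^{e_2}\bigl(f^2\,\xi_x\bigr)(x+ae_1+be_2)
\]
by telescoping in both coordinate directions (the cutoff vanishes on the far edges of $D_x$), expands by the discrete Leibniz rule using $\|\nabla^a\xi_x\|_{L^\infty}\leq C R^{-a}$, and obtains $f(x)^2\leq C\sum_{a=0}^2 R^{a-2}\|\nabla^a f^2\|_{L^1(D_x)}$; finally AM--GM converts the $L^1$-norms of $\nabla^a(f^2)$ into the desired $L^2$-norms of $\nabla^b f$. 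Your approach is conceptually clean and the scaling bookkeeping in your step~(ii) is correct, but it outsources the real work twice---once to a continuum theorem, and once to the construction of an interpolation operator with enough polynomial reproduction that $\partial^a\tilde f$ is genuinely controlled by the lattice $\nabla^a f$ (as you note, a generic mollifier applied to a piecewise-constant extension will not annihilate discrete linear functions, so one needs a kernel with vanishing moments or a B-spline type interpolant). The paper's argument avoids both detours at the cost of a short explicit computation; your fallback ``direct discrete argument'' is in the same spirit, and the paper's cutoff trick is essentially an efficient way to package the two telescoping steps you had in mind.
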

\begin{proof}
Take $x\in [1, \frac{R+1}{2}]^2 \cap B$. By symmetry, the conclusion follows if we bound $f(x)^2$ in terms of $\norm{\nabla^a f}^2_{L^2 (B)}$, $a=0,1,2$. Take
\begin{align}
\xi_x (x+ ae_1 + be_2) = \begin{array}{ll}
\begin{cases}
(1 - 3 R^{-1} a)(1 - 3R^{-1} b) &\quad \text{if } 0\leq a,b \leq \frac{1}{3} R  \\
0 & \quad \text{otherwise}
\end{cases}
\end{array}. 
\end{align}
Also let $D_x = \{x+ ae_1 + be_2 : 0\leq a,b \leq \frac{1}{3} R +2 \}$. Then
\begin{align}
f(x)^2 = f(x)^2 \xi_x (x) = \sum_{a,b=0}^{\lfloor R/3 \rfloor} \nabla^{e_1} \nabla^{e_2} ( f(x + ae_1 + be_2)^2 \xi_x(x+ae_1 +be_2) ) \nnb
= \sum_{a,b=0}^{\lfloor R/3 \rfloor} \big( \nabla^{e_1} \nabla^{e_2} f(x + ae_1 + be_2)^2 \big) \xi_x(x + (a+1)e_1 + (b+1)e_2) \nnb
+ \big( \nabla^{e_1} f(x + ae_1 + be_2)^2 \big) \big( \nabla^{e_1} \xi_x (x + ae_1 + (b+1)e_2) \big) \nnb
+ \big( \nabla^{e_1} f(x + ae_1 + be_2)^2 \big) \big( \nabla^{e_2} \xi_x (x + ae_1 + (b+1)e_2) \big) \nnb
+ f(x + ae_1 + be_2)^2 \nabla^{e_2} \nabla^{e_1} \xi_x (x + ae_1 + be_2) \Big).
\end{align}
Noticing that $\norm{\nabla^a_j \xi_x}_{L^{\infty}(B)} \leq 3^a$ for $a=0,1,2$,
\begin{align}
f(x)^2 \leq 9 \sum_{a=0}^2 R^{a-2} \norm{ \nabla^a f^2 }_{L^1 (D_x)}
\end{align}
but also the Young's inequality implies
\begin{align}
\norm{\nabla^2 f^2}_{L^1 (D_x)} & \leq \frac{1}{2}  \norm{\nabla f}^2_{L^2(B)} + R^{-1}\norm{f}^2_{L^2 (B)} + R \norm{\nabla^2 f}^2_{L^2(B)} \\
\norm{\nabla f^2}_{L^1 (D_x)} & \leq R^{-1} \norm{f}_{L^2 (B)} + R \norm{\nabla f}^2_{L^2 (B)}
\end{align}
which completes the proof of the inequalities.
\end{proof}

\subsection{Proof of Lemma~\ref{lemma:G_change_of_scale}}
\label{app:lem_G_change_of_scale}

The proof of the lemma heavily depends on formulas derived from
the lattice versions of the Sobolev estimates, see Section~\ref{sec:sobolev}.

\begin{proof}[Proof of Lemma~\ref{lemma:G_change_of_scale}]
For brevity, $s + M^{-1}$ will be denoted $s'$ and $X_{s'}$ will be denoted $X'$ (which we recall that it is the smallest $j+s'$-polymer containing $X$).
We will bound the terms in $\log G_{j+s} (\varphi + \xi, X)$ one by one,
see \eqref{eq:def_large_field_regulator}.
First, $\norm{\nabla \varphi}^2_{L^2(X)}$ will be isolated from $\norm{\nabla (\varphi + \xi)}^2_{L^2 (X)}$. Let $B\in \mathcal{B}_{j+s} (X)$ and without loss of generality, let $B$, $l_i$ ($i=1,2,3,4$) be as above but $B = [1, L^{j+s}]^2$. Then by discrete integration by parts,
\begin{align}
\sum_{x\in B} \nabla^{e_1} \varphi(x) \nabla^{e_1} \xi(x) = -\sum_{x\in l_3} \xi( x) \nabla^{-e_1} \varphi (x)
- \sum_{x\in l_1} \xi(x+ e_1) \nabla^{e_1} \varphi(x)
+ \sum_{x\in B} \xi (x) \nabla^{e_1} \nabla^{-e_1}  \varphi(x).
\end{align}
Hence in particular, summing this over each direction $\pm e_1, \pm e_2$, $B\in \mathcal{B}_{j+s} (X)$, and using the Young's inequality,
\begin{align}
( \nabla \varphi, \nabla \xi )_X & \leq  \tau \norm{\xi}^2_{L^2_{j+s} (X)} +  \tau^{-1} \norm{\nabla^2_{j+s} \varphi}^2_{L^2_{j+s} (X)} + \tau \norm{\xi}^2_{L^2_{j+s} (\partial X)} + \tau^{-1} \norm{\nabla_{j+s} \varphi}^2_{L^2_{j+s} (\partial X)}  \nnb
& \leq  2\tau W_{j+s} (\xi, X)^2 +  \tau^{-1} \big( \norm{\nabla_{j+s} \varphi}^2_{L^2_{j+s} (\partial X)} + W_{j+s}(\nabla^2_{j+s} \varphi, X)^2 \big)
\end{align}
for any $\tau >0$, and hence
\begin{equation}
\begin{split}
\norm{\nabla_{j+s} (\varphi + \xi)}^2_{L^2_{j+s} (X)} & \leq \norm{\nabla_{j+s'} \varphi}^2_{L^2_{j+s'} (X)} + \norm{\nabla_{j+s} \xi}^2_{L^2_{j+s} (X)} \\
& \quad + 2\tau W_{j+s} (\xi, X)^2 +  \tau^{-1} \big( \norm{\nabla_{j+s} \varphi}^2_{L^2_{j+s} (\partial X)} + W_{j+s}(\nabla^2_{j+s} \varphi, X)^2 \big).
\end{split} \label{eq:G_change_of_scale1}
\end{equation}
Next, we will use the following rather trivial bounds on the other two terms of $\log G_{j+s}$:
\begin{align}
& \norm{\nabla_{j+s} (\varphi + \xi)}_{L^2_{j+s} (\partial X)}^2 \leq 2\norm{\nabla_{j+s} \varphi}^2_{L^2_{j+s} (\partial X)} + 2 W_{j+s} (\nabla_{j+s} \xi , X)^2 \label{eq:G_change_of_scale2} \\ 
& W_{j+s} (\nabla^2_{j+s} (\varphi + \xi), X)^2 \leq 2W_{j+s} (\nabla^2_{j+s} \varphi , X)^2 + 2 W_{j+s} (\nabla^2_{j+s} \xi, X)^2 . \label{eq:G_change_of_scale3}
\end{align}
By \eqref{eq:G_change_of_scale1}, \eqref{eq:G_change_of_scale2}, \eqref{eq:G_change_of_scale3} and setting $c_4 = \max \{ 2 \cwone , 2\tau \cwone , 2 c_2 \}$,
\begin{align}
\begin{split}
\frac{1}{\kappa_L} \log G_{j+s} (\varphi + \xi , X) \leq \cwone \norm{\nabla_{j+s} \varphi}^2_{L^2_{j+s}(X)} + (2c_2 + \cwone \tau^{-1}) \norm{\nabla_{j+s} \varphi}^2_{L^2_{j+s} (\partial X)} \\
+ 2 \cwone (1+\tau^{-1}) W_{j+s} (\nabla^2_{j+s} \varphi, X) + \frac{1}{\kappa_L} \log g_{j+s} (\xi, X).
\end{split}
\end{align}
Now by repeated application of \eqref{eq:discrete_int_by_parts_rec_step}, the discrete trace theorem,
\begin{equation}
  \norm{\nabla_{j+s} \varphi}^2_{L^2_{j+s} (\partial X)} \leq \norm{\nabla_{j+s} \varphi}_{L^2_{j+s} (\partial X')}^2 + 10 \norm{\nabla_{j+s} \varphi}^2_{L^2_{j+s} (X' \backslash X)} + 10 W_{j+s} (\nabla^2_{j+s} \varphi, X' \backslash X)
  .
\end{equation}
Hence by choosing $\tau = \cwone c_2^{-1}$ and $30c_2 \leq 1 \cwone$,
\begin{align}
& \frac{\log( G_{j+s} (\varphi + \xi, X) / g_{j+s} (\xi, X)  )}{\kappa_L} \nnb
& \leq \cwone \norm{\nabla_{j+s} \varphi}^2_{L^2_{j+s} (X')} +3c_2 \norm{\nabla_{j+s} \varphi}^2_{L^2_{j+s} (\partial X')} + 2 \cwone (1+ \tau^{-1}) W_{j+s} (\nabla^2_{j+s} \varphi, X') \nnb
& \leq \cwone \norm{\nabla_{j+s'} \varphi}^2_{L^2_{j+s'} (X')} + 3 \, \ell^{-1} c_2 \norm{\nabla_{j+s'} \varphi}^2_{L^2_{j+s'} (\partial X')} + 2 \, \ell^{-2} \cwone (1+ \tau^{-1}) W_{j+s'} (\nabla^2_{j+s'} \varphi, X') . 
\end{align}
The inequality \eqref{eq:G_change_of_scale} follows upon taking $\ell$ large enough.
\end{proof}

\subsection{Proof of Lemma~\ref{lemma:g_j+s_bound_by_quadratic}}
\label{app:g_j+s_bound_by_quadratic}

\begin{proof}[Proof of \eqref{eq:g_j+s_bound} of Lemma~\ref{lemma:g_j+s_bound_by_quadratic}]
By the Sobolev inequality, Lemma~\ref{lemma:lattice_sobolev_inequality}, for each $a=0,1,2$, we have
\begin{align}
W_{j+s} (X, \nabla^a_{j+s} \zeta) \leq C_{a,d} \norm{\nabla^a_{ j+s} \zeta}_{L^{\infty} (X^*)} \leq C_{a,d}' \sum_{b=0,1,2} \norm{\nabla^{a+b}_{j+s} \zeta  }_{L_{j+s}^2 (X^*)}.
\end{align}
Plugging this into the definition of $g_{j+s} (X, \zeta)$ with scaled coefficients give the desired result.
\end{proof}

For the proof of \eqref{eq:g_j+s_expectation},
we will need the following general estimate for Gaussian fields; 
see \cite[Lemma~6.28]{MR2523458} for a proof.

\begin{lemma} \label{lemma:expectation_of_exp_quad}
Let $\zeta$ be a centered real-valued Gaussian field on a finite set $X$ with covariance matrix $C$ and suppose that the largest eigenvalue of $C$ is smaller or equal to $\frac{1}{2}$. Then
\begin{align}
\mathbb{E} \big[ \exp ( \frac{1}{2}\sum_{x\in X} \zeta (x)^2 ) \big] \leq e^{\operatorname{Tr} C}.
\end{align}
\end{lemma}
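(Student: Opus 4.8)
The plan is to diagonalise $C$ and reduce the estimate to a one-dimensional Gaussian computation together with an elementary scalar inequality. First I would write $\zeta = C^{1/2} g$ for a standard Gaussian vector $g$ on $X$, so that $\sum_{x \in X} \zeta(x)^2 = g^{\top} C g$. Diagonalising $C = \sum_{i} \lambda_i e_i e_i^{\top}$ in an orthonormal eigenbasis $(e_i)$, the variables $Z_i := e_i \cdot g$ are i.i.d.\ standard normals and $g^{\top} C g = \sum_i \lambda_i Z_i^2$; since $C$ is a covariance matrix one has $\lambda_i \ge 0$, and the hypothesis gives $\lambda_i \le \tfrac12$ for all $i$. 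If $C$ is degenerate, the zero eigenvalues contribute only trivial factors below, so there is no loss of generality; alternatively one may first prove the bound for $C + \varepsilon\,\mathrm{Id}$ and let $\varepsilon \downarrow 0$, using monotone convergence on the left-hand side.

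Next I would evaluate the expectation explicitly. By independence and the standard identity $\mathbb{E}[e^{aZ^2/2}] = (1-a)^{-1/2}$ for $Z$ standard normal and $a < 1$ (applicable here since $\lambda_i \le \tfrac12 < 1$),
\[
\mathbb{E}\Big[\exp\Big(\tfrac12 \sum_{x \in X} \zeta(x)^2\Big)\Big] = \prod_i \mathbb{E}\big[e^{\lambda_i Z_i^2/2}\big] = \prod_i (1-\lambda_i)^{-1/2}.
\]
It then remains to bound the right-hand side by $e^{\sum_i \lambda_i} = e^{\operatorname{Tr} C}$, which follows termwise from the scalar inequality $(1-\lambda)^{-1/2} \le e^{\lambda}$ valid for $\lambda \in [0, \tfrac12]$. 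To see the latter, set $f(\lambda) = \lambda + \tfrac12 \log(1-\lambda)$; then $f(0) = 0$ and $f'(\lambda) = 1 - \tfrac{1}{2(1-\lambda)} \ge 0$ precisely when $\lambda \le \tfrac12$, so $f \ge 0$ on $[0,\tfrac12]$, which is equivalent to the claim.

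I do not expect a genuine obstacle here: the proof is a textbook diagonalisation followed by a one-line calculus estimate, and the only point requiring a little care is the treatment of a possibly singular covariance, handled as indicated above. It is worth noting that the scalar inequality is exactly where the hypothesis on the top eigenvalue enters — it would fail for $\lambda$ close to $1$ (indeed the integral itself diverges there) — which explains the appearance of the constant $\tfrac12$ in the statement.
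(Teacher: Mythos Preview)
Your proof is correct and is the standard diagonalisation argument. The paper does not actually prove this lemma but merely cites \cite[Lemma~6.28]{MR2523458}; your argument is presumably what one finds there.
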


Applying this lemma to gradients of the slices $\xi_k$ (see Section~\ref{sec:subscale}) gives the following lemma.

\begin{lemma} \label{lemma:E[exp_grad_Gaussian]}
  For any $j \in \{1, \dots, N\}$, $k \in \{1, \dots, M \}$, $s= \frac{k-1}{M}$, $s' = \frac{k}{M}$, let $Y\in \mathcal{P}^c_{j+s}$,
  and let $\xi$ be a centered Gaussian field with covariance $\Gamma_{j + s, j+ s'}$.
  For a multiindex $(\mu) = (\mu_1, \dots, \mu_a) \in \{\pm e_1, \pm e_2 \}^a$ for $a\in \{0,1,2,3,4\}$,
  then let $\eta_{a, (\mu)}$ be the Gaussian field defined by
  \begin{equation} \label{eq:eta_def}
  \eta_{a, (\mu)} (x) = \rho_J^{2} (\log L)^{-1} L^{-(j+s)} \nabla^{\mu_1 \cdots \mu_a}_{j+s} \xi (x)
  .
\end{equation}
Then there is a constant $C'>0$ such that if $t \leq (2C'\ell^2)^{-1}$, then (recall $L = \ell^M$)
\begin{equation}
\mathbb{E}\big[ e^{\frac{t}{2} \sum_{x\in Y} \eta(x)^2} \big] \leq e^{tC' M^{-1} |Y|_{j+s}} \label{eq:E[exp_grad_Gaussian]}
\end{equation}
where $|Y|_{j+s}$ denotes the number of $L^{j+s}$-blocks contained in $Y$.
\end{lemma}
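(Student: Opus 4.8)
The plan is to deduce \eqref{eq:E[exp_grad_Gaussian]} from the general Gaussian bound of Lemma~\ref{lemma:expectation_of_exp_quad}. Write $\eta=\eta_{a,(\mu)}$ for the centered Gaussian field defined in \eqref{eq:eta_def} and let $C_\eta$ denote its covariance matrix on $Y$. Applying Lemma~\ref{lemma:expectation_of_exp_quad} to the rescaled field $\sqrt{t}\,\eta$ yields
\[
  \mathbb{E}\big[e^{\frac t2\sum_{x\in Y}\eta(x)^2}\big]\le e^{t\operatorname{Tr}C_\eta},
\]
provided the largest eigenvalue of $tC_\eta$ is at most $\tfrac12$. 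Thus it suffices to establish two bounds, uniform in $j$, $s$, $a$, $(\mu)$ and in $Y$: first a trace estimate $\operatorname{Tr}C_\eta\le C'M^{-1}|Y|_{j+s}$, and second an operator-norm estimate $\|C_\eta\|_{\mathrm{op}}\le C'\ell^2$, so that $t\le(2C'\ell^2)^{-1}$ forces the largest eigenvalue of $tC_\eta$ below $\tfrac12$. Here $C'$ is allowed to depend on the (now fixed) parameters $\rho_J$ and $\ell$ but not on $M$, $N$ or $j$.

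For the trace bound I would unravel the rescalings in \eqref{eq:eta_def}: writing $g=\Gamma_{j+s,j+s'}$, the diagonal entry $C_\eta(x,x)$ equals $\rho_J^4(\log L)^{-2}L^{-2(j+s)}$ times $\operatorname{Var}(\nabla^{(\mu)}_{j+s}\xi(x))$, and the latter is $L^{2(j+s)a}$ times a discrete derivative of order $2a$ of $g$ evaluated at the origin (using translation invariance to turn $\nabla^{(\mu)}_x\nabla^{(\mu)}_yg(x-y)$ into a derivative in $x-y$). Lemma~\ref{lemma:fine_Gamma_estimate} gives $|\nabla^\alpha g|\le C_\alpha\rho_J^{-2}L^{-(j+s)|\alpha|}$ when $|\alpha|\ge1$ and $|g(0)|\le C_0\rho_J^{-2}(1+\log\ell)$ when $\alpha=0$; since $\ell\ge2$ we have $1+\log\ell\le C\log\ell=CM^{-1}\log L$, so in every case
\[
  C_\eta(x,x)\le C\rho_J^2(\log L)^{-1}L^{-2(j+s)}=C\rho_J^2(\log\ell)^{-1}M^{-1}L^{-2(j+s)}.
\]
Summing over the $L^{2(j+s)}|Y|_{j+s}$ sites of $Y$ yields $\operatorname{Tr}C_\eta\le C'M^{-1}|Y|_{j+s}$ with $C'=C\rho_J^2(\log\ell)^{-1}$; the gain of a full power of $M$ comes precisely from the fact that the $\alpha=0$ covariance bound carries a factor $\log\ell=M^{-1}\log L$ rather than $\log L$.

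For the operator-norm bound I would use Gershgorin's estimate $\|C_\eta\|_{\mathrm{op}}\le\max_{x}\sum_{y\in Y}|C_\eta(x,y)|$ together with the finite-range property: by \eqref{eq:gamma_js} and \eqref{eq:Dt_range}, $g=\Gamma_{j+s,j+s'}$ has range less than $\tfrac14 L^{j+s'}=\tfrac14\ell\,L^{j+s}$, so for fixed $x$ at most $C\ell^2 L^{2(j+s)}$ of the entries $C_\eta(x,y)$ are nonzero, each bounded by the same expression that controls the diagonal (up to an absolute constant, and in the $a=0$ case with the harmless extra factor $\log\ell$). Hence $\sum_{y}|C_\eta(x,y)|\le C\ell^2\rho_J^2(\log L)^{-1}\le C'\ell^2$, as required, and combining the three inequalities gives \eqref{eq:E[exp_grad_Gaussian]}. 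I do not anticipate a serious obstacle: the argument is a routine combination of Lemma~\ref{lemma:expectation_of_exp_quad} with the covariance estimates of Lemma~\ref{lemma:fine_Gamma_estimate}. The only points requiring mild care are the bookkeeping of the $\nabla_{j+s}$-rescalings (so that the powers of $L^{j+s}$ cancel exactly, leaving only the $\rho_J$- and $\log L$-dependence) and the observation that the $+1$ in the $\alpha=0$ covariance bound is dominated by $\log\ell=M^{-1}\log L$, which is the mechanism producing the decisive factor $M^{-1}$.
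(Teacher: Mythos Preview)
Your proposal is correct and follows essentially the same route as the paper's proof: apply Lemma~\ref{lemma:expectation_of_exp_quad} to $\sqrt{t}\,\eta$, control the diagonal (hence the trace) via the covariance estimates of Lemma~\ref{lemma:fine_Gamma_estimate}, and bound the operator norm by Gershgorin combined with the finite-range property of $\Gamma_{j+s,j+s'}$. The only cosmetic difference is that the paper treats all $a\in\{0,\dots,4\}$ at once by invoking the worst-case ($|\alpha|=0$) bound $|\E[(\nabla^{(\mu)}_{j+s}\xi)(x)(\nabla^{(\mu)}_{j+s}\xi)(y)]|\le C_\alpha\rho_J^{-2}\log\ell$, whereas you split into $a=0$ and $a\ge 1$ before merging; either way one lands on the same entrywise bound and the same $M^{-1}$ in the trace.
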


\begin{proof}
By Lemma~\ref{lemma:fine_Gamma_estimate},
for all $x,y \in \Lambda_N$ and $(\mu)$ as above, defining $\alpha=(\mu, -\mu)$ to be the concatenated multi-index (of length $2a$), 
\begin{equation}\label{eq:xi_k_cov}
   \big| \E^{\xi_k}\big[  (\nabla^{(\mu)}_{j+s} \xi_k)(x) (\nabla^{(\mu)}_{j+s}\xi_k)(y)\big]\big|
   \leq  C_{\alpha}\rho_J^{-2} \log \ell,
\end{equation}
which follows by considering the (worst) case estimate $|\alpha|=0$ in Lemma~\ref{lemma:fine_Gamma_estimate}.
Letting $H_{a, (\mu)}  (x,y) = \operatorname{Cov}(\eta_{a, (\mu)} (x), \eta_{a, (\mu)} (y))$, it follows from \eqref{eq:eta_def} and \eqref{eq:xi_k_cov} that
there is a constant $C'>0$ such that for all $x,y \in Y$ and $a=0,1,\dots,4$,
\begin{align} \label{eq:H_abound}
  | H_{a, (\mu)}  (x,y) | \leq 
  C' 
  (\log L)^{-1} L^{-2(j+s)} \log \ell.
\end{align}
Since also $H_{a, (\mu)}  (x,y) = 0$ for $|x-y|_{ \infty} \geq \frac12 L^{j+s+M^{-1}} = \frac12 \ell L^{j+s}$
by the finite-range property \eqref{eq:Dt_range},
it follows from \eqref{eq:H_abound} that 
\begin{align}
  t \sup_{ a\in \{0,\dots,4 \}} \norm{H_{a, (\mu)}}_{\operatorname{op}}
  \leq t C' \ell^2 (\log \ell) (\log L)^{-1}
  = \frac{C't}{M} \ell^2 \leq \frac{1}{2M} \leq \frac{1}{2},
  \label{eq:H_op_small_condition}
\end{align}
whenever $t \leq (2C'\ell^{2})^{-1}$.
Thus $\sqrt{t}\eta_{a, (\mu)}$ satisfies the assumption of Lemma~\ref{lemma:expectation_of_exp_quad}
so that with \eqref{eq:H_abound},
\begin{equation}
\mathbb{E}[ e^{\frac{t}{2} \sum_{x\in Y} \eta(x)^2} ] \leq e^{tC' M^{-1} |Y|_{j+s}}
\end{equation}
as claimed.
\end{proof}

\begin{proof}[Proof of \eqref{eq:g_j+s_expectation} of Lemma~\ref{lemma:g_j+s_bound_by_quadratic}]

Let $\eta_{a, (\mu)}$ be as in Lemma~\ref{lemma:E[exp_grad_Gaussian]}
and write $\kappa_L= c_\kappa \rho_J^{2} (\log L)^{-1}$ with $c_\kappa>0$.
Then by \eqref{eq:g_j+s_bound}
there is a constant $C>0$ such that
\begin{equation}
  g_{j+s} (Y, \xi) \leq \prod_{a=0}^4 \prod_{(\mu)}
  \exp \Big( \frac{Cc_4 c_\kappa}{2} (5 \cdot 2^{a})^{-1} \norm{\eta_{a, (\mu)}}^2_{L^2 (Y)} \Big)
  ,
\end{equation}
and hence by H\"older's inequality, 
\begin{equation}
\E[ g_{j+s} (Y, \xi)  ] \leq \prod_{a=0}^4  \prod_{(\mu)} \E\Big[ \exp \Big( \frac{Cc_4c_\kappa}{2} \norm{\eta_{a, (\mu)}}^2_{L^2 (Y)} \Big) \Big]^{1/(5\cdot 2^a)}.
\end{equation}
Applying
\eqref{eq:E[exp_grad_Gaussian]} with $t=Cc_4c_\kappa$, the right-hand side is bounded by
$e^{Cc_4c_\kappa C' M^{-1} |Y|_{j+s}}
\leq 2^{M^{-1} |Y|_{j+s}}$
when $c_\kappa \leq (2CC'c_4\ell^2)^{-1}$ is chosen small enough.

For the analogous conclusion for the last step with $\Gamma_{N}^{\Lambda_N}$ instead of $\Gamma_{j+1}$,
we just need to use the decomposition \eqref{eq:gamma^b_js} instead of \eqref{eq:gamma_js}
and recall from Lemma~\ref{lemma:fine_Gamma_estimate} that $\Gamma^{\Lambda_N}_{N-1+s,N-1+s'}$ satisfies
the same estimates as $\Gamma_{N-1+s,N-1+s'}$.
\end{proof}

\subsection{Proof of Proposition~\ref{prop:E_G_j}} \label{sec:A4}

\begin{proof}[Proof of Proposition~\ref{prop:E_G_j}]
Assuming that $c_2$ is sufficiently small 
so that Lemma~\ref{lemma:G_change_of_scale} applies, fix
(with the right-hand sides as in the conclusion of Lemma~\ref{lemma:G_change_of_scale})
\begin{equation}\label{eq:lchoice}
\ell = \ell_0(\cwone c_2),
\qquad
c_4=c_4(\cwone c_2).
\end{equation}
By the subdecomposition $\Gamma_{j+1} = \Gamma_{j,j+1/M}+ \cdots +\Gamma_{j+1-1/M,j+1}$
and the corresponding decomposition of the field
$\zeta = \sum_{k=1}^M {\xi}_{k} \sim \cN(0, \Gamma_{j+1})$, by repeated application of \eqref{eq:G_change_of_scale}, 
for all $\varphi' \in \R^{\Lambda_N}$, $X \in \cP^c_j$,
it follows that
\begin{equation}\label{eq:g_j+s_initial}
\Eplus [ G_j (X, \varphi' + \zeta )  ] \leq \prod_{k=1}^M \mathbb{E}^{{\xi}_{k}} \big[g_{j + \frac{(k-1)}{M}} (X_{\frac{k}{M}} , {\xi}_{k}) \big]  G_{j+1} (\overline{X}, \varphi')	,
\end{equation}
where we recall that $X_{k/M}$ is the smallest $j+ \frac{k}{M}$-polymer containing $X$.
Now by Lemma~\ref{lemma:g_j+s_bound_by_quadratic}, 
since $|X_{k/M}|_{j+k/M} \leq |X|_j$, we obtain the claim:
\begin{equation}
\label{eq:kappa_Lchoice}
\Eplus [ G_j (X, \varphi' + \zeta )  ]
\leq \big( 2^{M^{-1}  |X|_j } \big)^{M} G_{j+1} (\overline{X}, \varphi')
\leq 2^{|X|_j } G_{j+1} (\overline{X}, \varphi').
\end{equation}
The proof of the analogous conclusion for the last step with $\Gamma_{N}^{\Lambda_N}$ instead of $\Gamma_{j+1}$
is analogous.
\end{proof}

\subsection{Proof of Lemmas~\ref{lemma:bound_of_Gj_by_Gjplus1} and \ref{lemma:strong_regulator}} 
\label{sec:A.5}

\begin{proof}[Proof of Lemma~\ref{lemma:bound_of_Gj_by_Gjplus1}]
We first collect the following elementary but fundamental inequality. For any function $f:{\Lambda_N} \to \R$, any connected polymer $X \in \cP^c_j$ (not necessarily small) and $x_0 \in X$,
\begin{equation}
\label{eq:gradL_infty}
\max_{x\in X} |f(x)-f(x_0)| \leq 2 |X|_j L^j \norm{\nabla f}_{L^\infty(X)} = 2  |X|_j \norm{\nabla_j f}_{L^\infty(X)}.
\end{equation}
Observing that $2 |X^*|_j  \leq C$ for some $C > 0$ when $X \in \mathcal{S}_j$, this gives
\begin{align}
& \norm{\delta \varphi}_{L^{\infty} (X^*)} \leq C \norm{\nabla_j \varphi}_{L^{\infty}(X^*)}, \quad X\in \mathcal{S}_j. \label{eq:deltaphi_sup} 
\end{align}
Similarly, applying \eqref{eq:gradL_infty} with the choice $f= \nabla_j^{e} \varphi$ for $e \in \{\pm e_1, \pm e_2\}$ to obtain
that $|\nabla_j^{\mu} \varphi(x)| \leq |\nabla_j^{\mu} \varphi(y)| + C \norm{\nabla_j^2 \varphi}_{L^{\infty}(X^*)}$ for any $x, y\in X^*$, averaging over $y$ in $X$ and $\mu$ in $\hat{e}$, taking squares and using that $(a+b)^2 \leq 2(a^2+ b^2)$, one obtains for any $X \in \cP_j^c$ that
\begin{align}\label{eq:gradphi_sup}
& \norm{\nabla_j \varphi}_{L^{\infty}(X^*)}^2 \leq C \big(|X|_j^{-1} \norm{\nabla_j \varphi}_{L^2_j (X)}^2 + W_j (X, \nabla_j^2 \varphi)^2 \big),
\end{align}
where we also used that
\begin{align} \label{eq:grad2phi_sup}
& \norm{\nabla_j^2 \varphi}_{L^{\infty} (X^*)} \leq W_j (X, \nabla_j^2 \varphi),
\end{align}
which follows from \eqref{eq:W_j}.
Recalling $\norm{\cdot}_{C^2_j (X^*)}$ from \eqref{eq:normC2}, combining \eqref{eq:deltaphi_sup}, \eqref{eq:gradphi_sup} and \eqref{eq:grad2phi_sup} while noting that $\nabla_j \delta \varphi = \nabla_j \varphi$, one readily infers that
\begin{equation}
\norm{\delta \varphi}_{C^2_j (X^*)}^2 \leq C \big(\norm{\nabla_j \varphi}_{L^2_j (X)}^2 + W_j (X, \nabla_j^2 \varphi)^2 \big), \quad X \in \mathcal{S}_j,
\end{equation}
from which \eqref{eq:deltaphi_C2} follows in view of \eqref{eq:def_large_field_regulator} by means of the elementary inequality $t^k \leq C(k)e^{t^2}$, valid for all $t \geq 0 $.
\end{proof}

\begin{proof}[Proof of Lemma~\ref{lemma:strong_regulator}]
  The bound \eqref{eq:strong_regulator1} is a direct consequence of the first estimate in (6.100) of \cite[Lemma~6.21]{MR2523458} upon taking a product over $B \in \mathcal{B}_j(X)$ (for the reader's orientation, the quantity $e^{c_w \kappa_L w_j (\varphi, B)}$ for $B \in \mathcal{B}_j$ corresponds to $G^2_{\text{strong}, \varphi}(B)$ in the notation of \cite{MR2523458}).
  In particular, the presence of the factors $2^{-n}$ in \eqref{eq:normL2} and \eqref{eq:normL2boundary}, absent in \cite[(6.67)]{MR2523458}, is inconsequential for the validity of these results. The same applies to further references to \cite{MR2523458} in the sequel.

Note also that, while the value of $c_w$ is fixed in \cite{MR2523458} as $c_w=2$ 
and there is an extra parameter $c_1$ in $G_j$ chosen large enough, 
we take $c_1 = 1$,  $c_w$ small,
which is equivalent. 
Finally, note that \eqref{eq:strong_regulator1} does not rely on the presence of the $\norm{\cdot}_{L^2_j (\partial X)}^2$-term in $G_j$, i.e., \eqref{eq:strong_regulator1} holds with $c_2=0$ in \eqref{eq:def_large_field_regulator}.

The inequality \eqref{eq:strong_regulator2} is the content of (6.103) in \cite{MR2523458}. Here $G_j$ and $c_2$ corresponds to $G_{\varphi'}$ and $c_3$, respectively, in the notation of \cite{MR2523458}. 
Conditions on $c_2, c_w$ above \eqref{eq:strong_regulator2} follows by inspection of the proof of \cite[Lemma~6.22]{MR2523458}, see in particular (6.105) therein.

\end{proof}

\section{Completeness of the space of polymer activities}
\label{app:completeness}

Variations of the space of polymer activities have been defined and used by various different authors in similar contexts but we could not find reference
for its completeness with this specific norm, so we include a proof in this appendix.

\begin{proposition} \label{prop:completeness_of_N_j}
For any $h>0$, the space $\{F\in \mathcal{N}_{j} (X): \norm{F}_{h, T_j (X)}<\infty\}$ is a Banach space.
\end{proposition}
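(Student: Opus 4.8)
The plan is to show that a Cauchy sequence $(F_n)$ in the normed space $\{F \in \mathcal{N}_j(X) : \|F\|_{h,T_j(X)} < \infty\}$ converges to a limit in the same space. First I would recall that $\mathcal{N}_j(X) = C^\infty(\mathbb{R}^{X^*})$, so a candidate limit is a smooth function on a finite-dimensional space $\mathbb{R}^{X^*}$. The key observation is that the norm $\|\cdot\|_{h,T_j(X)}$, defined in \eqref{eq:NORm}, controls, for each fixed $\varphi \in \mathbb{R}^{X^*}$, all the derivative norms $\|D^n F(X,\varphi)\|_{n,T_j(X,\varphi)}$ weighted by $h^n/n!$ and divided by the regulator $G_j(X,\varphi)$. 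Since for a fixed $\varphi$ the quantity $\|D^n F(X,\varphi)\|_{n,T_j(X,\varphi)}$ is, up to a fixed ($j$- and $X$-dependent, hence finite) constant, comparable to a supremum of partial derivatives $|\partial^\alpha F(X,\varphi)|$ over multi-indices $\alpha$ with $|\alpha| = n$ — this follows from \eqref{eq:def-D}--\eqref{eq:nTj_norm_definition} and the fact that the unit ball of $C^2_j(X^*)$ spans $\mathbb{R}^{X^*}$ — Cauchyness in the norm implies that $(\partial^\alpha F_n(X,\varphi))_n$ is Cauchy in $\mathbb{R}$ for every $\varphi$ and every $\alpha$.

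Second, I would carry out the pointwise construction of the limit: define $F(X,\varphi) := \lim_n F_n(X,\varphi)$. To see that $F \in C^\infty(\mathbb{R}^{X^*})$, I would argue that on any compact set $\mathcal{C} \subset \mathbb{R}^{X^*}$, the regulator $G_j(X,\cdot)$ is bounded above (it is a continuous function, being an exponential of norms of $\varphi$, cf.~\eqref{eq:def_large_field_regulator}), so the bound
\[
  \sup_{\varphi \in \mathcal{C}} |\partial^\alpha F_n(X,\varphi) - \partial^\alpha F_m(X,\varphi)| \leq C(X,j,|\alpha|,\mathcal{C})\, h^{-|\alpha|}\, |\alpha|!\, \|F_n - F_m\|_{h,T_j(X)}
\]
shows that for each fixed $\alpha$, the sequence $(\partial^\alpha F_n)_n$ is uniformly Cauchy on $\mathcal{C}$. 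By the standard theorem on uniform convergence of derivatives (applied inductively in $|\alpha|$), the limit $F$ is $C^\infty$ and $\partial^\alpha F_n \to \partial^\alpha F$ uniformly on compacts, with $\partial^\alpha F = \lim_n \partial^\alpha F_n$ pointwise. Thus $F \in \mathcal{N}_j(X)$.

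Third, I would check that $F$ has finite norm and that $F_n \to F$ in the norm. For finiteness: fix $\varphi$ and $n$; since $\|D^n F_m(X,\varphi)\|_{n,T_j(X,\varphi)} \leq \|F_m\|_{h,T_j(X)} G_j(X,\varphi) h^{-n} n! \cdot (\text{something summable})$ — more precisely, each term in the series \eqref{eq:seminorm} is bounded by $\|F_m\|_{h,T_j(X)} G_j(X,\varphi)$ — and since $\|F_m\|$ is a bounded sequence (Cauchy sequences are bounded), passing to the limit $m \to \infty$ using the pointwise convergence of each $D^n F_m(X,\varphi)(f_1,\dots,f_n)$ and Fatou's lemma for the sum over $n$ gives $\|F(X,\varphi)\|_{h,T_j(X,\varphi)} \leq (\liminf_m \|F_m\|) G_j(X,\varphi) < \infty$, hence $\|F\|_{h,T_j(X)} < \infty$. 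For convergence in norm: given $\varepsilon > 0$, pick $N$ so that $\|F_n - F_m\|_{h,T_j(X)} < \varepsilon$ for $n,m \geq N$; for fixed $n \geq N$ and fixed $\varphi$, the quantity $G_j(X,\varphi)^{-1}\|F_n(X,\varphi) - F_m(X,\varphi)\|_{h,T_j(X,\varphi)} < \varepsilon$, and letting $m \to \infty$ (again using pointwise convergence of all the multilinear derivatives and Fatou for the $n$-sum) gives $G_j(X,\varphi)^{-1}\|F_n(X,\varphi) - F(X,\varphi)\|_{h,T_j(X,\varphi)} \leq \varepsilon$; taking the supremum over $\varphi$ yields $\|F_n - F\|_{h,T_j(X)} \leq \varepsilon$. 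The main obstacle — and the only point requiring genuine care — is the interchange of the limit $m \to \infty$ with the infinite sum over $n$ in \eqref{eq:seminorm} and with the supremum over $\varphi$; both are handled by the standard lower-semicontinuity of suprema and Fatou's lemma, using that the pointwise limits of the individual building blocks $D^n F_m(X,\varphi)(f_1,\dots,f_n)$ exist (which is where the finite-dimensionality of $\mathbb{R}^{X^*}$ and the smooth-convergence-of-derivatives argument from the second step are used).
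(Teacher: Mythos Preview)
Your proof is correct and follows the same overall architecture as the paper's: extract a pointwise limit, upgrade it to a $C^\infty$ limit using that $G_j$ is bounded on compacts and that all partial derivatives are controlled by the norm, then verify that the limit has finite norm and that the sequence converges to it in norm. Where you differ is in the execution of the last two steps. The paper argues both by contradiction: it introduces an auxiliary sup-over-$n$ norm, then for finiteness of $\|F\|_{h,T_j(X)}$ assumes it is infinite and uses a telescoping bound on partial sums to reach a contradiction; for norm convergence it supposes $\bar F_k = F_k - F$ does not go to zero and produces disjoint derivative-order ``bands'' $[N_k,M_k]$ on which the mass of $\bar F_k$ concentrates, contradicting Cauchyness of $\bar F_k - \bar F_{k+1} = F_k - F_{k+1}$. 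Your route via lower-semicontinuity of the sup defining $\|D^n\cdot\|_{n,T_j}$ combined with Fatou for the series over $n$ is the more standard and more transparent way to pass limits through a weighted-sup-of-series norm, and it avoids the subsequence extraction entirely. Both approaches ultimately rely on the same pointwise convergence of $D^n F_m(X,\varphi)(f_1,\dots,f_n)$, which you correctly justify via finite-dimensionality of $\R^{X^*}$.
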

\begin{proof}
Suppose $(F_k)_{k\geq 1}$ is a Cauchy sequence in the norm $\norm{\cdot}_{h, T_j (X)}$. Without loss of generality, we will assume $\norm{F_k - F_{k+1}}_{h, T_j (X)} \leq 2^{-k}$. In particular, 
\begin{align}
\frac{h^n}{n!} \norm{D^n (F_k - F_{k+1})(\varphi)}_{n, T_j (X,\varphi)} \leq 2^{-k} G_j (X,\varphi)
\end{align}
for each $n \geq 0$. Therefore the pointwise limit exists for $(F_k)_{k\geq 1}$, say $F$.
{From the completeness of the spaces $C^k(\R^{X^*})$, it is also clear that $F$ is smooth.}
In fact, if we define another normed space
\begin{align}
&\mathcal{N}'_j = \{ K \text{ polymer activity } : \; \norm{K}'_{h, T_j (X)} < \infty  \}, \\
&\norm{K}'_{h, T_j (X)} = \sup_{\varphi \in \mathbb{R}^{X^*}} G_j (X,\varphi)^{-1} \Big( \sup_{n\geq 0} \frac{h^n}{n!} \norm{D^n K(\varphi)}_{n, T_j (X,\varphi)} \Big)
\end{align}
then the pointwise limit satisfies $F \in \mathcal{N}'_j$. Now suppose $\norm{F}_{h, T_j (X)} = +\infty$. Then for each $M > 0$, there exists $\varphi_M \in \mathbb{R}^{X^*}$ and $N_M \in \mathbb{Z}$ such that
\begin{equation}
\sum_{n=0}^{N_M} \frac{h^n}{n!} \norm{D^n F (\varphi_M)}_{n, T_j (X,\varphi_M)} \geq M G_j (X,\varphi_M).
\end{equation}
But $\sum_{n=0}^{N_M} \frac{h^n}{n!} \norm{D^n (F_{k} - F_{k+1}) (\varphi_M)}_{n, T_j (X,\varphi_M)} \leq 2^{-k} G_j (X,\varphi_M)$ for each $k$ so if we set $M > 1+ \norm{F_1}_{h, T_j (X)}$, this gives a contradiction. This proves $\|F\|_{h,T_j(X)}<\infty$. 

Finally, we have to prove $F_k \rightarrow F$ as $k\rightarrow \infty$ in the $\|\cdot\|_{h,T_j(X)}$ norm. 
To see this, let $\bar{F}_k = F_k - F$, and notice that $(\bar{F}_k)_k$ is still Cauchy in the $\|\cdot\|_{h,T_j(X)}$ norm. 
Suppose $\bar{F}_k$ does not converge to $0$ as $k\rightarrow \infty$. By scaling and taking a subsequence if necessary, this means there is $\varphi \in \mathbb{R}^{X^*}$ such that
\begin{equation}
\sum_{n=0}^{\infty} \frac{h^n }{n!} \norm{D^n \bar{F}_k (\varphi_k)}_{n, T_j (X,\varphi_k )} \geq G_j (X,\varphi_k) .
\end{equation}
But also since $\norm{D^n \bar{F}_k (\varphi, k)}_{n, T_j (X,\varphi)} \rightarrow 0$ as $k\rightarrow \infty$,
up to a subsequence, there exist sequences $(N_k)_{k\geq 0}$, $(M_k)_{k\geq 0}$ such that $N_k < M_k < N_{k+1}$ and
\begin{align}
\sum_{n=N_k}^{M_k} \frac{h^n }{n!} \norm{D^n \bar{F}_k (\varphi_k)}_{n, T_j (X,\varphi_k)} & \geq \frac{2}{3} G_j (X,\varphi_k)  \\
\sum_{n \in \mathbb{N} \backslash [N_k, M_k]} \frac{h^n }{n!} \norm{D^n \bar{F}_k (\varphi)}_{n, T_j (X,\varphi)} & \leq \frac{1}{3} G_j (X,\varphi)  \quad \text{for all } \varphi \in \R^{X^*} .
\end{align}
But this implies $\norm{(\bar{F}_k - \bar{F}_{k+1} ) (\varphi_{k+1})}_{h, T_j (X,\varphi_{k+1})} \geq \frac{1}{3} G_j (X,\varphi_{k+1})$ which contradicts that $\bar{F}_k$ is a Cauchy sequence. Therefore $F_k \rightarrow F$ as $k\rightarrow \infty$.
\end{proof}

\section{Fourier transform of the standard bump function}
\label{app:bump}

In the proof of Lemma~\ref{lemma:basic_frd}, the decay rate of the Fourier transform of the standard bump function $\kappa$ was used.
Since we were unable to locate a reference, we include the elementary proof here.

\begin{proposition} \label{prop:kappa_hat_decay}
Define $\kappa (x) = e^{-\frac{1}{1- 4x^2}} 1_{|x| < 1/2}$ for $x\in \R$ and $\hat{\kappa}(p)$ to be its Fourier transform. Then $\hat{\kappa}(p) = O(e^{-|p|^{1/2}})$.
\end{proposition}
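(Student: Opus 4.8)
The plan is to show that $\hat\kappa$ extends to an analytic function in a neighbourhood of the real axis in $\mathbb{C}$ and to estimate it there, then optimise a contour shift. Recall that $\kappa$ is supported on $[-1/2,1/2]$ with a square-root type singularity of its argument at the endpoints, and $\hat\kappa(p)=\int_{-1/2}^{1/2} e^{-ipx}\kappa(x)\,dx$. Since $\kappa$ has compact support, $\hat\kappa$ is entire in $p$, but the rate of decay along the real axis is governed by the smoothness of $\kappa$ together with the precise nature of the singularities at $x=\pm\tfrac12$; the factor $e^{-1/(1-4x^2)}$ is $C^\infty$ but not analytic at the endpoints, and it is this ``Gevrey-$2$'' behaviour near the boundary that produces the stretched-exponential rate $e^{-|p|^{1/2}}$ rather than true exponential decay.

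The concrete approach I would take is the standard one for Gevrey bump functions. First, for $p>0$ shift the contour of integration from the real segment to a path in the lower half plane: write $\hat\kappa(p)=\int_{\Gamma} e^{-ipz}\kappa(z)\,dz$ where $\Gamma$ runs just below $[-1/2,1/2]$, dipping down by a height $h=h(p)>0$ to be chosen. The factor $e^{-ipz}=e^{-ip\,\mathrm{Re}\,z}e^{p\,\mathrm{Im}\,z}$ gains a decay factor $e^{-ph}$ when $\mathrm{Im}\,z=-h<0$. The cost is that $\kappa$ is no longer small on this contour: one must control $|\kappa(z)|=|e^{-1/(1-4z^2)}|$ for $z$ at distance $\asymp h$ from the singular points $\pm\tfrac12$. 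Near $z=\tfrac12$, with $w=z-\tfrac12$ small, $1-4z^2=-4w-4w^2\approx-4w$, so $1/(1-4z^2)\approx-1/(4w)$ and hence $|e^{-1/(1-4z^2)}|\approx e^{\mathrm{Re}(1/(4w))}=e^{\mathrm{Re}(w)/(4|w|^2)}$, which is $\lesssim e^{C/|w|}\lesssim e^{C/h}$ on the part of $\Gamma$ near the endpoint (here one should keep $\mathrm{Re}\,w$ of a controlled sign, e.g. by having $\Gamma$ pass below and the real part nonnegative, or simply bound crudely by $e^{C/h}$). Away from the endpoints $\kappa$ is bounded. Combining, $|\hat\kappa(p)|\le C\, e^{-ph}\,e^{C/h}$. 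Optimising in $h$, i.e. taking $h=\sqrt{C/p}\asymp p^{-1/2}$, gives the exponent $-ph+C/h\asymp-2\sqrt{Cp}\asymp-c\sqrt{p}$, hence $\hat\kappa(p)=O(e^{-c|p|^{1/2}})$; replacing $c$ by $1$ and absorbing constants (or, since only the order $O(e^{-|p|^{1/2}})$ is claimed, rescaling once at the end) yields the statement. For $p<0$ shift into the upper half plane by symmetry (or use that $\kappa$ is even, so $\hat\kappa$ is real and even).

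The main obstacle is making the contour-shift and endpoint estimate rigorous: one has to verify that $\kappa$, defined by the real formula $e^{-1/(1-4x^2)}1_{|x|<1/2}$, admits a holomorphic extension to a complex neighbourhood of the open interval $(-\tfrac12,\tfrac12)$ (immediate: $z\mapsto e^{-1/(1-4z^2)}$ is holomorphic wherever $1-4z^2\notin(-\infty,0]$), choose the contour $\Gamma$ so that it stays in this domain of holomorphy and connects $-\tfrac12$ to $\tfrac12$ while achieving $\mathrm{Im}\,z\le-h$ on the ``bulk'' and controlling $|\kappa|$ on the two short arcs near $\pm\tfrac12$, and justify that no contribution comes from the endpoints themselves (there $\kappa$ together with all derivatives vanishes, so the contour can be pinched to the real endpoints with no boundary term). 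A convenient clean choice is a rectangular-type contour: go straight down from $-\tfrac12$ to $-\tfrac12-ih$, across to $\tfrac12-ih$, and up to $\tfrac12$; on the two vertical segments $z=\pm\tfrac12 - it$, $t\in[0,h]$, one has $1-4z^2 = \mp 4it + 4t^2$ near the corner, so $\mathrm{Re}(1/(1-4z^2)) = 4t^2/(16t^2+16t^4)\le 1/(4)\cdot\frac{1}{1+t^2}$, which is \emph{bounded} — in fact $|\kappa|\le e^{-c}$ uniformly on these segments — so even the crude bound suffices and the endpoint estimate costs only a constant, not $e^{C/h}$. Then $|\hat\kappa(p)|\le C e^{-ph}$ for every admissible $h$; but $h$ can be taken up to the point where the horizontal segment leaves the domain of holomorphy, i.e. $h<\tfrac12$, giving $|\hat\kappa(p)|\le Ce^{-p/3}$, actually \emph{exponential} decay — which of course also implies the claimed $O(e^{-|p|^{1/2}})$. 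One should double-check this against the Gevrey-$2$ heuristic; if the rectangular contour genuinely stays holomorphic up to height $\asymp 1$, exponential decay is correct and the stronger-than-needed bound is fine. If instead the domain of holomorphy pinches near the corners (it does: $1-4z^2\in(-\infty,0]$ when $z$ is real with $|z|\ge\tfrac12$, and near $z=\tfrac12+\,$something this set is approached), one must take the vertical excursions to have width shrinking like the distance to the cut, recovering $h\asymp p^{-1/2}$ and the $e^{-c\sqrt p}$ rate; since the statement only asserts $O(e^{-|p|^{1/2}})$, the safe route is to present the $h\asymp p^{-1/2}$ optimisation, which is robust to these corner issues and gives exactly the claimed bound.
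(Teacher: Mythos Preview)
Your overall strategy—deform the contour into the lower half plane and balance the gain from $e^{-ipz}$ against the behaviour of $\kappa$ near the essential singularities at $\pm\tfrac12$—is the same as the paper's. But the concrete implementation you give has a genuine gap, and the rectangular contour does \emph{not} work.

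On the vertical segments $z=\tfrac12-it$, $t\in[0,h]$, you correctly compute $\operatorname{Re}\bigl(1/(1-4z^2)\bigr)=\tfrac{1}{4(1+t^2)}$, so $|\kappa(z)|\in[e^{-1/4},1)$: bounded, but \emph{not} tending to $0$ as $t\to 0$. (This is the essential singularity: along the real axis $|\kappa|\to 0$, along the vertical it does not.) Since $|e^{-ipz}|=e^{-pt}$, the vertical segment contributes $\int_0^h e^{-pt}\,|\kappa(\tfrac12-it)|\,dt\asymp 1/p$, which dominates the horizontal contribution $e^{-ph}$. So the rectangular contour gives only $\hat\kappa(p)=O(1/p)$, not $O(e^{-c\sqrt p})$—and certainly not the exponential decay you tentatively suggest. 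Your heuristic balance ``$e^{-ph}$ against $e^{C/h}$'' also does not apply here: on the horizontal segment at depth $h$ one has $\operatorname{Re}(z-\tfrac12)\le 0$, hence $|\kappa|\le 1$, so there is no $e^{C/h}$ cost to trade off.

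What is missing is that the contour must approach $\pm\tfrac12$ along a direction where $|\kappa|$ \emph{itself} decays like $e^{-c/t}$. Along a $45^\circ$ diagonal $z=\tfrac12-(1+i)t$ (so $\operatorname{Re}(z-\tfrac12)=-t<0$ and $\operatorname{Im}(z)=-t<0$), one gets $|\kappa(z)|\lesssim e^{-c/t}$ and $|e^{-ipz}|=e^{-pt}$, and then $e^{-pt-c/t}\le e^{-2\sqrt{cp}}$ by AM--GM, uniformly in $t$. Integrating gives $O(e^{-c'\sqrt p})$. This is exactly what the paper does: it deforms $(-1,1)$ (for the rescaled $\tau$) to two diagonal rays meeting at a common point in the lower half plane, changes variable so the exponent becomes $-\sqrt{p}\,\tfrac{1-i}{\sqrt2}(v+v^{-1})$ plus a bounded remainder, and concludes via $v+v^{-1}\ge 2$.
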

\begin{proof}
Letting $\tau (x) = \kappa(x/2) = e^{-1/(1-x^2)} 1_{|x| < 1}$, it is sufficient to prove $\hat{\tau} (p) = O(e^{-|2p|^{1/2}})$. One has $\hat{\tau}(p) = \int_{(-1,1)} e^{-ipx - \frac{1}{1-x^2}} dx$.  Since $\tau$ is analytic and bounded on the rectangle $S = \{ z \in \mathbb{C} : \operatorname{Re}(z) \in (-1, 1), \; \operatorname{Im}(z) \in (-2,2) \}$, one may write alternatively
\begin{align}
\hat{\tau} (p) = \int_{\Gamma_- \cup \Gamma_+} e^{-ipz - \frac{1}{1-z^2}} dz  = 2 \operatorname{Re} \Big[ \int_{\Gamma_+} e^{-ipz - \frac{1}{1-z^2}} dz  \Big]
\end{align}
where $\Gamma_{\pm} = \{ \pm 1+( \mp 1+i) t  \in \mathbb{C} : t\in (0,1] \}$ (with orientations as appropriate). Without loss of generality, take $p >0 $. Then change of parameter $v =  (\frac{1+i}{\sqrt{2}})^{-1} \sqrt{2p}(1-z)$ gives
\begin{align}
G(p) := \int_{ \Gamma_+} e^{-ipz - \frac{1}{1-z^2}} dz  = \frac{1}{\sqrt{2p}} e^{-ip-\frac{1}{4}} \int_0^{2\sqrt{p}} e^{- \sqrt{p} \frac{1-i}{\sqrt{2}} (v + v^{-1}) - g(\frac{1+i}{2\sqrt{p}} v)} dv
\end{align}
where $g(x) = \frac{x}{4 (2-x)}$. Since $g(\frac{1+i}{2\sqrt{p}} v)$ is bounded uniformly on $v \in [0,  2\sqrt{p})$,
there is $C>0$ such that
\begin{align}
|G(p)| \leq \frac{C}{2\sqrt{p}} \int_0^{2\sqrt{p}} e^{-\sqrt{\frac{p}{2}} (v+v^{-1}) } dv \leq C e^{-\sqrt{2p}}
\end{align}
utilising $v+v^{-1} \geq 2$.

\end{proof}

\section*{Index of notation}
\label{sec:index_of_notations}
\addcontentsline{toc}{section}{Index of notation}

The following list defines the model.

\begin{itemize}
\item $L, \ell, M, N$: (integer) parameters determining the volume of the two-dimensional discrete torus $\Lambda_N$. 
The side length of $\Lambda_N$ is $L^N$ and $L = \ell^{M}$ for sufficiently large $\ell$ and $M$. 
$\ell$ is determined in Section~\ref{sec:norm_properties} while $M$ is chosen only in Section~\ref{sec:stable_manifold_theorem}. 
$N$ is arbitrarily large and tends to $\infty$.

\item $J$: finite-range distribution, subset of $\Z^d$, with associated quantities $\rho_J, v_J^2, \theta_J$ as in Section~\ref{subsec:FRresults}.

\item $\beta$: the temperature, chosen $\beta \geq \beta_0 (J)$ for $\beta_0 (J)$ determined in Section~\ref{sec:stable_manifold_theorem}. 
Related quantities include $\betaeff$ and $\betafree$.
$\betaeff$ is the effective temperature in the scaling limit defined by \eqref{eq:betaeff_def} and $\betafree$ is a crude lower bound on $\beta_0$ given by \eqref{eq:betafree_def}. 

\item $q \geq 0$: (integer) index of Fourier modes of an even periodic function.

\item $\varphi, \varphi', \zeta, \zeta',\dots$:  (Gaussian) fields on $\Lambda_N$. The notation $\varphi + \zeta$ etc.~typically refers to $\zeta$ being currently integrated over while retaining $\varphi$ as a parameter.

\item $\delta f= f(x)-f(x_0)$, similarly $\delta \varphi,\dots$: increment of a function at a point $x_0 \in \Lambda_N$.

\end{itemize}

The following notations show up in the proof of the main theorem, in relation with the renormalisation group. 

\begin{itemize}
\item $C (s,m^2)$: modification of the Green's function $(-\Delta_J)^{-1}$ after $0$-range part ($\gamma$) extraction, 
mass ($m^2$) regularisation and stiffness ($s$) renormalisation.
See Section~\ref{subsec:FRresults} for the definition and related objects.

\item $\Gamma_j$, $\Gamma_N^{\Lambda_N}$, $t_N Q_N$: decomposed convariances of $C(s,m^2)$, defined in Section~\ref{subsec:limit_m_to_0}.

\item $\cB_j$, $\cP_j$,  $\cP_j^c$,  $\cS_j$: set of $j$-blocks, $j$-polymers, connected $j$-polymers and small set polymers  introduced in Section~\ref{sec:polymersdef0}.

\item $\bar{X}$, $\operatorname{Comp}_j (X)$, $X^*$: operations on polymer $X \in \cP_j$ defined in Section~\ref{sec:polymersdef0}.

\item $\cN_j (X)$, $\cN_j$, $\Omega_j^K$, $\Omega_j^U$: set of polymer functions (functions that depend on polymer $X$ and field $\varphi \in \R^{\Lambda_N})$.
$\cN_j (X)$ and $\cN$ are given in Definition~\ref{def:polymeractivity}, 
$\Omega_j^K$ is given in Definition~\ref{def:K_space} and $\Omega_j^U$ is given in Definition~\ref{def:U_space}.
Functions in $\Omega_j^K$ and $\Omega_j^U$ are required to satisfy strong restrictions.

\item $K(X), F(X),\dots$: typical polymer activity (at scale $j \geq0$), element of  $\cN_j$ (Definition~\ref{def:polymeractivity}).

\item $\norm{\cdot}_{n, T_j (X, \varphi)}$, $\norm{\cdot}_{h, T_j (X, \varphi)}$, $\norm{\cdot}_{h, T_j (X)}$, $\norm{\cdot}_{h, T_j}$: (semi-)norms on polymer activities at scale $j$ with associated radius of convergence $h$, large field regulator $G_j$ and large set regulator $A$ as in Section~\ref{sec:polymersdef}.
One may also consult the end of Section~\ref{sec:polymersdef},
Remark~\ref{R:choice-parameters}, Definition~\ref{def:K_space} and Lemma~\ref{lem:W_norm} for choices of parameters in the definition of the norms.

\item  $\Loc_F$, $ \Loc_{F,B}$: localisation operators (Definition~\ref{def:Loc}).

\item $\Tay_n$,  $\Rem_n$ and their averages $\bar{\Tay}_n, \bar{\Rem}_n$: Taylor expansion and remainder of a polymer $F$ to order $n$ about point $0$ (Section~\ref{sec:pf-Loc-neutral}).

\item $\Phi_{j+1}$: the renormalisation group map with coordinates $(\cE_{j+1}, \cU_{j+1}, \cK_{j+1})$, defined in Section~\ref{sec:rg_generic_step} (Definitions~\ref{def:evolution_of_U}--\ref{def:evolution_of_remainder}).  Calligraphic notation $\cE, \cU, \cK$ etc.~refers to an actual map (on a suitable function space), roman notation e.g.~$E, U, K,\dots$ (possibly with subscripts $j$) to a point in such a space; for instance in writing $\mathcal{K}_{j+1}(U_j,K_j)
$ we mean evaluate the map $\mathcal{K}_{j+1}$ at point $(U_j,K_j)$. 
$\cL_{j+1}$: linear part, $\cM_{j+1}$: non-linear part of $\cK_{j+1}$ (Theorem~\ref{thm:local_part_of_K_j+1}).

\end{itemize}

One may also see Section~\ref{sec:notation} for further notation. 


\section*{Acknowledgements}

R.B.\ was supported by the European Research Council under the European Union's Horizon 2020 research and innovation programme
(grant agreement No.~851682 SPINRG). He also acknowledges the hospitality of the Department of Mathematics at McGill University
where part of this work was carried out.
J.P.\ was supported by the Cambridge doctoral training centre Mathematics of Information.

\bibliography{all}

\begin{thebibliography}{10}

\bibitem{1910.13564}
S.~Adams, S.~Buchholz, R.~Koteck\'y, and S.~M\"uller.
\newblock {C}auchy-{B}orn {R}ule from {M}icroscopic {M}odels with {N}on-convex
  {P}otentials.
\newblock 2019.
\newblock Preprint, arXiv:1910.13564.

\bibitem{2110.09498}
M.~Aizenman, M.~Harel, R.~Peled, and J.~Shapiro.
\newblock Depinning in the integer-valued {G}aussian {F}ield and the {BKT}
  phase of the {2D} {V}illain model.
\newblock 2021.
\newblock Preprint, arXiv:2110.09498.

\bibitem{MR3189075}
S.~Andres.
\newblock Invariance principle for the random conductance model with dynamic
  bounded conductances.
\newblock {\em Ann. Inst. Henri Poincar\'{e} Probab. Stat.}, 50(2):352--374,
  2014.

\bibitem{MR4137943}
S.~Andres, J.-D. Deuschel, and M.~Slowik.
\newblock Green kernel asymptotics for two-dimensional random walks under
  random conductances.
\newblock {\em Electron. Commun. Probab.}, 25:Paper No. 58, 14, 2020.

\bibitem{MR4212193}
S.~Andres and P.A. Taylor.
\newblock Local limit theorems for the random conductance model and
  applications to the {G}inzburg-{L}andau {$\nabla\phi$} interface model.
\newblock {\em J. Stat. Phys.}, 182(2):Paper No. 35, 35, 2021.

\bibitem{1909.13325}
S.~Armstrong and W.~Wu.
\newblock {$C^2$} regularity of the surface tension for the {$\nabla\phi$}
  interface model.
\newblock 2019.
\newblock Preprint, arXiv:1909.13325.

\bibitem{MR3129804}
R.~Bauerschmidt.
\newblock A simple method for finite range decomposition of quadratic forms and
  {Gaussian} fields.
\newblock {\em Probab. Theory Related Fields}, 157(3-4):817--845, 2013.

\bibitem{MR3926125}
R.~Bauerschmidt and T.~Bodineau.
\newblock A very simple proof of the {LSI} for high temperature spin systems.
\newblock {\em J. Funct. Anal.}, 276(8):2582--2588, 2019.

\bibitem{MR4061408}
R.~Bauerschmidt and T.~Bodineau.
\newblock Spectral {G}ap {C}ritical {E}xponent for {G}lauber {D}ynamics of
  {H}ierarchical {S}pin {M}odels.
\newblock {\em Commun. Math. Phys.}, 373(3):1167--1206, 2020.

\bibitem{MR3269689}
R.~Bauerschmidt, D.C. Brydges, and G.~Slade.
\newblock Scaling limits and critical behaviour of the 4-dimensional
  {$n$}-component {$|\varphi|^4$} spin model.
\newblock {\em J. Stat. Phys.}, 157(4-5):692--742, 2014.
\newblock Special issue in memory of Kenneth Wilson.

\bibitem{MR3339164}
R.~Bauerschmidt, D.C. Brydges, and G.~Slade.
\newblock Logarithmic correction for the susceptibility of the 4-dimensional
  weakly self-avoiding walk: a renormalisation group analysis.
\newblock {\em Commun. Math. Phys.}, 337(2):817--877, 2015.

\bibitem{MR3332940}
R.~Bauerschmidt, D.C. Brydges, and G.~Slade.
\newblock A renormalisation group method. {III}. {P}erturbative analysis.
\newblock {\em J. Stat. Phys.}, 159(3):492--529, 2015.

\bibitem{MR3969983}
R.~Bauerschmidt, D.C. Brydges, and G.~Slade.
\newblock {\em Introduction to a renormalisation group method}, volume 2242 of
  {\em Lecture Notes in Mathematics}.
\newblock Springer, Singapore, 2019.

\bibitem{MR3995896}
R.~Bauerschmidt, D.~Conache, M.~Heydenreich, F.~Merkl, and S.W.W. Rolles.
\newblock Dislocation {L}ines in {T}hree-{D}imensional {S}olids at {L}ow
  {T}emperature.
\newblock {\em Ann. Henri Poincar\'{e}}, 20(9):3019--3057, 2019.

\bibitem{dgauss2}
R.~Bauerschmidt, J.~Park, and P.-F. Rodriguez.
\newblock The {D}iscrete {G}aussian model, {II}. {I}nfinite volume scaling
  limit at high temperature.
\newblock Preprint, arXiv:2202.02287.

\bibitem{MR4164451}
D.~Belius and W.~Wu.
\newblock Maximum of the {G}inzburg-{L}andau fields.
\newblock {\em Ann. Probab.}, 48(6):2647--2679, 2020.

\bibitem{MR0314399}
V.L. Berezinski\u{\i}.
\newblock Destruction of long-range order in one-dimensional and
  two-dimensional systems having a continuous symmetry group. {I}. {C}lassical
  systems.
\newblock {\em \v{Z}. \`Eksper. Teoret. Fiz.}, 59:907--920, 1970.

\bibitem{MR2322690}
M.~Biskup and R.~Koteck{\'y}.
\newblock Phase coexistence of gradient {G}ibbs states.
\newblock {\em Probab. Theory Related Fields}, 139(1-2):1--39, 2007.

\bibitem{MR2778801}
M.~Biskup and H.~Spohn.
\newblock Scaling limit for a class of gradient fields with nonconvex
  potentials.
\newblock {\em Ann. Probab.}, 39(1):224--251, 2011.

\bibitem{MR1048698}
D.~Brydges and H.-T. Yau.
\newblock Grad {$\phi$} perturbations of massless {G}aussian fields.
\newblock {\em Commun. Math. Phys.}, 129(2):351--392, 1990.

\bibitem{MR2523458}
D.C. Brydges.
\newblock Lectures on the renormalisation group.
\newblock In {\em Statistical mechanics}, volume~16 of {\em IAS/Park City Math.
  Ser.}, pages 7--93. Amer. Math. Soc., 2009.

\bibitem{MR2070102}
D.C. Brydges, G.~Guadagni, and P.K. Mitter.
\newblock Finite range decomposition of {G}aussian processes.
\newblock {\em J. Statist. Phys.}, 115(1-2):415--449, 2004.

\bibitem{MR3332938}
D.C. Brydges and G.~Slade.
\newblock A renormalisation group method. {I}. {G}aussian integration and
  normed algebras.
\newblock {\em J. Stat. Phys.}, 159(3):421--460, 2015.

\bibitem{MR3332939}
D.C. Brydges and G.~Slade.
\newblock A renormalisation group method. {II}. {A}pproximation by local
  polynomials.
\newblock {\em J. Stat. Phys.}, 159(3):461--491, 2015.

\bibitem{MR3332941}
D.C. Brydges and G.~Slade.
\newblock A renormalisation group method. {IV}. {S}tability analysis.
\newblock {\em J. Stat. Phys.}, 159(3):530--588, 2015.

\bibitem{MR3332942}
D.C. Brydges and G.~Slade.
\newblock A renormalisation group method. {V}. {A} single renormalisation group
  step.
\newblock {\em J. Stat. Phys.}, 159(3):589--667, 2015.

\bibitem{MR1446000}
J.~Cardy.
\newblock {\em Scaling and renormalization in statistical physics}, volume~5 of
  {\em Cambridge Lecture Notes in Physics}.
\newblock Cambridge University Press, 1996.

\bibitem{MR4315657}
N.~Chandgotia, R.~Peled, S.~Sheffield, and M.~Tassy.
\newblock Delocalization of uniform graph homomorphisms from {$\Bbb Z^2$} to
  {$\Bbb Z$}.
\newblock {\em Commun. Math. Phys.}, 387(2):621--647, 2021.

\bibitem{2103.11985}
D.~Conache, M.~Heydenreich, F.~Merkl, and S.W.W. Rolles.
\newblock Variance of voltages in a lattice {C}oulomb gas.
\newblock 2021.

\bibitem{MR2976565}
C.~Cotar and J.-D. Deuschel.
\newblock Decay of covariances, uniqueness of ergodic component and scaling
  limit for a class of {$\nabla\phi$} systems with non-convex potential.
\newblock {\em Ann. Inst. Henri Poincar\'e Probab. Stat.}, 48(3):819--853,
  2012.

\bibitem{MR2470934}
C.~Cotar, J.-D. Deuschel, and S.~M{\"u}ller.
\newblock Strict convexity of the free energy for a class of non-convex
  gradient models.
\newblock {\em Commun. Math. Phys.}, 286(1):359--376, 2009.

\bibitem{MR4003143}
P.~Dario.
\newblock Quantitative homogenization of the disordered {$\nabla\varphi$}
  model.
\newblock {\em Electron. J. Probab.}, 24:Paper No. 90, 99, 2019.

\bibitem{2002.02946}
P.~Dario and W.~Wu.
\newblock Massless phases for the {V}illain model in $d\geq 3$.
\newblock 2020.

\bibitem{DR22}
J.-D. Deuschel and P.-F. Rodriguez.
\newblock An isomorphism theorem for ginzburg-landau interface models and
  scaling limits.
\newblock Preprint, arXiv:2206.14805.

\bibitem{MR1101688}
J.~Dimock and T.R. Hurd.
\newblock A renormalization group analysis of the {K}osterlitz-{T}houless
  phase.
\newblock {\em Commun. Math. Phys.}, 137(2):263--287, 1991.

\bibitem{MR1777310}
J.~Dimock and T.R. Hurd.
\newblock Sine-{G}ordon revisited.
\newblock {\em Ann. Henri Poincar\'e}, 1(3):499--541, 2000.

\bibitem{1911.00092}
H.~Duminil-Copin, M.~Harel, B.~Laslier, A.~Raoufi, and G.~Ray.
\newblock Logarithmic variance for the height function of square-ice.
\newblock 2019.

\bibitem{2012.13750}
H.~Duminil-Copin, A.~Karrila, I.~Manolescu, and M.~Oulamara.
\newblock Delocalization of the height function of the six-vertex model.
\newblock 2020.

\bibitem{MR2917175}
P.~Falco.
\newblock Kosterlitz-{T}houless transition line for the two dimensional
  {C}oulomb gas.
\newblock {\em Commun. Math. Phys.}, 312(2):559--609, 2012.

\bibitem{1311.2237}
P.~Falco.
\newblock {C}ritical exponents of the two dimensional {C}oulomb gas at the
  {B}erezinskii-{K}osterlitz-{T}houless transition, 2013.
\newblock Preprint, arXiv:1311.2237.

\bibitem{MR496191}
J.~Fr\"{o}hlich and Y.M. Park.
\newblock Correlation inequalities and the thermodynamic limit for classical
  and quantum continuous systems.
\newblock {\em Commun. Math. Phys.}, 59(3):235--266, 1978.

\bibitem{MR634447}
J.~Fr{\"o}hlich and T.~Spencer.
\newblock The {K}osterlitz-{T}houless transition in two-dimensional abelian
  spin systems and the {C}oulomb gas.
\newblock {\em Commun. Math. Phys.}, 81(4):527--602, 1981.

\bibitem{MR733469}
J.~Fr{\"o}hlich and T.~Spencer.
\newblock The {B}ere\v zinski\u\i -{K}osterlitz-{T}houless transition
  (energy-entropy arguments and renormalization in defect gases).
\newblock In {\em Scaling and self-similarity in physics ({B}ures-sur-{Y}vette,
  1981/1982)}, volume~7 of {\em Progr. Phys.}, pages 29--138. Birkh\"auser
  Boston, Boston, MA, 1983.

\bibitem{MR2228384}
T.~Funaki.
\newblock Stochastic interface models.
\newblock In {\em Lectures on probability theory and statistics}, volume 1869
  of {\em Lecture Notes in Math.}, pages 103--274. Springer, 2005.

\bibitem{MR1463032}
T.~Funaki and H.~Spohn.
\newblock Motion by mean curvature from the {G}inzburg-{L}andau {$\nabla \phi$}
  interface model.
\newblock {\em Commun. Math. Phys.}, 185(1):1--36, 1997.

\bibitem{2012.01400}
C.~Garban and A.~Sep{\'u}lveda.
\newblock Quantitative bounds on vortex fluctuations in $2d$ {C}oulomb gas and
  maximum of the integer-valued {G}aussian free field.
\newblock 2020.

\bibitem{2002.12284}
C.~Garban and A.~Sep\'ulveda.
\newblock Statistical reconstruction of the {G}aussian free field and {KT}
  transition.
\newblock 2020.
\newblock Preprint, arXiv:2002.12284.

\bibitem{MR1872740}
G.~Giacomin, S.~Olla, and H.~Spohn.
\newblock Equilibrium fluctuations for {$\nabla\phi$} interface model.
\newblock {\em Ann. Probab.}, 29(3):1138--1172, 2001.

\bibitem{MR3606736}
A.~Giuliani, V.~Mastropietro, and F.L. Toninelli.
\newblock Height fluctuations in interacting dimers.
\newblock {\em Ann. Inst. Henri Poincar\'{e} Probab. Stat.}, 53(1):98--168,
  2017.

\bibitem{MR4121614}
A.~Giuliani, V.~Mastropietro, and F.L. Toninelli.
\newblock Non-integrable dimers: universal fluctuations of tilted height
  profiles.
\newblock {\em Commun. Math. Phys.}, 377(3):1883--1959, 2020.

\bibitem{1907.07923}
A.~Giuliani and F.~Theil.
\newblock Long range order in atomistic models for solids.
\newblock {\em J. Eur. Math. Soc.}, to appear.

\bibitem{1909.03436}
A.~Glazman and R.~Peled.
\newblock On the transition between the disordered and antiferroelectric phases
  of the 6-vertex model.
\newblock 2019.

\bibitem{MR496208}
L.P. Kadanoff.
\newblock Lattice {C}oulomb gas representations of two-dimensional problems.
\newblock {\em J. Phys. A}, 11(7):1399--1417, 1978.

\bibitem{MR2523460}
R.~Kenyon.
\newblock Lectures on dimers.
\newblock In {\em Statistical mechanics}, volume~16 of {\em IAS/Park City Math.
  Ser.}, pages 191--230. Amer. Math. Soc., Providence, RI, 2009.

\bibitem{1711.04720}
V.~Kharash and R.~Peled.
\newblock The {F}r\"ohlich-{S}pencer {P}roof of the
  {B}erezinskii-{K}osterlitz-{T}houless {T}ransition.
\newblock 2017.
\newblock Preprint, arXiv:1711.04720.

\bibitem{10.1088/0034-4885/79/2/026001}
J.M. Kosterlitz.
\newblock Kosterlitz--{T}houless physics: a review of key issues.
\newblock 79(2):026001, jan 2016.

\bibitem{0022-3719-6-7-010}
J.M. Kosterlitz and D.J. Thouless.
\newblock Ordering, metastability and phase transitions in two-dimensional
  systems.
\newblock {\em Journal of Physics C: Solid State Physics}, 6(7):1181, 1973.

\bibitem{MR4367953}
P.~Lammers.
\newblock Height function delocalisation on cubic planar graphs.
\newblock {\em Probab. Theory Related Fields}, 182(1-2):531--550, 2022.

\bibitem{2101.05139}
P.~Lammers and S.~Ott.
\newblock Delocalisation and absolute-value-{FKG} in the solid-on-solid model.
\newblock 2021.
\newblock Preprint, arXiv:2101.05139.

\bibitem{MR2677157}
G.F. Lawler and V.~Limic.
\newblock {\em Random walk: a modern introduction}, volume 123 of {\em
  Cambridge Studies in Advanced Mathematics}.
\newblock Cambridge University Press, 2010.

\bibitem{MR3508158}
E.~Lubetzky, F.~Martinelli, and A.~Sly.
\newblock Harmonic pinnacles in the discrete {G}aussian model.
\newblock {\em Commun. Math. Phys.}, 344(3):673--717, 2016.

\bibitem{MR2855536}
J.~Miller.
\newblock Fluctuations for the {G}inzburg-{L}andau {$\nabla\phi$} interface
  model on a bounded domain.
\newblock {\em Commun. Math. Phys.}, 308(3):591--639, 2011.

\bibitem{MR1461951}
A.~Naddaf and T.~Spencer.
\newblock On homogenization and scaling limit of some gradient perturbations of
  a massless free field.
\newblock {\em Commun. Math. Phys.}, 183(1):55--84, 1997.

\bibitem{MR2251117}
S.~Sheffield.
\newblock Random surfaces.
\newblock {\em Ast\'erisque}, (304):vi+175, 2005.

\bibitem{MR2239599}
G.~Slade.
\newblock {\em The lace expansion and its applications}, volume 1879 of {\em
  Lecture Notes in Mathematics}.
\newblock Springer-Verlag, 2006.
\newblock Lectures from the 34th Summer School on Probability Theory held in
  Saint-Flour, July 6--24, 2004, Edited and with a foreword by Jean Picard.

\bibitem{2110.09465}
D.~van Engelenburg and M.~Lis.
\newblock An elementary proof of phase transition in the planar {XY} model.
\newblock 2021.

\bibitem{1907.08868}
M.~Wirth.
\newblock Maximum of the integer-valued {G}aussian free field.
\newblock Preprint, arXiv:1907.08868.

\bibitem{MR3933043}
W.~Wu and O.~Zeitouni.
\newblock Subsequential tightness of the maximum of two dimensional
  {G}inzburg-{L}andau fields.
\newblock {\em Electron. Commun. Probab.}, 24:Paper No. 19, 12, 2019.

\end{thebibliography}
\bibliographystyle{plain}

\end{document}